\newcommand{\diff}[2]{\mbox{{\rm Diff}{${\,}_{#1}({\mathbb
C}^{#2},0)$}}}
\newcommand{\diffh}[2]{\mbox{$\widehat{\rm Diff}{{\,}_{#1}({\mathbb
C}^{#2},0)}$}}
\newcommand{\cn}[1]{\mbox{(${\mathbb C}^{#1},0$)}}
\newcommand{\Xt}{{\mathcal X}_{tp1}\cn{2}}
\newcommand{\pn}[1]{{\mathbb P}^{#1}({\mathbb C})}
\newcommand{\ox}{\'{o}}
\newtheorem{pro}{Proposition}[section]
\newtheorem{teo}{Theorem}[section]
\newtheorem*{pri}{Principle}
\newtheorem{cor}{Corollary}[section]
\newtheorem{lem}{Lemma}[section]
\newtheorem{rem}{Remark}[section]
\newtheorem{defi}{Definition}[section]
\begin{document}

\title[Multisummability of unfoldings]{Multisummability of unfoldings of tangent to the identity diffeomorphisms}
\author{Javier Rib\ox n}
\address{Instituto de Matem\'{a}tica, UFF, Rua M\'{a}rio Santos Braga S/N
Valonguinho, Niter\'{o}i, Rio de Janeiro, Brasil 24020-140}
\thanks{e-mail address: javier@mat.uff.br}
\thanks{MSC-class. Primary: 37F45; Secondary: 37G10, 37F75, 34E05, 30E15}
\thanks{Keywords: resonant diffeomorphism, bifurcation theory,
asymptotic expansions, Gevrey asymptotics, multisummability,
analytic classification, structural stability}
\date{\today}
\maketitle

\bibliographystyle{plain}
\section*{Abstract}
We prove the multisummability of the infinitesimal generator of 
unfoldings of finite codimension tangent to the identity 
$1$-dimensional local complex analytic diffeomorphisms.
We also prove the multisummability of Fatou coordinates and
extensions of the Ecalle-Voronin invariants associated to these 
unfoldings.
The quasi-analytic nature is related to the parameter variable.
As an application we prove an isolated zeros theorem for the
analytic conjugacy problem.

The proof is based on good asymptotics of Fatou coordinates 
and the introduction of a new auxiliary tool, the so called 
multi-transversal flows.
They provide the estimates and the combinatorics of sectors
typically associated to summability.
The methods are based on the study of
the infinitesimal stability properties of the unfoldings.
\section{Introduction}
  We study  unfoldings
of non-linearizable resonant complex analytic diffeomorphisms. The group of
1-dimensional unfoldings of elements of $\diff{}{}$ is
\[ \diff{p}{2} = \{ \varphi(x,y) \in \diff{}{2} \ : \ x \circ \varphi = x \}
. \]
Most of the time we work in the set $\diff{p1}{2}$ composed by the elements
$\varphi$ of $\diff{p}{2}$ such that
$\varphi_{|x=0}$ is tangent to the identity
(i.e. $j^{1} \varphi_{|x=0} = Id$) but $\varphi_{|x=0} \neq Id$.
The main goal of the paper is providing a rigorous formulation and then
proving the following statement:
\begin{pri}
The infinitesimal generator of an element $\varphi$ of $\diff{p1}{2}$
is multi-summable in the $x$-variable.
\end{pri}
A natural way of studying unfoldings $\varphi \in \diff{p1}{2}$ of tangent to
the identity diffeomorphisms is comparing the dynamics of $\varphi$ and ${\rm exp}(X)$
where $X$ is a vector field whose time $1$ flow ``approximates" $\varphi$.
This point of view has been developed by Glutsyuk  \cite{Gluglu}.
In this way extensions of the Ecalle-Voronin invariants to some sectors
in the parameter space are obtained. On the one hand they are uniquely
defined. On the other hand the sectors have to avoid a finite set of directions,
typically associated (but not exclusively) to small divisors phenomena.

A different point of view was introduced by Shishikura for codimension $1$
unfoldings \cite{Shishi}. The idea is constructing appropriate fundamental domains bounded
by two curves with common ends at
singular points: one curve is the image of the other one.  Pasting the boundary curves by the
dynamics yields (by quasiconformal surgery) a Riemann surface that is conformally
equivalent to the Riemann sphere. The logarithm of an appropriate affine complex
coordinate on the sphere induces a Fatou coordinate for $\varphi$.
These ideas were generalized to higher codimension unfoldings by Oudkerk \cite{Oudkerk}.
In this approach the first curve is a phase
curve of an appropriate vector field transversal to the real flow of $X$. In both cases
the Fatou coordinates provide Lavaurs vector fields $X^{\varphi}$
such that $\varphi= {\rm exp}(X^{\varphi})$ \cite{Lavaurs}. The Shishikura's approach was
used by Mardesic, Roussarie and Rousseau to provide a complete system of invariants
for unfoldings of codimension $1$ tangent to the identity diffeomorphisms \cite{MRR}.
The analytic classification for the finite codimension case was
completed in \cite{JR:mod} by using the Oudkerk's point of view.
On the one hand the constructions are applied to sectors whose union
is a neighborhood of
the origin in the $x$-variable. On the other hand the extensions of
Fatou coordinates, Lavaurs vector fields and Ecalle-Voronin invariants
depend on the choices in the construction. One of the goals of this paper
is explaining how all these objects are intrinsic and can be interpreted
as different sectorial sums of a quasi-analytic formal object.
\subsection{Construction of multi-transversal flows}
In order to study the properties of the infinitesimal generator
of $\varphi \in \diff{p1}{2}$
we construct transversal flows defined in sectorial domains in the variable $x$.
They are of the form $\Re (\aleph^{*} X)$ where
$\aleph^{*}: ([0,\delta) e^{i[u_{0},u_{1}]} \times B(0,\epsilon)) \setminus Sing (X) \to
{\mathbb S}^{1} \setminus \{-1,1\}$ is a continuous function; let us explain how.

The main ideas of the construction can be found in \cite{JR:mod}.
We use the dynamical splitting, we express a neighborhood of the origin
in ${\mathbb C}^{2}$ as a union of basic sets that are associated to $\varphi$
by a desingularization process of the fixed points set $Fix (\varphi)$ of $\varphi$.
There are two types of basic sets, namely exterior and compact-like basic
sets. The exterior sets are dynamically simple and the restriction of
$\Re (\mu X)$ to an exterior set is either a parametrized Fatou flower or
truncated Fatou flower for any $\mu \in {\mathbb S}^{1}$
(see figure (\ref{EVfig6})). Thus the dynamics of
$\Re (\mu X)$ in a neighborhood of the origin is determined by the dynamics
of $\Re(\mu X)$ in the compact-like sets
${\mathcal C}_{1}$, $\hdots$, ${\mathcal C}_{q}$.
We can associate an exponent $e_{j} \in {\mathbb N}$ and a polynomial vector field
$P_{j}(w) \partial / \partial w$ such that the dynamics of
$Re (\mu X)$ in ${\mathcal C}_{j}$ is orbitally equivalent to the dynamics
of $\Re (|x|^{e_{j}} \lambda^{e_{j}} \mu P_{j}(w) \partial / \partial w)$ for
$1 \leq j \leq q$ where $x =|x| \lambda$ (see figure (\ref{EVfig9})). We define
\[ {\mathcal U}_{X}^{j} =
\{ (\lambda,\mu) \in {\mathbb S}^{1} \times {\mathbb S}^{1} :
\Re (\lambda^{e_{j}} \mu  P_{j}(w) \partial / \partial w) \
{\rm is \ not \ stable} \} \]
The definition of stability is borrowed from Douady, Estrada and Sentenac
\cite{DES} (see \cite{JR:mod}).
The real flow of a vector field $P(w) \partial / \partial w \in {\mathbb C}[w]$
is stable if $Re (\mu P(w) \partial / \partial w)$ is orbitally conjugated to
$Re (P(w) \partial / \partial w)$ for any $\mu \in {\mathbb S}^{1}$
in a neighborhood of $1$.
It turns out that $\Re (\mu P(w) \partial / \partial w)$ is stable except for
finitely many directions $\mu \in {\mathbb S}^{1}$.
Then $\Re (\mu X)$ is stable in a neighborhood of the direction
$\lambda {\mathbb R}^{+}$ in the $x$-space if
$(\lambda,\mu) \not \in {\mathcal U}_{X}^{j}$; in other words
there exists a sector $S  = (0,\delta) \lambda e^{i[-u,u]}$ for some
$\delta, u \in {\mathbb R}^{+}$ such that
$\Re (\mu X)_{|x=x_{0}}$ is orbitally conjugated to
$\Re (\mu X)_{|x=x_{1}}$ in ${\mathcal C}_{j}$ for any
$(x_{0},x_{1}) \in S \times S$.
The stability of transversal flows is an important part of our approach
since it guarantees that the objects constructed (Fatou coordinates,...)
depend holomorphically on both variables.

Given a continuous function
$\mu_{j}: e^{i[u_{0},u_{1}]} \to {\mathbb S}^{1} \setminus \{-1,1\}$
such that
$(\lambda,\mu_{j}(\lambda)) \not \in {\mathcal U}_{X}^{j}$ for any
$\lambda \in  e^{i[u_{0},u_{1}]}$
it is natural to consider  $\Re (\aleph^{*} X)$ such that
\[ \Re (\aleph^{*} X)_{|{\mathcal C}_{j} \cap \{x=|x_{0}| \lambda_{0} \}}=
\Re(\mu_{j}(\lambda_{0}) X)_{|{\mathcal C}_{j} \cap \{x=|x_{0}| \lambda_{0} \}} \]
for $0 < |x_{0}| < \delta$ and $\lambda_{0} \in  e^{i[u_{0},u_{1}]}$.
In this way we define $\Re (\aleph^{*} X)_{|{\mathcal C}_{j}}$
for $1 \leq j \leq q$.
Such a vector field would be stable in every compact-like set. Since
compact-like sets collapse when approaching $x=0$
(we have ${\mathcal C}_{j} \cap \{x=0\} = \{(0,0)\}$)
we are requiring conditions of
infinitesimal stability for  $\Re (\aleph^{*} X)$.
The exterior sets are dynamically simple,
we can use them to interpolate the transversal flows defined in
different compact-like sets. We obtain in this way a multi-transversal flow.
Roughly speaking it is a stable flow transversal to $\Re (X)$ that is of the form
$\Re (\mu_{j}(x/|x|) X)$ by restriction to  any compact-like set
${\mathcal C}_{j}$.
Let us remind that in \cite{JR:mod} the functions $\aleph^{*}$ are constant
and that it is not difficult to generalize the constructions there for functions
$\aleph^{*} = \aleph^{*}(x/|x|)$.

Our objects (Fatou coordinates,...) are
defined in regions. A region is a connected component
of the subset $T$ of
$([0,\delta) e^{i[u_{0},u_{1}]} \times B(0,\epsilon)) \setminus Sing (X)$
obtained as the union of the trajectories of $\Re (\aleph^{*} X)$
whose $\alpha$ and $\omega$ limits are both singular points.
The multi-transversal flows have two important
properties:
\begin{itemize}
\item The infinitesimal stability properties allow us to use the same ideas in
\cite{JR:mod} to find Fatou coordinates $\psi_{H}^{\varphi}$ of $\varphi$
defined in regions $H$ of $\Re (\aleph^{*} X)$ such that
$\psi_{H}^{\varphi} - \psi^{X}$ is continuous in $\overline{H}$ and holomorphic
in $\dot{H}$ where $\psi^{X}$ is a Fatou coordinate of $X$. In particular the
function $\psi_{H}^{\varphi} - \psi^{X}$ is bounded.
\item The dynamics of multi-transversal flows and the transitions of the dynamics
between different multi-transversal flows can be described in a combinatorial way.
\end{itemize}
Let us explain succinctly how to use the previous properties to
deduce multi-summability of Fatou coordinates, Lavaurs vector fields and
Ecalle-Voronin invariants.
Consider a petal $L_{j}$ of $\varphi_{|x=0}$.
There exists a unique region $H_{j}$ of $\Re (\aleph^{*} X)$
containing $L_{j} \cap T$.
Consider the region $\tilde{H}_{j}$ obtained in an analogous way
for a multi-transversal flow $\Re (\tilde{\aleph}^{*} X)$
defined in
$[0,\delta) e^{i[\tilde{u}_{0},\tilde{u}_{1}]} \times B(0,\epsilon)$.
The first property implies roughly speaking that
$\psi_{H_{j}}^{\varphi} - \psi_{\tilde{H}_{j}}^{\varphi}$ is bounded in
$H_{j} \cap \tilde{H}_{j}$. Clearly
$\psi_{H_{j}}^{\varphi} - \psi_{\tilde{H}_{j}}^{\varphi}$ is constant in orbits of $\varphi$.
As a consequence the function
\[
(\psi_{H_{j}}^{\varphi} - \psi_{\tilde{H}_{j}}^{\varphi}) \circ (x, e^{2 \pi i \psi_{H_{j}}^{\varphi}})^{-1} \]
is well-defined and bounded in a domain of the form
\[ \{ (x,z) \in
[0,\delta) (( e^{i[{u}_{0},{u}_{1}]}) \cap ( e^{i[\tilde{u}_{0},\tilde{u}_{1}]})) \times {\mathbb C} :
e^{\frac{-C}{|x|^{e}}} < |z| < e^{\frac{C}{|x|^{e}}} \} . \]
The exponent $e$ is deduced from the combinatorial study of multi-transversal flows.
Hence we obtain that up to an additive function of $x$ the function
$\psi_{H_{j}}^{\varphi} - \psi_{\tilde{H}_{j}}^{\varphi}$ is a $O(e^{K/|x|^{e}})$
by using Cauchy's integral formula.
The combinatorics provides the exponentially small estimates and the right sectors to
obtain multi-summability. Multi-summability of Lavaurs vector fields and Ecalle-Voronin
invariants is deduced from the analogous property for Fatou coordinates.

  Since a multi-summable power series in
a direction is a sum of summable ones, the multi-summability levels of
of Fatou coordinates have to appear in an independent way.
The multi-summability is related to the nature of $\varphi$ in compact-like
sets, indeed the multi-summability levels are contained in the set
$\{e_{1}, \hdots, e_{j}\}$.
Imposing all the $\mu_{j}$ functions
($\Re (\aleph^{*} X)_{|{\mathcal C}_{j}}=\Re(\mu_{j} X)_{|{\mathcal C}_{j}} $)
to be equal would result in too small sectors too obtain a multi-summable
object.

Let us remark that the summability of Ecalle-Voronin invariants for generic families unfolding
a codimension $1$ parabolic or resonant diffeomorphism is proved in
\cite{Rousseau:modres} and \cite{rousseau-christopher:modpar}.
The methods are different. In particular it is used the so called compatibility
condition that establishes whether different invariants correspond to the same
diffeomorphism via a translation to Glutsyuk invariants.
We do not need such a condition, the good estimates on the asymptotic of Fatou
coordinates suffice to prove the multi-summability for Ecalle-Voronin invariants
in all finite codimension unfoldings. We also prove the multi-summability of
Fatou coordinates and Lavaurs vector fields.
This last object is specially interesting to us since one of the goals of this
paper is interpreting the sectorial Lavaurs vector fields as sums of the formal
infinitesimal generator.
\subsection{Intrinsic nature of sectorial objects}
As we said earlier we could make other choices of flows transversal to $\Re (X)$.
Consider a case such that the fixed points set $Fix (\varphi)$ of $\varphi$ is a union of
curves of the form $y=\gamma_{j}(x)$ for $j \in \{1,\hdots,p\}$ and
$(\partial \gamma_{j}/\partial x)(0) \neq (\partial \gamma_{k}/\partial x)(0)$ for
$j \neq k$. This is one of the cases providing summable formal objects.
Then we only consider multi-transversal flows that are deformed versions of
the imaginary flow $\Re (i X)$. More precisely our multi-transversal flows
$\Re (\aleph^{*} X)$
are defined in sectors of the form $[0,\delta) e^{i[u_{0},u_{1}]} \times B(0,\epsilon)$
and there exists $\lambda \in e^{i(u_{0},u_{1})}$ such that
$(\aleph^{*})_{|[0,\delta) \lambda \times B(0,\epsilon)} \equiv i$.
The situation is slightly different in the multi-summable case where the
richer combinatorics makes us consider other kind of multi-transversal flows.
Anyway, the imaginary flow is
again the basic ingredient that is present in all multi-transversal flows.
For instance, the directions of non-summability (Stokes directions) are contained in
$\{ \lambda \in {\mathbb S}^{1} : (\lambda ,i) \in \cup_{j=1}^{q} {\mathcal U}_{X}^{j} \}$,
i.e. the sets of directions that are unstable for the restriction of the imaginary flow
to some of the compact-like basic sets. In short,
on the one hand all the multi-transversal flows are related
to the imaginary flow $\Re (iX)$. On the other hand we need to consider more general
vector fields since in this way the sums of the multi-summable objects can be
realized in wide enough sectors.
\subsection{Infinitesimal generator}
The use of normal forms ${\rm exp}(X)$ to study unfoldings
$\varphi \in \diff{p1}{2}$ is classical (see Martinet's paper \cite{Mar:Ast}).
Vector fields are used to model the diffeomorphisms
even if it is clear that generic unfoldings do not behave as nicely as flows.
This paper is one step forward in the direction of justifying such an approach.
The diffeomorphism $\varphi$ is embedded in the
``formal flow"  of its infinitesimal generator. We show that such an
object is of geometric nature and that its sectorial sums provide
analytic vector fields whose time $1$ flow coincides with $\varphi$.
The complexity of the diffeomorphism can be interpreted in a cohomological
way since the Lavaurs vector fields do not coincide when their domains
of definition overlap. Our results allow to obtain Lavaurs vector fields
from the infinitesimal generator and vice versa.

Let us explain in what sense the infinitesimal generator of $\varphi$
is multi-summable in the variable $x$.
Consider a petal $L_{j}$ of $\varphi_{|x=0}$
and the region $H_{j}$ of $\Re (\aleph^{*} X)$ as defined above.
There exists a unique analytic vector field (the Lavaurs vector field)
$X_{H_{j}}^{\varphi} = g_{H_{j}}^{\varphi}(x,y) \partial / \partial y$ defined
in $H_{j}$ such that
$X_{H_{j}}^{\varphi}(\psi_{H_{j}}^{\varphi}) \equiv 1$.
By fixing $L_{j}$ but varying  $\Re (\aleph^{*} X)$ we obtain a family
of functions $\{ g_{H_{j}}^{\varphi}(x,y) \}$ that is multi-summable in the
variable $x$. The common asymptotic development of the family
$\{ X_{H_{j}}^{\varphi}  \}$ is of the form
\[ \hat{X}_{L_{j}}^{\varphi} = \left({ \sum_{k=0}^{\infty} g_{j,k}^{\varphi}(y) x^{k} }\right)
\frac{\partial}{\partial y} \]
where $g_{j,k}^{\varphi}$ is defined in $L_{j}$ for any $k  \in {\mathbb N} \cup \{0\}$.
Now we fix $k \geq 0$. The family $\{ g_{j,k}^{\varphi} \}$ is parametrized by the set
of petals of $\varphi_{|x=0}$. Moreover the functions in the family
$\{ g_{j,k}^{\varphi} \}$ are sums of
a $\nu$ summable power series $\hat{g}_{k}^{\varphi} $ where $2 \nu$ is the number
of petals of $\varphi_{|x=0}$. As a result of this two step process we recover
\[  \log \varphi = \left({ \sum_{k=0}^{\infty} \hat{g}_{k}^{\varphi}(y) x^{k} }\right)
\frac{\partial}{\partial y} \]
the infinitesimal generator of $\varphi$.

Let us remark that  the estimates in this paper for Fatou coordinates are the
generalizations of those in \cite{JR:mod}. Thus, they provide
\begin{itemize}
\item Asymptotic developments of the Lavaurs vector fields $X_{H}^{\varphi}$
until the first non-zero term in the neighborhood of the fixed points \cite{JR:mod}.
\item Gevrey asymptotics in the neighborhood of the bifurcation set $x=0$.
\end{itemize}
\subsection{Isolated zeros theorem for analytic conjugacy}
Given $\varphi \in \diff{p1}{2}$, let us relate the class of analytic
conjugacy of $\varphi$ with the classes of analytic conjugacy
of the $1$-dimensional germs in the family $\{\varphi_{|x=x_{n}}\}$
for some sequence $x_{n} \to 0$. This is an application of the
multi-summability of the Ecalle-Voronin invariants.

Let $\varphi, \eta \in \diff{p1}{2}$.
We denote $\varphi \sim \eta$ if there exists
$\sigma \in \diff{}{2}$ conjugating $\varphi$ and $\eta$
such that $\sigma_{|Fix (\varphi)} \equiv Id$.
\begin{teo}
\label{teo:intro}
Let $\varphi, \eta \in \diff{p1}{2}$
with $Fix (\varphi)= Fix (\eta)$. Suppose there exist $s \in {\mathbb R}^{+}$ and
a sequence $x_{n} \to 0$ contained in $B(0,\delta) \setminus \{0\}$ such that
for any $n \in {\mathbb N}$ the
restrictions $\varphi_{|x=x_{n}}$ and $\eta_{|x=x_{n}}$ are conjugated by an injective
holomorphic mapping $\kappa_{n}$ defined in $B(0,s)$ and fixing the points in
$Fix (\varphi) \cap \{ x=x_{n}\}$. Then we obtain $\varphi \sim \eta$.
\end{teo}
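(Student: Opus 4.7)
The plan is to reduce the theorem to a rigidity question for multi-summable Ecalle--Voronin invariants, and then invoke the analytic classification of unfoldings from \cite{JR:mod}. As a first step I would attach to both $\varphi$ and $\eta$ their sectorial Ecalle--Voronin invariants, built relative to a common formal normal form $X$ (which makes sense because $Fix(\varphi)=Fix(\eta)$) and a common multi-transversal flow $\Re(\aleph^{*}X)$. Working petal by petal, this produces families $\{\mathcal{E}_{H_{j}}^{\varphi}\}$ and $\{\mathcal{E}_{H_{j}}^{\eta}\}$ of sectorial invariants that, by the main results of the paper, are the sectorial sums of multi-summable formal power series $\hat{\mathcal{E}}^{\varphi}$ and $\hat{\mathcal{E}}^{\eta}$ in the parameter $x$, with summability levels drawn from $\{e_{1},\dots,e_{q}\}$.

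Next I would translate the hypothesis into a fiberwise identity for these invariants. For each non-singular $x_{n}$, the restriction of an extended sectorial Ecalle--Voronin invariant to the fiber $\{x=x_{n}\}$ reproduces, up to the standard normalization ambiguity of the one-dimensional Ecalle--Voronin moduli, the classical invariants of the germ $\varphi_{|x=x_{n}}$. The assumption that $\kappa_{n}$ conjugates $\varphi_{|x=x_{n}}$ and $\eta_{|x=x_{n}}$ while pointwise fixing $Fix(\varphi)\cap\{x=x_{n}\}$ is precisely what kills that ambiguity, since the normalization of the one-dimensional Ecalle--Voronin moduli is determined by the petals and the fixed points. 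Hence, for every $n$ the sectorial sums of $\hat{\mathcal{E}}^{\varphi}$ and $\hat{\mathcal{E}}^{\eta}$ agree on the fiber $x=x_{n}$.

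The third step is to exploit quasi-analyticity of multi-summable series: a multi-summable series is uniquely determined by its sectorial sum on any sector of sufficiently large opening, so the pointwise agreement along $x_{n}\to 0$ forces $\hat{\mathcal{E}}^{\varphi}=\hat{\mathcal{E}}^{\eta}$ as formal power series, and consequently the equality of each corresponding sectorial sum on every sector that has been constructed. Once the complete systems of invariants of $\varphi$ and $\eta$ coincide, the analytic classification theorem of \cite{JR:mod} provides an analytic diffeomorphism $\sigma\in\diff{}{2}$ with $\sigma_{|Fix(\varphi)}\equiv Id$ conjugating $\varphi$ to $\eta$, i.e. $\varphi\sim\eta$.

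I expect the main obstacle to be the second step. One must verify that the restriction of the sectorial extensions to a generic fiber genuinely reproduces the intrinsic one-dimensional Ecalle--Voronin invariants, and that the condition $\kappa_{n}|_{Fix(\varphi)\cap\{x=x_{n}\}}=Id$ cancels all remaining normalization choices so that the fiberwise identity is literal rather than only up to equivalence. This requires a careful bookkeeping of how the sectorial Fatou coordinates $\psi_{H_{j}}^{\varphi}$ restrict to the Fatou coordinates of $\varphi_{|x=x_{n}}$ attached to each petal, as well as control over the choice of petals and compact-like basic sets as $n$ varies, so that a single multi-transversal flow $\Re(\aleph^{*}X)$ may be used to read off the invariants along the whole sequence.
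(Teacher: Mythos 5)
Your proposal has two genuine gaps, both concentrated in what you yourself flag as Step 2, and one further gap in Step 3.

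First, the claim that the condition $\kappa_{n}|_{Fix(\varphi)\cap\{x=x_{n}\}}\cong Id$ ``cancels all remaining normalization choices so that the fiberwise identity is literal rather than only up to equivalence'' is false. Fixing the singular set pointwise (with the tangency condition $\cong Id$) still leaves a one-parameter freedom: a conjugation may differ from the identity by a flow ${\rm exp}(t\log\zeta)$ of the infinitesimal generator, and this shows up as an unknown translation constant in the Fatou coordinates. This is precisely the content of proposition \ref{pro:conimeq}: one only gets
$\xi_{j,\Lambda,\lambda}^{\eta}(x_{n},z)=(z+c_{n})\circ\xi_{j,\Lambda,\lambda}^{\varphi}(x_{n},z)\circ(z-c_{n})$
for some $c_{n}\in\mathbb{C}$ depending on $\kappa_{n}$, with only a bound $|c_{n}|\leq C'/s^{\nu}(x_{n})$. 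So the Fourier coefficients transform as $a_{j,\Lambda,\lambda,l}^{\eta}(x_{n})=a_{j,\Lambda,\lambda,l}^{\varphi}(x_{n})\,e^{2\pi i\upsilon l c_{n}}$, not $a^{\eta}=a^{\varphi}$. Handling this unknown $c_{n}$ --- showing the ratios $a^{\eta}/a^{\varphi}$ are $x^{e}h(x)$ with $h$ multi-summable and $h(0)\neq 0$, using the slow decay of $s$ to force $e=0$, and then using arithmetic compatibility between the different Fourier levels $l$ to glue a single well-defined $c_{\lambda}(x)$ over a whole sector --- is the technical heart of the paper's proof of theorem \ref{teo:modi}, and your proposal skips it entirely.

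Second, the quasi-analyticity step does not work as stated. Agreement of two sectorial Gevrey/multi-summable functions at a discrete sequence $x_{n}\to 0$ inside a sector does \emph{not} force equality of their asymptotic expansions: a nonzero flat function (e.g.\ $e^{-1/x}\sin(\pi/x)$ on the right half-plane) can vanish along such a sequence. So ``pointwise agreement along $x_{n}\to 0$ forces $\hat{\mathcal E}^{\varphi}=\hat{\mathcal E}^{\eta}$'' is not a valid inference. What the paper actually uses is the \emph{multiplicative} relation $a^{\eta}(x_{n})=a^{\varphi}(x_{n})e^{2\pi i\upsilon l c_{n}}$ with $c_{n}$ uniformly bounded, plus the structural fact that nonzero multi-summable functions are non-vanishing near $0$ on the relevant sectors; these combine to give $e=0$ and then an identity of sectorial sums, which is a stronger starting point than agreement at a sequence. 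After that it applies proposition \ref{pro:iinvic} to realize the invariant-level identity by conjugations defined on a fixed disk, and only then invokes the Main Theorem of \cite{JR:mod}. You should reorganize your argument along these lines: track the translation constant $c_{n}$ explicitly, obtain the multiplicative relation on the $a_{j,\Lambda,\lambda,l}$, and use multi-summability of the ratios rather than trying to propagate a fiberwise equality through a quasi-analyticity principle.
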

The previous theorem is the corollary \ref{cor:modi} of subsection
\ref{subsec:isozeros}. Indeed we prove the more general theorem
\ref{teo:modi} that analyzes what is the minimum domain of definition
$B(0,s_{n})$ of the mappings $\kappa_{n}$ such that the theorem is still true.
The theorem can be extended to codimension finite resonant diffeomorphisms
(remark \ref{rem:res}).
Theorem \ref{teo:intro} is a generalization of the main theorem
of \cite{JR:mod} where the $\kappa$ mappings where required to exist
for any parameter $x$ in a pointed neighborhood of $0$.

The theorem can be interpreted as an isolated zeros theorem, i.e. if the set
of parameters $x_{0}$ such that  $\varphi_{|x=x_{0}}$ and $\eta_{|x=x_{0}}$
are conjugated by a mapping defined in $B(0,s)$ acumulates the origin then
it contains a neighborhood of the origin. This analytic type property is a consequence
of the quasi-analytic nature of Ecalle-Voronin invariants.
The possibility of having flat Ecalle-Voronin invariants
when $x \to 0$ would be an obstruction to the extension of conjugations even
if defined in open sets of parameters.
\subsection{Remarks}
%
The codimension $\infty$ case can be studied with the techniques in this paper.
Indeed all the transforms of $\varphi$ in the desingularization process
of $Fix (\varphi)$ are codimension $\infty$ unfoldings.
We still obtain exponentially small estimates
when comparing different Fatou coordinates, Lavaurs vector fields or
Ecalle-Voronin invariants. These objects are not multi-summable since
in general they are not bounded in the neighborhood of $x=0$.

Given un unfolding $\varphi(x,y)=(x,f(x,y))$ of a resonant difeomorphism
$\phi \in \diff{}{}$ (i.e. $(\partial \phi/\partial y)(0)$ is a root of the unit of order $p$)
it admits a Jordan decomposition
\[ \varphi = \varphi_{s} \circ \varphi_{u} = \varphi_{u} \circ \varphi_{s}. \]
In fact $\varphi_{s}, \varphi_{u}$ are formal diffeomorphisms
such that $\varphi_{s}$ is semisimple (or equivalently formally linearizable)
and $\varphi_{u}$ is unipotent ($j^{1} \varphi_{u}$ is unipotent).
Since $j^{1} \varphi$ is $p$ periodic then
$\varphi_{s}$ is also $p$ periodic. Thus
we obtain $\varphi^{p} = \varphi_{u}^{p} = {\rm exp}(p \log \varphi_{u})$.
We deduce that $\log \varphi_{u}$ is multi-summable since
$\varphi^{p} \in \diff{p1}{2}$.
The semisimple part $\varphi_{s}$ is the unique formal diffeomorphism such
that $\varphi_{s}^{p} \equiv Id$, it commutes with $\varphi_{u}$ and
$(\partial (y \circ \varphi_{s})/\partial y)(0,0) =  (\partial (y \circ \varphi)/\partial y)(0,0) $.
Therefore $\varphi_{s}$ is totally determined by $\varphi_{u}$
(prop. 5.4 in \cite{JR:mod}). It is possible to use the multi-summability of
$\log \varphi_{u}$ to deduce the multi-summability of $\varphi_{s}$.
\section{Notations and definitions}
\label{sec:notdef}
Let $\diff{}{n}$ be the group of complex analytic germs of
diffeomorphisms at $0 \in {\mathbb C}^{n}$.
Denote $Fix (\varphi)$ the set of fixed points of an element $\varphi$
of $\diff{}{n}$.
\begin{defi}
Let $\varphi: U \to V$ be a holomorphic mapping where $U$ and $V$ are
open sets of ${\mathbb C}^{n}$. We say that  a holomorphic
$\psi: U \to {\mathbb C}$ is a Fatou coordinate of $\varphi$ if
$\psi \circ \varphi \equiv \psi + 1$ in $U \cap \varphi^{-1}(U)$.
\end{defi}
We say that $\varphi \in \diff{}{2}$
is a parametrized diffeomorphism if $\varphi(x,y)$ is of the form
$(x,f(x,y))$. Equivalently $\varphi$ is an unfolding of
$\varphi_{|x=0} \in \diff{}{}$.
We denote $\diff{p}{2}$ the group of parametrized
diffeomorphisms.
Let $\diff{1}{}$ be the subgroup of $\diff{}{}$ of germs
whose linear part is the identity.
\begin{defi}
We define the set
\[ \diff{p1}{2} = \{ \varphi \in \diff{p}{2} :
\varphi_{|x=0} \in \diff{1}{} \setminus \{ Id \} \} . \]
Then $\diff{p1}{2}$ is the set of one dimensional unfoldings of
one dimensional tangent to the identity germs of diffeomorphisms
(excluding the identity).
We consider $\diff{tp1}{2}$ the subset of $\diff{p1}{2}$ such that
$\varphi \in \diff{tp1}{2}$ if all the irreducible components of
$Fix (\varphi)$ are of the form $y=g(x)$.
\end{defi}
We define a formal vector field $\hat{X}$ as a derivation of the
maximal ideal of the ring
${\mathbb C}[[x_{1},\hdots,x_{n}]]$. We also express $\hat{X}$ in
the more conventional form
\[ \hat{X} =
\hat{X}(x_{1}) \partial / \partial{x_{1}} + \hdots +
\hat{X}(x_{n}) \partial / \partial{x_{n}} . \]
We denote $\hat{\mathcal X} \cn{n}$ the set of formal vector fields.
We denote ${\mathcal X} \cn{n}$ the Lie algebra of germs of analytic vector fields in a
neighborhood of $0 \in {\mathbb C}^{n}$. A formal vector field $X$
belongs to ${\mathcal X} \cn{n}$ if and only if
$X({\mathbb C}\{x_{1},\hdots,x_{n}\}) \subset {\mathbb C}\{x_{1},\hdots,x_{n}\}$.
\begin{defi}
Let $X$ be a holomorphic vector field defined in an open set $U$ of
${\mathbb C}^{n}$. We say that  a holomorphic
$\psi: U \to {\mathbb C}$ is a Fatou coordinate of $X$ if
$X(\psi) \equiv 1$.
\end{defi}
\begin{defi}
We denote $\Xt$ the subset of ${\mathcal X} \cn{2}$ of vector fields
of the form
\[  X =
v(x,y) (y-\gamma_{1}(x))^{s_{1}} \hdots (y-\gamma_{p}(x))^{s_{p}}
\partial / \partial{y} \]
where $v, \gamma_{1}, \hdots, \gamma_{p} \in {\mathbb C}\{x,y\}$,
$v(0) \neq 0 = \gamma_{1}(0) = \hdots = \gamma_{p}(0)$
and $s_{1}+\hdots+s_{p} \geq 2$.
We denote $\hat{\mathcal X}_{tp1}\cn{2}$ the set of formal vector fields that
are of the previous form but allowing $\hat{v} \in {\mathbb C}[[x,y]]$.
\end{defi}
Given a vector field $X$ defined in a domain $U \subset {\mathbb C}^{n}$
we denote $\Re (X)$ the real flow of $X$, namely the two dimensional
vector field on ${\mathbb R}^{2n} = {\mathbb C}^{n}$ defined by $X$.

Suppose that $X \in {\mathcal X} \cn{n}$ is singular at $0$.
We denote ${\rm exp}(tX)$ the flow of the vector field $X$, it is
the unique solution
of the differential equation
\[ \frac{\partial}{\partial{t}} {\rm exp}(tX) = X({\rm exp}(tX)) \]
with initial condition ${\rm exp}(0X)=Id$. We define the exponential
${\rm exp}(X)$ of $X$ as
${\rm exp}(1X)$. We can define the exponential operator for
a nilpotent $\hat{X} \in \hat{\mathcal X} \cn{n}$ and in particular for
$\hat{X} \in \hat{\mathcal X}_{tp1}\cn{2}$ as
\[
\begin{array}{rccc}
{\rm exp}(\hat{X}): & {\mathbb C}[[x_{1},\hdots,x_{n}]] & \to &
{\mathbb C}[[x_{1},\hdots,x_{n}]] \\
& g & \to & \sum_{j=0}^{\infty} \frac{\hat{X}^{\circ(j)}}{j!} (g) .
\end{array}
\]
Moreover the definition coincides with the previous one if $\hat{X}$ is
convergent, i.e. $({\rm exp}(X))(g) = g \circ {\rm exp}(X)$ for any
$g \in {\mathbb C}[[x,y]]$.
By the properties of the exponential mapping given a unipotent
$\varphi \in \diff{}{n}$ (i.e. $j^{1} \varphi$ is unipotent)
there exists a unique formal nilpotent vector field
$\log \varphi \in  \hat{\mathcal X} \cn{n}$ such that
$\varphi = {\rm exp}(\log \varphi)$
(see \cite{Ecalle} and \cite{MaRa:aen}).
We say that $\log \varphi$ is
the {\it infinitesimal generator} of $\varphi$.
\begin{defi}
\label{def:res1}
Let $X$ be a holomorphic vector field defined in a connected domain
$U \subset {\mathbb C}$ such that $X \neq 0$. Consider $P \in Sing X$.
There exists a unique meromorphic differential form
$\omega$ in $U$ such that $\omega(X)=1$. We denote
$Res(X,P)$ the residue of $\omega$ at the point $P$.
\end{defi}
\begin{defi}
\label{def:res2}
Let $Y=f(x,y) \partial/\partial y \in \Xt$.
Given $({x}^{0},y^{0}) \in Sing Y$ we define
$Res(Y,({x}^{0},y^{0}))=Res(f({x}^{0},y) \partial/\partial y,y^{0})$.
\end{defi}
\section{Dynamical splitting} \label{subsec:dynsplit}
We introduce a dynamical splitting $\digamma$ associated to an element of $\Xt$ along with
some notation. Most of the concepts were already introduced in \cite{JR:mod}.

Let $X \in \Xt$. We say that $T_{0}=\{(x,y) \in B(0,\delta) \times \overline{B(0,\epsilon)} \}$
is a {\it seed}. We provide a method to divide the set $T_{0}$. At each step of the process we have
a vector $\beta=(0, \beta_{1}, \hdots, \beta_{k}) \in \{0\} \times {\mathbb C}^{k}$ with $k \geq 0$ and a seed
$T_{\beta} = \{ (x,t) \in  B(0,\delta) \times \overline{B(0,\eta)} \}$ in coordinates
$(x,t)$ canonically associated to $T_{\beta}$. We either decide not to split $T_{\beta}$
or we divide it in sets ${\mathcal E}_{\beta}$,
$M_{\beta} = {\mathcal C}_{\beta} \cup \cup_{\zeta \in S_{\beta}} T_{\beta, \zeta}$
where $S_{\beta}$ is a finite subset of ${\mathbb C}$.
The seeds $T_{\beta, \zeta}$ for  $(\beta,\zeta) \in \{0\} \times {\mathbb C}^{k+1}$
with $\zeta \in S_{\beta}$ are divided in ulterior steps of the process.
The sets $T_{\beta}$, $M_{\beta}$, ${\mathcal E}_{\beta}$ and ${\mathcal C}_{\beta}$ are defined by induction on $k$.
Every set $M_{\beta}$   is called
a {\it magnifying glass set}. The sets ${\mathcal E}_{\beta}$ are called
{\it exterior basic sets} whereas the sets ${\mathcal C}_{\beta}$ are called {\it compact-like basic sets}.
At the first step of the process we consider $\beta=0$, $k=0$ and the coordinates $(x,y)$ in $T_{0}$.

Suppose also that
\begin{equation}
\label{for:forx}
 X = x^{e({\mathcal E}_{\beta})}
v(x,t) (t-\gamma_{1}(x))^{s_{1}} \hdots (t-\gamma_{p}(x))^{s_{p}}
\partial / \partial{t}
\end{equation}
in $T_{\beta}$ where $\gamma_{1}(0)=\hdots=\gamma_{p}(0)=0$
and $\{ v=0 \} \cap T_{\beta}=\emptyset$.
We denote
\[ \partial_{e} {\mathcal E}_{\beta} = \{ (x,t) \in B(0,\delta) \times \partial B(0,\eta) \} \ {\rm and} \
\nu({\mathcal E}_{\beta}) = s_{1}+\hdots+s_{p}-1. \]
For $p=1$ we define the {\it terminal exterior set} ${\mathcal E}_{\beta}=T_{\beta}$,
we do not split the terminal seed $T_{\beta}$.
We say that
$\partial_{e} {\mathcal E}_{\beta}$
is the {\it exterior boundary} of ${\mathcal E}_{\beta}$ and $e({\mathcal E}_{\beta})$ is the
{\it exterior exponent} of ${\mathcal E}_{\beta}$. We define
$\iota({\mathcal E}_{\beta}) = e({\mathcal E}_{\beta})$ the {\it interior exponent} of ${\mathcal E}_{\beta}$.
Suppose $p>1$. We define $t=xw$ and the sets
${\mathcal E}_{\beta} = T_{\beta} \cap \{ |t| \geq |x|\rho \}$,
$\tilde{\mathcal E}_{\beta} = T_{\beta} \cap \{ |t| > |x| 2 \rho \}$
and $M_{\beta}= \{ (x,w) \in B(0,\delta) \times \overline{B(0,\rho)} \}$
for some $\rho>>0$.
\begin{defi}
Given an exterior set ${\mathcal E}_{\beta}$ we define
$X_{{\mathcal E}_{\beta}}$ as the vector field defined in $T_{\beta}$ such that
$X= x^{e({\mathcal E}_{\beta})} X_{{\mathcal E}_{\beta}}$.
\end{defi}
We define
\[ \partial_{I} {\mathcal E}_{\beta} = \{ (x,t) \in B(0,\delta) \times \overline{B}(0,\eta) :
|t| = |x| \rho \} \] of ${\mathcal E}_{\beta}$. The sets $\partial_{e} {\mathcal E}_{\beta}$ and
$\partial_{I} {\mathcal E}_{\beta}$ are the {\it exterior} and {\it interior boundaries} of
${\mathcal E}_{\beta}$ respectively. We say that the coordinates $(x,t)$ are {\it adapted} to
$T_{\beta}$ and ${\mathcal E}_{\beta}$. We have
\[ X  =
x^{e({\mathcal E}_{\beta})+s_{1}+\hdots+s_{p}-1}v(x,xw)
{\left({ w -  \gamma_{1}(x) / x }\right)}^{s_{1}} \hdots
{\left({ w - \gamma_{p}(x) / x }\right)}^{s_{p}}
\partial / \partial{w}. \]
We denote $S_{\beta} =  \{ (\partial \gamma_{1}/\partial x)(0), \hdots,
(\partial \gamma_{p}/\partial x)(0) \}$.
We define
\[ {\mathcal C}_{\beta}=\{
(x,w) \in B(0,\delta) \times (\overline{B}(0,\rho) \setminus \cup_{\zeta \in S_{\beta}}
B(\zeta, \eta_{\beta,\zeta}) )  \} \]
where $\eta_{\beta,\zeta}>0$ is small enough for any $\zeta \in S_{\beta}$.  We denote
\[ \partial_{e} {\mathcal C}_{\beta} = \{ (x,w) \in B(0,\delta) \times \partial B(0,\rho) \}, \
\partial_{I} {\mathcal C}_{\beta} = \{ (x,w) \in B(0,\delta) \times
\cup_{\zeta \in S_{\beta}} \partial B(\zeta, \eta_{\beta,\zeta}) \} . \]
We define
\[ \nu({\mathcal C}_{\beta}) =  \nu({\mathcal E}_{\beta}) \ {\rm and} \
\iota({\mathcal E}_{\beta})=e({\mathcal C}_{\beta}) = \iota({\mathcal C}_{\beta}) =
e({\mathcal E}_{\beta})+ \nu({\mathcal E}_{\beta}). \]
We say that $e({\mathcal E}_{\beta})$  and $\iota({\mathcal E}_{\beta})$ are the {\it exterior} and
{\it interior exponents} of
${\mathcal E}_{\beta}$ respectively whereas $e({\mathcal C}_{\beta})$  and $\iota({\mathcal C}_{\beta})$
are the {\it exterior} and {\it interior exponents} of ${\mathcal C}_{\beta}$.
\begin{defi}
Given a compact-like set ${\mathcal C}_{\beta}$ we define
$X_{{\mathcal C}_{\beta}}$ as the vector field defined in $M_{\beta}$ such that
$X= x^{e({\mathcal C}_{\beta})} X_{{\mathcal C}_{\beta}}$.
\end{defi}
\begin{defi}
\label{def:pol}
We define the polynomial vector field
\[ X_{\beta}(\lambda) = \lambda^{e({\mathcal C}_{\beta})} v(0,0)
{\left({ w - (\partial \gamma_{1} / \partial x)(0) }\right)}^{s_{1}} \hdots
{\left({ w - (\partial \gamma_{p} / \partial x)(0) }\right)}^{s_{p}}
\partial / \partial{w}  \]
for $\lambda \in {\mathbb S}^{1}$ (see the equation (\ref{for:forx}), note that $t=xw$)
associated to $X$, $T_{\beta}$ and ${\mathcal C}_{\beta}$.
\end{defi}

Fix $\zeta \in S_{\beta}$. We define the seed
$T_{\beta, \zeta}= \{ (x,t') \in B(0,\delta) \times \overline{B(0,\eta_{\beta,\zeta})} \}$ where
$t'$ is the coordinate $w - \zeta$. By definition $(x,t')$ is the set of adapted coordinates
associated to $T_{\beta, \zeta}$.
We say that the seed $T_{\beta, \zeta}$ is a {\it son} of the seed
$T_{\beta}$. We have
\[ X = x^{e({\mathcal E}_{\beta, \zeta})} h(x,t')
\prod_{(\partial \gamma_{j}/\partial x)(0) = \zeta}
{\left({ t' - \left({
\gamma_{j}(x) / x - \zeta }\right)
}\right)}^{s_{j}}  \partial / \partial{w}  \]
where $e({\mathcal E}_{\beta, \zeta}) = e({\mathcal C}_{\beta})$. We just introduced
a method to divide $|y| \leq \epsilon$ in a union of exterior
and compact-like sets.
\begin{figure}[h]
\begin{center}
\includegraphics[height=6cm,width=6cm]{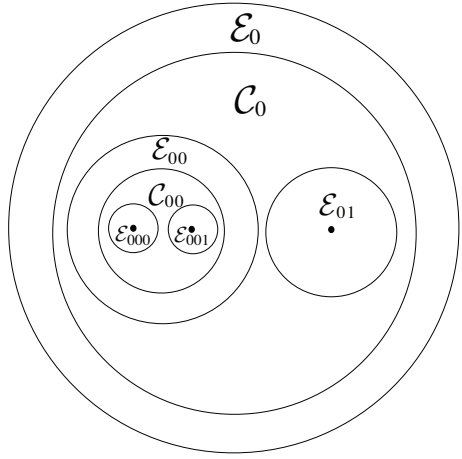}
\end{center}
\caption{Splitting for $X = y(y-{x}^{2})(y-x) \partial/\partial{y}$ in a line $x=x_{0}$} \label{fig5}
\end{figure}

\underline{Example}: Consider $X=y (y-x^{2}) (y-x) \partial / \partial y$.
Denote $w=y/x$. The vector field $X$ has the form $x^{2} w (w-x) (w-1) \partial / \partial w$
in coordinates $(x,w)$. The polynomial vector field $X_{0}(\lambda)$ associated to the seed
$T_{0}=B(0,\delta) \times \overline{B}(0,\epsilon)$ is equal to $\lambda^{2} w^{2}  (w-1) \partial / \partial w$.

The exterior and compact-like sets
associated to $T_{0}$ are ${\mathcal E}_{0}=T_{0} \cap \{ |y| \geq \rho_{0} |x| \}$ and
${\mathcal C}_{0}=\{ |y| \leq \rho_{0} |x| \} \setminus (\{ |w|<\eta_{00} \} \cup \{ |w-1|<\eta_{01} \} )$
respectively. The sons of $T_{0}$ are the seeds $T_{00}=\{ |w| \leq \eta_{00} \}$ and
${\mathcal E}_{01} = T_{01}= \{ |w-1| \leq \eta_{01} \}$.
The seed $T_{01}$ is terminal since it only contains one irreducible component of
$Sing X$.

Denote $w'=w/x$. We have
$X=x^{3} w' (w'-1) (xw'-1) \partial / \partial w'$
in coordinates $(x,w')$. Thus
$- \lambda^{3} w' (w'-1) \partial / \partial w'$
is the polynomial vector field $X_{00}(\lambda)$
associated to $T_{00}$. The seed $T_{00}$ contains an exterior set
${\mathcal E}_{00} = T_{00} \cap \{ |w| \geq \rho_{00} |x| \}$ for
$\rho_{00}>>1$, a compact-like set
${\mathcal C}_{00} =
\{ |w'| \leq \rho_{00} \} \setminus ( \{ |w'|<\eta_{000} \} \cup \{ |w'-1|<\eta_{001} \} )$
and two terminal seeds ${\mathcal E}_{000}=T_{000}=\{ |w'| \leq \eta_{000} \}$
and ${\mathcal E}_{001}=T_{001}= \{ |w'-1| \leq \eta_{001} \}$ for some
$0 < \eta_{000},\eta_{001} <<1$. We have $e({\mathcal E}_{0})=0$,
$\iota({\mathcal E}_{0})=e({\mathcal C}_{0})=e({\mathcal E}_{00})=e({\mathcal E}_{01})=2$
and
$\iota({\mathcal E}_{00})=e({\mathcal C}_{00})=e({\mathcal E}_{000})=e({\mathcal E}_{001})=3$.
\begin{rem}
The dynamical splitting $\digamma$ depends on the choice of the constants determining
the size of the basic sets. Anyway given two dynamical splittings
$\digamma_{1}$ and $\digamma_{2}$ there are bijective correspondences
${\mathcal E}_{\beta}^{1} \leftrightarrow {\mathcal E}_{\beta}^{2}$ and
${\mathcal C}_{\beta}^{1} \leftrightarrow {\mathcal C}_{\beta}^{2}$
between exterior and compact-like sets.
\end{rem}
\begin{defi}
Given two dynamical splittings $\digamma$ and $\digamma'$
we say that $\digamma$ is a refinement of $\digamma'$
if we have ${\mathcal E}_{\beta} \subset {\mathcal E}_{\beta}'$ for any exterior set
${\mathcal E}_{\beta}$ of $\digamma$ and
${\mathcal C}_{\beta} \supset {\mathcal C}_{\beta}'$ for any compact-like set
${\mathcal C}_{\beta}$ of $\digamma$.
\end{defi}
\begin{defi}
\label{def:refin}
Given two dynamical splittings $\digamma_{1}$ and $\digamma_{2}$ we can consider a refinement
$\digamma_{1} \cup \digamma_{2}$ of both of them.
More precisely if ${\mathcal E}_{\beta}^{1}$ and ${\mathcal E}_{\beta}^{2}$ are
exterior sets associated to $\digamma_{1}$ and $\digamma_{2}$ respectively then
${\mathcal E}_{\beta} = {\mathcal E}_{\beta}^{1} \cap {\mathcal E}_{\beta}^{2}$
is associated to $\digamma_{1} \cup \digamma_{2}$. Analogously suppose that
${\mathcal C}_{\beta}^{1}$ and ${\mathcal C}_{\beta}^{2}$ are
compact-like sets associated to $\digamma_{1}$ and $\digamma_{2}$ respectively.
Then ${\mathcal C}_{\beta} = {\mathcal C}_{\beta}^{1} \cup {\mathcal C}_{\beta}^{2}$
is associated to $\digamma_{1} \cup \digamma_{2}$.
\end{defi}
\section{Multi-transversal flows}
\label{sec:tramul}
Let $X \in  \Xt$.
This section is intended to define multi-transversal flows and describe their main
properties. Roughly speaking a multi-transversal flow is of the form
$\Re ({\aleph}_{\mathcal B} X)$
by restriction to any basic set ${\mathcal B}$ of the dynamical splitting.
The function $\aleph_{\mathcal B}(r  \lambda, y)$ is continuous, takes values in
${\mathbb S}^{1} \setminus \{1,-1\}$ and  depends only on $\lambda \in {\mathbb S}^{1}$.
The choice of $\aleph_{\mathcal B}$ is related to make the dynamics of
$\Re ({\aleph}_{\mathcal B} X)$ stable with respect to the direction
$\lambda {\mathbb R}^{+}$ in the parameter space.

The dynamical behavior of a transversal flow
$\Re (\mu X)_{| {\mathcal E} \cap (\{x\} \times B(0,\epsilon))}$ does not depend on
$x \in B(0,\delta) \setminus \{0\}$ or $\mu \in {\mathbb S}^{1} \setminus \{1,-1\}$
for any exterior basic set ${\mathcal E}$. Indeed it basically depends on
$\nu ({\mathcal E})$. Therefore in order to study the stability properties of transversal
or multi-transversal flows we can focus on compact-like sets where we can use
the associated polynomial vector fields defined in  subsection \ref{subsec:dynsplit}.
The notion of stability for polynomial vector fields, namely the absence of homoclinic
trajectories, was introduced in     \cite{DES}. We can define an analogous concept for
any compact-like set. Since the intersection of a compact-like set and the line $x=0$
contains only the origin our concept of stability is of infinitesimal type. It imposes a
set of restrictions in compact-like sets. Another way of making sense of the infinitesimal
label in stability is by noticing that there are as many
compact-like sets as steps in a minimal desingularizion of
the singular set of $X$ via blow-ups.

The subsection \ref{subsec:polvecfield} is devoted to introduce some of the
concepts and definitions that are required to define infinitesimal stability for
a multi-transversal flow.
The definition of $\aleph_{\mathcal B}$ is explained in subsections
\ref{subsec:stamuldir} and \ref{subsec:consmult}.
The construction of the multi-transversal flows is introduce in subsection
\ref{subsec:consmult}. The subsection \ref{subsec:traflowbs}
reviews the properties of transversal flows that are described in \cite{JR:mod}
and whose analogues for multi-transversal flows are studied in subsections
\ref{subsec:mtraflowpes} and \ref{subsec:mtraflowbs}.
Subsections \ref{subsec:npmtraflow} and \ref{subsec:sizereg} deal with some
quantitative properties of the constructions that will be used later on.
\subsection{Polynomial vector fields}
\label{subsec:polvecfield}
A vector field $X \in \Xt$ is of the form
\[ X = u(x,y) (y - g_{1}(x))^{n_{1}} \hdots (y - g_{p}(x))^{n_{p}} \frac{\partial}{\partial y} \]
where $u \in {\mathbb C}\{x,y\}$ is a unit. Given a non-degenerate element
$h(x,y) \partial / \partial y$
of ${\mathcal X}_{p1} \cn{2}$ there exists $k \in {\mathbb N}$ such that
$h(x^{k},y) \partial / \partial y$ belongs to $\Xt$.
\begin{defi}
Let $Y=P(w) \partial / \partial w$ be a polynomial vector field. We define
$\nu (Y) = \deg (P) -1$.
\end{defi}
\begin{defi}
\label{def:trY}
Let $Y=P(w) \partial / \partial w \in {\mathbb C}[w] \partial / \partial w$ with $\nu (Y) \geq 1$.
We define $Tr_{\to \infty}(Y)$ as the set of trajectories $\gamma :(c,d) \to {\mathbb C}$
of $\Re (Y)$ with $c \in {\mathbb R} \cup \{-\infty\}$ and $d \in {\mathbb R}$ such that
$\lim_{\zeta \to d} \gamma (\zeta) = \infty$. Analogously we define
$Tr_{\leftarrow \infty} (Y)= Tr_{\to \infty}(-Y)$.
\end{defi}
\begin{defi}
We say that $\Re(Y)$ has $\infty$-connections or homoclinic trajectories if
$Tr_{\to \infty}(Y) \cap Tr_{\leftarrow \infty}(Y) \neq \emptyset$.
Then there exists a trajectory $\gamma:(c_{-},c_{+}) \to {\mathbb C}$
of $\Re(Y)$ such that $c_{-},c_{+} \in {\mathbb R}$ and
$\lim_{\zeta \to c_{-}} \gamma(\zeta)= \infty = \lim_{\zeta \to c_{+}} \gamma(\zeta)$.
This notion has been introduced in \cite{DES}
for the study of deformations of elements of $\diff{1}{}$ (see also \cite{JR:mod}).
\end{defi}
\begin{rem}
Let $Y=P(w) \partial / \partial w$ be a polynomial vector field with
$\deg (P)  \geq 2$. We define the set $S \subset {\mathbb S}^{1}$ defined by
$\mu \in S$ if $\Re(\mu Y)$ and $\Re(\mu' Y)$ are orbitally equivalent
for any $\mu'  \in {\mathbb S}^{1}$ in  a  neighborhood  of $\mu$.
The set $S$ is the set of directions $\mu$ in which the dynamics of
$\Re (\mu X)$ is stable with respect to $\mu$. It turns out that $S$ coincides with
the set
$\{ \mu \in {\mathbb S}^{1} : \Re (\mu X) \ {\rm has \ no \ homoclinic \ trajectories} \}$
\cite{DES}.
 \end{rem}
\begin{defi}
\label{def:noinfcon}
We denote ${\mathcal X}_{\infty}\cn{}$ the set of polynomial vector
fields in ${\mathcal X} \cn{}$ such that $\nu(Y) \geq 1$ and
$2 \pi i \sum_{P \in S} Res(Y,P) \not \in {\mathbb R} \setminus \{0\}$
for any subset $S$ of $Sing Y$.
\end{defi}
\begin{rem}
Let $Y \in {\mathcal X}_{\infty}\cn{}$. The vector field $\Re (Y)$ has no homoclinic
trajectories. Thus $Re(1 \cdot Y)$ is stable at $1$ (see \cite{DES} or \cite{JR:mod}).
\end{rem}
\begin{defi}
\label{def:levels}
  Let $X \in \Xt$. Consider the compact-like sets
${\mathcal C}_{1}$, $\hdots$, ${\mathcal C}_{q}$ associated to $X$.
Let $X_{j}(\lambda) = \lambda^{e({\mathcal C}_{j})} P_{j}(w) \frac{\partial}{\partial w}$ be the
polynomial vector field associated to ${\mathcal C}_{j}$ and $X$ for $1 \leq j \leq q$.
We define
\[ {\mathcal U}_{X}^{j} = \left\{{ (\lambda, \mu) \in {\mathbb S}^{1} \times {\mathbb S}^{1} :
\lambda^{e({\mathcal C}_{j})} \mu P_{j}(w) \frac{\partial}{\partial w} \not \in {\mathcal X}_{\infty}\cn{} }\right\} \]
The vector field $\mu X_{j}(\lambda)$ has no homoclinic trajectories for
$(\lambda, \mu) \not \in {\mathcal U}_{X}^{j}$ \cite{DES} (see also \cite{JR:mod}).
We define $\Xi_{X}^{j} = \{ \lambda \in {\mathbb S}^{1} : (\lambda,i) \in {\mathcal U}_{X}^{j} \}$
for $1 \leq j \leq q$. We enumerate $\tilde{e}_{1} < \tilde{e}_{2} < \hdots < \tilde{e}_{\tilde{q}}$ the elements of
$\cup_{\{ j \in \{1,\hdots,q\} : \ {\mathcal U}_{X}^{j} \neq \emptyset \}} \{ e({\mathcal C}_{j}) \}$.
By convention we denote $\tilde{e}_{0}=0$ and $\tilde{e}_{\tilde{q}+1}=\infty$. We define
\[ \tilde{\mathcal U}_{X}^{k} =
\cup_{\{ j \in \{1,\hdots,q\} : \ e({\mathcal C}_{j}) = \tilde{e}_{k} \}} {\mathcal U}_{X}^{j}
\ \ {\rm and} \ \ \tilde{\Xi}_{X}^{k} =
\cup_{\{ j \in \{1,\hdots,q\} : \ e({\mathcal C}_{j}) = \tilde{e}_{k} \}} \Xi_{X}^{j} \]
for $1 \leq k \leq \tilde{q}$. We define
${\mathcal U}_{X} = \cup_{j=1}^{q} {\mathcal U}_{X}^{j} = \cup_{k=1}^{\tilde{q}} \tilde{\mathcal U}_{X}^{k}$.
\end{defi}
The notations in the previous definition are fixed from now on.
\begin{defi}
We say that $\tilde{\Xi}_{X}^{k}$ is the set of singular directions of level
$\tilde{e}_{k}$. Clearly the set $\tilde{\Xi}_{X}^{k}$ is finite for any $1 \leq k \leq \tilde{q}$.
\end{defi}
Later on we will see that given $\varphi \in \diff{p1}{2}$ we can associate a vector field
$X \in \Xt$, namely a normal form. We will see that
$  \{ \tilde{e}_{1}, \hdots, \tilde{e}_{\tilde{q}} \}$ is the set of levels of
multi-summability of the infinitesimal generator of $\varphi$. Moreover we will prove
that the set of singular directions of level $\tilde{e}_{k}$ is contained in
$\tilde{\Xi}_{X}^{k}$ for any $1 \leq k \leq \tilde{q}$.
\subsection{Stable multi-directions}
\label{subsec:stamuldir}
Let $X \in \Xt$. As a generalization of transversal flows of the form $\Re (\mu X)$  for
$\mu \in {\mathbb S}^{1} \setminus \{1,-1\}$ we are
going to construct transversal flows of the form $\Re (\aleph^{*} X)$ where
$\aleph^{*} : (B(0,\delta) \times B(0,\epsilon)) \setminus Sing X \to e^{i(0,\pi)}$ is a
$C^{\infty}$ function. We need them to capture the multiple summability levels associated
to Fatou coordinates of elements of $\diff{p1}{2}$.

Consider the notations at the beginning of the section.
\begin{defi}
We say that a multi-direction
$(\tilde{\mu}_{1}, \hdots, \tilde{\mu}_{\tilde{q}}) \in (e^{i(0,\pi)})^{\tilde{q}}$ is stable
at a direction $\lambda {\mathbb R}^{+}$ in the parameter space $x$ if
$(\lambda, \tilde{\mu}_{k}) \not \in \tilde{\mathcal U}_{X}^{k}$ for any $1 \leq k \leq \tilde{q}$.
We say that a multi-direction
$({\mu}_{1}, \hdots, {\mu}_{q}) \in (e^{i(0,\pi)})^{q}$ is stable at a direction
$\lambda {\mathbb R}^{+}$ if
$(\lambda, {\mu}_{j}) \not \in {\mathcal U}_{X}^{j}$ for any $1 \leq j \leq q$.

A stable multi-direction $(\tilde{\mu}_{1}, \hdots, \tilde{\mu}_{\tilde{q}})$ induces
a stable multi-direction $({\mu}_{1}, \hdots, {\mu}_{q})$. We define
$\mu_{j} = \tilde{\mu}_{k}$ if $e({\mathcal C}_{j}) = \tilde{e}_{k}$ and $\mu_{j} = i$ if
$e({\mathcal C}_{j}) \not \in  \{ \tilde{e}_{1}, \hdots, \tilde{e}_{\tilde{q}} \}$.
\end{defi}
\begin{defi}
Let $X \in \Xt$.
Let $I$ a closed arc $e^{i[u_{0},u_{1}]}$ of ${\mathbb S}^{1}$.
We say that a function $\aleph: I \to (e^{i(0,\pi)})^{\tilde{q}}$ is a stable multi-direction at
$I$ if
\[ \aleph(e^{iu}) \equiv (e^{i \tilde{\theta}_{1}(u)}, \hdots, e^{i \tilde{\theta}_{\tilde{q}}(u)}) \]
for some continuous decreasing functions
$\tilde{\theta}_{1}, \hdots, \tilde{\theta}_{\tilde{q}}:[u_{0},u_{1}] \to (0,\pi)$
and $\aleph(\lambda)$ is stable at $\lambda {\mathbb R}^{+}$ for any $\lambda \in e^{i[u_{0},u_{1}]}$.
\end{defi}
\begin{defi}
Consider $\lambda \in {\mathbb S}^{1}$ and $\upsilon \in {\mathbb R}^{+} \cup \{0\}$. We define
\[ I_{j}(\lambda, \upsilon) =
\lambda e^{i[-\frac{\pi}{2 \tilde{e}_{j}}- \upsilon, \frac{\pi}{2 \tilde{e}_{j}}+ \upsilon]} \]
for $1 \leq j \leq \tilde{q}$ and $I_{0}(\lambda, \upsilon)={\mathbb S}^{1}$.
\end{defi}
\begin{defi}
\label{def:msing}
We define ${\mathcal M} \subset ({\mathbb S}^{1})^{\tilde{q}}$ as the set whose elements
$(\lambda_{1}, \hdots, \lambda_{\tilde{q}})$ satisfy
$\lambda_{1} \not \in \tilde{\Xi}_{X}^{1}$, $\hdots$, $\lambda_{\tilde{q}} \not \in \tilde{\Xi}_{X}^{\tilde{q}}$
and $I_{j+1}(\lambda_{j+1},0) \subset I_{j}(\lambda_{j},0)$ for any $1 \leq j <\tilde{q}$.
\end{defi}
In the language of summability $(\tilde{e}_{1},\hdots, \tilde{e}_{\tilde{q}})$ and
$(\lambda_{1},\hdots , \lambda_{\tilde{q}})$ are admissible parameter vectors.

Let $\Lambda=(\lambda_{1}, \hdots, \lambda_{\tilde{q}}) \in {\mathcal M}$. Since
$\lambda_{j}=e^{i \theta_{j}} \not \in  \tilde{\Xi}_{X}^{j}$, given $\upsilon >0$ small enough
there exists a continuous function $\tilde{\mu}_{j}: I_{j}(\lambda_{j},\upsilon ) \to e^{i(0,\pi)}$ such that
$\tilde{\mu}_{j}(\lambda_{j})=i$ and
\[ (\lambda, \tilde{\mu}_{j}(\lambda)) \not \in \tilde{\mathcal U}_{X}^{j}   \ \
\forall \lambda \in I_{j}(\lambda_{j},\upsilon )  . \]
Moreover, if we choose a lift
$\tilde{\theta}_{j}: \theta_{j} + [- \pi / (2 \tilde{e}_{j})-\upsilon , \pi / (2 \tilde{e}_{j})+\upsilon ] \to (0,\pi)$
of $\tilde{\mu}_{j}$, i.e. $e^{i \tilde{\theta}_{j}(\theta)} = \tilde{\mu}_{j}(e^{i \theta})$
we choose $\tilde{\mu}_{j}$ such that $\tilde{\theta}_{j}$ is a decreasing (maybe non-strictly decreasing) function.
By considering a smaller $\upsilon >0$, if necessary, we obtain
\[ (\lambda', \mu) \not \in \tilde{\mathcal U}_{X}^{j} \ \ \ \forall 1 \leq j \leq \tilde{q} \ \ \forall \lambda
\in I_{j}(\lambda_{j},0) \ \ \forall \lambda' \in \lambda e^{i[-\upsilon ,\upsilon ]} \ \
\forall \mu \in \tilde{\mu}_{j}(\lambda e^{i[-\upsilon ,\upsilon ]}) . \]
By taking a smaller $\upsilon >0$, we obtain that for any $1 \leq j \leq \tilde{q}$ there exist
compact sets $I^{j,1}, \hdots, I^{j,s_{j}} \subset {\mathbb S}^{1}$ and complex numbers
${\mu}_{j,1}, \hdots, {\mu}_{j,s_{j}} \in e^{i[\pi/4,3 \pi/4]}$ such that
\begin{itemize}
\item $I^{j,1} \cup  \hdots \cup I^{j,s_{j}} = {\mathbb S}^{1}$.
\item $(\lambda',\mu_{j,l})  \not \in \tilde{\mathcal U}_{X}^{j}$ for all
$\lambda' \in I^{j,l} e^{i[-\upsilon ,\upsilon ]}$ and $l \in \{1,\hdots,s_{j}\}$.
\end{itemize}
Consider $1 \leq j \leq \tilde{q}$ and $\lambda \in {\mathbb S}^{1}$.
We choose $1 \leq l \leq s_{j}$ such that $\lambda \in I^{j,l}$.
We define $\tilde{\mu}_{j}^{*}(\lambda) = \mu_{j,l}$. Denote $\upsilon_{\Lambda}=\upsilon$.
\begin{defi}
\label{def:alepk}
Let $\Lambda=(\lambda_{1}, \hdots, \lambda_{\tilde{q}}) \in {\mathcal M}$.
Consider $\lambda \in  I_{k}(\lambda_{k},\upsilon_{\Lambda})$ if $k \neq 0$
or $\lambda \in {\mathbb S}^{1}$ if $k=0$. The formula
\[ \aleph_{k,\Lambda,\lambda}(\lambda')=
(\tilde{\mu}_{1}(\lambda'), \hdots, \tilde{\mu}_{k}(\lambda'),
\tilde{\mu}_{k+1}^{*}(\lambda), \hdots, \tilde{\mu}_{\tilde{q}}^{*}(\lambda)) \]
defines a stable multi-direction
$\aleph_{k,\Lambda,\lambda}: \lambda e^{i[-\upsilon_{\Lambda},\upsilon_{\Lambda}]}
\to (e^{i(0,\pi)})^{\tilde{q}}$ at
$\lambda e^{i[-\upsilon_{\Lambda},\upsilon_{\Lambda}]}$.
We denote $\aleph_{k,\Lambda,\lambda}^{j}$ the projection in the $j$ coordinate
of the image of $\aleph_{k,\Lambda,\lambda}$.
\end{defi}
\begin{rem}
\label{rem:alext}
Every multi-direction in
$\aleph_{k,\Lambda,\lambda}^{1} \times \hdots \times \aleph_{k,\Lambda,\lambda}^{\tilde{q}}$
is stable at
$\lambda' {\mathbb R}^{+}$ for any
$\lambda' \in \lambda e^{i[-\upsilon_{\Lambda},\upsilon_{\Lambda}]}$.
Therefore $\mu_{j}' X_{j}(\lambda')$ belongs to ${\mathcal X}_{\infty}\cn{}$
for all $\mu_{j}' \in \aleph_{k,\Lambda,\lambda}^{j}$ and
$\lambda' \in \lambda e^{i[-\upsilon_{\Lambda},\upsilon_{\Lambda}]}$.
\end{rem}
\subsection{Dynamics of transversal flows in basic sets}
\label{subsec:traflowbs}
Let us remind the reader some properties of transversal flows
before defining multi-transversal flows.
We will adapt these properties to the multi-transversal setting.
Further details and proofs can be found in \cite{JR:mod}.

Let us introduce some notations.
\begin{defi}
\label{def:contsets}
We consider coordinates $(x,y) \in {\mathbb C} \times {\mathbb C}$ or
$(r,\lambda,y) \in
{\mathbb R}_{\geq 0} \times {\mathbb S}^{1} \times {\mathbb C}$
in ${\mathbb C}^{2}$. Given a set $F \subset {\mathbb C}^{2}$ we denote
$F(x_{0})$ the set $F \cap \{ x=x_{0} \}$ and by $F(r_{0},\lambda_{0})$
the set $F \cap \{ (r,\lambda)=(r_{0},\lambda_{0}) \}$.
\end{defi}
\begin{defi}
\label{def:traj}
Let $\gamma_{P}(s)$ be the trajectory of $\Re (Z)$ such that $\gamma_{P}(0)=P$.
We define ${\mathcal I}(Z, P,F)$ the maximal interval where
$\gamma_{P}(s)$ is well-defined and belongs to $F$ for any $s \in {\mathcal I}(Z, P,F)$ whereas
$\gamma_{P}(s)$ belongs to ${F}^{\circ}$ for
any $s \neq 0$ in the interior of ${\mathcal I}(Z, P,F)$.
We denote $\Gamma(Z,P,F)=\gamma_{P}({\mathcal I}(Z, P,F))$. We define
\[ \partial {\mathcal I}(Z, P,F) = \{ \inf({\mathcal I}(Z, P,F)), \sup({\mathcal I}(Z, P,F)) \}
\subset {\mathbb R} \cup \{-\infty, \infty\}. \]
We denote $\Gamma(Z,P,F)(s) = \gamma_{P}(s)$.
\end{defi}
\begin{defi}
Let $X \in \Xt$. Let ${\mathcal E}$ be an exterior set associated to $X$. We say that
${\mathcal E}$ is parabolic if $\nu ({\mathcal E}) \geq 1$. Every non-terminal
exterior set is parabolic.
\end{defi}
The qualitative behavior of a transversal flow $\Re(\mu X)$
($\mu \in {\mathbb S}^{1} \setminus \{-1,1\}$) in an exterior set
${\mathcal E}$ depends on the nature of the set of tangent points between
$\Re(\mu X)$  and $\partial {\mathcal E}$. Since we want to reproduce the
same ideas for multi-transversal flows we introduce these concepts.
\begin{defi}
Let $X \in \Xt$. Consider an exterior set
\[ {\mathcal E} = \{(x,t) \in B(0,\delta) \times {\mathbb C} :  \eta \geq |t| \geq \rho|x| \} \]
associated to $X$ with $0 < \eta <<1$ and $\rho \geq 0$.
We define $T{\mathcal E}_{\mu X}^{\eta}(r, \lambda)$ the set of tangent points between
$|t|=\eta$ and $\Re(\lambda^{e({\mathcal E})} \mu X_{\mathcal E})_{|x=r \lambda}$
for $(r,\lambda,\mu) \in {\mathbb R}_{\geq 0} \times {\mathbb S}^{1} \times {\mathbb S}^{1}$.
We denote $T_{\mu X}^{\epsilon}(r \lambda)= T {\mathcal E}_{\mu X}^{\epsilon}(r, \lambda)$
for the particular case ${\mathcal E}= {\mathcal E}_{0}$.
\end{defi}
\begin{defi}
\label{def:taintpt0}
Let $X \in \Xt$. Consider a compact-like set
\[ {\mathcal C} = \{ (x,w) \in
B(0,\delta) \times (\overline{B}(0,\rho) \setminus \cup_{\zeta \in
S_{\mathcal C}} B(\zeta, \eta_{{\mathcal C},\zeta}) )  \} \]
associated to $X$. We denote $T{\mathcal C}_{\mu X}^{\rho}(r, \lambda)$ the set of
tangent points between
$|w|=\rho$ and $\Re(\lambda^{e({\mathcal C})} \mu X_{\mathcal C})_{|x=r \lambda}$.
\end{defi}
\begin{defi}
Let ${\mathcal B}$ a basic set.
We say that a point $y_{0} \in T{\mathcal B}_{\mu X}(r, \lambda)$
is {\it convex} if the germ of trajectory of
$\Re(\lambda^{e({\mathcal B})} \mu X_{\mathcal B})_{|x=r \lambda}$ passing through $y_{0}$
is contained in ${\mathcal B}$.
\end{defi}
\begin{lem}
\cite{JR:mod}
\label{lem:tgpt20}
Let $X \in \Xt$ and an exterior set
${\mathcal E}=\{ \eta \geq |t| \geq \rho|x| \}$ associated to $X$
with $0 < \eta <<1$ and $\rho \geq 0$. Then the set
$T{\mathcal E}_{\mu X}^{\eta}(r, \lambda)$ is composed of
$2 \nu({\mathcal E})$ convex points for all
$(\lambda,\mu) \in {\mathbb S}^{1} \times {\mathbb S}^{1}$
and $r$ close to $0$. Each connected component of
$\{ t \in \partial B(0,\eta) \} \setminus T{\mathcal E}_{\mu X}^{\eta}(r, \lambda)$
contains a unique point of $T{\mathcal E}_{\mu' X}^{\eta}(r, \lambda)$ for any
$\mu' \in {\mathbb S}^{1} \setminus \{ -\mu, \mu \}$.
\end{lem}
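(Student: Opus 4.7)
The plan is to reduce the problem to a trigonometric count for the leading part of $\lambda^{e(\mathcal{E})}\mu X_{\mathcal{E}}$ on the circle $|t|=\eta$, and then extend to the full vector field by a uniform perturbation argument. Writing the vector field in adapted coordinates on $\mathcal{E}$ as $X_{\mathcal{E}}=v(x,t)\prod_{j=1}^{p}(t-\gamma_{j}(x))^{s_{j}}\partial/\partial t$ with $\nu(\mathcal{E})+1=s_{1}+\cdots+s_{p}$, I set $f_{r,\lambda,\mu}(t)=\lambda^{e(\mathcal{E})}\mu\, v(r\lambda,t)\prod_{j}(t-\gamma_{j}(r\lambda))^{s_{j}}$, so that a point $t_{0}$ with $|t_{0}|=\eta$ belongs to $T\mathcal{E}^{\eta}_{\mu X}(r,\lambda)$ iff $\Re(f_{r,\lambda,\mu}(t_{0})\overline{t_{0}})=0$. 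Because $\gamma_{j}(0)=0$ and $v(0,0)\neq 0$, for small $\eta$ and $r$ one has uniformly in $(\lambda,\mu)\in(\mathbb{S}^{1})^{2}$
\[
\frac{f_{r,\lambda,\mu}(t)}{t}=c(\lambda,\mu)\,t^{\nu(\mathcal{E})}\bigl(1+O(\eta)+O(r)\bigr),\qquad c(\lambda,\mu)=\lambda^{e(\mathcal{E})}\mu\, v(0,0).
\]

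Parametrizing $t=\eta e^{i\theta}$ and $c=|c|e^{i\alpha}$, the leading equation $\Re(c\, t^{\nu(\mathcal{E})})=0$ reads $\cos(\alpha+\nu(\mathcal{E})\theta)=0$, which has exactly $2\nu(\mathcal{E})$ simple zeros in $[0,2\pi)$, equally spaced with gap $\pi/\nu(\mathcal{E})$. Since $|c|$ is bounded below on the compact $(\mathbb{S}^{1})^{2}$ and the derivative at each limit zero has absolute value $\nu(\mathcal{E})|c|\eta^{\nu(\mathcal{E})}$, the implicit function theorem yields $r_{0}>0$ such that for $0<r<r_{0}$ the real-analytic periodic function $\theta\mapsto \Re(f_{r,\lambda,\mu}(\eta e^{i\theta})/(\eta e^{i\theta}))$ still has exactly $2\nu(\mathcal{E})$ simple zeros, uniformly close to the limiting ones and uniformly in $(\lambda,\mu)$.

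Convexity of each tangency is a second-derivative test along the trajectory $t(s)$: at a tangent point $t_{0}$ one has
\[
\tfrac{d^{2}}{ds^{2}}|t(s)|^{2}\bigm|_{s=0}=2\Re\bigl(f'(t_{0})f(t_{0})\overline{t_{0}}\bigr)+2|f(t_{0})|^{2}.
\]
Substituting the leading model $f(t)=c\,t^{\nu(\mathcal{E})+1}$ and using the identity $(c\,t^{\nu(\mathcal{E})})^{2}=-|c|^{2}\eta^{2\nu(\mathcal{E})}$ valid at the tangent points (since $c\,t^{\nu(\mathcal{E})}$ is purely imaginary there), this simplifies to $-2\nu(\mathcal{E})|c|^{2}\eta^{2\nu(\mathcal{E})+2}<0$, a strict local maximum of $|t(s)|^{2}$ at $s=0$. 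The germ of trajectory therefore stays in $\{|t|<\eta\}\subset\mathcal{E}$, and the strict sign is preserved by the $O(\eta)+O(r)$ perturbation, so every tangency is convex.

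For the interleaving assertion, the leading tangent points of $\mu' X$ are obtained from those of $\mu X$ by the constant $\theta$-shift $-\arg(\mu'/\mu)/\nu(\mathcal{E})$. The hypothesis $\mu'\notin\{\mu,-\mu\}$ forces $\arg(\mu'/\mu)\not\equiv 0\pmod{\pi}$, so this shift is not a multiple of the arc-gap $\pi/\nu(\mathcal{E})$ and the $2\nu(\mathcal{E})$ new points fall one into each of the $2\nu(\mathcal{E})$ arcs of $\partial B(0,\eta)\setminus T\mathcal{E}^{\eta}_{\mu X}(r,\lambda)$; the same perturbation argument preserves this combinatorics. The main obstacle will be coordinating the uniformity in $(r,\lambda,\mu,\mu')$ simultaneously: one needs $c(\lambda,\mu)$ to be bounded below, the limit zeros to be simple with a uniform lower bound on the derivative, and $\arg(\mu'/\mu)$ to stay bounded away from $\{0,\pi\}$; compactness of $(\mathbb{S}^{1})^{2}$, the non-vanishing $v(0,0)\neq 0$, and restriction of $\mu'$ to any compact subset of $\mathbb{S}^{1}\setminus\{\pm\mu\}$ make this routine rather than delicate.
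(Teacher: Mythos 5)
The paper cites this lemma from \cite{JR:mod} without reproducing a proof, so there is no local proof to compare against; I am assessing your argument on its own terms. Your reduction to the leading polynomial model $f(t)/t\approx c\,t^{\nu(\mathcal{E})}$ with $c=\lambda^{e(\mathcal{E})}\mu\,v(0,0)$, the trigonometric count of $2\nu(\mathcal{E})$ simple zeros, the second-derivative computation at a tangency yielding $-2\nu(\mathcal{E})|c|^{2}\eta^{2\nu(\mathcal{E})+2}<0$, and the uniform persistence under the $O(\eta)+O(r)$ perturbation are all correct; note in passing that $|c|=|v(0,0)|$ is not merely bounded below but literally constant on $(\mathbb{S}^1)^2$.

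The one genuine gap is in the interleaving claim, and your own last paragraph flags it: you ask for $\arg(\mu'/\mu)$ to ``stay bounded away from $\{0,\pi\}$'' and for $\mu'$ to be restricted to a compact subset of $\mathbb{S}^1\setminus\{\pm\mu\}$. But the lemma asserts the interleaving for a fixed small $r$ and for \emph{every} $\mu'\in\mathbb{S}^1\setminus\{\mu,-\mu\}$. Your ``shift by $-\arg(\mu'/\mu)/\nu(\mathcal{E})$ then perturb'' argument degenerates as $\mu'\to\pm\mu$: the shift becomes comparable to the $O(\eta)+O(r)$ error and the perturbed $\mu'$-tangent points could in principle cross into the wrong arc. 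Two standard fixes close the gap. First, pass to the single angle function $G_{r,\lambda}(\theta)=\arg\bigl(f_{r,\lambda,1}(\eta e^{i\theta})/(\eta e^{i\theta})\bigr)$: for $\eta$ small and then $r$ small, $G_{r,\lambda}'(\theta)=\nu(\mathcal{E})+O(\eta)+O(r/\eta)>0$ uniformly in $(\lambda,\theta)$, so $G_{r,\lambda}$ is strictly increasing with total increase $2\pi\nu(\mathcal{E})$; tangent points of $\mu X$ are the solutions of $G_{r,\lambda}(\theta)+\arg\mu\equiv\pi/2\pmod{\pi}$, and since the level sets for $\mu$ and $\mu'$ differ by $\arg\mu-\arg\mu'\not\equiv 0\pmod{\pi}$, monotonicity gives exactly one $\mu'$-tangent point between consecutive $\mu$-tangent points for \emph{all} $\mu'\neq\pm\mu$ simultaneously. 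Second, alternatively, a no-collision plus continuity argument: for fixed small $r$ the $\mu'$-tangent points never coincide with the $\mu$-tangent points when $\mu'\neq\pm\mu$ (a common tangency $t_0$ would force $f(t_0)\overline{t_0}$ orthogonal to both $\mu$ and $\mu'$, impossible unless $f(t_0)=0$, which is excluded on $|t|=\eta$), so the per-arc count is locally constant in $\mu'$ on each of the two components of $\mathbb{S}^1\setminus\{\mu,-\mu\}$; verifying it for one well-separated $\mu'$ in each component (which your compactness argument does) then propagates the count to all of $\mathbb{S}^1\setminus\{\mu,-\mu\}$.
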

\begin{lem} \cite{JR:mod}
\label{lem:tgpt30}
Let $X \in \Xt$ and a compact-like set
\[ {\mathcal C} = \{ (x,w) \in
B(0,\delta) \times (\overline{B}(0,\rho) \setminus \cup_{\zeta
\in S_{\mathcal C}} B(\zeta, \eta_{{\mathcal C},\zeta}) )  \} \]
associated to $X$ with $\rho >>0$. Then $T{\mathcal C}_{\mu X}^{\rho}(r, \lambda)$
is composed of $2 \nu({\mathcal C})$ convex points for all
$(\lambda,\mu) \in {\mathbb S}^{1} \times {\mathbb S}^{1}$
and $r$ close to $0$. Moreover each connected component of
$\{ |w|=\rho \} \setminus T{\mathcal C}_{\mu X}^{\rho}(r, \lambda)$
contains a unique point of $T {\mathcal C}_{\mu' X}^{\rho}(r, \lambda)$ for
any point $\mu' \in {\mathbb S}^{1} \setminus \{ -\mu, \mu \}$.
\end{lem}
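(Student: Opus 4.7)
The plan is to adapt the proof of Lemma \ref{lem:tgpt20} for exterior sets, exploiting that on the outer circle $\{|w|=\rho\}$ with $\rho$ large the dynamics is dominated by the top-degree part of the polynomial vector field $X_{\beta}(\lambda)$ of Definition \ref{def:pol}. In the adapted coordinates $(x,w)$, as $r \to 0$ the restriction $(\lambda^{e({\mathcal C})} \mu X_{\mathcal C})_{|x=r\lambda}$ converges uniformly on ${\mathcal C}$ to the polynomial vector field $\mu X_{\beta}(\lambda)$, which has degree $\nu({\mathcal C})+1$ and leading coefficient $\mu \lambda^{e({\mathcal C})} v(0,0)$. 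So the strategy is to solve the tangency problem for $\mu X_{\beta}(\lambda)$ on $\{|w|=\rho\}$ for $\rho \gg 0$ and then transfer the conclusions to the perturbation $(\lambda^{e({\mathcal C})} \mu X_{\mathcal C})_{|x=r\lambda}$ via a Rouch\'{e}-type or implicit function theorem argument, uniformly in $(\lambda,\mu) \in {\mathbb S}^{1} \times {\mathbb S}^{1}$.

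Parametrizing the outer boundary by $w = \rho e^{i\theta}$, a point is tangent to the flow iff $\Re(\bar{w} Y(w)) = 0$ with $Y = \mu X_{\beta}(\lambda)$. Since $Y(w) = \mu \lambda^{e({\mathcal C})} v(0,0) w^{\nu+1} (1 + O(1/\rho))$ for $\nu = \nu({\mathcal C})$, the tangency equation reduces to $\cos(\nu \theta + \alpha) + O(1/\rho) = 0$, where $\alpha = \arg(\mu \lambda^{e({\mathcal C})} v(0,0))$. The unperturbed equation has exactly $2\nu$ simple zeros in $[0, 2\pi)$; for $\rho$ large and $r$ close to $0$ these persist as exactly $2\nu({\mathcal C})$ simple tangent points. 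Convexity follows by computing $\frac{d^{2}}{dt^{2}} |w|^{2} = 2(|\dot{w}|^{2} + \Re(\bar{w} \ddot{w}))$ at a tangent point, with $\ddot{w} = Y'(w) Y(w)$; plugging in the leading term of $Y$ produces a definite sign at each root of $\cos(\nu \theta + \alpha)$, and a direct check shows that the trajectory curves back into $\{|w| < \rho\}$, so every tangent point is convex with respect to ${\mathcal C}$.

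For the interleaving assertion, the tangent points of $\Re(\mu' X_{\beta}(\lambda))$ solve $\cos(\nu \theta + \alpha') + O(1/\rho) = 0$ with $\alpha' = \arg(\mu' \lambda^{e({\mathcal C})} v(0,0))$. The hypothesis $\mu' \notin \{-\mu, \mu\}$ translates exactly to $\alpha' - \alpha \notin \pi {\mathbb Z}$, and in that case the zero sets of $\cos(\nu \theta + \alpha)$ and $\cos(\nu \theta + \alpha')$ interleave in $[0, 2\pi)$. This yields the uniqueness statement for the polynomial vector field, and the interleaving survives under the small perturbation. The main obstacle I anticipate is uniformity: the perturbation estimates must be uniform in $(\lambda, \mu) \in {\mathbb S}^{1} \times {\mathbb S}^{1}$ for the tangent point count, and uniform in $\mu'$ bounded away from $\{-\mu, \mu\}$ for the interleaving. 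Compactness of ${\mathbb S}^{1} \times {\mathbb S}^{1}$ handles the first; the second requires a bit of care because the interleaving degenerates as $\mu' \to \pm \mu$, but for any fixed $\mu' \neq \pm \mu$ the gap between consecutive roots of the two cosine functions is bounded below, so the interleaving persists for $\rho$ large enough and $r$ small enough.
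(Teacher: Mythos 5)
Your reduction to the leading term $c\,w^{\nu({\mathcal C})+1}$ of $X_{\beta}(\lambda)$ on $|w|=\rho$, the tangent-point count $2\nu({\mathcal C})$, and the convexity check via the sign of $\tfrac{d^{2}}{dt^{2}}|w|^{2}$ at tangencies are all sound, and they hold uniformly in $(\lambda,\mu)$ by compactness of ${\mathbb S}^{1}\times{\mathbb S}^{1}$. The gap is in the interleaving step, and you half-notice it yourself: your fix requires $\rho$ large and $r$ small \emph{depending on the fixed} $\mu'$, which reverses the quantifiers. The lemma fixes the compact-like set ${\mathcal C}$ (hence $\rho$) up front and asserts the interleaving for \emph{every} $\mu'\in{\mathbb S}^{1}\setminus\{-\mu,\mu\}$. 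Treating the two tangency equations as independently $O(1/\rho)$-perturbed cosines $\cos(\nu\theta+\alpha)$ and $\cos(\nu\theta+\alpha')$ cannot give this, because the gap between their unperturbed roots collapses as $\mu'\to\pm\mu$ while the error bound stays the same size; compactness of ${\mathbb S}^{1}$ does not save you because the set ${\mathbb S}^{1}\setminus\{-\mu,\mu\}$ is not compact.

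The correct route is to exploit that $\mu$ enters only as a phase shift acting on a single $\mu$-independent quantity. Set
$K(\theta):=\bigl(\overline{w}\,\lambda^{e({\mathcal C})}X_{{\mathcal C}}(w)\bigr)_{w=\rho e^{i\theta},\ x=r\lambda}$.
Tangency for $\mu=e^{i\phi}$ is $\Re(e^{i\phi}K(\theta))=0$, i.e.\ $\arg K(\theta)\equiv \tfrac{\pi}{2}-\phi\ (\mathrm{mod}\ \pi)$. Writing $X_{{\mathcal C}}(w)=v(x,xw)\prod_{j}(w-\gamma_{j}(x)/x)^{s_{j}}$ one computes, for $\rho\gg 0$ and $r$ small,
$\tfrac{d}{d\theta}\arg K(\theta)=-1+\sum_{j}s_{j}\bigl(1+O(1/\rho)\bigr)+O(r\rho)=\nu({\mathcal C})+O(1/\rho)+O(r\rho)>0$,
so $\arg K$ is strictly increasing and winds exactly $\nu({\mathcal C})$ times. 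Consequently $\arg K$ increases by exactly $\pi$ between two consecutive tangent points for $\mu$, and therefore takes exactly one value in the residue class $\tfrac{\pi}{2}-\phi'\ (\mathrm{mod}\ \pi)$ there, for any $\phi'$ with $\phi'-\phi\notin\pi{\mathbb Z}$, i.e.\ any $\mu'\neq\pm\mu$. This monotonicity argument also re-derives the tangent-point count and, together with the second-derivative computation, the convexity; and it delivers the interleaving for all $\mu'$ at once with a single choice of $\rho$ and the $r$-range depending only on $X$ and ${\mathcal C}$, as the statement requires.
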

\subsubsection{Parabolic exterior sets}
\label{subsub:parextset}
Let us analyze the quantitative and the qualitative dynamical behavior of a transversal
flow  $\Re(\mu X)$  ($\mu \in {\mathbb S}^{1} \setminus \{-1,1\}$) in a parabolic
exterior set ${\mathcal E}$. We want to apply the same ideas in the more general setting
of multi-transversal flows.
\begin{rem}
\label{rem:qbeps}
The qualitative behavior of $\Re(\mu X)$
($\mu \in {\mathbb S}^{1} \setminus \{-1,1\}$) in a parabolic exterior set
${\mathcal E}={\mathcal E}_{\beta}$ is described in
propositions 6.1 and 6.2 and corollary 6.1 of \cite{JR:mod}. It is a truncated Fatou
flower (see figure (\ref{EVfig6})). The proof is based on:
\begin{itemize}
\item Tangent points between $\Re (\mu X)$ and $\partial {\mathcal E}$ are convex.
\item $\sharp T{\mathcal E}_{\mu X}^{\eta}(r,\lambda) = \nu ({\mathcal E})$
for any $(r,\lambda) \in [0,\delta) \times {\mathbb S}^{1}$.
\item Suppose ${\mathcal E}$ is non-terminal and denote
${\mathcal C}={\mathcal C}_{\beta}$. Then we have
\[ \sharp T{\mathcal E}_{\mu X}^{\eta}(r,\lambda)=
\sharp T{\mathcal C}_{\mu X}^{\rho}(r,\lambda) =\nu ({\mathcal E}) \]
for any $(r,\lambda) \in [0,\delta) \times {\mathbb S}^{1}$.
\end{itemize}
\end{rem}
The previous properties are guaranteed by lemmas
\ref{lem:tgpt20} and \ref{lem:tgpt30}.
The results hold true for
$0<\eta \leq \eta^{0}$ for some $\eta^{0} \in {\mathbb R}^{+}$.
We also need $\rho \geq \rho^{0}$ for some $\rho^{0} \in {\mathbb R}^{+}$ if
${\mathcal E}$ is non-terminal. These properties are preserved by refinement of the
dynamical splitting $\digamma$.
Let us remark that the choice of $\eta^{0}$ and $\rho^{0}$ does not depend
on $\mu \in {\mathbb S}^{1}$.

   Next, we introduce the other ingredient in \cite{JR:mod} to describe the
dynamics in exterior sets. It is of quantitative nature.
\begin{defi}
\label{def:psiext}
Let $X \in \Xt$.
Let ${\mathcal E} = \{   \eta \geq |t| \geq \rho|x| \}$
be a parabolic exterior set associated to a seed $T$.
The vector field $X_{\mathcal E}$ is of the form
\[ X_{\mathcal E} =v(x,t) (t-\gamma_{1}(x))^{s_{1}} \hdots (t-\gamma_{p}(x))^{s_{p}}
\partial / \partial{t} \]
where $v$ is a function never vanishing in $T$. Denote $\gamma_{\mathcal E}=\gamma_{1}$.
Denote $\psi_{\mathcal E}^{0}$  a holomorphic integral of the time form of
$v(0,t-\gamma_{\mathcal E}(x)) (t-\gamma_{\mathcal E}(x))^{\nu({\mathcal E})+1}
\partial / \partial{t}$
defined in the neighborhood of ${\mathcal E} \setminus Sing X$.
\end{defi}
\begin{rem}
\label{rem:psiext}
The function $\psi_{\mathcal E}^{0}$ is of the form
\[ \psi_{\mathcal E}^{0} = \frac{-1}{\nu({\mathcal E}) v(0,0)} \frac{1}{(t-\gamma_{\mathcal E}(x))^{\nu({\mathcal E})}}
+ Res(X_{\mathcal E},(0,0)) \ln (t-\gamma_{\mathcal E}(x)) + h(t-\gamma_{\mathcal E}(x)) + b(x) \]
where $h(z)$ is a $O(1/z^{\nu({\mathcal E})-1})$ meromorphic function
and $b(x)$ is a holomorphic function in the neighborhood of $0$. Thus given $\zeta>0$ there exists
$C_{\zeta} \in {\mathbb R}^{+}$ such that
\[ \frac{1}{C_{\zeta}}  \frac{1}{|t-\gamma_{\mathcal E}(x)|^{\nu({\mathcal E})}} \leq
|\psi_{\mathcal E}^{0}|(x,t) \leq  C_{\zeta} \frac{1}{|t-\gamma_{\mathcal E}(x)|^{\nu({\mathcal E})}} \]
in ${\mathcal E} \cap \{t - \gamma_{\mathcal E}(x) \in {\mathbb R}^{+} e^{i[-\zeta, \zeta]}\}
\cap \{x \in B(0,\delta(\zeta)) \}$.
\end{rem}
\begin{defi}
\label{def:psiE}
Let ${\mathcal E} = \{   \eta \geq |t| \geq \rho|x| \}$
be a parabolic exterior set associated to $X \in \Xt$.
Denote $\psi_{\mathcal E}$  a holomorphic integral of the time form of
$X_{\mathcal E}$ defined in the neighborhood of ${\mathcal E} \setminus Sing X$ such that
$\psi_{\mathcal E}(0,y) \equiv \psi_{\mathcal E}^{0}(0,y)$. The function $\psi_{\mathcal E}$ is
multi-valued.
\end{defi}
\begin{lem}
(lemma 6.5 \cite{JR:mod})
\label{lem:itf}
Let ${\mathcal E} = \{(x,t) \in B(0,\delta) \times {\mathbb C} :  \eta \geq |t| \geq \rho|x| \}$
be a parabolic exterior set associated to $X \in \Xt$.
Let $\upsilon>0$, $\zeta>0$. Suppose ${\mathcal E}$ is terminal.
Then $|\psi_{\mathcal E}/\psi_{\mathcal E}^{0} -1| \leq \upsilon$ in
${\mathcal E} \cap \{t - \gamma_{\mathcal E}(x) \in {\mathbb R}^{+} e^{i[-\zeta, \zeta]}\}
\cap \{x \in B(0,\delta(\upsilon,\zeta)) \}$ for some $\delta(\upsilon,\zeta) \in {\mathbb R}^{+}$.
The same inequality is true for a non-terminal ${\mathcal E}$ if $\rho >0$ is big enough.
\end{lem}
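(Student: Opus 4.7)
The plan is to estimate the difference $\psi_{\mathcal E} - \psi_{\mathcal E}^{0}$ by comparing their time forms, integrating the difference along radial paths in the shifted coordinate $u = t - \gamma_{\mathcal E}(x)$, and dividing by the lower bound $|\psi_{\mathcal E}^{0}| \gtrsim 1/|u|^{\nu({\mathcal E})}$ coming from remark \ref{rem:psiext}. First I treat the terminal case $p=1$ and then perturb to handle the non-terminal case.

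In the terminal case the time forms are
\[ \omega_{X_{\mathcal E}} = \frac{du}{v(x, u + \gamma_{\mathcal E}(x))\, u^{\nu({\mathcal E})+1}} \ \ {\rm and} \ \ \omega_{X_{\mathcal E}^{0}} = \frac{du}{v(0,u)\, u^{\nu({\mathcal E})+1}} . \]
Because $\gamma_{\mathcal E}(0) = 0$, the numerator $v(0,u) - v(x, u + \gamma_{\mathcal E}(x))$ of $\omega_{X_{\mathcal E}} - \omega_{X_{\mathcal E}^{0}}$ vanishes identically on $\{x = 0\}$ and is therefore $O(|x|)$ uniformly on a fixed $u$-neighborhood of the origin. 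Integrating radially along $u \in {\mathbb R}^{+} e^{i[-\zeta,\zeta]}$ from a fixed base point $u_{0}$ on $\{|t|=\eta\}$ produces $|\int_{u_{0}}^{u}(\omega_{X_{\mathcal E}}-\omega_{X_{\mathcal E}^{0}})| \leq C|x|/|u|^{\nu({\mathcal E})}$. The additive constant relating $\psi_{\mathcal E}$ and $\psi_{\mathcal E}^{0}$ along the radial path is a holomorphic function of $x$ that vanishes at $x=0$ thanks to the normalization $\psi_{\mathcal E}(0,y) \equiv \psi_{\mathcal E}^{0}(0,y)$, hence it is also $O(|x|)$. Combining these facts with the lower bound $|\psi_{\mathcal E}^{0}(x,t)| \geq 1/(C_{\zeta}|u|^{\nu({\mathcal E})})$ of remark \ref{rem:psiext} yields $|\psi_{\mathcal E}/\psi_{\mathcal E}^{0} - 1| \leq C'|x|$ on the prescribed region, which is at most $\upsilon$ as soon as $|x| \leq \delta(\upsilon,\zeta)$.

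For the non-terminal case $p \geq 2$ the vector field $X_{\mathcal E}$ carries extra factors $(t - \gamma_{j}(x))^{s_{j}}$ for $j \geq 2$ that are absent from $X_{\mathcal E}^{0}$. On ${\mathcal E} = \{|t| \geq \rho|x|\}$ with $\rho$ large one has $|\gamma_{j}(x)| = O(|x|) = O(|t|/\rho)$ for every $j$, so
\[ \frac{\prod_{j} (t - \gamma_{j}(x))^{s_{j}}}{(t - \gamma_{\mathcal E}(x))^{\nu({\mathcal E})+1}} = 1 + O(1/\rho) \]
uniformly, and likewise $v(x,t)/v(0,u) = 1 + O(|x| + 1/\rho)$. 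Absorbing these unit corrections, the terminal argument applies and gives $|\psi_{\mathcal E}/\psi_{\mathcal E}^{0} - 1| \leq C''(|x| + 1/\rho)$, which is at most $\upsilon$ once $\rho$ is taken large enough and $|x|$ is taken small enough.

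The main obstacle is tracking the additive constants: $\psi_{\mathcal E}$ is multivalued with a logarithmic term $Res(X_{\mathcal E},(x, \gamma_{\mathcal E}(x))) \log u$ whose branch must be consistent with the one used for $\psi_{\mathcal E}^{0}$. This is handled by fixing the same base point $u_{0}$ and the same radial path for both integrals, and by observing that the difference of residues, being a holomorphic function of $x$ that vanishes at $x=0$, is itself $O(|x|)$ and is therefore absorbed into the error term.
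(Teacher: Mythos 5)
The paper only cites this result from \cite{JR:mod} and does not reprove it, so there is no in-paper argument to compare against; judged on its own terms, your proof is correct and follows the natural route: compare the time forms of $X_{\mathcal E}$ and $X_{\mathcal E}^{0}$, observe that the difference is $O(|x|)$ (terminal) or $O(|x|+1/\rho)$ (non-terminal) in the coefficient, integrate, and divide by the lower bound on $|\psi_{\mathcal E}^{0}|$ from remark~\ref{rem:psiext}. Two small points worth tightening when you write it out: the phrase ``integrating radially from a fixed base point $u_{0}$'' needs an intermediate arc along $|u|\sim\eta$ (whose contribution is $O(|x|)$ and thus harmless) when $\arg u \neq \arg u_{0}$; and the residue term produces an $O(|x|\log(1/|u|))$ contribution that you should note is dominated by $O(|x|/|u|^{\nu({\mathcal E})})$ because $\nu({\mathcal E})\geq 1$ for a parabolic exterior set, so the final division by $|\psi_{\mathcal E}^{0}|\gtrsim 1/|u|^{\nu({\mathcal E})}$ still yields a bound of the form $C'|x|$ (resp.\ $C''(|x|+1/\rho)$).
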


\begin{rem}
The lemma \ref{lem:itf} implies that the qualitative behavior of
\[ \Re (\mu X) \ {\rm and} \
\Re(\mu x^{e({\mathcal E})} v(0,t-\gamma_{\mathcal E}(x)) (t-\gamma_{\mathcal E}(x))^{\nu({\mathcal E})}
\partial / \partial{t}) \]
is very similar for all exterior set ${\mathcal E}$ and $\mu \in {\mathbb S}^{1}$.
We say that the properties in remark \ref{rem:qbeps} and
lemma \ref{lem:itf} are the stability properties for the behavior of
transversal flows in parabolic exterior sets. They are preserved by refinement.
Moreover they do not depend on
the transversal flow $\Re (\mu X)$ whose dynamics we are studying.
\end{rem}
\subsubsection{Non-parabolic exterior sets}
\label{subsub:norparextset}
A non-parabolic exterior set ${\mathcal E}={\mathcal E}_{\beta, \zeta}$ is terminal.
We have
\[ {\mathcal E}={\mathcal E}_{\beta, \zeta} = \{(x,t) \in B(0,\delta) \times {\mathbb C} :  |t| \leq \eta  \} \]
and ${\mathcal C}_{\beta}={\mathcal C}_{j}$ for some $1 \leq j \leq q$.
Consider a compact set $I \subset {\mathbb S}^{1}$.
\begin{rem}
The stability properties associated to a transversal flow $\Re (\mu X)$ and a
non-parabolic exterior set ${\mathcal E}$ are:
\begin{itemize}
\item $(\lambda, \mu) \not \in {\mathcal U}_{X}^{j}$ for any $\lambda \in I$.
\item $\Re (\mu X)$ is transversal to $\partial_{e} {\mathcal E}$
in $\{ (x,t) \in  (0,\delta)I \times \partial B(0,\eta) \}$.
\end{itemize}
\end{rem}
The first property implies the second one for  $0 < \eta<<1$
by the argument in subsection (6.4.2) of \cite{JR:mod}.
We can choose  $0<\eta \leq \eta_{0}$ for some $\eta_{0} \in {\mathbb R}^{+}$.
The choice of $\eta_{0}>0$ depends on $I$ and $\mu$.
Given $I$, we can choose the same $\eta_{0}>0$ for any
$\mu' \in {\mathbb S}^{1}$ in a neighborhood of $\mu$.
There exists a dynamical splitting $\digamma'$ whose every refinement
$\digamma$ satisfies
${\mathcal E}_{\beta, \zeta} \subset \{(x,t) \in B(0,\delta) \times {\mathbb C} :  |t| \leq \eta_{0}  \}$.

Let us remark that the unique singular point $(x_{0}, \gamma_{\mathcal E}(x_{0}))$
in ${\mathcal E}(x_{0})$ is attracting or repelling for $\Re (\mu X)_{|x=x_{0}}$
and any $x_{0} \in (0,\delta)  I$ (subsection 6.4.2 of \cite{JR:mod}).
\subsubsection{Compact-like sets}
\label{subsub:comlikset}
Let
\[ {\mathcal C} = {\mathcal C}_{j}= {\mathcal C}_{\beta}=\{
(x,w) \in B(0,\delta) \times (\overline{B}(0,\rho) \setminus
\cup_{\zeta \in S_{\beta}} B(\zeta, \eta_{\beta,\zeta}) )  \} \]
be a compact-like set. Consider a compact set $I \subset {\mathbb S}^{1}$.
\begin{rem}
The stability properties associated to a transversal flow $\Re (\mu X)$ and a
compact-like set ${\mathcal C}$ are:
\begin{itemize}
\item $(\lambda, \mu) \not \in {\mathcal U}_{X}^{j}$ for any $\lambda \in I$.
\item $\sharp T {\mathcal C}_{\mu' X}^{\rho} (r,\lambda) = \nu ({\mathcal C}) \ {\rm and} \
\sharp T ({\mathcal E}_{\beta,\zeta})_{\mu' X}^{\eta_{\beta,\zeta}} (r,\lambda) =
\nu({\mathcal E}_{\beta,\zeta})
\ \forall \zeta \in  S_{\beta} \ \forall \mu' \in {\mathbb S}^{1}$.
\item If there is a trajectory $\gamma$ of $\Re (\mu X_{j}(\lambda))$
for some $\lambda \in I$ whose
$\alpha$ and $\omega$ limits are singletons contained in $Sing (X_{j}(1))$ then
there exists a trajectory
$\gamma'$ of $\Re (\mu X_{j}(\lambda))$ contained in $B(0,\rho)$
such that $\alpha(\gamma)=\alpha (\gamma')$ and $\omega(\gamma)=\omega (\gamma')$.
\end{itemize}
\end{rem}
The second property can be obtained by  choosing $\rho \geq \rho^{0} >0$ and
$0<\eta_{\beta,\zeta} \leq \eta_{\beta,\zeta}^{0}$
for any $\zeta \in  S_{\beta}$.
The above properties imply that the behavior of $\Re (\mu X_{j}(\lambda))$ in
${\mathbb C}$ and $B(0,\rho)$ are analogous (see equation (4) and proposition 6.6 in \cite{JR:mod}).
They  are the ingredients that we use to describe the behavior of $\Re (\mu X)$
in the compact-like set ${\mathcal C}_{\beta}$.
In fact the dynamics of
$\Re (\mu X)$ in ${\mathcal C}_{\beta}$ is analogous to the dynamics of the stable polynomial vector field
$\Re (\mu X_{j}(\lambda))$ (see section 6.5 in \cite{JR:mod} and figure (\ref{EVfig9})).
Given $I$ we can consider the same
choice of $\eta_{0}$ and ${\{ \eta_{\beta,\zeta}^{0} \}}_{\zeta \in S_{\beta}}$ for any $\mu' \in {\mathbb S}^{1}$
in the neighborhood of $\mu$.
\begin{figure}[h]
\begin{center}
\includegraphics[height=4cm,width=10cm]{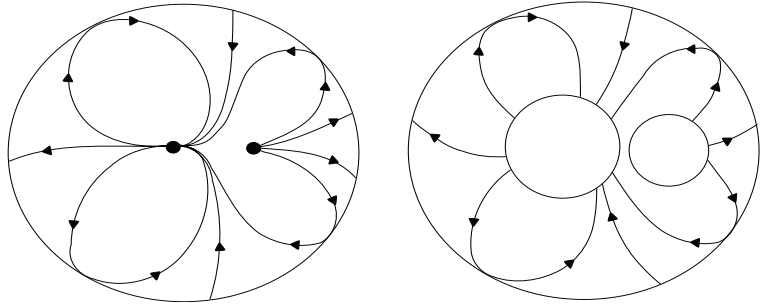}
\end{center}
\caption{Dynamics of $\Re(X_{0}(1))_{|B(0,\rho)}$ and $\Re (X)_{|{\mathcal C}_{0}(x_{0})}$
for $x_{0}$ in ${\mathbb R}^{+}$ and $X=y^{2}(y-x) \partial / \partial y$} \label{EVfig9}
\end{figure}
%
\begin{rem}
The stability properties that we demand to exterior and compact-like sets are compatible
since they are preserved by refinement.
\end{rem}
Next remark is basically lemma 6.13 in \cite{JR:mod}. It is helpful to make pictures
of the dynamics of $\Re (\mu X)$ in ${\mathcal C}_{j}$.
\begin{rem}
\label{rem:cldel}
Consider $P =(r,\lambda,w) \in \partial_{e} {\mathcal C}_{j}$ with $\lambda \in I$
and $(\lambda, \mu) \not \in {\mathcal U}_{X}^{j}$. If
$\Re (\mu X)$ points towards the interior of ${\mathcal C}_{j}$ at $P$ or
$\Re (\mu X)$ is tangent to $\partial_{e} {\mathcal C}_{j}$ at $P$ then
$\Gamma(s) \in \partial_{I} {\mathcal C}_{j}$
where
$\Gamma = \Gamma (\lambda^{e({\mathcal C}_{j})} \mu X_{{\mathcal C}_{j}}, P, {\mathcal C}_{j})$
and $s = \sup ({\mathcal I}(\Gamma))$.
A possible dynamical behavior for the trajectories of $\Re (\aleph X)_{|{\mathcal C}_{j}}$
is represented in figure (\ref{EVfig9}).
\end{rem}
\subsection{Construction of the multi-transversal flow}
\label{subsec:consmult}
Consider $\lambda \in {\mathbb S}^{1}$ and a stable multi-direction
$\aleph = (\tilde{\mu}_{1}, \hdots, \tilde{\mu}_{\tilde{q}}): e^{i[u_{0},u_{1}]} \to (e^{i(0,\pi)})^{\tilde{q}}$.
We denote
\[ (\mu_{1},\hdots,\mu_{q}): e^{i[u_{0},u_{1}]} \to (e^{i(0,\pi)})^{q} \]
the multi-direction induced by $\aleph$. We define
\begin{itemize}
\item $\aleph_{{\mathcal C}_{j}} \equiv \mu_{j}$ for a
a compact-like set ${\mathcal C}_{\beta}={\mathcal C}_{j}$.
\item $\aleph_{{\mathcal E}_{0}} \equiv i$ for the first exterior set.
\item $\aleph_{\mathcal E} \equiv \aleph_{{\mathcal C}_{\beta}}$ for
an exterior seed ${\mathcal E}$ whose father we denote $T_{\beta}$.
\end{itemize}
We defined a function
$\aleph_{{\mathcal B}} : e^{i[u_{0},u_{1}]} \to e^{i(0,\pi)}$ for any basic set ${\mathcal B}$.

We say that a dynamical splitting $\digamma$ is associated to $\aleph$ if
\begin{itemize}
\item $\Re (\mu X)$ is stable at ${\mathcal E}$ for any $\mu \in {\mathbb S}^{1}$ and
any parabolic exterior set ${\mathcal E}$.
\item $\Re (\aleph_{\mathcal E} X)$ is stable at
$\cup_{(r,\theta) \in [0,\delta) \times [u_{0},u_{1}]} {\mathcal E}(r,e^{i \theta})$
for any non-parabolic exterior set ${\mathcal E}$.
\item $\Re (\aleph_{\mathcal C} X)$ is stable at
$\cup_{(r,\theta) \in [0,\delta) \times [u_{0},u_{1}]} {\mathcal C}(r,e^{i \theta})$
for any compact-like set ${\mathcal C}$.
\end{itemize}
The stability properties are introduced in subsections \ref{subsub:parextset},
\ref{subsub:norparextset} and \ref{subsub:comlikset}.
We always consider associated dynamical splittings.
The first condition is obtained just by considering small exterior sets
(see subsection \ref{subsub:parextset}). Given $u \in [u_{0},u_{1}]$ there exist
a dynamical splitting and a neighborhood $I^{u}$ of $u$ in $[u_{0},u_{1}]$ such that
$\Re (\aleph_{\mathcal B} X)$ is stable at
$\cup_{(r,\theta) \in [0,\delta) \times I^{u}} {\mathcal B}(r,e^{i \theta})$
for any non-parabolic basic set ${\mathcal B}$
(see subsections \ref{subsub:norparextset} and \ref{subsub:comlikset}).
The dynamical splitting is obtained by doing successive refinements.
Since $[u_{0},u_{1}]$ is compact we can find the
same dynamical splitting satisfying the conditions above.

We want to define a continuous function
\[ \aleph^{*} : (({\mathbb R}^{+} \cup \{ 0 \}) e^{i[u_{0},u_{1}]} \times B(0,\epsilon)) \setminus Sing X \to
e^{i(0,\pi)} . \]
We obtain a flow
$\Re (\aleph^{*} X)$ defined in $[0,\delta) e^{i[u_{0},u_{1}]} \times B(0,\epsilon)$.
We require $\Re (\aleph^{*} X)$ to fulfill the following properties:
\begin{itemize}
\item We define $\aleph_{|{\mathcal C}_{j}}^{*}=\aleph_{{\mathcal C}_{j}}=\mu_{j}$ and
$(\aleph X)_{{\mathcal C}_{j}} = \mu_{j} X_{{\mathcal C}_{j}}$ for any $1 \leq j \leq q$.
\item Let ${\mathcal E}$ be a terminal exterior set.
We define $\aleph_{|\mathcal E}^{*}=\aleph_{\mathcal E}$ and
$(\aleph X)_{\mathcal E} = \aleph_{|\mathcal E}^{*} X_{\mathcal E}$.
\item Consider a non-terminal exterior set
${\mathcal E}_{\beta}= \{(x,t) \in {\mathbb C}^{2} : \rho |x| \leq |t| \leq \eta \}$ for some
$\rho ,\eta \in {\mathbb R}^{+}$. We require
\begin{itemize}
\item $\Re (\aleph^{*} X)_{|\tilde{\mathcal E}_{\beta}} \equiv \Re (\aleph_{{\mathcal E}_{\beta}} X)$.
\item $\Re (\aleph^{*} X)_{|\partial_{I} {\mathcal E}_{\beta}} \equiv \Re (\aleph_{{\mathcal C}_{\beta}} X)$.
\end{itemize}
\end{itemize}
Denote $\aleph_{{\mathcal E}_{\beta}} = e^{i \theta_{0}}$ and
$\aleph_{{\mathcal C}_{\beta}}= e^{i \theta_{1}}$ for
$\theta_{0}, \theta_{1}:e^{i[u_{0},u_{1}]} \to (0,\pi)$. Let $\varsigma: {\mathbb R} \to [0,1]$ be a
$C^{\infty}$ function such that $\varsigma(- \infty,1+1/4] = \{1\}$ and $\varsigma [2-1/4,\infty) = \{0\}$.
We define
\[ {\aleph}^{*}(r,\lambda,t) =
\aleph_{{\mathcal E}_{\beta}}(\lambda) e^{i (\theta_{1}-\theta_{0})(\lambda) \varsigma (|t|/(\rho r))}
\ \forall (r,\lambda,t) \in {\mathcal E}_{\beta} \cap ([0,\delta) \times e^{i[u_{0},u_{1}]} \times {\mathbb C}) \]
and $(\aleph X)_{{\mathcal E}_{\beta}} \equiv {\aleph}^{*} X_{{\mathcal E}_{\beta}}$.
\begin{defi}
Let $\aleph: I \to (e^{i(0,\pi)})^{\tilde{q}}$ be a stable multi-direction at $I$.
We denote $\Re (\aleph X)$ the flow $\Re (\aleph^{*} X)$ defined in
$[0,\delta)I \times B(0,\epsilon)$.
In particular we can define $\Re (\aleph_{k,\Lambda,\lambda} X)$ in
$[0,\delta) \lambda e^{i[-\upsilon_{\Lambda},\upsilon_{\Lambda}]} \times B(0,\epsilon)$ for
any function $\aleph_{k,\Lambda,\lambda}$.
We say that $\Re (\aleph X)$ is a multi-transversal flow.
\end{defi}
\begin{rem}
\label{rem:unifspl}
Let $X \in \Xt$ and $\Lambda=(\lambda_{1}, \hdots, \lambda_{\tilde{q}}) \in {\mathcal M}$.
Any multi-direction $\aleph_{k,\Lambda,\lambda}$ is of the form
\[ \aleph_{k,l_{k+1},\hdots,l_{\tilde{q}}}(\lambda')=
 (\tilde{\mu}_{1}(\lambda'), \hdots, \tilde{\mu}_{k}(\lambda'),
\mu_{k+1, l_{k+1}}, \hdots, \mu_{\tilde{q}, l_{\tilde{q}}}) \]
where $1 \leq l_{j} \leq s_{j}$ for any $k <j \leq \tilde{q}$. We denote
\[ D_{k,l_{k+1},\hdots,l_{\tilde{q}}}=I_{k}(\lambda_{k},0) \cap \cap_{j=k+1}^{\tilde{q}} I^{j,l_{j}}  . \]
Indeed
$\aleph_{k,l_{k+1},\hdots,l_{\tilde{q}}}:D_{k,l_{k+1},\hdots,l_{\tilde{q}}} e^{i[-\upsilon_{\Lambda} ,\upsilon_{\Lambda} ]}
\to (e^{i(0,\pi)})^{\tilde{q}}$
is a stable multi-direction.
We construct a dynamical splitting $\digamma_{k,l_{k+1},\hdots,l_{\tilde{q}}}$ associated to
$\aleph_{k,l_{k+1},\hdots,l_{\tilde{q}}}$.
By taking a smaller $\upsilon_{\Lambda} >0$ and a compactness argument we can suppose that
$\digamma_{k,l_{k+1},\hdots,l_{\tilde{q}}}$ is associated to the constant multi-direction
$\aleph_{k,l_{k+1},\hdots,l_{\tilde{q}}}(\lambda') : \lambda e^{i[-\upsilon_{\Lambda} ,\upsilon_{\Lambda} ]}
\to (e^{i(0,\pi)})^{\tilde{q}}$
for any $\lambda \in D_{k,l_{k+1},\hdots,l_{\tilde{q}}}$ and any $\lambda' \in \lambda e^{i[-\upsilon_{\Lambda} ,\upsilon_{\Lambda} ]}$.
Since the choices of sequences $(k,l_{k+1},\hdots,l_{\tilde{q}})$ are finite there exists
a dynamical splitting $\digamma_{\Lambda}$ associated to any $\aleph_{k,\Lambda,\lambda}$.
It is obtained by taking a common refinement of every splitting $\digamma_{k,l_{k+1},\hdots,l_{\tilde{q}}}$.
\end{rem}
\begin{rem}
Let $\lambda \in {\mathbb S}^{1}$ and $\mu \in e^{i(0,\pi)}$.
Consider $\aleph (\lambda) = (\mu, \hdots, \mu)$ such that
$(\lambda, \mu) \not \in \tilde{\mathcal U}_{X}^{k}$ for any $1 \leq k \leq \tilde{q}$.
Then $\Re (\aleph(\lambda) X)$ is a multi-transversal flow at $\lambda {\mathbb R}^{+}$
if and only if the transversal flow
$\Re (\mu X)$ is stable at $\lambda {\mathbb R}^{+}$ in the sense in \cite{JR:mod}
(which happens by definition if
$(\lambda, \mu) \not \in {\mathcal U}_{X} = \cup_{k=1}^{\tilde{q}} \tilde{\mathcal U}_{X}^{k}$).
Multi-transversal flows are then a natural generalization of transversal flows and they share
their good properties.
\end{rem}
\subsection{Dynamics of multi-transversal flows in basic sets}
\label{subsec:mtraflowpes}
Let $X \in \Xt$. Consider a multi-transversal flow $\Re (\aleph X)$ and a basic set ${\mathcal B}$.
If ${\mathcal B}$ is a compact-like or a terminal exterior set then we have
$\Re (\aleph X)_{|\mathcal B} \equiv \Re (\aleph_{\mathcal B} X)_{|\mathcal B}$.
Thus the dynamics of
a multi-transversal flow in ${\mathcal B}$ is the dynamics of a transversal flow.
The dynamics of $\Re (\aleph X)$ in a terminal exterior set ${\mathcal E}$ is described
in subsections \ref{subsub:parextset} and \ref{subsub:norparextset}. It is a
Fatou flower dynamics in the parabolic case. Otherwise it is an attractor or a repellor.

Consider a stable direction $\aleph: I \to (e^{i(0,\pi)})^{\tilde{q}}$ at a closed arc $I$.
Let ${\mathcal C}_{1}$, $\hdots$, ${\mathcal C}_{q}$ be the compact-like sets associated
to $X$. We denote $(\mu_{1}, \hdots, \mu_{q}): I \to (e^{i(0,\pi)})^{q}$ the multi-direction
induced by $\aleph$.
We have $\Re (\aleph X)_{|{\mathcal C}_{j}} \equiv \Re (\mu_{j} X)$.
Since $(\lambda, \mu_{j}(\lambda)) \not \in {\mathcal U}_{X}^{j}$ for any $\lambda \in I$
then the dynamics of $\Re (\aleph X)_{|{\mathcal C}_{j}}$ is as described in section (6.4)
of \cite{JR:mod} for any $1 \leq j \leq q$. The  dynamics of
$\Re (\mu X)_{|{\mathcal C}_{j}(r \lambda)}$ is analogous to the dynamics of
a stable polynomial vector field
$\Re (\mu X_{j}(\lambda))$ for $(\lambda, \mu) \not \in {\mathcal U}_{j}^{X}$.

Next, we see that, even if  ${\mathcal B}$ is a non-terminal
exterior set, the dynamics of $\Re (\aleph X)_{|\mathcal B}$ is still analogous to the dynamics
of a transversal flow, i.e. a truncated Fatou flower.
\begin{defi}
Let $X \in \Xt$. Consider an exterior set
\[ {\mathcal E} = \{(x,t) \in B(0,\delta) \times {\mathbb C} :  \eta \geq |t| \geq \rho|x| \} \]
associated to $X$ with $0 < \eta <<1$ and $\rho \geq 0$.
Given a multi-transversal flow $\Re (\aleph X)$
we denote $T{\mathcal E}_{\aleph X}^{\eta}(r, \lambda)=
T {\mathcal E}_{\aleph_{\mathcal E}(\lambda) X}^{\eta}(r, \lambda)$ the set of tangent points
between $\Re (\aleph X)$ and $\partial_{e} {\mathcal E}$.
We denote $T_{\aleph X}^{\epsilon}(r \lambda)= T {\mathcal E}_{\aleph X}^{\epsilon}(r, \lambda)$
for the particular case ${\mathcal E}= {\mathcal E}_{0}$.
\end{defi}
\begin{defi}
\label{def:taintpt}
Let $X \in \Xt$. Consider a compact-like set
\[ {\mathcal C} = \{ (x,w) \in
B(0,\delta) \times (\overline{B}(0,\rho)
\setminus \cup_{\zeta \in S_{\mathcal C}} B(\zeta, \eta_{{\mathcal C}, \zeta})  \} \]
associated to $X$. We denote $T{\mathcal C}_{\aleph X}^{\rho}(r, \lambda)=
T {\mathcal C}_{\aleph_{\mathcal C}(\lambda) X}^{\rho}(r, \lambda)$
\end{defi}
\begin{defi}
Let ${\mathcal B}$ a basic set. Given a multi-transversal flow $\Re (\aleph X)$ we say
that $y_{0} \in T{\mathcal B}_{\aleph X}(r, \lambda)$
is {\it convex} if  the germ of trajectory of
$\Re(\lambda^{e({\mathcal B})} (\aleph X)_{\mathcal B})_{|x=r \lambda}$ passing through $y_{0}$
is contained in ${\mathcal B}$. Equivalently $y_{0} \in T{\mathcal B}_{\aleph X}(r, \lambda)$
is {\it convex} if it is a convex point of
$T{\mathcal B}_{\aleph_{\mathcal B}(\lambda) X}(r, \lambda)$.
\end{defi}
\begin{figure}[h]
\begin{center}
\includegraphics[height=4cm,width=10cm]{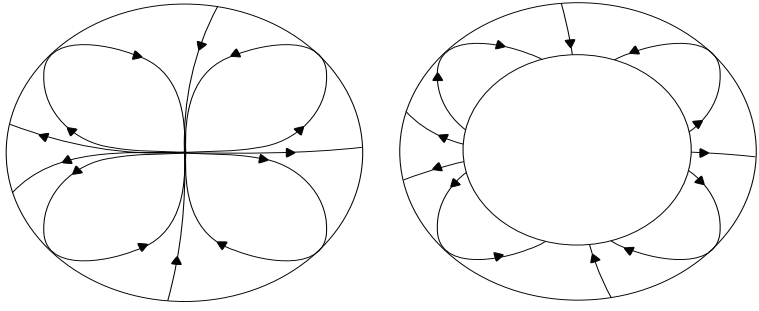}
\end{center}
\caption{Parabolic exterior sets} \label{EVfig6}
\end{figure}
\begin{rem}
\label{rem:qbeps2}
Let ${\mathcal E}={\mathcal E}_{\beta}$ be a parabolic exterior set and a
multi-transversal flow $\Re (\aleph X)$ . We have:
\begin{itemize}
\item Tangent points between $\Re (\aleph X)$ and $\partial {\mathcal E}$ are convex.
\item $\sharp T{\mathcal E}_{\aleph X}^{\eta}(r,\lambda) = \nu ({\mathcal E})$
for any $(r,\lambda) \in [0,\delta) \times {\mathbb S}^{1}$.
\item Suppose ${\mathcal E}$ is non-terminal and denote
${\mathcal C}={\mathcal C}_{\beta}$. Then we have
\[ \sharp T{\mathcal E}_{\aleph X}^{\eta}(r,\lambda)=
\sharp T{\mathcal C}_{\aleph X}^{\rho}(r,\lambda) =\nu ({\mathcal E}) \]
for any $(r,\lambda) \in [0,\delta) \times {\mathbb S}^{1}$.
\end{itemize}
The properties are a consequence of lemmas \ref{lem:tgpt20} and \ref{lem:tgpt30}.
Analogously as for transversal flows (see remark \ref{rem:qbeps})
the qualitative behavior of $\Re(\aleph X)$ in a parabolic exterior set
${\mathcal E}={\mathcal E}_{\beta}$ is a truncated Fatou
flower (see figure (\ref{EVfig6})). Lemma \ref{lem:itf} is also satisfied
since it does not depend on the choice of $\aleph$.
\end{rem}
Next we see that the behavior of a multi-transversal flow in a parabolic exterior set is
analogous to a Fatou flower also from a quantitative point of view. In particular we prove
that the spiraling behavior is bounded in exterior basic sets.
\begin{pro}
\label{pro:estext}
Let $X \in \Xt$ and let
${\mathcal E} = \{ \eta \geq |t| \geq \rho |x| \}$ be a parabolic exterior set
associated to $X$.  Consider a trajectory
$\Gamma = \Gamma( \lambda^{e({\mathcal E})} (\aleph X)_{\mathcal E},(r, \lambda,t),{\mathcal E})$.
for $r \lambda$ in a neighborhood of $0$ and a transversal multi-direction
$\aleph \in (e^{i(0,\pi)})^{\tilde{q}}$ at $\lambda {\mathbb R}^{+}$. Then $\Gamma$ is contained
in a sector centered at $t=\gamma_{\mathcal E}(r \lambda)$ (see def. \ref{def:psiext}) of angle less
than $\zeta$ for some $\zeta>0$ independent of $r, \lambda$, $\Gamma$ and $\aleph$.
\end{pro}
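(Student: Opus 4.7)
The plan is to rectify the flow via the integral coordinate $\psi_{\mathcal E}$ from definition \ref{def:psiE} and transport the resulting sector estimate back to the $t$-variable using lemma \ref{lem:itf}. Since $X_{\mathcal E}(\psi_{\mathcal E})\equiv 1$, a trajectory $\Gamma(s)$ of $\Re(\lambda^{e({\mathcal E})}(\aleph X)_{\mathcal E})$ satisfies $d\psi_{\mathcal E}(\Gamma(s))/ds=\lambda^{e({\mathcal E})}\aleph^{*}(\Gamma(s))$, which by the construction of the multi-transversal flow in subsection \ref{subsec:consmult} is a unit vector inside the rotated open half-circle $\lambda^{e({\mathcal E})}e^{i(0,\pi)}$. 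For $s_{1}<s_{2}$ the increment $\psi_{\mathcal E}(\Gamma(s_{2}))-\psi_{\mathcal E}(\Gamma(s_{1}))$ lies in the closed convex cone spanned by this half-circle, which is strictly contained in an open half-plane. Rotating so that this cone sits in the upper half-plane, the imaginary part of $\psi_{\mathcal E}\circ\Gamma$ is strictly monotone, so the image of $\Gamma$ in $\psi$-coordinate is a graph over a real axis; in particular it cannot wind around $0$, bounding the total variation of $\arg\psi_{\mathcal E}(\Gamma(s))$ uniformly in $\aleph$.

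I would then invoke lemma \ref{lem:itf} and remark \ref{rem:psiext}: in any sector $\{t-\gamma_{\mathcal E}(x)\in{\mathbb R}^{+}e^{i[-\zeta_{0},\zeta_{0}]}\}$ and for $|x|$ small enough, $\psi_{\mathcal E}=\psi_{\mathcal E}^{0}(1+o(1))$ with $\psi_{\mathcal E}^{0}\sim -1/(\nu({\mathcal E})v(0,0)(t-\gamma_{\mathcal E})^{\nu({\mathcal E})})$ modulo a logarithmic term and a bounded holomorphic one. Hence in this region $\arg\psi_{\mathcal E}$ and $-\nu({\mathcal E})\arg(t-\gamma_{\mathcal E}(x))$ differ by a uniformly bounded quantity, and the bound from the first step transfers to $\arg(t-\gamma_{\mathcal E}(x))$ after dividing by $\nu({\mathcal E})$. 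The complementary portion of $\Gamma$, where $|t-\gamma_{\mathcal E}(x)|\geq c_{0}$ for a fixed $c_{0}>0$, lies in a compact annulus around $\gamma_{\mathcal E}$ and is thus trivially contained in a sector of controlled angle. Gluing the two regions (covering a neighborhood of $\gamma_{\mathcal E}$ by finitely many sectors of the form above, rotated by constants independent of $x$, and using continuity of $\Gamma$ across the overlaps) produces a single sector of angle less than some $\zeta$ containing $\Gamma$, with $\zeta$ depending only on $\nu({\mathcal E})$, $v(0,0)$ and the constants in lemma \ref{lem:itf}, hence only on $X$ and ${\mathcal E}$.

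I expect the main obstacle to be uniformity of the bound in $\aleph$ as $\aleph^{*}$ approaches the boundary of $e^{i(0,\pi)}$: the rate of monotonicity of $\Im\psi_{\mathcal E}\circ\Gamma$ in the rotated frame degenerates, and pointwise estimates such as $|d\arg\psi_{\mathcal E}/ds|\leq 1/|\psi_{\mathcal E}|$ are no longer integrable in a useful way. Uniformity is recovered only by using the structural ``graph'' property of Step 1, which holds as soon as $\aleph^{*}\in e^{i(0,\pi)}$, together with the fact that the transfer between $\arg\psi_{\mathcal E}$ and $\arg(t-\gamma_{\mathcal E})$ carries the factor $1/\nu({\mathcal E})$, intrinsic to $X$ and ${\mathcal E}$. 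The underlying geometric intuition, borrowed from the Fatou flower picture of $\Re(X_{\mathcal E})$ and its perturbations described in \cite{JR:mod}, is that a trajectory whose $\psi$-image does not wind around $0$ cannot wind around $\gamma_{\mathcal E}$ in $t$-coordinates either.
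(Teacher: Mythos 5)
Your core argument matches the paper's: rectify the flow with the time integral $\psi_{\mathcal E}$, observe that $d(\psi_{\mathcal E}/\lambda^{e({\mathcal E})})/ds=\aleph^{*}\in e^{i(0,\pi)}$ forces $Im(\psi_{\mathcal E}/\lambda^{e({\mathcal E})})$ to be strictly monotone along $\Gamma$, so the image of $\Gamma$ under $\psi_{\mathcal E}/\lambda^{e({\mathcal E})}$ avoids a ray and lies in a sector of angle $2\pi$; then transfer this angular bound to $t-\gamma_{\mathcal E}(x)$ through $\psi_{\mathcal E}\sim\psi_{\mathcal E}^{00}=-1/(\nu({\mathcal E})v(0,0)(t-\gamma_{\mathcal E}(x))^{\nu({\mathcal E})})$, dividing the angle by $\nu({\mathcal E})$. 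That part is sound and is precisely what the paper does.

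The gap is in your treatment of the complementary region $|t-\gamma_{\mathcal E}(x)|\geq c_{0}$. A compact annulus around $\gamma_{\mathcal E}(x)$ is \emph{not} contained in a sector of controlled angle: it has full angular extent $2\pi$, and as $\aleph^{*}$ approaches $\{1,-1\}$ the transversal flow becomes nearly tangent to $\Re(X)$, so the length of $\Gamma$ inside that annulus (hence its angular variation there) has no bound uniform in $\aleph$. Your gluing step therefore does not close, and in fact this region should never appear: the paper takes $\eta>0$ small (and $\rho$ large if ${\mathcal E}$ is non-terminal) so that $|t-\gamma_{\mathcal E}(x)|$ is uniformly small throughout ${\mathcal E}$, which makes the multiplicative estimate $|\psi_{\mathcal E}/\psi_{\mathcal E}^{00}-1|<\upsilon$ of Lemma \ref{lem:itf} and Remark \ref{rem:psiext} valid on all of ${\mathcal E}\cap\{|\arg(t-\gamma_{\mathcal E}(x))|\leq\zeta_{0}\}$, leaving no far region to treat. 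Delete the annulus step and shrink $\eta$ instead; also choose $\zeta_{0}$ strictly larger than the target bound $2\pi/\nu({\mathcal E})+\varepsilon$, so that the continuity argument showing $\Gamma$ never leaves $\{|\arg(t-\gamma_{\mathcal E}(x))|\leq\zeta_{0}\}$ closes.
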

Let us explain the statement. Consider the universal covering
\[ (r,\lambda,\gamma_{\mathcal E}(r \lambda) + e^{z}): {\mathcal E}^{\flat} \to {\mathcal E} \setminus Sing X . \]
Let ${\Gamma}^{\flat}$ the lifting of $\Gamma$ by $(r,\lambda,e^{z})$.
We claim that the set $(Im(z))({\Gamma}^{\flat})$ is contained in an interval of length $\zeta$.
\begin{proof}
We have
\[ X = x^{e({\mathcal E})} v(x,t) (t-\gamma_{1}(x))^{s_{1}} \hdots (t-\gamma_{p}(x))^{s_{p}}
\partial / \partial{t}  \]
where we consider $\gamma_{\mathcal E} \equiv \gamma_{1}$.
%
%
We denote
\[ \psi_{\mathcal E}^{00} =
\frac{-1}{\nu({\mathcal E}) v(0,0)} \frac{1}{(t-\gamma_{\mathcal E}(x))^{\nu({\mathcal E})}} =
\frac{-1}{\nu({\mathcal E}) v(0,0)} \frac{1}{(t-\gamma_{1}(x))^{\nu({\mathcal E})}}. \]
Given $\upsilon>0$ and $\zeta_{0}>0$ we can consider $\eta>0$ small to obtain that
$|\psi_{\mathcal E}^{0}/\psi_{\mathcal E}^{00} -1| < \upsilon$ in the set
$\{ (x,t) \in {\mathcal E} :  |\arg(t-\gamma_{\mathcal E}(x))| \leq \zeta_{0} \}$
(see remark \ref{rem:psiext}).
%
%
Therefore we obtain
$|\psi_{\mathcal E}/\psi_{\mathcal E}^{00} -1| < \upsilon$ in
$\{ (x,t) \in {\mathcal E} :  |\arg(t-\gamma_{1}(x))| \leq \zeta_{0} \}$
by considering $\eta >0$ small enough and $\rho >0$ big enough if ${\mathcal E}$ is not terminal
(lemma \ref{lem:itf}).
%

We have
$(\aleph X)_{\mathcal E} \equiv \aleph_{|\mathcal E}^{*} X_{\mathcal E}$. Since
$\aleph^{*}(x,t) \in e^{i(0,\pi)}$ for any $(x,t) \in {\mathcal E}$ then we have that either
\[ (\psi_{\mathcal E}/\lambda^{e({\mathcal E})})(\Gamma) \cap ({\mathbb R}^{+} \cup \{0\}) = \emptyset \
{\rm or} \ (\psi_{\mathcal E}/\lambda^{e({\mathcal E})})(\Gamma) \cap ({\mathbb R}^{-} \cup \{0\}) = \emptyset . \]
Thus $(\psi_{\mathcal E}/\lambda^{e({\mathcal E})})(\Gamma)$ lies in a sector of angle of angle $2 \pi$.
Since $\psi_{\mathcal E} / \psi_{\mathcal E}^{00} \sim 1$ then $\Gamma$  lies in a sector of
center $t=\gamma_{\mathcal E}(r, \lambda)$ and angle close to $2 \pi/\nu({\mathcal E})$.
\end{proof}
\subsection{Dynamics of multi-transversal flows in a neighborhood of the origin}
\label{subsec:mtraflowbs}
Let $X \in \Xt$.
The dynamical properties of $Re(\mu X)_{|\lambda ({\mathbb R} \cup \{0\}) \times B(0,\epsilon)}$
for $(\lambda, \mu) \not \in {\mathcal U}_{X}$ are obtained by pasting
the dynamics in exterior and compact-like basic sets. The important facts are that
$\Re (\mu X)_{|{\mathcal E}}$ is a Fatou flower dynamics for any parabolic exterior set
${\mathcal E}$, an attractor or a repellor for any non-parabolic exterior set
and that the dynamics of $\Re (\mu X)_{|{\mathcal C}_{j}(r \lambda)}$
is analogous to the dynamics of
a stable polynomial vector field
$\Re (\mu X_{j}(\lambda))$ for $(\lambda, \mu) \not \in {\mathcal U}_{j}^{X}$.
These properties are preserved for a multi-transversal flow $\Re (\aleph X)$. Namely,
the dynamics of
$\Re (\aleph X)$ is a Fatou flower, an attractor or a repellor in exterior sets and
since $(\lambda, \mu_{j}(\lambda)) \not \in {\mathcal U}_{j}^{X}$ then the dynamics of
$\Re (\aleph X)_{|{\mathcal C}_{j} (r\lambda)} =
\Re (\mu_{j}(\lambda) X)_{|{\mathcal C}_{j} (r \lambda)}$
is analogous to the dynamics of a stable polynomial vector field
$\Re (\mu_{j}(\lambda) X_{j}(\lambda))$.

Next we introduce the generalizations for multi-transversal flows of some definitions and theorem
for transversal flows. We skip the proofs since they are straightforward generalizations of their
analogue counterparts in \cite{JR:mod}.
\begin{lem}
\label{lem:goins}
Let $X \in \Xt$. Consider the multi-transversal flow $\Re (\aleph X)$
for some stable multi-direction $\aleph: I \to (e^{i(0,\pi)})^{\tilde{q}}$.
Let $P_{0} \in [0,\delta) I \times \partial{B(0,\epsilon)}$
such that $\Re (\aleph X)$ does not point towards
${\mathbb C} \setminus \overline{B}(0,\epsilon)$ at $P_{0}$.
Denote $\Gamma = \Gamma (\aleph X, P_{0},T_{0})$. Then $[0,\infty)$ is
contained in ${\mathcal I}(\Gamma)$ and $\lim_{\zeta \to \infty} \Gamma (\zeta) \in Sing X$.
Moreover the intersection of $\Gamma [0, \infty)$ with
every compact-like or exterior set is connected.
\end{lem}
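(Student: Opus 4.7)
The plan is to follow the trajectory $\Gamma$ forward through the basic sets of the dynamical splitting $\digamma$ associated to $\aleph$, using the dynamics in each basic set established in subsections \ref{subsec:mtraflowpes} and \ref{subsub:comlikset}, and showing that at each transition the trajectory enters a strictly deeper basic set unless it has already limited to a singular point. Since $\digamma$ has finite depth, the process terminates at a singular point, which will give both the fact that $[0,\infty) \subset {\mathcal I}(\Gamma)$ and the convergence $\lim_{\zeta \to \infty} \Gamma(\zeta) \in Sing X$. Connectedness of the intersection with each basic set follows because at each step the trajectory enters a basic set once and either stays forever (in which case the intersection is a single connected ray) or exits exactly once into a strictly deeper seed.

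First I would analyze the initial step. The point $P_0$ lies on $\partial B(0,\epsilon) = \partial_{e} {\mathcal E}_{0}$. The assumption that $\Re(\aleph X)$ does not point towards ${\mathbb C} \setminus \overline{B}(0,\epsilon)$ at $P_0$, combined with remark \ref{rem:qbeps2} (tangent points with $\partial_{e} {\mathcal E}_{0}$ are convex, so a tangency means the trajectory lies inside $\overline{\mathcal E}_{0}$ locally), shows that $\Gamma$ is defined on a positive interval inside ${\mathcal E}_{0}$. Since ${\mathcal E}_{0}$ is a parabolic exterior set, remark \ref{rem:qbeps2} gives a truncated Fatou flower picture: there are exactly $2\nu({\mathcal E}_{0})$ tangent points on $\partial_{e} {\mathcal E}_{0}$ and the same number on $\partial_{I} {\mathcal E}_{0}$ when ${\mathcal E}_{0}$ is non-terminal, and proposition \ref{pro:estext} bounds the spiralling of $\Gamma$ in a sector of angle $< \zeta$. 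Consequently either $\Gamma$ converges in forward time to a singular point on $\{y = \gamma_{j}(x)\}$ inside ${\mathcal E}_{0}$, or it must eventually cross $\partial_{I} {\mathcal E}_{0}$ transversally and enter the compact-like set ${\mathcal C}_{0}$.

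Next I would handle the inductive step. Suppose $\Gamma$ has entered a compact-like set ${\mathcal C}_{\beta}$ through $\partial_{e} {\mathcal C}_{\beta}$. By the hypothesis that $\aleph$ is stable we have $(\lambda, \aleph_{{\mathcal C}_{\beta}}(\lambda)) \notin {\mathcal U}_{X}^{j}$ for the corresponding index $j$, so the dynamics of $\Re(\aleph X)_{|{\mathcal C}_{\beta}}$ is orbitally modelled on the stable polynomial vector field $\Re(\aleph_{{\mathcal C}_{\beta}}(\lambda) X_{j}(\lambda))$ (cf. subsection \ref{subsub:comlikset}). Remark \ref{rem:cldel} then guarantees that after entering $\partial_{e} {\mathcal C}_{\beta}$, the trajectory either converges to a singular point of $X_{\beta}(\lambda)$ (giving the required limit) or reaches $\partial_{I} {\mathcal C}_{\beta}$ in finite time; in the latter case it enters an exterior seed $T_{\beta,\zeta}$ through its outer boundary $\partial_{e} {\mathcal E}_{\beta,\zeta}$. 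The case of a non-parabolic (terminal) exterior set is immediate from subsection \ref{subsub:norparextset}: the unique singular point inside is attracting for $\Re(\aleph_{\mathcal E} X)$ so $\Gamma$ converges. The parabolic terminal case is handled like the first step.

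To organize the argument rigorously I would introduce the depth of a basic set in $\digamma$ (the length of the multi-index $\beta$) and prove by induction on depth that, provided $\Gamma$ enters the set through an incoming boundary and does not point outward, it either has $[0,\infty)$ in its forward domain converging to a singular point, or it reaches the next boundary and passes through it transversally into a strictly deeper basic set. Finiteness of the splitting then yields the first two assertions. For the connectedness assertion the key point is that $\Gamma$ enters each basic set ${\mathcal B}$ from exactly one boundary component and, once it leaves through the opposite boundary, cannot return: the possible re-entry would require crossing $\partial {\mathcal B}$ in the outward direction, while at any tangent point on $\partial_{e} {\mathcal E}_{\beta}$ or $\partial_{e} {\mathcal C}_{\beta}$ the convexity in lemmas \ref{lem:tgpt20} and \ref{lem:tgpt30} forbids this, and on the interior boundaries transversality of $\Re(\aleph X)$ (ensured by the stability of $\aleph$ and remark \ref{rem:cldel}) forces all crossings to be in the same direction. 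The main obstacle is checking that the interpolation formula defining $\aleph^{*}$ on the overlap between $\tilde{\mathcal E}_{\beta}$ and $\partial_{I} {\mathcal E}_{\beta}$ does not destroy the transversality and convexity properties; this is controlled by choosing the cut-off $\varsigma$ and the thickening parameter $\rho$ uniformly, using the compactness of the arc $I$ and remark \ref{rem:unifspl}.
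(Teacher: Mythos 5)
Your proposal is correct and takes essentially the same approach the paper intends: the paper itself gives only a one-sentence proof sketch for this lemma, stating that it follows from the Fatou-flower dynamics of $\Re(\aleph X)$ in parabolic exterior sets, the attractor/repellor structure in non-parabolic exterior sets, and remark \ref{rem:cldel} in compact-like sets (and referring to lemma 6.13 of \cite{JR:mod} for the transversal-flow analogue), and these are exactly the ingredients you organize into an induction on depth in the dynamical splitting. Your final-paragraph concern about the interpolation region of $\aleph^{*}$ not destroying transversality is a reasonable thing to flag, but it is automatic from the construction since $\aleph^{*}$ stays in $e^{i(0,\pi)}$ throughout, keeping $\Re(\aleph^{*}X)$ transversal to $\Re(X)$.
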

The previous lemma is a generalization of lemma 6.13 in \cite{JR:mod}.
The proof of the previous lemma is obtained by using that $\Re (\aleph X)$ is a Fatou
flower in parabolic exterior sets, an attractor or a repellor in non-parabolic exterior sets
and remark \ref{rem:cldel} in compact-like sets.
\begin{defi}
\label{def:regeao}
Let $X \in \Xt$. Consider a multi-transversal flow $\Re (\aleph X)$ for some
stable multi-direction $\aleph: I \to (e^{i(0,\pi)})^{\tilde{q}}$.
We define the set of regions $Reg^{*}(\epsilon, \aleph X, I)$ associated to
 $\Re (\aleph X)$  in $B(0,\delta) \times B(0,\epsilon)$ as the set of
connected components of
\[  ([0,\delta) I \times B(0,\epsilon)) \setminus (Sing X
\cup_{x \in [0,\delta) I}
\cup_{P \in  T_{\aleph X}^{\epsilon}(x)} \Gamma(\aleph X, P, T_{0})) . \]
We define $\alpha^{\aleph X}(P)$ as the $\alpha$-limit of $\Gamma(\aleph X, P, T_{0})$
for any $P \in B(0,\delta) \times \overline{B(0,\epsilon)}$
such that ${\mathcal I}(\aleph X,P,|y| \leq \epsilon)$
contains $(- \infty,0)$. Otherwise we define $\alpha^{\aleph X}(P)=\infty$.
We define $\omega^{\aleph X}(P)$ in an analogous way.
\end{defi}
Figures (1) and (2) in \cite{JR:mod} are examples of partitions
in regions.
The dynamical behavior of $\Re (\aleph X)$ in terminal exterior sets implies that
the singular points of $\Re (\aleph X)$ in $[0,\delta) I \times B(0,\epsilon)$ are
attracting, repelling or parabolic. We have:
\begin{lem}
Let $X \in \Xt$. Consider a multi-transversal flow $\Re (\aleph X)$ for some
stable multi-direction $\aleph: I \to (e^{i(0,\pi)})^{\tilde{q}}$. Given
$P \in B(0,\delta) \times \overline{B}(0,\epsilon)$ then either
$\alpha^{\aleph X}(P)=\infty$ (resp. $\omega^{\aleph X}(P)=\infty$)
or $\alpha^{\aleph X}(P)$ (resp. $\omega^{\aleph X}(P)$) is a singleton contained in
$Sing X$.
\end{lem}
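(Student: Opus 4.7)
The plan is to reduce the statement to lemma \ref{lem:goins} by a case analysis on the maximal backward extension of the trajectory through $P$. By replacing $\aleph$ with $\bar\aleph$, which reverses time, it suffices to handle the $\omega$-limit. Since $\Re(\aleph X)$ has no $x$-component, the trajectory $\Gamma:=\Gamma(\aleph X,P,T_{0})$ is contained in the $2$-dimensional slice $\{x=x(P)\}$, so the classical Poincar\'e--Bendixson theorem is applicable; this is the structural feature that will drive the argument.

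First I would assume $\omega^{\aleph X}(P)\neq\infty$, so $\Gamma(s)$ is defined and stays in $B(0,\delta)\times B(0,\epsilon)$ for every $s>0$. I would then extend $\Gamma$ backwards inside $T_{0}$ as far as possible. If the backward extension exits $T_{0}$ at a finite time $s_{0}\leq 0$, the exit point $P_{0}$ lies in $\partial B(0,\epsilon)$ (the $x$-coordinate being preserved), and $\Re(\aleph X)$ cannot point into ${\mathbb C}\setminus\overline{B}(0,\epsilon)$ at $P_{0}$, for otherwise forward re-entry of $\Gamma$ into $T_{0}$ would be impossible. Lemma \ref{lem:goins} applied at $P_{0}$ then gives directly $\lim_{s\to\infty}\Gamma(s)\in Sing\,X$.

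The remaining case is when the trajectory stays in $T_{0}$ for all $s\in{\mathbb R}$. Its $\omega$-limit is a nonempty, compact, connected invariant subset of the disk $\overline{B(0,\epsilon)}$, hence by Poincar\'e--Bendixson either a singular point, a periodic orbit, or a polycycle of heteroclinic connections. The last two possibilities are ruled out by the stability properties recalled in subsections \ref{subsub:parextset}--\ref{subsub:comlikset}: in each compact-like set ${\mathcal C}_{j}(r\lambda)$ the flow is orbitally equivalent to that of a stable polynomial vector field $\mu_{j}(\lambda)X_{j}(\lambda)\in{\mathcal X}_{\infty}\cn{}$, whose residue condition in Definition \ref{def:noinfcon} excludes local centers and closed orbits enclosing any subset of the singularities; in parabolic exterior sets the dynamics is a truncated Fatou flower by remark \ref{rem:qbeps2}; and in non-parabolic terminal exterior sets it is a single sink or source. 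None of these building blocks supports a periodic orbit or a polycycle, and a putative periodic orbit or polycycle spanning several basic sets would force the intersection of a trajectory with some basic set to have several connected components, contradicting the controlled crossings of $\partial_{e}$ and $\partial_{I}$ underlying lemma \ref{lem:goins}.

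The main obstacle is this last case: the no-periodicity statements inside each basic set must be glued across interior boundaries to exclude global periodic orbits and polycycles, and for this the combinatorial bookkeeping provided by the dynamical splitting is essential. Once the $\omega$-case is established, applying the same argument to $-\Re(\aleph X)$ yields the analogous statement for the $\alpha$-limit.
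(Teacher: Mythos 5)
Your overall architecture is right and, as far as I can tell, essentially what the paper intends (the paper gives no explicit proof; the lemma is offered as a corollary of Lemma \ref{lem:goins} together with the observation that the singular points of $\Re(\aleph X)$ are attracting, repelling or parabolic). Your Case A is correct: if the backward trajectory reaches $\partial B(0,\epsilon)$, the exit point $P_{0}$ cannot be one where $\Re(\aleph X)$ points into ${\mathbb C}\setminus\overline{B}(0,\epsilon)$ (convex tangencies keep the germ of trajectory inside, and a strictly outward-pointing $P_{0}$ would prevent the trajectory from being in $T_{0}$ just before $P_{0}$), so Lemma \ref{lem:goins} applies at $P_{0}$ and the $\omega$-limit of the orbit through $P$ is a singleton of $Sing\,X$.

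The problem is Case B. Poincar\'e--Bendixson is a legitimate tool here, and your local exclusions are fine: the residue condition defining ${\mathcal X}_{\infty}\cn{}$ rules out closed orbits of $\mu_{j}(\lambda)X_{j}(\lambda)$ enclosing any subset of its singularities (the period of such an orbit would be $2\pi i\sum Res$, which the condition forbids from being real and nonzero), and terminal non-parabolic exterior sets carry a source or a sink while parabolic ones carry a truncated Fatou flower, none of which support a periodic orbit or graphic on its own. What is not established is the step from these local statements to the global exclusion. Your last sentence appeals to the ``controlled crossings of $\partial_{e}$ and $\partial_{I}$ underlying lemma \ref{lem:goins}'' to forbid a periodic orbit or polycycle from spanning several basic sets, but Lemma \ref{lem:goins}, as stated and proved in the paper, only asserts connected intersections for trajectories that start on $\partial B(0,\epsilon)$ and do not point outward; you have not shown the analogous property for a complete interior trajectory. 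So ``contradicting lemma \ref{lem:goins}'' is not a valid deduction, and the claim that a cross-basic-set recurrence would force several connected components is exactly what needs to be proved, not assumed. Note in particular that a trajectory may enter a compact-like set ${\mathcal C}_{\beta}$ through $\partial_{I}$ and exit again through $\partial_{I}$ near a different son (a heteroclinic of the polynomial model), so the naive ``always go deeper in the tree'' picture is already false in general, and a more careful bookkeeping of which boundary arcs can be crossed in which direction is unavoidable.

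To close the gap, the most economical route, and the one the paper seems to rely on, is to observe that the proof of Lemma \ref{lem:goins} (Fatou-flower dynamics in parabolic exterior sets, attractor/repellor in terminal non-parabolic ones, Remark \ref{rem:cldel} in compact-like sets) applies without change to the forward orbit of an arbitrary interior point once one tracks the first basic set it enters: the forward orbit then visits a finite sequence of basic sets, each with connected intersection, and the last one is a terminal exterior set or a petal of a parabolic one, where convergence to a singular point is immediate. With that in hand one does not need Poincar\'e--Bendixson at all, which also avoids the subtle issue of $\omega$-limits touching $\partial B(0,\epsilon)$. Alternatively, if you want to keep the Poincar\'e--Bendixson argument, you must prove the no-return property for complete trajectories directly, rather than inferring it from a lemma whose hypotheses your trajectory does not satisfy.
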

The basins of attraction and repulsion of
the singular points are open sets. The functions $\alpha^{\aleph X}$
and $\omega^{\aleph X}$ are locally constant in
\[  (\{ x \} \times B(0,\epsilon)) \setminus (Sing X \cup
\cup_{P \in  T_{\aleph X}^{\epsilon}(x)} \Gamma(\aleph X, P, T_{0}))  \]
for any $x \in [0,\delta) I$. Given
$H \in Reg^{*}(\epsilon, \aleph X, I)$ the function $\omega^{\aleph X}_{|H}$
is either identically $\infty$ or $\omega^{\aleph X}(H(x))$ is a singleton
$\{ (x,g(x)) \}$ for any $x \in [0,\delta) I$
where $y=g(x)$ is one of the irreducible components of $Sing X$.
We denote $\omega^{\aleph X}(H)$ the curve $y=g(x)$. An analogous property
holds true for $\alpha^{\aleph X}$.
\begin{defi}
\label{def:regt}
Denote
\[ Reg_{\infty}(\epsilon, \aleph X, I)= Reg^{*}(\epsilon,\aleph X, I)
\cap ((\alpha^{\aleph X})^{-1}(\infty) \cup (\omega^{\aleph X})^{-1}(\infty)) \]
and
$Reg(\epsilon, \aleph X, I) = Reg^{*}(\epsilon, \aleph X, I) \setminus Reg_{\infty}(\epsilon, \aleph X,I)$.
We define
\[ Reg_{j}(\epsilon, \aleph X, I) = \{ H \in Reg(\epsilon, \aleph X, I) :
\sharp \{\alpha^{\aleph X}(H), \omega^{\aleph X}(H) \} =j \} \]
for $j \in \{1,2\}$. We define
$Reg_{1}^{*}(\epsilon, \aleph X, I)= Reg_{1}(\epsilon, \aleph X, I) \cup Reg_{\infty}(\epsilon, \aleph X, I)$
and $Reg_{2}^{*}(\epsilon, \aleph X, I)= Reg_{2}(\epsilon, \aleph X, I)$.
\end{defi}
\begin{rem}
Let $X \in \Xt$. Consider a multi-transversal flow $\Re (\aleph X)$ for some
stable multi-direction $\aleph: I \to (e^{i(0,\pi)})^{\tilde{q}}$.
The domains represented by points of  $Reg(\epsilon, \aleph X, I)$ are $\Re (\aleph X)$ invariant.
\end{rem}
\begin{rem}
\label{rem:setLR}
Let $X \in \Xt$. Consider a multi-transversal flow $\Re (\aleph X)$ for some
stable multi-direction $\aleph: I \to (e^{i(0,\pi)})^{\tilde{q}}$.
The set $H(x)$ (see def. \ref{def:contsets}) is connected for
$H \in Reg^{*}(\epsilon, \aleph X,I)$ and $x \in (0,\delta)I$.
The set $H(0)$ is connected for $H \in Reg_{1}^{*}(\epsilon, \aleph X,I)$
whereas otherwise $H(0)$ has two connected components.
\end{rem}
\begin{defi}
Let $H \in Reg^{*}(\epsilon, \aleph X,I)$ (see def. \ref{def:regt}).
We denote ${\mathcal P}(H)$ the set of connected components of $H(0)$
(see def. \ref{def:contsets}). Given $L \in {\mathcal P}(H)$ we define
$H_{L}= (H \setminus H(0)) \cup L$.
We define ${\mathcal P}_{X}^{\epsilon} = \cup_{H \in Reg(\epsilon, \aleph X,I)} {\mathcal P}(H)$.
The set ${\mathcal P}_{X}^{\epsilon}$ does not depend on $I$ or $\aleph$.
\end{defi}
\begin{rem}
\label{rem:LR}
Let $H \in Reg_{j}^{*}(\epsilon, \aleph X, I)$. We have $\sharp {\mathcal P} (H) =j$
(see remark \ref{rem:setLR}).
\end{rem}
\begin{defi}
\label{def:defLR2}
Let $L \in {\mathcal P}_{X}^{\epsilon}$.
We denote $L_{i X}^{\epsilon}(x)$ the unique continuous section of $T_{i X}^{\epsilon}$
such that $L_{i X}^{\epsilon}(0) \in \overline{L}$.
\end{defi}
\begin{defi}
\label{def:defLR}
Fix $\epsilon_{0}>0$ small enough. Let $\epsilon \in (0, \epsilon_{0}/2)$.
Consider $L \in {\mathcal P}_{X}^{\epsilon}$
and the region $H \in Reg(\epsilon, \aleph X,I)$ containing $L$.
There exist $H_{0} \in Reg(\epsilon_{0}, \aleph X,I)$ containing $H$ and
$L_{0} \in {\mathcal P}(H_{0})$ containing $L$.
Let $\psi$ be an integral of the time form of $X$ defined in
a neighborhood of $(L_{0})_{i X}^{\epsilon_{0}}(0)$ in ${\mathbb C}^{2}$
such that $\psi(x, y_{0}) \equiv 0$ where $(0,y_{0})$ is the point
$(L_{0})_{i X}^{\epsilon_{0}}(0)$.
By analytic continuation of $\psi$ we obtain a continuous integral of the time
form $\psi_{H,L}^{X}$ of $X$ in $H_{L}$. We denote $\psi_{H,L}^{X}$ by $\psi_{L}^{X}$
if the data $\epsilon$, $\aleph$ and $I$ are implicit.
\end{defi}
\begin{rem}
The choice of $\epsilon_{0}$ is intended to make the definition of Fatou coordinates
as independent of $\epsilon$ as possible. Given
$H \in Reg(\epsilon, \aleph X,I)$, $L \in {\mathcal P}(H)$ and
$H' \in Reg(\epsilon', \aleph X,I)$, $L' \in {\mathcal P}(H)$ with $H \subset H'$ and
$L \subset L'$ we have that $\psi_{H,L}^{X} \equiv (\psi_{H',L'}^{X})_{|H}$.
\end{rem}
\begin{defi}
\label{def:lj}
There exist $2 \nu({\mathcal E}_{0})$ continuous sections
\[ T_{i X}^{\epsilon, 1}(r,\lambda), \ \hdots, \
 T_{i X}^{\epsilon, 2 \nu({\mathcal E}_{0})}(r,\lambda), \
 T_{i X}^{\epsilon, 2 \nu({\mathcal E}_{0})+1}(r,\lambda) =
T_{i X}^{\epsilon, 1}(r,\lambda) \]
of $T_{i X}^{\epsilon}(r,\lambda)$. The sections are ordered in counter clock wise
sense. Let $L_{j}$ be the element of ${\mathcal P}_{X}^{\epsilon}$
such that $T_{i X}^{\epsilon,j}(0) \in \overline{L_{j}}$.
\end{defi}
\begin{defi}
Consider $H \in Reg^{*}(\epsilon, \aleph X,I) \setminus Reg(\epsilon, \aleph X,I)$ and
$L \in {\mathcal P}(H)$. Consider $L^{1} \in {\mathcal P}_{X}^{\epsilon}$ such that
$(L^{1})_{i X}^{\epsilon}(x) \in \overline{H}$ for any $x \in [0,\delta)I$.
We define $\psi_{L}^{X}$ as the analytic continuation of $\psi_{L^{1}}^{X}$
to $H$. Let us remark that there are two choices of $L^{1}$ and then two
possible definitions of $\psi_{L}^{X}$ since
$\sharp(T_{i X}^{\epsilon}(x) \cap \overline{H}) =2$ for any $x \in [0,\delta)I$.
\end{defi}
Consider $H \in Reg(\epsilon, \aleph X,I)$ and $L \in {\mathcal P}(H)$.
By construction $\psi_{L}^{X}$ is holomorphic in $H^{\circ}$.
Moreover $(x,\psi_{L}^{X})$ is injective in $H$
since $\psi_{L}^{X}(H_{L}(x))$ is simply connected for any $x \in [0,\delta)I$.
By construction $|\psi_{L}^{X}|$ is bounded by below by a positive constant in $H$.
The function $\psi_{L}^{X} + d(x)$ is also a Fatou coordinate of $X$ in $H$ for any
$d \in {\mathbb C}\{x\}$.

 We call subregion of a region
$H \in Reg^{*}(\epsilon, \aleph X, I)$  every set of the form
$H \cap {\mathcal E}$ or $H \cap {\mathcal C}$ where ${\mathcal E}$ is an
exterior set and ${\mathcal C}$ is a compact-like set.
We say that all the subregions of $H \in  Reg_{1}^{*}(\epsilon, \aleph X, I)$
are L-subregions where ${\mathcal P}(H)=\{L\}$.
Consider $H \in Reg_{2}^{*}(\epsilon, \aleph X, I)=Reg_{2}(\epsilon, \aleph X, I)$ with
${\mathcal P}(H)=\{L, R\}$. There exists a unique seed $T_{\beta}$ such that the curves
$\alpha^{\mu X}(H)$ and $\omega^{\mu X}(H)$ are contained in $T_{\beta}$
but they are not contained in the same son of $T_{\beta}$.
A subregion of $H$ contained in $M_{\beta}$ is a L-subregion.
A subregion in the same connected component
of $H \setminus (M_{\beta}^{\circ} \cup \{(0,0)\})$ as $L$
is also a L-subregion. We define ${H}^{L}$ the union of the L-subregions of $H$.
Clearly we have $H=H^{L} \cup H^{R}$ by lemma \ref{lem:goins}.
\begin{figure}[h]
\begin{center}
\includegraphics[height=5cm,width=10.5cm]{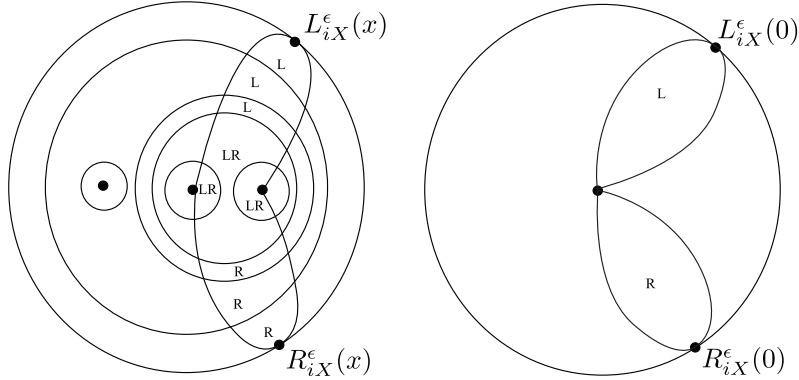}
\end{center}
\caption{Subregions of a region $H$ given ${\mathcal P}(H)=\{L,R\}$} \label{EVfig12}
\end{figure}
\subsection{Non-spiraling properties of multi-transversal flows}
\label{subsec:npmtraflow}
This subsection introduces some of framework leading to the proof of the
summability estimates. We need them in order to prove the multi-summability of
the infinitesimal generator of an element of $\diff{p1}{2}$. The task
requires an analysis of the dynamics of multi-transversal flows in basic sets.
This infinitesimal study is key to capture all levels of multi-summability.

Let $X \in \Xt$. Consider a multi-transversal flow $\Re (\aleph X)$ for some
stable multi-direction $\aleph: I \to (e^{i(0,\pi)})^{\tilde{q}}$.
Let $H \in Reg^{*}(\epsilon, \aleph X,I)$. In this subsection we study the
behavior of $H(r,\lambda) \cap {\mathcal B}$ when
$(r,\lambda) \to (0,\lambda_{0})$ for a basic set ${\mathcal B}$ and
$\lambda_{0} \in I$. We see that such limit exists in adapted coordinates.
\begin{defi}
Let $A$ be a subset of ${\mathbb C} \setminus \{0\}$. Consider the blow-up mapping
$\pi : ({\mathbb R}^{+} \cup \{0\}) \times {\mathbb S}^{1} \to {\mathbb C}$
defined by $\pi (r,\lambda)=r \lambda$. We denote $A_{\pi}$ the subset
$\overline{\pi^{-1}(A)}$ of $({\mathbb R}^{+} \cup \{0\}) \times {\mathbb S}^{1}$.
\end{defi}
\begin{defi}
Let $X \in \Xt$ and let
${\mathcal E} = \{ \eta \geq |t| \geq \rho |x| \}$ be an exterior set associated to $X$.
Consider $A \subset {\mathbb C} \setminus \{0\}$ and a continuous section
$\tau: A \to \partial_{e} {\mathcal E}$ (i.e. $\tau(x) \in \partial_{e} {\mathcal E}(x)$ for
any $x \in A$). The map $\tau$ is of the form $(x,\tau_{\mathcal E}(x))$ in
adapted coordinates $(x,t)$.
We say that $\tau$ is asymptotically continuous if $\tau_{\mathcal E}$ admits a continuous
extension to $A_{\pi}$.
\end{defi}
\begin{defi}
Let $X \in \Xt$ and let
\[ {\mathcal C} = \{ (x,w) \in
B(0,\delta) \times (\overline{B}(0,\rho) \setminus
\cup_{\zeta \in S_{\mathcal C}} B(\zeta, \eta_{{\mathcal C}, \zeta}) )  \} \]
be a compact-like set associated to $X$.
Consider $A \subset {\mathbb C} \setminus \{0\}$ and a continuous section
$\tau: A \to \partial_{e} {\mathcal C}$. The map $\tau$ is of the form $(x,\tau_{\mathcal C}(x))$
in adapted coordinates $(x,w)$. We say that $\tau$ is asymptotically continuous if $\tau_{\mathcal C}$
admits a continuous extension to $A_{\pi}$.
\end{defi}
\underline{Example}: Let $X = y(y-{x}^{2})(y-x) \partial/\partial{y}$. We have
that $\partial_{I} {\mathcal E}_{0} = \partial_{e} {\mathcal C}_{0}$ is of the form
$\{ (x,w) \in B(0,\delta) \times \partial B(0,\rho) \}$ with $w=y/x$. Consider the
sections $\tau_{0}: {\mathbb R}^{+} \to \partial_{e} {\mathcal C}_{0}$ and
$\tau_{1}: {\mathbb R}^{+} \to \partial_{e} {\mathcal C}_{0}$ defined by
$\tau_{0}(x)=(x,x \rho)$ and $\tau_{1}(x)=(x,x e^{i/x} \rho)$ in $(x,y)$ coordinates.
Both sections admit a continuous extension to $x=0$ by defining $\tau_{0}(0)=(0,0)=\tau_{1}(0)$.
Nevertheless $\tau_{0}$ is asymptotically continuous whereas $\tau_{1}$ is not.
\begin{defi}
Let $X \in \Xt$. Consider a multi-transversal flow $\Re (\aleph X)$ for some
stable multi-direction $\aleph: I \to (e^{i(0,\pi)})^{\tilde{q}}$. Let ${\mathcal B}$ be a non-terminal basic
set associated to $X$. Consider a continuous section $\tau: (0,\delta)I \to \partial_{e} {\mathcal B}$.
Suppose that $\Re (\aleph X)$ does not point towards the exterior of
${\mathcal B}$ at $\tau (x)$ for any $x \in  (0,\delta) I$. The dynamics of $\Re (\aleph X)$
implies $\sup {\mathcal I}(\Gamma_{x}) < \infty$ and
$\Gamma_{x}(\sup {\mathcal I}(\Gamma_{x})) \in \partial_{I} {\mathcal B}$
for $\Gamma_{x} = \Gamma (\aleph X, \tau(x), {\mathcal B})$
and any $x \in  (0,\delta) I$. The formula $\partial \tau (x) = \Gamma_{x}(\sup {\mathcal I}(\Gamma_{x}))$
defines a continuous section $\partial \tau: (0,\delta) I \to \partial_{I} {\mathcal B}$
whose image is contained in a connected component of $\partial_{I} {\mathcal B}$.
We define $\psi(\partial \tau(x))-\psi(\tau(x))$ by considering
a Fatou coordinate $\psi$ of $X$ defined in a neighborhood of
$\Gamma_{x}[0,\sup {\mathcal I}(\Gamma_{x})]$. The definition does not depend on the choice of $\psi$.
\end{defi}
\begin{pro}
\label{pro:hit}
Let $X \in \Xt$. Consider a multi-transversal flow $\Re (\aleph X)$ for some
continuous function $\aleph: I \to (e^{i(0,\pi)})^{\tilde{q}}$. Let ${\mathcal B}$ be a non-terminal basic
set associated to $X$. Consider a continuous section $\tau: (0,\delta)I \to \partial_{e} {\mathcal B}$.
Suppose that $\Re (\aleph X)$ does not point towards the exterior of
${\mathcal B}$ at $\tau (x)$ for any $x \in  (0,\delta) I$. Suppose that
$\tau$ is asymptotically continuous. Then $\partial \tau$ is asymptotically continuous.
Moreover the function $F: (0,\delta) \times I \to {\mathbb C}$ defined by
\[   |r|^{\iota({\mathcal B})} (\psi(\partial \tau(r,\lambda))-\psi(\tau(r,\lambda)))  \]
admits a continuous extension to $[0,\delta) \times I$ such that $F(0,\lambda) \in {\mathbb H}$
for any $\lambda \in I$.
\end{pro}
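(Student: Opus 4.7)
The plan is to exploit the scaling $X = x^{\iota(\mathcal{B})} \tilde{X}_{\mathcal{B}}$ that is valid in the adapted coordinate $(x,w)$ of $\mathcal{B}$ (with $w = t/x$ in the exterior case and $w$ already the adapted coordinate in the compact-like case), together with the fact that $\tilde{X}_{\mathcal{B}}(0,w)$ equals $X_{\beta}(1)$, the polynomial vector field of definition \ref{def:pol}. Rescaling time by $\tilde{s} = r^{\iota(\mathcal{B})} s$ turns the equation for a trajectory $\gamma(s)$ of $\Re(\aleph X)$ into
\[ \frac{d\tilde{\gamma}}{d\tilde{s}} = \aleph^{*}(\tilde{\gamma}) \lambda^{\iota(\mathcal{B})} \tilde{X}_{\mathcal{B}}(r\lambda, \tilde{\gamma}), \]
which converges, uniformly on compact sets in $w$, to the limit equation $d\tilde{\gamma}/d\tilde{s} = \aleph^{*}(\tilde{\gamma}) X_{\beta}(\lambda_{0})$ as $(r,\lambda) \to (0,\lambda_{0})$. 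Since $X(\psi) \equiv 1$ and hence $d\psi(\gamma)/ds = \aleph^{*}(\gamma)$, the substitution yields the central formula
\[ F(r,\lambda) = r^{\iota(\mathcal{B})}(\psi(\partial\tau) - \psi(\tau)) = \int_{0}^{\tilde{s}_{\max}} \aleph^{*}(\tilde{\gamma}(\tilde{s})) \, d\tilde{s}. \]

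When $\mathcal{B} = \mathcal{C}_{j}$ is compact-like the starting point $\tau_{\mathcal{C}}(r,\lambda) \in \partial B(0,\rho)$ extends continuously to $r = 0$ by asymptotic continuity, $\aleph^{*} \equiv \mu_{j}(\lambda)$ is constant in $w$, and the limit ODE $\Re(\mu_{j}(\lambda_{0}) X_{j}(\lambda_{0}))$ is stable because $(\lambda_{0}, \mu_{j}(\lambda_{0})) \notin \mathcal{U}_{X}^{j}$. The stability property of subsection \ref{subsub:comlikset} together with remark \ref{rem:cldel} guarantees that the limit trajectory reaches a unique point of $\partial_{I} \mathcal{C}_{j}$ at a finite time $\tilde{s}_{\max}^{0} > 0$, and standard continuous dependence of ODE solutions on parameters and initial conditions delivers the asymptotic continuity of $\partial \tau$ together with a continuous extension of $\tilde{s}_{\max}$, and hence of $F$, to $r=0$.

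The non-terminal exterior case $\mathcal{B} = \mathcal{E}_{\beta}$ is the main obstacle: in the coordinate $w = t/x$ the starting section sits at $|w| = \eta/r$, which escapes to infinity as $r \to 0$. I would handle this by using the Fatou coordinate of $X_{\beta}(\lambda_{0})$ at infinity,
\[ \chi(w) = \frac{-1}{\nu(\mathcal{E})\, v(0,0)\, \lambda_{0}^{\iota(\mathcal{E})}\, w^{\nu(\mathcal{E})}}, \]
which satisfies $\chi(w) \to 0$ as $|w| \to \infty$ and $X_{\beta}(\lambda_{0})(\chi) \equiv 1$. On the outer zone $|w|/\rho \geq 2 - 1/4$ the construction of $\aleph^{*}$ forces $\aleph^{*} \equiv \aleph_{\mathcal{E}_{\beta}}(\lambda_{0})$, so in the limit one has $\chi(\tilde{\gamma}(\tilde{s})) = \chi(\tilde{\gamma}(0)) + \aleph_{\mathcal{E}_{\beta}}(\lambda_{0}) \tilde{s}$, and because $\chi(\tilde{\gamma}(0)) \to 0$ the time to reach the transition threshold $|w|/\rho = 2 - 1/4$ admits a continuous extension to $r = 0$. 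Proposition \ref{pro:estext} moreover ensures that $\tilde{\gamma}$ remains in a bounded sector at $\infty$. Beyond the outer zone, the trajectory lies in a compact subset of $\mathcal{E}_{\beta} \cup \mathcal{C}_{\beta}$, and the truncated Fatou flower picture of remark \ref{rem:qbeps2} together with the compact-like stability forces it to reach $\partial_{I} \mathcal{E}_{\beta}$ at a unique point; continuous dependence then yields the asymptotic continuity of $\partial \tau$ and the continuous extension of $F$.

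Finally, passing to the limit in the central formula via dominated convergence gives
\[ F(0,\lambda_{0}) = \int_{0}^{\tilde{s}_{\max}^{0}} \aleph^{*}(\tilde{\gamma}_{0}(\tilde{s})) \, d\tilde{s}. \]
Since the stable multi-direction $\aleph$ is defined on the compact arc $I$ and takes values in $(e^{i(0,\pi)})^{\tilde{q}}$, its components admit a uniform lower bound $\delta_{0} > 0$ on their imaginary parts, and the same is inherited by $\aleph^{*}$ (because the interpolation formula of subsection \ref{subsec:consmult} only takes convex combinations of the arguments). Therefore $\Im F(0,\lambda_{0}) \geq \delta_{0} \tilde{s}_{\max}^{0} > 0$ and $F(0,\lambda_{0})$ lies in the open upper half plane $\mathbb{H}$.
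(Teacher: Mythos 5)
Your compact-like case is the paper's argument verbatim: rescale time by $r^{\iota(\mathcal B)}$, pass to the polynomial vector field $X_\beta(\lambda_0)$, invoke remark \ref{rem:cldel} to see the limit trajectory $\Gamma_{\lambda_0}$ hits $\partial_I\mathcal C$ in finite rescaled time, and read off $F(0,\lambda_0)\in\aleph_{\mathcal C}(\lambda_0)\mathbb R^+\subset\mathbb H$ from a Fatou coordinate $\psi^{\mathcal C}$ of the polynomial field (your ``central formula'' $\int\aleph^*\,d\tilde s$ is the same thing rewritten).

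For the exterior case the ideas agree but the paper is more explicit about the one point you compress. You assert that because $\chi(\tilde\gamma(0))\to0$ the \emph{time} to cross into $|w|\le(2-1/4)\rho$ extends continuously; what is actually needed (and what underlies the asymptotic continuity of $\partial\tau$) is that the \emph{position} at the crossing converges, i.e.\ that the trajectories starting from $|w|=\eta/r\to\infty$ converge to a well-defined limit landing trajectory. The paper constructs this landing trajectory $\phi_\lambda$ as the unique orbit of $\Re(\aleph_{\mathcal E}(\lambda)X^{\mathcal C}(\lambda))$ coming from $\infty$ and proves its uniqueness and the convergence of the $r>0$ crossings to it by the nesting argument $\cap_{\rho_2>\rho_1}\Gamma_{0,\lambda}^{\rho_2,\rho_1}(A_{\rho_2}(0,\lambda))=\{\phi_\lambda(s_{\rho_1,\lambda})\}$, then estimates the tail contribution to $F$ via lemma \ref{lem:itf} and remark \ref{rem:psiext} (which is exactly your $\chi\sim-c/w^{\nu}$ approximation, written for $\psi_{\mathcal E}$). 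Your $\chi$-based route can be closed: prop.\ \ref{pro:estext} confines the trajectories to a fixed sector on which $\chi$ is a biholomorphism, so $\chi_{r\lambda}(\tilde\gamma_r(\tilde s))\to\aleph_{\mathcal E_\beta}(\lambda_0)\tilde s$ does force convergence of $\tilde\gamma_r$ itself, not merely of the crossing time. But as written you leave that step implicit, and it is the crux; once supplied, the two proofs are the same argument in different bookkeeping.
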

\begin{proof}
Suppose that ${\mathcal B}$ is a compact-like set
\[ {\mathcal C} =\{
(x,w) \in B(0,\delta) \times (\overline{B}(0,\rho)
\setminus \cup_{\zeta \in S_{\mathcal C}} B(\zeta, \eta_{{\mathcal C}, \zeta}) )  \} .\]
Denote $X^{\mathcal C}(\lambda)$ the polynomial vector field associated to ${\mathcal C}$.
We denote
\[ \Gamma_{\lambda}= \Gamma (\aleph_{\mathcal C}(\lambda) X^{\mathcal C}(\lambda),
\tau_{\mathcal C}(0,\lambda), {\mathcal C}). \]
We have that $\Re (\lambda^{e({\mathcal C})} (\aleph X)_{{\mathcal C}})$
does not point towards the exterior of
$\overline{B}(0,\rho)$ at $(r,\lambda, \tau_{\mathcal C}(r,\lambda))$ for any
$(r, \lambda) \in (0,\delta) \times I$.
By continuity  $\Re (\lambda^{e({\mathcal C})}  (\aleph X)_{{\mathcal C}})$ does not point towards the exterior of
$\overline{B}(0,\rho)$ at $(r,\lambda, \tau_{\mathcal C}(r,\lambda))$ for any
$(r, \lambda) \in [0,\delta) \times I$. The vector field $\Re( \aleph X)_{|{\mathcal C}}$
is of the form $\Re (|x|^{e({\mathcal C})}\lambda^{e({\mathcal C})}  (\aleph X)_{{\mathcal C}})$.
We have
\[  {\left({ \lambda^{e({\mathcal C})}  (\aleph X)_{{\mathcal C}} }\right)}_{|(r,\lambda)=
(0,\lambda_{0})} \equiv \aleph_{\mathcal C} (\lambda_{0}) X^{\mathcal C}(\lambda_{0}) \ \
\forall \lambda_{0} \in I . \]
Hence the remark (\ref{rem:cldel}) implies that
$\sup ({\mathcal I}(\Gamma_{\lambda}))< \infty$ and
$\Gamma_{\lambda}(\sup ({\mathcal I}(\Gamma_{\lambda}))) \in \partial_{I} {\mathcal C}$ for any
$\lambda \in I$.
We deduce that $\partial \tau$ extends continuously to $((0,\delta) I)_{\pi}$ by defining
\[ \partial \tau (0,\lambda) = \Gamma_{\lambda}(\sup ({\mathcal I}(\Gamma_{\lambda})))  \]
in $(x,w)$ coordinates for any $\lambda \in I$.
Consider a Fatou coordinate $\psi^{\mathcal C}$
of $X^{\mathcal C}(\lambda_{0})$ defined in a neighborhood of
$\Gamma_{\lambda_{0}}[0,\sup {\mathcal I}(\Gamma_{\lambda_{0}})]$. We obtain
\[ \lim_{(r,\lambda) \to (0,\lambda_{0})} |r|^{e({\mathcal C})}
(\psi(\partial \tau(r,\lambda))-\psi(\tau(r,\lambda))) =
\psi^{\mathcal C}(\partial \tau(0,\lambda_{0})) - \psi^{\mathcal C}(\tau(0,\lambda_{0}))  \]
for any $\lambda_{0} \in I$. Clearly $F(0,\lambda_{0})$ belongs to
$\aleph_{\mathcal C} (\lambda_{0}) {\mathbb R}^{+} \subset {\mathbb H}$.

Suppose now that ${\mathcal B}$ is an exterior set
\[  {\mathcal E} = \{(x,t) \in B(0,\delta) \times {\mathbb C} :  \eta \geq |t| \geq \rho|x| \} . \]
Let ${\mathcal C}$ be the compact-like set such that
$\partial_{I} {\mathcal E} = \partial_{e} {\mathcal C}$. There exist $2 \nu({\mathcal E})$
continuous sections
\[ T {\mathcal E}_{\aleph X}^{\eta, 1}(r,\lambda), \ \hdots, \
T {\mathcal E}_{\aleph X}^{\eta,2 \nu({\mathcal E}) }(r,\lambda), \
T {\mathcal E}_{\aleph X}^{\eta,2 \nu({\mathcal E})+1 }(r,\lambda) =
T {\mathcal E}_{\aleph X}^{\eta, 1}(r,\lambda) \]
of $T {\mathcal E}_{\aleph X}^{\eta}(r,\lambda)$. The sections are ordered in counter clock wise
sense. We denote $arc_{j}(r,\lambda)$ the closed arc going from
$T {\mathcal E}_{\aleph X}^{\eta, j}(r,\lambda)$ to
$T {\mathcal E}_{\aleph X}^{\eta, j+1}(r,\lambda)$
in counter clock wise sense. The choice of the order implies
\[ arc_{j}(r, \lambda) \cap T {\mathcal E}_{\aleph X}^{\eta}(r,\lambda)=
\{ T {\mathcal E}_{\aleph X}^{\eta, j}(r,\lambda), T {\mathcal E}_{\aleph X}^{\eta,j+1}(r,\lambda) \}
\ \ \forall (r,\lambda) \in [0,\delta) \times I. \]
There exists $k \in {\mathbb Z}/(2 \nu({\mathcal E}) {\mathbb Z})$ such that
$\tau_{\mathcal E}(r,\lambda) \in arc_{k}(r,\lambda)$ for any $(r,\lambda) \in [0,\delta) \times I$.
In the rest of the proof we are going to show that $\partial \tau$ and $F$
only depend on $k$.

Consider coordinates $(x,w)$ with $t=wx$. We denote
\[ {\mathcal C}(\rho_{1}) =
{\mathcal C} \cup \{(x,w) \in B(0,\delta) \times {\mathbb C} :  \rho_{1}  \geq |w| \geq \rho \} \]
and
$T {\mathcal C}_{\aleph X}^{\rho_{1}} (r,\lambda) =
T {{\mathcal C}(\rho_{1})}_{\aleph_{\mathcal E} X}^{\rho_{1}} (r,\lambda)$
for $\rho_{1} \geq  2 \rho$. Fix $\rho_{1} \geq 2 \rho$, we define
\[ \Gamma_{P}^{\rho_{1}} = \Gamma (\lambda^{e({\mathcal E})} (\aleph X)_{\mathcal E}, P ,
{\mathcal E} \setminus {\mathcal C}(\rho_{1})) . \]
We have that $\sup {\mathcal I}(\Gamma_{P}^{\rho_{1}}) < \infty$ and
$\Gamma_{P}^{\rho_{1}} (\sup {\mathcal I}(\Gamma_{P}^{\rho_{1}})) \in
\partial_{e} {\mathcal C}(\rho_{1}) \setminus T {\mathcal C}_{\aleph X}^{\rho_{1}} (r,\lambda)$
for all $P \in arc_{k}(r,\lambda)$ and $(r,\lambda) \in (0,\delta) \times I$.
All the points of the form $\Gamma_{P}^{\rho_{1}} (\sup {\mathcal I}(\Gamma_{P}^{\rho_{1}}))$
are in a unique connected component $A_{\rho_{1}}$ of
$\partial_{e} {\mathcal C}(\rho_{1}) \setminus T {\mathcal C}_{\aleph X}^{\rho_{1}}$. We have
\[  {\left({ \lambda^{e({\mathcal C})} \aleph_{\mathcal E}(\lambda) X_{{\mathcal C}}
}\right)}_{|({\mathcal E} \setminus {\mathcal C}(\rho_{1}))(0,\lambda_{0})}
\equiv \aleph_{\mathcal E} (\lambda_{0}) X^{\mathcal C}(\lambda_{0}) \ \ \forall \lambda_{0} \in I . \]
Thus $A_{\rho_{1}}(0,\lambda)$ is an open arc transverse to
$\Re (\aleph_{\mathcal E} (\lambda) X^{\mathcal C}(\lambda))$
and whose closure contains two points of $T {\mathcal C}_{\aleph X}^{\rho_{1}}(0,\lambda)$
for any $\lambda \in I$.
Moreover $\aleph_{\mathcal E} (\lambda) X^{\mathcal C}(\lambda)$ is conjugated to
$w^{\nu({\mathcal C})+1} \partial / \partial w$ in the neighborhood of $w=\infty$.
Thus there exists a unique trajectory $\phi_{\lambda}:(0,s_{\rho_{1},\lambda}] \to {\mathbb C}$
of $\Re(\aleph_{\mathcal E} (\lambda) X^{\mathcal C}(\lambda))$ such that $\lim_{s \to 0} \phi_{\lambda}(s)=\infty$,
$\phi_{\lambda}(0,s_{\rho_{1},\lambda}) \subset {\mathbb C} \setminus \overline{B}(0,\rho_{1})$
and $\phi_{\lambda}(s_{\rho_{1},\lambda}) \in A_{\rho_{1}}(\lambda)$ for any $\lambda \in I$.
The germ of the curve $\phi_{\lambda}$ in the neighborhood of $\infty$
does not depend on $\rho_{1}$.

Consider $\rho_{2} > \rho_{1}$ and $(r,\lambda) \in [0,\delta) \times I$.
We define
$\Gamma_{r, \lambda}^{\rho_{2},\rho_{1}}:A_{\rho_{2}}(r,\lambda) \to A_{\rho_{1}}(r,\lambda)$,
it is the mapping given by the formula
\[ \Gamma_{r, \lambda}^{\rho_{2},\rho_{1}}(P) = \Gamma_{P}^{\rho_{2},\rho_{1}}
(\sup {\mathcal I}(\Gamma_{P}^{\rho_{2},\rho_{1}}))  \]
where $\Gamma_{P}^{\rho_{2},\rho_{1}} = \Gamma ( \aleph_{\mathcal E} X^{\mathcal C}, P,
{\mathcal C}(\rho_{2}) \setminus  {\mathcal C}(\rho_{1}) )$.
Let us remark that given $\rho_{2} > \rho_{1}$ we have
$\Gamma_{P}^{\rho_{2}} (\sup {\mathcal I}(\Gamma_{P}^{\rho_{2}})) \subset
A_{\rho_{2}}(r,\lambda)$
for all $P \in arc_{k}(r,\lambda)$ and $(r,\lambda) \in (0,\delta) \times I$.
We deduce that
$\Gamma_{P}^{\rho_{1}} (\sup {\mathcal I}(\Gamma_{P}^{\rho_{1}})) \in
\Gamma_{r, \lambda}^{\rho_{2},\rho_{1}}(A_{\rho_{2}}(r,\lambda))$
for all $P \in arc_{k}(r,\lambda)$ and $(r,\lambda) \in (0,\delta) \times I$.
\begin{figure}[h]
\begin{center}
\includegraphics[height=6cm,width=4cm]{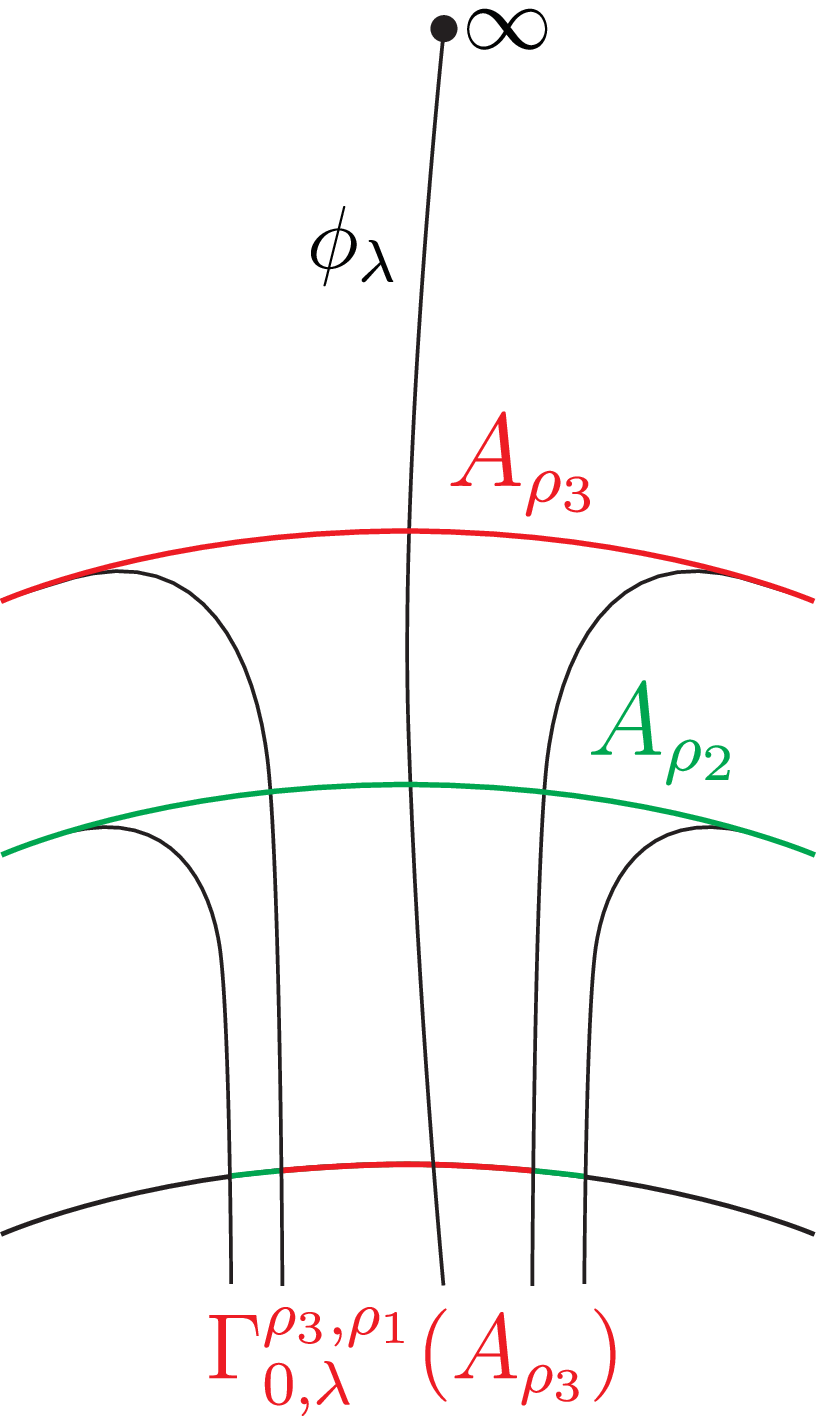}
\end{center}
\label{EVf1}
\end{figure}
We have
\[ \cap_{\rho_{2} > \rho_{1}} \Gamma_{0, \lambda}^{\rho_{2},\rho_{1}}(A_{\rho_{2}}(0,\lambda))
= \{ \phi_{\lambda}(s_{\rho_{1},\lambda}) \} \ \ \forall \lambda \in I. \]
We can consider $\rho_{1}=2 \rho$. We define
\[ \partial \tau(0,\lambda)=(0, \tilde{\Gamma}_{\lambda}(\sup ({\mathcal I}(\tilde{\Gamma}_{\lambda}))) \]
in $(x,w)$ coordinates for $\lambda \in I$ where
$\tilde{\Gamma}_{\lambda}=
\Gamma (\lambda^{e({\mathcal C})} \aleph^{*} X_{\mathcal C}, (0, \phi_{\lambda}(s_{2 \rho,\lambda})), {\mathcal E})$.
We obtain in this way a continuous extension of $\partial \tau$ to $((0,\delta)I)_{\pi}$.

Consider a Fatou coordinate $\psi^{\mathcal C}$
of $X^{\mathcal C}(\lambda_{0})$ defined in a neighborhood of the set
$\tilde{\Gamma}_{\lambda_{0}} \cup \phi_{\lambda}(0,s_{2 \rho, \lambda_{0}}] \cup \{\infty\}$.
Denote
$w_{r,\lambda,\rho_{1}}=
\Gamma_{\tau(r,\lambda)}^{\rho_{1}} (\sup {\mathcal I}(\Gamma_{\tau(r,\lambda)}^{\rho_{1}}))$.
We have
\[ \psi(\partial \tau(r,\lambda)) - \psi(\tau(r,\lambda)) = [\psi(\partial \tau(r,\lambda)) -
\psi (w_{r,\lambda,\rho_{1}})] + [\psi (w_{r,\lambda,\rho_{1}}) - \psi(\tau(r,\lambda))] \]
and
\[ lim_{(r,\lambda) \to (0,\lambda_{0})} |r|^{\iota({\mathcal E})}
(\psi(\partial \tau(r,\lambda))-\psi(w_{r,\lambda,\rho_{1}}))  = \psi^{\mathcal C}(\partial \tau(0,\lambda)) -
\psi^{\mathcal C}(\phi_{\lambda}(s_{\rho_{1},\lambda})). \]
We have $t \sim t-\gamma_{\mathcal E}(x)$ in ${\mathcal E}$.
By using lemma \ref{lem:itf} and remark \ref{rem:psiext}  we obtain
\[ |\psi_{\mathcal E} (w_{r,\lambda,\rho_{1}}) - \psi_{\mathcal E} (\tau(r,\lambda))| \leq
\frac{C'}{\rho_{1}^{\nu({\mathcal E})} r^{\nu({\mathcal E})}} + C'' \]
for some constants $C',C'' \in {\mathbb R}^{+}$. We can consider $\psi=\psi_{\mathcal E}/x^{e({\mathcal E})}$
to get
\[ |r|^{\iota({\mathcal E})} |\psi  (w_{r,\lambda,\rho_{1}}) - \psi  (\tau(r,\lambda))|
\leq \frac{C'}{\rho_{1}^{\nu({\mathcal E})}  } + C'' r^{\nu({\mathcal E})} \]
whose limit when $(\rho_{1},r) \to (\infty,0)$ is equal 0. Therefore we obtain
\[ \lim_{(r,\lambda) \to (0,\lambda_{0})} |r|^{\iota({\mathcal B})}
(\psi(\partial \tau(r,\lambda))-\psi(\tau(r,\lambda)))  = \psi^{\mathcal C}(\partial \tau(0,\lambda)) -
\psi^{\mathcal C}(\infty) \]
for any $\lambda_{0} \in I$. Moreover $F(0,\lambda_{0}) \in {\mathbb H}$ for
any $\lambda_{0} \in I$ since the image of $\aleph^{*}$ is contained in $e^{i(0,\pi)}$.
\end{proof}
The previous proposition makes simpler the analysis of regions of multi-transversal flows.
The proof of the following corollary is contained in the previous one.
\begin{cor}
\label{cor:hit}
Consider the setting of the previous proposition. Suppose that ${\mathcal B}$ is an exterior set.
The set $\partial_{\downarrow} {\mathcal B}$ of points in $\partial_{e} {\mathcal B}$
where $\Re (\aleph X)$ does not point towards the exterior of ${\mathcal B}$ has
$\nu ({\mathcal B})$ connected components. Then $(\partial \tau)_{|r=0}$
only depends on the component of $\partial_{\downarrow} {\mathcal B}$ containing
$\tau ((0,\delta)I)$.
\end{cor}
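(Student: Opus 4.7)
The plan is to read the corollary as a direct bookkeeping consequence of the construction carried out in the proof of Proposition \ref{pro:hit}, organized in two independent parts.

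First, I would establish the count of connected components of $\partial_{\downarrow}{\mathcal B}$. Writing ${\mathcal B}={\mathcal E}=\{\eta\ge |t|\ge\rho|x|\}$, the exterior boundary $\partial_{e}{\mathcal E}$ is fibered over $[0,\delta) I$ by circles $\{|t|=\eta\}$. By Lemma \ref{lem:tgpt20} the set of tangent points $T{\mathcal E}_{\aleph X}^{\eta}(r,\lambda)$ consists of exactly $2\nu({\mathcal E})$ convex points in each fiber, and these points depend continuously on $(r,\lambda)$ (they are the sections $T{\mathcal E}_{\aleph X}^{\eta,1},\hdots,T{\mathcal E}_{\aleph X}^{\eta,2\nu({\mathcal E})}$ used in the proof of the proposition). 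Between two consecutive tangent points the sign of the transverse component of $\Re(\aleph X)$ with respect to $\partial_{e}{\mathcal E}$ is constant, and convexity together with Lemma \ref{lem:tgpt20} force the sign to alternate from one arc to the next. Hence exactly $\nu({\mathcal E})$ of the $2\nu({\mathcal E})$ arcs $arc_{j}$ are inward-pointing, and $\partial_{\downarrow}{\mathcal B}$ is the union of the closures of these $\nu({\mathcal E})$ arcs, glued only at tangent points lying over $r=0$ if at all. A continuity argument in $(r,\lambda)$ shows these closures remain disjoint in $[0,\delta)I\times\partial B(0,\eta)$, giving $\nu({\mathcal B})$ connected components.

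Second, I would unwind the recipe for $(\partial\tau)_{|r=0}$ from the proof of Proposition \ref{pro:hit}. By continuity of $\tau$ and of the sections $T{\mathcal E}_{\aleph X}^{\eta,j}$, there is a single index $k\in{\mathbb Z}/2\nu({\mathcal E}){\mathbb Z}$ such that $\tau(r,\lambda)\in arc_{k}(r,\lambda)$ for all $(r,\lambda)\in(0,\delta)\times I$, and the hypothesis that $\Re(\aleph X)$ does not point towards the exterior of ${\mathcal B}$ at $\tau$ forces $arc_{k}$ to be one of the inward arcs, hence a connected component of $\partial_{\downarrow}{\mathcal B}$. In the proof of the proposition, the limit $\partial\tau(0,\lambda)$ is defined by $(0,\tilde\Gamma_{\lambda}(\sup{\mathcal I}(\tilde\Gamma_{\lambda})))$ where $\tilde\Gamma_{\lambda}$ starts at $(0,\phi_{\lambda}(s_{2\rho,\lambda}))$, and the trajectory $\phi_{\lambda}$ is in turn singled out (among the finitely many trajectories of $\Re(\aleph_{\mathcal E}(\lambda)X^{\mathcal C}(\lambda))$ coming from $\infty$) by the requirement that $\phi_{\lambda}(s_{\rho_{1},\lambda})\in A_{\rho_{1}}(\lambda)$, where $A_{\rho_{1}}$ is the connected component of $\partial_{e}{\mathcal C}(\rho_{1})\setminus T{\mathcal C}_{\aleph X}^{\rho_{1}}$ produced by the trajectories issued from $arc_{k}$. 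Since $A_{\rho_{1}}$, and therefore $\phi_{\lambda}$ and $\tilde\Gamma_{\lambda}$, depend only on the index $k$, and $k$ is determined by the connected component of $\partial_{\downarrow}{\mathcal B}$ containing $\tau((0,\delta)I)$, the corollary follows.

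The routine obstacle is the first part: one must be careful to check that the alternation of the sign of the transversal component, hence the count $\nu({\mathcal B})$, is preserved on the degenerate fiber over $r=0$ where several tangent sections may coalesce at the origin; this is handled by working in the adapted coordinate $t$, in which all sections extend continuously to $r=0$ and their relative cyclic order is preserved, so no two inward arcs merge into one as $r\to 0$.
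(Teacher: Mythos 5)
Your proposal is correct and follows essentially the same route the paper intends: the paper states that the corollary's proof is contained in the proof of Proposition \ref{pro:hit}, and you correctly unpack the two pieces implicit there — the alternation of inward and outward arcs between the $2\nu({\mathcal B})$ convex tangent sections of Lemma \ref{lem:tgpt20} (giving $\nu({\mathcal B})$ closed inward arcs, hence $\nu({\mathcal B})$ components of $\partial_{\downarrow}{\mathcal B}$, with no merging at $r=0$ in adapted coordinates), and the fact that the sets $A_{\rho_{1}}$, the trajectory $\phi_{\lambda}$, and hence $\partial\tau(0,\lambda)$ constructed in the proof of Proposition \ref{pro:hit} depend only on the index $k$ of the arc $arc_{k}$ containing $\tau$.
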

\begin{cor}
\label{cor:intbou}
Let $X \in \Xt$. Consider a multi-transversal flow $\Re (\aleph X)$ for some
stable multi-direction $\aleph: I \to (e^{i(0,\pi)})^{\tilde{q}}$.
Denote $\Gamma_{x}^{j}=\Gamma (\aleph X, T_{\aleph X}^{\epsilon,j}(x), T_{0})$ for $x \in (0,\delta) I$
and $j \in {\mathbb Z}/(2 \nu({\mathcal E}_{0}) {\mathbb Z})$ (see def. \ref{def:lj}).
Given a basic set ${\mathcal B}$ associated to $X$ we have that either
$\Gamma_{x}^{j}[0,\infty) \cap \partial_{e} {\mathcal B}$ is empty for any $x \in (0,\delta) I$ or
$\Gamma_{x}^{j}[0,\infty) \cap \partial_{e} {\mathcal B}$ contains a unique point for any $x \in (0,\delta) I$.
Suppose that we are in the latter case. Then the mapping
$x \to \Gamma_{x}^{j}[0,\infty) \cap \partial_{e} {\mathcal B}$
is asymptotically continuous in $(0,\delta) I$.
Moreover the function $F: (0,\delta) \times I \to {\mathbb C}$ defined by
$(|r|^{e({\mathcal B})} \psi_{L_{j}}^{X})(\Gamma_{r,\lambda}^{j}[0,\infty) \cap \partial_{e} {\mathcal B})$
admits a continuous extension to $[0,\delta) \times I$ such that $F(0,\lambda) \in {\mathbb H}$
for any $\lambda \in I$.
\end{cor}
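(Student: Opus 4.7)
The plan is to reduce the statement to an inductive application of Proposition \ref{pro:hit} along the chain of basic sets that the trajectory $\Gamma_x^j$ traverses before meeting $\partial_e \mathcal{B}$. First I establish the dichotomy. By Lemma \ref{lem:goins} the forward trajectory $\Gamma_x^j[0,\infty)$ tends to a singular point and its intersection with each basic set is connected, so it can cross $\partial_e \mathcal{B}$ in at most one point. Moreover the combinatorial type of $\Gamma_x^j$ (the ordered list of basic sets it visits) is locally constant in $x \in (0,\delta) I$: in each compact-like set $\mathcal{C}_j$ the dynamics of $\Re (\aleph X)$ is orbitally equivalent to the stable polynomial flow $\Re(\mu_j(\lambda) X_j(\lambda))$ and depends continuously on $\lambda$, while in each exterior set it is a truncated Fatou flower, attractor or repellor also depending continuously on parameters. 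Hence $\{ x \in (0,\delta) I : \Gamma_x^j[0,\infty) \cap \partial_e \mathcal{B} \neq \emptyset \}$ is open and closed in $(0,\delta) I$, so it is either empty or the whole of $(0,\delta) I$.

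In the non-empty case, let $\mathcal{B}_0 = \mathcal{E}_0, \mathcal{B}_1, \ldots, \mathcal{B}_m = \mathcal{B}$ be the chain of basic sets visited by $\Gamma_x^j$, and let $\tau_k(x)$ denote the exit point of $\Gamma_x^j$ from $\mathcal{B}_k$ for $0 \leq k \leq m-1$. Set $\tau_{-1}(x) = T_{\aleph X}^{\epsilon,j}(x)$; this section is asymptotically continuous by construction (a convex tangency of $\Re(\aleph X)$ with $\partial_e \mathcal{E}_0$ depending continuously on $\lambda$). Proposition \ref{pro:hit} asserts that if $\tau$ is an asymptotically continuous section on $\partial_e \mathcal{B}_k$ at which $\Re (\aleph X)$ does not point outwards, then $\partial \tau$ is asymptotically continuous on $\partial_I \mathcal{B}_k = \partial_e \mathcal{B}_{k+1}$. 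Iterating this shows that each $\tau_k$ is asymptotically continuous, and in particular $\tau_{m-1}$, which coincides with $\Gamma_x^j[0,\infty) \cap \partial_e \mathcal{B}$, extends continuously to $((0,\delta) I)_\pi$ in adapted coordinates of $\mathcal{B}$.

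For the integral $F$, I telescope the Fatou coordinate along the chain:
\[ \psi_{L_j}^X(\tau_{m-1}(r,\lambda)) = \psi_{L_j}^X(\tau_{-1}(r,\lambda)) + \sum_{k=0}^{m-1} \bigl( \psi_{L_j}^X(\tau_k(r,\lambda)) - \psi_{L_j}^X(\tau_{k-1}(r,\lambda)) \bigr). \]
By Proposition \ref{pro:hit}, multiplication of the $k$-th increment by $|r|^{\iota(\mathcal{B}_k)}$ yields a continuous extension to $[0,\delta) \times I$ whose value at $r=0$ lies in $\mathbb{H}$. The splitting identities $e(\mathcal{C}_\beta) = \iota(\mathcal{E}_\beta)$, $e(\mathcal{E}_{\beta,\zeta}) = e(\mathcal{C}_\beta)$ and $\iota(\mathcal{C}_\beta) = e(\mathcal{C}_\beta)$, together with $\iota(\mathcal{B}_{k+1}) \geq e(\mathcal{B}_{k+1}) = \iota(\mathcal{B}_k)$, show that $\iota(\mathcal{B}_k)$ is non-decreasing in $k$ and that $\iota(\mathcal{B}_{m-1}) = e(\mathcal{B})$. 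Multiplying the whole identity by $|r|^{e(\mathcal{B})}$ therefore kills every increment with $\iota(\mathcal{B}_k) < e(\mathcal{B})$ and preserves the increments with $\iota(\mathcal{B}_k) = e(\mathcal{B})$ as finite limits in $\mathbb{H}$. The starting term $|r|^{e(\mathcal{B})} \psi_{L_j}^X(\tau_{-1})$ is $O(|r|^{e(\mathcal{B})})$ since $\tau_{-1}$ stays in a bounded region where $\psi_{L_j}^X$ is bounded, and it vanishes in the limit when $e(\mathcal{B}) > 0$. A finite sum of elements of $\mathbb{H}$ still lies in $\mathbb{H}$, giving the continuous extension with $F(0,\lambda) \in \mathbb{H}$.

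The main obstacle I foresee is verifying that this telescoping representation of $\psi_{L_j}^X(\tau_{m-1})$ genuinely agrees with the analytic continuation of the time form of $X$ along $\Gamma_x^j$ anchored at $(L_0)_{iX}^{\epsilon_0}(0)$, so that the branches of the local Fatou coordinates used in each invocation of Proposition \ref{pro:hit} glue correctly. Concretely, one must track the logarithmic branches of the integrals $\psi_{\mathcal{E}}^0$ of remark \ref{rem:psiext} across each interface $\partial_I \mathcal{B}_k = \partial_e \mathcal{B}_{k+1}$ and check that the residual $O(1)$ ambiguities are absorbed into the factor $|r|^{e(\mathcal{B})}$ when $e(\mathcal{B}) > 0$ or pinned down by the anchoring of definition \ref{def:defLR} in the residual case $\mathcal{B} = \mathcal{E}_0$.
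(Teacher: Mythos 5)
Your proof is essentially correct and follows the approach the paper implicitly intends: Corollary \ref{cor:intbou} is stated without proof, immediately after Proposition \ref{pro:hit} and Corollary \ref{cor:hit}, with the understanding that it is obtained by iterating Proposition \ref{pro:hit} along the chain of basic sets the trajectory traverses --- which is exactly what you do. The dichotomy via local constancy of the combinatorial type (a consequence of the stability of the flows together with Lemma \ref{lem:goins} and Remark \ref{rem:cldel}), the inductive asymptotic continuity of the exit sections $\tau_k$, and the telescoping of $\psi_{L_j}^X$ weighted by $|r|^{e(\mathcal{B})}$, using $e(\mathcal{B}_{k+1})=\iota(\mathcal{B}_k)$ and the non-decreasing behaviour of $\iota$ to identify which increments survive, are all sound.

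The obstacle you flag at the end, however, is not a real one. Definition \ref{def:defLR} builds $\psi_{L_j}^X$ by analytic continuation to a \emph{single-valued, continuous} function on $H_{L_j}$, so the telescoping
\[\psi_{L_j}^X(\tau_{m-1}) - \psi_{L_j}^X(\tau_{-1}) = \sum_{k=0}^{m-1}\bigl(\psi_{L_j}^X(\tau_k) - \psi_{L_j}^X(\tau_{k-1})\bigr)\]
is an exact algebraic identity with no branch ambiguity to absorb. Moreover, the quantity $\psi(\partial\tau)-\psi(\tau)$ appearing in Proposition \ref{pro:hit} is explicitly declared to be independent of the choice of local Fatou coordinate $\psi$ along the segment, so each summand above \emph{equals} the increment that Proposition \ref{pro:hit} estimates; the logarithmic branches of $\psi_{\mathcal{E}}^{0}$ in Remark \ref{rem:psiext} enter only the internal proof of Proposition \ref{pro:hit} and never need to be glued across interfaces. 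The one genuinely delicate point is the degenerate case $\mathcal{B}=\mathcal{E}_0$, where $e(\mathcal{B})=0$, $m=0$, the chain is empty and $F(r,\lambda)=\psi_{L_j}^X(T_{\aleph X}^{\epsilon,j}(r\lambda))$: your argument yields the continuous extension but does not deliver $F(0,\lambda)\in\mathbb{H}$ because there is no increment term to invoke; this case would need a separate direct check (and is in any event not the case used in the paper's subsequent applications of the corollary).
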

\begin{pro}
\label{pro:epsiext}
Let $X \in \Xt$. Consider a multi-transversal flow $\Re (\aleph X)$ for some
stable multi-direction $\aleph: I \to (e^{i(0,\pi)})^{\tilde{q}}$.
Let ${\mathcal E} = \{ \eta \geq |t| \geq \rho|x| \}$
be a parabolic exterior set associated to $X \in \Xt$. Consider
$H \in Reg^{*}(\epsilon, \aleph X, I)$. Then there exists $C_{\mathcal E} \in {\mathbb R}^{+}$ such that
\[ \frac{1}{C_{\mathcal E}}  \frac{1}{|t-\gamma_{\mathcal E}(x)|^{\nu({\mathcal E})}} \leq
|\psi_{\mathcal E}|(x,t) \leq  C_{\mathcal E}  \frac{1}{|t-\gamma_{\mathcal E}(x)|^{\nu({\mathcal E})}} \]
in every sub-region of $H$ contained in ${\mathcal E}$.
\end{pro}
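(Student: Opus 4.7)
The plan is to reduce the proposition to the combination of three earlier tools: the model estimates in Remark \ref{rem:psiext}, the asymptotic identification $\psi_{\mathcal E} \sim \psi_{\mathcal E}^{0}$ in Lemma \ref{lem:itf}, and the uniform spiraling control of trajectories in Proposition \ref{pro:estext}. The whole argument is about showing that a subregion, although it is a $2$-parameter family of trajectories, is trapped in a sector of bounded angular opening around the curve $t=\gamma_{\mathcal E}(x)$, and then reading off the bounds from the model.

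First I would verify that every subregion $S=H\cap {\mathcal E}$ sits inside a sector
\[ {\mathcal S}_{\zeta}=\{(x,t)\in {\mathcal E}:\, t-\gamma_{\mathcal E}(x)\in {\mathbb R}^{+}e^{i[\theta_{S}-\zeta,\theta_{S}+\zeta]}\} \]
for some $\theta_{S}\in {\mathbb R}$ and some $\zeta>0$ depending only on ${\mathcal E}$. Indeed $S$ is $\Re(\aleph X)$-saturated, so it is a union of trajectories of $\Re((\aleph X)_{\mathcal E})$; Proposition \ref{pro:estext} (applied along each such trajectory) provides a uniform angular bound $\zeta_{0}=\zeta_{0}({\mathcal E})$, independent of the trajectory, of $r,\lambda$, and of $\aleph$. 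It remains to see that the different trajectories that foliate $S$ cannot spread apart angularly by more than a controlled amount: this follows from the Fatou flower structure of $\Re(\aleph X)_{|{\mathcal E}}$ (Remark \ref{rem:qbeps2}) together with Lemmas \ref{lem:tgpt20}--\ref{lem:tgpt30}, which guarantee that the convex tangent points on $\partial_{e}{\mathcal E}$ and (when ${\mathcal E}$ is non-terminal) on $\partial_{I}{\mathcal E}$ are $2\nu({\mathcal E})$ in number and vary continuously with $(r,\lambda)$. Consequently the trajectories constituting $S$ all enter ${\mathcal E}$ through a single arc bounded by two consecutive tangent points; they therefore lie in a single petal of the Fatou flower, whose angular opening is bounded by $2\pi/\nu({\mathcal E})+O(1)$. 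Taking $\zeta=\zeta_{0}+2\pi/\nu({\mathcal E})$ does the job.

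Once $S\subset {\mathcal S}_{\zeta}$, Remark \ref{rem:psiext} (applied with this $\zeta$) yields $C_{\zeta}\in {\mathbb R}^{+}$ with
\[ \frac{1}{C_{\zeta}}\frac{1}{|t-\gamma_{\mathcal E}(x)|^{\nu({\mathcal E})}}\leq |\psi_{\mathcal E}^{0}|(x,t)\leq C_{\zeta}\frac{1}{|t-\gamma_{\mathcal E}(x)|^{\nu({\mathcal E})}} \]
on ${\mathcal S}_{\zeta}\cap \{|x|<\delta(\zeta)\}$. Next, apply Lemma \ref{lem:itf} with the same $\zeta$ and with, say, $\upsilon=1/2$: after shrinking $\eta$ (and, if ${\mathcal E}$ is non-terminal, enlarging $\rho$) we get $|\psi_{\mathcal E}/\psi_{\mathcal E}^{0}-1|\leq 1/2$ on ${\mathcal S}_{\zeta}$. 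Multiplying the two estimates then gives the proposition with $C_{\mathcal E}=2C_{\zeta}$.

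The main obstacle is Step~1, namely the assertion that a whole subregion, not just a single trajectory, fits in a sector of uniformly bounded angular opening around $\gamma_{\mathcal E}$. Proposition \ref{pro:estext} only handles individual trajectories, so one really has to invoke the Fatou-flower combinatorics of Remark \ref{rem:qbeps2} and the tangent-point count in Lemmas \ref{lem:tgpt20}--\ref{lem:tgpt30} to see that all the trajectories making up $S$ are pinned to a common petal. Once this angular localization is in hand the rest of the proof is essentially the comparison $\psi_{\mathcal E}\sim\psi_{\mathcal E}^{0}\sim\psi_{\mathcal E}^{00}$ already packaged in Lemma \ref{lem:itf} and Remark \ref{rem:psiext}.
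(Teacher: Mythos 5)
Your argument reaches the right conclusion by the right means: angular confinement of the sub-region around $t=\gamma_{\mathcal E}(x)$, then the two-sided estimate from Remark \ref{rem:psiext} combined with the comparison $\psi_{\mathcal E}\sim\psi_{\mathcal E}^{0}$ of Lemma \ref{lem:itf}. Where you diverge from the paper is in the justification of the angular confinement. The paper cites Corollary \ref{cor:intbou}: the boundary trajectories of $H$ meet $\partial_{e}{\mathcal E}$ along finitely many \emph{asymptotically continuous} sections, which therefore converge as $x\to0$ and stay in a fixed sector in $\arg(t-\gamma_{\mathcal E}(x))$, and Proposition \ref{pro:estext} then propagates this sector along the trajectories foliating the sub-region. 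You instead invoke the Fatou-flower tangent-point combinatorics (Lemmas \ref{lem:tgpt20}--\ref{lem:tgpt30}, Remark \ref{rem:qbeps2}) to argue that all the trajectories making up a connected sub-region enter through a single arc between two consecutive convex tangent points and hence lie in a common petal. Both routes build on Proposition \ref{pro:estext} for the trajectory-level control; the paper's route obtains the required uniformity in $(r,\lambda)$ and in the choice of $\aleph$ directly from the asymptotic-continuity machinery (which is also what gets reused in the closely related Proposition \ref{pro:bddconf}, so the two proofs stay parallel), whereas your petal argument derives the same uniformity from the $(r,\lambda)$-uniform count and positions of the tangent points. Either is sound; your version is a bit more elementary but slightly less explicit about uniformity in $\aleph$.
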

\begin{proof}
We define $\Gamma_{0}=\partial{H} \setminus Sing X$. The set $\Gamma_{0}(x)$ is the union of
the trajectories of $\Re(\aleph X)$ bounding $H(x)$ for $x \in [0,\delta)I$.
The corollary \ref{cor:intbou} implies that $\Gamma_{0} \cap \partial_{e} {\mathcal E}$
is the union of a finite number of asymptotically continuous sections.
Therefore there exists $\zeta \in {\mathbb R}^{+}$
such that
\[ \overline{H} \cap \partial_{e} {\mathcal E} \subset
\{t - \gamma_{\mathcal E}(x) \in {\mathbb R}^{+} e^{i[-\zeta, \zeta]}\} \]
by proposition \ref{pro:estext}.
The result is a consequence of lemma \ref{lem:itf} and remark \ref{rem:psiext}.
\end{proof}
\begin{pro}
\label{pro:ext1g}
Let $X \in \Xt$. Consider a multi-transversal flow $\Re (\aleph X)$ for some
stable multi-direction $\aleph: I \to (e^{i(0,\pi)})^{\tilde{q}}$. Consider
$H \in Reg^{*}(\epsilon, \aleph X, I)$ and $L \in {\mathcal P}(H)$.
Then there exists $C_{1} \in {\mathbb R}^{+}$ such that
\begin{equation}
\label{equ:unieo}
 \frac{1}{C_{1}}  \frac{1}{|y|^{\nu({\mathcal E}_{0})}} \leq
|\psi_{L}^{X}|(x,y) \leq  C_{1} \frac{1}{|y|^{\nu({\mathcal E}_{0})}}
\end{equation}
for any $(x,y)$ that belongs to the L-subregion contained in ${\mathcal E}_{0}$.
The constant $C_{1}$ depends on $X$
but it does not depend on $I$, $\aleph$, $H$ or $L$.
\end{pro}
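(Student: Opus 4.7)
The strategy is to reduce the assertion to Proposition \ref{pro:epsiext} applied to the outermost exterior set ${\mathcal E}_0$, together with a control on the difference between $\psi_L^X$ and the standard exterior Fatou coordinate $\psi_{{\mathcal E}_0}$. Observe that for ${\mathcal E}_0$ one has $e({\mathcal E}_0)=0$, so $X_{{\mathcal E}_0}=X$, and the adapted coordinate $t$ on ${\mathcal E}_0$ is just $y$. Since $\nu({\mathcal E}_0)=s_1+\hdots+s_p-1\geq 1$ the set ${\mathcal E}_0$ is parabolic, hence Proposition \ref{pro:epsiext} applies to every subregion of $H$ contained in ${\mathcal E}_0$ and yields
\[ \frac{1}{C_{{\mathcal E}_0}} \frac{1}{|y-\gamma_{{\mathcal E}_0}(x)|^{\nu({\mathcal E}_0)}} \leq |\psi_{{\mathcal E}_0}|(x,y) \leq C_{{\mathcal E}_0} \frac{1}{|y-\gamma_{{\mathcal E}_0}(x)|^{\nu({\mathcal E}_0)}} \]
on the L-subregion contained in ${\mathcal E}_0$, with $C_{{\mathcal E}_0}$ depending only on $X$.

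Next I would replace $|y-\gamma_{{\mathcal E}_0}(x)|$ by $|y|$. When $p\geq 2$ the set ${\mathcal E}_0$ has the form $\{\eta\geq|y|\geq\rho|x|\}$, and since $\gamma_{{\mathcal E}_0}(0)=0$ we have $|\gamma_{{\mathcal E}_0}(x)|\leq K|x|\leq(K/\rho)|y|$ on ${\mathcal E}_0$; choosing $\rho$ large enough (which amounts to refining the dynamical splitting, and so is compatible with the hypotheses of the preceding propositions) one gets $|y-\gamma_{{\mathcal E}_0}(x)|$ comparable to $|y|$ with constants depending only on $X$. When $p=1$ the set ${\mathcal E}_0=T_0$ coincides with the whole initial ball and the unique curve $y=\gamma_1(x)$ can be made equal to $\{y=0\}$ by a biholomorphic change of the variable $y$ preserving $x$, so after this normalization $y-\gamma_{{\mathcal E}_0}(x)=y$ and the same comparison trivially holds.

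The last step is to pass from $\psi_{{\mathcal E}_0}$ to $\psi_L^X$. Both are integrals of the time form of the same vector field $X=X_{{\mathcal E}_0}$ in the open connected set $H^\circ\cap{\mathcal E}_0^\circ$, so they differ by a single-valued holomorphic function $b(x)$ of $x$ alone. To bound $b(x)$ uniformly in $I$, $\aleph$, $H$ and $L$, I would use the normalization of $\psi_L^X$: by Definition \ref{def:defLR} the function $\psi_L^X$ vanishes at $(L_0)_{iX}^{\epsilon_0}(0)$, a point lying on $|y|=\epsilon_0$ with $\epsilon_0$ fixed once and for all (independent of all the data varying). Evaluating $\psi_{{\mathcal E}_0}$ at this same point and using the upper bound of Step 1 at $|y|=\epsilon_0$ gives $|b(x)|\leq C_{{\mathcal E}_0}/\epsilon_0^{\nu({\mathcal E}_0)}$, a bound depending only on $X$.

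Combining the three ingredients yields $|\psi_L^X|\cdot|y|^{\nu({\mathcal E}_0)}$ bounded above and below by positive constants in the regime $|y|$ small, while for $|y|$ bounded below by a positive constant (the L-subregion is anyway contained in $\{|y|\leq\epsilon\}$) both $|\psi_L^X|$ and $1/|y|^{\nu({\mathcal E}_0)}$ are trivially bounded above and below, since $\psi_L^X$ is continuous on $\overline{H}_L$ and bounded below by a positive constant in $H$ (as recorded right after Definition \ref{def:defLR}). The main technical hurdle is the uniformity claim in Step 3: one must verify that the additive holomorphic correction $b(x)$ does not absorb any hidden dependence on the region, which is the reason the normalization point is chosen in the fixed reference thickness $\epsilon_0$ rather than in the varying $\epsilon$.
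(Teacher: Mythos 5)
Your argument follows essentially the same route as the paper's: comparability of $|y|$ and $|y-\gamma_{{\mathcal E}_0}(x)|$ in ${\mathcal E}_0$, confinement of the $L$-subregion in a sector of angle $\zeta$ independent of $I$, $\aleph$, $H$ (Proposition \ref{pro:estext}), and then Lemma \ref{lem:itf} with Remark \ref{rem:psiext}. You go further than the paper in one respect: you name and bound the additive holomorphic discrepancy $b(x)=\psi_L^X-\psi_{{\mathcal E}_0}$, which the paper's one-paragraph proof silently absorbs into "apply Lemma \ref{lem:itf} and Remark \ref{rem:psiext}"; since those give estimates on $\psi_{{\mathcal E}_0}$, not on $\psi_L^X$, this is a genuine gap-fill. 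Two small precision points, both repairable. First, invoking Proposition \ref{pro:epsiext} "with $C_{{\mathcal E}_0}$ depending only on $X$" is circular: the statement of that proposition does not claim that $C_{\mathcal E}$ is independent of $I$, $\aleph$, $H$; such uniformity is exactly the new content of the present proposition, and it comes from the fact that the angle $\zeta$ supplied by Proposition \ref{pro:estext} is independent of $r$, $\lambda$, $\Gamma$, $\aleph$, so you should cite that proposition together with Lemma \ref{lem:itf} and Remark \ref{rem:psiext}, as the paper does. Second, the normalization point $(L_0)_{iX}^{\epsilon_0}(0)$ lies on $|y|=\epsilon_0$, outside the $L$-subregion of $H$ (which sits in $|y|\le\epsilon<\epsilon_0/2$); to evaluate the estimate at that point you must run the exterior-set bound in the enlarged region $H_0\in Reg(\epsilon_0,\aleph X,I)$ of Definition \ref{def:defLR}, not in $H$ itself.
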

\begin{proof}
We have $y \sim (y-\gamma_{{\mathcal E}_{0}}(x))$ in ${\mathcal E}_{0}$.
The proposition \ref{pro:estext} implies that the subregions of $H$ contained in
$H \cap {\mathcal E}_{0}$ are contained in
$\{y   \in {\mathbb R}^{+} e^{i[-\zeta, \zeta]}\}$ for some $\zeta \in {\mathbb R}^{+}$.
In fact $\zeta$ does not depend on $I$, $\aleph$ or $H$. Thus we can apply
lemma \ref{lem:itf} and remark \ref{rem:psiext} in order to obtain equation (\ref{equ:unieo})
for some $C_{1} \in {\mathbb R}^{+}$ that does not depend on $I$, $\aleph$, $H$ or $L$.
\end{proof}
%
%
%
\subsection{Size of regions}
\label{subsec:sizereg}
Given $\varphi \in \diff{p1}{2}$ we define Fatou coordinates in the regions of
$Reg(\epsilon, \aleph X, I)$ for convenient choices of $X$, $I$ and $\aleph$
(section \ref{sec:comphicnf}). The Fatou coordinates are the main ingredients
that we use in the theorem of analytic conjugation  \ref{teo:modi}.
In order for such an approach to analytic conjugacy to work it is required that regions
be somehow preserved by analytic conjugations. Very roughly speaking we have to
prove that regions are not too small. This subsection is devoted to make the previous
ideas more precise and to introduce the
concepts and results that will be used in section \ref{sec:app}.

Let $X \in \Xt$. Consider a subset $H$ of $T_{0}$. Given a point $P \in H(x)$
we consider $\Gamma_{P} = \Gamma(X, P,H)$. We define
\[ width_{H} (P) = length ( {\mathcal I} (\Gamma_{P} )) \]
where $length ( {\mathcal I} (\Gamma_{P} ))$ is by definition the length of the interval of
definition of ${\mathcal I} (\Gamma_{P} )$. It can be eventually equal to $\infty$. We define
\[ width_{H}^{\min} (x) = \min_{P \in  H(x)} width_{H} (P), \ \
width_{H}^{\max} (x) = \max_{P \in H(x)} width_{H} (P) . \]
\begin{defi}
\label{def:eh}
Let $H \in Reg_{2}(\epsilon, \aleph X, I)$. There exists a seed $T_{\beta}$ containing
${\alpha}^{\aleph X} (H)$ and ${\omega}^{\aleph X} (H)$ but such that
${\alpha}^{\aleph X} (H)$ and ${\omega}^{\aleph X} (H)$ are contained in different sons
of $T_{\beta}$. We denote ${\mathcal C}_{H}={\mathcal C}_{\beta}$.
We define $e(H)=e({\mathcal C}_{H})$.
\end{defi}
\begin{pro}
\label{pro:widreg}
Let $X \in \Xt$. Consider a multi-transversal flow $\Re (\aleph X)$ for some
stable multi-direction $\aleph: I \to (e^{i(0,\pi)})^{\tilde{q}}$. Suppose
$H \in Reg_{1}(\epsilon, \aleph X, I)$, then
\[ width_{H}^{\min}(x) = width_{H}^{\max}(x) = \infty \ \ \forall x \in [0,\delta)I . \]
Suppose $H \in Reg_{2}(\epsilon, \aleph X, I)$, then there exists $J \in {\mathbb R}^{+}$ such that
\[ width_{H}^{min}(x) \geq \frac{J}{|x|^{e(H)}} \ \  \forall x \in (0,\delta)I . \]
\end{pro}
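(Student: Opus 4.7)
The plan is to pass to the Fatou coordinate $\psi_L^X$, in which orbits of $\Re(X)$ are horizontal lines and $width_H(P)$ equals the length of the horizontal segment at height $\Im \psi_L^X(P)$ inside the image $\psi_L^X(H_L)$. Since $(\aleph X)(\psi_L^X) \equiv \aleph \in e^{i(0,\pi)}$, the boundary tangency-trajectories of $H$ map to affine lines of direction $\aleph$, a direction that is uniformly bounded away from horizontal.

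For $H \in Reg_{1}(\epsilon, \aleph X, I)$: both $\alpha^{\aleph X}(H)$ and $\omega^{\aleph X}(H)$ coincide with a single singular curve $P_0$, so $\partial H$ contains no arc of $\partial T_0$ and every boundary trajectory of $H$ corresponds to a full line of direction $\aleph$ in the $\psi_L^X$-plane whose two ends both tend to $\psi = \infty$ (the image of $P_0$). Using the asymptotic description of $\psi_L^X$ near $P_0$ given by remark \ref{rem:psiext} and lemma \ref{lem:itf}, together with the non-spiraling estimate of proposition \ref{pro:estext}, I will argue that $\psi_L^X(H_L)$ necessarily extends to $\Re\psi = \pm\infty$ rather than being squeezed into a finite $\aleph$-strip, essentially because a second boundary trajectory on the opposite side of $H$ would force the $\alpha$ and $\omega$ limits to differ. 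Consequently every horizontal segment in $\psi_L^X(H_L)$ has infinite length and $width_H(P) = \infty$.

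For $H \in Reg_{2}(\epsilon, \aleph X, I)$ the decisive ingredient is the time-rescaling in the compact-like set $\mathcal{C}_H = \mathcal{C}_\beta$. From $X = x^{e(H)} X_{\mathcal{C}_H}$ there, on the fiber $x = x_0 = |x_0|\lambda_0$ one has
\[ \Re(X)|_{\mathcal{C}_H(x_0)} \;=\; |x_0|^{e(H)}\, \Re\bigl(\lambda_0^{e(H)} X_{\mathcal{C}_H}|_{x=x_0}\bigr), \]
and the bracketed flow converges as $x_0 \to 0$ to the polynomial flow $\Re(X_\beta(\lambda_0))$ of definition \ref{def:pol}. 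Since $\alpha^{\aleph X}(H)$ and $\omega^{\aleph X}(H)$ belong to distinct sons of $T_\beta$, the nondegenerate polynomial flow $\Re(X_\beta(\lambda_0))$ takes strictly positive time to cross $\mathcal{C}_H(0,\lambda_0) \cap H(0,\lambda_0)$, depending continuously on $\lambda_0$; compactness of $I$ yields a uniform lower bound $\tau_0 > 0$. Dividing by $|x_0|^{e(H)}$ converts this into a lower bound of order $1/|x_0|^{e(H)}$ on the transit time of $\Re(X)$ through $\mathcal{C}_H(x_0) \cap H(x_0)$ for every orbit that enters this set.

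The main obstacle is that some trajectories $\Gamma_P$ might avoid $\mathcal{C}_H$ entirely, remaining inside a single son of $T_\beta$ while winding around the singular point it contains. To handle them I will use the identity $Res(X, P) = x_0^{e(H)} Res(X_{\mathcal{C}_H}, P)$ at each singular point $P$ of $H$ lying in $\mathcal{C}_H$, which produces a $\psi_L^X$-monodromy of order $1/|x|^{e(H)}$ around $P$. Because the two tangency-trajectory boundaries of $H$ wind homotopically differently around these singular points, they are separated in $\psi_L^X$ by a real displacement of the same order $1/|x|^{e(H)}$; hence every horizontal segment of $\psi_L^X(H_L)$ has length bounded below by a positive multiple of $1/|x|^{e(H)}$, uniformly in $\lambda \in I$, yielding the required $J$.
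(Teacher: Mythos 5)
Your identification of the rescaling $X = x^{e(H)} X_{\mathcal{C}_H}$ inside $\mathcal{C}_H$, the convergence of $\lambda^{e(H)} X_{\mathcal{C}_H}|_{x=x_0}$ to the polynomial field $X_\beta(\lambda_0)$, and the compactness of $I$ is exactly the paper's starting point for the $Reg_2$ case, and your handling of $Reg_1$ is fine (the paper dismisses it as obvious). You also correctly flag the real obstacle: a transit-time bound through $\mathcal{C}_H$ does not by itself bound $width_H$ for orbits of $\Re(X)$ that descend into the sons $T_{\beta,\zeta}$ or deeper levels of the dynamical splitting. That is indeed where the work is.

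The residue/monodromy fix you propose does not close this gap. As a minor point the scaling is inverted: $X = x^{e(H)} X_{\mathcal{C}_H}$ gives $Res(X,P) = x_0^{-e(H)} Res(X_{\mathcal{C}_H},P)$. More seriously, the singular points of $X$ are removed from $\mathcal{C}_H$ by construction, and the two boundary trajectories of $H$ do not enclose any singular point: they enter $\mathcal{C}_H$ at two distinct tangent sections of $\partial_e \mathcal{C}_H$ and then flow into two \emph{distinct} sons of $T_\beta$ (this is the condition defining $\mathcal{C}_H$), so there is no homotopy class difference to exploit and the monodromy of $\psi_{L}^{X}$ is not the source of the separation. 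What the paper does instead is a holonomy argument: it sets up the rescaled width $width_{\mathcal{C}}^{*}$ on the set $H_{\mathcal{C}}$ of points whose $\Re(\aleph X)$-trajectory stays in the magnifying-glass set $M_\beta$, obtains a uniform lower bound $c_0>0$ at the two tangent sections $T\mathcal{C}_{\aleph X}^{\rho,j}(r,\lambda)$ by continuous extension to $r=0$ and compactness of $I$ (this is morally your $\tau_0$), and then propagates $c_0$ along the boundary trajectory $\Gamma_x^1$ as it crosses the successive non-terminal exterior sets $\mathcal{E}_1,\hdots,\mathcal{E}_a$. Each crossing can shrink the width, but the crossing is governed by a holonomy $hol_{r,\lambda,h_1,h_2}$ of the rescaled flow which, on a compact parameter set, satisfies $|hol_{r,\lambda,h_1,h_2}(z)| \geq |z|/J_k$ (equation (\ref{equ:reghol})); taking the product over the finitely many levels gives the final constant $J$. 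Your proposal has no substitute for this uniform control of the contraction through the nested splitting, and the residue argument cannot provide it; that is the genuine gap.
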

\begin{proof}
The result is obvious if $H \in Reg_{1}(\epsilon, \aleph X, I)$.
Suppose $H \in Reg_{2}(\epsilon, \aleph X, I)$. Denote ${\mathcal C} = {\mathcal C}_{H}$.
We have ${\mathcal C} = \{ (x,w) \in
B(0,\delta) \times (\overline{B}(0,\rho)
\setminus \cup_{\zeta \in S_{\mathcal C}} B(\zeta, \eta_{{\mathcal C}, \zeta}) )\}$.
Consider the magnifying glass $M =\{ (x,w) \in B(0,\delta) \times \overline{B}(0,\rho)  \}$.
We define
\[ H_{\mathcal C}(x) = \{ P \in H(x) : \Gamma( \aleph X, P, T_{0}) \subset M \} \]
for any $x \in (0,\delta)I$.  We define
\[ width_{\mathcal C}^{*}(P) = length( {\mathcal I} (\Gamma
(\lambda^{e(H)} X_{\mathcal C}, P, H_{\mathcal C}))) \]
for $P \in H_{\mathcal C}(r,\lambda)$ and $(r,\lambda) \in [0,\delta) \times I$.
It suffices to prove that $width_{\mathcal C}^{*} > J_{0}$ in $H_{\mathcal C}$ for
some $J_{0} \in {\mathbb R}^{+}$. The rest of the proof is devoted to show this result.

We have
$H_{\mathcal C}(r, \lambda) \cap \partial_{e} {\mathcal C}_{H} =
\{ T{\mathcal C}_{\aleph X}^{\rho, 1}(r, \lambda), T{\mathcal C}_{\aleph X}^{\rho, 2}(r, \lambda) \}$
for any  $(r,\lambda) \in (0,\delta) \times I$ where
$T{\mathcal C}_{\aleph X}^{\rho, 1}(r, \lambda)$ and
$T{\mathcal C}_{\aleph X}^{\rho, 2}(r, \lambda)$
are continuous sections of $T{\mathcal C}_{\aleph X}^{\rho}(r, \lambda)$ defined for
$(r,\lambda) \in [0,\delta) \times I$. Up to exchange
$T{\mathcal C}_{\aleph X}^{\rho, 1}$ and $T{\mathcal C}_{\aleph X}^{\rho, 2}$
if necessary we suppose that $\Re (X)$ points towards
$H_{\mathcal C}$ at $T{\mathcal C}_{\aleph X}^{\rho, 1}(x)$ for any
$x \in (0,\delta)I$. We claim that $H_{\mathcal C}(r,\lambda)$ extends
continuously to $[0,\delta) \times I$, i.e. there exists
$H_{\mathcal C}(0,\lambda_{0}) \subset M(0,\lambda_{0}) \setminus Sing X$ such that
\[ \lim_{(r,\lambda) \to (0,\lambda_{0})} (H_{\mathcal C} \cup Sing X_{\mathcal C})(r,\lambda)
= (H_{\mathcal C} \cup Sing X_{\mathcal C})(0,\lambda_{0}) \]
for any $\lambda_{0} \in I$. The limit is considered
in the Hausdorff topology for compact sets in
$\{ (r,\lambda,w) \in [0,\infty) \times {\mathbb S}^{1} \times {\mathbb C} \}$.
The claim is a consequence of the analogous property for
$T{\mathcal C}_{\aleph X}^{\rho, 1}(r, \lambda)$ and $T{\mathcal C}_{\aleph X}^{\rho, 2}(r, \lambda)$.
Consider a Fatou coordinate $\psi_{\mathcal C}$ of $\lambda^{e(H)} X_{\mathcal C}$ in
$H_{\mathcal C}$. There exists a continuous function $c : [0,\delta) \times I \to {\mathbb R}^{+}$ such that
we have the equality of sets
\[ \psi_{\mathcal C} (T{\mathcal C}_{\aleph X}^{\rho, 2}(r, \lambda)) + \aleph_{\mathcal C}(\lambda)
{\mathbb R} = c(r,\lambda) + \psi_{\mathcal C} (T{\mathcal C}_{\aleph X}^{\rho, 1}(r, \lambda)) +
\aleph_{\mathcal C}(\lambda){\mathbb R} \]
for any $(r,\lambda) \in [0,\delta) \times I$. Thus there exists $c_{0} \in {\mathbb R}^{+}$
such that $c_{0} \leq c(r,\lambda)$ for any $(r,\lambda) \in [0,\delta) \times I$. We denote
\[ \Gamma_{r, \lambda}^{j} = \Gamma (\lambda^{e(H)}
(\aleph X)_{\mathcal C},T{\mathcal C}_{\aleph X}^{\rho, j}(r, \lambda), M) \]
for $j \in \{1,2\}$ and $(r,\lambda) \in [0,\delta) \times I$.
It suffices to prove that $width_{\mathcal C}^{*}(P) > J_{0}$ for some constant
$J_{0} \in {\mathbb R}^{+}$ and any $P \in \cup_{x \in (0,\delta)I} \Gamma_{x}^{1}(-\infty,\infty)$.
\begin{figure}[h]
\begin{center}
\includegraphics[height=7cm,width=7cm]{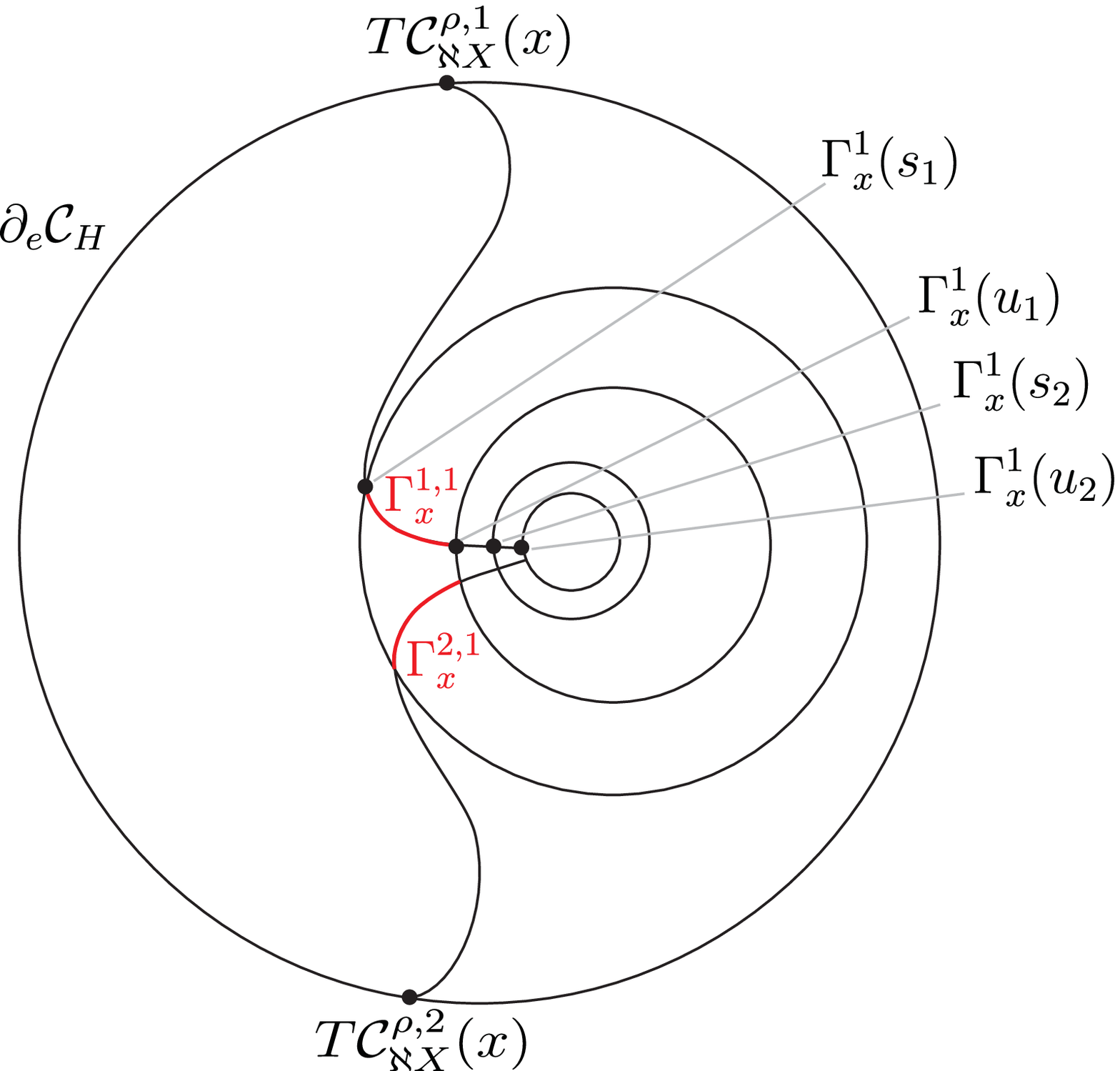}
\end{center}
\label{EVf2}
\end{figure}
Let ${\mathcal E}_{1}$, $\hdots$, ${\mathcal E}_{a}$ be the sequence of
non-terminal exterior sets intersected by $\Gamma_{x}^{j}(0,\infty)$.
We have
\[ {\mathcal E}_{k} =  \{(x,t_{k}) \in B(0,\delta) \times {\mathbb C} :  \eta_{k} \geq |t_{k}| \geq \rho_{k} |x| \} \]
in adapted coordinates $(x,t_{k})$ for any $1 \leq k \leq a$.
Consider coordinates $(x,w_{k}) \in {\mathbb C}^{2}$ such that $t_{k} = x w_{k}$.
Denote
\[ {\mathcal E}_{k}^{*} = \{(x,t_{k}) \in B(0,\delta) \times {\mathbb C} :  \rho_{k} \leq |w_{k}| \leq 2 \rho_{k}  \}
\ {\rm and} \ \partial_{e} {\mathcal E}_{k}^{*} =  {\mathcal E}_{k} \cap \{ |w_{k}| = 2 \rho_{k}  \} . \]
Given $x \in (0,\delta)I$ we denote $s_{k}(x)$ and $u_{k}(x)$ the positive real numbers
such that $\Gamma_{x}^{1}(s_{k}(x)) \in \partial_{e} {\mathcal E}_{k}^{*}$  and
$\Gamma_{x}^{1}(u_{k}(x)) \in \partial_{I} {\mathcal E}_{k}$
for $1 \leq k \leq a$. By reordering the sequence ${\mathcal E}_{1}$, $\hdots$, ${\mathcal E}_{a}$
we suppose $s_{1} < u_{1} < \hdots < s_{a} < u_{a}$.
We denote $\Gamma_{x}^{1,k}$ the compact set $(w_{k} \circ \Gamma_{x}^{1}) [s_{k}(x), u_{k}(x)]$.
Analogously as corollary \ref{cor:intbou} the mappings
$w_{k} \circ \Gamma_{x}^{1}(s_{k}(x))$ and   $w_{k} \circ \Gamma_{x}^{1}(u_{k}(x))$
extend continuously to $(r,\lambda) \in [0,\delta) \times I$.
Therefore the function $x \to \Gamma_{x}^{1,k}$
defined in $(0,\delta) \times I$ can be extended continuously to $[0,\delta) \times I$.
In an analogous way we obtain that
$\Gamma_{x}^{2,k} = \Gamma_{x}^{2}(0,\infty) \cap {\mathcal E}_{k}^{*}$
extends continuously to $(r,\lambda) \in [0,\delta) \times I$. Moreover since
$\Gamma_{r, \lambda}^{1}(0,\infty) \cap \partial_{e} {\mathcal E}_{1}$ and
$\Gamma_{r, \lambda}^{2}(0,\infty) \cap \partial_{e} {\mathcal E}_{1}$
intersect $\partial_{e} {\mathcal E}_{1}$ in the same component of
$\partial_{e} {\mathcal E}_{1} \setminus T ({\mathcal E}_{1})_{\aleph X}^{\eta_{1}}(r,\lambda)$
then prop. \ref{pro:hit} and cor. \ref{cor:hit} imply that
$\Gamma_{0, \lambda}^{1,1} \equiv \Gamma_{0,\lambda}^{2,1}$ for any $\lambda \in I$.
Then we obtain $\Gamma_{0, \lambda}^{1,k} \equiv \Gamma_{0,\lambda}^{2,k}$ for
all $1 \leq k \leq a$ and $\lambda \in I$. The negative trajectory $\Gamma_{x}^{1}(-\infty,0)$
can be analyzed analogously.

Let us study the behavior of $width_{\mathcal C}^{*}$ in
\[ \Gamma_{x}^{1}[0,s_{1}(x)], \  \Gamma_{x}^{1}[s_{1}(x), u_{1}(x)], \ \hdots, \
 \Gamma_{x}^{1}[s_{a}(x), u_{a}(x)], \  \Gamma_{x}^{1}[u_{a}(x), \infty) . \]
Denote $u_{0} \equiv 0$ and $s_{a+1} \equiv \infty$.
Let ${\mathcal C}_{k}$ be the compact-like set associated to the same seed as
${\mathcal E}_{k}$.
We claim that given $x \in (0,\delta) I$ the function $width_{\mathcal C}^{*}$ is constant in
$\Gamma_{x}^{1}[u_{k}(x), s_{k+1}(x)]$ for any $0 \leq k \leq a$.
Suppose $k \geq 1$. Then $\Gamma_{x}^{1}(u_{k})$ and
$\Gamma_{x}^{2}({\mathbb R}^{+}) \cap \partial_{I} {\mathcal E}_{k}$
(respectively $\Gamma_{x}^{1}(s_{k})$ and
$\Gamma_{x}^{2}({\mathbb R}^{+}) \cap \partial_{e} {\mathcal E}_{k}^{*}$)
are asymptotically continuous sections whose extensions to $r=0$ coincide.
Thus we have $\aleph^{*} \equiv \aleph_{{\mathcal C}_{k}}(x)$ in
$\Gamma (\lambda^{e(H)} X_{\mathcal C}, P, H_{\mathcal C})$.
for any $P \in \Gamma_{x}^{1}[u_{k}(x), s_{k+1}(x)]$ and the claim follows.
Suppose $k=0$. Let $s_{-1}(x)$ be the negative real number such that
$\Gamma_{x}^{1}(s_{-1}(x)) \in \partial_{e} {\mathcal E}_{1}^{*}$.
By arguing as in the previous case we obtain that $width_{\mathcal C}^{*}$ is
constant in $\Gamma_{x}^{1}[s_{-1}(x), s_{1}(x)]$.
Moreover
$width_{\mathcal C}^{*} (\Gamma_{x}^{1}[s_{-1}(x),s_{1}(x)]) \subset [c_{0}, \infty)$ for any
$x \in (0,\delta) I$. It suffices to prove that $width_{\mathcal C}^{*}$ decreases at most by
a multiplicative constant in $\Gamma_{x}^{1}[s_{k}(x), u_{k}(x)]$ for all $x \in (0,\delta) I$
and $1 \leq k \leq a$.

The sets $\Gamma_{r,\lambda}^{1,k}[s_{k}(r,\lambda), u_{k}(r,\lambda)]$ and
$\Gamma_{0,\lambda}^{1,k}$ for $(r,\lambda) \in (0,\delta) \times I$ are contained
in trajectories of
$\Re (\lambda^{\iota({\mathcal E}_{k})} \aleph^{*} X_{{\mathcal C}_{k}})$
that we denote $\tilde{\Gamma}_{r,\lambda}^{1,k}[0,v(r,\lambda)]$.
The vector field $Re (\lambda^{\iota({\mathcal E}_{k})} X_{{\mathcal C}_{k}})$ is transversal
to $\Re (\lambda^{\iota({\mathcal E}_{k})} \aleph^{*} X_{{\mathcal C}_{k}})$
in $\cup_{(r,\lambda) \in [0,\delta) \times I} \tilde{\Gamma}_{r,\lambda}^{1,k}[0,v(r,\lambda)]$.
We can define an holonomy mapping in the neighborhood of
$\cup_{(r,\lambda) \in [0,\delta) \times I} \tilde{\Gamma}_{r,\lambda}^{1,k}[0,v(r,\lambda)]$.
More precisely consider $(r,\lambda) \in [0,\delta) \times I$ and points $h_{1},h_{2} \in [0,v(r,\lambda)]$.
Let us define $hol_{r, \lambda, h_{1}, h_{2}}(z)$ for $z \in {\mathbb R}$ in a neighborhood of $0$ such that
\[ \Gamma(\lambda^{\iota({\mathcal E}_{k})} X_{{\mathcal C}_{k}},
\tilde{\Gamma}_{r,\lambda}^{1,k}(h_{2}), T_{0})(hol_{r, \lambda, h_{1}, h_{2}}(z)) \]
is the point of intersection of $\Gamma(\lambda^{\iota({\mathcal E}_{k})} X_{{\mathcal C}_{k}},
\tilde{\Gamma}_{r,\lambda}^{1,k}(h_{2}), T_{0})$ and the trajectory of the vector field
$\Re (\lambda^{\iota({\mathcal E}_{k})} \aleph^{*} X_{{\mathcal C}_{k}})$
passing through $\Gamma(\lambda^{\iota({\mathcal E}_{k})} X_{{\mathcal C}_{k}},
\tilde{\Gamma}_{r,\lambda}^{1,k}(h_{1}), T_{0})(z)$.
The function $hol_{r, \lambda, h_{1}, h_{2}}(z)$ is continuous in $(r,\lambda,h_{1},h_{2},z)$
and real analytic in $z$. Since
$\cup_{(r,\lambda) \in [0,\delta) \times I} (r,\lambda) \times [0,v(r,\lambda)]^{2}$ is compact
then there exists $J_{k} \in {\mathbb R}^{+}$ such that
\begin{equation}
\label{equ:reghol}
 |hol_{r, \lambda, h_{1}, h_{2}}(z)| \geq \frac{|z|}{J_{k}}
\end{equation}
for all $h_{1},h_{2} \in [0,v(r,\lambda)]$ and $z$ in a neighborhood of $0$ and any
$(r,\lambda) \in [0,\delta) \times I$. We denote
$J = c_{0} / \prod_{k=1}^{a} J_{k}$. Consider $1 \leq k \leq a$ and $\lambda_{0} \in I$.
The compact set $H_{\mathcal C}(r,\lambda) \cap {\mathcal E}_{k}^{*}$ tends to the curve
$\Gamma_{0,\lambda_{0}}^{1,k} = \Gamma_{0,\lambda_{0}}^{2,k}$ when $(r,\lambda) \to (0,\lambda_{0})$.
Thus the property (\ref{equ:reghol}) on the behavior of the holonomy in a neighborhood of
$\cup_{\lambda \in I} \Gamma_{0, \lambda}^{1,\lambda}$ implies
$width_{\mathcal C}^{*} (\Gamma_{x}^{1}[0,\infty)) \geq J$ for any $x \in (0,\delta)I$.
We do the same analysis with $\Gamma_{x}^{1}(-\infty,0]$ and consider a smaller
$J>0$ if necessary to obtain $width_{\mathcal C}^{*} (\Gamma_{x}^{1}(-\infty,\infty)) \geq J$
for any $x \in (0,\delta)I$. This implies
$width_{H}^{\min}(x) \geq J/|x|^{e(H)}$ for any $x \in (0,\delta)I$.
\end{proof}
Let $\varphi, \eta \in \diff{p1}{2}$ with the same fixed points set.
Suppose that $\varphi_{|x=x_{0}}$ is analytically conjugated to
$\eta_{|x=x_{0}}$ by an injective mapping $\kappa_{x_{0}}$ whose fixed points
set contains the fixed points set of  $\varphi_{|x=x_{0}}$ for any $x_{0}$
in a pointed  neighborhood of $0$.  If $\kappa_{x_{0}}$ is defined in some
$B(0,\epsilon)$ for any $x_{0} \neq 0$ and some $\epsilon>0$ independent of
$x_{0}$ then  $\varphi$ is analytically conjugated to $\eta$
(main theorem in \cite{JR:mod}). The family $\kappa_{x_{0}}$ is not required to
depend analytically or even continually on $x_{0}$.
It turns out that if we drop the hypothesis on
the common domain of definition $B(0,\epsilon)$ then the result is no longer true.
There are counterexamples where $\varphi$ and $\eta$ are not analytically conjugated
and $\kappa_{x_{0}}$ is defined in
$B(0,C_{0}/\sqrt[\nu({\mathcal E}_{0})]{|\ln x_{0}|})$ for some
$C_{0} \in {\mathbb R}^{+}$ and any $x_{0} \neq 0$.
One of the goals of this paper is proving that such a counterexample is optimal:
$\varphi$ is analytically conjugated to $\eta$ if the domain of definition is
any ``bigger" than $\{ |y| < \sqrt[\nu({\mathcal E}_{0})]{|\ln x|}) \}$, i.e.
if it is of the form $\{ |y| < s (x) \}$ where $s$ is a
$\nu({\mathcal E}_{0})$ slow decaying function (see theorem \ref{teo:modi}
for a precise statement).
\begin{defi}
\label{def:slow}
Let us consider a bounded function $s(x) : B(0,\delta) \setminus \{0\} \to {\mathbb R}^{+}$. We say
that $s$ is a {\it slow decaying} function if $\lim_{x \to 0} s^{-1}(x) |x|^{\tau}=0$ for any
$\tau \in {\mathbb R}^{+}$. Let $\nu \in {\mathbb N}$. We say that $s$ is a $\nu$ slow decaying function if
$\lim_{x \to 0} s(x) \sqrt[\nu]{|\ln|x||} = \infty$.
\end{defi}
\begin{rem}
A $\nu$ slow decaying function is a slow decaying function. The constant fuctions
are examples of $\nu$ slow decaying functions for any $\nu \in {\mathbb N}$.
Moreover both concepts of decay are preserved if we replace $s(x)$ with
$s(x) \tau$ for $\tau \in {\mathbb R}^{+}$.
\end{rem}
\begin{defi}
Let $X \in \Xt$. Consider a multi-transversal flow $\Re (\aleph X)$ for some
stable multi-direction $\aleph: I \to (e^{i(0,\pi)})^{\tilde{q}}$.
Let $s$ be a slow decaying function. We define $Reg(s, \aleph X, I)$ the set of connected
components of
\[ \{ (x,y) \in (0,\delta) I \times B(0,\epsilon) :
\Gamma(\aleph X, (x,y), T_{0}) \subset \{x\} \times B(0,s(x)) \} \setminus Sing X . \]
If $s$ satisfies $s<\epsilon$ there is a bijective correspondence between
$Reg(s, \aleph X, I)$ and $Reg(\epsilon, \aleph X, I)$ and any element of
$Reg(\epsilon, \aleph X, I)$ contains an element of $Reg(s, \aleph X, I)$.
\end{defi}
The next result is used to prove the proposition \ref{pro:conimeq}. It is
one of the ingredients of the proof of theorem \ref{teo:modi} .
\begin{pro}
\label{pro:widcon}
Let $X \in \Xt$. Consider a multi-transversal flow $\Re (\aleph X)$ for some
stable multi-direction $\aleph: I \to (e^{i(0,\pi)})^{\tilde{q}}$.
Let $s$ be a slow decaying function with $s < \epsilon$ and $\tau \in (0,1)$. Consider
$H \in Reg(\epsilon, \aleph X, I)$. Denote $H_{s \tau}$ the element of
$Reg(s \tau, \aleph X, I)$ contained in $H$. There exists $J_{0},J_{1} \in {\mathbb R}^{+}$
such that ${\rm exp}(tX)$ is well-defined in $H_{s \tau}(x)$ and
${\rm exp}(tX)(H_{s \tau}(x)) \subset H(x)$ for all $x \in (0,\delta)I $
and $t \in B(0,J_{0}/(s^{\nu({\mathcal E}_{0})}(x) \tau^{\nu({\mathcal E}_{0})}) - J_{1})$.
In particular we obtain
\[ width_{H \setminus H_{s \tau}}^{\min}(x) \geq
\frac{J_{0}}{s^{\nu({\mathcal E}_{0})}(x) \tau^{\nu({\mathcal E}_{0})}} - J_{1} \ \
\forall x \in (0,\delta)I . \]
\end{pro}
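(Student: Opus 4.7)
The plan is to carry out the entire argument in the Fatou coordinate $\psi = \psi_L^X$, where the complex one-parameter family $\exp(tX)$ restricted to the slice $\{x\} \times B(0,\epsilon)$ reduces to translation $z \mapsto z+t$. Since $\psi_L^X$ is injective on $H_L$ with $\psi_L^X(H_L(x))$ simply connected, the inclusion $\exp(tX)(H_{s\tau}(x)) \subset H(x)$ amounts to $\psi_L^X(P) + t \in \psi_L^X(H_L(x))$ for every $P \in H_{s\tau,L}(x)$ and every $|t| < J_0/(s(x)\tau)^{\nu(\mathcal{E}_0)} - J_1$. The proposition therefore reduces to estimating, from below, the Euclidean distance from $\psi_L^X(H_{s\tau,L}(x))$ to $\partial \psi_L^X(H_L(x))$ in the $\psi$-plane.

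The crucial ingredient is Proposition~\ref{pro:ext1g}: on the L-subregion contained in $\mathcal{E}_0$ one has the two-sided bound $1/(C_1 |y|^{\nu(\mathcal{E}_0)}) \leq |\psi_L^X(x,y)| \leq C_1/|y|^{\nu(\mathcal{E}_0)}$, with $C_1$ independent of $I$, $\aleph$, $H$ and $L$. Applied to $\overline{H_L(x)} \cap \{|y|=\epsilon\}$ (the compact portion of $\partial H_L(x) \cap \mathcal{E}_0$), the upper bound confines the $\psi_L^X$-image of that portion to the disk $\{|z| \leq C_1/\epsilon^{\nu(\mathcal{E}_0)}\}$. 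Applied to $P \in H_{s\tau,L}(x) \cap \mathcal{E}_0$ with $|y(P)| \leq s(x)\tau$, the lower bound gives $|\psi_L^X(P)| \geq 1/(C_1 (s(x)\tau)^{\nu(\mathcal{E}_0)})$, while for $P$ in a deeper basic set $\mathcal{B}$ integrating the time form of $X$ along the $\Re(\aleph X)$-trajectory from a point with $|y|=s(x)\tau$ produces an even larger lower bound. By the triangle inequality, the distance from $\psi_L^X(P)$ to the compact part of $\partial \psi_L^X(H_L(x))$ is at least
\[ \frac{1}{C_1 (s(x)\tau)^{\nu(\mathcal{E}_0)}} - \frac{C_1}{\epsilon^{\nu(\mathcal{E}_0)}}, \]
which furnishes the constants $J_0$ and $J_1$ for this half of the argument.

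The main obstacle is that $\partial \psi_L^X(H_L(x))$ has a second, non-compact component coming from the two lateral $\Re(\aleph X)$-trajectories bounding $H_L(x)$, which $\psi_L^X$ sends to curves going to infinity; a priori these might approach $\psi_L^X(P)$ and ruin the distance estimate. To control them I would invoke Proposition~\ref{pro:estext}: each such lateral trajectory in $\mathcal{E}_0$ is confined to a sector of angular aperture at most $\zeta$, independent of $\aleph$, centered at $y = \gamma_{\mathcal{E}_0}(x)$. The relation $\psi_L^X \sim -1/(\nu(\mathcal{E}_0) v(0,0) y^{\nu(\mathcal{E}_0)})$ converts this into a confinement of both lateral curves to fixed angular sectors of the $\psi$-plane, well separated from $\psi_L^X(P)$ by a distance comparable to $|\psi_L^X(P)|$. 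Iterating this analysis basic set by basic set in the magnifying-glass hierarchy, and using the holonomy estimate (\ref{equ:reghol}) across the gluings exactly as in the proof of Proposition~\ref{pro:widreg}, propagates the distance bound uniformly through every basic set that the trajectory of $P$ visits, yielding $B(\psi_L^X(P), J_0/(s(x)\tau)^{\nu(\mathcal{E}_0)} - J_1) \subset \psi_L^X(H_L(x))$ for possibly adjusted positive constants $J_0, J_1$. Restricting $t$ to real values gives the final assertion on $width_{H \setminus H_{s\tau}}^{\min}$, since $\exp(tX)(P)$ then traces a real $\Re(X)$-orbit remaining in $H \setminus H_{s\tau}$ throughout the claimed time interval.
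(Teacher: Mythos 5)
Your reduction to a Euclidean distance estimate in the $\psi$-plane, and the use of Proposition~\ref{pro:ext1g} to produce the quantity $D(x,\tau)=C_1^{-1}(s(x)\tau)^{-\nu({\mathcal E}_0)} - C_1\epsilon^{-\nu({\mathcal E}_0)}$, coincides with the opening step of the paper's proof. The paper, however, applies this estimate to bound a \emph{width}: it measures the length of the $\Re(X)$-trajectory $\Gamma_{x}^{L}=\Gamma(X,L_{i X}^{\epsilon}(x),\overline{B}(0,\epsilon)\setminus B(0,s(x)\tau))$, propagates the width bound basic set by basic set through the holonomy argument of Proposition~\ref{pro:widreg}, and only at the last line converts width into Euclidean distance by invoking the uniform transversality $\aleph^{*}(P)\in e^{i[\zeta,\pi-\zeta]}$ of the multi-transversal flow; the resulting radius is $J\sin(\zeta)D(x,\tau)$.

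The gap in your proof is the control of the lateral (non-compact) boundary. You claim, citing Proposition~\ref{pro:estext}, that the two lateral $\Re(\aleph X)$-trajectories are confined to fixed angular sectors of the $\psi$-plane, ``well separated from $\psi_L^X(P)$ by a distance comparable to $|\psi_L^X(P)|$''. Proposition~\ref{pro:estext} only bounds the angular spread of \emph{each individual} trajectory; it says nothing about an angular gap between two of them, and generically there is none. In $\tilde{\mathcal E}_0$ the multi-transversal flow is $\Re(iX)$, so in the $\psi$-plane the lateral trajectories there are exact vertical lines; any two distinct vertical lines occupy overlapping angular sectors about the origin (each has arguments tending to $\pm\pi/2$), so an angular argument gives no positive lower bound on their separation. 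What separates $\psi_L^X(P)$ from the lateral curve is the horizontal gap --- the width --- and turning that into a Euclidean distance requires the factor $\sin(\zeta)$, where $\zeta$ is the uniform lower bound guaranteeing $\aleph^*(P)\in e^{i[\zeta,\pi-\zeta]}$. This transversality constant never appears in your argument, and Proposition~\ref{pro:estext} cannot substitute for it. A secondary issue is the last sentence: for $P\in H_{s\tau}$ and small real $t$ the orbit $\exp(tX)(P)$ remains inside $H_{s\tau}$, not in $H\setminus H_{s\tau}$, so it does not compute $width_{H\setminus H_{s\tau}}^{\min}$. The paper avoids this by first bounding the width of the crescent $G_{s\tau}$ directly (through $\Gamma_{x}^{L}$, which lies in the crescent) and then deducing the $\exp(tX)$ inclusion.
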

\begin{proof}
Denote $\nu=\nu({\mathcal E}_{0})$.
We have $H \in Reg_{j}(\epsilon, \aleph X, I)$.
The set $H \setminus H_{s \tau}$ has $j$ connected components. Consider
$L \in {\mathcal P}(H)$.
We denote $L_{s \tau}$ the element of ${\mathcal P}(H_{s \tau})$ such that
$L_{s \tau} \subset L$.
There exists a unique connected component $G_{s \tau}$ of
$\overline{H} \setminus (H_{s \tau} \cup Sing X)$
such that $L_{i X}^{\epsilon}(x) \in G_{s \tau}$ for any $x \in [0,\delta)I$.
It suffices to prove
$width_{G_{s \tau}}^{\min}(x) \geq  J_{0}/(s(x) \tau)^{\nu} - J_{1}$ for any $x \in (0,\delta)I$.

We can suppose that $\Re (X)$ points towards the interior of $T_{0}$ at $L_{i X}^{\epsilon}(x)$
for any $x \in [0,\delta)I$ by replacing $X$ with $-X$ if necessary. Denote
\[ \Gamma_{x}^{L} =
\Gamma (X, L_{i X}^{\epsilon}(x), \overline{B}(0,\epsilon) \setminus B(0,s(x) \tau)). \]
Denote $\psi=\psi_{L}^{X}$.
The interval ${\mathcal I}(\Gamma_{x}^{L})$ is of the form $[0,\upsilon (x)]$ for
$x \in (0,\delta)I$. Let us remark that $\aleph X \equiv i X$ in ${\mathcal E}_{0}$. We have
$C_{1}^{-1}/|y|^{\nu} \leq |\psi(x,y)| \leq C_{1}/|y|^{\nu}$
in ${\mathcal E}_{0}$
for some positive constant $C_{1} \geq 1$ (prop. \ref{pro:ext1g}).
Denote $D(x,\tau)=C_{1}^{-1}/(s(x) \tau)^{\nu} - C_{1}/\epsilon^{\nu}$. We deduce
\[ \upsilon (x)=|\psi(\Gamma_{x}^{L}( \upsilon (x)) - \psi(\Gamma_{x}^{L}(0))| \geq
D(x,\tau) \ \ \forall x \in (0,\delta)I. \]
Denote ${\mathcal C}={\mathcal C}_{0}$,
\[ \Gamma_{x}^{\epsilon} = \Gamma (\aleph X, L_{i X}^{\epsilon}(x), T_{0}), \ \
 \Gamma_{x}^{s \tau} = \Gamma (\aleph X, \Gamma_{x}^{L}( \upsilon (x)), T_{0}). \]
Consider the notations in the proof of prop. \ref{pro:hit}.
Given $\rho_{1} \geq 2 \rho_{0}$ the points
$\Gamma_{x}^{\epsilon}(0,\infty) \cap (\{x\} \times \partial B(0,\rho_{1}|x|))$ and
$\Gamma_{x}^{s \tau}(0,\infty) \cap (\{x\} \times \partial B(0,\rho_{1} |x|))$
belong to the same connected component $A_{\rho_{1}}$ of
$\partial_{e} {\mathcal C}(\rho_{1}) \setminus T {\mathcal C}_{\aleph X}^{\rho_{1}}$
for any $x \in (0,\delta) I$ in a neighborhood of $0$.
Analogously as in proposition \ref{pro:hit} the sections
\[ x \to \Gamma_{x}^{\epsilon}(0,\infty) \cap (\{x\} \times \partial B(0,\rho_{1}|x|)) \ \ {\rm and} \ \
x \to \Gamma_{x}^{s \tau}(0,\infty) \cap (\{x\} \times \partial B(0,\rho_{1} |x|)) \]
are asymptotically continuous and their value at $(r,\lambda)=(0,\lambda_{0})$
coincides and is equal to the element of the set
$\cap_{\rho_{2} > \rho_{1}} \Gamma_{0, \lambda_{0}}^{\rho_{2},\rho_{1}}(A_{\rho_{2}}(0,\lambda_{0}))$
for any $\lambda_{0} \in I$. An analogous result holds true for
the negative trajectories $\Gamma_{x}^{\epsilon}(-\infty,0)$ and
$\Gamma_{x}^{s \tau}(-\infty,0)$. We have
\[ width_{G_{s \tau}}(L_{i X}^{\epsilon}(x))   \geq \upsilon (x) \geq
D(x,\tau) \ \ \forall x \in [0,\delta) I . \]
Holonomy arguments, analogous to those in
the proof of prop. \ref{pro:widreg}, imply that there exists $J \in {\mathbb R}^{+}$
such that
$width_{G_{s \tau}}^{\min} \geq J D(x,\tau)$
for any $x \in [0,\delta) I$.
Moreover there exists $\zeta \in {\mathbb R}^{+}$ such that
$\aleph^{*}(P) \in e^{i[\zeta,\pi-\zeta]}$ for any
$P \in ([0,\delta)I \times B(0,\epsilon)) \setminus Sing X$.
Thus we can deduce that ${\rm exp}(t X)(x,y)$ is well defined and belongs to
$H$ for all $(x,y) \in H_{s \tau}$ and
$t \in B(0, J \sin (\zeta) D(x,\tau))$.
\end{proof}
\section{Comparing $\varphi \in \diff{tp1}{2}$ and a convergent normal form}
\label{sec:comphicnf}
The goal of this section is constructing Fatou coordinates for
$\varphi \in \diff{tp1}{2}$  and analyzing their asymptotic properties.
The first step of such a project is choosing an element $X \in  \Xt$ such that
$\varphi$ is tangent to ${\rm exp}(X)$ at a high enough order in the neighborhood of
the fixed points.
Given a region $H \in Reg(\epsilon, \aleph X, I)$ the space of orbits of
${\rm exp}(X)_{|H(x)}$ is biholomorphic to ${\mathbb C}^{*}$ for any
$x \in (0,\delta) I$. Such a property also holds true for the
orbit space of $\varphi_{|H(x)}$. A consequence is the possibility of building
holomorphic Fatou coordinates and Lavaurs vector fields in the region $H$.
This approach was introduced in Lavaurs thesis \cite{Lavaurs}
and developped in several papers \cite{Shishi} \cite{Oudkerk} \cite{MRR}
\cite{JR:mod}.

One of the main properties proved in \cite{JR:mod} is that the difference between
Fatou coordinates of $\varphi$ and ${\rm exp}(X)$ in any given region of a transversal
flow is always bounded. We prove the analogue for multi-transversal flows.
Such a property and the study of the intersection of regions associated to
different choices of multi-transversal flows suffice to prove the exponentially small
estimates required for multi-summability (see subsection \ref{subsec:comfatcor}).
The boundness of the difference of Fatou coordinates of $\varphi$ and ${\rm exp}(X)$
is proved in subsection \ref{subsec:defatcor}.  The main difficulty is that in order to
capture the higher levels of summability the estimates have to be expressed
in adapted coordinates (prop. \ref{pro:bddconf}). The shape of the regions and their
intersections is studied in subsections \ref{subsec:floconset} and
\ref{subsec:comdifmflo}. The regions associated to the same multi-transversal flow
do not intersect and then a priori we can not compare Fatou coordinates defined in them.
But we can acomplish that goal by extending Fatou coordinates to slightly bigger domains
(subsection \ref{subsec:extfatcor}). Given the Lavaurs vector fields associated to
$\varphi$ we prove that they correspond to a multi-summable object by using a
cohomological approach a la Ramis-Sibuya \cite{RamSib:Fou}. Anyway, in order to
prove that the infinitesimal generator of $\varphi$ is the asymptotic
development of the Lavaurs vector fields  we  construct Fatou coordinates whose
asymptotic development coincides with the power expansion of the infinitesimal generator
up to an arbitrary order (subsection \ref{subsec:asyfatcor}); the proof is completed by
using the flatness arguments in  subsection \ref{subsec:comfatcor}.
\begin{rem}
The constructions in this paper can be generalized to elements $\varphi$ of
$\diff{p1}{2}$. Indeed there exists $k \in {\mathbb N}$ such that
$(x^{1/k},y) \circ \varphi \circ (x^{k},y)$ belongs to $\diff{tp1}{2}$.
The results can be translated to
$\diff{p1}{2}$ via a ramification in the parameter space.
We chose $\diff{tp1}{2}$ since the presentation is simpler.
\end{rem}
\subsection{Normal forms}
The infinitesimal generator $\log \varphi$ of $\varphi \in \diff{p1}{2}$ is of the form
$(y \circ \varphi -y) \hat{u} \partial / \partial{y}$ for some unit
$\hat{u} \in {\mathbb C}[[x,y]]$ (see prop. (3.2) of \cite{JR:mod}).
\begin{defi}
Let $\varphi= {\rm exp}((y \circ \varphi -y) \hat{u} \partial / \partial{y}) \in \diff{p1}{2}$.
We say that $\Upsilon \in \diff{p1}{2}$ is a $k$-convergent normal form of $\varphi$
if $\log \Upsilon=(y \circ \varphi -y) u \partial / \partial{y}$ for some
$u \in {\mathbb C}\{x,y\}$ and $\hat{u}-u$ belongs to the ideal $(y \circ \varphi -y)^{k}$.
The last condition is equivalent to $y \circ \varphi - y \circ \Upsilon \in (y \circ \varphi -y)^{k+1}$.
We say that $\Upsilon$ is a convergent normal form of $\varphi$ if $\Upsilon$ is a $1$-convergent
normal form.
\end{defi}
\begin{defi}
\label{def:resd}
Let $\varphi \in \diff{tp1}{2}$. Consider a convergent normal form
$\Upsilon$ of $\varphi$. We define
$Res(\varphi,P)=Res(\log \Upsilon,P)$ for $P \in Fix (\varphi)$ (see def. \ref{def:res2}).
The definition does not depend on the choice of $\Upsilon$ \cite{UPD}.
\end{defi}
\begin{pro}
\cite{UPD}
Let $\varphi \in \diff{p1}{2}$ and $k \in {\mathbb N}$. Then there exists a
$k$-convergent normal form $\Upsilon$.
\end{pro}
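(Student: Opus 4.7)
The plan is to produce a convergent unit $u \in \mathbb{C}\{x,y\}$ with $\hat{u} - u \in (f^k)$, where $f := y\circ\varphi - y$ and $\log\varphi = f\hat u\,\partial/\partial y$; then $\Upsilon := \exp(fu\,\partial/\partial y)$ will be the desired normal form. The first step is to pass to a finite-dimensional quotient where the problem becomes linear-algebraic. Since $\varphi_{|x=0}$ is tangent to the identity and nontrivial, $f_{|x=0}$ has a zero of finite order $d \geq 2$ at $y=0$, so by Weierstrass preparation $f = e\cdot P$ with $e$ a unit and $P \in \mathbb{C}\{x\}[y]$ a monic polynomial of degree $d$. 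Consequently $M := \mathbb{C}\{x,y\}/(f^{k+1})$ is a free $\mathbb{C}\{x\}$-module of rank $d(k+1)$, and the pullback $V: g \mapsto g\circ\varphi$ is a $\mathbb{C}\{x\}$-linear automorphism of $M$, represented by a matrix in $\operatorname{GL}_{d(k+1)}(\mathbb{C}\{x\})$. At $x=0$, $V_{|x=0}$ is the pullback by a tangent-to-identity germ, hence unipotent, so its spectrum is $\{1\}$.

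Next, by continuity the spectrum of $V(x)$ stays in a small neighborhood of $1$ for $x$ in a small disk about the origin. I would then define
\[
\log V \;:=\; \frac{1}{2\pi i}\oint_\gamma (\log\lambda)(V-\lambda I)^{-1}\,d\lambda,
\]
for $\gamma$ a small positively oriented loop around $\lambda = 1$ enclosing the spectrum of every $V(x)$ in our disk. Since $V(x)$ depends analytically on $x$ and $(V(x) - \lambda I)^{-1}$ depends analytically on $(x,\lambda)$ for $\lambda$ outside the spectrum of $V(x)$, the integral yields a matrix $\log V \in M_{d(k+1)}(\mathbb{C}\{x\})$. On the other hand, the formal derivation $\log\varphi = f\hat u\,\partial/\partial y$ preserves $(f^{k+1})$ (by Leibniz applied to $f^{k+1}$) and so descends to a $\mathbb{C}[[x]]$-linear derivation $\bar L$ of $M\otimes\mathbb{C}[[x]]$ satisfying $\exp(\bar L) = V$. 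At $x=0$ both $\log V$ and $\bar L$ coincide with the unique nilpotent logarithm of the unipotent operator $V_{|x=0}$, and since $V$ is close to the identity throughout our disk the principal-branch logarithm is the unique logarithm with spectrum near $0$; hence $\bar L = \log V$, so $\bar L$ is actually convergent.

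Evaluating at the class of $y$ gives that $f\hat u \bmod (f^{k+1})$ is a convergent element of $M$. Because $f$ is a non-zero-divisor in $\mathbb{C}[[x,y]]$ and $(f)\mathbb{C}[[x,y]] \cap \mathbb{C}\{x,y\} = (f)\mathbb{C}\{x,y\}$ by faithful flatness of the completion, this convergent element can be written as $fu \bmod (f^{k+1})$ for some $u \in \mathbb{C}\{x,y\}$, yielding $\hat u - u \in (f^k)$. Since $u(0,0) = \hat u(0,0) \neq 0$, $u$ is a unit, so $fu\,\partial/\partial y$ lies in $\Xt$ and $\Upsilon := \exp(fu\,\partial/\partial y)$ belongs to $\diff{p1}{2}$ with $\log \Upsilon = fu\,\partial/\partial y$, completing the construction. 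The central obstacle is the identification of the holomorphic-functional-calculus logarithm of $V$ with the action of the formal infinitesimal generator of $\varphi$; this relies crucially on the unipotency of $V_{|x=0}$ and on the uniqueness of the principal-branch logarithm in a neighborhood where the spectrum of $V$ clusters near $1$.
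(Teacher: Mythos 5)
Your proof is correct, and it appears to be the natural route for this statement: reduce to the finite free $\mathbb{C}\{x\}$-module $M = \mathbb{C}\{x,y\}/(f^{k+1})$, observe the pullback $V = \varphi^{*}$ is unipotent at $x=0$, take the holomorphic-functional-calculus logarithm $\log V \in M_{N}(\mathbb{C}\{x\})$, identify it with the formal derivation $\bar L$ induced by $\log\varphi$, and read off a convergent representative of the class of $f\hat u$ from $\log V(\bar y)$. I cannot verify against \cite{UPD} directly, but this is almost certainly the argument there.

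Two small points deserve tightening. The justification that $\bar L = \log V$ should not rest on ``spectrum near $0$,'' since $\bar L$ is a priori only a matrix over $\mathbb{C}[[x]]$ and has no pointwise spectrum away from $x=0$. The clean statement is: a matrix $A \in M_{N}(\mathbb{C}[[x]])$ with $\exp A = V$ and $A(0) = L_{0}$ nilpotent is unique. This follows by $(x)$-adic induction, because the differential of $\exp$ at $L_{0}$, namely $C \mapsto \exp(L_{0})\,\frac{1 - e^{-\mathrm{ad}_{L_{0}}}}{\mathrm{ad}_{L_{0}}}(C)$, is invertible (as $\mathrm{ad}_{L_{0}}$ is nilpotent), so each successive $x^{n}$-coefficient of a logarithm with $x^{0}$-term $L_{0}$ is uniquely determined by $V$. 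Applying this to $\bar L$ and to the Taylor expansion of $\log V$ at $x=0$ gives the identification. Second, the parenthetical claim that $fu\,\partial/\partial y$ lies in $\Xt$ is unnecessary and in fact false for a general $\varphi \in \diff{p1}{2}$, since membership in $\Xt$ requires $Fix(\varphi)$ to be a union of graphs $y=g(x)$; what the definition of $k$-convergent normal form requires is only $\Upsilon \in \diff{p1}{2}$, and this holds because $fu\,\partial/\partial y$ has nilpotent linear part, so $\exp(fu\,\partial/\partial y)$ is a convergent germ of diffeomorphism preserving $x$ whose restriction to $x=0$ is tangent to the identity and nontrivial.
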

Let $\varphi \in \diff{p1}{2}$. Denote $f= y \circ \varphi -y$. Fix a $k$-convergent normal form
$\Upsilon$ of $\varphi$. Consider an integral
of the time form $\psi$ of $\log \Upsilon$. We define (see section 7.1 of \cite{JR:mod})
\[ \Delta_{\varphi}= \psi \circ \varphi - \psi \circ \Upsilon = \psi \circ \varphi - (\psi +1) . \]
The definition depends on $\Upsilon$ but it does not depend on $\psi$.
A priori the function $\Delta_{\varphi}$ is defined only outside of $Fix (\varphi)$.
Nevertheless, by Taylor's formula we have
\[ \Delta_{\varphi} \sim
\left({ \frac{\partial \psi}{\partial y} \circ \Upsilon }\right) (y \circ \varphi - y \circ \Upsilon) =
O \left({ \left({ \frac{1}{f} \circ \Upsilon }\right) f^{k+1} }\right) =
O(f^{k}).\]
We obtain
\begin{lem}
\label{lem:deltahol}
Let $\varphi \in \diff{p1}{2}$ with fixed $k$-convergent normal form. Then $\Delta_{\varphi}$
belongs to the ideal $(y \circ \varphi -y)^{k}$ of the ring ${\mathbb C}\{x,y\}$.
\end{lem}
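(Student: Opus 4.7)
The plan is to turn the heuristic sketched immediately before the statement into a rigorous argument by expressing $\Delta_\varphi$ as a Hadamard/Taylor integral and then exploiting the tangent-to-identity hypothesis to cancel one power of $f$ against the divisibility $(y\circ\varphi - y\circ\Upsilon)\in (f)^{k+1}$ provided by $k$-convergence. The formal computation would give $\Delta_\varphi \sim (\partial\psi/\partial y)(y\circ\varphi - y\circ\Upsilon)$, which is an object of ``shape'' $f^{k+1}/f = f^k$; the actual work is showing that the $1/f$ denominator indeed cancels as a holomorphic unit, not merely a meromorphic factor, and this is where tangency to the identity enters.

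Concretely, I will write $\log\Upsilon = f u\,\partial/\partial y$ with $u\in{\mathbb C}\{x,y\}$ a unit, so that $\partial\psi/\partial y = 1/(fu)$. On the complement of $Fix(\varphi)=\{f=0\}$, applying the fundamental theorem of calculus to $t\mapsto \psi(x,y_t)$ with $y_t := y\circ\Upsilon + t(y\circ\varphi - y\circ\Upsilon)$ will give
$$\Delta_\varphi(x,y) = (y\circ\varphi - y\circ\Upsilon)(x,y)\int_0^1 \frac{dt}{f(x,y_t)\,u(x,y_t)}.$$
By $k$-convergence, $y\circ\varphi - y\circ\Upsilon = f^{k+1}h$ for some $h\in{\mathbb C}\{x,y\}$; the power-series expansion of $\exp(fu\,\partial/\partial y)$ shows moreover that $y\circ\Upsilon - y\in (f)$, so $y_t - y = f\cdot\xi(x,y,t)$ with $\xi$ holomorphic on a neighborhood of $\{0\}\times[0,1]$ and $\xi(0,0,t) = 1$. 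Hadamard's lemma in the $y$-variable then yields $f(x,y_t) = f(x,y)\,G(x,y,t)$ with $G$ holomorphic on that neighborhood.

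The decisive step will be verifying that $G$ is a unit. At $(x,y)=(0,0)$ one has $y_t = y$ because $f$ vanishes, so $G(0,0,t) = 1 + \xi(0,0,t)\cdot\partial_y f(0,0)$; the tangent-to-identity condition $j^{1}\varphi_{|x=0}=\mathrm{Id}$ forces $\partial_y(y\circ\varphi)(0,0)=1$ and hence $\partial_y f(0,0)=0$, so $G(0,0,t)\equiv 1$ and $G$ is a unit on a small neighborhood of $\{0\}\times[0,1]$. Since $u(x,y_t)$ is also a unit there, substitution gives
$$\Delta_\varphi \;=\; f^k\cdot h\cdot M(x,y),\qquad M(x,y) := \int_0^1 \frac{dt}{G(x,y,t)\,u(x,y_t)},$$
with $M$ holomorphic near the origin (an integral over a compact interval of a holomorphic integrand). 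This identity is initially valid outside $\{f=0\}$, but the right-hand side extends holomorphically to a full neighborhood of the origin, hence $\Delta_\varphi$ does too, and belongs to $(f)^k$. The main obstacle is precisely the unitarity of $G$: without the tangent-to-identity assumption the denominator $f(x,y_t)$ would not factor as $f$ times a unit, the integral $M$ would have genuine poles along $\{f=0\}$, and no extraction of the extra power of $f$ would be possible.
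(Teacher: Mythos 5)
Your argument is correct and takes essentially the same route as the paper: a Taylor/fundamental-theorem-of-calculus expansion of $\psi\circ\varphi-\psi\circ\Upsilon$, with the pole of $\partial\psi/\partial y = 1/(fu)$ cancelled against one factor of $f$ coming from $(y\circ\varphi-y\circ\Upsilon)\in(f)^{k+1}$, justified by showing $f(x,y_t)/f(x,y)$ is a holomorphic unit (the step the paper compresses into ``$O((1/f\circ\Upsilon)f^{k+1})=O(f^{k})$''). One small slip: $\xi(0,0,t)$ equals $u(0,0)$, not $1$, but this is immaterial since $\partial_{y}f(0,0)=0$ already forces $G(0,0,t)\equiv 1$.
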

Next we estimate $\Delta_{\varphi}$ in terms of the Fatou coordinates of regions.
We translate $\Delta_{\varphi}=O(f^{k})$ to an estimate in adapted coordinates
by using the asymptotical good behavior of regions.
\begin{pro}
\label{pro:bddconf}
Let $\varphi \in \diff{tp1}{2}$. Let $\Upsilon={\rm exp}(X)$ be a $k$-convergent normal form.
Consider a multi-transversal flow $\Re (\aleph X)$ for some
stable multi-direction $\aleph: I \to (e^{i(0,\pi)})^{\tilde{q}}$. Take
$H \in Reg^{*}(\epsilon, \aleph X, I)$ and $L \in {\mathcal P}(H)$. Then there exists
$K \in {\mathbb R}^{+}$ such that
\begin{equation}
\label{equ:delta}
|\Delta_{\varphi}(x,y)| \leq \frac{K}{(1+|\psi_{H,L}^{X}(x,y)|)^{k}} \ \ \forall (x,y)
\in H^{L} .
\end{equation}
The constant $K$ depends only on $\Upsilon$ but does not depend on
$I$, $\aleph$, $H$ or $L$.
\end{pro}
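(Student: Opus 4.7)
The plan is to reduce the estimate to bounding the product $|f(x,y)|(1+|\psi_{H,L}^X(x,y)|)$ uniformly on $H^L$, where $f = y \circ \varphi - y$, and then to exploit an intrinsic pointwise identity coming from the convergent normal form. Concretely, Lemma~\ref{lem:deltahol} gives $\Delta_\varphi = g \cdot f^k$ for some $g \in \mathbb{C}\{x,y\}$ holomorphic on a fixed compact neighborhood of the origin depending only on $\Upsilon$, so $|\Delta_\varphi| \leq C_0 |f|^k$ with $C_0$ depending only on $\Upsilon$; hence it suffices to show $|f|(1+|\psi_{H,L}^X|) \leq K^{1/k}$ uniformly on $H^L$.

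The intrinsic identity reads as follows. Since $\Upsilon = {\rm exp}(X)$ is a convergent normal form of $\varphi$, we have $X = f u \, \partial/\partial y$ for a bounded unit $u \in \mathbb{C}\{x,y\}$, so $X(\psi_{H,L}^X) \equiv 1$ becomes $f u \, \partial_y \psi_{H,L}^X \equiv 1$. This gives $|f| = |X(y)|/|u|$, and since $|u|$ is bounded above and below uniformly, it is enough to bound $(1+|\psi_{H,L}^X(x,y)|) \cdot |X(y)(x,y)|$ uniformly on $H^L$. I would then compute this product in the adapted coordinates of each basic set of the dynamical splitting. In a parabolic exterior set $\mathcal{E}$ with adapted coordinate $t$, writing $X = x^{e(\mathcal{E})} v(x,t) \prod(t-\gamma_j(x))^{s_j} \partial/\partial t$ and using $\partial_t y = x^{n}$ (where $n$ is the number of blowups producing $t$), one obtains $|X(y)| \lesssim |x|^{e(\mathcal{E})+n} \prod |t-\gamma_j|^{s_j}$. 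Proposition~\ref{pro:epsiext} combined with the relation $\psi_{H,L}^X = \psi_\mathcal{E}/x^{e(\mathcal{E})} + c(x)$, with $c(x)$ uniformly bounded by comparison to the estimates of Proposition~\ref{pro:ext1g} across the boundary with $\mathcal{E}_0$, yields $|\psi_{H,L}^X| \lesssim 1/(|x|^{e(\mathcal{E})}|t-\gamma_\mathcal{E}|^{\nu(\mathcal{E})})$ in any subregion of $H^L$ contained in $\mathcal{E}$. Multiplying, the factors $|x|^{e(\mathcal{E})}$ and $|t-\gamma_\mathcal{E}|^{\nu(\mathcal{E})}$ cancel, leaving the product $\lesssim |x|^n|t-\gamma_\mathcal{E}| \leq \delta^n \eta$, uniformly bounded. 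The same kind of cancellation handles compact-like sets, where $|X(y)| \sim |x|^{e(\mathcal{C})+n}$ (we are bounded away from the zeros of $X_\mathcal{C}$) and $|\psi_{H,L}^X| \lesssim 1/|x|^{e(\mathcal{C})}$, giving the bound $\lesssim |x|^n$.

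The main obstacle is that in original $(x,y)$ coordinates the leading coefficient of the pole of $\psi_{H,L}^X$ at $\gamma_j(x)$ is proportional to $1/\prod_{i \neq j}(\gamma_j-\gamma_i)^{s_i}$, which blows up as $x \to 0$ because the fixed components $\gamma_j$ coalesce at the origin; a naive local computation would therefore fail to give uniform control. The passage to adapted coordinates resolves this: the factor $x^{-e(\mathcal{B})}$ in $\psi_{H,L}^X = \psi_\mathcal{B}/x^{e(\mathcal{B})} + c(x)$ absorbs the pathological scaling, and the identity $e(\mathcal{C}) = e(\mathcal{E})+\nu(\mathcal{E})$ is precisely what makes the exponents match so as to cancel in the final product with $|X(y)|$. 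Uniformity in $I$, $\aleph$, $H$, $L$ follows automatically because the constants in Propositions~\ref{pro:epsiext} and~\ref{pro:ext1g} (together with their analogues in compact-like and deeper exterior sets) do not depend on these choices, so the resulting $K$ depends only on $\Upsilon$.
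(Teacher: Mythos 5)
Your overall strategy --- bound $|f| \approx |X(y)|$ against $1/(1+|\psi_{H,L}^X|)$ subregion by subregion in adapted coordinates, exploiting $e(\mathcal{C}) = e(\mathcal{E}) + \nu(\mathcal{E})$ so that the exponents cancel --- is the same as the paper's; the cancellation you describe for parabolic exterior and compact-like sets is essentially the computation behind equations (\ref{equ:equsou})--(\ref{equ:equsou2}). There are, however, two genuine gaps.

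First, your estimate $|\psi_{H,L}^X| \lesssim 1/\bigl(|x|^{e(\mathcal{E})}|t-\gamma_{\mathcal E}|^{\nu(\mathcal{E})}\bigr)$ is simply false for non-parabolic terminal exterior sets, where $\nu(\mathcal{E}) = 0$: then the formula would give $|\psi_{H,L}^X| \lesssim 1/|x|^{e(\mathcal{E})}$ with no dependence on $t$, yet $\psi_{\mathcal{E}}$ has a logarithmic singularity as $t \to \gamma_{\mathcal{E}}(x)$. Proposition~\ref{pro:epsiext} is stated only for parabolic $\mathcal{E}$, so you cannot invoke it here. The paper treats this case separately and in an essentially different way: it uses the stability condition $(\lambda, \aleph_{E_k}(\lambda)) \notin \mathcal{U}_X^l$, hence $\aleph_{E_k}(\lambda)X_l(\lambda) \in {\mathcal X}_\infty\cn{}$, to conclude that the residue $v(0,0)^{-1}$ is not purely imaginary and therefore $\arg D(r,\lambda)$ is bounded away from $\pm\pi/2$; this gives $f = O(x^{e(E_k)+[(k+1)/2]}\,e^{-A|\psi_k|})$, an exponential (not merely polynomial) decay in $|\psi_k|$. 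Your write-up has no substitute for this ingredient, and the cancellation argument as stated cannot supply one.

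Second, the assertion that the constant $K$ is automatically uniform "because the constants in Propositions~\ref{pro:epsiext} and~\ref{pro:ext1g} (together with their analogues...) do not depend on these choices" is not what the paper says, and is not true. Only Proposition~\ref{pro:ext1g} claims independence of $I$, $\aleph$, $H$, $L$, and it does so only in $\mathcal{E}_0$. The constant $C_{\mathcal E}$ of Proposition~\ref{pro:epsiext} and the bound $A_j$ on $|r|^{e(E_j)}|\psi_{H,L}^X|$ on $\partial_e B_j$ supplied by Corollary~\ref{cor:intbou} do depend on the choices, through the asymptotically continuous boundary sections of $H$. The paper deals with this by proving the \emph{stronger} bound $|\Delta_\varphi| \leq K'/(1+|\psi_{H,L}^X|)^{k + k/\nu(\mathcal{E}_0)}$ with a possibly non-uniform $K'$, getting the uniform $K$ from Proposition~\ref{pro:ext1g} inside $\mathcal{E}_0$, and then using $\inf_{H^L\setminus \mathcal{E}_0}|\psi_{H,L}^X| \to \infty$ as $\delta'\to 0$ so that the excess factor $(1+|\psi_{H,L}^X|)^{k/\nu(\mathcal{E}_0)}$ absorbs $K'$. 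Because your proof targets only the exponent $k$ (rather than $k + k/\nu(\mathcal{E}_0)$), there is no slack with which to absorb the non-uniform constants, and the uniformity of $K$ remains unproved. (Incidentally, your parenthetical that $c(x) := \psi_{H,L}^X - \psi_{\mathcal{E}}/x^{e(\mathcal{E})}$ is "uniformly bounded" is also off: what the asymptotic continuity argument controls is $x^{e(\mathcal{E})}c(x)$, not $c(x)$ itself; this does not wreck your upper bound for $|\psi_{H,L}^X|$, but the justification as worded is incorrect.)
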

\begin{proof}
Denote $f=X(y)$.
Let us prove the result for a L-subregion $J$.
Lemma \ref{lem:goins} implies that
there exists a sequence $B_{0}$, $\hdots$, $B_{k}=J$ of L-subregions
of $H$ such that $\beta(0)=0$ and
\begin{itemize}
\item $B_{2j} \subset {\mathcal E}_{\beta(2j)}$ for any $0 \leq 2j \leq k$.
\item $B_{2j+1} \subset {\mathcal C}_{\beta(2j)}$ for any $0 \leq 2j+1 \leq k$.
\item $\beta(2j+2) = (\beta(2j), \kappa(j))$ for
some $\kappa(j) \in {\mathbb C}$ for any $0 \leq 2j+2 \leq k$.
\end{itemize}
We denote $E_{2j}={\mathcal E}_{\beta(2j)}$ and $E_{2j+1}={\mathcal C}_{\beta(2j)}$.
We define $\partial_{e} B_{0}= \partial_{e} {\mathcal E}_{0} \cap \overline{B_{0}}$ and
$\partial_{e} B_{j} = \overline{B_{j}} \cap \partial_{I}{E_{j-1}}$ for $j \geq 1$.
As in the proof of proposition \ref{pro:epsiext} we obtain that
$\partial H \cap \partial_{e} B_{j}$ is the union of a finite number of
asymptotically continuous sections
$\tau_{1}, \hdots, \tau_{b}: [0,\delta) \times I \to \partial H \cap \partial_{e} B_{j}$.
%
%
Consider adapted coordinates $(x,t)$ for $E_{j}$.
Corollary \ref{cor:intbou} implies that $|r|^{e(E_{j})} \psi_{H,L}^{X}(r,\lambda,t)$ is continuous in
$\overline{B_{j}} \setminus Sing X$ and satisfies
$|r|^{e(E_{j})} |\psi_{H,L}^{X}| \leq A_{j}$ in
$\partial_{e} B_{j}$ for some $A_{j}>0$. Denote $\psi_{j}=x^{e(E_{j})} \psi_{H,L}^{X}$.
%
%

Let $T=\{ (x,t) \in B(0,\delta) \times \overline{B(0,\eta)} \}$ be the seed
associated to $E_{j}$.
  The construction of the splitting implies that
$f \in ({x}^{e(E_{j})+ [(j+1)/2]})$
in $E_{j}$ for any $0 \leq j \leq k$ (let us remark that $[(j+1)/2]$
is the integer part of $(j+1)/2$).

Suppose that $E_{j}= \{ \eta \geq |t| \geq \rho |x| \}$ is a parabolic exterior set, we obtain
\begin{equation}
\label{equ:equsou}
 f = O \left({
\frac{x^{e(E_{j}) +[(j+1)/2]}}{(1+|\psi_{j}|)^{1 + 1/\nu(E_{j})}}
}\right) = O \left({
\frac{x^{e(E_{j}) +[(j+1)/2]}}{(1+|\psi_{j}|)^{1 + 1/\nu({\mathcal E}_{0})}} }\right)
\end{equation}
by prop. \ref{pro:epsiext} since $\nu(E_{j}) \leq \nu({\mathcal E}_{0})$.
We obtain
\begin{equation}
\label{equ:equsou2}
 f = O \left({
\frac{x^{e(E_{j}) +[(j+1)/2] -e(E_{j}) (1+1/\nu({\mathcal E}_{0}))}}
{(1+|\psi_{H,L}^{X}|)^{1 + 1/\nu({\mathcal E}_{0})}}
}\right) = O \left({
\frac{1}{(1+|\psi_{H,L}^{X}|)^{1 + 1/\nu({\mathcal E}_{0})}} }\right)
\end{equation}
in $B_{j}$ since $e(E_{j}) \leq [(j+1)/2] \nu({\mathcal E}_{0})$ by construction.

Suppose that $E_{j}= \{ |t| \leq \eta \}$ is a non-parabolic exterior set,
this implies $j=k$. We have
\[ X= x^{e(E_{k})} X_{E_{k}} = x^{e(E_{k})} v(x,t) (t- \gamma(x)) \partial / \partial{t} \]
where $v$ is a holomorphic function never vanishing in $E_{k}$.
The seed $E_{k}$ is the son of a seed $T_{\beta}$. We obtain
${\mathcal C}_{\beta} = {\mathcal C}_{l}$ for any $1 \leq l \leq q$. We have
$X_{{\mathcal C}_{\beta}}=(x,w-\zeta)^{*} X_{E_{k}}$
for some $\zeta \in {\mathbb C} \cap  S_{\beta}$ by construction
of the dynamical splitting. This implies
\[ v(0,0)^{-1}=Res(X_{E_{k}},(0,0))= Res(X_{{\mathcal C}_{\beta}},(0,\zeta))=Res(X_{l}(1),\zeta). \]
Since $(\lambda, \aleph_{E_{k}}(\lambda)) \not \in {\mathcal U}_{X}^{l}$ (see section \ref{sec:tramul})
for any $\lambda \in I$ we deduce that
\[ \aleph_{E_{k}}(\lambda) \lambda^{e(E_{k})} X_{l}(1)=
\aleph_{E_{k}}(\lambda) X_{l}(\lambda)  \in {\mathcal X}_{\infty}\cn{} \]
for any $\lambda \in I$. The definition of ${\mathcal X}_{\infty}\cn{}$ implies
\[ \lambda^{- e(E_{k})} \aleph_{E_{k}}(\lambda)^{-1} v(0,0)^{-1} =
Res(\aleph_{E_{k}}(\lambda) X_{l}(\lambda), \zeta) \not \in i {\mathbb R}  \]
for any $\lambda \in I$.
The function
$\lambda^{- e(E_{k})} \aleph_{E_{k}}(\lambda)^{-1} \psi_{k}(r,\lambda,t)$
is an integral of the time form of $\Re (\lambda^{e(E_{k})} (\aleph X)_{E_{k}})$. We define
$D(r,\lambda)= \lambda^{- e(E_{k})} \aleph_{E_{k}}(\lambda)^{-1} v(0,0)^{-1}$ and
\[ F(r,\lambda,t)=\lambda^{- e(E_{k})} \aleph_{E_{k}}(\lambda)^{-1}
[\psi_{k}(r,\lambda,t)  - v(r \lambda,\gamma(r \lambda))^{-1} \ln (t-\gamma(r \lambda))]. \]
The function $\partial F/\partial t$ satisfies
\[ \frac{\partial F}{\partial t}(r,\lambda,t)= \frac{1}{\lambda^{e(E_{k})} \aleph_{E_{k}}(\lambda)}
\left({ \frac{1}{v(r \lambda,t) (t- \gamma(r \lambda))} -
\frac{1}{v(r \lambda,\gamma(r \lambda)) (t- \gamma(r \lambda))} }\right) . \]
It is bounded in $B_{k}$ and then so is $F$.
There exists $\upsilon>0$ such that
\[ \arg(D(r,\lambda))
\in (-\pi/2+\upsilon,\pi/2-\upsilon) \]
for any $(r,\lambda) \in [0,\delta) \times I$ if
$B_{k}$ is a basin of repulsion, otherwise we have that
$\arg(D(r,\lambda)) \in (\pi/2+\upsilon,3 \pi/2 - \upsilon)$
for any $(r,\lambda) \in [0,\delta) \times I$. We deduce that
\[ f=O(x^{e(E_{k}) + [(k+1)/2]} (t-\gamma(x))) =
O(x^{e(E_{k}) + [(k+1)/2]} e^{-A |\psi_{k}|}) \]
in $B_{k}$ for some $A>0$. This implies equation
(\ref{equ:equsou}) and then equation (\ref{equ:equsou2}).

Finally suppose that $E_{j}$ is a compact-like set.
The asymptotically continuous character of $\partial H \cap \partial_{e} E_{j}$
implies that $E_{j}$ is compact in the coordinates $(r,\lambda,w)$ associated
to $E_{j}$. We have that
$\psi_{j}$ is bounded in $B_{j}$. Hence $f= O(x^{e(E_{j}) +[(j+1)/2]})$
implies equation (\ref{equ:equsou}) and then equation (\ref{equ:equsou2}).

We proved that there exists $K' \in {\mathbb R}^{+}$ such that
\begin{equation}
\label{equ:delaux}
|\Delta_{\varphi}(x,y)| \leq \frac{K'}{(1+|\psi_{H,L}^{X}(x,y)|)^{k+k/\nu({\mathcal E}_{0})}} \ \
\forall (x,y) \in H^{L} .
\end{equation}
Proposition \ref{pro:ext1g} implies
\[ |\Delta_{\varphi}(x,y)| \leq \frac{K}{(1+|\psi_{H,L}^{X}(x,y)|)^{k+ k/\nu({\mathcal E}_{0})}} \ \ \forall (x,y)
\in H^{L} \cap {\mathcal E}_{0} \]
where $K$ depends only on $\Upsilon$. Since we have
\[ \lim_{\delta' \to 0} \left({ \inf_{x \in [0,\delta')I, \ (x,y) \in H^{L} \setminus {\mathcal E}_{0}}
|\psi_{H,L}^{X}(x,y)| }\right)= \infty \]
then equation (\ref{equ:delta}) holds true for any $(x,y) \in H^{L}$ with $x$ close to $0$.
\end{proof}
Both in subsections \ref{subsec:extfatcor} and \ref{subsec:asyfatcor}
will be necessary to extend Fatou coordinates $\psi^{\varphi}$ of $\varphi$
by using the equation $\psi^{\varphi} \circ \varphi = \psi^{\varphi} +1$.
The next lemma assures that the asymptotic properties of Fatou coordinates
are preserved when extending $\psi^{\varphi}$ along long orbits.
\begin{defi}
\label{def:W}
Let $\theta \in (0,\pi/2)$ and $M \in {\mathbb R}^{+} \cup \{0\}$. We define the set
$W_{\theta, M} \subset {\mathbb C}$ given by
\[ W_{\theta, M} = \{ z \in {\mathbb C} : Re (z) >0 \} \cup
\{ z \in {\mathbb C} : |Im (z)| + \tan (\theta) Re (z) -M >0  \} . \]
We define $W_{\pi/2, M} = \{ z \in {\mathbb C} : Re (z) >0 \}$.
\end{defi}
\begin{figure}[h]
\begin{center}
\includegraphics[height=3cm,width=9cm]{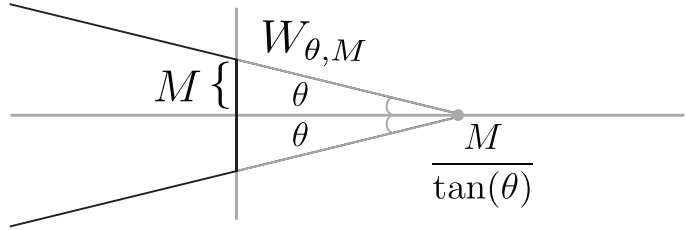}
\end{center}
\caption{Picture of $W_{\theta,M}$}
\label{EVf3}
\end{figure}
\begin{lem}
\label{lem:techsum}
Let $\varphi \in \diff{tp1}{2}$.
Let $\theta \in (0,\pi/2]$, $M \geq 0$ and $k \geq 2$.
Let $\Upsilon$ be a $k$-convergent normal form.
Consider a orbit $\vartheta = \{ P, \varphi(P), \hdots, \varphi^{j}(P) \}$ and
a function $\psi$ defined in $\vartheta$ such that
$\psi(\varphi(Q)) =  \psi + 1 + \Delta_{\varphi}(Q)$ for any $Q \in \vartheta$.
Suppose that  $\psi(P) \in W_{\theta, M}$,  $| \Delta_{\varphi}(Q)| \leq 1/2$ for
any $Q \in \vartheta$ and
$|\Delta_{\varphi}(Q)| \leq \sin (\theta)/2$ for any $Q \in \vartheta$ such that
$|Im(\psi(Q))| \geq M$. Consider a function $\Pi:\vartheta \to {\mathbb C}$ such that
\[ |\Pi(Q)| \leq  \frac{D}{(1+|\psi(Q)|)^{k}}  \ \ \forall Q \in \vartheta. \]
Then, we have
\[ \sum_{l=0}^{j} |\Pi(\varphi^{l}(P))| \leq \frac{4^{k} \sqrt{2}^{k} D k}{c^{k} (k-1)}
\frac{1}{(1+|\psi(P)|)^{k-1}} \]
where $c^{2} = (1-\cos (\theta))/2$.
\end{lem}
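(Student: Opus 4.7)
The proof runs in three steps: (1) invariance of the orbit in $W_{\theta,M}$, (2) a linear lower bound on $|\psi_l|$ in terms of $|\psi(P)|$ and $l$, and (3) a standard summation.

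First, by induction on $l$, I verify that $\psi_l := \psi(\varphi^l(P))$ remains in $W_{\theta,M}$. With $\Delta_l := \Delta_\varphi(\varphi^l(P))$ and the recursion $\psi_{l+1} = \psi_l + 1 + \Delta_l$: if $Re(\psi_l) \geq 0$ then $Re(\psi_{l+1}) \geq 1 - |\Delta_l| \geq 1/2 > 0$; if $Re(\psi_l) < 0$, membership in $W_{\theta,M}$ forces $|Im(\psi_l)| \geq M + \tan\theta\,|Re(\psi_l)|$, hence $|\Delta_l| \leq \sin\theta/2$ by hypothesis, and either $Re(\psi_{l+1}) \geq 0$ or the inequality $|Im(\psi_{l+1})| \geq M + \tan\theta\,|Re(\psi_{l+1})|$ persists --- this last check uses the elementary fact that $\tan\theta(1 - \sin\theta/2) \geq \sin\theta/2$ for $\theta \in (0,\pi/2]$.

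The second and most delicate step is the estimate
\[
|\psi_l| + 1 \;\geq\; \frac{c}{4\sqrt{2}}\,(|\psi_0| + l + 1) \qquad (0 \leq l \leq j).
\]
The geometric driver is that for $z \in W_{\theta,M}$ one has $Re(z) \geq -\cos\theta\,|z|$ (trivially when $Re(z)\geq 0$, and because $\arg(z) \in (-(\pi-\theta),\pi-\theta)$ when $Re(z)<0$). Writing $\psi_l = \psi_0 + l + E_l$ with $E_l = \sum_{i<l}\Delta_i$,
\[
|\psi_0+l|^2 \;=\; |\psi_0|^2 + 2l\,Re(\psi_0) + l^2 \;\geq\; (1-\cos\theta)(|\psi_0|^2 + l^2) \;=\; 2c^2(|\psi_0|^2 + l^2),
\]
and therefore $|\psi_0 + l| \geq c\sqrt{2}\,\max(|\psi_0|,l)$. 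While $|Im(\psi_i)| \geq M$ for all $i < l$, the sharp bound $|\Delta_i| \leq \sin\theta/2 = c\cos(\theta/2)$ gives $|E_l| \leq l\,c\cos(\theta/2)$, and a case split on $l \leq |\psi_0|$ versus $l \geq |\psi_0|$ together with $\sqrt{2} - \cos(\theta/2) \geq \sqrt{2} - 1$ yields the claim. At the first index $l_\ast$ where $|Im(\psi_{l_\ast})| < M$, membership in $W_{\theta,M}$ forces $Re(\psi_{l_\ast}) > 0$; for $l > l_\ast$ the right-half-plane estimate $Re(\psi_l) \geq Re(\psi_{l_\ast}) + (l-l_\ast)/2$ combined with $|\psi_l| \geq (|\psi_{l_\ast}| + (l-l_\ast))/\sqrt{2} - (l-l_\ast)/2$ preserves the linear growth through the transition. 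This phase transition is the main obstacle: the sharp error bound is available only while $|Im(\psi_i)| \geq M$, and one must show the inequality survives once $\psi_l$ enters the strip $|Im|<M$ where only the weaker bound $|\Delta_i| \leq 1/2$ applies.

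Finally, with the Step~2 bound,
\[
\sum_{l=0}^j |\Pi(\varphi^l(P))| \;\leq\; \Bigl(\frac{4\sqrt{2}}{c}\Bigr)^{k} D \sum_{l \geq 0}\frac{1}{(1+|\psi_0|+l)^k} \;\leq\; \frac{(4\sqrt{2})^k \,D\, k}{c^k(k-1)(1+|\psi_0|)^{k-1}},
\]
using the integral comparison $\sum_{l\geq 0}(A+l)^{-k} \leq A^{-k}+\int_0^\infty(A+l)^{-k}\,dl \leq k/((k-1)A^{k-1})$ valid for $A\geq 1$ and $k\geq 2$.
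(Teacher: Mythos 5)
Your overall scheme mirrors the paper's (invariance of the orbit in $W_{\theta,M}$; a linear lower bound on $|\psi_l|$; an integral comparison), and your Steps 1 and 3 are sound, but Step 2 has a genuine gap in the splice across the transition index $l_\ast$.

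The difficulty is this. You apply the cone inequality $Re(\psi_0)\geq -\cos\theta\,|\psi_0|$ already in the pre-transition phase and obtain, for $l\leq l_\ast$, a bound of the form $|\psi_l|\geq \tfrac{c(\sqrt{2}-1)}{2}\,(|\psi_0|+l)$. After the transition you have $|\psi_l|\geq \tfrac{1}{\sqrt 2}|\psi_{l_\ast}|+(\tfrac{1}{\sqrt 2}-\tfrac 12)(l-l_\ast)$; substituting the pre-transition bound gives a coefficient $\tfrac{c(\sqrt{2}-1)}{2\sqrt 2}\approx 0.146\,c$ in front of $(|\psi_0|+l_\ast)$, which is strictly below the target $\tfrac{c}{4\sqrt 2}\approx 0.177\,c$. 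The surplus from the $(l-l_\ast)$ term only covers this deficit once $l-l_\ast$ is comparable to $|\psi_0|+l_\ast$, so the stated inequality fails for $l=l_\ast+1$ when $|\psi_0|$ is large. In short, "preserves the linear growth through the transition" is exactly the step that does not hold with the constants you have; multiplying by $1/\sqrt 2$ at the splice destroys the margin.

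The paper avoids this by never converting to the additive quantity $|\psi_0|+l$ until the very end. Its pivot is the complex quantity $|\tau_0+l|$, for which the triangle inequality gives a loss-free transitivity $|\tau_0+l_0|+(l-l_0)\geq |\tau_0+l|$. It first proves $|\tau_l|\geq |\tau_0+l|/(4\sqrt 2)$ \emph{uniformly} in $l$ (pre- and post-transition), and only at the last step applies $|\tau+l|\geq c(|\tau|+l)$ for $\tau\in W_{\theta,0}$. It also uses a sharper post-transition bound $|\tau_l|\geq \tfrac{1}{2\sqrt 2}\bigl(|\tau_{l_0}|+(l-l_0)\bigr)$, obtained from $|\tau_{l_0}+(l-l_0)|\geq l-l_0$ so that the accumulated error $(l-l_0)/2$ is at most half of $|\tau_{l_0}+(l-l_0)|$; your version $\tfrac{1}{\sqrt 2}(|\psi_{l_\ast}|+m)-m/2$ has the smaller coefficient $\tfrac{1}{\sqrt 2}-\tfrac 12$ on $m$, and the smaller of the two coefficients is what controls the final constant. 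To repair your argument, replace the pre-transition target by $|\psi_l|\geq |\psi_0+l|/2$ (which follows exactly as in your Step 2 from the error bound $|E_l|\leq l\sin\theta/2$ and the cone fact $|\psi_0+l|\geq l\sin\theta$), prove the paper's form of the post-transition bound, splice via the triangle inequality on $|\psi_0+l|$, and only then apply $|\psi_0+l|\geq c(|\psi_0|+l)$.
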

\begin{proof}
We claim that $\psi(\vartheta)$ is contained in $W_{\theta, M}$.
We denote $z=z_{1}+iz_{2}$.
The set $W_{\theta, M}$ is a union of the sets $W_{\pi/2, M}$,
\[ E_{1}= \{ z_{2} + (\tan (\theta)) z_{1} >M \} \ {\rm and}
 \ E_{2}= \{ -z_{2} + (\tan (\theta)) z_{1} >M \}. \]
The distance from $Re(z)=0$ to $Re(z)=1$ is $1$. Thus $|(\Delta_{\varphi})|_{\vartheta}| \leq 1/2$
implies that a point $Q \in \vartheta$ such that $\psi(Q) \in  W_{\pi/2, M}$ satisfies
$\psi(\varphi(Q)) \in  W_{\pi/2, M}$. A point
$Q \in \vartheta$ such that $\psi(Q) \in E_{1}  \setminus W_{\pi/2, M}$ satisfies
$|Im(\psi(Q))| \geq M$. Since the distance between the lines
$\partial E_{1}$ and $1+\partial E_{1}$ is $\sin (\theta)$
we obtain that $\psi(\varphi(Q)) \in E_{1}$. Analogously we prove
$\psi (\varphi(Q)) \in E_{2}$ for $Q \in \vartheta$ such that $\psi(Q) \in E_{2} \setminus W_{\pi/2, M}$.
We deduce that $\psi(\vartheta)$ is contained in $W_{\theta, M}$.

Let us deal with the case of an orbit whose image by $\psi$ does not intersect $W_{\pi/2, M}$.
We denote $\tau_{l}=\psi(\varphi^{l}(P))$ for any $l \in \{0,\hdots,j\}$.
Take $l_{1} \in \{0,\hdots,j-1\}$ such that $\{\tau_{0}, \hdots, \tau_{l_{1}}\}$ is contained
in $(E_{1} \cup E_{2}) \setminus W_{\pi/2, M}$.
Given $\tau \in W_{\theta, 0}$ and $l \in {\mathbb N} \cup \{0\}$ we have
$|\tau+l| \geq \sin (\theta) l$. We obtain
\begin{equation}
\label{equ:invteta}
|\tau_{l_{2}}| \geq |\tau_{0}+l_{2}| - \sum_{a=0}^{l_{2}-1} |\Delta_{\varphi}(\varphi^{a}(P))| \geq
|\tau_{0}+l_{2}| - \frac{l_{2}}{2} \sin (\theta) \geq \frac{|\tau_{0}+l_{2}|}{2}
\end{equation}
for any $l_{2} \in \{0, \hdots, l_{1}+1\}$.

Now we consider the case $\psi(\vartheta) \cap W_{\pi/2, M} \neq \emptyset$.
We define
\[ l_{0} = \min \{l \in \{0,\hdots,j\} : \tau_{l} \in W_{\pi/2, M} \}. \]
We have $\tau_{l} \in W_{\pi/2, M}$ for any $l_{0} \leq l \leq j$.
Fix $l_{0} \leq l \leq j$. We obtain
\[ \tau_{l} - (\tau_{l_{0}} + (l-l_{0})) = \sum_{a=l_{0}}^{l-1} |\Delta_{\varphi}(\varphi^{a}(P))| \implies
|\tau_{l} - (\tau_{l_{0}} + (l-l_{0}))| \leq \frac{l-l_{0}}{2} . \]
The property $|\tau_{l_{0}} + (l-l_{0})| \geq l-l_{0}$ is a consequence of $Re (\tau_{l_{0}}) >0$.
It implies
\[ |\tau_{l}| \geq \frac{|\tau_{l_{0}} + (l-l_{0})|}{2} \geq
\frac{\sqrt{|\tau_{l_{0}}|^{2} + (l-l_{0})^{2}}}{2} \geq
\frac{|\tau_{l_{0}}| + (l-l_{0})}{2 \sqrt{2}}
. \]
We put $l_{2}=l_{0}$ in equation (\ref{equ:invteta}) and simplify the previous inequality to get
\begin{equation}
\label{equ:invflat}
 |\tau_{l}| \geq \frac{|\tau_{0}+l_{0}| + (l-l_{0})}{4 \sqrt{2}} \geq
\frac{|\tau_{0}+l|}{4 \sqrt{2}} .
\end{equation}
Equations (\ref{equ:invteta}) and (\ref{equ:invflat}) imply that
$|\tau_{l}| \geq |\tau_{0}+l|/(4 \sqrt{2})$ for any $l \in \{0, \hdots, j\}$.

We claim that $|\tau + l| \geq c (|\tau|+l)$ for all $\tau \in W_{\theta,0}$ and
$l \in {\mathbb N} \cup \{0\}$. It suffices to prove that
\[ |\tau|^{2} + l^{2} + 2 |\tau| l \cos (\theta_{0}) = |\tau+l|^{2} \geq c^{2} (|\tau|+l)^{2}  \]
where $\theta_{0}$ is the angle enclosed by the vectors $\psi$ and $l$. Since $\theta_{0} \leq \pi-\theta$
we deduce that $\cos (\theta_{0}) \geq - \cos (\theta)$. It suffices to prove
\[ |\tau|^{2} + l^{2} - 2 |\tau| l \cos (\theta) \geq c^{2} (|\tau| +l)^{2}
\Leftrightarrow
(|\tau|^{2} + l^{2})(1-c^{2}) \geq  2 |\tau| l ( \cos (\theta) + c^{2}) .  \]
The last inequality can be deduced from $c^{2} = (1-\cos (\theta))/2$ and
$|\tau|^{2} + l^{2} \geq 2 |\tau| l$. The previous properties imply
\[  |\Pi(\varphi^{l}(P))| \leq
\frac{D}{(1+|\tau_{l}|)^{k}} \leq \frac{4^{k} \sqrt{2}^{k} D}{(1+|\tau_{0}+l|)^{k}} \leq
\frac{4^{k} \sqrt{2}^{k} D}{c^{k}}  \frac{1}{(|\tau_{0}|+l+1)^{k}} . \]
We estimate the right hand side to obtain
\[ \sum_{l=0}^{j} |\Pi(\varphi^{l}(P))| \leq
\frac{4^{k} \sqrt{2}^{k} D}{c^{k}} \left({
\frac{1}{(|\tau_{0}|+1)^{k}} + \int_{0}^{\infty} \frac{1}{(|\tau_{0}|+1+t)^{k}} dt
}\right) . \]
This leads us to
\[ \sum_{l=0}^{j} |\Pi(\varphi^{l}(P))| \leq
\frac{4^{k} \sqrt{2}^{k} D}{c^{k}} \left({
\frac{1}{(|\tau_{0}|+1)^{k}} + \frac{1}{k-1} \frac{1}{(|\tau_{0}|+1)^{k-1}}
}\right)  \]
and then to
\[ \sum_{l=0}^{j} |\Pi(\varphi^{l}(P))| \leq
\frac{4^{k} \sqrt{2}^{k} k D}{(k-1)c^{k}} \left({ \frac{1}{(|\psi(P)|+1)^{k-1}} }\right)  \]
as we intended to prove.
\end{proof}
\subsection{Defining Fatou coordinates}
\label{subsec:defatcor}
The construction of Fatou coordinates of elements in $\diff{tp1}{2}$ is based on
building quasiconformal homeomorphisms $\sigma$ in regions
$H \in  Reg(\epsilon, \aleph X,I)$ conjugating $\varphi$ and one of its normal forms.
The mapping $\sigma$ induces a quasiconformal
conjugation between the space of orbits of $\varphi_{|H(x)}$ and ${\mathbb C}^{*}$
for $x \in (0,\delta)I$.  This conjugation can
be turned into a holomorphic one by using the Ahlfors-Bers theorem. As a result,
we obtain holomorphic Fatou coordinates.

Let $\varphi \in \diff{tp1}{2}$. Let $\Upsilon={\rm exp}(X)$ be a $2$-convergent normal form.
Consider $\Lambda=(\lambda_{1}, \hdots, \lambda_{\tilde{q}}) \in {\mathcal M}$ and
the dynamical splitting $\digamma_{\Lambda}$ in remark \ref{rem:unifspl}.
\begin{defi}
\label{def:dlam}
Let $\lambda \in {\mathbb S}^{1}$. We define
\[ I_{\Lambda}^{\lambda}= \lambda e^{i[-\upsilon_{\Lambda}, \upsilon_{\Lambda}]} \ {\rm and} \
d_{\Lambda}^{\lambda}= \max \{ j \in \{0, \hdots, \tilde{q} \} : \lambda \in I_{j}(\lambda_{j},0) \} \]
and $\aleph_{\Lambda, \lambda}= \aleph_{d_{\Lambda}^{\lambda},\Lambda,\lambda}$ (see def. \ref{def:alepk}).
\end{defi}
Given $H$ in $Reg(\epsilon, \aleph_{\Lambda, \lambda} X, I_{\Lambda}^{\lambda})$
we construct Fatou coordinates of $\varphi$ in $H$. The construction is analogous
to the one in \cite{JR:mod} where detailed proofs can be found.

Let $L \in {\mathcal P}(H)$. Consider a point $P=(x_{0},y_{0}) \in H^{L}$. Let
$\gamma$ be the trajectory of $\Re (\aleph_{\Lambda, \lambda} X)$ passing through $P$.
The subset of $H_{L}$ enclosed by $\gamma$ and ${\rm exp}(X)(\gamma)$, i.e.
\[ B_{X}(P) \stackrel{def}{=} \psi_{L}^{X}(x_{0},y)^{-1} (\psi_{L}^{X}(\gamma) + [0,1]) \]
satisfies that $B_{X}(P) \setminus {\rm exp}(X)(\gamma)$ is
a fundamental domain for ${\rm exp}(X)_{|H_{L}(x_{0})}$.

We want to prove that the set $B_{\varphi}(P)$ enclosed by $\gamma$ and $\varphi(\gamma)$
satisfies that $B_{\varphi}(P) \setminus \varphi(\gamma)$ is a fundamental
domain for $\varphi_{|H_{L}(x_{0})}$. The proof relies on showing that there exists a quasi-conformal
homeomorphism $\sigma$ defined in a neighborhood of $B_{X}(P)$ in $\{ x_{0} \} \times B(0,\epsilon)$
conjugating ${\rm exp}(X)$, $\varphi$ such that $\sigma(B_{X}(P)) = B_{\varphi}(P)$.

{\bf Step 1.}
Since $\Re (\aleph_{\Lambda, \lambda} X)$ is transversal to $\Re (X)$ then
$\psi_{L}^{X}(\gamma) \cap \{ Im(z)=b \}$ is a singleton whose element we
denote $a(b)+ i b$. The curve $\gamma$ is parametrized by $Im (\psi_{L}^{X})$.
Let $\varrho: {\mathbb R} \to [0,1]$ a $C^{\infty}$ increasing function such that
$\varrho(-\infty,1/3] = \{0\}$ and $\varrho [2/3,\infty) = \{1\}$. We define
\[ \sigma_{0}(z)=\sigma_{0}(z_{1}+i z_{2})=
z + \varrho (z_{1}-a(z_{2})) (\Delta_{\varphi} \circ (\psi_{L}^{X}(x_{0},y))^{-1}(z-1)) \]
and
\[ \sigma(x_{0},y)= (\psi_{L}^{X}(x_{0},y))^{-1} \circ \sigma_{0} \circ \psi_{L}^{X}(x_{0},y) . \]
Let us prove that $\sigma$ is a q.c. diffeomorphism from a neighborhood of $B_{X}(P)$ onto
a neighborhood of $B_{\varphi}(P)$ such that $\sigma \circ {\rm exp}(X)= \varphi \circ \sigma$ and
$\sigma(B_{X}(P)) = B_{\varphi}(P)$.

Clearly $\sigma$ is the identity in a neighborhood of $\gamma$ and
$\sigma= \varphi \circ {\rm exp}(X)^{-1}(x_{0},y)$ in a neighborhood of ${\rm exp}(X)(\gamma)$.
Thus we obtain $\sigma \circ {\rm exp}(X)= \varphi \circ \sigma$ in a neighborhood of $\gamma$.
By construction $\sigma$ is a $C^{\infty}$ mapping.

{\bf Step 2.}
In order to prove that $\sigma$ is a q.c. diffeomorphism
we divide $H^{L}$ in two parts, in one of them $\Re (\aleph_{\Lambda,\lambda} X)$
is ``very transversal" to $\Re (X)$ whereas in the other one $\sigma$ is very
close to $Id$. We will use different estimates in both kind of sets in order to
analyze the properties of $\sigma$.

Denote $\aleph =\aleph_{\Lambda,\lambda}$.
Consider ${\mathcal E}_{0} =\{ (x,y) \in B(0,\delta) \times B(0,\epsilon) : |y| \geq \eta_{0} |x| \}$.
Next, we prove that $\psi_{L}^{X}$ is big outside
${\mathcal E}_{0}'=\{ (x,y) \in B(0,\delta) \times B(0,\epsilon) : |y| \geq 2 \eta_{0} |x| \}$.
By prop. \ref{pro:ext1g} we obtain
\begin{equation}
\label{equ:step21}
|\psi_{L}^{X}(x,y)| \geq \frac{1}{C_{1} (2 \eta_{0})^{\nu({\mathcal E}_{0})} |x|^{\nu({\mathcal E}_{0})}}
\end{equation}
for any $(x,y) \in H^{L}$ such that $|y| = 2 \eta_{0} |x|$.
Denote
$\Gamma_{x}= \Gamma(\aleph X, L_{i X}^{\epsilon}(x),{\mathcal E}_{0}')$
and ${\mathcal I} (\Gamma_{x}) = [h_{1}(x), h_{2}(x)]$. Since
$\Gamma_{x}(h_{j}(x)) \in \{ |y| = 2 \eta_{0} |x| \}$ and
\[ \psi_{L}^{X}(\Gamma_{x}(h_{j}(x))) - \psi_{L}^{X}(L_{i  X}^{\epsilon}(x))
\in i {\mathbb R} \]
for all $x \in (0,\delta) I_{\Lambda}^{\lambda}$ and $j \in \{1,2\}$ then we obtain
\begin{equation}
\label{equ:step22}
|Im (\psi_{L}^{X}(\Gamma_{x}(h_{j}(x))))|
\geq \frac{1}{2 C_{1} (2 \eta_{0})^{\nu({\mathcal E}_{0})} |x|^{\nu({\mathcal E}_{0})}}
\end{equation}
for all $x \in (0,\delta) I_{\Lambda}^{\lambda}$ and $j \in \{1,2\}$
by considering a smaller $\delta>0$ if necessary.
Let $\tilde{\upsilon}_{x}$ be the open arc in $\partial_{I} {\mathcal E}_{0}'$ contained in
$H$ and such that $\partial \tilde{\upsilon}_{x}= \{\Gamma_{x}(h_{1}(x)), \Gamma_{x}(h_{2}(x)) \}$.
Suppose that $\Re (X)$ points towards $H$ at $L_{i X}^{\epsilon}(0)$
without lack of generality.
Denote $\upsilon_{x}$ the curve
\[ \Gamma_{x}(-\infty,h_{1}(x)] \cup \upsilon_{x} \cup \Gamma_{x}[h_{2}(x),\infty) . \]
Given $(x,y) \in H_{L} \setminus  {\mathcal E}_{0}'$ the point $\psi_{L}^{X}(x,y)$
is to the right of the curve $\psi_{L}^{X}(\upsilon_{x})$.
Equations (\ref{equ:step21}) and (\ref{equ:step22}) imply that
the ball $B(0,C_{2}/|x|^{\nu({\mathcal E}_{0})})$ does not intersect $\psi_{L}^{X}(\upsilon_{x})$
where $C_{2}=1/(2 C_{1} (2 \eta_{0})^{\nu({\mathcal E}_{0})})$.
Thus $B(0,C_{2}/|x|^{\nu({\mathcal E}_{0})})$ is to the left of $\psi_{L}^{X}(\upsilon_{x})$.
We deduce that
\[ |\psi_{L}^{X}(x,y)| \geq \frac{C_{2}}{  |x|^{\nu({\mathcal E}_{0})}}
\ \ \forall (x,y) \in H_{L} \setminus  {\mathcal E}_{0}' . \]
We obtain $\aleph_{\Lambda,\lambda}^{*}(x,y)=i$ for any $(x,y) \in H^{L}$
such that $|\psi_{L}^{X}(x,y)| < C_{2}/|x|^{\nu({\mathcal E}_{0})}$.

{\bf Step 3.}
\begin{pro}
The mapping $\sigma$ is a diffeomorphism between neighborhoods of
$B_{X}(P)$ and $B_{\varphi}(P)$ in $\{x_{0}\} \times {\mathbb C}$.
\end{pro}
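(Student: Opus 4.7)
The plan is to reduce the proposition to proving that $\sigma_{0}$ is a $C^{1}$-diffeomorphism on a neighborhood of $\psi_{L}^{X}(B_{X}(P))=\psi_{L}^{X}(\gamma)+[0,1]$ in the $z$-plane. By construction $\psi_{L}^{X}(x_{0},\cdot)$ is an injective holomorphism on $H_{L}(x_{0})$, carrying a neighborhood of $B_{X}(P)$ biholomorphically onto a neighborhood of $\psi_{L}^{X}(\gamma)+[0,1]$, so the conjugation $\sigma=(\psi_{L}^{X}(x_{0},\cdot))^{-1}\circ \sigma_{0} \circ \psi_{L}^{X}(x_{0},\cdot)$ inherits the diffeomorphism property from $\sigma_{0}$ and automatically maps $B_{X}(P)$ to $\sigma_{0}(\psi_{L}^{X}(B_{X}(P)))$.

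The main estimate comes from Proposition \ref{pro:bddconf} applied to the $2$-convergent normal form $\Upsilon$: it yields $|\Delta_{\varphi}(x_{0},y)|\le K(1+|\psi_{L}^{X}(x_{0},y)|)^{-2}$ on $H^{L}$. Setting $F(z)=\Delta_{\varphi}\circ(\psi_{L}^{X}(x_{0},\cdot))^{-1}(z-1)$, one obtains a holomorphic function with $|F(z)|\le K(1+|z-1|)^{-2}$. The width estimates on regions (Proposition \ref{pro:widreg}, combined with the fact that $\psi_{L}^{X}$ extends injectively to a slightly larger neighborhood of $H^{L}$) guarantee that the holomorphy domain of $F$ contains a disk of fixed positive radius $r_{0}$ around every point of $\psi_{L}^{X}(\gamma)+[0,1]$. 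A Cauchy inequality in such disks then gives $|F'(z)|\le K'(1+|z|)^{-2}$, with $K'$ independent of $x_{0}$.

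Differentiating yields $\partial_{z_{1}}\sigma_{0}=1+\varrho' F+\varrho F'$ and $\partial_{z_{2}}\sigma_{0}=i-\varrho' a'(z_{2})F+i\varrho F'$, whence the real Jacobian equals $1+O(|F|(1+|a'(z_{2})|)+|F'|)$. The bound $|a'(z_{2})|\le \cot\zeta$ follows from the fact that $\gamma$ is a trajectory of $\Re(\aleph_{\Lambda,\lambda}X)$ and $\aleph_{\Lambda,\lambda}^{*}\in e^{i[\zeta,\pi-\zeta]}$ for some $\zeta>0$ by compactness (the same observation used at the end of the proof of Proposition \ref{pro:widcon}); outside $\mathcal{E}_{0}'$ we even have $\aleph_{\Lambda,\lambda}^{*}\equiv i$ by Step 2, so $a'\equiv 0$ there. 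Consequently the Jacobian is strictly positive, giving local invertibility. Global injectivity is then obtained by noting that $\sigma_{0}\equiv \mathrm{id}$ on $\{z_{1}\le a(z_{2})+1/3\}$, that $\sigma_{0}(z)=z+F(z)$ with $|F'|<1$ on $\{z_{1}\ge a(z_{2})+2/3\}$ (a holomorphic contractive perturbation of the identity, hence univalent), and a sliding argument along vertical lines in the transition strip. The identities $\sigma_{0}(\psi_{L}^{X}(\gamma))=\psi_{L}^{X}(\gamma)$ and $\sigma_{0}(\psi_{L}^{X}(\gamma)+1)=\psi_{L}^{X}(\varphi(\gamma))$ then imply $\sigma(B_{X}(P))=B_{\varphi}(P)$.

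The main obstacle is the Cauchy estimate on $F'$: its applicability hinges on finding a disk of size bounded below, uniformly in $z$, inside the holomorphy domain of $F$, and that domain has the intricate shape inherited from the partition of $H_{L}(x_{0})$ into basic sets that collapse as $x_{0}\to 0$. What makes the estimate go through is that Proposition \ref{pro:bddconf} is stated directly in $\psi_{L}^{X}$-coordinates (which is the whole point of the asymptotic analysis in Section \ref{subsec:npmtraflow}) rather than in $y$-coordinates, so the scaling factor $x_{0}^{e(\mathcal{B})}$ hidden in each basic set has already been absorbed; combined with the asymptotic continuity of the boundary sections of $H$, this delivers the uniform $C^{1}$-smallness of $\sigma_{0}-\mathrm{id}$ needed for the diffeomorphism conclusion.
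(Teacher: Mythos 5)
Your overall strategy is the same as the paper's: work in the $\psi_{L}^{X}$-chart, express $\sigma_{0}-\mathrm{id}$ in terms of $\Delta_{0}$, differentiate, bound $a'$ and the derivatives of $\Delta_{0}$ by a Cauchy estimate, and conclude that $\sigma_{0}$ is a diffeomorphism. Two points, however, deserve attention.

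First, a genuine gap. Your bound on the Jacobian perturbation is stated as $O\bigl(|F|(1+|a'(z_{2})|)+|F'|\bigr)$, and you conclude immediately that the Jacobian is strictly positive. But the decay estimate $|F(z)|\le K(1+|z-1|)^{-2}$ coming from Proposition \ref{pro:bddconf} does \emph{not} by itself make $|F|$ or $|F'|$ uniformly small: for $z$ of order $1$ (and $|\psi_{L}^{X}|$ is merely bounded below by some positive constant on $H$), the bound is only $O(K)$, with $K$ depending on $\Upsilon$ and not necessarily less than $1$. The paper deals with this explicitly: it invokes Lemma \ref{lem:deltahol} to get $\Delta_{\varphi}(0,0)=0$, and then, by continuity, shrinks the domain $B(0,\delta)\times B(0,\epsilon)$ so that $|\Delta_{\varphi}|<(\sup_{\mathbb R}|\partial\varrho/\partial t|)^{-1}/16$ and (via Cauchy) $|\partial\Delta_{0}/\partial z|<1/16$ uniformly; the remark after the Cauchy step stresses that the shrunken domain depends only on $\varphi$ and $X$, not on $P$ or $x_{0}$. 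Without this step — or the equivalent observation that shrinking $\epsilon$ pushes the lower bound on $|\psi_{L}^{X}|$ to infinity and thus makes $K(1+|z-1|)^{-2}$ uniformly small — your conclusion that the Jacobian is positive does not follow, and the case-split on $a'$ alone cannot repair it, since even where $a'\equiv 0$ the terms $\varrho'F$ and $\varrho F'$ remain.

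Second, a smaller but telling slip: you write that $\aleph_{\Lambda,\lambda}^{*}\equiv i$ \emph{outside} ${\mathcal E}_{0}'$. Step 2 of the paper shows the opposite: $|\psi_{L}^{X}|$ is \emph{large} outside ${\mathcal E}_{0}'$, and $\aleph_{\Lambda,\lambda}^{*}\equiv i$ precisely where $|\psi_{L}^{X}|<C_{2}/|x|^{\nu({\mathcal E}_{0})}$, i.e.\ \emph{inside} ${\mathcal E}_{0}'$. The case-split in the paper is therefore between $|z_{2}|<C_{2}/|x_{0}|^{\nu({\mathcal E}_{0})}$ (where $a'\equiv 0$) and $|z_{2}|\ge C_{2}/|x_{0}|^{\nu({\mathcal E}_{0})}$ (where $|a'|\le 1/\tan(\theta')$ and the decay of $|\Delta_{0}|$ makes $\Delta_{0}\cdot a'$ small). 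Your substance is right once the labels are flipped, but the reversal suggests the role of that case-split in controlling the product $|F|\cdot|a'|$ was not fully internalized; the paper estimates $\tau(z)=\Delta_{0}(z)(\partial a/\partial t)(z_{2})$ precisely so as to avoid needing any bound on $a'$ where $|z_{2}|$ is small.

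Finally, the global injectivity argument you sketch (identity region, contractive perturbation, sliding) is a different route from the paper's, which simply reads off univalence from $\|{\mathcal J}(\sigma_{0})-\mathrm{Id}\|<1/4$; your version is not incorrect in spirit but the ``hence univalent'' for $z\mapsto z+F(z)$ with $|F'|<1$ requires a star-shaped or convex domain, a hypothesis you should check for $\{z_{1}\ge a(z_{2})+2/3\}$.
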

\begin{proof}
Denote
\[ {\Delta}_{0}=\Delta_{\varphi} \circ (\psi_{L}^{X}(x_{0},y))^{-1}(z-1) \ \ {\rm and} \ \
{\mathcal J} (h) =  \left({
\begin{array}{cc}
\frac{\partial Re (h)}{\partial z_{1}} & \frac{\partial Re (h)}{\partial z_{2}} \\
\frac{\partial Im (h)}{\partial z_{1}} & \frac{\partial Im (h)}{\partial z_{2}}
\end{array}
}\right) .\]
We have that $({\mathcal J} (\sigma_{0}))(z)  - Id  - \varrho (z_{1}-a(z_{2})) ({\mathcal J} (\Delta_{0}))(z)$
is equal to
\[
\left({
\begin{array}{cc}
Re (\Delta_{0})(z) \frac{\partial \varrho}{\partial t}(z_{1}-a(z_{2})) &
-Re (\Delta_{0})(z) \frac{\partial \varrho}{\partial t}(z_{1}-a(z_{2})) \frac{\partial a}{\partial t}(z_{2}) \\
Im (\Delta_{0})(z) \frac{\partial \varrho}{\partial t}(z_{1}-a(z_{2})) &
-Im (\Delta_{0})(z) \frac{\partial \varrho}{\partial t}(z_{1}-a(z_{2})) \frac{\partial a}{\partial t}(z_{2})
\end{array}
}\right) .
\]
By lemma \ref{lem:deltahol} we have that $\Delta_{\varphi}(0,0)=0$.
By using Cauchy's integral formula
\[ \frac{\partial \Delta_{0}}{\partial z}(z) = \frac{1}{2 \pi i}
\int_{|w|=1} \frac{\Delta_{0}(w)}{(w-z)^{2}} dw \]
we can suppose that $|\Delta_{\varphi}(Q)| < (\sup_{\mathbb R} |\partial \varrho/\partial t|)^{-1}/16$ and
$|\partial \Delta_{0}/\partial z|(\psi_{L}^{X}(Q)) < 1/16$ for any $Q$ in a neighborhood of
$B_{X}(P)$ in $\{x_{0}\} \times B(0,\epsilon)$. Let us remark that we could need to consider a
smaller domain of definition $B(0,\delta) \times B(0,\epsilon)$ but the domain with the good
estimates only depend on $\varphi$ and $X$ (and not on $P$ for example).

Denote $\tau(z)= \Delta_{0}(z) (\partial a / \partial t)(z_{2})$.
Let us estimate $\tau(z)$.
There exist continuous functions
$\theta_{1}, \hdots, \theta_{r}: I_{\Lambda}^{\lambda} \to (0,\pi)$ such
that $\theta_{j}$ is decreasing on $\arg \lambda'$ and
$\aleph_{\Lambda,\lambda}(\lambda') = (e^{i \theta_{1}}, \hdots, e^{i \theta_{r}})(\lambda')$
for any $\lambda' \in I_{\Lambda}^{\lambda}$. We define
\[ \theta' = \min_{1 \leq j \leq r, \ \lambda' \in I_{\Lambda}^{\lambda}}
\min (\pi - \theta_{j}(\lambda'), \theta_{j}(\lambda')) . \]
We have $\theta'>0$. The tangent vector to $\psi_{L}^{X}(\gamma)$ at any point
$\psi_{L}^{X}(Q)$ for $Q$ in $\gamma$ belongs to ${\mathbb R}^{+} e^{i[\theta',\pi-\theta']}$.
We deduce that $|\partial a/\partial t| \leq 1 / \tan (\theta')$. The equation (\ref{equ:step22})
implies that $a(t)$ is constant in
$(-C_{2}/|x_{0}|^{\nu({\mathcal E}_{0})}, C_{2}/|x_{0}|^{\nu({\mathcal E}_{0})})$.
We obtain that $\tau(z)=0$ if $|z_{2}| < C_{2}/|x_{0}|^{\nu({\mathcal E}_{0})}$.
Suppose now that $|z_{2}| \geq C_{2}/|x_{0}|^{\nu({\mathcal E}_{0})}$. We obtain
\begin{equation}
\label{equ:later}
 |\Delta_{0}(z)| \leq \frac{K_{1}}{(1+|z|)^{2}} \leq
\frac{K_{1} |x_{0}|^{2 \nu({\mathcal E}_{0})}}{C_{2}^{2}} <
\frac{\tan (\theta')}{16 \sup_{\mathbb R} |\partial \varrho/\partial t|}
\end{equation}
by considering $\delta>0$ small enough. We obtain
$|\tau(z)| < (\sup_{\mathbb R} |\partial \varrho/\partial t|)^{-1}/16$.
The coefficients of the matrix $({\mathcal J} (\sigma_{0}))(z)  - Id$ belong to
$(-1/8,1/8)$ for any $z$ in a neighborhood of $\psi_{L}^{X}(B_{X}(P))$.
Since $||({\mathcal J} (\sigma_{0}))(z)  - Id|| < 1/4$ for the spectral norm then $\sigma_{0}$ is a
diffeomorphism in the neighborhood of $\psi_{L}^{X}(B_{X}(P))$. We deduce that $\sigma$ is a
diffeomorphism in the neighborhood of $B_{X}(P)$.
\end{proof}
%
\begin{rem}
Consider $Q \in \gamma$. Denote $z' =  \psi_{L}^{X}(Q)+1$.
Let $e^{i \theta_{1}}$ be the unit tangent vector to $\psi_{L}^{X}(\gamma)$
at $\psi_{L}^{X}(Q)$.
The vector $({\mathcal J} (\sigma_{0})(z'))(e^{i \theta_{1}})$ is tangent to
$\psi_{L}^{X}(\varphi(\gamma))$ at $\psi_{L}^{X}(\varphi(Q))$.
We claim that
\[ ({\mathcal J} (\sigma_{0})(z'))(e^{i \theta_{1}}) \in
{\mathbb R}^{+} e^{i[\theta'/2, \pi - \theta'/2]} \cup {\mathbb R}^{+} i e^{i[-\theta_{0}, \theta_{0}]} \]
where $\theta_{0} = \arctan (1/4)$. Indeed  if
$|\psi_{L}^{X}(Q)| < C_{2}/|x_{0}|^{\nu({\mathcal E}_{0})}$ we have that
$\theta_{1}=\pi/2$ and $|({\mathcal J} (\sigma_{0})(z'))(i) - i| \leq 1/4$
imply $({\mathcal J} (\sigma_{0})(z'))(i) \in {\mathbb R}^{+} i e^{i[-\theta_{0}, \theta_{0}]}$.
Otherwise $|{\mathcal J} (\sigma_{0}) - Id| = O(|x|^{\nu({\mathcal E}_{0})})$ and we obtain
$({\mathcal J} (\sigma_{0})(z'))(e^{\theta_{1}}) \in {\mathbb R}^{+} e^{i[\theta'/2, \pi - \theta'/2]}$.
Anyway  $\Re (X)$ is transversal to $\varphi(\gamma)$ at $Q'$ for any $Q' \in \varphi(\gamma)$.
\end{rem}
{\bf Step 4.} Analogously we can prove
\begin{pro}
The mapping $\sigma$ is quasiconformal.
\end{pro}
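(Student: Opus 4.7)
The plan is to reduce the quasiconformality of $\sigma$ to that of $\sigma_{0}$ and then bound the Beltrami coefficient of $\sigma_{0}$ by using exactly the same estimates on $|\Delta_{0}|$, $|\partial \Delta_{0}/\partial z|$ and $|\tau|$ already established in Step 3. First, since $\psi_{L}^{X}$ is holomorphic on the interior of $H$ and is a local biholomorphism from a neighborhood of $B_{X}(P) \setminus Sing X$ onto its image, the conjugation
\[ \sigma = (\psi_{L}^{X}(x_{0}, \cdot))^{-1} \circ \sigma_{0} \circ \psi_{L}^{X}(x_{0}, \cdot) \]
is quasiconformal with dilatation equal to that of $\sigma_{0}$. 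It therefore suffices to prove that $\sigma_{0}$ is quasiconformal in a neighborhood of $\psi_{L}^{X}(B_{X}(P))$.

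The key preliminary observation is that $\Delta_{0}(z) = \Delta_{\varphi} \circ (\psi_{L}^{X}(x_{0}, \cdot))^{-1}(z-1)$ is \emph{holomorphic} in $z$, since $y \mapsto \Delta_{\varphi}(x_{0}, y)$ and $(\psi_{L}^{X}(x_{0}, \cdot))^{-1}$ are holomorphic at fixed $x_{0}$. Hence $\partial \Delta_{0}/\partial \bar{z} \equiv 0$. Writing $z_{1}=(z+\bar z)/2$, $z_{2}=(z-\bar z)/(2i)$, a direct computation gives
\[ \frac{\partial}{\partial z} \varrho(z_{1}-a(z_{2})) = \frac{\varrho'(z_{1}-a(z_{2}))}{2}(1+i a'(z_{2})), \quad \frac{\partial}{\partial \bar z} \varrho(z_{1}-a(z_{2})) = \frac{\varrho'(z_{1}-a(z_{2}))}{2}(1-i a'(z_{2})). \]
Consequently,
\[ \frac{\partial \sigma_{0}}{\partial \bar z}(z) = \frac{\varrho'(z_{1}-a(z_{2}))}{2}\bigl(\Delta_{0}(z) - i \tau(z)\bigr), \qquad \frac{\partial \sigma_{0}}{\partial z}(z) = 1 + \frac{\varrho'(z_{1}-a(z_{2}))}{2}\bigl(\Delta_{0}(z) + i\tau(z)\bigr) + \varrho(z_{1}-a(z_{2})) \frac{\partial \Delta_{0}}{\partial z}(z), \]
where $\tau(z) = \Delta_{0}(z) a'(z_{2})$ is exactly the quantity introduced in Step 3.

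All ingredients to close the proof are already available. By the estimates derived in Step 3 (via Cauchy's integral formula applied to $\Delta_{0}$, together with $|\tau(z)| < (\sup |\varrho'|)^{-1}/16$ obtained from the equation (\ref{equ:later})), we have uniformly on a neighborhood of $\psi_{L}^{X}(B_{X}(P))$ the bounds
\[ |\varrho'(z_{1}-a(z_{2}))|\cdot |\Delta_{0}(z)| \leq \tfrac{1}{16}, \quad |\varrho'(z_{1}-a(z_{2}))|\cdot |\tau(z)| \leq \tfrac{1}{16}, \quad |\varrho(z_{1}-a(z_{2}))|\cdot \bigl|\tfrac{\partial \Delta_{0}}{\partial z}(z)\bigr| \leq \tfrac{1}{16}. \]
Plugging these into the formulas above yields $|\partial_{\bar z}\sigma_{0}| \leq 1/16$ and $|\partial_{z}\sigma_{0}-1| \leq 1/8$, so $|\partial_{z}\sigma_{0}| \geq 7/8$ and the Beltrami coefficient satisfies
\[ \left| \frac{\partial \sigma_{0}/\partial \bar z}{\partial \sigma_{0}/\partial z} \right| \leq \frac{1/16}{7/8} = \frac{1}{14} < 1 \]
uniformly on a neighborhood of $\psi_{L}^{X}(B_{X}(P))$. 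Therefore $\sigma_{0}$ is $K$-quasiconformal with $K \leq 15/13$, and hence so is $\sigma$. The only mildly subtle point to check carefully is that the domain on which all three bounds hold simultaneously depends only on $\varphi$ and $X$ (and not on the particular choice of the point $P \in H^{L}$ or of $\aleph_{\Lambda,\lambda}$); this was already noted inside Step 3 and propagates without change to the present argument.
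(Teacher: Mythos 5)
Your proof is correct and follows essentially the same route as the paper: reduce to $\sigma_{0}$ (holomorphic conjugation preserves the Beltrami coefficient), compute the Wirtinger derivatives using that $\Delta_{0}$ is holomorphic, and bound $|\chi_{\sigma_{0}}|$ uniformly by $1/14$ using the three Step-3 estimates on $|\varrho'\Delta_{0}|$, $|\varrho'\tau|$, and $|\varrho\,\partial_{z}\Delta_{0}|$. The only cosmetic difference is that the paper bounds the numerator $|\partial_{\bar z}\sigma_{0}|$ by $\tfrac{\sqrt{2}}{2}\cdot\tfrac{1}{16}$ (treating $\Delta_{0}$ and $-i\tau$ as orthogonal components) whereas you use the triangle inequality to get $\tfrac{1}{16}$; both land at the same final bound $1/14$ and the same quasiconformality constant $15/13$.
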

\begin{proof}
We have
\begin{equation}
\label{equ:dilats}
\chi_{\sigma_{0}}=
\frac{\frac{\partial \sigma_{0}}{\partial \overline{z}}}{\frac{\partial \sigma_{0}}{\partial z}} (z)=
\frac{\frac{\partial \varrho(z_{1}-a(z_{2}))}{\partial \overline{z}} \Delta_{0}(z)}{
1+ \varrho(z_{1}-a(z_{2})) \frac{\partial \Delta_{0}}{\partial z}(z) + \Delta_{0}(z)
\frac{\partial \varrho(z_{1}-a(z_{2}))}{\partial z}}
\end{equation}
and
\[ \frac{\partial \varrho(z_{1}-a(z_{2}))}{\partial z_{1}} = \frac{\partial \varrho}{\partial t} (z_{1}-a(z_{2})),
\  \frac{\partial \varrho(z_{1}-a(z_{2}))}{\partial z_{2}} = -\frac{\partial \varrho}{\partial t} (z_{1}-a(z_{2}))
\frac{\partial a}{\partial t}(z_{2}). \]
We obtain
\[ |\chi_{\sigma_{0}}| =
\left|{ \frac{\frac{\partial \sigma_{0}}{\partial \overline{z}}}{\frac{\partial \sigma_{0}}{\partial z}} (z)
}\right| \leq \frac{\frac{\sqrt{2}}{2} \frac{1}{16}}{1- \frac{1}{16}-\frac{\sqrt{2}}{2} \frac{1}{16}} <
\frac{1}{14}   \]
in the neighborhood of $\psi_{L}^{X}(B_{X}(P))$. Therefore $\sigma$ is a $15/13$ q.c. mapping in a neighborhood of
$B_{X}(P)$.
\end{proof}
{\bf Step 5.} We prove now that the domain enclosed by $\gamma$ and $\varphi(\gamma)$ is a
fundamental domain for $\varphi_{|H_{L}(x_{0}) \cup \varphi(H_{L}(x_{0}))}$.

The vector field $\Re (X)$ is
transversal to $\varphi(\gamma)$ at $Q'$ for any $Q' \in \varphi(\gamma)$.
Hence given $z_{2} \in {\mathbb R}$ there exists a unique
point $c(z_{2})+i z_{2}$ in $\psi_{L}^{X}(\varphi(\gamma)) \cap \{Im(z)=z_{2}\}$.

We define (see Step 1 for the definition of the function $a$)
\[ B_{\varphi}(P) = \{ Q \in H_{L}(x_{0}) : Re (\psi_{L}^{X}(Q)) \in
[a(Im(\psi_{L}^{X}(Q))),c(Im(\psi_{L}^{X}(Q)))] \} . \]
Let us prove that $B_{\varphi}(P) \setminus \varphi(\gamma)$ is a fundamental domain for
$\varphi_{|H_{L}(x_{0}) \cup \varphi(H_{L}(x_{0}))}$.
Step 3 implies that $B_{\varphi}(P) \setminus \varphi(\gamma)$
is a fundamental domain for $\varphi$ in the neighborhood of $B_{\varphi}(P)$
and that orbits of $\varphi_{|H_{L}(x_{0}) \cup \varphi(H_{L}(x_{0}))}$ intersects
$B_{\varphi}(P) \setminus \varphi(\gamma)$ at most once.
It suffices to prove that every orbit of a point $Q$ in
$H_{L}(x_{0}) \cup \varphi(H_{L}(x_{0}))$ intersects $B_{\varphi}(P)$.

Let $Q \in H_{L}(x_{0}) \cup \varphi(H_{L}(x_{0}))$. Denote
$z_{1}+i z_{2} = \psi_{L}^{X}(Q)$. If $z_{1} \in [a(z_{2}), c(z_{2})]$ then
$Q \in B_{\varphi}(P)$ and there is nothing to prove. Suppose without lack of generality
that $z_{1} < a(z_{2})$. We define
\[ A= \{  Q \in H_{L}(x_{0}) : Re (\psi_{L}^{X}(Q)) < a(Im(\psi_{L}^{X}(Q))) \} . \]
It suffices to prove that there exists $j \in {\mathbb N}$ such that
$Q, \hdots, \varphi^{j-1}(Q) \in A$ and $\varphi^{j}(Q) \not \in A$ since then
$\varphi^{j}(Q)$ belongs to $B_{\varphi}(P)$. Let us argue by contradiction, we suppose that
$\varphi^{j}(Q) \in A$ for any $j \in {\mathbb N}$.
Denote $\psi_{L}^{X}(\varphi^{j}(Q)) = s_{j}^{1} + i s_{j}^{2}$. We have
$|\Delta_{\varphi}(\varphi^{j}(P))| \leq 1/16$ and then $s_{j+1}^{1} - s_{j}^{1} \geq 15/16$
for any $j \geq 0$. Since $|\partial a/ \partial t| \leq 1/\tan(\theta')$ then
$Re (\psi_{L}^{X})$ is bounded by above in
\[ \{ (x_{0},y) \in A : |Im (\psi_{L}^{X})|(x_{0},y) < M \} \]
for any $M >0$. Therefore we obtain
$\lim_{j \to \infty} |Im (\psi_{L}^{X})|(\varphi^{j}(Q)) = \infty$.
In particular the limit $\lim_{j \to \infty} \varphi^{j}(Q)$ exists and is equal to
a point $Z \in Sing X$. We deduce that
$\lim_{j \to \infty} \Delta_{\varphi}(\varphi^{j}(Q))=0$.
 We have
\[ |s_{j+1}^{2} -s_{j}^{2}| \leq |\Delta_{\varphi}(\varphi^{j}(Q))| \Rightarrow
 |a(s_{j+1}^{2}) -a(s_{j}^{2})| \leq \frac{|\Delta_{\varphi}(\varphi^{j}(Q))|}{\tan (\theta')} \]
and then
\[ (s_{j+1}^{1} - a(s_{j+1}^{2}))  -(s_{j}^{1} - a(s_{j}^{2})) \geq \frac{15}{16} -
\frac{|\Delta_{\varphi}(\varphi^{j}(Q))|}{\tan (\theta')} > 1/2 \]
for any $j \in {\mathbb N}$ big enough. This is impossible since
$\varphi^{j}(P) \in A$ implies $s_{j}^{1} - a(s_{j}^{2})<0$ for any $j \in {\mathbb N}$.

{\bf Step 6.}  We denote
$B_{\varphi}^{*}(P)$ the space of orbits of $\varphi_{|H_{L}(x_{0})}$.
We construct a biholomorphism from $B_{\varphi}^{*}(P)$ to ${\mathbb C}^{*}$.

The mapping $\xi = e^{2 \pi i z} \circ \psi_{L}^{X} \circ \sigma^{-1}$ is a
diffeomorphism from $B_{\varphi}^{*}(P)$ onto ${\mathbb C}^{*}$.
The mapping $\xi$ depends on the point $P$. The function
$\psi_{L}^{X} \circ \sigma^{-1}$ is a $C^{\infty}$ Fatou coordinate of $\varphi$ in $B_{\varphi}(P)$.
Equations (\ref{equ:delta}) and (\ref{equ:delaux}) imply
\begin{equation}
\label{equ:delfund}
 |\Delta_{0}(z)| \leq \frac{K_{1}}{(1+|z|)^{2}} \ {\rm and} \
|\Delta_{0}(z)| \leq \frac{K'}{(1+|z|)^{2+2/\nu({\mathcal E}_{0})}} \ {\rm in} \  H^{L}(x_{0})
\end{equation}
where $K_{1}$ depends only on $X$, $\varphi$ and $K'>0$ does not depend on $P$ or $x_{0}$
but depends on $\aleph_{\Lambda,\lambda}$.
Consider the notations in Step 3. Let $z=z_{1}+i z_{2} \in \psi_{L}^{X}(B_{X}(P))$.
If $|z_{2}| < C_{2}/|x_{0}|^{\nu({\mathcal E}_{0})}$ we obtain $(\partial a/\partial t)(z_{2})=0$;
the complex dilatation $\chi_{\sigma_{0}}$ of $\sigma_{0}$ satisfies
\[ |\chi_{\sigma_{0}}|(z) =
\left|{ \frac{\frac{\partial \sigma_{0}}{\partial \overline{z}}}{\frac{\partial \sigma_{0}}{\partial z}} (z)
}\right| \leq \frac{\frac{\sup_{\mathbb R} |\partial \varrho/\partial t| K_{1} }{2 (1+|z|)^{2}}}{1-\frac{1}{16} - \frac{\sqrt{2}}{2} \frac{1}{16} }=
\frac{C_{3} K_{1} }{(1+|z|)^{2}} . \]
Suppose that $|z_{2}| \geq C_{2}/|x_{0}|^{\nu({\mathcal E}_{0})}$, we have
\[ |\chi_{\sigma_{0}}|(z) \leq
\frac{\frac{1}{2} \left({ \sup_{\mathbb R} |\partial \varrho/\partial t|
+ \frac{\sup_{\mathbb R} |\partial \varrho/\partial t|}{\tan (\theta')} }\right) \frac{K'}{K_{1}}
\frac{1}{(1+|z|)^{2/\nu({\mathcal E}_{0})}}
\frac{K_{1}}{(1+|z|)^{2}}}{1-\frac{1}{16} - \frac{\sqrt{2}}{2} \frac{1}{16}} . \]
Consider $x_{0} \in B(0,\delta)$ and $\delta>0$ small enough. Since
\[ \frac{1}{(1+|z|)^{2/\nu({\mathcal E}_{0})}} \leq \frac{1}{|z_{2}|^{2/\nu({\mathcal E}_{0})}}
\leq \frac{|x_{0}|^{2}}{C_{2}^{2/\nu({\mathcal E}_{0})}} \]
the previous calculations and Step 4 lead us to
\[ |\chi_{\sigma_{0}}|(z) \leq  \min \left({ \frac{C_{3} K_{1} }{(1+|z|)^{2}}, \frac{1}{14} }\right)
\ \ \forall z \in \psi_{L}^{X}(B_{X}(P)) . \]
Since $\xi^{-1}$ is equal to
$(\psi_{L}^{X})^{-1} \circ \sigma_{0} \circ ((1/2\pi i) \ln z)$ then
\begin{lem}
\begin{equation}
\label{equ:dilat}
|\chi_{\xi^{-1}}|(z) \leq  \min \left({ \frac{C_{3} K_{1} }{(1+2^{-1} \pi^{-1} |\ln z|)^{2}},
\frac{1}{14} }\right)
\end{equation}
for any $z \in e^{2 \pi i w} \circ \psi_{L}^{X}(B_{X}(P)) = {\mathbb C}^{*}$.
\end{lem}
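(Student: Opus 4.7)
The plan is to compute $\chi_{\xi^{-1}}$ via the chain rule for complex dilatations and then reduce the estimate to the bound already established for $|\chi_{\sigma_0}|$.

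First I would unwind the definitions. Since $\sigma(x_{0},y) = (\psi_{L}^{X})^{-1} \circ \sigma_{0} \circ \psi_{L}^{X}$ (acting on the $y$-fiber) and $\xi = e^{2\pi i z} \circ \psi_{L}^{X} \circ \sigma^{-1}$, a direct composition gives $\xi = e^{2\pi i z} \circ \sigma_{0}^{-1} \circ \psi_{L}^{X}$, whence
\[ \xi^{-1} = (\psi_{L}^{X})^{-1} \circ \sigma_{0} \circ \frac{1}{2\pi i} \ln z . \]
Both $(\psi_{L}^{X})^{-1}$ and $\frac{1}{2\pi i}\ln z$ (on a suitable branch) are holomorphic. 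For a composition $f = g \circ h$ with $h$ holomorphic, the Wirtinger identities $\partial_{z}f = (\partial_{z}g)(h)\, h'$ and $\partial_{\overline{z}}f = (\partial_{\overline{z}}g)(h)\, \overline{h'}$ yield $|\chi_{f}| = |\chi_{g}\circ h|$; pre-composition with a holomorphic map likewise leaves $|\chi|$ unchanged pointwise. Applying this to both holomorphic factors surrounding $\sigma_{0}$, I obtain
\[ |\chi_{\xi^{-1}}|(z) = |\chi_{\sigma_{0}}|\!\left({\tfrac{1}{2\pi i} \ln z}\right) . \]

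Next I would feed this into the bound already proved in Step 6. For $w \in \psi_{L}^{X}(B_{X}(P))$ it was shown via the two cases $|z_{2}| < C_{2}/|x_{0}|^{\nu({\mathcal E}_{0})}$ and $|z_{2}| \geq C_{2}/|x_{0}|^{\nu({\mathcal E}_{0})}$, combined with the universal $15/13$-quasiconformality from Step 4, that
\[ |\chi_{\sigma_{0}}|(w) \leq \min\!\left({\frac{C_{3} K_{1}}{(1+|w|)^{2}},\ \frac{1}{14}}\right) . \]
Setting $w = (2\pi i)^{-1}\ln z$ one has $|w| = (2\pi)^{-1}|\ln z|$, so $(1+|w|)^{2} = (1 + 2^{-1}\pi^{-1} |\ln z|)^{2}$, and substitution yields the claimed inequality on $e^{2\pi i w}\circ \psi_{L}^{X}(B_{X}(P)) = \mathbb{C}^{*}$.

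There is no real obstacle beyond bookkeeping: the nontrivial content (the two estimates on $|\chi_{\sigma_{0}}|$ depending on whether $\Re (\aleph_{\Lambda,\lambda} X)$ is close to $\Re (iX)$ or not, and the universal bound $1/14$) has already been done in Steps 3--5. The lemma is essentially just the invariance of $|\chi|$ under pre- and post-composition with holomorphic maps, transported through the logarithm which rescales distances by the factor $2\pi$. The only care needed is to note that although $\ln z$ is multi-valued, different branches differ by an additive constant $2\pi i k$, which translates the argument of $\sigma_{0}$ by an integer and yields the same $|\chi_{\sigma_{0}}|$ by the periodicity of the construction (the fundamental domain $B_{X}(P) \setminus \mathrm{exp}(X)(\gamma)$ being tiled by $\mathrm{exp}(X)$-translates); so the right-hand side is a well-defined function of $z \in \mathbb{C}^{*}$.
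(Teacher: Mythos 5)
Your derivation is correct and supplies exactly the detail the paper omits: the paper's ``proof'' is the single sentence ``Since $\xi^{-1}$ is equal to $(\psi_L^X)^{-1}\circ\sigma_0\circ((1/2\pi i)\ln z)$ then [the lemma],'' and your chain-rule argument for Beltrami coefficients (invariance of $|\chi|$ under pre- and post-composition with holomorphic maps) is precisely the missing step. One small correction to your closing remark: the branch-independence is not because $|\chi_{\sigma_0}|$ is literally $1$-periodic with the periodic bound --- the estimate $|\chi_{\sigma_0}|(w)\le C_3K_1/(1+|w|)^2$ depends on $|w|$ and is not translation-invariant --- but rather because you only need the bound for the one branch of $\tfrac{1}{2\pi i}\ln z$ landing in $\psi_L^X(B_X(P))$, and for any other branch $w=w_0+k$ ($k\in\mathbb{Z}$, $w_0$ the principal lift with $\mathrm{Re}(w_0)\in(-\tfrac12,\tfrac12]$) one has $|w|\ge|w_0|$, so the stated right-hand side with $|\ln z|$ read on the principal branch is the weakest (hence safest) of these bounds; this is cosmetic and does not affect the validity of the argument.
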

There exists a quasi-conformal homeomorphism $\tilde{\rho}: \pn{1} \to \pn{1}$
such that $\chi_{\tilde{\rho}} = \chi_{\xi^{-1}}$. Since
$||\chi_{\xi^{-1}}||_{\infty} = \sup_{z \in {\mathbb C}^{*}} |\chi_{\xi^{-1}}(z)| \leq 1/14$
it is a consequence of Ahlfors-Bers theorem. The choice of $\tilde{\rho}$ is unique if we require
the normalizing conditions $\tilde{\rho}(0)=0$, $\tilde{\rho}(1)=1$ and $\tilde{\rho}(\infty)=\infty$.
By construction $\tilde{\rho} \circ \xi$ is a biholomorphism from $B_{\varphi}^{*}(P)$ to ${\mathbb C}^{*}$.

{\bf Step 7.} Let us construct a holomorphic Fatou coordinate of $\varphi$ defined in a
neighborhood of $B_{\varphi}(P)$ in $\{x_{0}\} \times B(0,\epsilon)$. The construction and its properties
are analogous to those in section 7.2 of \cite{JR:mod}. In both cases the key
ingredient is the equation (\ref{equ:dilat}). Further details can be found in  \cite{JR:mod}.

We define
\[ J(r) = \frac{2}{\pi} \int_{|z| < r}
\frac{C_{3} K_{1}}{(1+ 2^{-1} \pi^{-1} |\ln |z||)^2}
\frac{1}{|z|^{2}} d \sigma  \]
for $r \in {\mathbb R}^{+}$. We have that $J(r) < \infty$ for
any $r \in {\mathbb R}^{+}$.
\begin{lem}
\label{lem:Le-Vi}
The mapping $\tilde{\rho}$ is conformal at $0$ and at $\infty$.
Moreover we have
\[ \left|{ \frac{\tilde{\rho}(z)}{z} -
\frac{\partial \tilde{\rho}}{\partial z}(0) }\right| \leq \left|{
\frac{\partial \tilde{\rho}}{\partial z}(0)
}\right| \jmath (|z|) \ {\rm and} \
 \left|{ \frac{z}{\tilde{\rho}(z)} -
{\frac{\partial \tilde{\rho}}{\partial z}(\infty) }^{-1}
}\right| \leq {\left|{
\frac{\partial \tilde{\rho}}{\partial z}(\infty)
}\right|}^{-1} \jmath(1/|z|) \]
where $\jmath$ is a function depending only on $\varphi$, $X$; it satisfies
$\lim_{|z| \to 0} \jmath(|z|)=0$. We have
\[ min_{|z|=1} |\tilde{\rho}(z)| e^{-J(1)} \leq
|\partial \tilde{\rho}/\partial z|(0), |\partial \tilde{\rho}/
\partial z|(\infty) \leq  max_{|z|=1} |\tilde{\rho}(z)| e^{J(1)} . \]
\end{lem}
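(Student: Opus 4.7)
The strategy is to view Lemma \ref{lem:Le-Vi} as an application of the Teichm\"uller-Wittich-Belinskii theorem on asymptotic conformality of quasiconformal homeomorphisms together with the Gr\"otzsch-type modulus distortion inequality, both applied by crucially using the decay of the dilatation expressed in (\ref{equ:dilat}).

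First I would verify that $J(r)<\infty$ for every $r\in {\mathbb R}^{+}$. In polar coordinates $z=\varrho e^{i\theta}$ one has $|z|^{-2}\,d\sigma=\varrho^{-1}\,d\varrho\,d\theta$, so the substitution $u=\ln \varrho$ converts $J(r)$ into a constant multiple of $\int_{-\infty}^{\ln r}(1+|u|/(2\pi))^{-2}\,du$, which is finite. In particular the complex dilatation $\chi_{\tilde{\rho}}$, bounded pointwise by the right-hand side of (\ref{equ:dilat}), satisfies the integrability condition $\int_{|z|<r}|\chi_{\tilde{\rho}}|(z)/|z|^{2}\,d\sigma<\infty$, which is precisely the hypothesis of the Teichm\"uller-Wittich-Belinskii theorem at $z=0$. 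That theorem then implies that $\tilde{\rho}$ is conformal at $0$: the limit $\lim_{z\to 0}\tilde{\rho}(z)/z$ exists and equals a finite non-zero complex number identified with $\partial \tilde{\rho}/\partial z(0)$. The Belinskii refinement produces a quantitative estimate $|\tilde{\rho}(z)/z-\partial \tilde{\rho}/\partial z(0)|\leq |\partial \tilde{\rho}/\partial z(0)|\,\jmath(|z|)$, where $\jmath(|z|)$ is an explicit function of the tail integral $\int_{|w|<|z|}|\chi_{\tilde{\rho}}|/|w|^{2}\,d\sigma$ tending to zero as $|z|\to 0$. Because the pointwise bound on $\chi_{\tilde{\rho}}$ involves only the constants $C_{3}$ and $K_{1}$ (which depend only on $\varphi$ and $X$, not on $H$, $L$, $P$ or $x_{0}$), the function $\jmath$ can be chosen uniformly in these data. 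The analogous statement at $\infty$ is obtained by applying exactly the same argument to the normalized quasiconformal homeomorphism $z\mapsto 1/\tilde{\rho}(1/z)$, whose dilatation decays at $0$ with the same rate as $\chi_{\tilde{\rho}}$ at $\infty$.

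Finally, to derive the two-sided bounds on $|\partial \tilde{\rho}/\partial z|(0)$ and $|\partial \tilde{\rho}/\partial z|(\infty)$ I would combine the normalisations $\tilde{\rho}(0)=0$, $\tilde{\rho}(1)=1$, $\tilde{\rho}(\infty)=\infty$ with the modulus distortion inequality for quasiconformal maps: if $f$ is quasiconformal on $\{|z|<R\}$ with dilatation $\mu$, the conformal modulus of any annulus $\{r<|z|<R\}$ is changed by at most a multiplicative factor $\exp\bigl((2/\pi)\int_{|w|<R}|\mu|/|w|^{2}\,d\sigma\bigr)$. Applying this to $f=\tilde{\rho}$ with $R=1$, letting $r\to 0^{+}$ and using that $\tilde{\rho}(\{|z|=1\})$ is sandwiched between the circles of radii $\min_{|z|=1}|\tilde{\rho}(z)|$ and $\max_{|z|=1}|\tilde{\rho}(z)|$ yields the bounds with the factor $e^{\pm J(1)}$. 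A symmetric argument applied to $1/\tilde{\rho}(1/z)$ produces the bounds at $\infty$. The main technical obstacle I anticipate is the bookkeeping needed to obtain the sharp constant $J(1)$ in the distortion inequality: this requires invoking the Teichm\"uller functional identity relating modulus deformation to an integral of the Beltrami coefficient, rather than a crude $K$-quasiconformality estimate that would only give a constant dependent on $\|\chi_{\tilde{\rho}}\|_{\infty}$.
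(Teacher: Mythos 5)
Your proposal is correct and takes essentially the same route as the paper. The label \emph{Le-Vi} signals Lehto--Virtanen, whose treatment of the Teichm\"uller--Wittich--Belinskii theorem together with the Gr\"otzsch/Teichm\"uller modulus-distortion estimates is precisely the machinery you invoke; the paper itself defers the details to section 7.2 of \cite{JR:mod}, which carries out exactly this argument (integrability of $|\chi|/|z|^{2}$ from equation (\ref{equ:dilat}), TWB for conformality at $0$ and, via $z\mapsto 1/\tilde\rho(1/z)$, at $\infty$, and the annulus-modulus comparison to pin $|\partial\tilde\rho/\partial z|(0)$ and $|\partial\tilde\rho/\partial z|(\infty)$ between $e^{\pm J(1)}$-distorted radii). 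You also correctly flag that the factor $e^{J(1)}$ demands the integral-of-Beltrami form of the modulus inequality rather than the cruder $K$-quasiconformal bound.
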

Since $\tilde{\rho}$ is conformal at $0$ we can define
$\rho=\tilde{\rho}/(\partial \tilde{\rho}/\partial z)(0)$.
The q.c. mapping $\rho: \pn{1} \to \pn{1}$ is the unique solution
of $\chi_{\rho} = \chi_{\xi^{-1}}$ such that
$\rho(0)=0$, $\rho(\infty)=\infty$ and $(\partial \rho/\partial z)(0)=1$.
We define the function
\begin{equation}
\label{def:Fat}
\psi_{H,L,P}^{\varphi} = \frac{1}{2 \pi i} \ln z \circ
\rho \circ {e}^{2 \pi i z} \circ \psi_{H,L}^{X} \circ \sigma^{-1} .
\end{equation}
It is a holomorphic Fatou coordinate of $\varphi$ defined in the neighborhood of
$B_{\varphi}(P)$ in $\{x_{0}\} \times {\mathbb C}$.
The set $B_{\varphi}(P)$ contains a fundamental domain of
$\varphi_{|H_{L}(x_{0}) \cup \varphi(H_{L}(x_{0}))}$ by Step 5.
Thus we can extend $\psi_{H,L,P}^{\varphi}$ to $H_{L}(x_{0})$ by using
$\psi_{H,L,P}^{\varphi} \circ \varphi \equiv \psi_{H,L,P}^{\varphi} +1$.
Consider the points $Z_{\pm} \in Sing X$ such that
$Z_{\pm}= \lim_{Q \in H_{L}(x_{0}), \ Im(\psi_{L}^{X}(Q)) \to \pm \infty} Q$.
By construction we have
\[ |\psi_{L}^{X} \circ \sigma - \psi_{L}^{X}|(x_{0},y) \leq
\frac{K_{1}}{(1+|\psi_{L}^{X}(x_{0},y)|)^{2}} \]
for any $(x_{0},y)$ in a neighborhood of $B_{X}(P)$. We deduce that
\[ \lim_{Q \in B_{\varphi}(P), \ |Im (\psi_{L}^{X}(Q))| \to \infty}
|\psi_{L}^{X} \circ \sigma^{-1} - \psi_{L}^{X}|(Q)=0. \]
The mapping $\rho$ is conformal at $0$ and $\infty$, hence there exist
$\kappa_{+}, \kappa_{-} \in {\mathbb C}$ such that
\[ \lim_{Q \in B_{\varphi}(P), \ Im (\psi_{L}^{X}(Q)) \to \pm \infty}
|\psi_{H,L,P}^{\varphi} - \psi_{L}^{X}|(Q)=\kappa_{\pm} . \]
Moreover $(\partial \rho/ \partial z)(0)=1$ implies $\kappa_{+}=0$.
Since $\Delta_{\varphi}(Z_{+})=\Delta_{\varphi}(Z_{-})=0$  we obtain
\[ \lim_{Q \in B_{\varphi}(P'), \ Im (\psi_{L}^{X}(Q)) \to \pm \infty}
|\psi_{H,L,P}^{\varphi} - \psi_{L}^{X}|(Q)= \kappa_{\pm}  \]
for any $P' \in H^{L}(x_{0})$.
The function
$(\psi_{H,L,P}^{\varphi} - \psi_{H,L,P'}^{\varphi}) \circ (e^{2 \pi i w} \circ \psi_{H,L,P}^{\varphi})^{-1}(z)$
is a bounded holomorphic function
defined in ${\mathbb C}^{*}=e^{2 \pi i w} \circ \psi_{H,L,P}^{\varphi}(B_{\varphi} (P'))$ and then
constant. Since its value at $z=0$ is $0$ we deduce that
$\psi_{H,L,P}^{\varphi} \equiv \psi_{H,L,P'}^{\varphi}$ for all $P,P' \in H^{L}(x_{0})$.
\begin{defi}
We denote $\psi_{H, L}^{\varphi}$ any of the functions $\psi_{H, L,P}^{\varphi}$ defined
in $H_{L}(x_{0})$.
We denote $\psi_{L}^{\varphi}=\psi_{H, L}^{\varphi}$ if the choice of the domain of definition and
multi-transversal flow is implicit.
\end{defi}
Consider
$H \in Reg_{2}(\epsilon, \aleph_{\Lambda, \lambda} X, I_{\Lambda}^{\lambda})$.
We denote ${\mathcal P}(H)=\{L,R\}$. Consider $x_{0} \neq 0$. Proceeding in an analogous way as
above we obtain
\[ \psi_{L}^{\varphi} - \psi_{L}^{X} \equiv \psi_{R}^{\varphi} - \psi_{R}^{X}
\ \ {\rm in} \ H_{L}(x_{0})=H_{R}(x_{0}). \]
\begin{defi}
We denote $\psi_{H}^{\varphi}-\psi_{H}^{X}$ the function
defined in $H(x_{0})$ and given by the formula $\psi_{H,L}^{\varphi} - \psi_{H,L}^{X}$ in $H_{L}(x_{0})$
for $L \in {\mathcal P}(H)$.
\end{defi}
{\bf Step 8.} We introduce the main results concerning Fatou coordinates.
\begin{pro}
\label{pro:Fatpar}
Let $\varphi \in \diff{tp1}{2}$. Let $\Upsilon$ be a $2$-convergent normal form.
Consider $\Lambda=(\lambda_{1}, \hdots, \lambda_{\tilde{q}}) \in {\mathcal M}$ and
$\lambda \in {\mathbb S}^{1}$.
Let $H \in Reg(\epsilon, \aleph_{\Lambda, \lambda} X, I_{\Lambda}^{\lambda})$
and $L \in {\mathcal P}(H)$. Then the mapping $(x,\psi_{L}^{\varphi})$ is holomorphic in
$H^{\circ}$ and continuous and injective in $H_{L}$.
\end{pro}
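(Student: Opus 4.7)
The plan is to reduce Proposition \ref{pro:Fatpar} to a fiberwise holomorphy statement plus a parametric Ahlfors--Bers argument that upgrades to joint holomorphy in $(x,y)$. For each fixed $x_{0}\in(0,\delta) I_{\Lambda}^{\lambda}$, Steps 1 through 7 of the construction above produce a function $\psi_{H,L,P}^{\varphi}$ on a neighborhood of $B_{\varphi}(P)$ in $\{x_{0}\}\times B(0,\epsilon)$ that is holomorphic in $y$: it is the composition of the holomorphic maps $\psi_{H,L}^{X}$, $e^{2\pi i z}$, the conformal solution $\rho$ of the Beltrami equation $\overline{\partial}\rho = \chi_{\xi^{-1}}\,\partial\rho$, and $(2\pi i)^{-1}\ln z$, after applying the quasiconformal $\sigma^{-1}$ which happens to preserve the fiber $x=x_{0}$. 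The functional equation $\psi^{\varphi}\circ\varphi = \psi^{\varphi}+1$ extends this coordinate to all of $H_{L}(x_{0})$ without destroying fiberwise holomorphy; independence of the base point $P$ was already established at the end of Step 7.

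For fiberwise injectivity, I would use Step 6: the mapping $\tilde{\rho}\circ\xi$ is a biholomorphism from the orbit space $B_{\varphi}^{*}(P)$ onto ${\mathbb C}^{*}$, so $e^{2\pi i\psi_{H,L}^{\varphi}}$ descends to a biholomorphism $H_{L}(x_{0})/\varphi \to {\mathbb C}^{*}$. Two points of $H_{L}(x_{0})$ sharing the same value of $\psi_{H,L}^{\varphi}$ must therefore lie in the same $\varphi$-orbit; but on an orbit $\psi_{H,L}^{\varphi}$ takes values in an arithmetic progression with step $1$, so equality of values forces equality of the points. Continuity of $(x,\psi_{L}^{\varphi})$ on $H_{L}$ (including the boundary singular points and the limit $x\to 0$ when the region extends there) follows from the continuous extension of $\sigma$, of the Ahlfors--Bers solution $\rho$ (lemma \ref{lem:Le-Vi}), of the background coordinate $\psi_{H,L}^{X}$ (via propositions \ref{pro:epsiext} and \ref{pro:ext1g}), and from the asymptotic controls $|\psi_{H,L,P}^{\varphi}-\psi_{H,L}^{X}|\to\kappa_{\pm}$ already obtained in Step 7.

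The genuinely new point, compared with the transversal-flow situation of \cite{JR:mod}, is joint holomorphy of $\psi_{L}^{\varphi}$ in $(x,y)$ on $H^{\circ}$. Although the auxiliary function $\sigma$ is only $C^{\infty}$ in $x$ (because $\varrho$ and the parametrization $a$ of $\psi_{L}^{X}(\gamma)$ are smooth, not holomorphic), all the $x$-dependence that matters enters through holomorphic data, namely $\Delta_{\varphi}(x,y)$ and the Fatou coordinate $\psi_{H,L}^{X}(x,y)$ of $X$. Formula (\ref{equ:dilats}) then expresses $\chi_{\xi^{-1}}(x,\cdot)$ as a rational combination of these quantities with smooth cut-offs, so the Beltrami coefficient depends holomorphically on $x$ in the relevant Sobolev sense. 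Applying the measurable Riemann mapping theorem with a holomorphic parameter to the normalized family $\rho(x;\cdot)$ (with $\rho(x;0)=0$, $\rho(x;\infty)=\infty$, $(\partial_{z}\rho)(x;0)=1$) yields $\rho$ holomorphic in $x$; plugging this into the formula (\ref{def:Fat}) for $\psi_{H,L,P}^{\varphi}$ gives joint holomorphy on the interior of $B_{\varphi}(P)$, and propagation by $\psi^{\varphi}\circ\varphi=\psi^{\varphi}+1$ extends it to $H^{\circ}$.

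The main obstacle will be implementing the parametric Ahlfors--Bers step rigorously, since the cut-off $\varrho(z_{1}-a(z_{2}))$ in $\sigma_{0}$ mixes the real and imaginary structures. A cleaner alternative, which I would pursue if the direct parametric approach becomes cumbersome, is to argue via uniqueness: any two holomorphic Fatou coordinates of $\varphi$ on $H_{L}(x_{0})$ normalized to have the same asymptotic value $\kappa_{+}=0$ at the chosen $\omega$-limit singular point must coincide on each fiber; since the normalization and the data $\varphi$, $X$ depend holomorphically on $x$, the standard Hartogs-type argument (continuity in $x$ plus fiberwise holomorphy in $y$ of a bounded function that is the sectorial sum of a holomorphic candidate) forces joint holomorphy on $H^{\circ}$. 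Injectivity of $(x,\psi_{L}^{\varphi})$ in $H_{L}$ then follows at once from fiberwise injectivity established above.
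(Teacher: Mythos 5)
The fiberwise holomorphy, fiberwise injectivity, and joint continuity parts of your argument are fine and match the paper's Steps 1--8. The difficulty is concentrated in joint holomorphy, and there you have a genuine gap.

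You claim that ``all the $x$-dependence that matters enters through holomorphic data, namely $\Delta_{\varphi}(x,y)$ and $\psi_{H,L}^{X}(x,y)$,'' and conclude that the Beltrami coefficient $\chi_{\xi^{-1}}$ depends holomorphically on $x$. This is false in the multi-transversal setting. Look at formula (\ref{equ:dilats}): besides $\Delta_{0}$ it involves the cut-off $\varrho(z_{1}-a(x,z_{2}))$, and $a(x,z_{2})$ is the parametrization of $\psi_{L}^{X}(\gamma_{x})$ where $\gamma_{x}$ is a trajectory of $\Re(\aleph_{\Lambda,\lambda}X)$. Since $\aleph^{*}$ is a function of $\arg(x)$ that is only continuous (nowhere holomorphic in $x$), the trajectory $\gamma_{x}$ and hence the function $a(x,\cdot)$ change non-holomorphically with $x$. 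The parametric Ahlfors--Bers theorem therefore does not apply directly. This is precisely the point where multi-transversal flows are harder than constant transversal flows: for a constant $\mu\in{\mathbb S}^{1}$ the function $a$ is independent of $x$ and the paper's Step 9 does get holomorphy by exactly the route you describe, but for $\aleph_{\Lambda,\lambda}$ it breaks.

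Your ``cleaner alternative'' does not repair this. Continuity in $x$ together with fiberwise holomorphy in $y$ does not imply joint holomorphy in general; Hartogs' theorem requires separate holomorphy in \emph{each} variable, which is exactly what is in question. And appealing to ``the sectorial sum of a holomorphic candidate'' presupposes the existence of a holomorphic function agreeing with $\psi_{L}^{\varphi}$, which is the content to be proved, not a tool to prove it. What is missing is the paper's actual mechanism, carried out in Steps 9--11: (a) freeze the multi-direction at a single $\lambda_{0}\in I_{\Lambda}^{\lambda}$ to get a constant multi-direction $\aleph_{\Lambda,\lambda}(\lambda_{0})$ (valid by remark \ref{rem:alext}), for which the cut-off function $a$ is $x$-independent and the Beltrami coefficient \emph{is} holomorphic in $x$, so $\psi_{H_{\lambda_{0}},L}^{\varphi}$ is jointly holomorphic; (b) extend this coordinate by iteration to the set $G$ containing $H$, using the monotonicity of the $\theta_{j}$ and the decay estimate $|\tilde\Delta_{\varphi}|\le K/(1+|z|)^{2}$ in $G^{L}$; (c) compare $\psi_{H,L}^{\varphi}$ with the extended $\psi_{H_{\lambda},L}^{\varphi}$ on a fiber by transporting along an orbit and controlling the accumulated error via lemma \ref{lem:techsum}, then use the maximum principle on ${\mathbb C}^{*}$ to conclude they coincide. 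Without this three-step comparison (or an equivalent device) your proof establishes fiberwise properties and continuity but not the holomorphy of $\psi_{L}^{\varphi}$ on $H^{\circ}$.
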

We constructed holomorphic Fatou coordinates in $H_{L}(x)$ for any
$x \in [0,\delta) I_{\Lambda}^{\lambda}$.
Next, we explain why the dependence of $\psi_{L}^{\varphi}(x,y)$ on $x$ is continuous.
The holomorphic part of the statement is proved in Step 11.

Consider $P=(x_{0},y_{0}) \in H^{L}(x_{0})$. Given
$x \in [0,\delta) I_{\Lambda}^{\lambda}$ in a neighborhood of $x_{0}$
we define the continuous section $P(x) \in H_{L}(x)$ such that
$\psi_{L}^{X}(P(x)) \equiv \psi_{L}^{X}(P)$ and $P(x_{0})=P$. We consider the trajectory
$\gamma_{x}=\Gamma(\aleph_{\Lambda,\lambda} X, P(x), T_{0})$ and then we define the function
$a(x,z_{2})$ as in Step 1.

We define
\[ \sigma_{0}(x,z)=\sigma_{0}(x,z_{1}+i z_{2})=
(x,z + \varrho (z_{1}-a(x,z_{2})) (\Delta_{\varphi} \circ (x, \psi_{L}^{X})^{-1}(x,z-1))) \]
and
\[ \sigma(x,y)= (x, \psi_{L}^{X})^{-1} \circ \sigma_{0} \circ (x,\psi_{L}^{X})(x,y) . \]
The functions $a(x,z_{2})$ and $\partial a(x,z_{2}) /\partial z_{2}$ are continuous. Therefore
the complex dilatations $\chi_{\sigma_{0}}$ and $\chi_{\xi^{-1}}$ depend continuously on
$(x,z)$ (see Step 4 and in particular equation (\ref{equ:dilats})). We deduce that
$\tilde{\rho}$ and $\rho$ depend continuously on $(x,z)$. By definition of $\psi_{L}^{\varphi}$
(see equation (\ref{def:Fat})) we obtain that $\psi_{L}^{\varphi}$ is continuous in
$\cup_{x \in V} B_{\varphi}(P(x))$ for some neighborhood $V$ of $x_{0}$ in
$[0,\delta) I_{\Lambda}^{\lambda}$.
As a consequence $\psi_{L}^{\varphi}$ is continuous in $\cup_{x \in V} H_{L}(x)$
and then in $H_{L}$.

A proof for the next proposition can be found by replicating the proof of the
analogous result in \cite{JR:mod}.
\begin{pro}
\label{pro:bddcon}
Let $\varphi \in \diff{tp1}{2}$. Let $\Upsilon$ be a $2$-convergent normal form.
Consider $\Lambda=(\lambda_{1}, \hdots, \lambda_{\tilde{q}}) \in {\mathcal M}$ and
$\lambda \in {\mathbb S}^{1}$.
Let $H \in Reg(\epsilon, \aleph_{\Lambda, \lambda} X, I_{\Lambda}^{\lambda})$;
the function $\psi_{H}^{\varphi} - \psi_{H}^{X}$ is continuous in $\overline{H}$.
\end{pro}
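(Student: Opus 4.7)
The plan is to combine the bounded-cocycle behavior of $\psi_L^\varphi-\psi_L^X$ with the tail estimates of Proposition \ref{pro:bddconf} and Lemma \ref{lem:techsum}, then upgrade boundedness on a fundamental strip to boundedness and continuity on all of $\overline{H}$. I would fix $L\in\mathcal{P}(H)$ and work first with $\psi_{H,L}^\varphi-\psi_{H,L}^X$ on $H_L$. Starting from the explicit formula
\[ \psi_{H,L,P}^\varphi = \frac{1}{2\pi i}\ln\circ\rho\circ e^{2\pi i z}\circ \psi_L^X\circ \sigma^{-1} \]
used to construct Fatou coordinates, together with the bound $|\chi_{\xi^{-1}}(z)|\le C_3 K_1/(1+|\ln|z||/2\pi)^2$ of Step 6 and Lemma \ref{lem:Le-Vi}, I would read off directly that $\psi_L^\varphi-\psi_L^X$ is bounded on the fundamental strip $B_\varphi(P)$ and admits continuous limits $\kappa_\pm$ at its two singular ends (this is essentially Step 7).

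Next I would propagate from $B_\varphi(P)$ to all of $H_L(x_0)$ by using the cocycle identity
\[ (\psi_L^\varphi-\psi_L^X)\circ\varphi^n \;=\; (\psi_L^\varphi-\psi_L^X) \;-\; \sum_{j=0}^{n-1}\Delta_\varphi\circ\varphi^j, \]
which follows from $\psi_L^\varphi\circ\varphi=\psi_L^\varphi+1$ and $\Delta_\varphi=\psi_L^X\circ\varphi-\psi_L^X-1$. Because $\Upsilon$ is a $2$-convergent normal form, Proposition \ref{pro:bddconf} with $k=2$ gives $|\Delta_\varphi|\le K/(1+|\psi_L^X|)^2$ with $K$ independent of $I,\aleph,H,L$, and since $\aleph^*$ takes values in $e^{i(0,\pi)}$ the $\psi_L^X$-image of every $\varphi$-orbit sits inside some $W_{\theta,M}$. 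Applying Lemma \ref{lem:techsum} (with $k=2$) makes the partial sum bounded by $C/(1+|\psi_L^X(Q)|)$ uniformly in $n$. This produces a uniform bound for $\psi_L^\varphi-\psi_L^X$ on $H_L$, forces convergence to the same limits $\kappa_\pm$ along every orbit approaching a fixed-point end, and gives continuity of the difference on $\overline{H_L}$ away from $\{x=0\}$. Continuity on the lateral trajectory pieces of $\partial H$ is automatic because $P$ in the construction can be translated across them and $\rho$ varies continuously with the quasiconformal data; the compatibility for $H\in Reg_2$ between $L$ and $R$ on $H_L\cap H_R$ has already been established in Step 7.

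The main obstacle I expect is continuity at the stratum $\overline{H}\cap\{x=0\}$. There I need the family $(x,\cdot)\mapsto\sigma_0(x,\cdot)$, its complex dilatation $\chi_{\sigma_0}(x,\cdot)$, and the normalized Ahlfors--Bers solution $\tilde\rho(x,\cdot)$ to depend continuously on $x$ up to $x=0$. The asymptotic-continuity machinery of Subsection \ref{subsec:npmtraflow}, notably Corollary \ref{cor:intbou}, provides continuous extensions to $x=0$ of the trajectories $\gamma_x$ and of the rescaled coordinate $|x|^{e({\mathcal B})}\psi_L^X$ restricted to each basic set; this extends the function $a(x,z_2)$ of Step 1 continuously to $x=0$. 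The $x$-independent constant $K$ of Proposition \ref{pro:bddconf} then bounds $\chi_{\sigma_0}(x,\cdot)$ uniformly in $L^\infty$ and produces a pointwise limit $\chi_{\sigma_0}(0,\cdot)$. Standard stability of the normalized solution of the Beltrami equation under $L^\infty$ convergence of its coefficient upgrades this to continuous dependence of $\tilde\rho(x,\cdot)$ and $\rho(x,\cdot)$ on $x$ up to $x=0$, hence of $\psi_L^\varphi$ on $B_\varphi(P(x))$, and the cocycle argument extends this continuity to all of $H_L$. Combined with the two previous steps, this gives continuity of $\psi_H^\varphi-\psi_H^X$ on all of $\overline{H}$.
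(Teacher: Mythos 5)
Your proposal is correct and takes essentially the same route as the paper, which outlines the quasiconformal construction and the dilatation, $\Delta_\varphi$, and asymptotic-continuity estimates in Steps 1--8 of Subsection \ref{subsec:defatcor} and then defers the formal argument for Proposition \ref{pro:bddcon} to the analogous result in \cite{JR:mod}. You have correctly identified and assembled the key ingredients — the bound on $\chi_{\xi^{-1}}$ and Lemma \ref{lem:Le-Vi} for the fundamental-domain estimates and the limits $\kappa_\pm$, the cocycle identity together with Proposition \ref{pro:bddconf} (with $k=2$) and Lemma \ref{lem:techsum} for propagation across the region, Corollary \ref{cor:intbou} for the continuity of $a(x,z_2)$ up to $x=0$, and continuity of the normalized Beltrami solution in the coefficient — which is exactly what the intended proof uses.
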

\begin{cor}
\label{cor:Lavvf}
Let $\varphi \in \diff{tp1}{2}$. Let $\Upsilon$ be a $2$-convergent normal form.
Consider $\Lambda=(\lambda_{1}, \hdots, \lambda_{\tilde{q}}) \in {\mathcal M}$ and
$\lambda \in {\mathbb S}^{1}$.
Let $H \in Reg(\epsilon, \aleph_{\Lambda, \lambda} X, I_{\Lambda}^{\lambda})$.
There exists a unique vector field
$X_{H}^{\varphi}= X_{H}^{\varphi}(y) \partial / \partial y =
(e^{2 \pi i \psi_{H}^{\varphi}})^{*} (2 \pi i z \partial / \partial z)$,
the so called Lavaurs vector field, which is
continuous in $H$, holomorphic in ${H}^{\circ}$ and satisfies
$X_{H}^{\varphi}(\psi_{H}^{\varphi}) \equiv 1$.
Moreover $X_{H}^{\varphi}(y)/X(y)-1$ is continuous in
$\overline{H}$ and vanishes at $\overline{H} \cap Fix (\varphi)$.
\end{cor}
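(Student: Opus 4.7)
I would introduce $X_H^\varphi$ directly via the pullback formula $(e^{2\pi i\psi_H^\varphi})^{*}(2\pi i z\,\partial/\partial z)$.  The $\varphi$-invariance of $e^{2\pi i\psi_H^\varphi}$ (from $\psi_H^\varphi\circ\varphi \equiv \psi_H^\varphi+1$) makes this pullback well-defined on $H^\circ$, and rewriting it in the coordinate $y$ yields $X_H^\varphi = (\partial\psi_H^\varphi/\partial y)^{-1}\,\partial/\partial y$.  Proposition \ref{pro:Fatpar} supplies holomorphy of $\psi_H^\varphi$ on $H^\circ$ together with injectivity of $(x,\psi_H^\varphi)$ on $H_L$; the injectivity forces $\partial\psi_H^\varphi/\partial y$ not to vanish on $H^\circ$, so the formula indeed defines a holomorphic vector field there.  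The identity $X_H^\varphi(\psi_H^\varphi)\equiv 1$ is built into the construction, and uniqueness among vector fields of the form $g(x,y)\,\partial/\partial y$ is automatic since any such $g$ must equal $1/(\partial\psi_H^\varphi/\partial y)$.

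The remaining statements hinge on the decomposition $\psi_H^\varphi = \psi_H^X + h$ with $h := \psi_H^\varphi - \psi_H^X$ continuous on $\overline{H}$ by Proposition \ref{pro:bddcon}.  Differentiating in $y$ yields the key identity
\[ \frac{X_H^\varphi(y)}{X(y)} \;=\; \frac{1}{1 + X(y)\,\partial h/\partial y}, \]
so continuity of $X_H^\varphi$ on $H$ reduces to the local boundedness of $\partial h/\partial y$ away from $Fix(\varphi)$, which follows from Cauchy's estimate applied to the bounded function $h$ on slices $\{x\}\times\mathbb{C}$ that stay uniformly away from the singular set.

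The main obstacle is proving $X(y)\,\partial h/\partial y\to 0$ at every point of $\overline{H}\cap Fix(\varphi)$, which is what makes $X_H^\varphi(y)/X(y) - 1$ continuous on $\overline{H}$ and vanish on $Fix(\varphi)$.  Bare Cauchy estimates do not suffice here because petals approach the fixed points tangentially: the distance from an interior point to the boundary of $H(x)$ inside a slice decays like $|y-\gamma_j(x)|^{s_j}$, exactly cancelling the $|X(y)|\sim |y-\gamma_j(x)|^{s_j}$ factor.  The fix is to use that $h$ does not merely stay bounded but in fact tends to prescribed constants $\kappa_\pm$ at the two ends of the region (from Step 7 of the construction).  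For a $k$-convergent normal form with $k\geq 2$, Proposition \ref{pro:bddconf} provides $|\Delta_\varphi|\leq K(1+|\psi_H^X|)^{-k}$, and iterating the functional equation $h\circ\varphi-h = -\Delta_\varphi$ along a $\varphi$-orbit tending to $Fix(\varphi)$ using Lemma \ref{lem:techsum} yields the improved modulus $|h - \kappa_\pm| = O((1+|\psi_H^X|)^{-(k-1)})$.  Cauchy's estimate applied to $h-\kappa_\pm$ on a suitable disk in the slice $\{x\}\times\mathbb{C}$ then produces $|X(y)\,\partial h/\partial y| = O((1+|\psi_H^X|)^{-(k-1)})$, which tends to $0$ at $Fix(\varphi)$ in every case, including the delicate hyperbolic case $s_j=1$ (where $|\psi_H^X|\sim |\log|y-\gamma_j||$ diverges only logarithmically but still to $\infty$).
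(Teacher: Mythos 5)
Your proposal is essentially correct and follows the route that the paper (which delegates the details to \cite{JR:mod}) has in mind. The skeleton matches: pullback definition of $X_H^\varphi$, the algebraic identity $X_H^\varphi/X = (1 + X\,\partial h/\partial y)^{-1}$ with $h = \psi_H^\varphi - \psi_H^X$, holomorphy via Proposition \ref{pro:Fatpar}, continuity of $h$ on $\overline{H}$ via Proposition \ref{pro:bddcon}, and then the crucial point of showing $X(y)\,\partial h/\partial y = \partial h/\partial\psi_H^X \to 0$ at $\overline{H}\cap Fix(\varphi)$.

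You correctly identify the delicate step: in the $\psi_H^X$-chart near a singular point, the region $H(x)$ has only bounded width, so the disk available for Cauchy's estimate has radius comparable to $|X(y)|$ in the $y$-variable, and boundedness of $h$ alone yields only boundedness of $X\,\partial h/\partial y$, not vanishing. Your fix — exploit the $k$-convergent normal form ($k\geq 2$) to get $|\Delta_\varphi|\leq K(1+|\psi_H^X|)^{-k}$ from Proposition \ref{pro:bddconf}, then telescope along $\varphi$-orbits via Lemma \ref{lem:techsum} to obtain $|h-\kappa_\pm| = O((1+|\psi_H^X|)^{-(k-1)})$, and apply Cauchy to $h-\kappa_\pm$ — is exactly the mechanism the paper relies on (it is the content of equation (\ref{equ:equext0}) and the surrounding iteration argument, and is the analogue of proposition 7.3 of \cite{JR:mod} as the remark after Theorem \ref{teo:infgenad} explains). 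Your remark about the hyperbolic case $s_j=1$ is also apt: there $|\psi_H^X|$ grows only logarithmically, so bare boundedness would definitely be insufficient, while the polynomial decay in $|\psi_H^X|$ still pushes $\partial h/\partial\psi_H^X$ to zero. The only place I would tighten the exposition is the Cauchy step itself: you should be explicit that the disk you use has $\psi_H^X$-radius bounded below by a fixed constant (so that $|\psi_H^X|$ is essentially unchanged on it and the improved bound applies uniformly there), which is legitimate because the domain of definition of $h$ — via the extension of $\psi_H^\varphi$ by $\psi_H^\varphi\circ\varphi = \psi_H^\varphi+1$ described in Steps 10--11 of subsection \ref{subsec:defatcor} — is slightly larger than $H^{L}(x)$ itself.
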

Proposition \ref{pro:bddcon} implies all the statements in corollary \ref{cor:Lavvf}
except the holomorphic character of $X_{H}^{\varphi}$ (see \cite{JR:mod}). This last property is a consequence
of the analogous result for Fatou coordinates (prop. \ref{pro:Fatpar}) and it will be proved later on.

{\bf Step 9.}
We prove that Fatou coordinates are holomorphic if multi-directions are constant.

Consider the notations at the beginning of this section.
Let $\lambda_{0} \in I_{\Lambda}^{\lambda}$.
%
%
Then $\Re (\aleph_{\Lambda,\lambda}(\lambda_{0}) X)$ is a multi-transversal flow
in $[0,\delta) I_{\Lambda}^{\lambda}$ by remark \ref{rem:alext}.
The multi-direction $\aleph_{\Lambda,\lambda}(\lambda_{0})$ associated to
$\Re (\aleph_{\Lambda,\lambda}(\lambda_{0}) X)$ is constant.

Let $H \in Reg (\epsilon, \aleph_{\Lambda,\lambda} X, I_{\Lambda}^{\lambda})$
and $L \in {\mathcal P}(H)$.
Let $H_{\lambda_{0}}$ be the element of the set
$Reg (\epsilon, \aleph_{\Lambda,\lambda}(\lambda_{0}) X, I_{\Lambda}^{\lambda})$
such that $L \subset H_{\lambda_{0}}$.

Let $x_{0} \in (0,\delta) \lambda e^{i(-\upsilon_{\Lambda},\upsilon_{\Lambda})}$.
Consider $P \in H_{\lambda_{0}}^{L}(x_{0})$ and
$\gamma = \Gamma(\aleph_{\Lambda,\lambda}(\lambda_{0}) X,P,T_{0})$.
Let $P(x)$ be the section defined in Step 8.
We consider the continuous family of curves $\{\gamma_{x}\}_{x \in V}$ defined
for an open neighborhood $V$ of $x_{0}$ in
$(0,\delta) \lambda e^{i(-\upsilon_{\Lambda},\upsilon_{\Lambda})}$
by $\gamma_{x_{0}}=\gamma$ and
$\psi_{L}^{X}(\gamma_{x})=\psi_{L}^{X}(\gamma)$
for any $x \in V$.
Since the multi-direction of $\Re (\aleph_{\Lambda,\lambda}(\lambda_{0}) X)$ is constant
we obtain that $\gamma_{x}$ is contained in $H_{\lambda_{0}}$ for any $x \in V$
(by considering a smaller $V$ if necessary), indeed we have
\[ \lim_{x \to x_{0}}
| \psi_{L}^{X}(\gamma_{x}(s)) - \psi_{L}^{X}(\Gamma(\aleph_{\Lambda,\lambda}(\lambda_{0}) X,P(x),T_{0})(s))| =0 \]
uniformly on $s \in {\mathbb R}$.
This property does not hold true in general if we replace $\aleph_{\Lambda,\lambda}(\lambda_{0})$
with $\aleph_{\Lambda,\lambda}$ since then for instance we could have
\[ \sup_{s \in {\mathbb R}} |\psi_{L}^{X}(\Gamma(\aleph_{\Lambda,\lambda} X,P,T_{0})(s)),
\psi_{L}^{X}(\Gamma(\aleph_{\Lambda,\lambda} X,P(x),T_{0})(s))| =\infty \]
for any $x \in V \setminus \{x_{0}\}$.

The function $a(x,z_{2})$ (see Steps 1 and 8) does not depend on $x$.
The complex dilation $\chi_{\sigma_{0}}$ depends holomorphically on $x$
(see equation (\ref{equ:dilats})) and then $\chi_{\xi^{-1}}$ is also holomorphic on
$x$. As a consequence $\tilde{\rho}$ and $\rho$ are holomorphic on $x$.
Therefore $\psi_{H_{\lambda_{0}},L}^{\varphi}$ is holomorphic in a neighborhood of
$\cup_{x \in V} B_{\varphi} (P(x))$. The set $B_{\varphi} (P(x))$ contains a fundamental domain of
$\varphi$ in $H_{\lambda_{0}}(x)$. By using
$\psi_{H_{\lambda_{0}},L}^{\varphi} \circ \varphi \equiv \psi_{H_{\lambda_{0}},L}^{\varphi}+1$
we obtain that $\psi_{H_{\lambda_{0}},L}^{\varphi}$ is holomorphic in $\cup_{x \in V} H_{\lambda_{0}}(x)$.
Thus $\psi_{H_{\lambda_{0}},L}^{\varphi}$ is holomorphic in
$H_{\lambda_{0}, L}^{\circ} = H_{\lambda_{0}}^{\circ}$.

{\bf Step 10.} We prove that $\psi_{H_{\lambda},L}^{\varphi}$ can be extended to $H$ by iteration.
We deduce that $\psi_{H_{\lambda},L}^{\varphi}$ is holomorphic in $H^{\circ}$.
Step 11 is dedicating to prove $\psi_{H,L}^{\varphi} \equiv \psi_{H_{\lambda},L}^{\varphi}$ in $H$.

The difficulty is that in general $H_{\lambda_{0}} \neq H$ for any
$\lambda_{0} \in I_{\Lambda}^{\lambda}$. We could try to consider
$\cup_{\lambda_{0} \in I_{\Lambda}^{\lambda}} H_{\lambda_{0}, L}$ since
$H_{L} \subset \cup_{\lambda_{0} \in I_{\Lambda}^{\lambda}} H_{\lambda_{0}, L}$
but in general $(x,\psi_{L}^{X})$ is not injective in
$\cup_{\lambda_{0} \in I_{\Lambda}^{\lambda}} H_{\lambda_{0}, L}$. We define
\[ G(x)  = \cup_{\lambda_{0} \in I_{\Lambda}^{\lambda}}
\psi_{L}^{X}(H_{\lambda_{0}, L}(x)) \ {\rm and} \
G=\cup_{x \in [0,\delta)  I_{\Lambda}^{\lambda}} (\{x\} \times G(x)) . \]
Consider the continuous functions
$\theta_{1}, \hdots, \theta_{r}: I_{\Lambda}^{\lambda} \to (0,\pi)$
defined in Step 3. We have that $\theta_{j}(\lambda')$ is decreasing on $\arg \lambda'$. We denote
$(\aleph_{\Lambda,\lambda}(\lambda_{0}))^{*} =e^{i \theta_{\lambda_{0}}}$ where
\[ \theta_{\lambda_{0}}:
([0,\delta)  I_{\Lambda}^{\lambda} \times B(0,\epsilon)) \setminus Sing X
\to (0,\pi) \]
is a continuous function. The construction of multi-transversal flows implies that given
$P \in ([0,\delta)  I_{\Lambda}^{\lambda} \times B(0,\epsilon)) \setminus Sing X$
the function $\upsilon \mapsto \theta_{\lambda e^{i \upsilon}}(P)$ is decreasing in
$[-\upsilon_{\Lambda}, \upsilon_{\Lambda}]$.
We define
\[ \Gamma_{x, \lambda_{0}}= \Gamma(\aleph_{\Lambda,\lambda}(\lambda_{0}) X, L_{i X}^{\epsilon}(x), T_{0}). \]
\begin{figure}[h]
\begin{center}
\includegraphics[height=6cm,width=6cm]{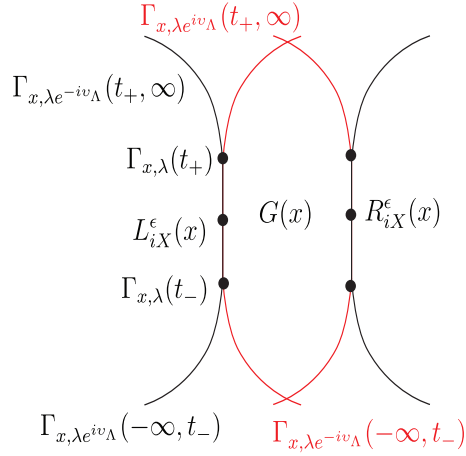}
\end{center}
\caption{Picture of $G(x)$ in the coordinate $\psi_{L}^{X}$}
\label{EVf4}
\end{figure}
Let $\upsilon_{1}, \upsilon_{2} \in [-\upsilon_{\Lambda}, \upsilon_{\Lambda}]$ with
$\upsilon_{1} < \upsilon_{2}$
and $x \in [0,\delta) I_{\Lambda}^{\lambda}$. Denote
$\lambda_{1}= \lambda  e^{i \upsilon_{1}}$ and $\lambda_{2}= \lambda  e^{i \upsilon_{2}}$.
The decreasing character of
$\upsilon \mapsto \theta_{\lambda e^{i \upsilon}}(P)$ implies that there exist
$t_{-}, t_{+} \in {\mathbb R}^{+} \cup \{\infty\}$ such that
$\Gamma_{x, \lambda_{1}}(t) = \Gamma_{x, \lambda_{2}}(t)$ if and only if $-t_{-} \leq t \leq t_{+}$.
Moreover if $t_{+}<\infty$ we obtain that
$\psi_{L}^{X}(\Gamma_{x, \lambda_{2}}(t_{+}, \infty))$ is to the right of
$\psi_{L}^{X}(\Gamma_{x, \lambda_{1}})$ whereas if $t_{-} < \infty$ then
$\psi_{L}^{X}(\Gamma_{x, \lambda_{2}}(-\infty, t_{-}))$ is to the left of
$\psi_{L}^{X}(\Gamma_{x, \lambda_{1}})$. We deduce that
$\cup_{\lambda_{0} \in I_{\Lambda}^{\lambda}} \psi_{L}^{X}(\Gamma_{x, \lambda_{0}})$
is a simply connected subset whose boundary is the union of the curves
$\psi_{L}^{X}(\Gamma_{x, \lambda e^{-i \upsilon_{\Lambda}}})$ and
$\psi_{L}^{X}(\Gamma_{x, \lambda e^{i \upsilon_{\Lambda}}})$ (see figure (\ref{EVf4})).
Therefore $G(x)$ is a simply connected open set in ${\mathbb C}$.
Suppose that $\Re (X)$ points towards the interior of $H(0)$ at
$L_{i X}^{\epsilon}(0)$ without lack of generality. Then the curve
\begin{equation}
\label{equ:cur}
 \psi_{L}^{X}(\Gamma_{x, \lambda e^{-i \upsilon_{\Lambda}}}[0,\infty)) \cup
\psi_{L}^{X}(\Gamma_{x, \lambda e^{i \upsilon_{\Lambda}}}(-\infty,0])
\end{equation}
is in the boundary of $G(x)$. If
$H \in Reg_{1}(\epsilon, \aleph_{\Lambda, \lambda}X, I_{\Lambda}^{\lambda})$
or $x=0$ there are no other points in the boundary. Otherwise let $R$ be the element in
${\mathcal P}(H) \setminus \{L\}$. The boundary of $G(x)$ is the union
of the curve (\ref{equ:cur}) and a curve, passing through the point
$\psi_{L}^{X}(R_{i X}^{\epsilon}(x))$, whose description is analogous (see figure (\ref{EVf4})).

Since $G(x)$ is simply connected for any $x \in [0,\delta)  I_{\Lambda}^{\lambda}$
we have that
\[ (x, \psi_{L}^{X})^{-1} : G \to B(0,\delta) \times B(0,\epsilon) \]
is a well-defined  continuous function. It is holomorphic if we restrict the parameter $x$
to $(0,\delta)  \lambda e^{i(-\upsilon_{\Lambda}, \upsilon_{\Lambda})}$.
Consider the lift $\tilde{\varphi}$ of $\varphi$ to $G$
and $\tilde{\Delta}_{\varphi}=\Delta_{\varphi} \circ (x, \psi_{L}^{X})^{-1}$.
The diffeomorphism $\tilde{\varphi}$ is of the form
\[ (x,z) \mapsto (x,z+1+\tilde{\Delta}_{\varphi}(x,z)). \]
We define
\[ G^{L} =
\cup_{x \in [0,\delta)  I_{\Lambda}^{\lambda}, \
\lambda_{0} \in I_{\Lambda}^{\lambda}}
(\{x\} \times \psi_{L}^{X}(H_{\lambda_{0}}^{L}(x))) \]
Since $I_{\Lambda}^{\lambda}$ is a compact set
the proof of prop. \ref{pro:bddconf} implies
\begin{equation}
\label{equ:inegl}
 |\tilde{\Delta}_{\varphi}(x,z)|
\leq \frac{K}{(1+|z|)^{2}} \ \ \forall (x,z) \in G^{L}.
\end{equation}
More precisely, there are two main ingredients in the proof of prop. \ref{pro:bddconf},
namely we use that
$\partial H_{\lambda_{0}} \cap \partial {\mathcal B}$ is the union of a finite number of
asymptotically continuous sections for any basic set ${\mathcal B}$ and prop. \ref{pro:epsiext}.
The proof can be adapted since the asymptotically continuous sections
depend continuously on $\lambda_{0} \in I_{\Lambda}^{\lambda}$
and prop.  \ref{pro:epsiext} is still valid.

Fix $x_{0}=r_{0} \lambda_{0}$ with $r_{0} \in [0,\delta)$ and
$\lambda_{0} \in  I_{\Lambda}^{\lambda}$.
Suppose $\sharp {\mathcal P}(H)=1$ or $x_{0}=0$. Then we obtain $G^{L}(x_{0})=G(x_{0})$
and
\begin{equation}
\label{equ:reldp}
\lim_{z \in G(x_{0}), \ |z| \to \infty} \tilde{\Delta}_{\varphi}(x_{0},z) =0 .
\end{equation}
Suppose $\sharp {\mathcal P}(H)=2$ and $x_{0} \neq 0$. Let $R$ be the element in
${\mathcal P}(H) \setminus \{L\}$. We can define $G^{R}$ in an analogous way as $G^{L}$
We obtain
\[ |\Delta_{\varphi} \circ (x, \psi_{H,R}^{X})^{-1}(x,z')|
\leq \frac{K}{(1+|z'|)^{2}} \ \ \forall (x,z') \in G^{R}. \]
We have $\psi_{R}^{X} \circ (\psi_{L}^{X}(x_{0},y))^{-1} = z+ \tau$
for some $\tau \in {\mathbb C}$. Therefore we obtain
$|\tilde{\Delta}_{\varphi}|(x_{0},z) \leq K/(1+|z+\tau|)^{2}$ for any $z \in G^{R}(x_{0})$.
This inequality and (\ref{equ:inegl}) imply equation (\ref{equ:reldp}).
Consider the set $B_{\varphi} (L_{i X}^{\epsilon}(x_{0}))$ as defined in Step 5.
It is enclosed by
$\Gamma_{x_{0}, \lambda_{0}}$ and $\varphi(\Gamma_{x_{0}, \lambda_{0}})$.
Analogously consider the set $B_{\varphi} ^{\lambda_{1}}$ enclosed by
$\Gamma_{x_{0}, \lambda_{1}}$ and $\varphi(\Gamma_{x_{0}, \lambda_{1}})$.
Denote
$B_{\varphi}^{\flat}=
\cup_{\lambda_{1} \in I_{\Lambda}^{\lambda}} \psi_{L}^{X}(B_{\varphi} ^{\lambda_{1}})$.
By proceeding as in Step 5 we obtain that
the set $\psi_{L}^{X}(B_{\varphi} ^{\lambda_{1}} \setminus \varphi(\Gamma_{x_{0}, \lambda_{1}}))$
is a fundamental domain of
$\tilde{\varphi}_{|B_{\varphi}^{\flat}}$ for any
$\lambda_{1} \in I_{\Lambda}^{\lambda}$.
%
%
Therefore we can extend $\psi_{H_{\lambda_{1}},L}^{\varphi}$ by iteration to $G$
for any $\lambda_{1} \in I_{\Lambda}^{\lambda}$.
We deduce that $\psi_{H_{\lambda}, L}^{\varphi}$ is defined in $H^{L}$ and holomorphic in $H^{\circ}$.

{\bf Step 11}.
In order to complete the proofs of proposition \ref{pro:Fatpar} and corollary \ref{cor:Lavvf}
it suffices to show
$(\psi_{H,L}^{\varphi})_{|x=x_{0}} \equiv (\psi_{H_{\lambda}, L}^{\varphi})_{|x=x_{0}}$ for
$x_{0} = r_{0} \lambda_{0} \neq 0$.

By arguing as in Step 2 (see equation (\ref{equ:step22})) we obtain
$C_{4} \in {\mathbb R}^{+}$ such that
\[ B_{\varphi} ^{\lambda_{1}}  \cap
\left\{{ |Im (\psi_{L}^{X})| \leq \frac{C_{4}}{|x_{0}|^{\nu({\mathcal E}_{0})}} }\right\} =
B_{\varphi} ^{\lambda}  \cap
\left\{{ |Im (\psi_{L}^{X})| \leq \frac{C_{4}}{|x_{0}|^{\nu({\mathcal E}_{0})}} }\right\} \]
for any $\lambda_{1} \in I_{\Lambda}^{\lambda}$.
The constant $C_{4}$ is independent of $\lambda_{1}$ and $x_{0}$.

%
%
%
%
%
Consider $P \in B_{\varphi} ^{\lambda_{0}} \setminus B_{\varphi} ^{\lambda}$.
Denote $\tilde{z}= \psi_{L}^{X}(P)$. We obtain
$|Im (\tilde{z})| > C_{4}/|x_{0}|^{\nu({\mathcal E}_{0})}$. There exist $j \in {\mathbb Z}$ and
a orbit $\tilde{z}, \hdots, \tilde{\varphi}^{j}(\tilde{z})$ contained in
$B_{\varphi}^{\flat} \cap \{ |Im(z)| > C_{4} / |x|^{\nu({\mathcal E}_{0})} \}$
such that $\psi_{L}^{X}(\tilde{\varphi}^{j}(\tilde{z})) \in \psi_{L}^{X}(B_{\varphi} ^{\lambda})$.
We have
\[ \psi_{H_{\lambda},L}^{\varphi}(\tilde{z}) - \tilde{z} =
\psi_{H_{\lambda},L}^{\varphi}(\tilde{\varphi}^{j}(\tilde{z})) - \tilde{\varphi}^{j}(\tilde{z})
- \sum_{s=0}^{|j|-1} \tilde{\Delta}_{\varphi}(\tilde{\varphi}^{j+s}(\tilde{z})) \]
if $j<0$ and
\[ \psi_{H_{\lambda},L}^{\varphi}(\tilde{z}) - \tilde{z} =
\psi_{H_{\lambda},L}^{\varphi}(\tilde{\varphi}^{j}(\tilde{z})) - \tilde{\varphi}^{j}(\tilde{z})
+ \sum_{s=0}^{j-1} \tilde{\Delta}_{\varphi}(\tilde{\varphi}^{s}(\tilde{z})) \]
for $j>0$. Suppose $j>0$ without lack of generality.
Since $B_{\varphi}^{\flat}$ is contained in $G^{L}$ and
$|Im(\tilde{\varphi}^{s}(\tilde{z}))|> C_{4}/|x|^{\nu({\mathcal E}_{0})}$ the eq. (\ref{equ:inegl}) implies
that $|\tilde{\Delta}_{\varphi}(\tilde{\varphi}^{s}(\tilde{z}))| \leq K|x_{0}|^{2 \nu({\mathcal E}_{0})}/C_{4}^{2}$
for any $0 \leq s <j$. Thus $|\tilde{\Delta}_{\varphi}(\tilde{\varphi}^{s}(\tilde{z}))|$ is
as small as desired for $0 \leq s <j$ by considering a smaller $\delta>0$ if necessary.
By applying lemma \ref{lem:techsum} we obtain
\[ |(\psi_{H_{\lambda},L}^{\varphi}(\tilde{z}) - \tilde{z}) -
(\psi_{H_{\lambda},L}^{\varphi}(\tilde{\varphi}^{j}(\tilde{z})) - \tilde{\varphi}^{j}(\tilde{z}))|
\leq \frac{C_{5}}{1+|\tilde{z}|} \]
where $C_{5} \in {\mathbb R}^{+}$ does not depend on $\tilde{z}$ or $j$. Moreover we deduce
\[ |Im(\tilde{\varphi}^{j}(\tilde{z}))  - Im(\tilde{z})| \leq \frac{C_{5}}{1+|\tilde{z}|} . \]
By definition of $\psi_{H_{\lambda},L}^{\varphi}$ we get
\[ \lim_{\tilde{z} \in \psi_{L}^{X}(B_{\varphi} ^{\lambda_{0}}), \ Im(\tilde{z}) \to \pm \infty}
(\psi_{H_{\lambda},L}^{\varphi}(\tilde{\varphi}^{j}(\tilde{z})) - \tilde{\varphi}^{j}(\tilde{z}))=
\kappa_{\pm} \]
for $\kappa_{+}=0$ and some $\kappa_{-} \in {\mathbb C}$. We obtain
\[ \lim_{Q \in B_{\varphi} ^{\lambda_{0}}, \ Im(\psi_{L}^{X}(Q)) \to \pm \infty}
(\psi_{H_{\lambda},L}^{\varphi}(Q) - \psi_{H_{\lambda},L}^{X}(Q))=
\kappa_{\pm} . \]
The function
$(\psi_{H,L}^{\varphi} - \psi_{H_{\lambda},L}^{\varphi}) \circ (e^{2 \pi i \psi_{H,L}^{\varphi}(x_{0},y)})^{-1}(z)$
is holomorphic in ${\mathbb C}^{*}=e^{2 \pi i z}(B_{\varphi}^{\lambda_{0}})$
and extends continuously to $\pn{1}$. Moreover it takes the value $0$ at $0$.
Therefore we obtain $\psi_{H,L}^{\varphi} \equiv \psi_{H_{\lambda},L}^{\varphi}$.
The Fatou coordinate $\psi_{H,L}^{\varphi}$ is holomorphic in $H^{\circ}$.
\begin{rem}
Apparently the constructions depend on the choice of the $2$-convergent normal
form $\Upsilon={\rm exp}(X)$. Anyway, the polynomial vector fields associated to
compact-like sets are the same. Then ${\mathcal M}$ is independent of
$\Upsilon$ and so is $\aleph_{\Lambda,\lambda}$
for all $\Lambda \in {\mathcal M}$ and $\lambda \in {\mathbb S}^{1}$.
It is easy to see that the dynamical splitting $\digamma_{\Lambda}$
is independent of the choice of normal form too. Of course the regions
in $Reg (\epsilon, \aleph_{\Lambda,\lambda} X, I_{\Lambda}^{\lambda})$
depend on $X$ and then on $\Upsilon$. Anyway, the different Fatou coordinates
that we obtain in slightly different regions
can be interpreted as particularizations of the same object in an analogous way
as described in Steps 10 and 11.
\end{rem}
\subsection{Flow-convex sets}
This subsection is of technical importance for the results presented in subsections
\ref{subsec:rj1}, \ref{subsec:extfatcor} and \ref{subsec:asyfatcor}.
The language required to study
properties related to the shape of subregions
contained in the first exterior set is introduced below.
We use those properties
in order to analyze or extend Fatou coordinates.
\label{subsec:floconset}
\begin{defi}
Let us consider a domain $D \subset {\mathbb C}$ such that there exists a homeomorphism
$\sigma : D \to {\mathbb D}$  extending to a homeomorphism
$\tilde{\sigma}: \overline{D} \to \overline{{\mathbb D}}$.
Let $Z$ be a holomorphic vector
field defined in the neighborhood of $\overline{D}$ such that $D \cap Sing Z=\emptyset$.
We say that $(Z,D)$ is a regular pair.
\end{defi}
\begin{defi}
\label{def:flowcon}
Let $(Z,D)$ be a regular pair.
We say that $D$ is $\Re (Z)$-convex if
$\Gamma_{P}=\Gamma(Z, P, \overline{D})$ satisfies that
$\{ t \in {\mathcal I}(P): \Gamma_{P}(t) \in D \}$
is connected for any $P \in D$.
In other words we have
$\Gamma(Z, P, \overline{D}) \cap D = \Gamma(Z, P, D)$ for any $P \in D$.
\end{defi}
\begin{defi}
\label{def:transv}
Let $(Z,D)$ be a regular pair and $Q \in \partial{D} \setminus Sing Z$.
We say that $\Re(Z)$ is almost transversal to $\partial D$ at $Q$
if there exists $s_{0} \in {\mathbb R}^{+}$ such that either
${\rm exp}(s X)(Q) \not \in \overline{D}$ for any $s \in (0,s_{0})$ or
${\rm exp}(-s X)(Q) \not \in \overline{D}$ for any $s \in (0,s_{0})$.
If the curve $\partial D$ is smooth in the neighborhood of $Q$ then transversal
implies almost transversal.
\end{defi}
The following result is straightforward.
\begin{lem}
\label{lem:transv}
Let $(Z,D)$ be a regular pair and $P \in D$.
Suppose that $\Re(Z)$ is almost transversal to $\partial D$ at
the points in $\Gamma(Z, P, D)(\partial {\mathcal I}(\Gamma(Z, P, D)))$.
Then we obtain
${\mathcal I}(\Gamma(Z, P, \overline{D})) = \overline{{\mathcal I}(\Gamma(Z, P, D))}$.
In particular $D$ is $\Re (Z)$-convex if
$\Re(Z)$ is almost transversal to $\partial D$ at $Q$ for any
$Q \in \overline{D} \setminus Sing Z$.
\end{lem}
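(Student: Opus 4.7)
The plan is to control the endpoints of $I^{-}:={\mathcal I}(Z,P,\overline{D})$ in terms of those of $I^{\circ}:={\mathcal I}(Z,P,D)$ and conclude that no new interior points are added. Clearly $I^{\circ}\subset I^{-}$, since every point of $D$ belongs to $\overline{D}$ and $D=\overline{D}{\,}^{\circ}$. The first step will be to exclude the inclusion being strict at an endpoint. I would argue by contradiction: suppose $b:=\sup I^{\circ}$ is finite and $b<\sup I^{-}$ (the $\inf$ case is symmetric). Since $b$ lies in the interior of $I^{-}$, the definition of ${\mathcal I}(Z,P,\overline{D})$ forces $\gamma_{P}(b)\in D$. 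But then $\gamma_{P}(s)\in D$ on a full two-sided neighbourhood of $b$, contradicting the maximality of the connected component $I^{\circ}$ of $\gamma_{P}^{-1}(D)$ at $0$ unless $\gamma_{P}(b)\in \partial D$; this already forces $\gamma_{P}(b)\in\partial D$.

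Now comes the use of the hypothesis. The point $\gamma_{P}(b)$ belongs to $\Gamma(Z,P,D)(\partial {\mathcal I}(\Gamma(Z,P,D)))$, so by assumption $\Re(Z)$ is almost transversal to $\partial D$ at $\gamma_{P}(b)$, and Definition \ref{def:transv} gives $s_{0}>0$ such that either $\gamma_{P}(b+s)\not\in\overline{D}$ for $s\in(0,s_{0})$ or $\gamma_{P}(b-s)\not\in\overline{D}$ for $s\in(0,s_{0})$. The second alternative contradicts $(b-s_{0},b)\subset I^{\circ}$ (where $\gamma_{P}\in D\subset\overline{D}$); the first contradicts the fact that $b$ is in the interior of $I^{-}$, hence $\gamma_{P}(b+s)\in\overline{D}$ for small $s>0$. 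This yields $\sup I^{-}=\sup I^{\circ}$, and symmetrically for the infimum. Combined with $I^{\circ}\subset I^{-}$, this gives $I^{-}=\overline{I^{\circ}}$, the closure being taken in ${\mathbb R}\cup\{-\infty,\infty\}$, which is the desired equality.

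For the second assertion, once every boundary point of $\overline{D}\setminus Sing\,Z$ is almost transversal, the first part applies at every $P\in D$ and yields $\Gamma(Z,P,\overline{D})=\gamma_{P}(\overline{I^{\circ}})$. The intersection with $D$ can add at most the two endpoint images $\gamma_{P}(\inf I^{\circ})$ and $\gamma_{P}(\sup I^{\circ})$ to $\Gamma(Z,P,D)=\gamma_{P}(I^{\circ})$; but either these endpoints are infinite, or the trajectory hits $\partial D$ there (exactly the discussion of the preceding paragraph), so they do not lie in $D$. Hence $\Gamma(Z,P,\overline{D})\cap D=\Gamma(Z,P,D)$, which is the definition of $\Re(Z)$-convexity. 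I expect no serious obstacle: the only mildly delicate point is bookkeeping the endpoint cases (finite versus infinite, and the distinction between $\overline{D}$ and its interior $D$) so that one correctly invokes the two alternatives of almost transversality.
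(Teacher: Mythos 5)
The paper gives no proof of this lemma --- it simply declares the result ``straightforward'' after def.~\ref{def:transv} --- so there is no argument in the text to compare against; what you wrote is the expected argument and its essential content is correct.

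One step, however, is internally inconsistent as written. In the second sentence of your first paragraph you argue that $b$ interior to $I^{-}$ forces $\gamma_{P}(b)\in D$ via the clause ``$\gamma_P(s)\in F^\circ$ for $s\neq 0$ in the interior of ${\mathcal I}(Z,P,F)$'' of def.~\ref{def:traj}, then observe this contradicts maximality of $I^{\circ}$, and yet end by asserting ``this already forces $\gamma_{P}(b)\in\partial D$.'' The point $\gamma_P(b)$ cannot lie in both $D$ and $\partial D$, and the contradiction you reach in the middle of that sentence already terminates the proof without ever using almost transversality --- a sign that you have mixed two separate arguments. The clean way to organize it is to deduce $\gamma_{P}(b)\in\partial D$ directly from continuity of $\gamma_P$, openness of $D$ and maximality of $I^\circ$ (no reference to $I^{-}$ at all), and only then run the almost-transversality dichotomy of def.~\ref{def:transv}: one alternative contradicts $(b-s_0,b)\subset I^\circ$, the other contradicts $\gamma_P\in\overline{D}$ on $(b,\sup I^-)$. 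Once that step is untangled, the remainder --- the symmetric treatment of $\inf$, the caveat about undefined endpoints, and the passage to $\Re(Z)$-convexity --- is fine.
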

\begin{defi}
Since $\overline{D} \setminus Sing Z$ is simply connected there exists
a holomorphic Fatou coordinate $\psi$ of $Z$ defined in a neighborhood of $\overline{D} \setminus Sing Z$.
We say that $\psi$ is a Fatou coordinate of the pair $(Z,D)$.
\end{defi}
\begin{lem}
\label{lem:flcolp}
Consider a regular pair $(Z,D)$ such that $D$ is $\Re (Z)$-convex.
Let $\psi$ a Fatou coordinate of $(Z,D)$. Consider
continuous paths $\tau_{1},\tau_{2}:[0,1] \to D$ such that
\[ Im(\psi(\tau_{1}(s))) = Im(\psi(\tau_{2}(s))) \ \forall s \in [0,1] \ {\rm and} \
\tau_{1}(1)=\tau_{2}(1) . \]
Then $\tau_{2}(0)$ belongs to $\Gamma(Z,\tau_{1}(0), D)$.
\end{lem}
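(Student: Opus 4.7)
The plan is to work in the Fatou coordinate $\psi$, in which $Z$ becomes $\partial / \partial z$ and the real trajectories of $\Re(Z)$ are horizontal segments. Since $D$ is simply connected and $Z$ does not vanish on $D$, the coordinate $\psi$ is single-valued and a local biholomorphism on $D$. The hypothesis $Im(\psi \circ \tau_{1}) \equiv Im(\psi \circ \tau_{2})$ means that the continuous real-valued function
\[ F(s) = Re(\psi(\tau_{2}(s)) - \psi(\tau_{1}(s))) \]
satisfies $\psi(\tau_{2}(s)) = \psi(\tau_{1}(s)) + F(s)$ for every $s \in [0,1]$, with $F(1)=0$. The task reduces to showing that the curve $u \mapsto {\rm exp}(uZ)(\tau_{1}(0))$ is defined for all $u$ between $0$ and $F(0)$, remains inside $D$, and equals $\tau_{2}(0)$ at $u=F(0)$.

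First I would introduce the set
\[ T = \{ s \in [0,1] : {\rm exp}(uZ)(\tau_{1}(s)) \in D \text{ for every } u \text{ between } 0 \text{ and } F(s) \} \]
and prove that $T = [0,1]$ by a connectedness argument. Since $\tau_{1}(1) = \tau_{2}(1)$ and $F(1)=0$, the interval is trivial there and $1 \in T$. Openness of $T$ follows from continuity of the flow: if $s \in T$, the compact arc $A_{s} = \{ {\rm exp}(uZ)(\tau_{1}(s)) : u \text{ between } 0 \text{ and } F(s) \}$ lies inside $D$, hence at positive distance from $\partial D$, and continuity of $\tau_{1}$, $F$ and of the flow keeps the analogous arc in $D$ for $s'$ near $s$; the identity ${\rm exp}(F(s')Z)(\tau_{1}(s')) = \tau_{2}(s')$ then holds automatically since in $\psi$-coordinates it amounts to a translation by $F(s')$.

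The main obstacle is closedness of $T$; this is exactly where the $\Re(Z)$-convexity is used. Given $s_{n} \to s$ with $s_{n} \in T$, the arcs $\psi(A_{s_{n}})$ are horizontal segments with endpoints converging to $\psi(\tau_{1}(s))$ and $\psi(\tau_{2}(s))$, hence contained in a fixed compact subset of $\mathbb{C}$. Since $\psi$ diverges near any point of $Sing Z$, this uniform bound prevents the limiting trajectory $A_{s}$ from reaching $Sing Z$, so the flow is defined throughout $A_{s}$ and the arcs converge (uniformly in $u$) to $A_{s}$. The limit $A_{s}$ therefore sits in $\overline{D}$ and its two endpoints $\tau_{1}(s), \tau_{2}(s)$ lie in $D$. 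Applying $\Re(Z)$-convexity at $P = \tau_{1}(s)$, the set $\Gamma(Z,\tau_{1}(s),\overline{D}) \cap D$ equals the connected set $\Gamma(Z,\tau_{1}(s),D)$; the portion of $A_{s}$ between its two endpoints must therefore lie entirely in $D$, not just in $\overline{D}$. This gives $s \in T$, so $T$ is closed, hence $T = [0,1]$. Taking $s=0$ yields $\tau_{2}(0) = {\rm exp}(F(0)Z)(\tau_{1}(0)) \in \Gamma(Z,\tau_{1}(0),D)$, as required.
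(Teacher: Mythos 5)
Your proof is correct and follows essentially the same strategy as the paper's: the paper shows that the set $F = \{ s : \tau_2(s) \in \Gamma(Z,\tau_1(s),D) \}$ equals $[0,1]$ by a supremum-and-contradiction argument, which is equivalent to your open/closed/nonempty argument for $T$, and both rely on the $\Re(Z)$-convexity at exactly the same step (showing a limiting trajectory arc in $\overline{D}$ with endpoints in $D$ lies in $D$). The extra detour through the $\psi$-coordinate and the divergence of $\psi$ near $Sing Z$ is harmless but not strictly needed, since $Z$ is holomorphic on a neighborhood of the compact set $\overline{D}$ and the time parameter $F(s)$ stays bounded, so continuous dependence of the flow already yields convergence of the arcs.
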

\begin{proof}
Consider $F= \{ s \in [0,1]:  \tau_{2}(s) \in \Gamma(Z,\tau_{1}(s), D) \}$.
It suffices to prove $F=[0,1]$. Suppose $F \neq [0,1]$.
Denote $s_{0} = \sup \{s \in [0,1] : s \not \in F \}$.
Since $F$ is open and contains the point $1$ we deduce that $s_{0} < 1$,
$s_{0} \not \in F$ and $s \in F$ for any $s \in (s_{0},1]$.
We obtain
\[ \tau_{2}(s) \in \Gamma(Z,\tau_{1}(s), D)(a(s))  \ \forall s \in (s_{0},1] \]
where $a(s) = \psi(\tau_{2}(s)) - \psi(\tau_{1}(s))$.
Therefore we have
\[ \tau_{2}(s_{0}) \in \Gamma(Z,\tau_{1}(s_{0}), \overline{D})(a(s_{0})). \]
As a consequence of definition \ref{def:flowcon} the point $\tau_{2}(s_{0})$ belongs to
$\Gamma(Z,\tau_{1}(s_{0}), D)$ and then $s_{0} \in F$. This is a contradiction.
We obtain $F=[0,1]$.
\end{proof}
\begin{pro}
\label{pro:Finj1}
Consider a regular pair $(Z,D)$ such that $D$ is $\Re (Z)$-convex.
Let $\psi$ a Fatou coordinate of $(Z,D)$ and $P \in D$.
Then there exists a continuous path
$\gamma_{Q}:[0,1] \to D$ with $\gamma_{Q}(0)=P$, $\gamma_{Q}(1)=Q$ such that
$Im(\psi \circ \gamma_{Q}):[0,1] \to {\mathbb R}$ is injective for any $Q \in D \setminus \Gamma(Z,P,D)$.
\end{pro}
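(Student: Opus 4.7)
The plan has two stages: (a) any two points of $D$ sharing the same value of $Im(\psi)$ lie on a common trajectory of $\Re(Z)$, and (b) with (a) in hand, one constructs $\gamma_{Q}$ using the foliation structure so obtained. Stage (a) is needed to rule out the case $Im(\psi(Q))=Im(\psi(P))$ with $Q\notin\Gamma(Z,P,D)$, in which the requested injectivity of $Im(\psi\circ\gamma_{Q})$ would be impossible at the endpoints.

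For stage (a), given $P_{1},P_{2}\in D$ with $Im(\psi(P_{1}))=Im(\psi(P_{2}))$, pick any continuous path $\tau_{2}:[0,1]\to D$ from $P_{2}$ to $P_{1}$ and build a loop $\tau_{1}:[0,1]\to D$ based at $P_{1}$ with $Im(\psi\circ\tau_{1})\equiv Im(\psi\circ\tau_{2})$; Lemma \ref{lem:flcolp} then forces $P_{2}=\tau_{2}(0)\in\Gamma(Z,P_{1},D)$. The natural candidate for $\tau_{1}(s)$ is the image of $P_{1}$ under the $\Re(iZ)$-flow for time $Im(\psi\circ\tau_{2}(s))-Im(\psi(P_{1}))$, since that flow raises $Im(\psi)$ at unit rate while preserving $Re(\psi)$. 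When this flow would leave $D$, one reroutes using segments of $\Re(Z)$-flow (which preserve $Im(\psi)$), exploiting $\Re(Z)$-convexity of $D$ and the topological disk structure provided by $\tilde{\sigma}$ to re-enter $D$; compactness of $[0,1]$ and uniform continuity of $Im(\psi\circ\tau_{2})$ reduce this to finitely many local corrections.

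Once stage (a) holds, the level sets of $F:=Im(\psi)$ are exactly the $\Re(Z)$-trajectories. Since $d\psi\neq 0$ yields $dF\neq 0$ throughout $D$, the map $F:D\to F(D)$ is a submersion with connected fibers onto an open interval, and $D$ being a topological disk (via $\tilde{\sigma}$) gives a homeomorphism $D\cong F(D)\times\mathbb{R}$ carrying $\Re(Z)$-leaves to horizontal fibers. In these coordinates $P=(F(P),y_{P})$ and $Q=(F(Q),y_{Q})$ with $F(P)\neq F(Q)$, and the affine interpolation
\[ s\mapsto (F(P)+s(F(Q)-F(P)),\,(1-s)y_{P}+sy_{Q}) \]
yields $\gamma_{Q}$: it is continuous, joins $P$ to $Q$, and its first coordinate equals $Im(\psi\circ\gamma_{Q}(s))$, which is affine in $s$ and hence injective. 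The main obstacle is stage (a): the $\Re(iZ)$-flow starting at $P_{1}$ need not remain in $D$, so the loop must be built by piecing together short $\Re(iZ)$-arcs with corrective $\Re(Z)$-arcs, and $\Re(Z)$-convexity of $D$ is precisely what guarantees the corrective segments stay in $D$ while preserving $Im(\psi)$.
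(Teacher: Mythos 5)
Your plan reverses the logical order of the paper. The paper proves the proposition directly by an open--closed argument: it introduces the set $E$ of points $Q\in D$ that can be joined to $P$ by a path along which $Im(\psi)$ is injective or constant, uses Lemma \ref{lem:flcolp} only in the closedness step, and concludes $E=D$ by connectedness. The statement you call stage (a) — two points of $D$ with equal $Im(\psi)$ lie on one $\Re(Z)$-trajectory — is precisely Corollary \ref{cor:flconcl}, which the paper \emph{deduces from} Proposition \ref{pro:Finj1}; attempting to establish it first is the source of the difficulty.

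Concretely, stage (a) has a genuine gap. To build the loop $\tau_{1}$ at $P_{1}$ with $Im(\psi\circ\tau_{1})\equiv Im(\psi\circ\tau_{2})$, you propose to flow $P_{1}$ by $\Re(iZ)$. That flow raises $Im(\psi)$ at unit rate, but nothing guarantees the trajectory of $P_{1}$ under $\Re(iZ)$ stays inside $D$ long enough to attain the full range of $Im(\psi\circ\tau_{2})$. The proposed remedy — corrective $\Re(Z)$-arcs — cannot work: $\Re(Z)$ preserves $Im(\psi)$, so a $\Re(Z)$-arc never changes which level set you are on, and hence cannot help you reach levels the $\Re(iZ)$-trajectory never attains. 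If instead you reroute to a \emph{different} starting point on the same level set, you are assuming that level sets are connected — which is the very statement being proved. And if "re-entering $D$" means the corrective arcs pass outside $D$, then $\tau_{1}$ is no longer a path into $D$ and Lemma \ref{lem:flcolp} does not apply. So stage (a) is either circular or misuses the hypotheses.

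Stage (b) also understates the difficulty. "Submersion with connected fibers onto an interval" does not by itself give a product: the submersion $F=Im(\psi)$ is not proper (fibers are non-compact), so Ehresmann does not apply. The trivialization $D\cong F(D)\times\mathbb{R}$ preserving the foliation is true, but it is a nontrivial theorem (Kaplan / Haefliger--Reeb on plane foliations with Hausdorff leaf space), and nothing in "$D$ is a topological disk" gives it for free. Moreover, the final affine interpolation is only valid in the product coordinates furnished by that theorem; the naive affine segment in the coordinates $(Im(\psi),Re(\psi))$ can leave $\psi(D)$, because $\psi(D)$ need not be convex even though its horizontal slices are intervals. The correct elementary replacement for the affine step would be a continuous selection of a point in each slice between the levels of $P$ and $Q$ (a Kat\v{e}tov--Tong/Michael-type insertion argument), not affine interpolation.

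In short: the paper's proof establishes the existence of the monotone path \emph{before} any statement about level sets, by the open--closed device; your proof inverts the order and, as a result, both stages rely on facts that are not available at that point of the argument. To repair your plan you would essentially have to re-derive the paper's argument, so I recommend adopting the open--closed approach on $E$.
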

\begin{proof}
We define the set $E$ of points $Q \in D$ satisfying that
there exists a continuous path
$\gamma_{Q}:[0,1] \to D$ with $\gamma_{Q}(0)=P$, $\gamma_{Q}(1)=Q$ such that
$Im(\psi \circ \gamma_{Q}):[0,1] \to {\mathbb R}$ is either an injective or a constant function.
A point $Q \in E$ belongs to $\Gamma(Z,P,D)$
if $Im(\psi \circ \gamma_{Q})$ is a constant function.
It suffices to prove that $E=D$.

Let $\tau_{1},\tau_{2}:[0,1] \to D$ be continuous paths such that
$\tau_{1}(0)=P$, $\tau_{1}(1)=\tau_{2}(0)$ and $\tau_{2}(1)=Q'$.
We claim that if $Im(\psi \circ \tau_{1})$ is constant and
$Im(\psi \circ \tau_{2})$ is injective or $Im(\psi \circ \tau_{1})$ is injective and
$Im(\psi \circ \tau_{2})$ is constant then $Q'$ belongs to $E$.
For instance in the latter case we consider $a \in (0,1)$ near $1$ and we define
a path
$\tau:[0,1] \to D$ such that $\tau(s)=\tau_{1}(s)$ for $s \in [0,a]$ and
$\psi(\tau((1-s)a+s))= (1-s) \psi(\tau_{1}(a)) + s \psi(Q')$ for $s \in [0,1]$.
Clearly $Im(\psi \circ \tau)$ is injective.

Given $Q \in E$ any point $Q'$ in a neighborhood of $Q$ satisfies either $Q' \in \Gamma(Z,P,D)$
or we can find paths $\tau_{1}, \tau_{2}$ as in the previous paragraph.
As a consequence the set $E$ is open in $D$. Since $D$ is connected it
suffices to prove that $E$ is closed in $D$.

Let $Q \in \overline{E} \cap D$.
Consider a sequence $Q_{n} \to Q$ of points in $E \cap D$.
Choose $n>>1$. We can choose
a continuous path $\tilde{\gamma}_{n}:[0,1] \to D$
with $\tilde{\gamma}_{n}(0)=Q$, $\tilde{\gamma}_{n}(1)=Q_{n}$ such that
$Im (\psi \circ \tilde{\gamma}_{n})$ is injective or constant.
We can suppose that $Im(\psi(Q_{n}))$ does not belong to the closed interval whose
ends are $Im(\psi(P))$ and $Im(\psi(Q))$. Otherwise the argument in the second paragraph
implies $Q \in E$. Without lack of generality we can suppose
$Im(\psi(Q_{n})) > \max (Im(\psi(P)), Im(\psi(Q)))$. We obtain
$Im(\psi(Q)) \geq Im(\psi(P))$.

Suppose $Im(\psi(P)) = Im(\psi(Q))$. Lemma \ref{lem:flcolp}
implies $Q \in \Gamma(Z,P,D)$ and then $Q \in E$. Now suppose
$Im(\psi(P)) < Im(\psi(Q))$. There exists $P' \in \gamma_{Q_{n}}[0,1]$
such that $Im(\psi(P')) = Im(\psi(Q))$. We obtain $Q \in \Gamma(Z,P',D)$
by lemma \ref{lem:flcolp} and then
$Q \in E$ (see second paragraph). The set $E$ is closed in $D$ as we wanted to prove.
\end{proof}
\begin{cor}
\label{cor:flconcl}
Consider a regular pair $(Z,D)$ such that $D$ is $\Re (Z)$-convex.
Let $\psi$ a Fatou coordinate of $(Z,D)$ and $P,Q \in \overline{D} \setminus Sing Z$.
Then $Im(\psi(P))=Im(\psi(Q))$ implies that $Q \in \Gamma(Z, P, \overline{D})$.
In particular $\psi$ is injective in $\overline{D} \setminus Sing Z$.
\end{cor}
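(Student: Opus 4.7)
The plan is to deduce the corollary directly from Proposition \ref{pro:Finj1} in two stages: first handle interior points, then extend to the boundary by approximation.

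First I would treat the case $P,Q \in D$. If $Q = P$ there is nothing to prove, so assume $P \neq Q$. If $Q \in \Gamma(Z,P,D)$, then $\Gamma(Z,P,D) \subset \Gamma(Z,P,\overline{D})$ gives the conclusion. Otherwise $Q \in D \setminus \Gamma(Z,P,D)$, and Proposition \ref{pro:Finj1} furnishes a continuous path $\gamma_Q:[0,1]\to D$ from $P$ to $Q$ along which $\mathrm{Im}(\psi\circ\gamma_Q)$ is injective. But then $\mathrm{Im}(\psi(P)) = \mathrm{Im}(\psi(Q))$ forces $0=1$, a contradiction. Hence $Q \in \Gamma(Z,P,D)$.

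Next I would extend to $P,Q \in \overline{D}\setminus\mathrm{Sing}\,Z$. Since $\psi$ is defined in a neighborhood of $\overline{D}\setminus\mathrm{Sing}\,Z$ and $Z$ is non-vanishing there, $\psi$ is a local biholomorphism near $P$ and near $Q$; in particular the level sets of $\mathrm{Im}(\psi)$ are smooth arcs transverse to $\Re(Z)$. Pick $P_n \to P$ in $D$; by the implicit function theorem applied to $\mathrm{Im}(\psi)$ we can choose $Q_n \to Q$ in $D$ with $\mathrm{Im}(\psi(Q_n)) = \mathrm{Im}(\psi(P_n))$ for every $n$. The interior case yields $Q_n \in \Gamma(Z,P_n,\overline{D})$, that is $Q_n = \Gamma(Z,P_n,\overline{D})(t_n)$ with $t_n = \mathrm{Re}(\psi(Q_n)) - \mathrm{Re}(\psi(P_n)) \to \mathrm{Re}(\psi(Q)) - \mathrm{Re}(\psi(P))$. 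By continuous dependence of solutions of $\Re(Z)$ on initial conditions in a neighborhood of $\overline{D}\setminus\mathrm{Sing}\,Z$, the endpoints $\Gamma(Z,P_n,\overline{D})(t_n)$ converge to $\Gamma(Z,P,\overline{D})(t_\infty)$; since $D$ is $\Re(Z)$-convex, lemma \ref{lem:transv}-type arguments ensure this limit point actually lies in $\overline{D}$ and in $\Gamma(Z,P,\overline{D})$. Uniqueness of limits gives $Q \in \Gamma(Z,P,\overline{D})$.

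Finally, injectivity of $\psi$ on $\overline{D}\setminus\mathrm{Sing}\,Z$ follows immediately: if $\psi(P)=\psi(Q)$ then in particular $\mathrm{Im}(\psi(P))=\mathrm{Im}(\psi(Q))$, so by what we just proved $Q = \Gamma(Z,P,\overline{D})(t)$ for some $t$ with $t = \mathrm{Re}(\psi(Q))-\mathrm{Re}(\psi(P)) = 0$, whence $Q = P$.

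The only delicate step I expect is the boundary passage: one must make sure the approximation $P_n \to P$, $Q_n \to Q$ can be chosen with matching imaginary parts and that the parameter $t_n$ realising $Q_n$ on the trajectory of $P_n$ converges inside the interval of definition ${\mathcal I}(Z,P,\overline{D})$. Both follow from the local biholomorphism $\psi$ near non-singular points together with $\Re(Z)$-convexity of $D$, but they are the technical heart of the argument; everything else is bookkeeping once Proposition \ref{pro:Finj1} is in hand.
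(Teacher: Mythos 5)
Your proof is in the same spirit as the paper's --- both reduce the interior case to Proposition \ref{pro:Finj1} and then pass to the boundary by approximation --- but it skips the structural device that actually makes the boundary passage work, and that omission is a real gap, not bookkeeping. The paper splits according to whether $c := \mathrm{Im}(\psi(P))$ lies in $E=\mathrm{Im}(\psi)(D)$ or in $E'\setminus E$. When $c\in E$ it anchors the approximation at an \emph{interior} point $P'\in D$ with $\mathrm{Im}(\psi(P'))=c$: because $P'$ is interior and $\psi$ is a local biholomorphism there, $\mathrm{Im}(\psi)$ on any small neighbourhood of $P'$ inside $D$ sweeps out a full \emph{two-sided} open interval around $c$, so one can always match $\mathrm{Im}(\psi(P_n'))=\mathrm{Im}(\psi(P_n))$ for an arbitrary $P_n\to P$, conclude $P\in\Gamma(Z,P',\overline{D})$, and likewise $Q\in\Gamma(Z,P',\overline{D})$. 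When $c\in E'\setminus E$, $c$ is the supremum or infimum of $\mathrm{Im}(\psi)$ on $\overline{D}$, so every sequence from $D$ approaches $c$ from the \emph{same} side; there the direct matching of $\mathrm{Im}(\psi(P_n))$ and $\mathrm{Im}(\psi(Q_n))$ is indeed possible. The case split and the anchor are not cosmetic; they are how the paper gets around exactly the difficulty you wave at.

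Your version tries to match $\mathrm{Im}(\psi(P_n))=\mathrm{Im}(\psi(Q_n))$ directly in all cases, appealing to the implicit function theorem near $Q$. That is the step that can fail when $c\in E$ and both $P,Q\in\partial D$: the IFT describes the level sets of $\mathrm{Im}(\psi)$ in a full neighbourhood of $Q$, but says nothing about which side of the level $\{\mathrm{Im}(\psi)=c\}$ the domain $D$ lies on near $Q$. If $D$ sits in $\{\mathrm{Im}(\psi)<c\}$ near $P$ and in $\{\mathrm{Im}(\psi)>c\}$ near $Q$, then $\mathrm{Im}(\psi(P_n))<c<\mathrm{Im}(\psi(Q_n))$ for every choice of approximating sequences and no matching is possible, while the statement you must prove is still meaningful. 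Your closing sentence, that the matching and the convergence of the time parameter into $\mathcal{I}(Z,P,\overline{D})$ ``both follow from the local biholomorphism $\psi$ near non-singular points together with $\Re(Z)$-convexity of $D$,'' asserts the two delicate points rather than proving them; the paper's choice of an interior anchor $P'$ is precisely the extra idea needed, and a correct write-up must either reproduce it or replace it with something of equal force.
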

\begin{proof}
Proposition \ref{pro:Finj1} implies that $\psi$ is injective in $D$.
Denote $E= Im(\psi)(D)$ and $E'=Im(\psi)(\overline{D} \setminus Sing Z)$.
We have $E' \subset \overline{E}$. The set $E$ is an open interval.

Suppose $Im(\psi(P)) \in E$.
There exists $P' \in D$ such that $Im(\psi(P'))=Im(\psi(P))$.
There exists a sequence $P_{n} \to P$ of points in $D$. We choose a sequence
$P_{n}' \to P'$ of points in $D$ such that $Im(\psi(P_{n}'))= Im(\psi(P_{n}))$
for any $n \in {\mathbb N}$. Proposition \ref{pro:Finj1}  implies
\[ P_{n} \in \Gamma(Z,P_{n}',D)(a_{n}) \ \forall n \in {\mathbb N} \]
where $a_{n}=\psi(P_{n}) -\psi(P_{n}')$. We obtain
$P \in \Gamma(Z,P',\overline{D})(\psi(P) -\psi(P'))$. Analogously
$Q$ belongs to $\Gamma(Z,P',\overline{D})$.
We deduce $Q = \Gamma(Z,P,\overline{D})(\psi(Q) -\psi(P))$.

Suppose $Im(\psi(P)) \in E' \setminus E$. Then $Im(\psi(P))$ is either the supremum
or the infimum or $Im(\psi)$ at $\overline{D} \setminus Sing Z$. Suppose without lack
of generality that we are in the former case. There exists a continuous path
$\gamma:[0,1] \to \overline{D}$ such that $\gamma[0,1) \subset D$ and $\gamma(1)=P$.
We deduce that
$Im(\psi)(\gamma[0,1))= [Im(\psi(P))-a, Im(\psi(P)))$ for some $a \in {\mathbb R}^{+}$.
An analogous property holds true for $Q$. Therefore there exist sequences
$P_{n} \to P$ and $Q_{n} \to Q$ of points in $D$ such that $Im(\psi(P_{n}))= Im(\psi(Q_{n}))$
for any $n \in {\mathbb N}$. Since
$Q_{n} \in \Gamma(Z,P_{n},D)(\psi(Q_{n}) -\psi(P_{n}))$ for any $n \in {\mathbb N}$ we obtain
$Q \in \Gamma(Z,P,\overline{D})(\psi(Q) -\psi(P))$.
\end{proof}
\subsection{Comparing multi-transversal flows}
\label{subsec:comdifmflo}
Let $\varphi \in \diff{tp1}{2}$ with $2$-convergent normal form $\Upsilon={\rm exp}(X)$.
Consider $\Lambda,\Lambda' \in {\mathcal M}$ and
the dynamical splittings $\digamma_{\Lambda}$ and $\digamma_{\Lambda'}$ in remark \ref{rem:unifspl}.
Let $\lambda, \lambda' \in {\mathbb S}^{1}$.
\begin{defi}
\label{def:hjlam}
Let $X \in \Xt$, $\Lambda \in {\mathcal M}$ and
$\lambda \in {\mathbb S}^{1}$.
We define $H_{\Lambda,j}^{\lambda}$ the element in
$Reg(\epsilon, \aleph_{\Lambda, \lambda} X, I_{\Lambda}^{\lambda})$
such that $L_{j} \subset H_{\Lambda,j}^{\lambda}$ (see def. \ref{def:lj}). We denote
$\tilde{\psi}_{j,\Lambda,\lambda}^{\varphi}= \psi_{H_{\Lambda,j}^{\lambda},L_{j}}^{\varphi}$
or just $\tilde{\psi}_{j,\lambda}^{\varphi}$ if $\Lambda$ is implicit.
\end{defi}
\begin{defi}
Let $X \in \Xt$, $\Lambda, \Lambda' \in {\mathcal M}$ and
$\lambda, \lambda' \in {\mathbb S}^{1}$. We denote $d_{\Lambda,\Lambda'}^{\lambda,\lambda'}=0$
if $\Lambda \neq \Lambda'$. We denote
\[ d_{\Lambda,\Lambda}^{\lambda,\lambda'}=\min (d_{\Lambda}^{\lambda}, d_{\Lambda}^{\lambda'}) \ {\rm and} \
I_{\Lambda,\Lambda'}^{\lambda,\lambda'}=
I_{\Lambda}^{\lambda} \cap I_{\Lambda'}^{\lambda'},\]
see def. \ref{def:dlam}.
Denote $R_{\Lambda,\Lambda',j}^{\lambda,\lambda'}=2$ if
$H_{\Lambda,j}^{\lambda} \in
Reg_{2}(\epsilon, \aleph_{\Lambda, \lambda} X, I_{\Lambda}^{\lambda})$
and $e(H) \leq \tilde{e}_{d_{\Lambda,\Lambda'}^{\lambda,\lambda'}}$
(see definitions \ref{def:levels} and \ref{def:eh}).
Otherwise we define $R_{\Lambda,\Lambda',j}^{\lambda,\lambda'}=1$.
\end{defi}
\begin{rem}
The equation $R_{\Lambda,\Lambda',j}^{\lambda,\lambda'}=2$
implies
$H_{\Lambda,j}^{\lambda} \in
Reg_{2}(\epsilon, \aleph_{\Lambda, \lambda} X, I_{\Lambda}^{\lambda})$
and
$H_{\Lambda',j}^{\lambda'} \in
Reg_{2}(\epsilon, \aleph_{\Lambda', \lambda'} X, I_{\Lambda'}^{\lambda'})$.
Moreover, we have
\[
\partial {\mathcal E}_{0} \cap \overline{H_{\Lambda,j}^{\lambda}(x)}=
\partial {\mathcal E}_{0} \cap \overline{H_{\Lambda',j}^{\lambda'}(x)}=
\{ T_{iX}^{\epsilon,j} (x) ,  T_{iX}^{\epsilon,k} (x) \}
\]
for some $k \in {\mathbb Z}/(2 \nu({\mathcal E}_{0}) {\mathbb Z})$ and any
$x \in [0,\delta) ( I_{\Lambda}^{\lambda} \cap  I_{\Lambda'}^{\lambda'})$.
The equation $R_{\Lambda,\Lambda',j}^{\lambda,\lambda'}=1$ is less
restrictive, for instance $d_{\Lambda,\Lambda'}^{\lambda,\lambda'}=0$
implies $R_{\Lambda,\Lambda',j}^{\lambda,\lambda'}=1$
for any $j \in {\mathbb Z}/(2 \nu({\mathcal E}_{0}) {\mathbb Z})$.
\end{rem}
\begin{figure}[h]
\begin{center}
\includegraphics[height=8cm,width=7cm]{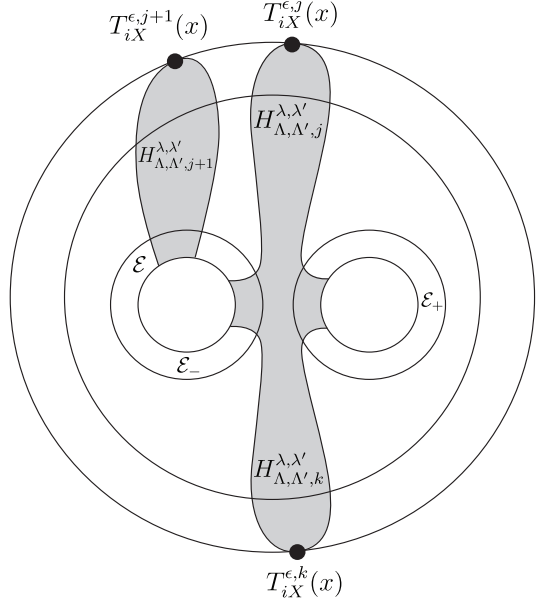}
\end{center}
\caption{Examples: $R_{\Lambda,\Lambda',j+1}^{\lambda,\lambda'}=1$ and
$R_{\Lambda,\Lambda',j}^{\lambda,\lambda'}=2$,
$H_{\Lambda,\Lambda',j}^{\lambda,\lambda'} =H_{\Lambda,\Lambda',k}^{\lambda,\lambda'} $}
\label{EVf5}
\end{figure}
We want to estimate
$\tilde{\psi}_{j,\Lambda,\lambda}^{\varphi} - \tilde{\psi}_{j,\Lambda',\lambda'}^{\varphi}$.
It is defined in  $H_{\Lambda,j}^{\lambda} \cap H_{\Lambda',j}^{\lambda'}$
but this set is not well suited to work with since for instance it is not
necessarily connected. We work with the set
$H_{\Lambda,\Lambda',j}^{\lambda,\lambda'} $ (see def. \ref{def:widint}). It is contained
in $H_{\Lambda,j}^{\lambda} \cap H_{\Lambda',j}^{\lambda'}$ and
the restrictions of $\Re (\aleph_{\Lambda, \lambda}^{*} X)$ and
$\Re (\aleph_{\Lambda', \lambda'}^{*} X)$ to $H_{\Lambda,\Lambda',j}^{\lambda,\lambda'} $
coincide.
Roughly speaking the orbit space of $\varphi$ restricted to
$H_{\Lambda,\Lambda',j}^{\lambda,\lambda'} (x_{0})$ is biholomorphic to the subset
$B(0,\kappa_{-}(x_{0})) \setminus \overline{B}(0,\kappa_{+}(x_{0}))$ of
${\mathbb C}^{*}$. The idea is studying the subregions of both
 $H_{\Lambda,j}^{\lambda} $ and $H_{\Lambda',j}^{\lambda'}$
 contained in $H_{\Lambda,j}^{\lambda} \cap H_{\Lambda',j}^{\lambda'}$
 to prove that $\kappa_{-}$ and $\kappa_{+}$ are exponentially small of the right order.
\begin{defi}
Let $En_{\Lambda,\Lambda', j}^{\lambda, \lambda'}$ be the set of basic sets ${\mathcal B}$ of
$\digamma_{\Lambda} \cup \digamma_{\Lambda'}$ (see def. \ref{def:refin})
such that
$e({\mathcal B}) \leq \iota({\mathcal B})
< \tilde{e}_{d_{\Lambda,\Lambda'}^{\lambda,\lambda'}+1}$
and $H_{\Lambda,j}^{\lambda} \cap {\mathcal B} \neq \emptyset$.
\end{defi}
We have $\aleph_{\Lambda,\lambda}^{*} \equiv \aleph_{\Lambda',\lambda'}^{*}$
in $\cup_{(r,\lambda_{0}) \in [0,\delta) \times I_{\Lambda,\Lambda'}^{\lambda,\lambda'}}
{\mathcal B}'(r,\lambda_{0})$
for any ${\mathcal B}' \in En_{\Lambda,\Lambda' j}^{\lambda, \lambda'}$
by definition. We obtain
\[ (H_{\Lambda,j}^{\lambda})^{L_{j}} \cap {\mathcal B}(r,\lambda_{0}) = (H_{\Lambda',j}^{\lambda'})^{L_{j}} \cap
{\mathcal B}(r,\lambda_{0}) \ \forall (r,\lambda_{0}) \in [0,\delta) \times I_{\Lambda,\Lambda'}^{\lambda,\lambda'}
\ \forall {\mathcal B} \in En_{\Lambda,\Lambda',j}^{\lambda,\lambda'}. \]
We deduce that
$En_{\Lambda,\Lambda',j}^{\lambda,\lambda'} \subset En_{\Lambda',\Lambda, j}^{\lambda', \lambda}$.
We can permute the roles of $\lambda$ and $\lambda'$ to get
$En_{\Lambda,\Lambda',j}^{\lambda,\lambda'} = En_{\Lambda',\Lambda, j}^{\lambda', \lambda}$.
\begin{defi}
Let $Ex_{\Lambda,\Lambda',j}^{\lambda,\lambda'}$ be the set of basic sets ${\mathcal B}$ of
$\digamma_{\Lambda} \cup \digamma_{\Lambda'}$
satisfying the properties $H_{\Lambda,j}^{\lambda} \cap {\mathcal B} \neq \emptyset$ and
$e({\mathcal B})  < \tilde{e}_{d_{\Lambda,\Lambda'}^{\lambda,\lambda'}+1}  \leq
 \iota({\mathcal B})$.
Since $e({\mathcal B}) < \iota({\mathcal B})$ the elements of $Ex_{\Lambda,\Lambda',j}^{\lambda,\lambda'}$
are non-terminal exterior sets.
\end{defi}
Analogously as in the previous paragraph we obtain
\[ (H_{\Lambda,j}^{\lambda})^{L_{j}} \cap \tilde{\mathcal E}(r,\lambda_{0}) = (H_{\Lambda',j}^{\lambda'})^{L_{j}} \cap
\tilde{\mathcal E}(r,\lambda_{0}) \ \forall (r,\lambda_{0}) \in [0,\delta) \times I_{\Lambda,\Lambda'}^{\lambda,\lambda'}
\ \forall {\mathcal E} \in Ex_{\Lambda,\Lambda',j}^{\lambda,\lambda'}, \]
where $\tilde{\mathcal E}$ is defined in section \ref{subsec:dynsplit}.
We have $\sharp Ex_{\Lambda,\Lambda',j}^{\lambda,\lambda'} \leq 2$.
Moreover we obtain
$Ex_{\Lambda,\Lambda',j}^{\lambda,\lambda'} \subset
Ex_{\Lambda', \Lambda, j}^{\lambda', \lambda}$
and then $Ex_{\Lambda,\Lambda',j}^{\lambda,\lambda'} = Ex_{\Lambda',\Lambda,j}^{\lambda', \lambda}$.
\begin{defi}
\label{def:widint}
Let $X \in \Xt$, $\Lambda,\Lambda' \in {\mathcal M}$ and
$\lambda, \lambda' \in {\mathbb S}^{1}$.
We define
\[ H_{\Lambda,\Lambda',j}^{\lambda,\lambda'} = [(H_{\Lambda,j}^{\lambda})^{L_{j}} \cap
(\cup_{{\mathcal B} \in En_{\Lambda,\Lambda',j}^{\lambda,\lambda'}} {\mathcal B}  \cup
\cup_{{\mathcal E} \in Ex_{\Lambda,\Lambda',j}^{\lambda,\lambda'}} \tilde{\mathcal E})]
\cap \{ x \in [0,\delta) I_{\Lambda,\Lambda'}^{\lambda,\lambda'} \}  \]
if $R_{\Lambda,\Lambda',j}^{\lambda,\lambda'}=1$. We define
\[ H_{\Lambda,\Lambda',j}^{\lambda,\lambda'} = [H_{\Lambda,j}^{\lambda} \cap
(\cup_{{\mathcal B} \in En_{\Lambda,\Lambda',j}^{\lambda,\lambda'}} {\mathcal B}  \cup
\cup_{{\mathcal E} \in Ex_{\Lambda,\Lambda',j}^{\lambda,\lambda'}} \tilde{\mathcal E})] \cap
\{ x \in [0,\delta) I_{\Lambda,\Lambda'}^{\lambda,\lambda'} \} \]
for $R_{\Lambda,\Lambda',j}^{\lambda,\lambda'}=2$ (see figure (\ref{EVf5})).
\end{defi}
We have $H_{\Lambda,\Lambda',j}^{\lambda,\lambda'} \subset H_{\Lambda,j}^{\lambda} \cap H_{\Lambda',j}^{\lambda'}$
and $\Re (\aleph_{\Lambda,\lambda}X) \equiv  \Re (\aleph_{\Lambda',\lambda'}X)$
in $H_{\Lambda,\Lambda',j}^{\lambda,\lambda'}$.
\begin{defi}
\label{def:exexr}
Let $X \in \Xt$, $\Lambda,\Lambda' \in {\mathcal M}$ and
$\lambda, \lambda' \in {\mathbb S}^{1}$. Denote
$\Gamma_{x}=\Gamma(\aleph_{\Lambda,\lambda}X ,T_{i X}^{\epsilon,j}(x),T_{0})$
(see def. \ref{def:lj}).
Either $\Gamma_{x}[0,\infty)$ intersects
an element ${\mathcal E}_{+}$ of $Ex_{\Lambda,\Lambda',j}^{\lambda,\lambda'}$ for any
$x \in (0,\delta) I_{\Lambda,\Lambda'}^{\lambda,\lambda'}$ or $\Gamma_{x} \cap {\mathcal E}=\emptyset$
for all ${\mathcal E} \in Ex_{\Lambda,\Lambda',j}^{\lambda,\lambda'}$ and
$x \in (0,\delta) I_{\Lambda,\Lambda'}^{\lambda,\lambda'}$. In the former case we define
$e_{\Lambda,\Lambda',j}^{\lambda,\lambda',+}=\iota({\mathcal E}_{+})$. Otherwise we define
$e_{\Lambda,\Lambda',j}^{\lambda,\lambda',+}= \infty$. Analogously we define
${\mathcal E}_{-}$ and $e_{\Lambda,\Lambda',j}^{\lambda,\lambda',-}$ by considering $\Gamma_{x}(-\infty,0]$ (see figure (\ref{EVf5})).
\end{defi}
\begin{defi}
\label{def:dirvf}
Let $\varphi \in \diff{tp1}{2}$ with $2$-convergent normal form ${\rm exp}(X)$.
We define ${\mathcal D}(\varphi)={\mathbb Z}/(2 \nu({\mathcal E}_{0}) {\mathbb Z})$.
We define
\[ {\mathcal D}_{1}(\varphi)=\{ j \in {\mathbb Z}/(2 \nu({\mathcal E}_{0}) {\mathbb Z}) :
\Re (X) \ {\rm points \ towards} \ B(0,\delta) \times B(0,\epsilon) \ {\rm at} \ T_{iX}^{\epsilon,j}(0) \}  \]
and ${\mathcal D}_{-1}(\varphi) =  {\mathcal D}(\varphi) \setminus  {\mathcal D}_{1}(\varphi)$.
\end{defi}
\begin{defi}
Let $\varphi \in \diff{tp1}{2}$ with $2$-convergent normal form ${\rm exp}(X)$.
Let $\Lambda,\Lambda' \in {\mathcal M}$ and $\lambda, \lambda' \in {\mathbb S}^{1}$. We define
\[ \psi_{j,\lambda,\lambda'}^{\varphi}=
(\tilde{\psi}_{j,\Lambda,\lambda}^{\varphi}- \tilde{\psi}_{j,\Lambda',\lambda'}^{\varphi})
\circ (x, e^{2 \pi i \tilde{\psi}_{j,\Lambda,\lambda}^{\varphi}})^{-1}. \]
\end{defi}
The values of $\Lambda$ and $\Lambda'$ in $\psi_{j,\lambda,\lambda'}^{\varphi}$ are implicit.
We want to prove that
$\psi_{j,\lambda,\lambda'}^{\varphi}$ is defined in the space of orbits of
$\varphi_{|H_{\Lambda,\Lambda',j}^{\lambda,\lambda'}}$ in order to estimate
$\tilde{\psi}_{j,\lambda}^{\varphi}- \tilde{\psi}_{j,\lambda'}^{\varphi}$ in $H_{\Lambda,\Lambda',j}^{\lambda,\lambda'}$.
\begin{pro}
\label{pro:flasp}
Let $\varphi \in \diff{tp1}{2}$ with $2$-convergent normal form ${\rm exp}(X)$.
Let $\Lambda,\Lambda' \in {\mathcal M}$. Consider
$\lambda, \lambda' \in {\mathbb S}^{1}$ and $j \in {\mathcal D}(\varphi)$.
Then we have
\[ (x, e^{2 \pi i \tilde{\psi}_{j,\Lambda,\lambda}^{\varphi}})(H_{\Lambda,\Lambda',j}^{\lambda,\lambda'}) \subset
\cup_{(r,\tilde{\lambda}) \in [0,\delta) \times I_{\Lambda,\Lambda'}^{\lambda,\lambda'}} (\{r \tilde{\lambda} \} \times
[B(0,\kappa_{-}(r,\tilde{\lambda})) \setminus \overline{B}(0,\kappa_{+}(r,\tilde{\lambda}))]) \]
where $\kappa_{\pm}(r,\tilde{\lambda}) \equiv e^{\mp C_{\pm}(\lambda)/|r|^{e_{\Lambda,\Lambda',j}^{\lambda,\lambda',\pm}}}$
for some continuous function $C_{\pm}:I_{\Lambda,\Lambda'}^{\lambda,\lambda'} \to {\mathbb R}^{+}$.
Moreover there exists $\zeta \in {\mathbb R}^{+}$ such that $\psi_{j,\lambda,\lambda'}^{\varphi}$ is well
defined in
\[ \cup_{(r,\tilde{\lambda}) \in [0,\delta) \times I_{\Lambda,\Lambda'}^{\lambda,\lambda'}} (\{r \tilde{\lambda}\} \times
[B(0,\tilde{\kappa}_{-}(r,\tilde{\lambda})) \setminus \overline{B}(0,\tilde{\kappa}_{+}(r,\tilde{\lambda}))]) \]
and holomorphic outside $x=0$ where
$\tilde{\kappa}_{\pm}(r,\tilde{\lambda}) = e^{\mp (C_{\pm}(\tilde{\lambda})+\zeta)/|r|^{e_{\Lambda,\Lambda',j}^{\lambda,\lambda',\pm}}}$.
\end{pro}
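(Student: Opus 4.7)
The plan is to exploit the $\varphi$-invariance of the difference $\tilde{\psi}_{j,\Lambda,\lambda}^{\varphi} - \tilde{\psi}_{j,\Lambda',\lambda'}^{\varphi}$ combined with a precise description of $H_{\Lambda,\Lambda',j}^{\lambda,\lambda'}$ expressed in the coordinate $\psi_{L_{j}}^{X}$. Since both Fatou coordinates satisfy $\tilde{\psi} \circ \varphi = \tilde{\psi} + 1$, the difference is constant on $\varphi$-orbits in $H_{\Lambda,\Lambda',j}^{\lambda,\lambda'}$, so it descends to the quotient by $e^{2\pi i \tilde{\psi}_{j,\Lambda,\lambda}^{\varphi}}$. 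The proposition then has two components: determining the shape of the image annulus, and extending $\psi_{j,\lambda,\lambda'}^{\varphi}$ holomorphically to a slightly larger annulus with radii $\tilde{\kappa}_{\pm}$.

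First I would analyze the boundary of $H_{\Lambda,\Lambda',j}^{\lambda,\lambda'}(x)$ in $\psi_{L_{j}}^{X}$ coordinates. The outermost boundary trajectories follow $\Gamma_{x} = \Gamma(\aleph_{\Lambda,\lambda} X, T_{iX}^{\epsilon,j}(x), T_{0})$ in the $\pm \infty$ directions. Either they converge to a singular point (corresponding to $e_{\Lambda,\Lambda',j}^{\lambda,\lambda',\pm} = \infty$, giving $\kappa_{+} = 0$ or $\kappa_{-} = \infty$), or they exit through the inner boundary of some $\tilde{\mathcal E}_{\pm} \in Ex_{\Lambda,\Lambda',j}^{\lambda,\lambda'}$. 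In the latter case $|t_{\pm}| \sim |x|$, so remark \ref{rem:psiext}, lemma \ref{lem:itf} and the identity $\psi_{L_{j}}^{X} = \psi_{\mathcal E_{\pm}}/x^{e(\mathcal E_{\pm})}$ yield that $|x|^{\iota(\mathcal E_{\pm})} |\psi_{L_{j}}^{X}|$ is bounded at the exit, and corollary \ref{cor:intbou} upgrades this to a continuous (in $\tilde{\lambda}$) bound $|\textrm{Im}(\psi_{L_{j}}^{X})| \leq C_{\pm}(\tilde{\lambda})/|r|^{e_{\Lambda,\Lambda',j}^{\lambda,\lambda',\pm}}$ at the boundary trajectories, with the $r=0$ limit belonging to $\mathbb H$. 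Proposition \ref{pro:bddcon} then allows me to replace $\psi_{L_{j}}^{X}$ by $\tilde{\psi}_{j,\Lambda,\lambda}^{\varphi}$ up to an additive bounded error that can be absorbed into $C_{\pm}(\tilde{\lambda})$.

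Next I would apply the exponential map. The imaginary-part bounds on $\tilde{\psi}_{j,\Lambda,\lambda}^{\varphi}$ translate into modulus bounds on $e^{2\pi i \tilde{\psi}_{j,\Lambda,\lambda}^{\varphi}}$, giving the containment in the annulus $\kappa_{+}(r,\tilde{\lambda}) < |z| < \kappa_{-}(r,\tilde{\lambda})$ with $\kappa_{\pm} = e^{\mp C_{\pm}/|r|^{e_{\pm}}}$. Injectivity of $(x, e^{2\pi i \tilde{\psi}_{j,\Lambda,\lambda}^{\varphi}})$ modulo $\varphi$ follows because $\tilde{\psi}_{j,\Lambda,\lambda}^{\varphi}$ is a Fatou coordinate and a fundamental domain in $H_{\Lambda,j}^{\lambda}$ corresponds to a strip of horizontal width $1$; the difference $\tilde{\psi}_{j,\Lambda,\lambda}^{\varphi} - \tilde{\psi}_{j,\Lambda',\lambda'}^{\varphi}$ being $\varphi$-invariant makes the descent to the annulus automatic, and the $\tilde{\psi}$'s being holomorphic off $x=0$ (prop. \ref{pro:Fatpar}) gives holomorphy of $\psi_{j,\lambda,\lambda'}^{\varphi}$ on the image annulus.

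For the enlargement from $\kappa_{\pm}$ to $\tilde{\kappa}_{\pm}$, I would enlarge $H_{\Lambda,\Lambda',j}^{\lambda,\lambda'}$ by saturating the boundary trajectories under $\varphi^{\pm n}$ for $n$ up to order $\zeta/|x|^{e_{\pm}}$; this is enough since each additional iterate pushes $\textrm{Re}(\tilde{\psi}^{\varphi})$ by $\simeq 1$ without changing $\textrm{Im}(\tilde{\psi}^{\varphi})$ by more than an $O(1)$ amount, so $\lfloor \zeta/|x|^{e_{\pm}} \rfloor$ iterates enlarge the $\textrm{Im}$-range by $\zeta/|x|^{e_{\pm}}$. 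The main obstacle is to verify that these iterates remain inside both $H_{\Lambda,j}^{\lambda}$ and $H_{\Lambda',j}^{\lambda'}$: the iterates lie in the common extended region because the two flows coincide on $\tilde{\mathcal E}_{\pm} \in Ex_{\Lambda,\Lambda',j}^{\lambda,\lambda'}$ and on all ${\mathcal B} \in En_{\Lambda,\Lambda',j}^{\lambda,\lambda'}$, and a quantitative argument analogous to proposition \ref{pro:widcon} bounds the transversal width available for these iterations, ensuring the enlargement is indeed by factor $\zeta$ in the exponent. Holomorphy off $x=0$ then persists by proposition \ref{pro:Fatpar} applied to the enlarged domain.
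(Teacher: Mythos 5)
Your plan for the first assertion — showing that the boundary trajectories of $H_{\Lambda,\Lambda',j}^{\lambda,\lambda'}(x)$ exit through the inner boundaries of $\tilde{\mathcal E}_{\pm}$, reading off $|x|^{\iota(\mathcal E_{\pm})}|\psi_{L_{j}}^{X}|$ bounds from remark \ref{rem:psiext}, lemma \ref{lem:itf} and corollary \ref{cor:intbou}, absorbing the bounded error from proposition \ref{pro:bddcon} into $C_{\pm}(\tilde{\lambda})$, and then exponentiating — is in the spirit of the paper's argument for the annulus containment. (You should be aware that the paper splits the proof into the cases $R_{\Lambda,\Lambda',j}^{\lambda,\lambda'}\in\{1,2\}$, since in the $R=2$ case both petals $L,R\in{\mathcal P}(H)$ contribute and the shape of $H_{\Lambda,\Lambda',j}^{\lambda,\lambda'}$ is genuinely different; a more careful statement is needed that tracks asymptotic continuity of the exit sections in $(r,\tilde{\lambda})$, but the toolkit you invoke is the correct one.)

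Your mechanism for passing from $\kappa_{\pm}$ to $\tilde{\kappa}_{\pm}$, however, does not work. You propose to saturate the transversal boundary trajectories under $\varphi^{\pm n}$ for $n$ up to order $\zeta/|x|^{e_{\pm}}$ and assert that each iterate shifts $\operatorname{Re}(\tilde{\psi}^{\varphi})$ by $\simeq 1$ "without changing $\operatorname{Im}(\tilde{\psi}^{\varphi})$ by more than an $O(1)$ amount," from which you conclude the $\operatorname{Im}$-range grows by $\zeta/|x|^{e_{\pm}}$. But $\tilde{\psi}_{j,\Lambda,\lambda}^{\varphi}$ is an exact Fatou coordinate of $\varphi$, so $\operatorname{Im}(\tilde{\psi}^{\varphi})$ is strictly invariant under $\varphi$; iteration only translates horizontally and cannot increase the radial range of the annulus at all. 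Even measuring in the $\psi_{L_{j}}^{X}$ coordinate, the cumulative drift of $\operatorname{Im}(\psi_{L_{j}}^{X})$ along any $\varphi$-orbit contained in $H_{\Lambda,j}^{\lambda}$ is $O(M)$, uniformly bounded by proposition \ref{pro:bddcon}, independent of the number of iterates — far too small to produce a $\zeta/|x|^{e_{\pm}}$ enlargement. The actual mechanism in the paper is geometric, not dynamical: one builds intermediate sets (the $H_{j,k}^{\lambda_{0}}$ bounded by shifted exit sections $\tau_{2,\pm}$ in the case $R=2$, or the nested chain $H_{j,0}^{\lambda,\lambda'}\subset\cdots\subset H_{j,4}^{\lambda,\lambda'}$ in the case $R=1$) that already sit inside $H_{\Lambda,j}^{\lambda}\cap H_{\Lambda',j}^{\lambda'}$, uses compactness of $I_{\Lambda,\Lambda'}^{\lambda,\lambda'}$ plus propositions \ref{pro:hit} and \ref{cor:intbou} to get a uniform $\zeta$-margin between the $|r|^{e_{\pm}}$-rescaled $\operatorname{Im}(\psi)$ ranges of the inner and outer sets, and then shows that a fundamental domain of $\varphi$ in the larger set maps onto the wider annulus. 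The $O(M)$ orbit-drift is then used with the opposite sign of your argument: it is small enough to guarantee that any orbit connecting a point of $H_{\Lambda,\Lambda',j}^{\lambda,\lambda'}$ to that fundamental domain remains inside the larger common set. Without this geometric enlargement your proof cannot establish the $\tilde{\kappa}_{\pm}$ part of the proposition.
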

Let us remark that in the previous proposition
$\tilde{\kappa}_{+} \equiv \kappa_{+} \equiv 0$ if $e_{\Lambda,\Lambda',j}^{\lambda,\lambda',+}=\infty$
and $\tilde{\kappa}_{-} \equiv \kappa_{-} \equiv \infty$ if $e_{\Lambda,\Lambda',j}^{\lambda,\lambda',-}=\infty$.

The subsections \ref{subsec:rj1} and \ref{subsec:rj2} are intended to prove proposition \ref{pro:flasp}
in the cases $R_{\Lambda,\Lambda',j}^{\lambda,\lambda'}=1$ and $R_{\Lambda,\Lambda',j}^{\lambda,\lambda'}=2$ respectively.
The subsection \ref{subsec:rj1} is though a little more ambitious since it introduces the setup that
will be used in subsections \ref{subsec:extfatcor} and \ref{subsec:asyfatcor}.
\subsubsection{Proof of proposition \ref{pro:flasp} in the case $R_{\Lambda,\Lambda',j}^{\lambda,\lambda'}=2$}
\label{subsec:rj2}
The exterior sets ${\mathcal E}_{-}$ and ${\mathcal E}_{+}$ (see def. \ref{def:exexr}) are different if
$e_{\Lambda,\Lambda',j}^{\lambda,\lambda',-}  \neq \infty$ and
$e_{\Lambda,\Lambda',j}^{\lambda,\lambda',+}  \neq \infty$.
Thus the cardinal of the set $Ex_{\Lambda,\Lambda',j}^{\lambda,\lambda'}$ coincides with the cardinal of
$\{e_{\Lambda,\Lambda',j}^{\lambda,\lambda',-}, e_{\Lambda,\Lambda',j}^{\lambda,\lambda',+} \} \setminus \{ \infty \}$.

Denote $\psi = \psi_{H_{\Lambda,j}^{\lambda},L_{j}}^{X}$, $\aleph=\aleph_{\Lambda,\lambda}$ and
$\Gamma_{x}=\Gamma(\aleph_{\Lambda,\lambda}X ,T_{i X}^{\epsilon,j}(x),T_{0})$.
Suppose that $\Re (X)$ points towards $B(0,\delta) \times B(0,\epsilon)$ at $T_{i X}^{\epsilon,j}(0)$
without lack of generality.

Suppose that $e_{\Lambda,\Lambda',j}^{\lambda,\lambda',+} \neq \infty$. We have
\[ {\mathcal E}_{\beta}= {\mathcal E}_{+}=
\{(x,t) \in B(0,\delta) \times {\mathbb C} :  \eta_{+} \geq |t| \geq \rho_{+} |x| \} . \]
We define
\[ {\mathcal E}_{+}'= \{(x,t) \in B(0,\delta) \times {\mathbb C} :  \eta_{+} \geq |t| \geq (2-1/4) \rho_{+} |x| \} .\]
The construction of multi-transversal flows implies that
$\Re (\aleph_{\Lambda,\lambda}X) \equiv  \Re (\aleph_{\Lambda',\lambda'}X)$ in
${\mathcal E}_{+}' \cap \{ x \in (0,\delta) I_{\Lambda,\Lambda'}^{\lambda,\lambda'}\}$.
Given $k \in \{1,2\}$ we consider the continuous section
$\tau_{k,+}: (0,\delta) I_{\Lambda,\Lambda'}^{\lambda,\lambda'} \to {\mathcal E}_{+}$ defined by
\[ \tau_{k,+}(x) = \Gamma_{x}[0,\infty) \cap \{ |t|= (2-k/16) |x| \} . \]
\begin{figure}[h]
\begin{center}
\includegraphics[height=6cm,width=13cm]{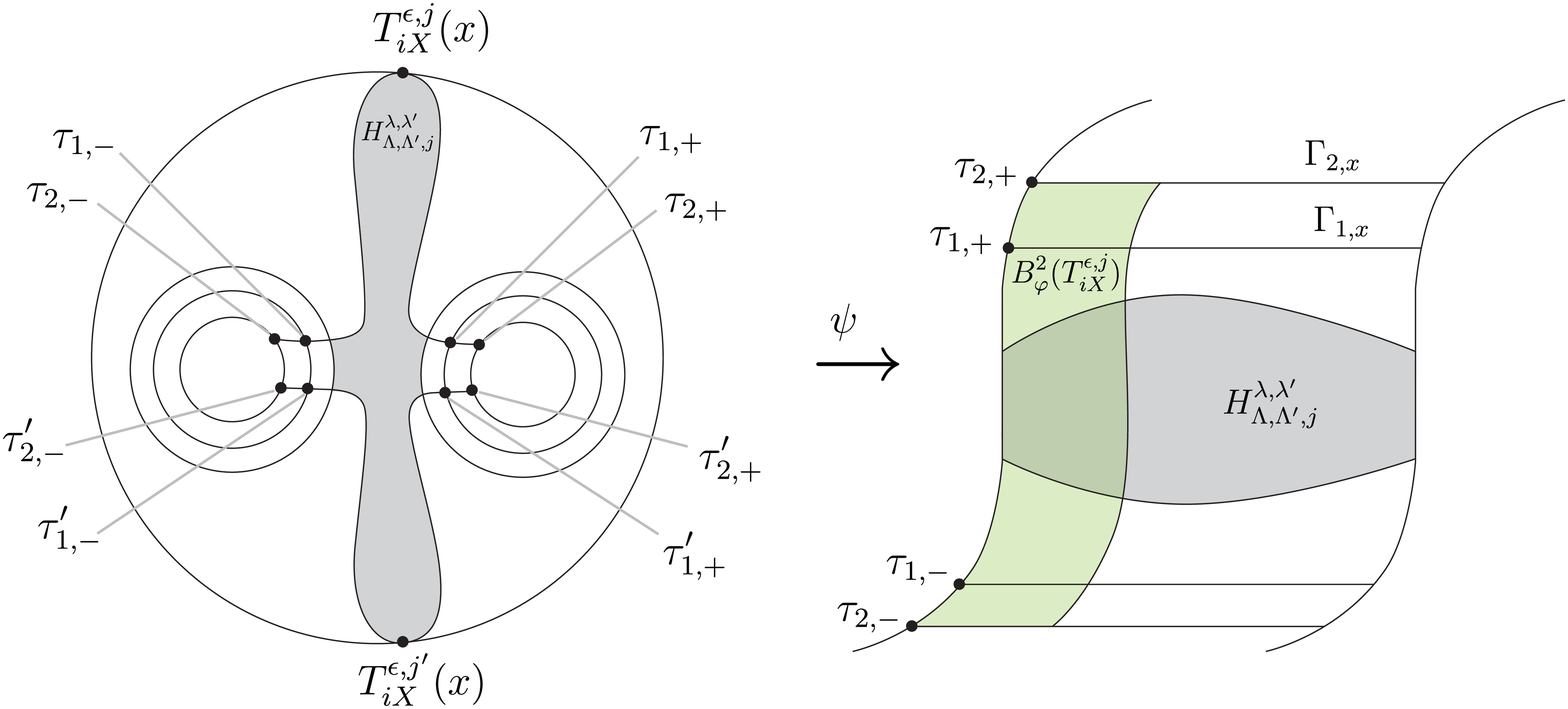}
\end{center}
\caption{Case $R_{\Lambda,\Lambda',j}^{\lambda,\lambda'}=2$}
\label{EVf6}
\end{figure}
We can define $\tau_{1,-}$ and $\tau_{2,-}$ by replacing $\Gamma_{x}[0,\infty)$
with $\Gamma_{x}(-\infty,0]$ if $e_{\Lambda,\Lambda',j}^{\lambda,\lambda',-} \neq \infty$.
We claim that
$H_{\Lambda,\Lambda',j}^{\lambda,\lambda'}(r,\tilde{\lambda}) \cap \{ |t|= (2-k/16) |x| \}$
adheres to the point $\tau_{k,+}(0,\lambda_{0})$ when $(r,\tilde{\lambda}) \to (0,\lambda_{0})$
for $k \in \{1,2\}$.
Let us study the properties of $\tau_{1,+}$ and $\tau_{2,+}$. Those of
$\tau_{1,-}$ and $\tau_{2,-}$ are analogous.
By applying several times prop. \ref{pro:hit} we obtain that
$\tau_{1,+}$ and $\tau_{2,+}$ are asymptotically continuous.
The points
$\tau_{1,+}(0,\tilde{\lambda})$ and $\tau_{2,+}(0,\tilde{\lambda})$
belong to the same element $\gamma_{\tilde{\lambda}}$ of
$Tr_{\leftarrow \infty} (\aleph_{{\mathcal E}_{+}} X_{\beta}(\tilde{\lambda}))$
for any $\tilde{\lambda} \in I_{\Lambda,\Lambda'}^{\lambda,\lambda'}$ (see def. \ref{def:pol}).
Consider the tangent section $T_{i X}^{\epsilon,j'}(x)$ containing the other
point in $T_{iX}^{\epsilon}(x) \cap \overline{H_{\Lambda,j}^{\lambda}}$.
Then we can replace $j$ with $j'$ to obtain asymptotically continuous sections
$\tau_{1,+}'(r,\tilde{\lambda})$ and
$\tau_{2,+}'(r,\tilde{\lambda})$. Corollary \ref{cor:hit} implies
$\tau_{k,+}'(0,\tilde{\lambda})=\tau_{k,+}(0,\tilde{\lambda})$
for all $\tilde{\lambda} \in I_{\Lambda,\Lambda'}^{\lambda,\lambda'}$ and $k \in \{1,2\}$.
Thus $H_{\Lambda,\Lambda',j}^{\lambda,\lambda'}(r,\tilde{\lambda}) \cap \{ |t|= (2-k/16) |x| \}$
adheres to the point $\tau_{k,+}(0,\lambda_{0})$ when $(r,\tilde{\lambda}) \to (0,\lambda_{0})$.

We define
\[ \Gamma_{k,x}= \Gamma(X, \tau_{k,+}(x), \overline{H_{\Lambda,j}^{\lambda}})
\ {\rm for} \ k \in \{1,2\}
\ {\rm and} \ x \in (0,\delta) I_{\Lambda,\Lambda'}^{\lambda,\lambda'} . \]
The vector field $\Re (X_{\beta}(\lambda_{0}))$ is transversal to $\gamma_{\lambda_{0}}$ at
$\tau_{1,+}(0,\lambda_{0})$ and $\tau_{2,+}(0,\lambda_{0})$.
Hence $\Gamma_{k,r,\tilde{\lambda}}$ tends to $\tau_{k,+}(0,\lambda_{0})$
when $(r,\tilde{\lambda}) \to (0,\lambda_{0})$ for all $\lambda_{0} \in I_{\Lambda,\Lambda'}^{\lambda,\lambda'}$ and
$k \in \{1,2\}$. Hence the trajectories $\Gamma_{1,x}$ and $\Gamma_{2,x}$ are contained in
${\mathcal E}_{+}' \setminus \tilde{\mathcal E}_{+}$ and then in
$H_{\Lambda,j}^{\lambda} \cap H_{\Lambda',j}^{\lambda'}$
for any $x \in (0,\delta) I_{\Lambda,\Lambda'}^{\lambda,\lambda'}$. Given $k \in \{1,2\}$ we define
\[  H_{j,k}^{\lambda_{0}}(x) = H_{\Lambda,j}^{\lambda_{0}}(x) \cap
\{ Im (\psi ) \in (Im [\psi (\tau_{k,-}(x))], Im [\psi (\tau_{k,+}(x))]) \}  \]
for $\lambda_{0} \in \{\lambda,\lambda'\}$ and $x \in (0,\delta) I_{\Lambda,\Lambda'}^{\lambda,\lambda'}$.
We define $Im (\psi \circ \tau_{k,\pm})= \pm \infty$ if
$e_{\Lambda,\Lambda',j}^{\lambda,\lambda',\pm} = \infty$.
The previous discussion implies
$H_{\Lambda,\Lambda',j}^{\lambda,\lambda'} \subset H_{j,k}^{\lambda} = H_{j,k}^{\lambda'}
\subset H_{\Lambda,j}^{\lambda} \cap H_{\Lambda',j}^{\lambda'}$
for $k \in \{1,2\}$.

We want to study the sets
$Im (\tilde{\psi}_{j,\lambda}^{\varphi})(H_{\Lambda,\Lambda',j}^{\lambda,\lambda'}
(r,\lambda_{0}))$
for $(r,\lambda_{0}) \in [0,\delta) \times I_{\Lambda,\Lambda'}^{\lambda,\lambda'}$ .
Let $\psi_{\lambda_{0}}^{\beta}$ be a Fatou coordinate
of $X_{\beta}(\lambda_{0})$ defined in the neighborhood of $\gamma_{\lambda_{0}}$.
Given $k \in \{1,2\}$, $l \in \{+,-\}$ consider the function
$F_{k,l}: (0,\delta) \times I_{\Lambda,\Lambda'}^{\lambda,\lambda'} \to {\mathbb C}$
defined by
\[ F_{k,l}(r,\tilde{\lambda})= |r|^{\iota({\mathcal E}_{l})} \psi (\tau_{k,l}(r,\tilde{\lambda})) . \]
It extends continuously to $[0,\delta) \times I_{\Lambda,\Lambda'}^{\lambda,\lambda'}$
by cor. \ref{cor:intbou}. Moreover
$F_{k,+}(\{0\} \times I_{\Lambda,\Lambda'}^{\lambda,\lambda'}) \subset {\mathbb H}$
and
$F_{k,-}(\{0\} \times I_{\Lambda,\Lambda'}^{\lambda,\lambda'}) \subset -{\mathbb H}$
for $k \in \{1,2\}$. We also have
\[ (F_{2,+}-F_{1,+})(0,\lambda_{0})=\psi_{\lambda_{0}}^{\beta}(\tau_{2,+}(0,\lambda_{0})) -
\psi_{\lambda_{0}}^{\beta}(\tau_{1,+}(0,\lambda_{0}))
\in \aleph_{{\mathcal E}_{+}} {\mathbb R}^{+} \subset {\mathbb H} \]
and $(F_{1,-}-F_{2,-})(0,\lambda_{0}) \in {\mathbb H} $
for any $\lambda_{0} \in I_{\Lambda,\Lambda'}^{\lambda,\lambda'}$.
There exists $\zeta \in {\mathbb R}^{+}$ such that
$Im(F_{2,+}-F_{1,+})(0,\lambda_{0}) \geq 2 \zeta$ and
$Im(F_{1,-}-F_{2,-})(0,\lambda_{0}) \geq 2 \zeta$
for any $\lambda_{0} \in I_{\Lambda,\Lambda'}^{\lambda,\lambda'}$.
Since $H_{\Lambda,\Lambda',j}^{\lambda,\lambda'}(r,\lambda_{0}) \subset
H_{j,1}^{\lambda}(r,\lambda_{0})$
for any $(r,\lambda_{0}) \in [0,\delta) \times I_{\Lambda,\Lambda'}^{\lambda,\lambda'}$ and
$\tilde{\psi}_{j,\lambda}^{\varphi} - \psi$ is bounded (prop. \ref{pro:bddcon})
we obtain
\[ Im (\tilde{\psi}_{j,\lambda}^{\varphi})(H_{\Lambda,\Lambda',j}^{\lambda,\lambda'}(r,\lambda_{0}))
\subset \left(
\frac{Im(F_{1,-})(0,\lambda_{0})-\zeta/2}{r^{e_{\Lambda,\Lambda',j}^{\lambda,\lambda',-}}},
\frac{Im(F_{1,+})(0,\lambda_{0})+\zeta/2}{r^{e_{\Lambda,\Lambda',j}^{\lambda,\lambda',+}}}
\right) \]
for all $\lambda_{0} \in I_{\Lambda,\Lambda'}^{\lambda,\lambda'}$ and $r \in [0,\delta)$.

Consider the domain $B_{\varphi}(T_{i X}^{\epsilon,j}(x))$ enclosed by $\Gamma_{x}$ and $\varphi(\Gamma_{x})$.
We define
\[ B_{\varphi}^{2}(T_{i X}^{\epsilon,j}(x)) = B_{\varphi}(T_{i X}^{\epsilon,j}(x)) \cap H_{j,2}^{\lambda}(x) \]
for $x \in (0,\delta) I_{\Lambda,\Lambda'}^{\lambda,\lambda'}$.
%
%
The  function $\psi_{j,\lambda,\lambda'}^{\varphi}$ is then well-defined in
\[ \cup_{(r,\tilde{\lambda}) \in [0,\delta) \times I_{\Lambda,\Lambda'}^{\lambda,\lambda'}} (\{r \tilde{\lambda} \} \times
e^{2 \pi i \tilde{\psi}_{j,\lambda}^{\varphi}}[B_{\varphi}^{2}(T_{i X}^{\epsilon,j}(x))])  \]
since $B_{\varphi}(T_{i X}^{\epsilon,j}(x)) \setminus \varphi(\Gamma_{x})$
is a fundamental domain of $\varphi_{|H_{\Lambda,j}^{\lambda}(x)}$.
Moreover we have
\[ e^{2 \pi i \tilde{\psi}_{j,\lambda}^{\varphi}}(B_{\varphi}^{2}(T_{i X}^{\epsilon,j}(r,\tilde{\lambda}))
\supset {\mathbb S}^{1} e^{-2 \pi z} \left(
\frac{Im(F_{2,-})(0,\tilde{\lambda})+\zeta/2}{r^{e_{\Lambda,\Lambda',j}^{\lambda,\lambda',-}}},
\frac{Im(F_{2,+})(0,\tilde{\lambda})-\zeta/2}{r^{e_{\Lambda,\Lambda',j}^{\lambda,\lambda',+}}}
\right) \]
for all $\tilde{\lambda} \in I_{\Lambda,\Lambda'}^{\lambda,\lambda'}$ and $r \in (0,\delta)$.
It remains to show that
$\psi_{j,\lambda,\lambda'}^{\varphi}$ is well-defined in
$(x, e^{2 \pi i \tilde{\psi}_{j,\lambda}^{\varphi}})(H_{\Lambda,\Lambda',j}^{\lambda,\lambda'})$.
More precisely,
given $Q=(x_{0},y_{0}) \in H_{\Lambda,\Lambda',j}^{\lambda,\lambda'}$ there exist
$Q_{0} \in B_{\varphi}(T_{i X}^{\epsilon,j}(x_{0}))$ and $k \geq 0$ such that
$Q_{0}, \hdots, \varphi^{k}(Q_{0}) \in H_{\Lambda,j}^{\lambda}$
and $Q=\varphi^{k}(Q_{0})$. It suffices to prove that
$\{Q_{0}, \hdots, \varphi^{k}(Q_{0})\}$ is contained
in  $H_{\Lambda',j}^{\lambda'}$ and
$Q_{0} \in B_{\varphi}^{2}(T_{i X}^{\epsilon,j}(x_{0}))$ since then
$\tilde{\psi}_{j,\Lambda,\lambda}^{\varphi} - \tilde{\psi}_{j,\Lambda',\lambda'}^{\varphi}$ is
constant along orbits of $\varphi$.

We have $|\tilde{\psi}_{j,\lambda}^{\varphi} - \psi| \leq M$ in $H_{\Lambda,j}^{\lambda}$
for some $M>0$ by prop. \ref{pro:bddcon}.  Thus we obtain
$|\psi \circ \varphi^{l}(Q_{0}) - (\psi(Q_{0})+l)| \leq 2M$ for any $0 \leq l \leq k$.
Since $Q \in H_{j,1}^{\lambda}(x_{0})$ and
\[ \lim_{x \in (0,\delta) I_{\Lambda,\Lambda'}^{\lambda,\lambda'}, \ x \to 0}
 Im (\psi (\tau_{2,\pm}(x))) -  Im (\psi (\tau_{1,\pm}(x))) = \pm \infty \]
for $e_{\Lambda,\Lambda',j}^{\lambda,\lambda',\pm} \neq \infty$ we deduce that
$Q_{0}, \hdots, \varphi^{k}(Q_{0}) \in
H_{j,2}^{\lambda} \subset H_{\Lambda,j}^{\lambda} \cap H_{\Lambda',j}^{\lambda'}$.
\subsubsection{Case $R_{\Lambda,\Lambda',j}^{\lambda,\lambda'}=1$}
\label{subsec:rj1}
First we introduce the setup that we use for $R_{\Lambda,\Lambda',j}^{\lambda,\lambda'}=1$. Then we
discuss the topological properties that allow to adapt the proof for the case
$R_{\Lambda,\Lambda',j}^{\lambda,\lambda'}=2$
to the new setting. Such properties are key to prove the results in
subsections \ref{subsec:extfatcor} and \ref{subsec:asyfatcor}.

{\bf Step 1.} Let us introduce some definitions.
If $Ex_{\Lambda,\Lambda',j}^{\lambda,\lambda'}= \emptyset$
we have $H_{\Lambda,j}^{\lambda}=H_{\Lambda',j}^{\lambda'}$ and
$\tilde{\psi}_{j,\lambda}^{\varphi} \equiv  \tilde{\psi}_{j,\lambda'}^{\varphi}$.
Otherwise we denote ${\mathcal E}={\mathcal E}_{+}={\mathcal E}_{-}$ and
$e_{\Lambda,\Lambda',j}^{\lambda,\lambda'}=e_{\Lambda,\Lambda',j}^{\lambda,\lambda',+}=e_{\Lambda,\Lambda',j}^{\lambda,\lambda',-}$
(see def. \ref{def:exexr}). Denote $\aleph = \aleph_{\Lambda,\lambda}$. We have
\[ {\mathcal E}_{\beta} = {\mathcal E}=
\{(x,t) \in B(0,\delta) \times {\mathbb C} :  \eta \geq |t| \geq \rho |x| \} . \]
Denote ${\mathcal C}={\mathcal C}_{\beta}$. We define
\[ {\mathcal E}'= \{(x,t) \in B(0,\delta) \times {\mathbb C} :  \eta \geq |t| > (2-1/4) \rho |x| \} .\]
We define $\rho^{0} = 2 \rho$, $\rho^{4}= (2-1/4) \rho$,
$H_{j,0}^{\lambda,\lambda'}=H_{\Lambda,\Lambda',j}^{\lambda,\lambda'}$ and
\[ H_{j,4}^{\lambda, \lambda'} = [(H_{\Lambda,j}^{\lambda})^{L_{j}} \cap
({\mathcal E}' \cup \cup_{{\mathcal B} \in En_{\Lambda,\Lambda',j}^{\lambda,\lambda'}} {\mathcal B})]
\cap \{ x \in [0,\delta) I_{\Lambda,\Lambda'}^{\lambda,\lambda'} \}  . \]
We have
$H_{\Lambda,\Lambda',j}^{\lambda,\lambda'} \subset H_{j,4}^{\lambda, \lambda'} \subset H_{\Lambda,j}^{\lambda} \cap H_{\Lambda',j}^{\lambda'}$.
Denote $\Gamma_{x}=\Gamma(\aleph_{\Lambda,\lambda}X ,T_{i X}^{\epsilon,j}(x),T_{0})$.
Let $x \in (0,\delta) I_{\Lambda,\Lambda'}^{\lambda,\lambda'}$. The boundary of $H_{j,k}^{\lambda,\lambda'}(x)$
is composed of a piece of trajectory $\Gamma_{x}[s_{k}^{-}(x),s_{k}^{+}(x)]$ of $\Gamma_{x}$
and a closed arc $arc_{k}(x) \subset \{ |t| =   \rho^{k} |x| \}$ for $k \in \{0,4\}$.
The section $\tau_{k}^{l}(r,\tilde{\lambda})=\Gamma_{x}(s_{k}^{l}(r,\tilde{\lambda}))$ is asymptotically
continuous in $[0,\delta) \times I_{\Lambda,\Lambda'}^{\lambda,\lambda'}$ for $k \in \{0,4\}$ and $l \in \{-,+\}$.
Moreover $\tau_{0}^{-}(0,\lambda_{0})$ and $\tau_{4}^{-}(0,\lambda_{0})$
belong to an element $\gamma_{-,\lambda_{0}}$
of $Tr_{\to \infty}(\aleph_{\mathcal E} X_{\beta}(\lambda_{0}))$ whereas
$\tau_{0}^{+}(0,\lambda_{0})$ and $\tau_{4}^{+}(0,\lambda_{0})$
belong to an element $\gamma_{+,\lambda_{0}}$
of $Tr_{\leftarrow \infty}(\aleph_{\mathcal E} X_{\beta}(\lambda_{0}))$
for any $\lambda_{0} \in  I_{\Lambda,\Lambda'}^{\lambda,\lambda'}$.
The functions
$arc_{0}(r,\tilde{\lambda})$ and $arc_{4}(r,\tilde{\lambda})$ defined in $(0,\delta) \times I_{\Lambda,\Lambda'}^{\lambda,\lambda'}$
admit a continuous extension to $[0,\delta) \times I_{\Lambda,\Lambda'}^{\lambda,\lambda'}$ (we are considering
the Hausdorff topology for compact sets). The extrema of $arc_{k}(0,\lambda_{0})$
are the points $\tau_{k}^{-}(0,\lambda_{0})$ and $\tau_{k}^{+}(0,\lambda_{0})$.
We define
\[ \gamma_{+,\lambda_{0}}^{k} = \gamma_{+,\lambda_{0}} \cap ({\mathbb C} \setminus B(0,\rho^{k})) \
{\rm and} \ \gamma_{-,\lambda_{0}}^{k} = \gamma_{-,\lambda_{0}} \cap ({\mathbb C} \setminus B(0,\rho^{k})) . \]
Let $\tilde{H}_{j,k}^{\lambda,\lambda'}(0,{\lambda}_{0})$ be
the connected component of
${\mathbb C} \setminus [\gamma_{+,\lambda_{0}}^{k} \cup \gamma_{-,\lambda_{0}}^{k} \cup arc_{k}(0,\lambda_{0})]$
not containing $0$. We denote
$\psi_{\lambda_{0}}^{\mathcal C}$ the Fatou coordinate of $X_{\beta}(\lambda_{0})$
defined in the neighborhood of $\overline{\tilde{H}_{j,k}^{\lambda,\lambda'}(0,\lambda_{0})}$
such that $\psi_{\lambda_{0}}^{\mathcal C}(\infty)=0$. Denote $\psi = \psi_{H_{\Lambda,j}^{\lambda},L_{j}}^{X}$.
\begin{figure}[h]
\begin{center}
\includegraphics[height=6cm,width=12cm]{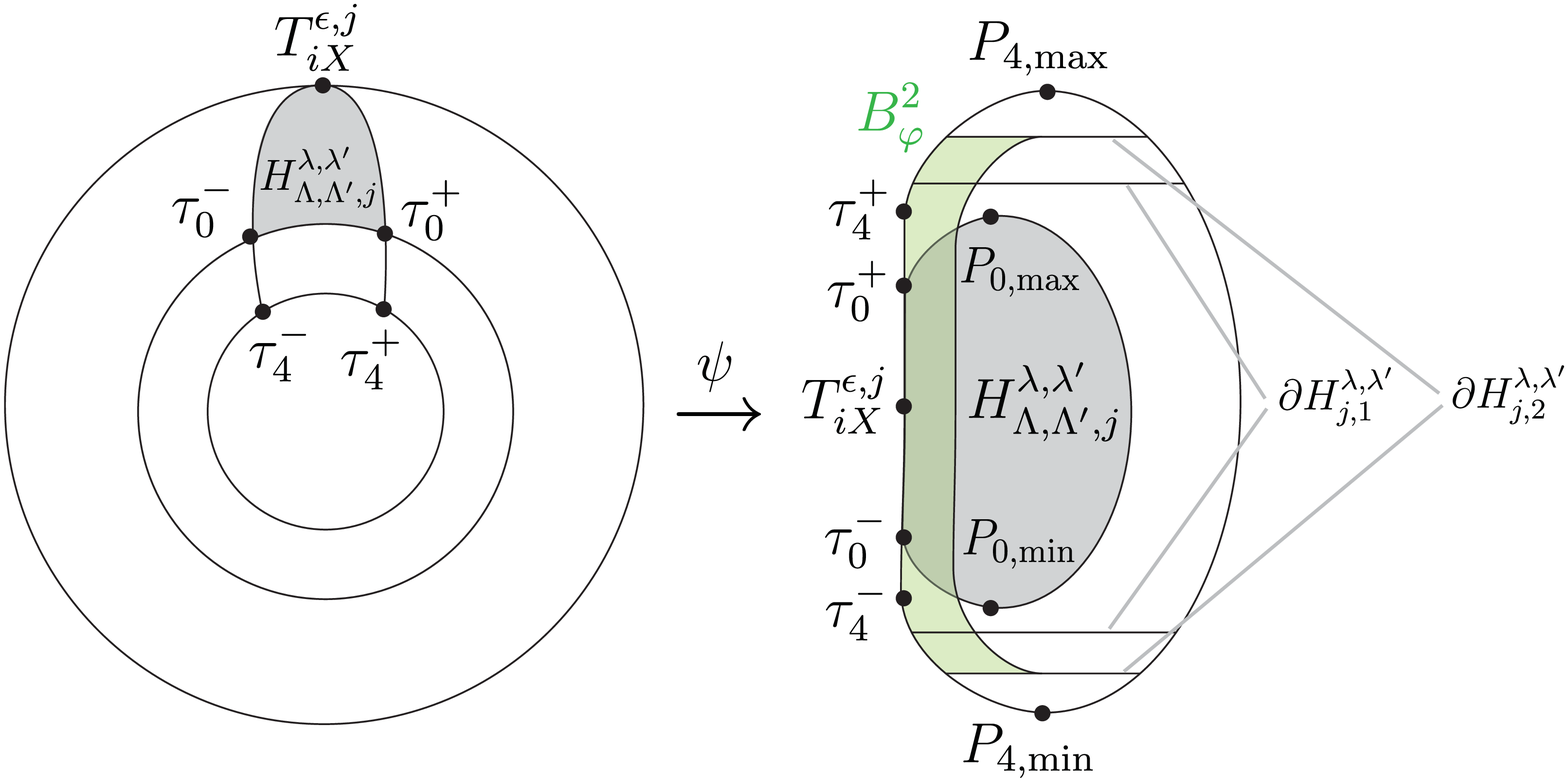}
\end{center}
\caption{Case $R_{\Lambda,\Lambda',j}^{\lambda,\lambda'}=1$}
\label{EVf7}
\end{figure}

{\bf Step 2.}
We present the main properties of the sets $H_{j,0}^{\lambda,\lambda'}$ and
$H_{j,k}^{\lambda,\lambda'}$. Our goal is showing that the qualitative shape of
$H_{\Lambda,\Lambda',j}^{\lambda,\lambda'}$ is as in figure (\ref{EVf7}).
Suppose that $\Re(X)$ points towards $B(0,\delta) \times B(0,\epsilon)$ at $T_{iX}^{\epsilon,j}(0)$
without lack of generality.
\begin{lem}
Let $X \in \Xt$. Consider $\Lambda, \Lambda' \in {\mathcal M}$,
$\lambda, \lambda' \in {\mathbb S}^{1}$ and $j \in {\mathbb Z}/(2 \nu({\mathcal E}_{0}) {\mathbb Z})$.
Assume $R_{\Lambda,\Lambda',j}^{\lambda,\lambda'}=1$.
Then $H_{j,k}^{\lambda,\lambda'}(x)$ is $\Re(X)$-convex for all
$k \in \{0,4\}$ and $x \in [0,\delta) I_{\Lambda,\Lambda'}^{\lambda,\lambda'}$.
\end{lem}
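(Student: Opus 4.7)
The plan is to invoke lemma \ref{lem:transv} by verifying that $\Re(X)$ is almost transversal to $\partial H_{j,k}^{\lambda,\lambda'}(x)$ at every non-singular boundary point. By the description of the boundary in Step 1, we split the check along the two pieces $\Gamma_x[s_k^-(x),s_k^+(x)]$ and $arc_k(x)$. On the trajectory piece, the tangent direction to the boundary is $\aleph^* X$, and because $\aleph^*(P)\in e^{i(0,\pi)}$ is never real, $\Re(X)$ and $\Re(\aleph X)$ are nowhere parallel; hence $\Re(X)$ is strictly transversal to $\Gamma_x$, which gives almost transversality trivially.

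For the arc $arc_k(x)\subset\{|t|=\rho^k|x|\}$ I would pass to the blow-up coordinates $w=t/x$, so the arc lies on the Euclidean circle $\{|w|=\rho^k\}$, sitting strictly inside $\mathcal E\setminus\overline{\mathcal C}_{\beta}$ (since $\rho^k\geq(2-1/4)\rho>\rho$). At a point where $\Re(X)$ is not tangent to this circle, transversality is immediate. At a tangent point $Q$ the germ of $\Re(X)$-trajectory through $Q$ stays entirely in one of the two half-neighborhoods $\{|w|<\rho^k\}$ or $\{|w|>\rho^k\}$, and the claim to be proved is that this side is always $\{|w|<\rho^k\}$, which is disjoint from $H_{j,k}^{\lambda,\lambda'}(x)\subset\{|w|>\rho^k\}$; then $\Re(X)$-trajectories exit the closure of the region on both sides of $Q$ and almost transversality holds.

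The orientation claim is proved first at $x=0$, where $\lambda_0^{e(\mathcal E)}X_{\mathcal E}$ degenerates to the polynomial $X_\beta(\lambda_0)$ and $arc_k(0,\lambda_0)$ is bounded by the endpoints $\tau_k^\pm(0,\lambda_0)$ of the trajectories $\gamma_\pm^k$ of $\Re(\aleph_{\mathcal E}X_\beta(\lambda_0))$ flowing to/from $\infty$. By the alternation property of lemma \ref{lem:tgpt30}, transported from the interior boundary $\{|w|=\rho\}$ of $\mathcal C$ to the slightly larger circle $\{|w|=\rho^k\}$ via continuity of tangent loci, the tangent points of $\Re(X_\beta(\lambda_0))$ and of $\Re(\aleph_{\mathcal E}X_\beta(\lambda_0))$ alternate around the circle. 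The asymptotic directions of $\gamma_\pm^k$ at $\infty$ are attracting/repelling directions of $\Re(\aleph_{\mathcal E}X_\beta(\lambda_0))$ and, since $\aleph_{\mathcal E}\in e^{i(0,\pi)}$ has positive imaginary part, they fall at the interfaces between adjacent Fatou petals of $\Re(X_\beta(\lambda_0))$ at $\infty$, i.e.\ at the local-maximum tangent points of $|w|$ along $\Re(X_\beta(\lambda_0))$-trajectories. This forces any tangent point of $\Re(X_\beta(\lambda_0))$ lying in the interior of $arc_k(0,\lambda_0)$ to be of the same local-maximum type, so the trajectory stays in $\{|w|<\rho^k\}$ locally. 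Continuity in $(r,\lambda_0)\in[0,\delta)\times I_{\Lambda,\Lambda'}^{\lambda,\lambda'}$ (the same continuity arguments used in prop.\ \ref{pro:hit} and cor.\ \ref{cor:intbou}) extends the conclusion to all $x\in[0,\delta)I_{\Lambda,\Lambda'}^{\lambda,\lambda'}$.

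The main obstacle is this orientation identification at tangent points on the arc: one has to check, at the level of the polynomial $X_\beta(\lambda_0)$, that the endpoints $\tau_k^\pm(0,\lambda_0)$ sit on the correct side of the Fatou-petal partition so that the arc sweeps only across local-maximum tangent directions of $|w|$. The ingredients — the alternation lemma \ref{lem:tgpt30}, the bound on the angular span of $\Re(\aleph X)$-trajectories in parabolic exterior sets (prop.\ \ref{pro:estext}), and the upper-half-plane constraint on $\aleph_{\mathcal E}$ — are all already available, and together they make the verification a local Fatou-flower computation rather than a genuinely new difficulty.
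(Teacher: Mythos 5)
Your overall strategy --- verify almost transversality of $\Re(X)$ at every nonsingular boundary point and invoke lemma \ref{lem:transv} --- is exactly the paper's, and the two smooth pieces you check are fine. For the interior of the $\Gamma_x$ segment the transversality argument from $\aleph^*\in e^{i(0,\pi)}$ is correct. For the interior tangent points of the arc, however, you do considerably more work than necessary and identify the wrong ``main obstacle'': lemma \ref{lem:tgpt30} is stated for all $\mu\in\mathbb S^1$, in particular $\mu=1$, so the tangent points of $\Re(X)$ on $\{|w|=\rho^k\}$ are \emph{all} convex by direct citation --- no orientation argument at infinity, no Fatou-petal bookkeeping, no appeal to the asymptotic directions of $\gamma_\pm^k$ is required. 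Convexity of those tangent points already says the germ of the $\Re(X)$-trajectory stays in $\{|w|\le\rho^k\}$, hence leaves $\overline{H_{j,k}^{\lambda,\lambda'}(x)}$ in both time directions.

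The genuine gap is at the corner points $\tau_k^\pm(x)$. These lie in $T\mathcal C_{\aleph_{\Lambda,\lambda}X}^{\rho^k}(x)$, i.e.\ they are tangencies of the \emph{multi-transversal} flow, not of $\Re(X)$; so $\Re(X)$ is indeed transversal to the circle there, but near $\tau_k^\pm$ the boundary of $H_{j,k}^{\lambda,\lambda'}(x)$ is \emph{not} the circle --- it is a corner where the arc piece meets the $\Gamma_x$ piece. Transversality of $\Re(X)$ to each smooth branch does not by itself give almost transversality to their union at the corner: in the Fatou coordinate one must rule out that both $\psi(\exp(sX)(\tau_k^\pm))$, $s>0$ and $s<0$, fall inside $\psi(\overline{H_{j,k}^{\lambda,\lambda'}(x)})$. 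The paper closes this by observing that $\psi(arc_k\setminus\{\tau_k^\pm\})$ lies to the right of $\psi(\Gamma_x)$ near $\psi(\tau_k^\pm)$ (this uses the convexity of $T\mathcal C_{\aleph X}^{\rho^k}$ from lemma \ref{lem:tgpt30}), so $\psi(\overline{H_{j,k}^{\lambda,\lambda'}(x)})$ contains nothing to the left of $\psi(\Gamma_x)$ there, while $\psi(\exp(sX)(\tau_k^\pm))$ moves to the left for $s<0$; hence $\exp(sX)(\tau_k^\pm)\notin\overline{H_{j,k}^{\lambda,\lambda'}(x)}$ for small $s<0$. Your proposal silently lumps $\tau_k^\pm$ into the case ``$\Re(X)$ not tangent to the circle'' and thereby reads transversality to the circle as transversality to the region's boundary, which is exactly what fails at a corner. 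This corner check is the actual content of the paper's proof, and it is missing from yours.
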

\begin{proof}
If $Ex_{\Lambda,\Lambda',j}^{\lambda,\lambda'}= \emptyset$ or $x=0$ the vector field $\Re (X)$
is transversal and then almost transversal to $\partial H_{j,k}^{\lambda,\lambda'}(x) \setminus Sing X$.
Thus we can suppose $Ex_{\Lambda,\Lambda',j}^{\lambda,\lambda'} \neq \emptyset$ and $x \neq 0$
by lemma \ref{lem:transv}.
Suppose that $\Re(X)$ points towards $B(0,\delta) \times B(0,\epsilon)$ at $T_{iX}^{\epsilon,j}(0)$
without lack of generality.

The vector field $\Re(X)$ is transversal to
$\partial H_{j,k}^{\lambda,\lambda'}(x) \setminus (\{ \tau_{k}^{+}(x), \tau_{k}^{-}(x)\}
\cup T{\mathcal C}_{X}^{\rho^{k}}(x))$
for $k \in \{0,4\}$ and $x \in (0,\delta) I_{\Lambda,\Lambda'}^{\lambda,\lambda'}$.
Moreover $\Re(X)$ is almost transversal to $\partial H_{j,k}^{\lambda,\lambda'}(x)$
at the points in $T{\mathcal C}_{X}^{\rho^{k}}(x)$ since they are convex.
The curve $\Gamma_{x}$ is transversal to the set
$\{ |t| =   \rho^{k} |x| \}$ at $\tau_{k}^{+}(x)$ and $\tau_{k}^{-}(x)$ for
$k \in \{0,4\}$ and $x \in (0,\delta) I_{\Lambda,\Lambda'}^{\lambda,\lambda'}$ since the points of
$T{\mathcal C}_{\aleph_{\Lambda,\lambda}X}^{\rho^{k}}(x)$ are convex (lemma \ref{lem:tgpt30}).
Moreover the curve $\psi(arc_{k}(x) \setminus \tau_{k}^{\pm}(x))$ is to the right of $\psi(\Gamma_{x})$
in the neighborhood of $\psi(\tau_{k}^{\pm}(x))$.
Thus a neighborhood of $\psi(\tau_{k}^{\pm}(x))$ in $\psi(\overline{H_{j,k}^{\lambda,\lambda'}(x)})$
does not contain points to the left of $\psi(\Gamma_{x})$.
The point $\psi({\rm exp}(s X)(\tau_{k}^{\pm}(x)))$ is to the left of $\psi(\Gamma_{x})$
and then ${\rm exp}(s X)(\tau_{k}^{\pm}(x)) \not \in \overline{H_{j,k}^{\lambda,\lambda'}(x)}$
for any $s<0$ in a neighborhood of $0$. Therefore
$\Re (X)$ is almost transversal to $\partial H_{j,k}^{\lambda,\lambda'}(x)$
at both $\tau_{k}^{+}(x)$ and $\tau_{k}^{-}(x)$ for all
$k \in \{0,4\}$ and $x \in (0,\delta) I_{\Lambda,\Lambda'}^{\lambda,\lambda'}$.
We obtain that $\Re (X)$ is almost transversal to $\partial H_{j,k}^{\lambda,\lambda'}(x)$
at any of its points.
Lemma \ref{lem:transv} implies that
$H_{j,k}^{\lambda,\lambda'}(x)$ is $\Re(X)$-convex for all
$k \in \{0,4\}$ and $x \in (0,\delta) I_{\Lambda,\Lambda'}^{\lambda,\lambda'}$.
\end{proof}
The set $\overline{H_{j,k}^{\lambda,\lambda'}(x)}$ does not contain
pieces of trajectories of $Re(X)$ for all
$k \in \{0,4\}$ and $x \in (0,\delta) I_{\Lambda,\Lambda'}^{\lambda,\lambda'}$.
Hence the maximum (resp. the minimum) of $Im(\psi)$ in
$\overline{H_{j,k}^{\lambda,\lambda'}(x)}$
is attained at a unique point $P_{k,\max}(x)$ (resp. $P_{k,\min}(x)$) by corollary \ref{cor:flconcl}.
Since $\Gamma_{x}$ is transversal to $\Re (X)$ the points
$P_{k,\max}(x)$ and $P_{k,\min}(x)$ belong to $\{|t|=\rho^{k}|x|\}$.
The points in $T{\mathcal C}_{X}^{\rho^{k}}(x)$ are convex and then they can not
be accessed by trajectories of $\Re (X)$ lying outside $\{|t| \leq \rho^{k}|x|\}$. We deduce that
\[ \overline{H_{j,k}^{\lambda,\lambda'}(x)} \cap T{\mathcal C}_{X}^{\rho^{k}}(x)
\subset \{P_{k,\min}(x), P_{k,\max}(x) \}
\ \forall k \in \{0,4\} \ \forall x \in (0,\delta) I_{\Lambda,\Lambda'}^{\lambda,\lambda'}. \]
We proceed in an analogous way with $\Re (X_{\beta}(\lambda_{0}))$ and
$\tilde{H}_{j,k}^{\lambda,\lambda'}(0,\lambda_{0})$ for $\lambda_{0} \in I_{\Lambda,\Lambda'}^{\lambda,\lambda'}$.
There exists a unique point $P_{k,\max}(0,\lambda_{0})$ (resp. $P_{k,\max}(0,\lambda_{0})$)
where is attained the maximum (resp. the minimum) of $Im(\psi_{\lambda_{0}}^{\mathcal C})$
in $\overline{\tilde{H}_{j,k}^{\lambda,\lambda'}(0,\lambda_{0})}$. We obtain
\[ \{P_{k,\min}(0,\lambda_{0}), P_{k,\max}(0,\lambda_{0}) \} \subset \partial B(0,\rho^{k}). \]
Moreover $\Re (X_{\beta}(\lambda_{0}))$ is transversal to
$arc_{k}(0,\lambda_{0})$ except maybe at $P_{k,\min}(0,\lambda_{0})$ and $P_{k,\max}(0,\lambda_{0})$.
The sections $P_{k,\min}$ and $P_{k,\max}$ are asymptotically continuous.
Given $k \in \{0,4\}$ and $l \in \{\min, \max\}$ we define the functions
\[ F_{k,l}(r,\lambda_{0}) = Im(|r|^{\iota({\mathcal E})} \psi(P_{k,l}(r,\lambda_{0}))) \]
for $(r,\lambda_{0}) \in (0,\delta) \times I_{\Lambda,\Lambda'}^{\lambda,\lambda'}$. Moreover they extend
continuously to $[0,\delta) \times I_{\Lambda,\Lambda'}^{\lambda,\lambda'}$ (prop. \ref{pro:hit}) by defining
\[ F_{k,\min}(0,\lambda_{0}) = Im(\psi_{\lambda_{0}}^{\mathcal C}(P_{k,\min}(0,\lambda_{0}))) \ {\rm and} \
F_{k,\max}(0,\lambda_{0}) = Im(\psi_{\lambda_{0}}^{\mathcal C}(P_{k,\max}(0,\lambda_{0})))  \]
for $k \in \{0,4\}$ and $\lambda_{0} \in  I_{\Lambda,\Lambda'}^{\lambda,\lambda'}$. We have
\[ F_{4,\max}(0,\lambda_{0}) > F_{0,\max}(0,\lambda_{0}) \geq
Im (\psi_{\lambda_{0}}^{\mathcal C}(\tau_{0}^{+}(0,\lambda_{0}))) >0 \]
for any $\lambda_{0} \in I_{\Lambda,\Lambda'}^{\lambda,\lambda'}$.
The last inequality is a consequence of prop. \ref{pro:hit}. Analogously we obtain
\[ F_{4,\min}(0,\lambda_{0}) < F_{0,\min}(0,\lambda_{0}) \leq
Im (\psi_{\lambda_{0}}^{\mathcal C}(\tau_{0}^{-}(0,\lambda_{0}))) < 0 \]
for any $\lambda_{0} \in I_{\Lambda,\Lambda'}^{\lambda,\lambda'}$.
There exists $\zeta \in {\mathbb R}^{+}$ such that
\[ (F_{4,\max}-F_{0,\max})(0,\lambda_{0}) \geq 2 \zeta \leq (F_{0,\min}-F_{4,\min})(0,\lambda_{0})
\ \forall \lambda_{0} \in I_{\Lambda,\Lambda'}^{\lambda,\lambda'}. \]
We define $\zeta_{1}=7\zeta/4$, $\zeta_{2}= \zeta/4$, $\zeta_{3}=\zeta/8$ and
\[ H_{j,k}^{\lambda,\lambda'}(r,\lambda_{0}) =
H_{j,4}^{\lambda,\lambda'}(r,\lambda_{0}) \cap
\left\{ Im(\psi) \in \left( \frac{F_{4,\min}(0,\lambda_{0}) + \zeta_{k}}{r^{e_{\Lambda,\Lambda',j}^{\lambda,\lambda'}}}
, \frac{F_{4,\max}(0,\lambda_{0})-\zeta_{k}}{r^{e_{\Lambda,\Lambda',j}^{\lambda,\lambda'}}} \right)  \right\} \]
for $(r, \lambda_{0}) \in (0,\delta) \times I_{\Lambda,\Lambda'}^{\lambda,\lambda'}$ and $k \in \{1,2,3\}$. We have
\[ H_{\Lambda,\Lambda',j}^{\lambda,\lambda'} = H_{j,0}^{\lambda,\lambda'}
\subset H_{j,1}^{\lambda,\lambda'} \subset H_{j,2}^{\lambda,\lambda'} \subset
 H_{j,3}^{\lambda,\lambda'} \subset H_{j,4}^{\lambda,\lambda'} \subset H_{\Lambda,j}^{\lambda} \cap H_{\Lambda',j}^{\lambda'} . \]
Let $x \in (0,\delta) I_{\Lambda,\Lambda'}^{\lambda,\lambda'}$.
The set $\overline{H_{j,3}^{\lambda,\lambda'}(x)} \cap \partial H_{j,4}^{\lambda,\lambda'}(x)$
is a union of two curves, namely a curve $\varpi_{x}$ containing $T_{iX}^{\epsilon,j}(x)$
a curve $\varpi_{x}'$ contained in $arc_{4}(x)$.

There exists $\theta'>0$ such that the angle between $\Re (X_{\beta}(\lambda_{0}))$ and
$arc_{4}(0,\lambda_{0})$ at $Q$ is greater than $\theta' \in {\mathbb R}^{+}$ for any
$\lambda_{0} \in  I_{\Lambda,\Lambda'}^{\lambda,\lambda'}$ and any $Q \in arc_{4}(0,\lambda_{0})$ such that
\[ Im(\psi_{\lambda_{0}}^{\mathcal C}(Q))) \in
[F_{4,\min}(0,\lambda_{0}) + \zeta/8, F_{4,\max}(0,\lambda_{0})-\zeta/8] . \]
Thus the angle between $\Re (X)$ and
$arc_{4}(x)$ at $Q$ is greater than $\theta'$ for any
$x$ in  $(0,\delta) I_{\Lambda,\Lambda'}^{\lambda,\lambda'}$
and any $Q \in arc_{4}(x) \cap (\varpi_{x} \cup \varpi_{x}')$.
The angle between $\Re (X)$ and $\Gamma_{x}$ at any point $Q \in \Gamma_{x}$
is bounded by below by a positive constant not depending on
$x \in (0,\delta) I_{\Lambda,\Lambda'}^{\lambda,\lambda'}$ or $Q$.
Thus the angle between $\Re (X)$ and $\varpi_{x} \cup \varpi_{x}'$ at any of its points
is greater than $\theta'' \in {\mathbb R}^{+}$ for any $x \in (0,\delta) I_{\Lambda,\Lambda'}^{\lambda,\lambda'}$.
Analogously as in Step 3 of subsection \ref{subsec:defatcor}
we can prove that $\varphi(\varpi_{x}) \cup \varphi^{-1}(\varpi_{x}')$ is transversal to
$\Re(X)$ at any of its points for any $x \in (0,\delta) I_{\Lambda,\Lambda'}^{\lambda,\lambda'}$.

{\bf Step 3.}
Let $x \in (0,\delta) I_{\Lambda,\Lambda'}^{\lambda,\lambda'}$. Let
$B_{\varphi}^{2}(T_{i X}^{\epsilon,j}(x))$ be the closure of the bounded connected component
of the complementary of
$\varpi_{x} \cup \varphi(\varpi_{x}) \cup (\partial H_{j,2}^{\lambda,\lambda'}(x) \setminus \varpi_{x}')$.
As in subsection \ref{subsec:rj2} we obtain that
given $Q \in H_{j,1}^{\lambda,\lambda'}(x)$ there exist
$Q_{0} \in B_{\varphi}^{2}(T_{i X}^{\epsilon,j}(x))$ and $k \geq 0$ such that
$Q_{0}, \hdots, \varphi^{k}(Q_{0}) \in H_{j,2}^{\lambda,\lambda'}$
and $Q=\varphi^{k}(Q_{0})$.
We can proceed as in subsection \ref{subsec:rj2} to prove prop. \ref{pro:flasp}.
\subsection{Flatness properties of Fatou coordinates}
\label{subsec:comfatcor}
Let $\varphi \in \diff{tp1}{2}$. Let $\Upsilon={\rm exp}(X)$ be a $2$-convergent normal form.
Consider $\Lambda=(\lambda_{1}, \hdots, \lambda_{\tilde{q}}) \in {\mathcal M}$ and
the dynamical splitting $\digamma_{\Lambda}$ in remark \ref{rem:unifspl}.

We want to prove that
$\tilde{\psi}_{j,\Lambda,\lambda}^{\varphi} - \tilde{\psi}_{j,\Lambda',\lambda'}^{\varphi}$
is exponentially small and then flat up to an additive function of $x$.
We use two ingredients, namely the boundness of
$\tilde{\psi}_{j,\Lambda,\lambda}^{\varphi} -  \psi_{H_{\Lambda,j}^{\lambda}, L_{j}}^{X}$
(subsection \ref{subsec:defatcor}) and the study of the shape of
$H_{\Lambda,\Lambda',j}^{\lambda,\lambda'}$
provided by prop. \ref{pro:flasp}.
Flatness is the key property to prove multi-summability of
Fatou coordinates of elements of $\diff{p1}{2}$.
\begin{defi}
Let $\lambda \in {\mathbb S}^{1}$ and $j \in {\mathcal D}(\varphi)$.
Denote $(0,y_{0})= T_{iX}^{\epsilon, j}(0)$. We define
\[ \ddot{\psi}_{j,\Lambda,\lambda}^{\varphi}(x,y) = \tilde{\psi}_{j,\Lambda,\lambda}^{\varphi}(x,y) -
\tilde{\psi}_{j,\Lambda,\lambda}^{\varphi}(x,y_{0}) \ \
{\rm for} \ (x,y) \in H_{\Lambda,j}^{\lambda} . \]
The function $\ddot{\psi}_{j,\Lambda,\lambda}^{\varphi}$ is a Fatou coordinate of $\varphi$ in $H_{\Lambda,j}^{\lambda}$
such that $\ddot{\psi}_{j,\Lambda,\lambda}^{\varphi}(x,y_{0}) \equiv 0$.
We denote $\ddot{\psi}_{j,\lambda}^{\varphi} = \ddot{\psi}_{j,\Lambda,\lambda}^{\varphi}$ if $\Lambda$ is implicit.
\end{defi}
Let us remark that the Fatou coordinate $\tilde{\psi}_{j,\lambda}^{\varphi}$ can be extended to a neighborhood of
$([0,\delta) I_{\Lambda}^{\lambda}) \times \{y_{0}\}$ by using the equation
$\tilde{\psi}_{j,\lambda}^{\varphi} \circ \varphi \equiv \tilde{\psi}_{j,\lambda}^{\varphi} + 1$.

We have the normalizing conditions
\[ \ddot{\psi}_{j,\lambda}^{\varphi}(x,y_{0}) \equiv 0  \ {\rm and} \
(\tilde{\psi}_{j,\lambda}^{\varphi} - \psi_{H_{\Lambda,j}^{\lambda}, L_{j}}^{X})
(\omega^{\aleph_{\Lambda, \lambda} X} (H_{\Lambda,j}^{\lambda}(x))) \equiv 0 . \]
The latter one is not a good choice to compare $\tilde{\psi}_{j,\lambda}^{\varphi}$
and $\tilde{\psi}_{j,\lambda'}^{\varphi}$ since for example we could have
$\omega^{\aleph_{\Lambda, \lambda} X} (H_{\Lambda,j}^{\lambda}(x))) \neq
\omega^{\aleph_{\Lambda',\lambda'} X} (H_{\Lambda',j}^{\lambda'}(x)))$
for any $x \in (0,\delta) I_{\Lambda,\Lambda'}^{\lambda,\lambda'}$.

In the next proposition we denote $e^{-K/|x|^{\infty}} \equiv 0$ by convention.
\begin{pro}
\label{pro:difFatflat}
Let $\varphi \in \diff{tp1}{2}$ with $2$-convergent normal form ${\rm exp}(X)$.
Let $\Lambda, \Lambda' \in {\mathcal M}$. Consider
$\lambda, \lambda' \in {\mathbb S}^{1}$ and $j \in {\mathcal D}(\varphi)$.
Then there exists $K \in {\mathbb R}^{+}$ such that
\[ |\ddot{\psi}_{j,\Lambda,\lambda}^{\varphi} - \ddot{\psi}_{j,\Lambda',\lambda'}^{\varphi}|(x,y) \leq
\frac{e^{-K/|x|^{e_{\Lambda,\Lambda',j}^{\lambda,\lambda',-}}}}{2} +    \frac{e^{-K/|x|^{e_{\Lambda,\Lambda',j}^{\lambda,\lambda',+}}}}{2}
\leq  e^{-K/|x|^{\tilde{e}_{d_{\Lambda,\Lambda'}^{\lambda,\lambda'}+1}}}  \]
for any $(x,y) \in H_{\Lambda,\Lambda',j}^{\lambda,\lambda'}$.
\end{pro}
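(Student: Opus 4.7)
The plan is to exploit the $\varphi$-invariance of the difference $h := \tilde{\psi}_{j,\Lambda,\lambda}^{\varphi} - \tilde{\psi}_{j,\Lambda',\lambda'}^{\varphi}$ and descend it through the chart $z = e^{2\pi i \tilde{\psi}_{j,\Lambda,\lambda}^{\varphi}}$ to the orbit space, where Proposition \ref{pro:flasp} furnishes a wide annulus on which the resulting $\psi_{j,\lambda,\lambda'}^{\varphi}$ is holomorphic for $x \neq 0$, while the image of $H_{\Lambda,\Lambda',j}^{\lambda,\lambda'}$ is confined to a strictly narrower annulus with logarithmically separated radii. Concretely, the image lies in $\{\kappa_{+}(x) \leq |z| \leq \kappa_{-}(x)\}$ with $\kappa_{\pm} = e^{\mp C_{\pm}/|x|^{e_{\pm}}}$, while holomorphicity extends to $\{\tilde{\kappa}_{+}(x) < |z| < \tilde{\kappa}_{-}(x)\}$ with $\tilde{\kappa}_{\pm} = e^{\mp (C_{\pm}+\zeta)/|x|^{e_{\pm}}}$; this buffer of size $e^{\zeta/|x|^{e_{\pm}}}$ is exactly what will give the exponential flatness via annular Cauchy estimates.

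Since both Fatou coordinates increase by $1$ under $\varphi$, the function $h$ is $\varphi$-invariant. I would write $h = (\tilde{\psi}_{j,\Lambda,\lambda}^{\varphi} - \psi_{H_{\Lambda,j}^{\lambda},L_j}^{X}) - (\tilde{\psi}_{j,\Lambda',\lambda'}^{\varphi} - \psi_{H_{\Lambda',j}^{\lambda'},L_j}^{X}) + g(x)$, where $g(x)$ is the difference of the two $X$-Fatou coordinates and is a first integral of $X$ hence a function of $x$ alone. By Proposition \ref{pro:bddcon} the first two parentheses are uniformly bounded by a constant $M$ on their respective closed regions. Moreover the normalization $\ddot{\psi}^{\varphi}(x,y_0)=0$ gives
\[
\ddot{\psi}_{j,\Lambda,\lambda}^{\varphi}(x,y) - \ddot{\psi}_{j,\Lambda',\lambda'}^{\varphi}(x,y) = h(x,y) - h(x,y_0) = \psi_{j,\lambda,\lambda'}^{\varphi}(x,z(y)) - \psi_{j,\lambda,\lambda'}^{\varphi}(x,z(y_0)),
\]
which cancels both $g(x)$ and the Laurent coefficient $a_0(x)$ below. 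It also guarantees that the uncontrolled additive function of $x$ never enters the estimate.

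Next I would apply the Laurent development $\psi_{j,\lambda,\lambda'}^{\varphi}(x,z) - g(x) = \sum_{n \in \mathbb{Z}} a_{n}(x) z^{n}$ on the wide annulus. Cauchy's formula along circles with radii tending to $\tilde{\kappa}_{\mp}(x)$ yields $|a_{n}(x)| \leq 2M\, \tilde{\kappa}_{-}(x)^{-n}$ for $n>0$ and $|a_{n}(x)| \leq 2M\, \tilde{\kappa}_{+}(x)^{-n}$ for $n<0$. For the evaluation points $z(y), z(y_0) \in [\kappa_{+}(x),\kappa_{-}(x)]$ one has $|z|/\tilde{\kappa}_{-}(x) \leq e^{-\zeta/|x|^{e_{-}}}$ and $\tilde{\kappa}_{+}(x)/|z| \leq e^{-\zeta/|x|^{e_{+}}}$, so each term $|a_{n}(x)(z(y)^{n} - z(y_0)^{n})|$ is bounded by $4M$ times the appropriate exponential; summing the two geometric series gives, for $|x|$ small,
\[
|\ddot{\psi}_{j,\Lambda,\lambda}^{\varphi} - \ddot{\psi}_{j,\Lambda',\lambda'}^{\varphi}|(x,y) \leq C\bigl(e^{-\zeta/|x|^{e_{\Lambda,\Lambda',j}^{\lambda,\lambda',-}}} + e^{-\zeta/|x|^{e_{\Lambda,\Lambda',j}^{\lambda,\lambda',+}}}\bigr).
\]
Any positive $K < \zeta$ then yields the first inequality for $|x|$ sufficiently small. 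The second inequality is immediate since every ${\mathcal B} \in Ex_{\Lambda,\Lambda',j}^{\lambda,\lambda'}$ satisfies $\iota({\mathcal B}) \geq \tilde{e}_{d_{\Lambda,\Lambda'}^{\lambda,\lambda'}+1}$ by definition of $Ex$, and $e^{-K/|x|^{e}}$ is monotone decreasing in $e$ for $|x|<1$.

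The delicate step is verifying that $\psi_{j,\lambda,\lambda'}^{\varphi} - g(x)$ is \emph{uniformly} bounded on the full wide annulus, not merely on the image of $H_{\Lambda,\Lambda',j}^{\lambda,\lambda'}$. The uniform bound is automatic on the narrow image by Proposition \ref{pro:bddcon}, but the rest of the wide annulus is reached only by iterating $\varphi$, and one must check that those iterates remain in $H_{\Lambda,j}^{\lambda} \cap H_{\Lambda',j}^{\lambda'}$ so that both Fatou coordinates (and their controlled differences with the $X$-Fatou coordinates) continue to apply. This is exactly what the construction of $H_{j,1}^{\lambda,\lambda'}$ versus $H_{j,2}^{\lambda,\lambda'}$ in Subsections \ref{subsec:rj1} and \ref{subsec:rj2} provides: the $\varphi$-orbit of any point of $H_{\Lambda,\Lambda',j}^{\lambda,\lambda'}$ needed to fill up the wide annulus lies in the larger buffered region where both Fatou data are defined and satisfy the uniform bound of Proposition \ref{pro:bddcon}.
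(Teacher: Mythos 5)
Your proposal follows the same route as the paper's proof: use Proposition \ref{pro:bddcon} to get a uniform bound for $\psi_{j,\lambda,\lambda'}^{\varphi}$ on the wide annulus furnished by Proposition \ref{pro:flasp}, apply Laurent/Cauchy estimates on circles of radii close to $\tilde{\kappa}_{\pm}$, sum the two geometric series, and eliminate the residual function of $x$ by the normalization built into $\ddot{\psi}$ (you do this by subtracting $h(x,y_0)$ directly, while the paper isolates $a_0(x)$ and then $b_0(x)$ and evaluates at $(x,y_0)$, which is the same mechanism). The second inequality is likewise obtained exactly as you state, from $\iota({\mathcal B}) \geq \tilde{e}_{d_{\Lambda,\Lambda'}^{\lambda,\lambda'}+1}$ for ${\mathcal B} \in Ex_{\Lambda,\Lambda',j}^{\lambda,\lambda'}$.
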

\begin{proof}
The functions $\tilde{\psi}_{j,\lambda}^{\varphi} - \psi_{H_{\Lambda,j}^{\lambda}, L_{j}}^{X}$ and
$\tilde{\psi}_{j,\lambda'}^{\varphi} - \psi_{H_{\Lambda',j}^{\lambda'}, L_{j}}^{X}$ are bounded by
prop. \ref{pro:bddcon}. Thus there exists $M \in {\mathbb R}^{+}$ such that
$|\psi_{j,\lambda,\lambda'}^{\varphi}| \leq M$ in
\[ \cup_{(r,\tilde{\lambda}) \in [0,\delta) \times I_{\Lambda,\Lambda'}^{\lambda,\lambda'}} (\{r \tilde{\lambda}\} \times
[B(0,\tilde{\kappa}_{-}(r,\tilde{\lambda})) \setminus \overline{B}(0,\tilde{\kappa}_{+}(r,\tilde{\lambda}))]), \]
see prop. \ref{pro:flasp}.
We obtain
\[ \psi_{j,\lambda,\lambda'}^{\varphi}(x,z) = a_{0}(x) + \sum_{k \in {\mathbb N}} a_{k}(x) z^{k} +
\sum_{k \in {\mathbb N}} \frac{a_{-k}(x)}{z^{k}} \]
by considering the Laurent series of $\psi_{j,\lambda,\lambda'}^{\varphi}$. We have
\[ a_{k}(x) = \frac{1}{2 \pi i}
\int_{|z|=\tilde{\kappa}_{-}(x)} \frac{\psi_{j,\lambda,\lambda'}^{\varphi}(x,z)}{z^{k+1}}dz
\implies |a_{k}(x)| \leq \frac{M}{\tilde{\kappa}_{-}(x)^{k}} \]
for all $k \in {\mathbb N}$ and $x \in (0,\delta) I_{\Lambda,\Lambda'}^{\lambda,\lambda'}$. Analogously we deduce
\[ a_{-k}(x) = \frac{1}{2 \pi i}
\int_{|z|=\tilde{\kappa}_{+}(x)} \psi_{j,\lambda,\lambda'}^{\varphi}(x,z) z^{k-1} dz
\implies |a_{k}(x)| \leq M \tilde{\kappa}_{+}(x)^{k} \]
for all $k \in {\mathbb N}$ and $x \in (0,\delta) I_{\Lambda,\Lambda'}^{\lambda,\lambda'}$. We get
\[ |\tilde{\psi}_{j,\lambda}^{\varphi}(x,y) - \tilde{\psi}_{j,\lambda'}^{\varphi}(x,y) -
a_{0}(x) | \leq
M \sum_{k \in {\mathbb N}} {\left( {\frac{\kappa_{-}(x)}{\tilde{\kappa}_{-}(x)} }\right)}^{k} +
M \sum_{k \in {\mathbb N}}{\left( {\frac{\tilde{\kappa}_{+}(x)}{\kappa_{+}(x)} }\right)}^{k}   \]
in $H_{\Lambda,\Lambda',j}^{\lambda,\lambda'}$.
By plugging the values of
$\kappa_{-}$, $\tilde{\kappa}_{-}$,  $\kappa_{+}$ and $\tilde{\kappa}_{+}$
in the previous equation we obtain
\[ |\tilde{\psi}_{j,\lambda}^{\varphi}(x,y) - \tilde{\psi}_{j,\lambda'}^{\varphi}(x,y) -
a_{0}(x) | \leq
M \sum_{k \in {\mathbb N}}  e^{-k \zeta/|x|^{e_{\Lambda,\Lambda',j}^{\lambda,\lambda',-}}}     +
M \sum_{k \in {\mathbb N}}  e^{-k \zeta/|x|^{e_{\Lambda,\Lambda',j}^{\lambda,\lambda',+}}} \]
and then
\[  |\tilde{\psi}_{j,\lambda}^{\varphi}(x,y) - \tilde{\psi}_{j,\lambda'}^{\varphi}(x,y) -
a_{0}(x)  |  \leq
2M (e^{- \zeta/|x|^{e_{\Lambda,\Lambda',j}^{\lambda,\lambda',-}}}  + e^{- \zeta/|x|^{e_{\Lambda,\Lambda',j}^{\lambda,\lambda',+}}})   \]
in $H_{\Lambda,\Lambda',j}^{\lambda,\lambda'}$. Denote $(0,y_{0})= T_{i X}^{\epsilon,j}(0)$ and
$b_{0}(x)=a_{0}(x)- \tilde{\psi}_{j,\lambda}^{\varphi}(x,y_{0}) + \tilde{\psi}_{j,\lambda'}^{\varphi}(x,y_{0})$.
We have
\[  |\ddot{\psi}_{j,\lambda}^{\varphi}(x,y) - \ddot{\psi}_{j,\lambda'}^{\varphi}(x,y) - b_{0}(x)  |  \leq
2M (e^{- \zeta/|x|^{e_{\Lambda,\Lambda',j}^{\lambda,\lambda',-}}}  + e^{- \zeta/|x|^{e_{\Lambda,\Lambda',j}^{\lambda,\lambda',+}}})   \]
in $H_{\Lambda,\Lambda',j}^{\lambda,\lambda'}$. We obtain
\[  |b_{0}(x)  |  \leq
2M (e^{- \zeta/|x|^{e_{\Lambda,\Lambda',j}^{\lambda,\lambda',-}}}  + e^{- \zeta/|x|^{e_{\Lambda,\Lambda',j}^{\lambda,\lambda',+}}})   \]
for any  $x \in (0,\delta) I_{\Lambda,\Lambda'}^{\lambda,\lambda'}$ by evaluating at $(x,y_{0})$. This property implies
\[  |\ddot{\psi}_{j,\lambda}^{\varphi}(x,y) - \ddot{\psi}_{j,\lambda'}^{\varphi}(x,y)|  \leq
4M (e^{- \zeta/|x|^{e_{\Lambda,\Lambda',j}^{\lambda,\lambda',-}}}  + e^{- \zeta/|x|^{e_{\Lambda,\Lambda',j}^{\lambda,\lambda',+}}})   \]
for any $(x,y) \in H_{\Lambda,\Lambda',j}^{\lambda,\lambda'}$. It suffices to consider $K \in {\mathbb R}^{+}$ such that
$K < \zeta$.
\end{proof}
\subsection{Extending Fatou coordinates}
\label{subsec:extfatcor}
Let $\varphi \in \diff{tp1}{2}$. Let $\Upsilon={\rm exp}(X)$ be a $k$-convergent normal form.
Consider $\Lambda,\Lambda' \in {\mathcal M}$.
Let $\lambda, \lambda' \in {\mathbb S}^{1}$ and $j \in {\mathcal D}(\varphi)$.
The goal of this subsection is extending $\ddot{\psi}_{j,\Lambda,\lambda}^{\varphi}$
and $\ddot{\psi}_{j,\Lambda,\lambda}^{\varphi} - \ddot{\psi}_{j,\Lambda',\lambda'}^{\varphi}$
to domains slightly bigger
than $H_{\Lambda,j}^{\lambda}$ and $H_{\Lambda,\Lambda',j}^{\lambda,\lambda'}$,
namely
$H_{j,\theta}^{\epsilon,\rho,\lambda}$ and $H_{j,\theta}^{\epsilon,\rho,\lambda,\lambda'}$
respectively.
We intend to use such an extension to deduce properties of quasi-analytic
type for the infinitesimal generator of $\varphi$.
\begin{figure}[h]
\begin{center}
\includegraphics[height=5.5cm,width=12cm]{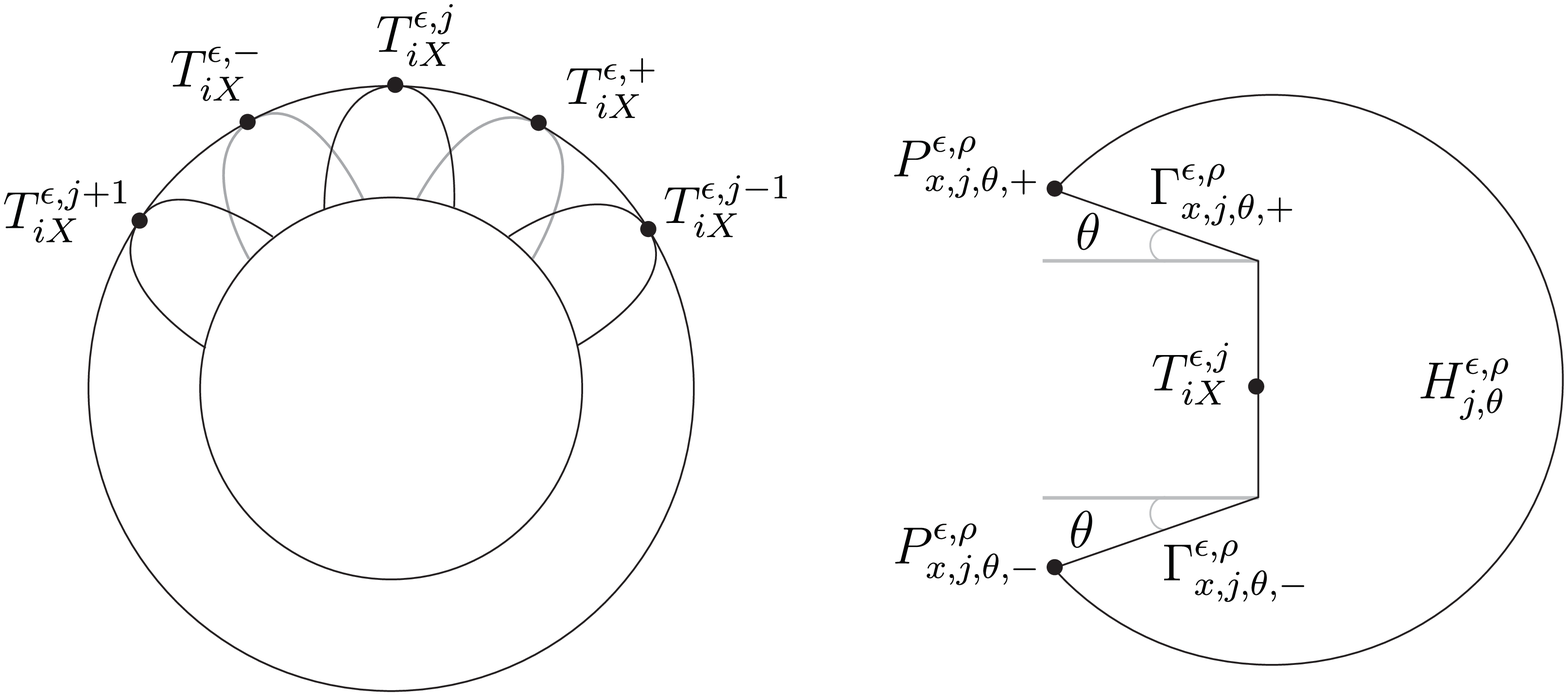}
\end{center}
\caption{Picture of $H_{j,\theta}^{\epsilon, \rho}$
in the $\psi_{L_{j}}^{X}$ coordinate on the right}
\label{EVf8}
\end{figure}

{\bf Step 1.} Fix $\theta \in (0,\pi/2]$.
We define the set $H_{j,\theta}^{\epsilon, \rho}$ where $\ddot{\psi}_{j,\lambda}^{\varphi}$
can be extended.

  Suppose without lack of generality that $\Re (X)$ points
towards $B(0,\delta) \times B(0,\epsilon)$ at $T_{i X}^{\epsilon, j}(0)$.
There exists a unique section $T_{X}^{\epsilon,+}$ of $T_{X}^{\epsilon}$ such that
$T_{X}^{\epsilon,+}(x)$ is in the arc in $\{x\} \times \partial B(0,\epsilon)$ going from
$T_{iX}^{\epsilon, j-1}(x)$ to $T_{iX}^{\epsilon,j}(x)$ in counter clockwise sense for any
$x \in B(0,\delta)$ (see def. \ref{def:lj}). Analogously we can define the section
$T_{X}^{\epsilon,-}$ of $T_{X}^{\epsilon}$ contained in the arc going from
$T_{iX}^{\epsilon, j}$ to $T_{iX}^{\epsilon,j+1}$ in counter clockwise sense.
The exterior set ${\mathcal E}_{0}$ is of the form $\{ \rho_{0} |x| \leq |y| \leq \epsilon\}$.
Given $\rho \geq 2 \rho_{0}$ we denote
${\mathcal E}_{0}^{\rho} = \{ \rho |x| \leq |y| \leq \epsilon\}$. We define
\[ \Gamma_{x,j,0}^{\epsilon,\rho} = \Gamma(iX, T_{iX}^{\epsilon,j}(x), {\mathcal E}_{0}^{\rho}), \
\Gamma_{x,j,-}^{\epsilon,\rho} = \Gamma(X, T_{X}^{\epsilon,-}(x), {\mathcal E}_{0}^{\rho}), \
\Gamma_{x,j,+}^{\epsilon,\rho} = \Gamma(X, T_{X}^{\epsilon,+}(x), {\mathcal E}_{0}^{\rho}) . \]
We define $H_{j,s}^{\epsilon,\rho}(x_{0})$ as the bounded component of
$\{ (x_{0}, y) \in {\mathbb C}^{2} : |y| > \rho |x_{0}| \} \setminus \Gamma_{x_{0},j,s}^{\epsilon,\rho}$
for $x_{0} \in B(0,\delta)$ and $s \in \{0,+,-\}$.

The sets $\Gamma_{0,j,0}^{\epsilon, \rho} \cap \Gamma_{0,j,+}^{\epsilon, \rho}$ and
$\Gamma_{0,j,0}^{\epsilon, \rho} \cap \Gamma_{0,j,-}^{\epsilon, \rho}$ are singletons.
Indeed we have
\[ \Gamma_{0,j,0}^{\epsilon, \rho} \cap \Gamma_{0,j,+}^{\epsilon, \rho} = \{ \Gamma_{0,j,0}^{\epsilon, \rho} (h_{+}) \}
\ {\rm and} \
\Gamma_{0,j,0}^{\epsilon, \rho} \cap \Gamma_{0,j,-}^{\epsilon, \rho} = \{ \Gamma_{0,j,0}^{\epsilon, \rho} (-h_{-}) \} \]
for some $h_{-}, h_{+} \in {\mathbb R}^{+}$.  Consider $M_{0} > \max (h_{-}, h_{+})$. Hence the point
$\Gamma_{0,j,0}^{\epsilon, \rho} (-M_{0})$ belongs to $H_{j,-}^{\epsilon,\rho}(0)$ whereas
$\Gamma_{0,j,0}^{\epsilon, \rho} (M_{0})$ belongs to $H_{j,+}^{\epsilon,\rho}(0)$. We obtain
\[ \Gamma_{x,j,0}^{\epsilon, \rho} (-M_{0}) \in H_{j,-}^{\epsilon,\rho}(x) \ {\rm and} \
\Gamma_{x,j,0}^{\epsilon, \rho} (M_{0}) \in H_{j,+}^{\epsilon,\rho}(x) \ \forall x \in B(0,\delta) . \]
By applying prop. \ref{pro:estext} to $X$ and $-iX$ we obtain that there exists
$\theta_{0} \in {\mathbb R}^{+}$ such that
\[ H_{j,0}^{\epsilon,2 \rho_{0}}(x) \cup H_{j,+}^{\epsilon,2 \rho_{0}}(x) \cup H_{j,-}^{\epsilon,2 \rho_{0}}(x)
\subset \{x\} \times (0,\epsilon) e^{i[-\theta_{0},\theta_{0}]} \]
for any $x \in B(0,\delta)$.
We extend
$\psi_{L_{j}}^{X}$ to
$H_{j,0}^{\epsilon,2 \rho_{0}} \cup H_{j,+}^{\epsilon,2 \rho_{0}} \cup H_{j,-}^{\epsilon,2 \rho_{0}}$
by analytic continuation.
We can apply prop. \ref{pro:ext1g} to $X$ and $-iX$ to obtain
\begin{equation}
\label{equ:pexpsi}
 \frac{1}{C_{6} |y|^{\nu({\mathcal E}_{0})}} \leq
|\psi_{L_{j}}^{X}| \leq \frac{C_{6}}{|y|^{\nu({\mathcal E}_{0})}} \ \ {\rm in} \
H_{j,0}^{\epsilon,2 \rho_{0}} \cup H_{j,+}^{\epsilon,2 \rho_{0}} \cup H_{j,-}^{\epsilon,2 \rho_{0}}
\end{equation}
for some $C_{6} \geq 1$.
Moreover $\Delta_{\varphi}=O(y^{k (\nu({\mathcal E}_{0})+1)})$ implies that there exists
$K_{2} \in {\mathbb R}^{+}$ such that
\begin{equation}
\label{equ:deltak2}
|\Delta_{\varphi}(x,y)| \leq
\frac{K_{2}}{(1+|\psi_{L_{j}}^{X}(x,y)|)^{\frac{k(\nu({\mathcal E}_{0})+1)}{\nu({\mathcal E}_{0})}}}
\ \ \forall (x,y) \in
H_{j,0}^{\epsilon,2 \rho_{0}} \cup H_{j,+}^{\epsilon,2 \rho_{0}} \cup H_{j,-}^{\epsilon,2 \rho_{0}}.
\end{equation}
Consider $d_{0} \in {\mathbb R}^{+}$ such that
\begin{equation}
\label{equ:chlint}
\frac{K_{2}}{d_{0}^{k}} < \min
\left({ \frac{\sin (\theta)}{2}, \frac{\tan(\theta)}{16 \sup_{\mathbb R} |\varrho|} }\right)
\end{equation}
where $\varrho$ is the function defined in Step 1 of subsection \ref{subsec:defatcor}.
Consider $M \geq M_{0}$ such that
$Im(\psi_{L_{j}}^{X}(\Gamma_{x,j,0}^{\epsilon, \rho} (-M))) < -d_{0}$ and
$Im(\psi_{L_{j}}^{X}(\Gamma_{x,j,0}^{\epsilon, \rho} (M))) > d_{0}$ for any $x \in B(0,\delta)$.
We define
\[ \Gamma_{x,j,\theta,\pm}^{\epsilon, \rho} =
\Gamma (- e^{i \mp \theta} X,\Gamma_{x,j,0}^{\epsilon, \rho} (\pm M), {\mathcal E}_{0}^{\rho})
({\mathcal I}(\Gamma_{x,j,\theta,\pm}^{\epsilon, \rho})) \]
where ${\mathcal I}(\Gamma_{x,j,\theta,\pm}^{\epsilon, \rho}) =
{\mathcal I}(- e^{i \mp \theta} X,\Gamma_{x,j,0}^{\epsilon, \rho} (\pm M), {\mathcal E}_{0}^{\rho}) \cap
({\mathbb R}^{+} \cup \{0\})$
and
\[ \Gamma_{x,j,\theta}^{\epsilon, \rho} =
\Gamma_{x,j,0}^{\epsilon, \rho}[-M,M] \cup \Gamma_{x,j,\theta,+}^{\epsilon, \rho} \cup
\Gamma_{x,j,\theta,-}^{\epsilon, \rho}, \ \
P_{x,j,\theta,\pm}^{\epsilon, \rho}=
\Gamma_{x,j,\theta,\pm}^{\epsilon, \rho}(\sup {\mathcal I}(\Gamma_{x,j,\theta,\pm}^{\epsilon, \rho})) . \]
The vector field $\Re (- e^{-i \theta} X)$ is transversal to $\Gamma_{x,j,+}^{\epsilon, \rho}$
and points towards $H_{j,+}^{\epsilon,\rho}$ at $T_{X}^{\epsilon,+}(x)$ for any $x \in B(0,\delta)$.
Therefore $P_{x,j,\theta,+}^{\epsilon, \rho}$ belongs to $\{ |y|=\rho |x| \}$. Analogously
$\Re (- e^{i \theta} X)$ is transversal to $\Gamma_{x,j,-}^{\epsilon, \rho}$
and points towards $H_{j,-}^{\epsilon,\rho}$ at $T_{X}^{\epsilon,-}(x)$ for any $x \in B(0,\delta)$.
Thus $P_{x,j,\theta,-}^{\epsilon, \rho}$ belongs to $\{ |y|=\rho |x| \}$.
\begin{defi}
We define $H_{j,\theta}^{\epsilon,\rho}(x_{0})$ as the bounded component of
\[ \{ (x_{0}, y) \in {\mathbb C}^{2} : |y| > \rho |x_{0}| \} \setminus
\Gamma_{x_{0},j,\theta}^{\epsilon, \rho} \]
for $x_{0} \in B(0,\delta)$.
We have $H_{j,\pi/2}^{\epsilon,\rho}(x) \subset H_{j,\theta}^{\epsilon,\rho}(x)$
for any choice of $\rho \geq 2\rho_{0}$, $\theta \in (0,\pi/2]$ and $x \in B(0,\delta)$.
We denote $\tilde{H}_{j,\theta}^{\epsilon,\rho}(x_{0})=H_{j,\theta}^{\epsilon,\rho}(x_{0})$
for $x_{0} \neq 0$.
\end{defi}
\begin{defi}
We define
\[ H_{j,\theta}^{\epsilon,\rho,\lambda} =H_{\Lambda,j}^{\lambda} \cup
\cup_{x \in [0,\delta) I_{\Lambda}^{\lambda}} H_{j,\theta}^{\epsilon,\rho}(x), \
H_{j,\theta}^{\epsilon,\rho,\lambda,\lambda'} =H_{\Lambda,\Lambda',j}^{\lambda,\lambda'} \cup
\cup_{x \in [0,\delta) I_{\Lambda,\Lambda'}^{\lambda,\lambda'}} H_{j,\theta}^{\epsilon,\rho}(x) . \]
\end{defi}
{\bf Step 2.}
The extension of Fatou coordinates is going to be obtained through iteration.
In order to obtain similar asymptotic behavior as in subsection \ref{subsec:defatcor}
we use lemma \ref{lem:techsum}. Next result assures that the hypotheses of
lemma  \ref{lem:techsum} are satisfied.
\begin{lem}
\label{lem:hjter}
Let $\varphi \in \diff{tp1}{2}$. Let $\Upsilon={\rm exp}(X)$ be a $k$-convergent normal form.
Let $j \in {\mathcal D}(\varphi)$  and $\theta \in (0,\pi/2]$.
Consider $\rho \geq 2 \rho_{0}$ and $\theta_{1} \in [-\theta,\theta]$.
Then $H_{j,\theta}^{\epsilon,\rho}(x)$ is $\Re(e^{i \theta_{1}} X)$-convex
for any $x \in B(0,\delta)$. In particular
$\psi_{L_{j}}^{X}(H_{j,\theta}^{\epsilon,\rho}(x))$ is contained in the set
$\psi_{L_{j}}^{X}(T_{iX}^{\epsilon,j}(x)) + W_{\theta, M}$ (see def. \ref{def:W}).
\end{lem}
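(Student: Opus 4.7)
The plan is to translate both assertions into elementary statements about the model region $W_{\theta,M}$ via the Fatou coordinate $\psi := \psi_{L_{j}}^{X}$, normalized so that $\psi(T_{iX}^{\epsilon,j}(x)) = 0$. In this coordinate $X$ becomes $\partial/\partial z$, so $\Re(e^{i\theta_{1}} X)$ is the constant vector field of direction $e^{i\theta_{1}}$ and its trajectories are straight lines. The four pieces of $\partial H_{j,\theta}^{\epsilon,\rho}(x)$ map explicitly: $\Gamma_{x,j,0}^{\epsilon,\rho}[-M,M]$, a trajectory of $\Re(iX)$ of $\psi$-length $2M$ through the origin, maps to the vertical segment $[-iM,iM]$; each ray $\Gamma_{x,j,\theta,\pm}^{\epsilon,\rho}$, a trajectory of $\Re(-e^{\mp i\theta}X)$ emanating from $\Gamma_{x,j,0}^{\epsilon,\rho}(\pm M)$, maps to a segment of the ray $\pm iM + \mathbb{R}^{+}(-e^{\mp i\theta})$; and the arc of $\{|y|=\rho|x|\}$ maps, by prop.\ \ref{pro:ext1g}, into the far region $\{|\psi| \geq 1/(C_{6}(\rho|x|)^{\nu({\mathcal E}_{0})})\}$, which retreats to infinity as $|x| \to 0$.

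The containment assertion is then immediate: the three finite curves above are precisely the bounded pieces of the polygonal left boundary of $W_{\theta,M}$, the far arc lies deep inside $W_{\theta,M}$ for small $|x|$, and the interior of $H_{j,\theta}^{\epsilon,\rho}(x)$ sits on the side of the vertical segment with $Re(\psi) > 0$, yielding $\psi(H_{j,\theta}^{\epsilon,\rho}(x)) \subset W_{\theta,M}$. The key geometric fact for convexity is that $W_{\theta,M}$ itself is $\Re(e^{i\theta_{1}})$-convex for every $\theta_{1} \in [-\theta,\theta]$. Writing
\[
W_{\theta,M} = \{Re(z)>0\} \cup \{Im(z) + \tan\theta \, Re(z) > M\} \cup \{-Im(z) + \tan\theta \, Re(z) > M\},
\]
one observes that along any line $t \mapsto z_{0} + t e^{i\theta_{1}}$ the three defining linear quantities take the form $a_{k} + t b_{k}$ with $b_{1} = \cos\theta_{1}$, $b_{2} = \cos\theta_{1}(\tan\theta_{1} + \tan\theta)$ and $b_{3} = \cos\theta_{1}(\tan\theta - \tan\theta_{1})$, all nonnegative for $|\theta_{1}| \leq \theta \leq \pi/2$. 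Hence each half-plane meets the line in a forward half-line $\{t > c_{k}\}$ (or the whole line or the empty set in degenerate limits), and the union of three collinear forward half-lines is again a forward half-line, hence connected.

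To deduce the $\Re(e^{i\theta_{1}} X)$-convexity of $H_{j,\theta}^{\epsilon,\rho}(x)$, I would apply lemma \ref{lem:transv} after verifying almost-transversality of $\Re(e^{i\theta_{1}} X)$ with each boundary piece. For the three explicit boundary pieces this reduces to comparing arguments: their tangent directions are $\pi/2$ and $\pi \mp \theta$ modulo $\pi$, and equality with $\theta_{1} \in [-\theta,\theta] \subset [-\pi/2,\pi/2]$ modulo $\pi$ occurs only at the endpoint degeneracies $\theta_{1} = \pm\theta$. In those degenerate cases the constant flow is antiparallel to one boundary ray; the trajectory through an interior point $\psi(P)$ is then a line parallel to but distinct from that boundary ray, hence never meets it, and the half-line description from the previous paragraph still shows the intersection with $\psi(H_{j,\theta}^{\epsilon,\rho}(x))$ is connected. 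For the far boundary on $\{|y| = \rho|x|\}$, lemma \ref{lem:tgpt20} guarantees any tangent point with $\Re(\mu X)$ is convex, forcing the flow to leave $\overline{H_{j,\theta}^{\epsilon,\rho}(x)}$ immediately. The main obstacle is handling these endpoint degeneracies cleanly, which the parallel-line picture in the $\psi$-coordinate resolves by replacing transversality with disjointness for interior orbits.
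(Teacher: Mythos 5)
Your overall plan—work in the Fatou coordinate $\psi = \psi_{L_j}^X$, apply lemma \ref{lem:transv} after checking almost-transversality of $\Re(e^{i\theta_1}X)$ with each boundary piece, and handle $\theta_1 = \pm\theta$ as a degenerate case—is the same strategy the paper uses, and your identification of the degenerate case (flow antiparallel to one ray of $\Gamma_{x,j,\theta}^{\epsilon,\rho}$) is exactly the crux the paper addresses. However, you reverse the logical order: you try to establish the containment $\psi(H_{j,\theta}^{\epsilon,\rho}(x)) \subset W_{\theta,M}$ first, by inspection of where $\partial H$ maps, and then use it to help with convexity. This reversal creates a gap. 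Knowing that $\psi(\partial H)$ traces the polygonal boundary of $W_{\theta,M}$ together with a far-away arc does not by itself tell you which side $\psi(H)$ lies on, nor that $\psi$ is injective on $H$, nor that $\psi(H)$ sits inside $W_{\theta,M}$ rather than winding around; your ``immediate'' step hides a real topological assertion. The paper avoids this by proving convexity first, from which the containment follows cleanly: because the flow $\Re(e^{i\theta_1}X)$ enters $H$ at every point of $\Gamma_{x,j,0}^{\epsilon,\rho}[-M,M]$ and the set is $\Re(e^{i\theta_1}X)$-convex, $\psi(H)$ must be disjoint from $\psi(Q) + e^{i[-\theta,\theta]}(\mathbb{R}^- \cup \{0\})$ for every $Q$ on the segment, and the union of these half-cones is exactly the complement of $W_{\theta,M}$.

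There is also an error in the degenerate case itself. You claim that for $\theta_1 = \theta$ the trajectory of $\Re(e^{i\theta}X)$ through an interior point $P$ ``is a line parallel to but distinct from that boundary ray, hence never meets it.'' This is false when $\psi(P)$ lies on the supporting line of the ray, i.e.\ $\psi(P) \in \psi(\Gamma_{x,j,0}^{\epsilon,\rho}(-M)) + \mathbb{R}e^{i\theta}$ with positive abscissa; then the backward trajectory reaches the vertex $\Gamma_{x,j,0}^{\epsilon,\rho}(-M)$ and from there coincides with $\Gamma_{x,j,\theta,-}^{\epsilon,\rho}$ (since that curve is precisely the backward $\Re(e^{i\theta}X)$-flow from the vertex). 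So the trajectory does meet the boundary ray. The fix is not disjointness but the observation that once the trajectory hits the vertex it remains on $\partial H$, so the set $\{t : \Gamma_P(t) \in H\}$ is still a single interval; this is exactly what the paper records by computing ${\mathcal I}(\Gamma(e^{i\theta}X, Q, \overline{H_{j,\theta}^{\epsilon,\rho}})) = [s_- - \sup{\mathcal I}(\Gamma_{x,j,\theta,-}^{\epsilon,\rho}), s_+]$. Your fall-back, that ``the half-line description from the previous paragraph still shows the intersection with $\psi(H_{j,\theta}^{\epsilon,\rho}(x))$ is connected,'' does not work either: that computation shows the line meets $W_{\theta,M}$ in a half-line, but $\psi(H) \subsetneq W_{\theta,M}$ (the inner arc truncates it), and a half-line can hit a proper open subset in a disconnected set.
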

\begin{proof}
The result is straightforward if $x=0$. Suppose $x \neq 0$.
Suppose without lack of generality that $j \in {\mathcal D}_{1}(\varphi)$ (see def. \ref{def:dirvf}).
Denote ${\mathcal C}={\mathcal C}_{0}$.
The vector field $\Re (e^{i \theta_{1}} X)$ is transversal to
$\partial H_{j,\theta}^{\epsilon,\rho}(x) \setminus
(\Gamma_{x,j,\theta}^{\epsilon, \rho} \cup T {\mathcal C}_{e^{i \theta_{1}} X}^{\rho}(x))$
for any $\theta_{1} \in [-\theta,\theta]$.
Moreover $\Re (e^{i \theta_{1}} X)$ is almost transversal to
$\partial H_{j,\theta}^{\epsilon,\rho}(x)$ at the set of convex points
$T {\mathcal C}_{e^{i \theta_{1}} X}^{\rho}(x)$
for any $\theta_{1} \in [-\theta,\theta]$.

The function $Im(\psi_{L_{j}}^{X})$ is injective in
$\Gamma_{x,j,\theta}^{\epsilon, \rho}$ by construction.
Since $\Re (e^{i \theta_{1}} X)$  is transversal to
$\Gamma_{x,j,0}^{\epsilon, \rho}[-M,M]$, $\Gamma_{x,j,\theta,+}^{\epsilon, \rho}$ and
$\Gamma_{x,j,\theta,-}^{\epsilon, \rho}$ at any of their points we deduce that
$Im(\psi_{L_{j}}^{X} e^{-i \theta_{1}})$ is injective in
$\Gamma_{x,j,\theta}^{\epsilon, \rho}$ for any $\theta_{1} \in (-\theta,\theta)$.
As a consequence given $\theta_{1} \in (-\theta,\theta)$ the vector field
$\Re (e^{i \theta_{1}} X)$ is almost transversal to
$\Gamma_{x,j,\theta}^{\epsilon, \rho}$ at any point in
$\Gamma_{x,j,\theta,-}^{\epsilon, \rho} \setminus
\{ P_{x,j,\theta,+}^{\epsilon, \rho}, P_{x,j,\theta,-}^{\epsilon, \rho} \}$.
Moreover in the neighborhood of $P_{x,j,\theta,\pm}^{\epsilon, \rho}$ the curve
$\psi_{L_{j}}^{X}(\partial H_{j,\theta}^{\epsilon,\rho}(x) \setminus \Gamma_{x,j,\theta}^{\epsilon, \rho})$
is to the right of $\psi_{L_{j}}^{X}(\Gamma_{x,j,\theta}^{\epsilon, \rho})$.
Analogously as in subsection \ref{subsec:rj1} we obtain that
$\Re (e^{i \theta_{1}} X)$ is almost transversal to $\partial H_{j,\theta}^{\epsilon,\rho}(x)$
at both $P_{x,j,\theta,-}^{\epsilon, \rho}$ and $P_{x,j,\theta,+}^{\epsilon, \rho}$
for any $\theta_{1} \in (-\theta,\theta)$.  Lemma \ref{lem:transv}
implies that $H_{j,\theta}^{\epsilon,\rho}(x)$ is $\Re(e^{i \theta_{1}} X)$-convex
for any $\theta_{1} \in (-\theta,\theta)$.

Let us consider the flow $\Re (e^{i \theta} X)$. The proof for $\Re (e^{-i \theta} X)$
is analogous. We can proceed as in the previous paragraphs to obtain that
$\Re (e^{i \theta} X)$ is almost transversal to $\partial H_{j,\theta}^{\epsilon,\rho}(x)$
at any point outside of $\Gamma_{x,j,\theta,-}^{\epsilon, \rho}$.
The analysis of the properties of $P_{x,j,\theta,-}^{\epsilon, \rho}$ also
implies
${\rm exp}(s e^{i \theta} X)(P_{x,j,\theta,-}^{\epsilon, \rho}) \not \in
\overline{H_{j,\theta}^{\epsilon,\rho}(x)}$ for any $s <0$ in a neighborhood of $0$.

Given $Q \in H_{j,\theta}^{\epsilon,\rho}(x)$ we denote
$\Gamma_{Q}=\Gamma(e^{i \theta} X, Q, H_{j,\theta}^{\epsilon,\rho})$ and
$(s_{-},s_{+})= {\mathcal I}(\Gamma_{Q})$. Clearly
$\Gamma_{Q}(s_{+}) \in \partial H_{j,\theta}^{\epsilon,\rho}(x) \setminus
\Gamma_{x,j,\theta,-}^{\epsilon, \rho}$, thus
$\Re (e^{i \theta} X)$ is almost transversal to $\partial H_{j,\theta}^{\epsilon,\rho}(x)$
at $\Gamma_{Q}(s_{+})$. Analogously if
$\Gamma_{Q}(s_{-}) \not \in \Gamma_{x,j,\theta,-}^{\epsilon, \rho}$ the vector field
$\Re (e^{i \theta} X)$ is almost transversal to $\partial H_{j,\theta}^{\epsilon,\rho}(x)$
at $\Gamma_{Q}(s_{-})$. Otherwise we obtain
$\Gamma_{Q}(s_{-}) = \Gamma_{x,j,0}^{\epsilon, \rho} (- M)$.
We deduce
\[ {\mathcal I}(\Gamma(e^{i \theta} X, Q, \overline{H_{j,\theta}^{\epsilon,\rho}}))=
[s_{-} - \sup {\mathcal I}(\Gamma_{x,j,\theta,-}^{\epsilon, \rho} ), s_{+}]  \]
and then
\[ (s_{-},s_{+})=
\{s \in {\mathcal I}(\Gamma(e^{i \theta} X, Q, \overline{H_{j,\theta}^{\epsilon,\rho}})) :
\Gamma(e^{i \theta} X, Q, \overline{H_{j,\theta}^{\epsilon,\rho}})(s) \in H_{j,\theta}^{\epsilon,\rho} \} . \]
Lemma \ref{lem:transv} implies that $H_{j,\theta}^{\epsilon,\rho}(x)$ is
$Re(e^{i \theta} X)$-convex.

There exists $s_{0} \in {\mathbb R}^{+}$ such that
${\rm exp}(s e^{i \theta_{1}} X)(Q) \in  H_{j,\theta}^{\epsilon,\rho}(x)$ for all
$s \in (0,s_{0})$, $\theta_{1} \in [-\theta,\theta]$ and
$Q \in \Gamma_{x,j,0}^{\epsilon, \rho}[-M,M]$.
Since the set $H_{j,\theta}^{\epsilon,\rho}(x)$ is
$Re(e^{i \theta_{1}} X)$-convex for $\theta_{1} \in [-\theta,\theta]$ we deduce that the sets
$\psi_{L_{j}}^{X}(H_{j,\theta}^{\epsilon,\rho}(x))$ and
$\psi_{L_{j}}^{X}(Q) + e^{i[-\theta,\theta]} ({\mathbb R}^{-} \cup \{0\})$ are disjoint for any
$Q \in \Gamma_{x,j,0}^{\epsilon, \rho}[-M,M]$.
Therefore $\psi_{L_{j}}^{X}(H_{j,\theta}^{\epsilon,\rho}(x))$ is contained in the set
$\psi_{L_{j}}^{X}(T_{iX}^{\epsilon,j}(x)) + W_{\theta, M}$ (see def. \ref{def:W}).
\end{proof}
\begin{figure}[h]
\begin{center}
\includegraphics[height=6cm,width=5.5cm]{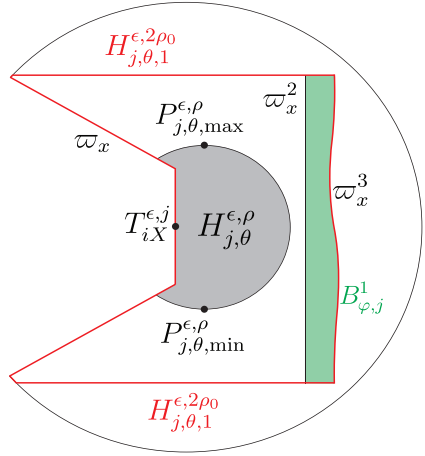}
\end{center}
\caption{Picture of $H_{j,\theta,1}^{\epsilon,2 \rho_{0}}$ and $B_{\varphi, j}^{1}$
in the $\psi_{L_{j}}^{X}$ coordinate}
\label{EVf9}
\end{figure}

{\bf Step 3.}
In this step we define a subset $H_{j,\theta,1}^{\epsilon,2 \rho_{0}}$ of
$H_{j,\theta}^{\epsilon, 2 \rho_{0}}$ and sort of a fundamental domain
$B_{\varphi, j}^{1}$ of $\varphi_{|H_{j,\theta,1}^{\epsilon,2 \rho_{0}}}$ such that
$B_{\varphi, j}^{1}(x) \subset H_{\Lambda,j}^{\lambda}(x)$ for
$x \in (0,\delta) I_{\Lambda}^{\lambda}$ and
$B_{\varphi, j}^{1}(x) \subset H_{\Lambda,\Lambda',j}^{\lambda,\lambda'}(x)$ for $x \in (0,\delta) I_{\Lambda,\Lambda'}^{\lambda,\lambda'}$.
Moreover we have $H_{j,\theta}^{\epsilon, \rho} \subset H_{j,\theta,1}^{\epsilon,2 \rho_{0}}$ for
$\rho >>0$. As a consequence a Fatou coordinate of $\varphi$ defined in $B_{\varphi,j}^{1}$
can be extended to $H_{j,\theta}^{\epsilon, \rho}$ (see Step 4). In other words to define a Fatou coordinate
of $\varphi$ in $H_{j,\theta}^{\epsilon, \rho}$ we can restrict ourselves to $B_{\varphi,j}^{1}$.
This is a key property in next subsection's results.

The fundamental domain $B_{\varphi,j}^{1}(x)$ has different properties than
$B_{\varphi}^{2}(T_{iX}^{\epsilon,j}(x))$ (see Step 3 of subsection \ref{subsec:comdifmflo}).
The former one tends to the origin when $x \to 0$ whereas the latter one tends to
$B_{\varphi}^{2}(T_{iX}^{\epsilon,j}(0))$. This discrepancy is justified by
the methods that we use in subsection \ref{subsec:asyfatcor} to
study the asymptotic developments of Lavaurs vector fields.

More precisely, let ${\rm exp}(Y_{k})$ be a $k$-convergent normal form of $\varphi$.
In subsection \ref{subsec:asyfatcor} we want to find Fatou coordinates
$\psi_{j,\lambda,k}^{\varphi}$ of $\varphi$ in $H_{j,\theta}^{\epsilon, \rho}$ such that
$\psi_{j,\lambda,k}^{\varphi}$ satisfies
\[ \psi_{j,\lambda,k}^{\varphi} - \psi_{L_{j},k} = O((y \circ \varphi -y)^{c_{k}}) \]
where $\psi_{L_{j},k}$ is a Fatou coordinate of $Y_{k}$ and $\lim_{k \to \infty} c_{k} = \infty$.
We obtain $\psi_{j,\lambda,k}^{\varphi}$ in a fundamental domain by
using synthesis and solving a $\overline{\partial}$
equation. Then we extend the result by iteration. If the fundamental domain is
$B_{\varphi}^{2}(T_{iX}^{\epsilon,j}(x))$ then $O(y \circ \varphi -y)$ is a $O(1)$
and such property does not improve by iteration. On the other hand the choice of
$B_{\varphi,j}^{1}$ turns out to be good since we have
\[ \sup_{(x,y') \in B_{\varphi,j}^{1}(x)} |y \circ \varphi - y|(x,y') =
O((y \circ \varphi - y)(x,y)) \]
when $x \to 0$ in $H_{j,\theta}^{\epsilon, \rho}$.

Let us explicit the construction.
We define $arc_{j,\theta}^{\epsilon,\rho}(x_{0})= \partial H_{j,\theta}^{\epsilon,\rho}(x_{0})
\cap \{|y|=\rho x_{0}\}$. By proposition \ref{pro:hit} the sections
$P_{x,j,\theta,+}^{\epsilon, \rho}$ and $P_{x,j,\theta,-}^{\epsilon, \rho}$
are asymptotically continuous in $B(0,\delta) \setminus \{0\}$.
Hence the arc $arc_{j,\theta}^{\epsilon,\rho}(r,\tilde{\lambda})$ converges in adapted coordinates
$(x,w)=(x,y/x)$ to an arc $arc_{j,\theta}^{\epsilon,\rho}(0,\lambda_{0})$ when
$(r,\tilde{\lambda}) \to (0,\lambda_{0})$. The set
$H_{j,\theta}^{\epsilon,\rho}(r,\tilde{\lambda})$ tends to some set
$\tilde{H}_{j,\theta}^{\epsilon,\rho}(0,\lambda_{0})$ in adapted coordinates
when $(r,\tilde{\lambda}) \to (0,\lambda_{0})$. More precisely
$\tilde{H}_{j,\theta}^{\epsilon,\rho}(0,\lambda_{0})$ is contained in $\{|w| > \rho\}$
and its boundary is contained in the union of
$arc_{j,\theta}^{\epsilon,\rho}(0,\lambda_{0})$, a trajectory in
$Tr_{\leftarrow  \infty}(-e^{-i \theta} X_{0}(\lambda_{0}))$ and a trajectory in
$Tr_{\leftarrow \infty}(-e^{i \theta} X_{0}(\lambda_{0}))$ for $\lambda_{0} \in {\mathbb S}^{1}$.
We denote
$\psi_{\lambda_{0}}^{{\mathcal C}_{0}}$ the Fatou coordinate of $X_{0}(\lambda_{0})$
defined in the neighborhood of $\overline{\tilde{H}_{j,\theta}^{\epsilon,\rho}(0,\lambda_{0})}$
such that $\psi_{\lambda_{0}}^{{\mathcal C}_{0}}(\infty)=0$. We obtain
\[ \lim_{(r,\tilde{\lambda}) \to (0,\lambda_{0})}
(r^{\nu({\mathcal E}_{0})} \psi_{L_{j}}^{X}) (\overline{H_{j,\theta}^{\epsilon,\rho}(r,\tilde{\lambda})}) =
\psi_{\lambda_{0}}^{{\mathcal C}_{0}}(\overline{\tilde{H}_{j,\theta}^{\epsilon,\rho}(0,\lambda_{0})}) \]
in the Hausdorff topology for any $\lambda_{0} \in {\mathbb S}^{1}$.

We proceed as in subsection \ref{subsec:rj1}.
The maximum (resp. the minimum) of the function
$Im(r^{\nu({\mathcal E}_{0})} \psi_{L_{j}}^{X})$ in
$\overline{\tilde{H}_{j,\theta}^{\epsilon,\rho}(r,\tilde{\lambda})}$
is attained at a unique point $P_{j,\theta, \max}^{\epsilon, \rho}(r,\tilde{\lambda})$
(resp. $P_{j,\theta, \min}^{\epsilon, \rho}(r,\tilde{\lambda})$)
contained in $arc_{j,\theta}^{\epsilon,\rho}(r,\tilde{\lambda})$ for any
$(r, \tilde{\lambda}) \in [0,\delta) \times {\mathbb S}^{1}$. Consider
$a_{0}, a_{1} \in {\mathbb R}^{+}$ such that
$a_{0} + 4i[-a_{1},a_{1}]$ is contained in
$(r^{\nu({\mathcal E}_{0})} \psi_{L_{j}}^{X})(\tilde{H}_{j,\pi/2}^{\epsilon,2 \rho_{0}}(r,\tilde{\lambda}))$
for any $(r,\tilde{\lambda})) \in [0,\delta) \times {\mathbb S}^{1}$.
We choose $\rho \geq 2 \rho_{0}$ such that
$C_{6}/\rho^{\nu({\mathcal E}_{0})} \leq \min (a_{0},a_{1})/2$.
The equation (\ref{equ:pexpsi}) implies that
$\psi_{L_{j}}^{X}({H}_{j,\theta}^{\epsilon,\rho}(x))$ is contained in
\[ \left[{ \frac{-a_{0}}{2|x|^{\nu({\mathcal E}_{0})}}, \frac{a_{0}}{2|x|^{\nu({\mathcal E}_{0})}} }\right] +
i \left[{ \frac{-a_{1}}{2|x|^{\nu({\mathcal E}_{0})}}, \frac{a_{1}}{2|x|^{\nu({\mathcal E}_{0})}} }\right] \]
for any $x \in B(0,\delta) \setminus \{0\}$. We have
\[ \overline{\tilde{H}_{j,\theta}^{\epsilon,2 \rho_{0}}(r,\tilde{\lambda})} \cap
T({\mathcal C}_{0})_{X}^{2 \rho_{0}}(r,\tilde{\lambda})
\subset \{P_{j,\theta, \min}^{\epsilon, 2 \rho_{0}}(r,\tilde{\lambda}),
P_{j,\theta, \min}^{\epsilon, 2 \rho_{0}}(r,\tilde{\lambda}) \}
\   \forall (r,\tilde{\lambda}) \in [0,\delta) \times {\mathbb S}^{1}. \]
As a consequence there exists $\theta' \in (0,\theta]$ such that the angle between
$Re (X)$ and $arc_{j,\theta}^{\epsilon,2 \rho_{0}}(x)$ is greater or equal than $\theta'$
at any point in
\[ \cup_{x \in B(0,\delta) \setminus \{0\}}
\left({ arc_{j,\theta}^{\epsilon,2 \rho_{0}}(x) \cap \left\{{ Im (\psi_{L_{j}}^{X}) \in
\left[{ \frac{-(3+1/2)a_{1}}{|x|^{\nu({\mathcal E}_{0})}},
\frac{(3+1/2) a_{1}}{|x|^{\nu({\mathcal E}_{0})}} }\right] }\right\} }\right). \]
The angle between
$Re (X)$ and $\partial H_{j,\theta}^{\epsilon,2 \rho_{0}}(x)$ is greater or equal than
$\theta' = \min(\theta',\theta)$
at any point in
\[ \partial H_{j,\theta}^{\epsilon,2 \rho_{0}}(x) \cap \left\{{ Im (\psi_{L_{j}}^{X}) \in
\left[{ \frac{-(3+1/2) a_{1}}{|x|^{\nu({\mathcal E}_{0})}},
\frac{(3+1/2) a_{1}}{|x|^{\nu({\mathcal E}_{0})}} }\right] }\right\} \ \forall x \in B(0,\delta). \]
The previous set is a union of two curves, namely a curve $\varpi_{x}$ containing
$T_{i X}^{\epsilon, j}(x)$ and a curve $\varpi_{x}'$ contained in $\{|y| = 2 \rho_{0} |x|\}$.
The curve $\varpi_{x}$ is parametrized by $Im (\psi_{L_{j}}^{X})$. Indeed given
$x \in B(0,\delta) \setminus \{0\}$ and
$s \in (7/2) a_{1}[-1/|x|^{\nu({\mathcal E}_{0})},1/|x|^{\nu({\mathcal E}_{0})}]$
there exists a unique $d(\varpi_{x},s) \in {\mathbb R}$ such that
$d(\varpi_{x},s) + is \in \psi_{L_{j}}^{X}(\varpi_{x})$. We define
\[ \varpi_{x}^{1} = \varphi(\varpi_{x}) \cap
\left\{{ Im (\psi_{L_{j}}^{X}) \in
\left[{ \frac{-3a_{1}}{|x|^{\nu({\mathcal E}_{0})}}, \frac{3a_{1}}{|x|^{\nu({\mathcal E}_{0})}}
}\right] }\right\} \ \forall x \in B(0,\delta) \setminus \{0\} . \]
We can use the equation (\ref{equ:chlint}) and the techniques in Step 3 of subsection
\ref{subsec:defatcor} to obtain that $\varpi_{x}^{1}$ is transversal to $\Re(X)$
for any $x \in B(0,\delta) \setminus \{0\}$; moreover we obtain
$d(\varpi_{x},s) \leq d(\varpi_{x}^{1},s)$ for all
$x \in B(0,\delta)  \setminus \{0\}$ and
$s \in [-3a_{1}/|x|^{\nu({\mathcal E}_{0})},3a_{1}/|x|^{\nu({\mathcal E}_{0})}]$.
The curve $\psi_{L_{j}}^{X}(\varpi_{x}^{1})$ is to the right of $\psi_{L_{j}}^{X}(\varpi_{x})$.
We define the curves $\varpi_{x}^{2}$ and $\varpi_{x}^{3}$ contained in
$H_{j,\theta}^{\epsilon,2 \rho_{0}}(x)$ such that
\[ (|x|^{\nu({\mathcal E}_{0})} \psi_{L_{j}}^{X})(\varpi_{x}^{2})=
a_{0} + \frac{7}{2} i [-a_{1},a_{1}] , \ \varpi_{x}^{3} = \varphi(\varpi_{x}^{2}) \cap
\left\{{ |Im (\psi_{L_{j}}^{X})|  \leq \frac{3a_{1}}{|x|^{\nu({\mathcal E}_{0})}} }\right\} . \]
We proceed as in Step 3 of subsection \ref{subsec:defatcor} to prove that
$\varpi_{x}^{3}$ is transversal to $\Re (X)$ for any $x \in B(0,\delta) \setminus \{0\}$
and $d(\varpi_{x}^{2},s) \leq d(\varpi_{x}^{3},s)$ for all
$x \in B(0,\delta)  \setminus \{0\}$ and
$s \in [-3a_{1}/|x|^{\nu({\mathcal E}_{0})},3a_{1}/|x|^{\nu({\mathcal E}_{0})}]$.
The curve $\psi_{L_{j}}^{X}(\varpi_{x}^{3})$ is to the right of $\psi_{L_{j}}^{X}(\varpi_{x}^{2})$.
\begin{defi}
We define $H_{j,\theta,1}^{\epsilon,2 \rho_{0}}(x)$ as the subset of
$H_{j,\theta}^{\epsilon,2 \rho_{0}}(x)$ such that
\[ \psi_{L_{j}}^{X} (H_{j,\theta,1}^{\epsilon,2 \rho_{0}}(x)) =
\cup_{s \in [-a_{1}/|x|^{\nu({\mathcal E}_{0})}, a_{1}/|x|^{\nu({\mathcal E}_{0})}]}
( (d(\varpi_{x},s), d(\varpi_{x}^{3},s)) + is ) . \]
\end{defi}
\begin{defi}
\label{def:bfj1}
Given $l \in \{1,2,3\}$ we denote $B_{\varphi,j}^{l}(x)$ the subset of
$H_{j,\pi/2}^{\epsilon,2 \rho_{0}}(x)$ such that
\[ \psi_{L_{j}}^{X} (B_{\varphi,j}^{l}(x)) =
\cup_{s \in [-l a_{1}/|x|^{\nu({\mathcal E}_{0})}, l a_{1}/|x|^{\nu({\mathcal E}_{0})}]}
( [d(\varpi_{x}^{2},s), d(\varpi_{x}^{3},s)) + is ) . \]
Clearly $B_{\varphi,j}^{1}(x)$ is contained in $H_{j,\theta,1}^{\epsilon,2 \rho_{0}}(x)$.
\end{defi}
{\bf Step 4.}
Next we prove that a Fatou coordinate of $\varphi$ in $B_{\varphi,j}^{1}$ extends by
iteration to a Fatou coordinate in $H_{j,\theta}^{\epsilon,\rho}$.
\begin{lem}
\label{lem:fundext}
Let $\varphi \in \diff{tp1}{2}$. Let $\Upsilon={\rm exp}(X)$ be a $k$-convergent normal form.
Let $j \in {\mathcal D}(\varphi)$, $\theta \in (0,\pi/2]$.
Consider $x \in B(0,\delta) \setminus \{0\}$ and $P \in H_{j,\theta}^{\epsilon,\rho}(x)$.
Then there exists $l_{0}(P) \in {\mathbb Z}$ such that
$\{P, \hdots, \varphi^{l_{0}}(P) \} \subset H_{j,\theta,1}^{\epsilon,2 \rho_{0}}(x)$ and
$\varphi^{l_{0}}(P) \in B_{\varphi,j}^{1}(x)$.
\end{lem}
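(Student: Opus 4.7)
The plan is to define $l_0$ as the first integer (which will turn out to be nonnegative) for which $\varphi^{l_0}(P) \in B_{\varphi,j}^1(x)$, and to verify that the preceding iterates lie in $H_{j,\theta,1}^{\epsilon,2\rho_0}(x)$. Set $\psi=\psi_{L_j}^X$. Since $P\in H_{j,\theta}^{\epsilon,\rho}(x)$ with $\rho$ chosen in Step 3 so that $C_6/\rho^{\nu({\mathcal E}_0)} \le \min(a_0,a_1)/2$, estimate (\ref{equ:pexpsi}) yields $|\psi(P)| \le \min(a_0,a_1)/(2|x|^{\nu({\mathcal E}_0)})$; in particular $Re(\psi(P)) \le a_0/(2|x|^{\nu({\mathcal E}_0)})$, strictly to the left of the strip $[d(\varpi_x^2,s),d(\varpi_x^3,s))$ that defines $B_{\varphi,j}^1(x)$, so only forward iteration is needed. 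By Lemma \ref{lem:hjter}, $\psi(P)$ belongs to $\psi(T_{iX}^{\epsilon,j}(x)) + W_{\theta,M}$.

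First I would combine the functional equation $\psi\circ\varphi = \psi+1+\Delta_\varphi$ with (\ref{equ:deltak2}) to apply Lemma \ref{lem:techsum} to the orbit $P,\varphi(P),\ldots$, using the exponent $k(\nu({\mathcal E}_0)+1)/\nu({\mathcal E}_0) \ge 2$ coming from the $k$-convergent normal form (we may take $k\ge 2$). The choice of $d_0$ in (\ref{equ:chlint}) and of $M$ in Step 1 is engineered precisely so that whenever $|Im(\psi(Q))|\ge M$ one has $|\psi(Q)|\ge d_0$, hence $|\Delta_\varphi(Q)| \le K_2/d_0^k < \sin(\theta)/2$; the requirement $|\Delta_\varphi|\le 1/2$ holds for $\delta$ small. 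Lemma \ref{lem:techsum} then provides two facts: the forward orbit $\psi(\varphi^l(P))$ remains in $\psi(T_{iX}^{\epsilon,j}(x)) + W_{\theta,M}$ for all $l\ge 0$, and the cumulative corrections $E_l := \sum_{a=0}^{l-1} \Delta_\varphi(\varphi^a(P))$ satisfy $|E_l| \le C$ uniformly in $l$, with $C$ independent of $x$ and $P$.

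Then $\psi(\varphi^l(P)) = \psi(P) + l + E_l$. For $\delta$ sufficiently small, $|Im(\psi(P))| + C \le a_1/|x|^{\nu({\mathcal E}_0)}$, so $Im(\psi(\varphi^l(P)))$ remains in $[-a_1/|x|^{\nu({\mathcal E}_0)},a_1/|x|^{\nu({\mathcal E}_0)}]$. Meanwhile $Re(\psi(\varphi^l(P)))=Re(\psi(P))+l+Re(E_l)$ grows steadily with $l$, so there is a smallest $l_0\ge 0$ for which it enters $[d(\varpi_x^2,s_{l_0}),d(\varpi_x^3,s_{l_0}))$, where $s_l := Im(\psi(\varphi^l(P)))$; since the increments $1+\Delta_\varphi$ are arbitrarily close to $1$ (for $\delta$ small), which is less than the length of this horizontal interval (essentially $1-O(\delta)$), the orbit cannot leap over it. This $l_0$ gives $\varphi^{l_0}(P) \in B_{\varphi,j}^1(x)$.

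It remains to check that $\varphi^l(P) \in H_{j,\theta,1}^{\epsilon,2\rho_0}(x)$ for $0\le l<l_0$. The strip condition for the imaginary part is already established; the upper bound $Re(\psi(\varphi^l(P))) < d(\varpi_x^3, s_l)$ holds by minimality of $l_0$; and the lower bound $Re(\psi(\varphi^l(P))) > d(\varpi_x, s_l)$ follows because $\varpi_x$ lies in $H_{j,\theta}^{\epsilon,2\rho_0}(x)$ and its $Re(\psi)$ is bounded above by a quantity of order $a_0/(2|x|^{\nu({\mathcal E}_0)})$, while $Re(\psi(\varphi^l(P)))$ is monotone increasing modulo the bounded correction $Re(E_l)$. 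The ambient containment $\varphi^l(P) \in H_{j,\theta}^{\epsilon,2\rho_0}(x)$ follows from the $\Re(X)$-convex characterization of $H_{j,\theta}^{\epsilon,2\rho_0}(x)$ through $W_{\theta,M}$ supplied by Lemma \ref{lem:hjter}. The main obstacle is this last containment check, namely ensuring the orbit cannot escape through the slanted portion of $\partial H_{j,\theta,1}^{\epsilon,2\rho_0}$; it is resolved by combining the flow-convexity of Lemma \ref{lem:hjter} with the strictly forward motion of $Re(\psi)$ furnished by Lemma \ref{lem:techsum}.
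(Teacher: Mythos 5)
Your argument is correct and relies on the same ingredients as the paper's: the containment $\psi_{L_j}^X(H_{j,\theta}^{\epsilon,\rho}(x))\subset \psi_{L_j}^X(T_{iX}^{\epsilon,j}(x))+W_{\theta,M}$ from Lemma~\ref{lem:hjter}, the uniformly bounded cumulative correction $\sum_l\Delta_\varphi(\varphi^l(P))=O((1+|\psi_{L_j}^X(P)|)^{1-k})$ from Lemma~\ref{lem:techsum} together with (\ref{equ:deltak2})--(\ref{equ:chlint}), and the choice of $\rho$ forcing $|\psi_{L_j}^X(P)|\le\min(a_0,a_1)/(2|x|^{\nu({\mathcal E}_0)})$ so that the orbit cannot gain enough imaginary part to leave the strip $|Im|\le a_1/|x|^{\nu({\mathcal E}_0)}$. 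The paper packages the same content as an argument by contradiction (escape would force a jump of size $\geq a_1/(2|x|^{\nu({\mathcal E}_0)})$ in $Im(\psi_{L_j}^X)$, contradicting the $O(1)$ bound), whereas you construct $l_0$ directly as the first entry time into $B_{\varphi,j}^1(x)$ and then verify the intermediate containments; this is a presentational rather than a substantive difference.
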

\begin{proof}
Suppose that $j \in {\mathcal D}_{1}(\varphi)$ without lack of generality. Suppose that
$\{P, \hdots, \varphi^{l_{1}}(P) \}$ is contained in $H_{j,\theta,1}^{\epsilon,2 \rho_{0}}(x)$
for some $l_{1} \in {\mathbb N} \cup \{0\}$. We have
\[ \psi_{L_{j}}^{X}(\varphi^{l_{1}+1}(P)) = \psi_{L_{j}}^{X}(P) + (l_{1}+1) + \sum_{l=0}^{l_{1}}
\Delta_{\varphi}(\varphi^{l}(P)) . \]
The set $\psi_{L_{j}}^{X}(H_{j,\theta}^{\epsilon,\rho}(x))$ is contained in the set
$\psi_{L_{j}}^{X}(T_{iX}^{\epsilon,j}(x)) + W_{\theta, M}$ (lemma \ref{lem:hjter}).
Thus equations (\ref{equ:deltak2}), (\ref{equ:chlint}) and lemma \ref{lem:techsum} imply that
there exists a constant $K_{3} \in {\mathbb R}^{+}$ independent of $P$, $x$ and $l_{1}$ such that
\begin{equation}
\label{equ:context}
| \psi_{L_{j}}^{X}(\varphi^{l_{1}+1}(P)) - \psi_{L_{j}}^{X}(P)- (l_{1}+1)| \leq
\frac{K_{3}}{(1+|\psi_{L_{j}}^{X}(P)|)^{k-1}} .
\end{equation}
Suppose that the lemma is false. Then
Step 3 implies that there exists $l_{2} \in {\mathbb N} \cup \{0\}$ such that
\[ \{P, \hdots, \varphi^{l_{2}}(P) \} \subset H_{j,\theta,1}^{\epsilon,2 \rho_{0}}(x) \ {\rm and} \
|Im (\psi_{L_{j}}^{X}(\varphi^{l_{2}+1}(P)))| > a_{1}/|x|^{\nu({\mathcal E}_{0})}  . \]
The choice of $\rho$ implies
\[ \frac{a_{1}}{2 |x|^{\nu({\mathcal E}_{0})}} \leq
 | Im(\psi_{L_{j}}^{X}(\varphi^{l_{2}+1}(P))) - Im( \psi_{L_{j}}^{X}(P))| \leq K_{3}. \]
We obtain a contradiction.
\end{proof}
We can extend $\ddot{\psi}_{j, \lambda}^{\varphi}$ and
$\ddot{\psi}_{j, \lambda}^{\varphi} - \ddot{\psi}_{j, \lambda'}^{\varphi}$
to $H_{j,\theta}^{\epsilon, \rho}$ by defining
\begin{equation}
\label{equ:exttheta}
\ddot{\psi}_{j, \lambda}^{\varphi}(P) = \ddot{\psi}_{j, \lambda}^{\varphi}(\varphi^{l_{0}}(P)) - l_{0} \ {\rm and} \
(\ddot{\psi}_{j, \lambda}^{\varphi} - \ddot{\psi}_{j, \lambda'}^{\varphi})(P)=
(\ddot{\psi}_{j, \lambda}^{\varphi} - \ddot{\psi}_{j, \lambda'}^{\varphi})(\varphi^{l_{0}}(P)) .
\end{equation}
The lemma \ref{lem:fundext} and property (\ref{equ:context}) imply
\begin{equation}
\label{equ:equext0}
|(\ddot{\psi}_{j,\lambda}^{\varphi}(P)- \psi_{L_{j}}^{X}(P)) -
(\ddot{\psi}_{j,\lambda}^{\varphi}(\varphi^{l_{0}}(P))-  \psi_{L_{j}}^{X}(\varphi^{l_{0}}(P)))|  \leq
\frac{K_{3}}{(1+|\psi_{L_{j}}^{X}(P)|)^{k-1}}.
\end{equation}
Since  $Re (\psi_{L_{j}}^{X}(Q)) \geq a_{0}/|x|^{\nu({\mathcal E}_{0})}$ for any
$Q \in B_{\varphi,j}^{1}(x)$ we can use equation (\ref{equ:pexpsi}) to obtain
$B_{\varphi,j}^{1}(x) \subset \{x\} \times \overline{B}(0,\sqrt[\nu({\mathcal E}_{0})]{C_{6}/a_{0}} |x|)$.
We deduce
\[ \lim_{x \in (0,\delta) I_{\Lambda}^{\lambda}, \
Q \in B_{\varphi,j}^{1}(x), \ Q \to (0,0)}
\ddot{\psi}_{j,\lambda}^{\varphi}(Q)-  \psi_{L_{j}}^{X}(Q) =
(\ddot{\psi}_{j,\lambda}^{\varphi}-  \psi_{L_{j}}^{X})(0,0) \]
by proposition \ref{pro:bddcon} since
$B_{\varphi,j}^{1}(x) \subset H_{\Lambda, j}^{\lambda}(x)$ for any
$x  \in (0,\delta) I_{\Lambda}^{\lambda}$ . This implies
\[ \lim_{Q \in H_{j,\theta}^{\epsilon,\rho,\lambda}, \ Q \to (0,0)}
\ddot{\psi}_{j,\lambda}^{\varphi}(Q)-  \psi_{L_{j}}^{X}(Q) =
(\ddot{\psi}_{j,\lambda}^{\varphi}-  \psi_{L_{j}}^{X})(0,0) . \]
as a consequence of equation (\ref{equ:equext0}).
The previous discussion leads us to
\begin{pro}
\label{pro:bddconext}
Let $\varphi \in \diff{tp1}{2}$. Let $\Upsilon$ be a $2$-convergent normal form.
Consider $\Lambda=(\lambda_{1}, \hdots, \lambda_{\tilde{q}}) \in {\mathcal M}$,
$\lambda \in {\mathbb S}^{1}$,
$j \in {\mathcal D}(\varphi)$ and $\theta \in (0,\pi/2]$.
Then there exists $\rho \geq 2 \rho_{0}$ such that the function
$\ddot{\psi}_{j,\Lambda,\lambda}^{\varphi} - \psi_{L_{j}}^{X}$ is continuous in
$\overline{H_{j,\theta}^{\epsilon,\rho,\lambda}}$ and holomorphic in the interior of
$H_{j,\theta}^{\epsilon,\rho,\lambda}$.
Moreover $\rho$ depends only on $X$, $\varphi$ and $\Lambda$.
\end{pro}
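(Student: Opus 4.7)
The plan is to package the ingredients already assembled in Step 1--Step 4 into a clean statement. First I would fix $\rho \geq 2\rho_{0}$ large enough so that all quantitative conditions of Step 3 and Step 4 hold simultaneously: namely, $C_{6}/\rho^{\nu({\mathcal E}_{0})} \leq \min(a_{0},a_{1})/2$ (so that $\psi_{L_{j}}^{X}(H_{j,\theta}^{\epsilon,\rho}(x))$ fits inside the window that makes the domains $B_{\varphi,j}^{l}(x)$ meaningful), together with the inequality (\ref{equ:chlint}) that drives lemma \ref{lem:techsum}. Since $a_{0}, a_{1}, C_{6}, K_{2}, K_{3}$ depend only on $X$, $\varphi$ and the splitting $\digamma_{\Lambda}$ (hence only on $\Lambda$), the same $\rho$ works for every $\theta \in (0,\pi/2]$ and every $\lambda$.

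Next I would verify holomorphy on the interior. The extension (\ref{equ:exttheta}) reads $\ddot{\psi}_{j,\Lambda,\lambda}^{\varphi}(P) = \ddot{\psi}_{j,\Lambda,\lambda}^{\varphi}(\varphi^{l_{0}}(P)) - l_{0}$ where $l_{0} = l_{0}(P)$ is supplied by lemma \ref{lem:fundext}. Although $l_{0}(P)$ is not canonical, any larger integer $l$ such that $\{P,\ldots,\varphi^{l}(P)\} \subset H_{j,\theta,1}^{\epsilon,2\rho_{0}}(x)$ and $\varphi^{l}(P) \in B_{\varphi,j}^{1}(x)$ produces the same value, since $\ddot{\psi}_{j,\Lambda,\lambda}^{\varphi}\circ \varphi \equiv \ddot{\psi}_{j,\Lambda,\lambda}^{\varphi}+1$. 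Consequently, on a small open neighborhood of any $P_{0}$ in the interior of $H_{j,\theta}^{\epsilon,\rho,\lambda}$, I may use a single integer $l_{0}$ for all nearby points; then $\varphi^{l_{0}}$ is a local biholomorphism into $H_{\Lambda,j}^{\lambda}$, where $\ddot{\psi}_{j,\Lambda,\lambda}^{\varphi}$ is holomorphic by proposition \ref{pro:Fatpar}. Hence the extension is holomorphic.

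The critical step is continuity up to the edge $x=0$. The author's equation (\ref{equ:equext0}), obtained from lemma \ref{lem:techsum}, says that iterating $P$ forward to $\varphi^{l_{0}}(P) \in B_{\varphi,j}^{1}(x)$ distorts the difference $\ddot{\psi}_{j,\lambda}^{\varphi} - \psi_{L_{j}}^{X}$ by at most $K_{3}/(1+|\psi_{L_{j}}^{X}(P)|)^{k-1}$. By (\ref{equ:pexpsi}) this error vanishes as $P \to (0,0)$. Meanwhile $B_{\varphi,j}^{1}(x) \subset \{x\}\times \overline{B}(0,\sqrt[\nu({\mathcal E}_{0})]{C_{6}/a_{0}}\,|x|)$, so the fundamental domains collapse to the origin as $x \to 0$, and they all lie inside $H_{\Lambda,j}^{\lambda}$ where proposition \ref{pro:bddcon} already gives continuity of $\ddot{\psi}_{j,\lambda}^{\varphi} - \psi_{L_{j}}^{X}$ on $\overline{H_{\Lambda,j}^{\lambda}}$. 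Combining these two facts yields the limit of $\ddot{\psi}_{j,\lambda}^{\varphi} - \psi_{L_{j}}^{X}$ as $Q \to (0,0)$ first along the collapsing fundamental domains and then, via (\ref{equ:equext0}), along arbitrary sequences in $H_{j,\theta}^{\epsilon,\rho,\lambda}$. Continuity at interior boundary points follows in the same style, pulling back continuity on $\overline{H_{\Lambda,j}^{\lambda}}$ along a locally constant choice of $l_{0}$.

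The main obstacle is precisely the behavior at $x=0$: the number of iterations $l_{0}(P)$ needed to land in $B_{\varphi,j}^{1}(x)$ is unbounded as $P$ approaches the origin, so without a quantitative bound the accumulated error could spoil continuity. This is the reason lemma \ref{lem:techsum} was set up in the form it was: the summation of the small perturbations $\Delta_{\varphi}(\varphi^{l}(P))$ is controlled by $1/(1+|\psi_{L_{j}}^{X}(P)|)^{k-1}$, which is exactly the ingredient that makes the limit at the origin well-defined regardless of the number of iterations chosen.
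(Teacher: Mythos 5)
Your proposal is correct and reconstructs the paper's own argument: the paper does not give a separate proof but presents Proposition \ref{pro:bddconext} as the summary of Steps 1--4 of the subsection, and you have reassembled exactly those ingredients (the choice of $\rho$ from Step~3, lemma \ref{lem:fundext} to push points into $B_{\varphi,j}^{1}$, equation (\ref{equ:equext0}) from lemma \ref{lem:techsum} to control the accumulated error, the collapse $B_{\varphi,j}^{1}(x)\subset\{x\}\times\overline{B}(0,\sqrt[\nu({\mathcal E}_{0})]{C_{6}/a_{0}}\,|x|)$, and proposition \ref{pro:bddcon} on $\overline{H_{\Lambda,j}^{\lambda}}$). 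You also correctly identified that lemma \ref{lem:techsum} is precisely what makes the extension uniform in the unbounded number of iterations needed near $x=0$, which is the crux of the proposition.
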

The definition of
$\ddot{\psi}_{j,\lambda}^{\varphi} - \ddot{\psi}_{j,\lambda'}^{\varphi}$ in equation (\ref{equ:exttheta})
and proposition \ref{pro:difFatflat} immediately imply
\begin{pro}
\label{pro:difFatflatext}
Let $\varphi \in \diff{tp1}{2}$ with $2$-convergent normal form ${\rm exp}(X)$.
Let $\Lambda,\Lambda' \in {\mathcal M}$. Consider
$\lambda, \lambda' \in {\mathbb S}^{1}$, $j \in {\mathcal D}(\varphi)$
and $\theta \in (0,\pi/2]$.
Then there exist $K \in {\mathbb R}^{+}$ and $\rho \geq 2 \rho_{0}$ such that
\[ |\ddot{\psi}_{j,\Lambda,\lambda}^{\varphi} - \ddot{\psi}_{j,\Lambda',\lambda'}^{\varphi}|(x,y) \leq
\frac{e^{-K/|x|^{e_{\Lambda,\Lambda',j}^{\lambda,\lambda',-}}}}{2} +    \frac{e^{-K/|x|^{e_{\Lambda,\Lambda',j}^{\lambda,\lambda',+}}}}{2}
\leq  e^{-K/|x|^{\tilde{e}_{d_{\Lambda,\Lambda'}^{\lambda,\lambda'}+1}}}  \]
for any $(x,y) \in H_{j,\theta}^{\epsilon,\rho,\lambda,\lambda'}$.
Moreover $\rho$ depends only on $X$, $\varphi$ and $\Lambda$.
\end{pro}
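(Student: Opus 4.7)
The plan is to reduce Proposition \ref{pro:difFatflatext} to its unextended counterpart, Proposition \ref{pro:difFatflat}, by exploiting the iterative definition of the extension in equation (\ref{equ:exttheta}) and the fact that the fundamental domain $B_{\varphi,j}^{1}$ was built precisely so as to sit inside the (smaller) intersection region $H_{\Lambda,\Lambda',j}^{\lambda,\lambda'}$.

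First I would fix $\theta \in (0,\pi/2]$ and pick $\rho \geq 2\rho_{0}$ large enough that the conclusions of Proposition \ref{pro:bddconext} hold simultaneously for $\Lambda$ and $\Lambda'$, and additionally large enough that the fundamental-domain inclusion $B_{\varphi,j}^{1}(x) \subset H_{\Lambda,\Lambda',j}^{\lambda,\lambda'}(x)$ of Step 3 of subsection \ref{subsec:extfatcor} holds for every $x \in (0,\delta) I_{\Lambda,\Lambda'}^{\lambda,\lambda'}$. Because the constants $a_{0}, a_{1}$ governing the shape of $B_{\varphi,j}^{1}(x)$ depend only on $X$, $\varphi$ and the splittings $\digamma_{\Lambda}$, $\digamma_{\Lambda'}$, this choice of $\rho$ depends only on $X$, $\varphi$, $\Lambda$ and $\Lambda'$.

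Next, given $P \in H_{j,\theta}^{\epsilon,\rho,\lambda,\lambda'}$ over a parameter $x \in (0,\delta) I_{\Lambda,\Lambda'}^{\lambda,\lambda'}$, Lemma \ref{lem:fundext} produces an integer $l_{0}$ such that the forward or backward orbit $\{P,\ldots,\varphi^{l_{0}}(P)\}$ is contained in $H_{j,\theta,1}^{\epsilon,2\rho_{0}}(x) \subset H_{j,\theta}^{\epsilon,\rho,\lambda,\lambda'}(x)$ and $Q := \varphi^{l_{0}}(P) \in B_{\varphi,j}^{1}(x)$. By the choice of $\rho$ we have $Q \in H_{\Lambda,\Lambda',j}^{\lambda,\lambda'}$, so Proposition \ref{pro:difFatflat} gives
\[
|\ddot{\psi}_{j,\Lambda,\lambda}^{\varphi} - \ddot{\psi}_{j,\Lambda',\lambda'}^{\varphi}|(Q)
\leq \frac{1}{2} e^{-K/|x|^{e_{\Lambda,\Lambda',j}^{\lambda,\lambda',-}}} + \frac{1}{2} e^{-K/|x|^{e_{\Lambda,\Lambda',j}^{\lambda,\lambda',+}}}.
\]
The extension formula (\ref{equ:exttheta}) forces $\ddot{\psi}_{j,\Lambda,\lambda}^{\varphi} - \ddot{\psi}_{j,\Lambda',\lambda'}^{\varphi}$ to be constant along the $\varphi$-orbit joining $P$ and $Q$; hence the same bound holds at $P$. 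The final inequality with the single exponent $\tilde{e}_{d_{\Lambda,\Lambda'}^{\lambda,\lambda'}+1}$ follows because $\tilde{e}_{d_{\Lambda,\Lambda'}^{\lambda,\lambda'}+1} \leq \min(e_{\Lambda,\Lambda',j}^{\lambda,\lambda',-}, e_{\Lambda,\Lambda',j}^{\lambda,\lambda',+})$ by the definition of $e_{\Lambda,\Lambda',j}^{\lambda,\lambda',\pm}$ as the interior exponents of the non-terminal exterior sets crossed by the boundary trajectories (and the bound $e^{-K/|x|^{\infty}} \equiv 0$ convention handles the infinite cases).

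The main obstacle I anticipate is verifying cleanly that a single choice of $\rho$ works uniformly. Concretely one must check that the asymptotically continuous sections $P_{j,\theta,\min/\max}^{\epsilon,\rho}$ controlling the shape of $B_{\varphi,j}^{1}(x)$ are compatible with both dynamical splittings $\digamma_{\Lambda}$ and $\digamma_{\Lambda'}$ (equivalently with $\digamma_{\Lambda}\cup\digamma_{\Lambda'}$), so that for large enough $\rho$ the fundamental domain $B_{\varphi,j}^{1}(x)$ lies inside the intersection region $H_{\Lambda,\Lambda',j}^{\lambda,\lambda'}(x)$ and not merely inside $H_{\Lambda,j}^{\lambda}(x) \cap H_{\Lambda',j}^{\lambda'}(x)$. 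Once this geometric inclusion is in place the rest of the argument is a direct transport via $\varphi$-iteration of the exponential estimate already established in Proposition \ref{pro:difFatflat}.
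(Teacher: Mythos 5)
Your proof is correct and takes essentially the same route as the paper, which at this point simply asserts that equation (\ref{equ:exttheta}) together with Proposition \ref{pro:difFatflat} "immediately imply" the result; you are filling in the transport-along-orbits step, the inclusion $B_{\varphi,j}^{1}(x) \subset H_{\Lambda,\Lambda',j}^{\lambda,\lambda'}(x)$ (which Step 3 of subsection \ref{subsec:extfatcor} is designed to guarantee), and the exponent comparison $\tilde{e}_{d_{\Lambda,\Lambda'}^{\lambda,\lambda'}+1} \leq e_{\Lambda,\Lambda',j}^{\lambda,\lambda',\pm}$ coming from the definition of $Ex_{\Lambda,\Lambda',j}^{\lambda,\lambda'}$. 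The only cosmetic mismatch is that you allow $\rho$ to depend on $\Lambda'$ as well, whereas the proposition claims dependence only on $X$, $\varphi$, $\Lambda$; this is harmless here since the bound one derives is indifferent to this, and the $\rho$ produced in Step 3 is in fact governed by $a_{0}$, $a_{1}$, $C_{6}$, $\rho_{0}$, which come from $X$ and $\varphi$ alone.
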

\subsection{Asymptotics of Fatou coordinates}
\label{subsec:asyfatcor}
Let $\varphi \in \diff{tp1}{2}$. Consider a $2$-convergent normal form $\Upsilon={\rm exp}(X)$.
Consider $\Lambda=(\lambda_{1}, \hdots, \lambda_{\tilde{q}}) \in {\mathcal M}$ and
the dynamical splitting $\digamma_{\Lambda}$ in remark \ref{rem:unifspl}.
Let $j \in {\mathbb Z}/(2 \nu ({\mathcal E}_{0}) {\mathbb Z})$. We use the notations in
the previous subsection.

We denote $f = y \circ \varphi -y$,
$\log \varphi = \hat{u}  f \partial/\partial y$ and
$X = u  f \partial/\partial y$
for some units $\hat{u} \in {\mathbb C}[[x,y]]$ and $u \in {\mathbb C}\{x,y\}$.
We obtain $\hat{u}-u \in (f^{2})$.
Fix $k \geq \max(5,4 \nu({\mathcal E}_{0}))$. Let ${\rm exp}(Y_{k})$ be a
$k$-convergent normal form. It is of the form ${\rm exp}(u_{k} f \partial/\partial y)$
for some unit $u_{k} \in {\mathbb C}\{x,y\}$.
We choose $\epsilon_{k} \in (0,\epsilon]$ such that $u_{k}$ does not vanish at any point of
$B(0,\delta) \times B(0,\epsilon_{k})$.
By construction we have $\hat{u}-u \in (f^{2})$ and $\hat{u}-u_{k} \in (f^{k})$.
We obtain $u - u_{k} \in (f^{2})$.

The set
$H_{j,0}^{\epsilon_{k},2 \rho_{0}} \cup H_{j,+}^{\epsilon_{k},2 \rho_{0}} \cup H_{j,-}^{\epsilon_{k},2 \rho_{0}}$
is contained in
$H_{j,0}^{\epsilon,2 \rho_{0}} \cup H_{j,+}^{\epsilon,2 \rho_{0}} \cup H_{j,-}^{\epsilon,2 \rho_{0}}$.
A Fatou coordinate  $\psi_{k}$ of $Y_{k}$ is a solution of the differential
equation $u_{k} f \partial \psi_{k}/\partial y=1$. Thus we can obtain a Fatou coordinate $\psi_{k}$ of $Y_{k}$
in
$H_{j,0}^{\epsilon_{k},2 \rho_{0}} \cup H_{j,+}^{\epsilon_{k},2 \rho_{0}} \cup H_{j,-}^{\epsilon_{k},2 \rho_{0}}$
of the form $\psi_{L_{j},k}=\psi_{L_{j}}^{X} +h$ where
$h \in {\mathcal O}(B(0,\delta) \times B(0,\epsilon_{k}))$ is a solution of
\[ \frac{\partial h}{\partial y} = \frac{1}{u_{k} f} - \frac{1}{u f} = \frac{1}{u u_{k}} \frac{u-u_{k}}{f} . \]
Such a solution exists since $u-u_{k} \in (f^{2})$. Moreover we can suppose $h(0,0)=0$.
Denote $\psi_{k} = \psi_{L_{j},k}$.

Denote
$I_{\lambda}= (0,\delta) \lambda e^{i(-\pi/(4 \nu({\mathcal E}_{0})), \pi/(4 \nu({\mathcal E}_{0})))}$.
In this subsection we build Fatou coordinates $\psi_{j,\lambda,k}^{\varphi}$ of $\varphi$
in $\cup_{x \in I_{\lambda}} (H_{j,\theta}^{\epsilon,\rho}(x) \cap H_{j,\theta/2}^{\epsilon_{k},\rho}(x))$
such that
\[ \psi_{j,\lambda,k}^{\varphi} - \psi_{L_{j},k} =
O(f^{k - 5 \nu({\mathcal E}_{0})/(\nu({\mathcal E}_{0})+1)}) . \]
Let $X_{j,\lambda,k}^{\varphi} = v_{k} f \partial /\partial y$ be the holomorphic
vector field such that $X_{j,\lambda,k}^{\varphi}(\psi_{j,\lambda,k}^{\varphi}) \equiv 1$.
The previous equation implies $v_{k} - u_{k} = O(f^{k - 5 \nu({\mathcal E}_{0})/(\nu({\mathcal E}_{0})+1)})$.
Thus the asymptotic development of $X_{j,\lambda,k}^{\varphi}$ is
$\log \varphi$ up to order $f^{k - 5 \nu({\mathcal E}_{0})/(\nu({\mathcal E}_{0})+1)}$.
The flatness properties in subsection \ref{subsec:comfatcor} imply that the difference
$X_{j,\lambda,k}^{\varphi} - X_{H_{\Lambda,j}^{\lambda}}^{\varphi}$ is exponentially small
(see corollary \ref{cor:Lavvf} and def. \ref{def:hjlam} for the definition of
$X_{H_{\Lambda,j}^{\lambda}}^{\varphi}$).
In this way we deduce in section \ref{sec:mulsuminf} that $\log \varphi$
is an asymptotic development of $X_{H_{\Lambda,j}^{\lambda}}^{\varphi}$.
The construction of $\psi_{j,\lambda,k}^{\varphi}$ is based on finding a well-behaved
$C^{\infty}$ Fatou coordinate and then deducing the existence of a holomorphic one
by solving a $\overline{\partial}$ equation.
\begin{defi}
\label{def:dkpm}
We define
\[ \Delta_{\varphi^{l},k} = \psi_{k} \circ \varphi^{l} - (\psi_{k} +l) \in {\mathbb C}\{x,y\} \cap (f^{k})
\ {\rm for} \  l \in \{-1,1\} . \]
\end{defi}
By considering a smaller $\epsilon_{k}>0$ we can suppose that $\Delta_{\varphi^{-1},k}$
is defined in a neighborhood of $\overline{B}(0,\delta) \times \overline{B}(0,\epsilon_{k})$.
We define the coordinates
\[ \left\{{ \begin{array}{l}
z= \psi_{k}(x,y) \\
\xi = \frac{1}{x^{\nu ({\mathcal E}_{0})}}
\end{array}
}\right. \  {\rm in} \
H_{j,0}^{\epsilon_{k},2 \rho_{0}} \cup H_{j,+}^{\epsilon_{k},2 \rho_{0}} \cup H_{j,-}^{\epsilon_{k},2 \rho_{0}}. \]
We define $U'= \cup_{x \in B(0,\delta) \setminus \{0\}, \ s \in B(0,1/4)} {\rm exp}(sX)(B_{\varphi,j}^{2}(x))$.
Analogously as for equation (\ref{equ:deltak2}) there exists  $K_{4} \in  {\mathbb R}^{+}$
such that
\begin{equation}
\label{equ:deltak4}
|\Delta_{\varphi^{-1},k}| \leq
\frac{K_{4}}{(1+|\psi_{L_{j}}^{X}|)^{k(1+1/\nu({\mathcal E}_{0}))}}
\ {\rm in} \
H_{j,0}^{\epsilon_{k},2 \rho_{0}} \cup H_{j,+}^{\epsilon_{k},2 \rho_{0}} \cup H_{j,-}^{\epsilon_{k},2 \rho_{0}}.
\end{equation}
By construction we have
$|\psi_{L_{j}}^{X}|(x,y) \geq a_{0}/(2 |x|^{\nu({\mathcal E}_{0})})$ in $U'$. We obtain
\begin{equation}
\label{equ:deltamk}
|\Delta_{\varphi^{-1},k}|(x,y) \leq  K_{5}
|x|^{k(\nu({\mathcal E}_{0})+1)} \ \forall (x,y) \in U'.
\end{equation}
where $K_{5}=K_{4} (2/a_{0})^{k(1+1/\nu({\mathcal E}_{0}))}$.
\begin{defi}
We define the mapping
\[ \sigma(\xi, z)= (\xi, z + \varrho (Re (z) - a_{0} |\xi|) \Delta_{\varphi^{-1}, k} (\xi,z)) \]
where $\varrho$ is the function defined in Step 1 of subsection \ref{subsec:defatcor}.
\end{defi}
The mapping $\sigma$ is defined in $U'$.
In fact it conjugates the diffeomorphisms $\varphi(\xi,z)$ and $(\xi,z+1)$.
Proceeding as in Step 3 of subsection \ref{subsec:defatcor} we obtain a constant
$K_{6} \in {\mathbb R}^{+}$ such that
\begin{equation}
\label{equ:deltamkd}
\left|{ \frac{\partial \Delta_{\varphi^{-1},k}}{\partial z} }\right|(\xi,z) \leq
\frac{K_{6}}{|\xi|^{k(1+1/\nu({\mathcal E}_{0}))}} \ \forall (\xi,z) \in U' .
\end{equation}
We have
\[ |z \circ \sigma(\xi, z) - z|   \leq
\frac{K_{5}}{|\xi|^{k(1+1/\nu({\mathcal E}_{0}))}} \ {\rm and} \
||({\mathcal J} \sigma)(\xi, z)  - Id|| <
\frac{K_{5} \sup_{\mathbb R} |\partial \varrho / \partial t|
+2 K_{6}}{|\xi|^{k(1+1/\nu({\mathcal E}_{0}))}} \]
for any $(\xi,z) \in U'$. Indeed $\sigma$ is a $C^{\infty}$ diffeomorphism from $U'$ to $\sigma(U')$.

Fix $\lambda_{0} \in {\mathbb S}^{1}$. Consider the sector
$\tilde{I}_{\lambda_{0}}= (0,\delta) \lambda_{0} e^{i(-7\pi/(24 \nu({\mathcal E}_{0})), 7\pi/(24 \nu({\mathcal E}_{0})))}$
in the $x$ coordinate. It corresponds to
$(\delta^{-\nu({\mathcal E}_{0})} , \infty) \lambda_{0}^{-\nu({\mathcal E}_{0})} e^{i[-7\pi/24,7\pi/24]}$
in the $\xi$ coordinate.
We define $U_{\lambda_{0}}' = \cup_{x \in \tilde{I}_{\lambda_{0}}} U'(x)$.
\begin{defi}
We define the functions
\[ h_{1}(\xi,z) = Re (2 a_{1} \xi \lambda_{0}^{\nu({\mathcal E}_{0})}) - Im(z) \ {\rm and} \
h_{2}(\xi,z) = Re (2 a_{1} \xi \lambda_{0}^{\nu({\mathcal E}_{0})}) + Im(z) \]
in $H_{j,0}^{\epsilon_{k},2 \rho_{0}} \cup H_{j,+}^{\epsilon_{k},2 \rho_{0}} \cup H_{j,-}^{\epsilon_{k},2 \rho_{0}}$.
We define the functions $\tau_{1}=h_{1} \circ \sigma$ and $\tau_{2}=h_{2} \circ \sigma$ in $U_{\lambda_{0}}'$.
\end{defi}
The functions $\tau_{1}$, $\tau_{2}$ satisfy
$\tau_{1} \circ \varphi = \tau_{1}$ and  $\tau_{2} \circ \varphi = \tau_{2}$ in
$U' \cap \varphi^{-1}(U')$. We have
\[  a_{1} |\xi| < 2 \cos (7 \pi/24) a_{1} |\xi| \leq Re (2 a_{1} \xi \lambda_{0}^{\nu({\mathcal E}_{0})})
\leq 2 a_{1} |\xi| \ \forall \xi \in \tilde{I}_{\lambda_{0}}. \]
The set
$\{ (\xi,z) \in B_{\varphi,j}^{3}: \xi \in \tilde{I}_{\lambda_{0}} \ {\rm and} \ h_{1}(\xi,z)>0<h_{2}(\xi,z) \}$
is contained in $B_{\varphi,j}^{2}$ and contains $\cup_{\xi \in \tilde{I}_{\lambda_{0}}} B_{\varphi,j}^{1}(\xi)$.
 The set
\[ \{ (\xi,z) \in U_{\lambda_{0}}' \cap B_{\varphi,j}^{3}: \xi \in \tilde{I}_{\lambda_{0}} \ {\rm and} \
\tau_{1}(\xi,z)>0< \tau_{2}(\xi,z) \} \]
contains $\cup_{\xi \in \tilde{I}_{\lambda_{0}}} B_{\varphi,j}^{1}(\xi)$. We define
\[ \tilde{U}_{\lambda_{0}}'=
\{ (\xi,z) \in U_{\lambda_{0}}': \tau_{1}(\xi,z) \tau_{2}(\xi,z) > 1 \} \]
and
\[ \tilde{B}_{\varphi,j}^{2} =
\{ (\xi,z) \in U_{\lambda_{0}}'  \cap B_{\varphi,j}^{3} : \xi \in \tilde{I}_{\lambda_{0}} \ {\rm and} \
\tau_{1}(\xi,z) \tau_{2}(\xi,z) > 1 \}. \]
The set $\tilde{B}_{\varphi,j}^{2}$ contains both
$\cup_{\xi \in \tilde{I}_{\lambda_{0}}} B_{\varphi,j}^{1}(\xi)$
and a fundamental domain of $\varphi_{|\tilde{U}_{\lambda_{0}}'}$.
We denote $U_{\lambda_{0}}^{*}$ the space of orbits of $\varphi_{|\tilde{U}_{\lambda_{0}}'}$.
\subsubsection{The $\overline{\partial}$ equation}
Our goal is defining a Fatou coordinate $\psi_{j,\lambda_{0}, k}^{\varphi}$ of $\varphi$ in
$\tilde{U}_{\lambda_{0}}'$.  We can define
$\psi_{j,\lambda_{0}, k}^{\varphi}  =  \sigma \circ  \psi_{k}$  if $\sigma$ is holomorphic.
In general we look for a function $v$ such that
\[ \sigma_{v}(\xi,z) =
(\xi, z + \varrho (Re (z) - a_{0} |\xi|) \Delta_{\varphi^{-1}, k} (\xi,z) + v(\xi,z))\]
is a holomorphic mapping conjugating $\varphi(\xi,z)$ and $(\xi,z+1)$.
The latter condition is equivalent to $v \circ \varphi \equiv v$ whereas the former condition
is equivalent to the equation
$\overline{\partial} v = \Omega$ where
$\Omega=- \overline{\partial} (z \circ \sigma(\xi,z))$ is
a $(0,1)$ form. Since $z \circ \sigma \circ \varphi(\xi,z) = z \circ \sigma +1$ we obtain
$\varphi^{*} \Omega \equiv \Omega$. The form $\Omega$
represents an element of $H^{0,1}(U_{\lambda_{0}}^{*})$.
It suffices to find a function $v$ defined in
$U_{\lambda_{0}}^{*}$ such that $\overline{\partial} v = \Omega$.

We have
\[ \Omega = - \Delta_{\varphi^{-1}, k}(\xi,z) \overline{\partial} (\varrho (Re (z) - a_{0} |\xi|)) . \]
Moreover $\Omega$ is of the form $\Omega = A(\xi,z) d \overline{\xi} + B(\xi,z) d \overline{z}$
where $A, B$ satisfy
\[ |A(\xi,z)| \leq
\frac{a_{0} K_{5} \sup_{\mathbb R}|\partial \varrho /  \partial t |}
{2 |\xi|^{k(1+1/\nu({\mathcal E}_{0}))}}, \
|B(\xi,z)| \leq \frac{K_{5} \sup_{\mathbb R}|\partial \varrho /  \partial t |}
{2 |\xi|^{k(1+1/\nu({\mathcal E}_{0}))}} \ \ \forall (\xi,z) \in U_{\lambda_{0}}' . \]
\begin{defi}
Since $\tau_{l}$ is defined in $U_{\lambda_{0}}^{*}$ we define
$\omega_{l}= \partial \tau_{l} \in H^{1,0}(U_{\lambda_{0}}^{*})$ for $l \in \{1,2\}$.
We denote
$\overline{\partial} {\omega}_{l} = \sum {c_{bd}^{l} \overline{{\omega}_{b}} \wedge {\omega}_{d}}$
for $l \in \{1,2\}$.
We consider the decomposition
\begin{equation}
\label{equ:dercon}
dc_{bd}^{l} = \sum_{g=1}^{2} {\partial}_{g} (c_{bd}^{l}) w_{g} +
\sum_{g=1}^{2} \overline{{\partial}_{g}} (c_{bd}^{l}) \overline{w_{g}} .
\end{equation}
We define the volume elements
\[ dV_{0}= {(i/2)}^{2} d z \wedge d \overline{z} \wedge d \zeta \wedge d \overline{\zeta} \ {\rm and} \
dV = {(i/2)}^{2}  {\omega}_{1} \wedge  \overline{\omega}_{1} \wedge  {\omega}_{2} \wedge  \overline{\omega}_{2}.\]
\end{defi}
The forms $dV_{0}$ and $dV$ are defined in $U_{\lambda_{0}}'$. The form $dV$ is also defined in
$U_{\lambda_{0}}^{*}$.
\begin{lem}
\label{lem:dbset}
Denote $e=k(1+1/\nu({\mathcal E}_{0}))$. We have
\begin{itemize}
\item The forms ${\omega}_{1}$ and ${\omega}_{2}$ compose a base of the cotangent space in every
point of $U_{\lambda_{0}}'$ and $U_{\lambda_{0}}^{*}$.
\item There exists $K_{7} \geq 1$ such that
\[ \frac{1}{K_{7}} dV_{0} \leq   dV \leq K_{7} dV_{0} \ \ {\rm in} \ U_{\lambda_{0}}'. \]
\item We have $\partial {\omega}_{1} = \partial {\omega}_{2}=0$,
$\overline{\partial} ({\omega}_{1} + {\omega}_{2})=0$ and
\[ |c_{bd}^{l}| \leq K_{8} /|\xi^{e}| \ , \ |{\partial}_{g} c_{bd}^{l}| \leq K_{8} /|\xi^{e}| \
{\rm and} \ |\overline{\partial}_{g} c_{bd}^{l}| \leq K_{8} /|\xi^{e}| \]
for some $K_{8}>0$ and any $(l,b,d,g) \in {\{1,2\}}^{4}$.
\end{itemize}
\end{lem}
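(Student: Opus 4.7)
The plan is to exploit the smallness of $\sigma-\mathrm{Id}$ proven just above the statement. First I would write $\sigma(\xi,z)=(\xi,\sigma_2(\xi,z))$ with $\sigma_2=z+\phi\cdot\Delta_{\varphi^{-1},k}$ and $\phi=\varrho(\mathrm{Re}\,z-a_0|\xi|)$, and use the explicit formula $h_l=a_1\lambda_0^{\nu({\mathcal E}_0)}\xi+\overline{a_1\lambda_0^{\nu({\mathcal E}_0)}\xi}\pm\frac{i}{2}(w-\overline w)$ to compute $\omega_l=\partial\tau_l=a_1\lambda_0^{\nu({\mathcal E}_0)}\,d\xi\pm\frac{i}{2}\bigl(\partial\sigma_2-\overline{\overline\partial\sigma_2}\bigr)$. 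The bound $\|\mathcal{J}\sigma-\mathrm{Id}\|=O(1/|\xi|^e)$ on $U'$ forces $\partial\sigma_2-dz$ and $\overline\partial\sigma_2$ to have all their coefficients of order $1/|\xi|^e$, so $\omega_1,\omega_2$ are perturbations of order $1/|\xi|^e$ of the linearly independent constant forms $a_1\lambda_0^{\nu({\mathcal E}_0)}\,d\xi\pm(i/2)\,dz$. Once $\delta$ is small enough this proves the first bullet, and since the corresponding top wedge equals $|a_1\lambda_0^{\nu({\mathcal E}_0)}|^2\,d\xi\wedge d\overline\xi\wedge dz\wedge d\overline z$ up to a relative $O(1/|\xi|^e)$ error, it also yields the second bullet.

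For the identities in the third bullet: $\partial\omega_l=\partial^2\tau_l=0$ comes from the bidegree decomposition of $d^2=0$. For $\omega_1+\omega_2$, I would observe that $h_1+h_2=2\mathrm{Re}(2a_1\lambda_0^{\nu({\mathcal E}_0)}\xi)$ depends only on $\xi$ and $\sigma$ preserves $\xi$, so $\tau_1+\tau_2=2a_1\lambda_0^{\nu({\mathcal E}_0)}\xi+2\overline{a_1\lambda_0^{\nu({\mathcal E}_0)}\xi}$ is pluriharmonic, giving $\omega_1+\omega_2=2a_1\lambda_0^{\nu({\mathcal E}_0)}\,d\xi$, a constant-coefficient $(1,0)$-form, which is trivially $\overline\partial$-closed.

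For the bounds on $c_{bd}^l$: using $\overline\partial\omega_l=-\partial\overline\partial\tau_l$, the holomorphy of $\Delta_{\varphi^{-1},k}$ in $(\xi,z)$ (which gives $\overline\partial\Delta_{\varphi^{-1},k}=0$ and $\partial\overline{\Delta_{\varphi^{-1},k}}=0$), and the reality of $\phi$, I would reduce to $\overline\partial\omega_l=\mp\tfrac{i}{2}\bigl[(\Delta_{\varphi^{-1},k}-\overline{\Delta_{\varphi^{-1},k}})\,\partial\overline\partial\phi+\partial\Delta_{\varphi^{-1},k}\wedge\overline\partial\phi-\partial\phi\wedge\overline\partial\overline{\Delta_{\varphi^{-1},k}}\bigr]$. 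The functions $\phi,\partial\phi,\overline\partial\phi,\partial\overline\partial\phi$ are $C^\infty$ with bounded coefficients on $U'_{\lambda_0}$. By (\ref{equ:deltamk}), $|\Delta_{\varphi^{-1},k}|\leq K_5/|\xi|^e$ on $U'$; since $U'$ was built by thickening $B_{\varphi,j}^2$ along the $X$-flow for $|s|\leq 1/4$, I can slightly shrink it and still apply Cauchy's formula to $\Delta_{\varphi^{-1},k}$ viewed as holomorphic in $(\xi,z)$ on a $(\xi,z)$-polydisc of radius bounded below, obtaining $|\partial^\alpha\Delta_{\varphi^{-1},k}|\leq C_\alpha/|\xi|^e$ for every fixed multi-index $\alpha$. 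Consequently all coefficients of $\overline\partial\omega_l$ in the standard basis $d\overline\xi\wedge d\xi,d\overline\xi\wedge dz,d\overline z\wedge d\xi,d\overline z\wedge dz$ are $O(1/|\xi|^e)$, together with their partial derivatives of any fixed order. Converting to the frame $(\overline\omega_b\wedge\omega_d)$ via the bounded invertible transition matrix supplied by the first paragraph yields $|c_{bd}^l|\leq K_8/|\xi|^e$; the same bounded change of basis, applied to the dual tangent frames, gives the corresponding estimates on $\partial_g c_{bd}^l$ and $\overline\partial_g c_{bd}^l$.

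The main technical point will be the uniformity of the Cauchy step in the $(\xi,z)$-coordinates: one must ensure that every point of (a slight enlargement of) $U'_{\lambda_0}$ is the centre of a $(\xi,z)$-polydisc whose radius is bounded below independently of $\xi$, on which $\Delta_{\varphi^{-1},k}$ remains holomorphic and satisfies an $O(1/|\xi|^e)$ bound. This can be arranged by exploiting the $1/4$-flow room in the definition of $U'$ to take a fixed radius in $z$ and a radius of order $|\xi|/2$ in $\xi$ (the latter actually giving a better $O(1/|\xi|^{e+1})$ bound on $\partial_\xi\Delta_{\varphi^{-1},k}$), after a routine verification that these discs are contained in the holomorphy domain of $\Delta_{\varphi^{-1},k}$.
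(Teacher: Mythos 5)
Your proposal is correct and follows essentially the same route as the paper: both express $\omega_1,\omega_2$ as $O(1/|\xi|^e)$-perturbations of the constant forms $a_1\lambda_0^{\nu({\mathcal E}_0)}\,d\xi\pm\tfrac{i}{2}dz$ (giving the frame and volume-comparison claims), reduce $\overline\partial\omega_l$ to derivatives of $\Delta_{\varphi^{-1},k}$ and of the cutoff, invoke Cauchy's formula for the $\Delta$-derivative bounds, and convert back through the bounded transition matrix. The only differences are presentational: you obtain $\overline\partial\omega_l$ from a clean abstract identity whereas the paper writes out the four coefficient terms in coordinates, and you explicitly flag the uniformity of the Cauchy polydisc, which the paper settles by referring back to Step 3 of subsection 5.2.
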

\begin{proof}
Denote $\rho(\xi,z)=\varrho(Re (z) - a_{0} |\xi|)$ and $\Delta= \Delta_{\varphi^{-1}, k}$.
We have
\[ \frac{\partial \tau_{1}}{\partial \xi} =a_{1} \lambda_{0}^{\nu({\mathcal E}_{0})} -
\frac{\partial \rho}{\partial \xi}  Im(\Delta) +
\frac{i}{2} \frac{\partial \Delta}{\partial \xi} \rho . \]
We have $\Delta =O(1/\xi^{e})$ by equation (\ref{equ:deltamk}).
We use Cauchy's integral formula as in Step 3 of subsection \ref{subsec:defatcor} to obtain
a constant $K_{9} \in{\mathbb R}^{+}$ such that
\[ |\Delta | ,
\left|{\frac{\partial \Delta}{\partial \xi} }\right|  ,
\left|{\frac{\partial \Delta}{\partial z} }\right|  ,
\left|{\frac{\partial^{2} \Delta}{\partial \xi \partial z} }\right|  ,
\left|{\frac{\partial^{2} \Delta}{\partial z^{2}} }\right|  ,
\left|{\frac{\partial^{2} \Delta}{\partial \xi^{2}} }\right| \leq
\frac{K_{9}}{|\xi|^{e}}  \]
in $U'$. Therefore we obtain
\[ \left|{\frac{\partial \tau_{l}}{\partial \xi} - a_{1} \lambda_{0}^{\nu({\mathcal E}_{0})} }\right|(\xi,z)
\leq \frac{1}{2} \frac{K_{9}}{|\xi|^{e}}
( a_{0}  \sup_{\mathbb R} |\partial \varrho / \partial t|  + 1) \ \forall (\xi,z) \in U_{\lambda_{0}}'  \
\forall l \in \{1,2\}. \]
The proof for $\tau_{2}$ is analogous. We also have
\[  \left|{\frac{\partial \tau_{1}}{\partial z} - \frac{i}{2} }\right|(\xi,z)
\leq \frac{K_{9}}{2 |\xi|^{e}} \left({
\sup_{\mathbb R} |\varrho'| + 1 }\right), \
 \left|{\frac{\partial \tau_{2}}{\partial z} + \frac{i}{2} }\right|(\xi,z)
\leq \frac{K_{9}}{2 |\xi|^{e}} \left({
\sup_{\mathbb R} |\varrho'| + 1 }\right) \]
for any $(\xi,z) \in U_{\lambda_{0}}'$. Thus the two first points of the lemma hold true.

Consider a composition $\partial_{l} \circ ... \circ \partial_{1}$ of operators where $l \leq 3$
and $\partial_{b}$ is either $\partial /\partial \xi$,
$\partial /\partial \overline{\xi}$, $\partial /\partial z$ or $\partial /\partial \overline{z}$ for
$b \leq 3$.
Consider an operator $\partial'= \partial_{b_{d}} \circ \hdots \circ \partial_{b_{1}}$ where
$d \leq l$ and $1 \leq b_{1} < \hdots < b_{d} \leq l$.
Let $l_{\xi}'$ be the number of times that we apply the operators $\partial /\partial \xi$
and $\partial /\partial \overline{\xi}$ in $\partial'$.
Suppose $l_{\xi}' \geq 1$.
We have that $\partial' (-a_{0}|\xi|) = O(\xi^{-l_{\xi}' +1})$. Since
$(\partial_{l} \circ ... \circ \partial_{1})(\rho)$
is a polynomial with rational coefficients in
$\varrho'$, $\varrho^{2)}$ and $\varrho^{3)}$ and functions of the form
$(\partial_{b_{d}} \circ \hdots \circ \partial_{b_{1}})(-a_{0}|\xi|)$
we deduce that $(\partial_{l} \circ ... \circ \partial_{1})(\rho)$ is bounded in $U_{\lambda_{0}}'$.

We have that the form  $\overline{\partial} \omega_{1}$ is equal to
\[  - Im \left({
\frac{\partial^{2} \rho}{\partial \xi \partial \overline{\xi}} \Delta  +
\frac{\partial \Delta}{\partial \xi} \frac{\partial \rho}{\partial \overline{\xi}}
}\right) d \overline{\xi} \wedge d \xi +
 \left({
- \frac{\partial^{2} \rho}{\partial \xi \partial \overline{z}} Im(\Delta) - \frac{i}{2}
\overline{\frac{\partial \Delta}{\partial z}} \frac{\partial \rho}{\partial \xi} +
\frac{i}{2}  \frac{\partial \Delta}{\partial \xi} \frac{\partial \rho}{\partial \overline{z}}
}\right) d \overline{z} \wedge d \xi + \]
\[  \left({
- \frac{\partial^{2} \rho}{\partial \overline{\xi} \partial z} Im(\Delta) - \frac{i}{2}
\overline{\frac{\partial \Delta}{\partial \xi}} \frac{\partial \rho}{\partial z} +
\frac{i}{2}  \frac{\partial \Delta}{\partial z} \frac{\partial \rho}{\partial \overline{\xi}}
}\right)  d \overline{\xi} \wedge d z
- Im \left({
\frac{\partial^{2} \rho}{\partial z \partial \overline{z}} \Delta  +
\frac{\partial \Delta}{\partial z} \frac{\partial \rho}{\partial \overline{z}}
}\right)  d \overline{z} \wedge d z . \]
Since
\[
\left({ \begin{array}{l}
\omega_{1} \\
\omega_{2}
\end{array}
}\right) =
\left({ \begin{array}{cc}
\frac{\partial \tau_{1}}{\partial \xi} &  \frac{\partial \tau_{1}}{\partial z} \\
\frac{\partial \tau_{2}}{\partial \xi} &  \frac{\partial \tau_{2}}{\partial z}
\end{array}
}\right)
\left({ \begin{array}{l}
d \xi \\
dz
\end{array}
}\right) \implies
\left({ \begin{array}{l}
d \xi \\
dz
\end{array} }\right)
=
\frac{\left({ \begin{array}{cc}
\frac{\partial \tau_{2}}{\partial z} & -  \frac{\partial \tau_{1}}{\partial z} \\
- \frac{\partial \tau_{2}}{\partial \xi} &  \frac{\partial \tau_{1}}{\partial \xi}
\end{array}
}\right)}{2  a_{1} \lambda_{0}^{-\nu({\mathcal E}_{0})} \frac{\partial \tau_{2}}{\partial z} }
\left({ \begin{array}{l}
\omega_{1} \\
\omega_{2}
\end{array} }\right)
\]
the coefficients $c_{bd}^{l}$ are of the form
$\tilde{c}_{bd}^{l}/(2 a_{1} |\partial \tau_{2}/\partial z|)^{2}$ where
numerator and denominator are polynomials with complex coefficients in
$\Delta$, $\partial \Delta/\partial \xi$, $\partial \Delta/\partial z$,
their complex conjugates and
functions of the form $(\partial_{g} \circ ... \circ \partial_{1})(\rho)$ with $g \leq 2$. There is no
independent term in the $\Delta$ variables for $\tilde{c}_{bd}^{l}$ and
$(2 a_{1} |\partial \tau_{2}/\partial z|)^{2} - a_{1}^{2}$. Therefore
$d c_{bd}^{l}$ is a $O(1/|x|^{e})$ in the base $\{d \xi, d \overline{\xi}, dz, d \overline{z}\}$
and then in the base $\{ \omega_{1}, \omega_{2}, \overline{\omega}_{1}, \overline{\omega}_{2}\}$ for any
$(l,b,d) \in {\{1,2\}}^{3}$.
\end{proof}
\subsubsection{Stein character of the space of orbits}
Next step is proving that $U_{\lambda_{0}}^{*}$ is pseudo-convex. We define the function
\[ u_{2} = u_{2}^{0} + u_{2}^{1} + u_{2}^{2} + u_{2}^{3} \]
where $u_{2}^{0} = -\ln (h_{1} h_{2}-1)$, $u_{2}^{1}= 5 \ln |\xi|$,
$u_{2}^{2}=- \ln ({|\xi|}^{2}-1/\delta^{2 \nu({\mathcal E}_{0})})$
and
\[ u_{2}^{3}=
-\ln \left({
[Im(\ln (\xi \lambda_{0}^{\nu({\mathcal E}_{0})})) + 7\pi/24]
[7\pi/24 - Im(\ln (\xi \lambda_{0}^{\nu({\mathcal E}_{0})}))]
}\right) .
\]
 Since none of the functions $u_{2}^{l}$ depends on $Re(z)$ then
$u_{2}$ is a $C^{\infty}$ function of $\sigma(U_{\lambda_{0}}^{*})$.
The function $u_{2}$ is a good candidate to be a $C^{\infty}$ p.s.h. exhaustion of
$\sigma(U_{\lambda_{0}}^{*})$. We define $u_{1} = u_{2} \circ \sigma$.
\begin{lem}
\label{lem:p2upsh}
The function $u_{1}$ is a p.s.h. $C^{\infty}$ exhaustion of
$U_{\lambda_{0}}^{*}$ for $\delta>0$ small enough.
In particular $U_{\lambda_{0}}^{*}$ is a Stein manifold.
\end{lem}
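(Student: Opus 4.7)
The plan is to verify the three defining properties of $u_1$ (smoothness, plurisubharmonicity, and exhaustion of $U_{\lambda_0}^*$) separately, exploiting that $\sigma$ preserves the $\xi$-coordinate while perturbing $z$ only by a quantity of order $|\xi|^{-e}$, with $e=k(1+1/\nu({\mathcal E}_{0}))$ arbitrarily large. First, $u_1$ descends to $U_{\lambda_0}^*$: since $\sigma\circ\varphi=T_1\circ\sigma$ with $T_1(\xi,z)=(\xi,z+1)$ and every $u_2^l$ is invariant under $T_1$ (either because $h_1,h_2$ depend on $z$ only via $Im(z)$, or because $u_2^l$ depends only on $\xi$), one has $u_1\circ\varphi=u_1$. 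The $C^\infty$ character is immediate from the smoothness of $\sigma$ on $U'$ and of each $u_2^l$.

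For the exhaustion property, I would match each piece of the topological boundary of $\tilde U_{\lambda_0}'$ with the single term of $u_2\circ\sigma$ that diverges there: $u_2^0\circ\sigma$ blows up as $\tau_1\tau_2\to 1^+$; $u_2^2$ blows up as $|\xi|\to\delta^{-\nu({\mathcal E}_{0})}$; $u_2^1$ blows up as $|\xi|\to\infty$; and $u_2^3$ blows up as $\arg(\xi\lambda_0^{\nu({\mathcal E}_{0})})\to\pm 7\pi/24$. Each remaining term stays uniformly bounded below in the corresponding neighborhood, by the uniform bounds on $\sigma$ and $\Delta_{\varphi^{-1},k}$ coming from (\ref{equ:deltamk}); thus the total function is proper on $\tilde U_{\lambda_0}'$ and descends to an exhaustion of $U_{\lambda_0}^*$.

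The decisive step is plurisubharmonicity. Here $u_2^1=5\ln|\xi|$ is pluriharmonic and contributes nothing. Direct computation shows that $i\partial\bar\partial u_2^2 = \delta^{-2\nu({\mathcal E}_{0})}(|\xi|^2-\delta^{-2\nu({\mathcal E}_{0})})^{-2}\, i\, d\xi\wedge d\bar\xi$ and that $i\partial\bar\partial u_2^3$ is a strictly positive smooth multiple of $|\xi|^{-2}\,i\, d\xi\wedge d\bar\xi$, both strictly positive on their domain. For $u_2^0\circ\sigma=-\ln(\tau_1\tau_2-1)$ I would first compute the Levi form of the model $-\ln(h_1 h_2-1)=-\ln(p^2-q^2-1)$, with $p=Re(2a_1\xi\lambda_0^{\nu({\mathcal E}_{0})})$ and $q=Im(z)$ real and pluriharmonic. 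Using $\partial(p^2-q^2-1)=2a_1\lambda_0^{\nu({\mathcal E}_{0})}p\, d\xi+iq\,dz$ and the identity $\partial^2(-\ln f)/\partial\bar\xi^j\partial\xi^k=-f_{j\bar k}/f+f_j\overline{f_k}/f^2$, the Hermitian matrix of the Levi form in the frame $(d\xi,dz)$ reduces algebraically to one whose determinant equals $(2a_1)^2(p^2-q^2+1)/(4f^3)$, strictly positive on $\{p^2-q^2>1\}$. Consequently the model is strictly plurisubharmonic with Levi form bounded below, on each relatively compact sublevel set, by a definite positive Hermitian form.

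The main obstacle, and the only place where the choice of $\delta$ small enters, is propagating strict positivity from the model to the actual $u_2^0\circ\sigma$. Since $\sigma$ is not holomorphic, $\tau_1\tau_2-1=p^2-\tilde q^2-1$ with $\tilde q=q+r$ and $r=Im(\varrho\Delta_{\varphi^{-1},k})$; the function $r$ and its first and second derivatives are controlled by Cauchy's formula applied to (\ref{equ:deltamk}) and (\ref{equ:deltamkd}), exactly as in Lemma \ref{lem:dbset}, giving $O(|\xi|^{-e})$ bounds. Combining with $|q|=O(|\xi|)$ in the first exterior set and $|\xi|\geq\delta^{-\nu({\mathcal E}_{0})}$, the perturbation of the Hermitian matrix of $i\partial\bar\partial u_2^0$ is of order $O(\delta^{(e-3)\nu({\mathcal E}_{0})})$ relative to the definite positive lower bound of the model Levi form. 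Because $k\geq\max(5,4\nu({\mathcal E}_{0}))$ forces $e\geq 5$, for $\delta>0$ sufficiently small the perturbation is dominated by the unperturbed Levi form, so that $i\partial\bar\partial u_1$ is strictly positive throughout $\tilde U_{\lambda_0}'$. Descending to $U_{\lambda_0}^*$ gives a $C^\infty$ plurisubharmonic exhaustion, and Grauert's characterization implies that $U_{\lambda_0}^*$ is Stein.
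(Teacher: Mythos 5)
Your proposal is correct in its main thrust and takes a genuinely different computational route from the paper for the decisive plurisubharmonicity step. The paper works intrinsically in the twisted frame $\{\omega_1,\omega_2\}=\{\partial\tau_1,\partial\tau_2\}$: it computes $\partial u/\partial\tau_l$ and $\partial^2 u/\partial\tau_b\partial\tau_l$ exactly, expresses $\partial\bar\partial u=\sum u_{bl}\omega_b\wedge\bar\omega_l$, and shows positivity via the elementary inequality $(\tau_1\tau_2+1)/(\tau_1^2+\tau_2^2)\geq 2/(\tau_1+\tau_2)^2$ (valid when $\tau_1\tau_2>1$) after absorbing the frame-twist coefficients $c^l_{bd}$ coming from lemma \ref{lem:dbset}. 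You instead compute the Levi form of the holomorphic model $-\ln(p^2-q^2-1)$ explicitly in the fixed frame $(d\xi,dz)$ — the determinant formula $a_1^2(p^2-q^2+1)/f^3$ is correct and positive on $\{f>0\}$, and together with $L_{\xi\bar\xi}>0$ gives positive definiteness — and then treat the non-holomorphicity of $\sigma$ as an additive perturbation $\tilde q = q+r$ controlled via Cauchy estimates on $\Delta_{\varphi^{-1},k}$. Both arguments bottom out at the same relative estimate: the perturbation to the Levi form is $O(|\xi|^{3-e})=O(\delta^{(e-3)\nu({\mathcal E}_0)})$ relative to the model's lowest eigenvalue, which degenerates like $1/(\rho\,|\xi|^2)$; since $e=k(1+1/\nu({\mathcal E}_0))>3$, taking $\delta$ small suffices. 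Your route is more explicit and easier to verify coordinate by coordinate; the paper's is more compact because the intrinsic frame avoids writing $p,q$ out in $(\xi,z)$.

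Two points deserve a slightly more careful treatment than you give them. First, the phrase ``bounded below on each relatively compact sublevel set'' is misleading and unnecessary: what you actually use (correctly, two sentences later) is a pointwise comparison of the degenerating lower bound $\sim 1/(\rho\,|\xi|^2)$ with the perturbation $\sim |\xi|^{1-e}/\rho$, valid on all of $\tilde U_{\lambda_0}'$, not only on compacta. Second, in the exhaustion argument it is not enough to match boundary pieces with diverging terms; one must also rule out cancellation where one term $\to+\infty$ while another $\to-\infty$ (e.g.\ $u_2^2\to-\infty$ and $u_2^1\to+\infty$ as $|\xi|\to\infty$), and then verify properness, i.e.\ that $\{u_2\leq K\}$ is compact after passing to the honest quotient coordinate $e^{2\pi i z}$. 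The paper handles this by regrouping as $u_2=(u_2^0+2\ln|\xi|)+\ln|\xi|+(u_2^2+2\ln|\xi|)+u_2^3$, showing each group is bounded below, deducing $|\xi|\leq e^{K_1'}$ on a sublevel set, and then bounding $|e^{2\pi i z}|$ by $e^{4\pi a_1 \exp(K_1')}$ via $|Im(z)|<2a_1|\xi|$. Your sketch should incorporate this regrouping and the final compactness check.
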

\begin{proof}
 The function $u_{1}$ is $C^{\infty}$ and in order to prove that it is an exhaustion it suffices
to prove that $u_{2}$ is an exhaustion of $\sigma(U_{\lambda_{0}}^{*})$.
  The set $\sigma(U_{\lambda_{0}}^{*})$ is biholomorphic to the subset
$S = (\xi, {e}^{2 \pi i z})(\sigma(\tilde{U}_{\lambda_{0}}'))$ of ${\mathbb C}^{2}$
since we have to identify $(\xi,z)$ and $(\xi,z+1)$.
The functions $\ln |\xi|$
and $u_{2}^{3}$ are bounded by below. The function $2 \ln |\xi| + u_{2}^{2}$ is also bounded by
below. The functions $h_{1}$ and $h_{2}$ are smaller than $4 a_{1} |\xi|$, we have
\[ u_{2}^{0} + 2 \ln |\xi| \geq \ln ({|\xi|}^{2}/(16 a_{1}^{2} {|\xi|}^{2})) = - 2 \ln (4 a_{1}) \]
and then $u_{2}^{0} + 2 \ln |\xi|$ is also bounded by below. Since
\[ u_{2} = (u_{2}^{0} + 2 \ln |\xi|) + \ln |\xi| + (u_{2}^{2}+ 2 \ln |\xi|) + u_{2}^{3} \]
the function $u_{2}$ extends continuously to $\overline{S}$ by defining
$(u_{2})_{|\partial S} \equiv \infty$. It suffices to prove the compactness of
$S_{K} = \{ (\xi,w) \in S : u_{2}(\xi,w) \leq K \}$ for any $K \in {\mathbb R}^{+}$.
The set $S_{K}$ is closed in ${\mathbb C}^{2}$ since $(u_{2})_{|\partial S} \equiv \infty$.
There exists $K_{1}' \in {\mathbb R}$ such that
$\ln |\xi| \leq K_{1}'$ for any $(\xi,w) \in S_{K}$.
The set $S_{K}$ is contained in $|\xi| \leq {e}^{K_{1}'}$.
Moreover since $-2 a_{1} |\xi|<Im(z)<2 a_{1} |\xi|$ in $\sigma(\tilde{U}_{\lambda_{0}}')$ then
\[ |\xi| \leq {e}^{K_{1}'} \ {\rm and} \ |{e}^{2 \pi i z}| \leq
{e}^{4 \pi a_{1} {\rm exp}(K_{1}')} \ \ \forall (\xi,z) \in \sigma(\tilde{U}_{\lambda_{0}}') \cap
\{u_{2} \leq K\}. \]
The set $S_{K}$ is closed and bounded and then compact.
Thus $\sigma({U}_{\lambda_{0}}^{*}) \cap \{u_{2} \leq K\}$ is a compact set for any $K \in {\mathbb R}^{+}$.

  The remainder of the proof is devoted to show that $u_{1}$ is a p.s.h. funtion in $U_{\lambda_{0}}^{*}$.
It suffices to prove that $u=-\ln(\tau_{1} \tau_{2}-1)$ is strictly p.s.h in $U_{\lambda_{0}}^{*}$
since it is straightforward to prove that $u_{2}^{l} \circ \sigma$ is p.s.h. for $l \in \{1,2,3\}$. We have
\[ \overline{\partial} u = \frac{\partial{u}}{\partial{\tau_{1}}} \overline{\partial} \tau_{1}
+  \frac{\partial{u}}{\partial{\tau_{2}}} \overline{\partial} \tau_{2} \]
and then
\[ \partial \overline{\partial} u = \sum_{bl}
{\frac{{\partial}^{2} u}{\partial \tau_{b} \partial \tau_{l}}} {\omega}_{b} \wedge \overline{\omega}_{l}
+ \frac{\partial{u}}{\partial{\tau_{1}}} \partial \overline{\partial} \tau_{1} +
\frac{\partial{u}}{\partial{\tau_{2}}} \partial \overline{\partial} \tau_{2} . \]
We denote
$\overline{\partial} {\omega}_{l} = \sum_{bd} {c_{bd}^{l} \overline{{\omega}_{b}} \wedge {\omega}_{d}}$
for $l \in \{1,2\}$.
We can express $\partial \overline{\partial} u$ as
\[ \sum_{bl} {\omega}_{b} \wedge \overline{\omega}_{l}
{\left({
{\frac{{\partial}^{2} u}{\partial \tau_{b} \partial \tau_{l}}} + c_{lb}^{1} \frac{\partial{u}}{\partial{\tau_{1}}}
+ c_{lb}^{2} \frac{\partial{u}}{\partial{\tau_{2}}}
}\right)} .
\]
We define the coefficients $u_{bl}$ to satisfy
$\partial \overline{\partial} u = \sum_{bl} u_{bl} {\omega}_{b} \wedge \overline{\omega}_{l}$. Then we have
\[ u_{bl} = \frac{{\partial}^{2} u}{\partial \tau_{b} \partial \tau_{l}}
+ c_{lb}^{1}
\left({
\frac{\partial{u}}{\partial{\tau_{1}}}- \frac{\partial{u}}{\partial{\tau_{2}}}
}\right) . \]
We define now $\rho= \tau_{1}\tau_{2}-1$. We can calculate the first derivatives of $u$ to obtain
\[ \frac{\partial{u}}{\partial{\tau_{1}}} = -\frac{\tau_{2}}{\rho} \ , \
\frac{\partial{u}}{\partial{\tau_{2}}} = -\frac{\tau_{1}}{\rho} \ \ {\rm and} \ \
\frac{\partial{u}}{\partial{\tau_{1}}}- \frac{\partial{u}}{\partial{\tau_{2}}} =
\frac{\tau_{1}-\tau_{2}}{\rho} \]
and then the second derivatives
\[ \frac{{\partial}^{2} u}{\partial {\tau_{1}}^{2}} = \frac{{\tau_{2}}^{2}}{{\rho}^{2}} \ , \
\frac{{\partial}^{2} u}{\partial {\tau_{2}}^{2}} = \frac{{\tau_{1}}^{2}}{{\rho}^{2}} \ \ {\rm and} \ \
\frac{{\partial}^{2} u}{\partial \tau_{1} \partial \tau_{2}} = \frac{1}{{\rho}^{2}} . \]
We can apply these calculations to show that $\sum_{bl} u_{bl} {\zeta}_{b} \overline{\zeta}_{l}$ is
greater or equal than
\[
\frac{(\tau_{2}^{2} |{\zeta}_{1}|^{2} + \tau_{1}^{2} |{\zeta}_{2}|^{2} +
 ({\zeta}_{1} \overline{\zeta}_{2} + \overline{\zeta}_{1} {\zeta}_{2}))}{{\rho}^{2}}
- \frac{2 K_{8} |\tau_{1}-\tau_{2}| {|\xi|}^{-k(1+1/\nu({\mathcal E}_{0}))}}{\rho}
({|{\zeta}_{1}|}^{2} + {|{\zeta}_{2}|}^{2}) . \]
where $K_{8} \in {\mathbb R}^{+}$ is the constant in lemma \ref{lem:dbset}. We have
\[ \frac{(\tau_{2}^{2} |{\zeta}_{1}|^{2} + \tau_{1}^{2} |{\zeta}_{2}|^{2} +
({\zeta}_{1} \overline{\zeta}_{2} + \overline{\zeta}_{1} {\zeta}_{2}))}{{\rho}^{2}}
\geq \frac{1}{{\rho}^{2}} \frac{{\tau_{1}}^{2} {\tau_{2}}^{2} - 1}{ {\tau_{1}}^{2} + \tau_{2}^{2} }
({|{\zeta}_{1}|}^{2} + {|{\zeta}_{2}|}^{2}) . \]
The right-hand side is equal to
\[ \frac{\tau_{1}\tau_{2}+1}{\rho ({\tau_{1}}^{2}+{\tau_{2}}^{2})} ({|{\zeta}_{1}|}^{2} + {|{\zeta}_{2}|}^{2}) \]
and since $(\tau_{1}\tau_{2}+1)/({\tau_{1}}^{2}+{\tau_{2}}^{2}) \geq 2 / {(\tau_{1}+\tau_{2})}^{2}$
if $\tau_{1}\tau_{2} >1$ then
\[ \frac{(\tau_{2}^{2} |{\zeta}_{1}|^{2} + \tau_{1}^{2} |{\zeta}_{2}|^{2} +
 ({\zeta}_{1} \overline{\zeta}_{2} + \overline{\zeta}_{1} {\zeta}_{2}))}{{\rho}^{2}} \geq
\frac{1}{\rho} \frac{2}{{(\tau_{1}+\tau_{2})}^{2}} ({|{\zeta}_{1}|}^{2} + {|{\zeta}_{2}|}^{2}) . \]
We remark that $\tau_{1}+\tau_{2}=4 a_{1} Re(\xi \lambda_{0}^{\nu({\mathcal E}_{0})})$ and
$|\tau_{2}-\tau_{1}| < 4 a_{1} |\xi|$, we deduce
\[ \sum_{bl} u_{bl} {\zeta}_{b} \overline{\zeta}_{l} \geq
\frac{1}{\rho}
\left({
\frac{1}{8 a_{1}^{2} Re(\xi \lambda_{0}^{\nu({\mathcal E}_{0})})^{2}}
- 8 a_{1} K_{8}  {|\xi|}^{-k(1+1/\nu({\mathcal E}_{0}))+1}
}\right)
({|{\zeta}_{1}|}^{2} + {|{\zeta}_{2}|}^{2})
. \]
Since $k(1+1/\nu({\mathcal E}_{0})) >3$ we get
\[ \sum_{bl} u_{bl}(\xi,z) {\zeta}_{b} \overline{\zeta}_{l} \geq
\frac{1}{\rho} \frac{1}{9 a_{1}^{2} Re(\xi \lambda_{0}^{\nu({\mathcal E}_{0})})^{2}}
({|{\zeta}_{1}|}^{2} + {|{\zeta}_{2}|}^{2}) \ \forall (\xi,z) \in \tilde{U}_{\lambda_{0}}' \
\forall (\zeta_{1},\zeta_{2}) \in {\mathbb C}^{2} \]
for $\delta>0$ small enough. The function $u$ is strictly p.s.h. in $U_{\lambda_{0}}^{*}$.
\end{proof}
\subsubsection{Estimates for the solution of the $\overline{\partial}$ equation}
\label{subsec:estsoldbe}
The equation $\overline{\partial} v=\Omega$ has a solution in $U_{\lambda_{0}}^{*}$ since
$U_{\lambda_{0}}^{*}$ is a Stein manifold. We could define
\[ \psi_{j,\lambda_{0},k}^{\varphi} = \psi_{L_{j},k} + \varrho (Re (z) - a_{0} |\xi|) \Delta_{\varphi^{-1}, k}+v \]
as a Fatou coordinate of $\varphi$ in $\tilde{U}_{\lambda_{0}}'$. Unfortunately
such a solution does not necessarily satisfy good estimates as the one we will prove later on
(see proposition \ref{pro:fatcorork}). Thus we need a well-behaved solution of the
$\overline{\partial}$ equation.
We will use the following theorem due to H\"{o}rmander (see \cite{PM-Yo}):
\begin{teo}
\label{teo:hor}
Let ${\mathcal U}$ be a Stein manifold of dimension $n$ with an hermitian metric.
Let 
$\omega_{1}, \hdots, \omega_{n}\ \in \Omega^{1,0}({\mathcal U})$ be
$C^{\infty}$ forms such that
$\{ \omega_{1}, \hdots, \omega_{n} \}$ is an orthonormal base of 
$T^{*} {\mathcal U}$ at any point. Denote
$dV = {(i/2)}^{n}  {\omega}_{1} \wedge  \overline{\omega}_{1} \wedge  \hdots \wedge
{\omega}_{n} \wedge  \overline{\omega}_{n}$ the volume element.
Denote
${\partial} {\omega}_{d} = \frac{1}{2} 
\sum_{bl} {a_{bl}^{d} {\omega}_{b} \wedge {\omega}_{l}}$ 
and
$\overline{\partial} {\omega}_{d} = \sum_{bl} {c_{bl}^{d} \overline{{\omega}_{b}} \wedge {\omega}_{l}}$
for $1 \leq d \leq n$. Suppose that there exist
$\theta_{0}, \theta_{1}: {\mathcal U} \to {\mathbb R}$ with
$|c_{bl}^{d}| \leq \theta_{0} \geq |a_{bl}^{d}| $,
$|{\partial}_{g} c_{bl}^{d}| \leq \theta_{1} \geq |{\partial}_{g} a_{bl}^{d}| $ 
and
$|\overline{\partial}_{g} c_{bl}^{d}| \leq \theta_{1} \geq |\overline{\partial}_{g} a_{bl}^{d}|$
in ${\mathcal U}$
for any $(g,b,l,d) \in {\{1,\hdots,n\}}^{4}$.
Let $\phi: {\mathcal U} \to {\mathbb R}$  be a $C^{2}$  strictly p.s.h. function.  
Suppose that there exists
a continuous $\theta: {\mathcal U} \to {\mathbb R}^{+}$ such that
\[ \sum_{bl} {\phi}_{bl} {\zeta}_{b} \overline{\zeta}_{l}
\geq (\theta + A({\theta}_{0}^{2} + {\theta}_{1}))  \sum_{d=1}^{n} {|{\zeta}_{d}|}^{2}  \]
in ${\mathcal U}$ where
$\partial \overline{\partial} \phi = \sum_{bl} \phi_{bl} {\omega}_{b} \wedge \overline{\omega}_{l}$
and $A \in {\mathbb R}^{+}$ is a universal constant depending only on the dimension of ${\mathcal U}$.
Consider a  $\overline{\partial}$-closed $(0,1)$ form $\Omega \in C^{\infty}({\mathcal U})$ 
such that
\[ {\int}_{{\mathcal U}} {\theta}^{-1} {|\Omega|}^{2} {e}^{-\phi} dV < \infty . \]
Then there exists a complex function 
$v \in C^{\infty} ({\mathcal U})$ such that $\overline{\partial} v = \Omega$ and
\[ {\int}_{{\mathcal U}} {{|v|}^{2} {e}^{-\phi}} dV \leq
{\int}_{{\mathcal U}} {\theta}^{-1} {|\Omega|}^{2} {e}^{-\phi} dV. \]
\end{teo}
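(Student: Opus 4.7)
The plan is to follow H\"ormander's classical $L^2$ method for the $\overline{\partial}$ equation, adapted to a non-holomorphic (not necessarily integrable) unitary coframe $\omega_1,\hdots,\omega_n$. Introduce the weighted Hilbert spaces $L^2_{(0,q)}({\mathcal U},\phi)$ of $(0,q)$-forms $\alpha$ with finite norm $\|\alpha\|_\phi^2=\int_{\mathcal U}|\alpha|^2 e^{-\phi} dV$, and regard $\overline{\partial}$ as a densely defined closed operator in the chain $L^2_{(0,0)}(\phi)\xrightarrow{\overline{\partial}} L^2_{(0,1)}(\phi)\xrightarrow{\overline{\partial}} L^2_{(0,2)}(\phi)$. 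By the Hahn--Banach/Riesz duality argument, the existence of a solution $v$ of $\overline{\partial}v=\Omega$ satisfying $\|v\|_\phi^2\le\int\theta^{-1}|\Omega|^2 e^{-\phi}dV$ is equivalent to the a priori estimate
\begin{equation*}
\bigl|\langle\alpha,\Omega\rangle_\phi\bigr|^2 \;\le\; \Bigl(\int\theta^{-1}|\Omega|^2 e^{-\phi}dV\Bigr)\,\bigl\|\vartheta\alpha+\overline{\partial}\alpha\bigr\|_\phi^2
\end{equation*}
restricted to forms $\alpha\in\mathrm{Dom}(\vartheta)\cap\mathrm{Dom}(\overline{\partial})$, where $\vartheta$ is the formal Hilbert adjoint of $\overline{\partial}$ with respect to the weight $e^{-\phi}dV$.

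The heart of the argument is therefore the basic estimate
\begin{equation*}
\int_{\mathcal U}\theta\,|\alpha|^2 e^{-\phi}dV \;\le\; \|\vartheta\alpha\|_\phi^2 + \|\overline{\partial}\alpha\|_\phi^2
\end{equation*}
for $\alpha$ in a suitable dense subspace. First I would compute $\vartheta$ in the coframe $\{\omega_d\}$: writing $\alpha=\sum\alpha_l\overline{\omega}_l$, one obtains $\vartheta\alpha = -\sum_l(\delta_l-\phi_l)\alpha_l + E(\alpha)$, where $\delta_l$ is the $(1,0)$-derivation dual to $\omega_l$, $\phi_l = \delta_l(\phi)$, and $E(\alpha)$ is a zero-order operator whose coefficients are polynomials of degree one in the torsion symbols $a_{bl}^d$, $c_{bl}^d$ and hence bounded by $A_0\theta_0$. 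Then, via integration by parts in the weighted inner product, compute
\begin{equation*}
\|\vartheta\alpha\|_\phi^2 + \|\overline{\partial}\alpha\|_\phi^2 \;=\; \sum_{b,l}\int \phi_{bl}\alpha_b\overline{\alpha_l}\,e^{-\phi}dV + \sum\int |\delta_l\alpha_b|^2 e^{-\phi}dV + R(\alpha),
\end{equation*}
where $\phi_{bl}$ are the coefficients of $\partial\overline{\partial}\phi$ in the coframe (as in lemma \ref{lem:p2upsh}) and $R(\alpha)$ is a remainder coming from the commutators $[\overline{\partial},\vartheta]$, $[\delta_b,\overline{\delta}_l]$ and from $E(\alpha)$. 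Using Cauchy--Schwarz with parameter $\varepsilon$, the remainder is controlled by $\int(A_1\theta_0^2+A_2\theta_1)|\alpha|^2 e^{-\phi}dV$ modulo $\varepsilon\sum\|\delta_l\alpha_b\|_\phi^2$, which is absorbed into the positive square terms. Combining with the hypothesis $\sum\phi_{bl}\zeta_b\overline{\zeta_l}\ge (\theta+A(\theta_0^2+\theta_1))\sum|\zeta_d|^2$ yields exactly the basic estimate, the universal constant $A$ being chosen once and for all to dominate $A_1+A_2$ and absorb the Cauchy--Schwarz losses.

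The main obstacle is precisely the bookkeeping of this remainder $R(\alpha)$: for an integrable (holomorphic) frame one has $a_{bl}^d=c_{bl}^d=0$ and the identity reduces to the classical Bochner--Kodaira--Nakano formula on a Stein manifold, but here the $\omega_d$ are merely $C^\infty$ and the torsion of the frame produces both zero-order (sizes $\theta_0^2$) and first-order (size $\theta_1$) corrections that must be cleanly separated from the semi-definite $\|\delta_l\alpha_b\|^2$ terms. Once the basic estimate is established on a dense subspace of smooth compactly supported forms, a standard density argument (using the plurisubharmonic exhaustion $\phi$ or, if necessary, replacing $\phi$ by $\phi+\chi(u_1)$ for a convex $\chi$ to ensure completeness of the associated metric) extends it to the full domain $\mathrm{Dom}(\vartheta)\cap\mathrm{Dom}(\overline{\partial})$. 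Applying Hahn--Banach to the functional $\vartheta\alpha\mapsto\langle\alpha,\Omega\rangle_\phi$, which is well-defined on $\vartheta(\mathrm{Ker}(\overline{\partial})\cap\mathrm{Dom}(\vartheta))$ by the $\overline{\partial}$-closedness of $\Omega$, and representing the extension by Riesz, produces the desired solution $v$ with the stated $L^2$ bound; interior ellipticity of $\overline{\partial}\oplus\vartheta$ then upgrades $v$ to a $C^\infty$ function.
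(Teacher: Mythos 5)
The paper does not prove this statement: it records Theorem~\ref{teo:hor} as a result of H\"ormander (with a pointer to \cite{PM-Yo}) and immediately applies it to $U_{\lambda_{0}}^{*}$ in the next proposition, so there is no internal proof to compare against. Your sketch reconstructs the standard H\"ormander $L^{2}$ existence argument adapted to a non-holomorphic unitary coframe, which is indeed the right route: weighted Hilbert spaces, reduction via Hahn--Banach and Riesz representation to the a priori inequality, the Bochner--Kodaira--Morrey identity in the moving coframe with torsion corrections of sizes $\theta_{0}^{2}$ and $\theta_{1}$, absorption of the remainder by Cauchy--Schwarz into the curvature hypothesis, density of compactly supported forms after completing the metric, and interior ellipticity of $\overline{\partial}\oplus\vartheta$ for smoothness. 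Two inessential slips: the surviving semidefinite first-order terms in the identity are $\sum_{b,l}\|\overline{\delta}_{l}\alpha_{b}\|_{\phi}^{2}$ (antiholomorphic derivations, not the $\delta_{l}\alpha_{b}$ you wrote); and the exhaustion used in your density step should be a strictly plurisubharmonic exhaustion of the abstract Stein manifold ${\mathcal U}$ provided by the Stein hypothesis --- the function $u_{1}$ you name belongs to the paper's later concrete application on $U_{\lambda_{0}}^{*}$, not to the general theorem.
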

\begin{pro}
There exists a solution $v : U_{\lambda_{0}}^{*} \to {\mathbb C}$ of
$\overline{\partial} v = \Omega$ such that
\[ {\int}_{U_{\lambda_{0}}^{*}} {{|v|}^{2} {|\xi|}^{2k(1+1/\nu({\mathcal E}_{0}))-10}
(\tau_{1}\tau_{2}-1)} dV < K_{13}  \]
for some $K_{13} \in {\mathbb R}^{+}$.
\end{pro}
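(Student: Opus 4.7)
The plan is to apply H\"ormander's theorem \ref{teo:hor} on the Stein manifold $U_{\lambda_0}^*$ (Lemma \ref{lem:p2upsh}), equipped with the hermitian metric making $\{\omega_1,\omega_2\}$ an orthonormal frame (so that the volume element is precisely the $dV$ of Lemma \ref{lem:dbset}), and with weight function
\[
\phi = -\ln(\tau_1\tau_2-1) - \bigl(2k(1+1/\nu({\mathcal E}_0))-10\bigr)\ln|\xi|.
\]
The point of this choice is that $e^{-\phi} = (\tau_1\tau_2-1)|\xi|^{2k(1+1/\nu({\mathcal E}_0))-10}$ matches exactly the density appearing in the conclusion, so the $L^2$ bound produced by H\"ormander will directly yield the claimed inequality.

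First I would verify the Levi-form hypothesis. Because $\ln|\xi|$ is pluriharmonic on $U_{\lambda_0}^*$ (which excludes $\xi=0$), the Levi form of $\phi$ equals that of $-\ln(\tau_1\tau_2-1)$, and the computation already carried out in the proof of Lemma \ref{lem:p2upsh} gives
\[
\sum_{bl}\phi_{bl}\zeta_b\bar\zeta_l \geq \frac{1}{9 a_1^2\,\text{Re}(\xi\lambda_0^{\nu({\mathcal E}_0)})^2\,(\tau_1\tau_2-1)}\bigl(|\zeta_1|^2+|\zeta_2|^2\bigr).
\]
Since $\text{Re}(\xi\lambda_0^{\nu({\mathcal E}_0)}) \geq \cos(7\pi/24)|\xi|$ on $\tilde I_{\lambda_0}$, I take $\theta$ to be one half of this lower bound. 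The estimates $\theta_0,\theta_1\leq K_8/|\xi|^{k(1+1/\nu({\mathcal E}_0))}$ from Lemma \ref{lem:dbset} together with $\tau_1\tau_2-1 \leq 16a_1^2|\xi|^2$ then show that $A(\theta_0^2+\theta_1) \leq \theta$ for $\delta$ small enough; here one exploits $k(1+1/\nu({\mathcal E}_0)) > 4$, which follows from $k \geq \max(5, 4\nu({\mathcal E}_0))$.

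The crucial step is the verification that $\int_{U_{\lambda_0}^*}\theta^{-1}|\Omega|^2 e^{-\phi}\,dV < \infty$. Combining $\theta^{-1} = O(|\xi|^2(\tau_1\tau_2-1))$, the bound $|\Omega|^2 = O(|\xi|^{-2k(1+1/\nu({\mathcal E}_0))})$ obtained from the coefficient estimates on $A$, $B$ preceding Lemma \ref{lem:dbset}, the formula $e^{-\phi} = (\tau_1\tau_2-1)|\xi|^{2k(1+1/\nu({\mathcal E}_0))-10}$, and $dV \asymp dV_0$ (Lemma \ref{lem:dbset}), the integrand reduces to $O((\tau_1\tau_2-1)^2|\xi|^{-8})$. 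Integrating first over a $\varphi$-fundamental domain in $z$ (unit width in $\text{Re}(z)$, length $O(|\xi|)$ in $\text{Im}(z)$, with $\tau_1\tau_2-1 \sim 4a_1^2\text{Re}(\xi\lambda_0^{\nu({\mathcal E}_0)})^2 - (\text{Im}(z))^2 - 1$) yields $\int_{\text{fd}}(\tau_1\tau_2-1)^2\,d^2z = O(|\xi|^5)$ via the explicit primitive of $(R^2-s^2)^2$. The remaining $\xi$-integral reduces to $\int_{|\xi|\geq\delta^{-\nu({\mathcal E}_0)}}|\xi|^{-3}\,d^2\xi$, which in polar coordinates is $\int_{\delta^{-\nu({\mathcal E}_0)}}^\infty r^{-2}\,dr < \infty$.

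Applying Theorem \ref{teo:hor} then produces $v\in C^\infty(U_{\lambda_0}^*)$ with $\overline\partial v=\Omega$ and $\int_{U_{\lambda_0}^*}|v|^2 e^{-\phi}\,dV$ bounded by the finite quantity just computed, which (by the matching choice of $\phi$) is precisely the claimed bound for some $K_{13}\in{\mathbb R}^+$. The main obstacle is this integrability check: the specific exponent $-10$ in the weight has to be calibrated so that the $(\tau_1\tau_2-1)^2$ growth over the fundamental domain, which is of order $|\xi|^5$, is precisely balanced by the $|\xi|^{-8}$ decay coming from the combination of $\theta^{-1}$, $|\Omega|^2$ and $e^{-\phi}$, producing the integrable $|\xi|^{-2}$ tail after radial integration. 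Any weaker decay rate in $\phi$ would have destroyed the integrability at $|\xi|\to\infty$, while any stronger decay would have destroyed the match with the target density.
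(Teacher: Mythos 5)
Your proof is correct and follows the paper's own argument: the same weight $\phi = -2(k(1+1/\nu({\mathcal E}_{0}))-5)\ln|\xi|-\ln(\tau_{1}\tau_{2}-1)$, the same choice of $\theta$ (up to an inessential constant), the same appeal to Theorem \ref{teo:hor}, and the same final integrability computation reducing to $\int r^{-2}\,dr<\infty$. The only cosmetic difference is bookkeeping in the integrability check: you integrate the $(\tau_{1}\tau_{2}-1)^{2}$ factor over the fundamental domain to get $O(|\xi|^{5})$, whereas the paper first bounds $\tau_{1}\tau_{2}-1=O(|\xi|^{2})$ pointwise and then uses that the fundamental domain has area $O(|\xi|)$ --- both yield the same $|\xi|^{-3}$ density in $\xi$.
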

\begin{proof}
Let us apply theorem \ref{teo:hor}.
In our situation we define ${\mathcal U}=U_{\lambda_{0}}^{*}$ and
\[ \phi = -2 (k(1+1/\nu({\mathcal E}_{0})) -5) \ln |\xi| - \ln(\tau_{1}\tau_{2}-1) . \]
The function $\phi$ is p.s.h. in $U_{\lambda_{0}}^{*}$.
We can choose $\theta_{0}=\theta_{1}=K_{8}/|\xi|^{k(1+1/\nu({\mathcal E}_{0}))}$
by lemma \ref{lem:dbset}.
%
Denote $\rho= \tau_{1}\tau_{2}-1$.
We define $\theta=1/(10 a_{1}^{2} \rho Re(\xi \lambda_{0}^{\nu({\mathcal E}_{0})})^{2})$. We have
\[ \sum_{bl} {\phi}_{bl} {\zeta}_{b} \overline{\zeta}_{l} \geq \sum_{bl} {u}_{bl} {\zeta}_{b} \overline{\zeta}_{l}
\geq
\left({
\frac{1}{9 a_{1}^{2} \rho Re(\xi \lambda_{0}^{\nu({\mathcal E}_{0})})^{2}}
}\right) ({|{\zeta}_{1}|}^{2} + {|{\zeta}_{2}|}^{2}) .
\]
Now we use
$\rho Re(\xi \lambda_{0}^{\nu({\mathcal E}_{0})})^{2} A ({\theta}_{0}^{2} + {\theta}_{1})
=O({\xi}^{4-k(1+1/\nu({\mathcal E}_{0}))})$
to prove
\[ \frac{1}{9 a_{1}^{2} \rho Re(\xi \lambda_{0}^{\nu({\mathcal E}_{0})})^{2}} \geq
\theta + A({\theta}_{0}^{2} + {\theta}_{1}). \]
Next step is proving
\[ {\int}_{U_{\lambda_{0}}^{*}} {\theta}^{-1} {|\Omega|}^{2} {e}^{-\phi} dV < \infty . \]
Since $\Omega= - \overline{\partial}(\varrho (Re (z) - a_{0} |\xi|)) \Delta_{\varphi^{-1}, k}$
and  $\overline{\partial}(\varrho (Re (z) - a_{0} |\xi|))$
is bounded (see proof of lemma \ref{lem:dbset}) then we obtain
$\Omega =O(|\xi|^{-e})$.
The integral is smaller or equal than
\[ K_{10} {\int}_{U_{\lambda_{0}}^{*}}
{  {|\xi|}^{4}  {|\xi|}^{-2e}  {|\xi|}^{2(e-5)}  {|\xi|}^{2}} dV \]
for $e=k(1+1/\nu({\mathcal E}_{0}))$ and some positive constant $K_{10}$. We deduce
\[ {\int}_{U_{\lambda_{0}}^{*}} {\theta}^{-1} {|\Omega|}^{2} {e}^{-\phi} dV \leq
K_{11} {\int}_{U_{\lambda_{0}}'} {\frac{1}{{|\xi|}^{4}}} dV_{0}  \]
for some $K_{11}>0$. The area of $U_{\lambda_{0}}'(\xi)$ is
a $O(\xi)$ when $\xi \to \infty$. Thus we have
\[ {\int}_{U_{\lambda_{0}}^{*}} {\theta}^{-1} {|\Omega|}^{2} {e}^{-\phi} dV \leq
K_{12} {\int}_{\tilde{I}_{\lambda_{0}}} {\frac{1}{{|\xi|}^{3}}} (i/2) d\xi \wedge d \overline{\xi} < K_{13}  \]
for some positive constants $K_{12}$ and $K_{13}$. We obtain the existence of a function $v$
defined in $U_{\lambda_{0}}^{*}$ such that $\overline{\partial} v = \Omega$ and
${\int}_{U_{\lambda_{0}}^{*}} {{|v|}^{2} {e}^{-\phi}} dV < K_{13}$. Hence we obtain
\[ {\int}_{U_{\lambda_{0}}^{*}} {{|v|}^{2} {|\xi|}^{2e-10} (\tau_{1}\tau_{2}-1)} dV < K_{13}  \]
as intended.
\end{proof}
\begin{defi}
We define
$U= \cup_{x \in B(0,\delta) \setminus \{0\}, \ s \in B(0,1/8)} {\rm exp}(sX)(B_{\varphi,j}^{2}(x))$,
\[ I_{\lambda_{0}}= (0,\delta) \lambda_{0} e^{i(-\pi/(4 \nu({\mathcal E}_{0})), \pi/(4 \nu({\mathcal E}_{0})))}
\ {\rm and} \  U_{\lambda_{0}} = \cup_{x \in I_{\lambda_{0}}} U(x). \]
We denote
$\tilde{U}_{\lambda_{0}}=
\{ (\xi,z) \in U_{\lambda_{0}}: \tau_{1}(\xi,z) >3< \tau_{2}(\xi,z) \}$.
\end{defi}
\begin{lem}
Let $L= \varrho (Re (z) - a_{0} |\xi|) \Delta_{\varphi^{-1}, k}+v$.
There exists $K_{15} \in {\mathbb R}^{+}$ such that
$|L| \leq K_{15} |\psi_{k}|^{5-k(1+1/\nu({\mathcal E}_{0}))}$ in $\tilde{U}_{\lambda_{0}}$.
\end{lem}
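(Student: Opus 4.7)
The plan is to observe that $L$ is in fact holomorphic on $\tilde{U}_{\lambda_{0}}'$ and then upgrade the $L^{2}$ bound for $v$ from subsection \ref{subsec:estsoldbe} to a pointwise bound via the mean value inequality for holomorphic functions. Since $\Omega = -\overline{\partial}(z \circ \sigma) = -\overline{\partial}(\varrho (Re(z)-a_{0}|\xi|) \Delta_{\varphi^{-1},k})$ and $\overline{\partial} v = \Omega$, we get $\overline{\partial} L = -\Omega + \Omega = 0$, so $L \in \mathcal{O}(\tilde{U}_{\lambda_{0}}')$.

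First I would exhibit a polydisc $B = B_{r_{0}}(\xi_{0}) \times B_{r_{0}}(z_{0})$ of uniform radius $r_{0} > 0$ around every point $(\xi_{0}, z_{0}) \in \tilde{U}_{\lambda_{0}}$ that is contained in $\tilde{U}_{\lambda_{0}}'$, and on which $|\xi| \geq C_{1} |\xi_{0}|$ and $\tau_{1} \tau_{2} - 1 \geq C_{2}$ hold with constants independent of the base point. This uses three independent margins: the angular margin between the $\xi$-sector opening $\pi/2$ for $I_{\lambda_{0}}$ and $7\pi/12$ for $\tilde{I}_{\lambda_{0}}$ (the angular distance translates to an absolute distance $\geq \delta^{-\nu(\mathcal{E}_{0})} \sin(\pi/24)$ since $|\xi_{0}| \geq \delta^{-\nu(\mathcal{E}_{0})}$); the margin in $z$ between $\tau_{1}, \tau_{2} > 3$ (defining $\tilde{U}_{\lambda_{0}}$) and the much weaker $\tau_{1}\tau_{2} > 1$; and the flow margin separating $U$ and $U'$, since they are built by flowing $B_{\varphi,j}^{2}$ with $|s| \leq 1/8$ and $|s| \leq 1/4$ respectively, and $X$ is close to $\partial/\partial z$ in these coordinates.

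Next I would apply the mean value inequality to $L$, split $L = \varrho\Delta_{\varphi^{-1},k} + v$, and estimate each piece:
\[
|L(\xi_{0}, z_{0})|^{2} \leq \frac{C}{r_{0}^{4}} \int_{B} |L|^{2} \, dV_{0} \leq \frac{2C}{r_{0}^{4}} \left( \int_{B} |\varrho\Delta_{\varphi^{-1},k}|^{2} \, dV_{0} + \int_{B} |v|^{2} \, dV_{0} \right) .
\]
The first integral is $O(|\xi_{0}|^{-2e})$ with $e = k(1+1/\nu(\mathcal{E}_{0}))$ by the pointwise bound (\ref{equ:deltamk}). For the second, lemma \ref{lem:dbset} gives $dV_{0} \leq K_{7} dV$; shrinking $r_{0}$ if necessary so that $B$ projects injectively to the orbit space, the $\varphi$-invariance of $v$, $|\xi|$ and $\tau_{1}\tau_{2}$ yields
\[
\int_{B} |v|^{2} \, dV \leq \frac{1}{C_{1}^{2e-10} C_{2} |\xi_{0}|^{2e-10}} \int_{U_{\lambda_{0}}^{*}} |v|^{2} |\xi|^{2e-10}(\tau_{1}\tau_{2}-1) \, dV \leq \frac{K_{13}}{C_{1}^{2e-10} C_{2} |\xi_{0}|^{2e-10}} .
\]
For $k$ large this second term dominates, producing $|L(\xi_{0}, z_{0})| \leq K |\xi_{0}|^{-(e-5)}$.

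Finally, since $\psi_{k} = \psi_{L_{j}}^{X} + h$ with $h$ bounded, proposition \ref{pro:ext1g} (together with the extension to $H_{j,\theta}^{\epsilon,\rho}$ from Step 1 of subsection \ref{subsec:extfatcor}, via inequality (\ref{equ:pexpsi})) implies $|\psi_{k}| \sim |\xi| = 1/|x|^{\nu(\mathcal{E}_{0})}$ throughout $\tilde{U}_{\lambda_{0}}$. Substituting $|\xi_{0}| \sim |\psi_{k}(\xi_{0}, z_{0})|$ converts the bound into $|L| \leq K_{15} |\psi_{k}|^{5 - k(1+1/\nu(\mathcal{E}_{0}))}$. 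The main technical obstacle is the construction of the uniform-radius polydisc $B$: one must simultaneously control the angular constraint, the $\tau_{i}$ values and the flow position, and show that the smallest of the three margins is bounded below independently of $(\xi_{0}, z_{0}) \in \tilde{U}_{\lambda_{0}}$.
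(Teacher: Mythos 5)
Your proposal matches the paper's proof in all essentials: identify that $L$ is holomorphic on $\tilde{U}_{\lambda_{0}}'$, combine the weighted $L^{2}$ bound on $v$ with the pointwise bound on $\Delta_{\varphi^{-1},k}$ (via $|L|^{2}\le 2|v|^{2}+2|\Delta_{\varphi^{-1},k}|^{2}$) to get a weighted $L^{2}$ bound on $L$, and then apply the sub-mean-value inequality on a polydisk of uniform polyradius centered at $(\xi_{0},z_{0})\in\tilde{U}_{\lambda_{0}}$ and contained in $\tilde{U}_{\lambda_{0}}'\cap\{\tau_{1}\ge 2\}\cap\{\tau_{2}\ge 2\}$, finishing with $|\psi_{k}|\sim|\xi|$. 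The only cosmetic difference is that the paper applies the mean-value formula directly to the weighted holomorphic function $L\,\xi^{e-5}$, whereas you apply it to $L$ and then factor the weight $|\xi|^{2e-10}(\tau_{1}\tau_{2}-1)$ out of the integral using that it is bounded above and below on the polydisk; both routes produce the same bound $|L|=O(|\xi|^{5-e})$.
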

\begin{proof}
The function $L$
is holomorphic in $\tilde{U}_{\lambda_{0}}'$. It is not defined in general in $U_{\lambda_{0}}^{*}$.
The next step is estimating the modulus of the function $L$. We have
\[ {\int}_{U_{\lambda_{0}}^{*} \cap \{ \tau_{1} \geq 2\} \cap \{\tau_{2} \geq 2\}}
{{|v|}^{2} {|\xi|}^{2e-10}} dV \leq \frac{K_{13}}{3} . \]
Since
${|L|}^{2} \leq 2{|v|}^{2} + 2 {|\Delta_{\varphi^{-1}, k}|}^{2}$
we deduce that
\[ {\int}_{\tilde{U}_{\lambda_{0}}' \cap \{ \tau_{1} \geq 2\} \cap \{\tau_{2} \geq 2\}}
{|L|^{2} {|\xi|}^{2e-10}} d V_{0} \leq K_{14} \]
for some $K_{14}>0$.

Let $(\xi_{0},z_{0}) \in \tilde{U}_{\lambda_{0}}$.
Consider $\kappa \in {\mathbb R}^{+}$ such that $a_{0} \kappa \leq 1/32$ and
$a_{1} \kappa \leq 1/4$.
The polydisk $D_{\xi_{0},z_{0}}$ of center $(\xi_{0},z_{0})$ and poli-radius $(\kappa,1/16)$ is contained in
the set
$\tilde{U}_{\lambda_{0}}' \cap \{ \tau_{1} \geq 2\} \cap \{\tau_{2} \geq 2\}$. We have
\[ L(\xi_{0},z_{0}) \xi_{0}^{e-5} =
\frac{16^{2}}{\pi^{2} \kappa^{2}} \int_{D_{\xi_{0},z_{0}}} L(\xi,z) \xi^{e-5} dV_{0} . \]
This implies
\[ |L(\xi_{0},z_{0}) \xi_{0}^{e-5}| \leq
\frac{16^{2}}{\pi^{2} \kappa^{2}} \sqrt{\int_{D_{\xi_{0},z_{0}}} |L(\xi,z) \xi^{e-5}|^{2} dV_{0}}
\sqrt{\int_{D_{\xi_{0},z_{0}}}   dV_{0}} \leq \frac{16}{\pi \kappa} \sqrt{K_{14}} \]
for any $(\xi_{0},z_{0}) \in \tilde{U}_{\lambda_{0}}$.
As a consequence $L$ is a $O(\xi^{-e+5})=O(\psi_{k}^{5-e})$ in $\tilde{U}_{\lambda_{0}}$.
\end{proof}
We define
\[ \psi_{j,\lambda_{0},k}^{\varphi} = \psi_{L_{j},k} + L(x,y) . \]
It is a holomorphic Fatou coordinate of $\varphi$ in $\tilde{U}_{\lambda_{0}}$. We obtain
\begin{lem}
\label{lem:fatcorork}
Let $\varphi \in \diff{tp1}{2}$. Let $\Upsilon$ be a $2$-convergent normal form.
Consider $\Lambda=(\lambda_{1}, \hdots, \lambda_{\tilde{q}}) \in {\mathcal M}$,
$\lambda \in {\mathbb S}^{1}$,
$j \in {\mathcal D}(\varphi)$ and
$k \geq \max(5,4 \nu({\mathcal E}_{0}))$.
Then there exists $K_{16} \in {\mathbb R}^{+}$ such that
\[ |\psi_{j,\lambda,k}^{\varphi} - \psi_{L_{j},k}|(x,y) \leq
\frac{K_{16}}{(1+|\psi_{L_{j}}^{X}(x,y)|)^{k(1+1/\nu({\mathcal E}_{0}))-5}} \]
for any  $(x,y) \in  \tilde{U}_{\lambda}$.
\end{lem}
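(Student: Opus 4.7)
The plan is to read off the lemma essentially as a corollary of the previous $L^{\infty}$ bound on the function
\[ L = \varrho(Re(z) - a_0|\xi|)\Delta_{\varphi^{-1},k} + v, \]
translating it from the coordinate $\psi_{L_j,k}$ (equivalently $\psi_k$) to the coordinate $\psi_{L_j}^{X}$. Indeed, by the construction in subsection~\ref{subsec:estsoldbe}, we have $\psi_{j,\lambda,k}^{\varphi} = \psi_{L_j,k} + L$ on $\tilde{U}_{\lambda}$, so the quantity to estimate \emph{is} $L$, and the preceding lemma already provides
\[ |L(x,y)| \leq K_{15}\,|\psi_{k}(x,y)|^{5 - k(1+1/\nu({\mathcal E}_0))} \qquad \forall (x,y) \in \tilde{U}_{\lambda}. \]

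The first step would be to compare the two Fatou coordinates $\psi_k$ and $\psi_{L_j}^{X}$. By the very definition of $\psi_{L_j,k}$ given at the beginning of subsection~\ref{subsec:asyfatcor}, we have $\psi_k = \psi_{L_j}^{X} + h$ where $h \in \mathcal{O}(B(0,\delta) \times B(0,\epsilon_k))$ with $h(0,0)=0$; in particular $\|h\|_\infty \leq M_0$ for some constant depending only on $\Upsilon$ and the normal form ${\rm exp}(Y_k)$. The second step is to use the lower bound $|\psi_{L_j}^{X}|(x,y) \geq a_0/(2|x|^{\nu({\mathcal E}_0)})$ valid on $\tilde{U}_{\lambda} \subset U$ (which comes from the fact that points in $U$ are obtained by flowing $B_{\varphi,j}^{2}$ for a short time, and $Re(\psi_{L_j}^X) \geq a_0/|x|^{\nu({\mathcal E}_0)}$ on $B_{\varphi,j}^{2}$). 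After shrinking $\delta$, we may assume $|\psi_{L_j}^{X}| \geq 2 M_0$ on $\tilde{U}_{\lambda}$, whence
\[ |\psi_{k}(x,y)| \geq |\psi_{L_j}^{X}(x,y)| - M_0 \geq \tfrac{1}{2}\,|\psi_{L_j}^{X}(x,y)|. \]

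Plugging this into the bound on $L$ and setting $e := k(1+1/\nu({\mathcal E}_0)) \geq 5$, we obtain
\[ |L(x,y)| \leq K_{15}\cdot 2^{\,e-5}\,|\psi_{L_j}^{X}(x,y)|^{5-e}. \]
Finally, to replace the denominator $|\psi_{L_j}^{X}|$ by $1+|\psi_{L_j}^{X}|$ we use once more that $|\psi_{L_j}^{X}|$ is bounded below by a positive constant on $\tilde{U}_{\lambda}$ (since $|x| < \delta$), so $|\psi_{L_j}^{X}|$ and $1+|\psi_{L_j}^{X}|$ are comparable up to a multiplicative constant. Taking $K_{16}$ large enough to absorb all the constants yields the desired estimate.

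There is no real obstacle here; the only point one must check carefully is the comparison $|\psi_k| \geq \tfrac12|\psi_{L_j}^X|$ on all of $\tilde U_\lambda$, which is guaranteed by the explicit lower bound on $|\psi_{L_j}^X|$ inside $U$ (ultimately by proposition \ref{pro:ext1g} applied to $X$ and $-iX$, cf.\ inequality (\ref{equ:pexpsi})). Everything else is bookkeeping.
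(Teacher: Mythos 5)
Your proof is correct and takes essentially the same route as the paper, which states the lemma as an immediate consequence of $\psi_{j,\lambda,k}^{\varphi}-\psi_{L_j,k}=L$ and the bound $|L|\leq K_{15}|\psi_k|^{5-k(1+1/\nu({\mathcal E}_0))}$; you simply make explicit the routine comparison $|\psi_k|\gtrsim|\psi_{L_j}^X|\gtrsim 1+|\psi_{L_j}^X|$ on $\tilde U_\lambda$ via $\psi_k=\psi_{L_j}^X+h$ with $h$ bounded and $h(0,0)=0$, and the lower bound $|\psi_{L_j}^X|\geq a_0/(2|x|^{\nu({\mathcal E}_0)})$ on $U'$.
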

\subsubsection{Well-behaved Fatou coordinates}
\begin{pro}
\label{pro:fatcorork}
Let $\varphi \in \diff{tp1}{2}$. Let $\Upsilon$ be a $2$-convergent normal form.
Consider $\Lambda \in {\mathcal M}$,
$\lambda \in {\mathbb S}^{1}$,
$j \in {\mathcal D}(\varphi)$, $\theta \in (0,\pi/2]$ and
$k \geq \max(5,4 \nu({\mathcal E}_{0}))$.
Then there exists $\rho \geq 2 \rho_{0}$ such that the function
$\psi_{j,\lambda,k}^{\varphi}$ satisfies
\begin{equation}
\label{equ:bedb}
|\psi_{j,\lambda,k}^{\varphi} - \psi_{L_{j},k}|(x,y) \leq
\frac{K_{17}}{(1+|\psi_{L_{j}}^{X}(x,y)|)^{k-1}}
\end{equation}
for some $K_{17} \in {\mathbb R}^{+}$ and any
$(x,y) \in \cup_{x' \in I_{\lambda}} (H_{j,\theta}^{\epsilon,\rho}(x') \cap H_{j,\theta/2}^{\epsilon_{k},\rho}(x'))$.
Moreover $\rho$ depends only on $X$, $\varphi$ and $\Lambda$.
\end{pro}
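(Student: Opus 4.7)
The plan is to extend the holomorphic Fatou coordinate $\psi_{j,\lambda,k}^{\varphi} = \psi_{L_{j},k} + L$ constructed in lemma \ref{lem:fatcorork} on the set $\tilde{U}_{\lambda}$ to the larger domain $D_{\lambda,k}^{\theta,\rho} := \cup_{x \in I_{\lambda}} (H_{j,\theta}^{\epsilon,\rho}(x) \cap H_{j,\theta/2}^{\epsilon_{k},\rho}(x))$ by iteration, using the functional equation $\psi_{j,\lambda,k}^{\varphi} \circ \varphi \equiv \psi_{j,\lambda,k}^{\varphi}+1$. The argument parallels the extension of $\ddot{\psi}_{j,\lambda}^{\varphi}$ in subsection \ref{subsec:extfatcor}, with two adjustments: the target decay is polynomial of order $k-1$ instead of merely bounded, and the reference Fatou coordinate is that of the $k$-convergent normal form $Y_{k}$.

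First I would fix $\rho \geq 2\rho_{0}$, depending only on $X$, $\varphi$ and $\Lambda$, large enough that $B_{\varphi,j}^{1}(x) \subset \tilde{U}_{\lambda}(x)$ for every $x \in I_{\lambda}$; such a uniform choice is possible because the geometric data defining $\tilde{U}_{\lambda}$ and $B_{\varphi,j}^{1}$ (i.e.~the constants $a_{0},a_{1}$ and the bounds for $\tau_{1},\tau_{2}$) come from the splitting $\digamma_{\Lambda}$ and lemma \ref{lem:hjter}, none of which depend on $k$, $\theta$ or $\lambda$. For any $P \in D_{\lambda,k}^{\theta,\rho}(x)$, lemma \ref{lem:fundext} yields $l_{0} = l_{0}(P) \in \mathbb{Z}$ with $\varphi^{l_{0}}(P) \in B_{\varphi,j}^{1}(x) \subset \tilde{U}_{\lambda}(x)$ and the whole chain $\{P,\varphi(P),\ldots,\varphi^{l_{0}}(P)\}$ contained in $H_{j,\theta,1}^{\epsilon_{k},2\rho_{0}}$; the role of the auxiliary restriction $H_{j,\theta/2}^{\epsilon_{k},\rho}$ in $D_{\lambda,k}^{\theta,\rho}$ is precisely to keep this orbit inside the domain of definition $|y| < \epsilon_{k}$ of $\psi_{L_{j},k}$. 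I then set $\psi_{j,\lambda,k}^{\varphi}(P) := \psi_{j,\lambda,k}^{\varphi}(\varphi^{l_{0}}(P)) - l_{0}$, which is automatically holomorphic.

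To establish (\ref{equ:bedb}) I would use the telescoping identity (assuming $l_{0}>0$, the opposite sign being analogous)
\[
(\psi_{j,\lambda,k}^{\varphi}-\psi_{L_{j},k})(P) = (\psi_{j,\lambda,k}^{\varphi}-\psi_{L_{j},k})(\varphi^{l_{0}}(P)) - \sum_{l=0}^{l_{0}-1} \Delta_{\varphi,k}(\varphi^{l}(P)),
\]
where $\Delta_{\varphi,k} := \psi_{L_{j},k}\circ\varphi - \psi_{L_{j},k} - 1$. Lemma \ref{lem:fatcorork} bounds the first term at $\varphi^{l_{0}}(P)$ by $K_{16}(1+|\psi_{L_{j}}^{X}(\varphi^{l_{0}}(P))|)^{-(k(1+1/\nu(\mathcal{E}_{0}))-5)}$; since $\varphi^{l_{0}}(P) \in B_{\varphi,j}^{1}(x)$ forces $|\psi_{L_{j}}^{X}(\varphi^{l_{0}}(P))| \geq c\,|\psi_{L_{j}}^{X}(P)|$ up to the bounded cocycle of iterations, and since the hypothesis $k \geq 4\nu(\mathcal{E}_{0})$ gives exactly $k(1+1/\nu(\mathcal{E}_{0}))-5 \geq k-1$, this term contributes at most $K'(1+|\psi_{L_{j}}^{X}(P)|)^{-(k-1)}$. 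For the sum, the $k$-convergent analogue of the argument leading to (\ref{equ:deltak2}) yields $|\Delta_{\varphi,k}| \leq K''(1+|\psi_{L_{j}}^{X}|)^{-k(\nu(\mathcal{E}_{0})+1)/\nu(\mathcal{E}_{0})}$ throughout the iteration. By lemma \ref{lem:hjter} the orbit image under $\psi_{L_{j}}^{X}$ (hence under $\psi_{L_{j},k}$, since these differ by the bounded function $h$) lies in a half-plane $W_{\theta',M}$, so the hypotheses of lemma \ref{lem:techsum} are met for $\delta$ small, and an application of that lemma collapses the sum to $O((1+|\psi_{L_{j}}^{X}(P)|)^{-(k(\nu(\mathcal{E}_{0})+1)/\nu(\mathcal{E}_{0})-1)})$, which already dominates the required $(1+|\psi_{L_{j}}^{X}(P)|)^{-(k-1)}$.

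The main obstacle is the uniformity assertion: $\rho$ must depend only on $X$, $\varphi$, $\Lambda$, and \emph{not} on $k$, $\theta$ or $\lambda$. This forces one to track carefully that every geometric ingredient used (the constants $a_{0}, a_{1}$ of def.~\ref{def:bfj1}, the iteration constants $K_{3}, K_{4}, K_{6}$, the half-plane constants $\theta',M$ of lemma \ref{lem:hjter}, and the fundamental-domain width relating $B_{\varphi,j}^{1}$ to $\tilde{U}_{\lambda}$) can be fixed uniformly by a compactness argument in $\lambda \in \mathbb{S}^{1}$ together with monotonicity in $\rho$; once a single $\rho$ serving all admissible $(k,\theta,\lambda)$ is produced, the remainder of the argument is a routine assembly of lemmas \ref{lem:fatcorork}, \ref{lem:fundext} and \ref{lem:techsum}.
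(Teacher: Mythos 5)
Your proof is correct and follows essentially the same route as the paper: you choose $\rho$ so that $B_{\varphi,j}^{1}(x) \subset \tilde{U}_{\lambda}(x)$, extend $\psi_{j,\lambda,k}^{\varphi}$ by iteration via lemma \ref{lem:fundext}, telescope $\psi_{j,\lambda,k}^{\varphi}-\psi_{L_j,k}$, bound the base term at $\varphi^{l_0}(P)\in B_{\varphi,j}^1(x)$ using lemma \ref{lem:fatcorork} together with $k\geq 4\nu(\mathcal{E}_0)$, and control the cocycle sum via lemma \ref{lem:techsum} and the decay of $\Delta_{\varphi,k}$. The only superficial differences are that you invoke the slightly stronger $k(\nu+1)/\nu$ decay for $\Delta_{\varphi,k}$ where the paper contents itself with order $k$, and the paper phrases the base-term estimate directly as $O(\xi^{1-k})$ with $\xi^{-1}=O(|\psi_{L_j}^X|^{-1})$ rather than comparing $|\psi_{L_j}^X(\varphi^{l_0}(P))|$ with $|\psi_{L_j}^X(P)|$; both routes give the required bound.
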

\begin{proof}
Let $\rho$ be the same constant defined in proposition \ref{pro:bddconext}.

We claim that $\tilde{U}_{\lambda}(x)$ contains
a neighborhood of $\overline{B_{\varphi,j}^{1}(x)}$ for any $x \in I_{\lambda}$.
In fact we have
\[ |Im(z \circ \sigma)| \leq |Im(z)| +1 =|Im(\psi_{k})|+1 \leq |Im(\psi_{L_{j}}^{X})| + 2 \leq a_{1}  |\xi| +2 \]
in $B_{\varphi,j}^{1}$. Since
$Re(\xi \lambda^{\nu({\mathcal E}_{0})}) > |\xi| \cos (\pi/4)$ in $I_{\lambda}$
we obtain
\[ a_{1}  |\xi| +2 < 2 a_{1} Re(\xi \lambda^{\nu({\mathcal E}_{0})}) - 3
\ \forall \xi \in I_{\lambda} . \]
As a consequence $\overline{B_{\varphi,j}^{1}(x)}$ is contained in $\tilde{U}_{\lambda}(x)$
for any $x \in I_{\lambda}$. Therefore $\psi_{j,\lambda,k}^{\varphi}$ is defined in
$\cup_{x \in I_{\lambda}} B_{\varphi,j}^{1}(x)$.
We can suppose that $j \in {\mathcal D}_{1}(\varphi)$ without lack of generality.

Given $x \in B(0,\delta) \setminus \{0\}$ and
$P \in H_{j,\theta}^{\epsilon,\rho}(x) \cap H_{j,\theta/2}^{\epsilon_{k},\rho}(x)$ there exists
$l_{0}(P) \in {\mathbb N} \cup \{0\}$ such that
$\{P, \hdots, \varphi^{l_{0}}(P) \} \subset
H_{j,\theta,1}^{\epsilon,2 \rho_{0}}(x) \cap H_{j,\theta/2}^{\epsilon_{k},2 \rho_{0}}(x)$ and
$\varphi^{l_{0}}(P) \in B_{\varphi,j}^{1}(x)$ by lemma
\ref{lem:fundext}. By defining
$\psi_{j,\lambda,k}^{\varphi}(P) = \psi_{j,\lambda,k}^{\varphi}(\varphi^{l_{0}}(P)) - l_{0}$
we extend the function $\psi_{j,\lambda,k}^{\varphi}$ to
$H_{j,\theta}^{\epsilon,\rho}(x) \cap H_{j,\theta/2}^{\epsilon_{k},\rho}(x)$. We have
\[ (\psi_{j,\lambda,k}^{\varphi} - \psi_{L_{j},k})(P)  =
(\psi_{j,\lambda,k}^{\varphi} - \psi_{L_{j},k})(\varphi^{l_{0}}(P)) +
(\psi_{L_{j},k}(\varphi^{l_{0}}(P)) - \psi_{L_{j},k}(P) - l_{0}) \]
and then we obtain (see def. \ref{def:dkpm})
\[ (\psi_{j,\lambda,k}^{\varphi} - \psi_{L_{j},k})(P)  =
O(\xi^{1-k}) + \sum_{l=0}^{l_{0}-1} \Delta_{\varphi,k}(\varphi^{l}(P)) \]
by lemma \ref{lem:fatcorork} and $k \geq 4 \nu({\mathcal E}_{0})$.
Since $\Delta_{\varphi,k}=O(1/(1+| \psi_{L_{j}}^{X}|)^{k})$ in
$H_{j,\theta,1}^{\epsilon,2 \rho_{0}}$
lemma \ref{lem:techsum} implies
\[ \psi_{j,\lambda,k}^{\varphi} - \psi_{L_{j},k}  =
O(\xi^{-(k-1)}) + O \left({ \frac{1}{(1+|\psi_{L_{j}}^{X}|)^{k-1}} }\right) \ {\rm in} \
\cup_{x \in I_{\lambda}} (H_{j,\theta}^{\epsilon,\rho}(x) \cap H_{j,\theta/2}^{\epsilon_{k},\rho}(x)). \]
We have $\psi_{L_{j}}^{X} \sim 1/y^{\nu({\mathcal E}_{0})}$ by equation (\ref{equ:pexpsi}).
Thus $\psi_{L_{j}}^{X}$ is a $O(1/x^{\nu({\mathcal E}_{0})})=O(\xi)$
in $H_{j,\theta}^{\epsilon_{k},\rho}$. We deduce $\xi^{-1} =O(|\psi_{L_{j}}^{X}|^{-1})$
in $H_{j,\theta}^{\epsilon,\rho}(x) \cap H_{j,\theta/2}^{\epsilon_{k},\rho}(x)$.
Hence equation (\ref{equ:bedb}) holds true
for some $K_{17} \in {\mathbb R}^{+}$ and any
$(x,y) \in \cup_{x' \in I_{\lambda}} (H_{j,\theta}^{\epsilon,\rho}(x') \cap H_{j,\theta/2}^{\epsilon_{k},\rho}(x'))$.
\end{proof}
\section{Multi-summability of the infinitesimal generator}
\label{sec:mulsuminf}
The goal of this section is proving the multi-summable nature (with respect to the parameter $x$)
of the Fatou coordinates
and Lavaurs vector fields of an element $\varphi$ of $\diff{tp1}{2}$. In the latter case
we explain how the infinitesimal generator of $\varphi$ is summable.

The subsection \ref{subsec:mulsumfps} is a fast review of the results of summability theory
that we are going to use. In subsection \ref{subsec:mulsumfc} we study the extensions of the
Ecalle-Voronin invariants
\[ \ddot{\xi}_{j,\Lambda,\lambda}^{\varphi}(x,z) =
\ddot{\psi}_{j+1,\Lambda,\lambda}^{\varphi} \circ (x, \ddot{\psi}_{j,\Lambda,\lambda}^{\varphi})^{-1}(x,z) . \]
for $j \in {\mathcal D}(\varphi)$, $\Lambda \in {\mathcal M}$ and $\lambda \in {\mathbb S}^{1}$.
At first sight the definition does not make sense since
$H_{\Lambda,j}^{\lambda} \cap H_{\Lambda,j+1}^{\lambda} = \emptyset$ but this problem can be
solved by extending slightly the domains of definition of
$ \ddot{\psi}_{j,\Lambda,\lambda}^{\varphi}$ and $ \ddot{\psi}_{j+1,\Lambda,\lambda}^{\varphi}$.
We prove in theorem \ref{teo:difccfl} that the family
$\{  \ddot{\psi}_{j,\Lambda,\lambda}^{\varphi} \}_{(j,\Lambda,\lambda) \in
{\mathcal D}(\varphi) \times {\mathcal M} \times {\mathbb S}^{1}}$ represents a multi-summable
object. The proof is based on the estimates provided in prop. \ref{pro:difFatflat}.
The summable nature is concentrated in the $x$ variable since all our estimates are exponentially
small functions of $x$. We study the properties of the Lavaurs vector fields in
subsection \ref{subsec:mulsumlvf}.
Given $j \in {\mathcal D}(\varphi)$ the Lavaurs vector fields
$\{ X_{j,\Lambda,\lambda}^{\varphi}\}_{(\Lambda,\lambda) \in {\mathcal M} \times {\mathbb S}^{1}}$
(def. \ref{def:lavaurs}) represent a multi-summable object whose asymptotic development is
of the form
$\hat{X}_{j}^{\varphi} = \left({ \sum_{k=0}^{\infty} g_{j,k}^{\varphi}(y) x^{k} }\right)
\partial / \partial y$
where the coefficients $g_{j,k}^{\varphi}$ are defined in the petal of order $j$
of $\varphi_{|x=0}$ or more precisely in
$\cup_{\theta \in (0,\pi/2]} H_{j,\theta}^{\epsilon,\rho,\lambda}(0)$. The proof is based on
the estimates of prop. \ref{pro:difFatflatext}. The infinitesimal generator
$\log \varphi$ is of the form
$\left({ \sum_{k=0}^{\infty} \hat{g}_{k}^{\varphi}(y) x^{k}}\right) \partial / \partial y$.
Then given $k \in {\mathbb N} \cup \{0\}$ it is natural to ask whether the power series
$\hat{g}_{k}^{\varphi}$ is a $\nu({\mathcal E}_{0})$-summable function whose sums are the functions
$g_{j,k}^{\varphi}$ for $j \in {\mathcal D}(\varphi)$.
The answer is affirmative (theorem \ref{teo:infgenad} of subsection \ref{subsec:anainfgen}).
\subsection{Multi-summability of formal power series}
\label{subsec:mulsumfps}
For the sake of completeness we introduce the notations in \cite{MalRam:Fou} and \cite{RamSib:Fou}.

We consider ${\mathcal V}_{\lambda}$ the set of open subsets of ${\mathbb C}^{*}$ containing
a set of the form $(0,\zeta) \lambda e^{i(-\zeta,\zeta)}$ for some $\zeta \in {\mathbb R}^{+}$.

Let us introduce some sheafs defined in ${\mathbb S}^{1}$. Let ${\mathcal A}$ be the sheaf
of rings such that ${\mathcal A}_{\lambda}$ is the set of holomorphic functions $f$ defined
in some $W \in {\mathcal V}_{\lambda}$ and admitting an asymptotyc development
$\hat{f} = \sum_{l \geq 0} a_{l} x^{l}$ at the origin, i.e. we have
\[ |f(x) - \sum_{l=0}^{b-1} a_{l} x^{l}| \leq c_{b} |x|^{b} \ {\rm in} \ W \ {\rm for \ some} \
c_{b} \in {\mathbb R}^{+}. \]
We denote ${\mathcal A}^{<0}$ the subsheaf of ${\mathcal A}$ whose elements $f$ satisfy $\hat{f} \equiv 0$.

Given $s \in {\mathbb R}^{+}$ we define ${\mathcal A}_{(s)}$ the subsheaf of ${\mathcal A}$
such that an element $f$ of ${\mathcal A}_{(s),\lambda}$ defined in some $W \in {\mathcal V}_{\lambda}$
satisfies
\[ |f(x) - \sum_{l=0}^{b-1} a_{l} x^{l}| \leq c^{b} (b!)^{s} |x|^{b} \]
for any $x \in W$ and some $c \in {\mathbb R}^{+}$ independent of $b$.
The sheaf ${\mathcal A}_{(s)}$ is the sheaf of functions admitting a Gevrey asymptotic
expansion of order $s$. If $\hat{f}=\sum_{l=0}^{\infty} a_{l} x^{l}$ is a formal power series
such that $|a_{b}| \leq c^{b} (b!)^{s}$ for any $b \in {\mathbb N}$ and some $c \in {\mathbb R}^{+}$
we say that $\hat{f}$ is a formal power series of Gevrey order $s$.
We denote ${\mathbb C}[[x]]_{s}$ the set of formal power series
of Gevrey order $s$.

Given $k \geq 0$ we define ${\mathcal A}^{\leq -k}$ the subsheaf of ${\mathcal A}^{<0}$
such that an element $f$ of ${\mathcal A}_{\lambda}^{\leq -k}$ defined in some $W \in {\mathcal V}_{\lambda}$
satisfies $|f(x)| \leq A e^{-B/|x|^{k}}$ for any $x \in W$ and some $A,B \in {\mathbb R}^{+}$.
By convention ${\mathcal A}^{\leq - \infty}$ only contains the zero function.
%
%
\begin{defi}
Let $\tilde{e}=(\tilde{e}_{1}, \hdots, \tilde{e}_{\tilde{q}}) \in ({\mathbb R}^{+})^{\tilde{q}}$ and
$\Lambda=(\lambda_{1}, \hdots, \lambda_{\tilde{q}}) \in ({\mathbb S}^{1})^{\tilde{q}}$. We define
$I_{l}(\lambda,\upsilon)=
\lambda e^{i[-\frac{\pi}{2 \tilde{e}_{l}}- \upsilon, \frac{\pi}{2 \tilde{e}_{l}}+ \upsilon]}$
for $\lambda \in {\mathbb S}^{1}$, $\upsilon \in {\mathbb R}^{+} \cup \{0\}$ and
$1 \leq l \leq \tilde{e}_{\tilde{q}}$.
We say that the pair $(\tilde{e},\Lambda)$ is admissible if
\begin{itemize}
\item We have $0 < \tilde{e}_{1} < \hdots < \tilde{e}_{\tilde{q}}$
\item $I_{l+1}(\lambda_{l+1},0) \subset I_{l}(\lambda_{l},0)$ for any $1 \leq l < \tilde{q}$.
\end{itemize}
\end{defi}
\begin{defi}
\label{def:RamSib}
\cite{RamSib:Fou}
Let $\tilde{e}=(\tilde{e}_{1}, \hdots, \tilde{e}_{\tilde{q}}) \in ({\mathbb R}^{+})^{\tilde{q}}$ and
$\Lambda=(\lambda_{1}, \hdots, \lambda_{\tilde{q}}) \in ({\mathbb S}^{1})^{\tilde{q}}$.
We set $I_{0}(\lambda_{0},0)={\mathbb S}^{1}$ and $\tilde{e}_{\tilde{q}+1}= \infty$.
Assume that $(\tilde{e},\Lambda)$ is admissible. Let $\hat{\phi} \in {\mathbb C}[[x]]$ be a formal
power series expansion. We will say that $\hat{\phi}$ is
$(\tilde{e}_{1}, \hdots, \tilde{e}_{\tilde{q}})$-summable in the multi-direction
$\Lambda$, with sum $\phi_{\tilde{q}}$, if:
\begin{itemize}
\item[(i)] $\hat{\phi} \in {\mathbb C}[[x]]_{\frac{1}{\tilde{e}_{1}}}$,
\item[(ii)] there exists a sequence $(\phi_{0}, \hdots, \phi_{\tilde{q}})$ where:
\begin{itemize}
\item[a)] $\phi_{0} \in \Gamma({\mathbb S}^{1};{\mathcal A}/{\mathcal A}^{\leq -\tilde{e}_{1}})$
and $\phi_{0}$ corresponds to $\hat{\phi}$ by the natural isomorphism
\[ \Gamma({\mathbb S}^{1};{\mathcal A}/{\mathcal A}^{\leq -\tilde{e}_{1}}) \to
{\mathbb C}[[x]]_{\frac{1}{\tilde{e}_{1}}}, \]
\item[b)] $\phi_{j} \in \Gamma(I_{j}(\lambda_{j},0); {\mathcal A}/{\mathcal A}^{-\tilde{e}_{j+1}})$
($j =0,\hdots,\tilde{q}$),  and $\phi_{j|I_{j+1}} = \phi_{j+1}$ modulo ${\mathcal A}^{\leq - \tilde{e}_{j+1}}$,
for $j =0 , \hdots, \tilde{q}-1$.
\end{itemize}
\end{itemize}
\end{defi}
The next proposition (see \cite{Balser:LN}, page 69) is a criterium to identify multi-summable functions.
\begin{pro}
\label{pro:sumbal}
Let $\tilde{e}=(\tilde{e}_{1}, \hdots, \tilde{e}_{\tilde{q}}) \in ({\mathbb R}^{+})^{\tilde{q}}$ and
$\Lambda=(\lambda_{1}, \hdots, \lambda_{\tilde{q}}) \in ({\mathbb S}^{1})^{\tilde{q}}$.
We set $I_{0}(\lambda_{0},0)={\mathbb S}^{1}$ and $\tilde{e}_{\tilde{q}+1}= \infty$.
Assume that $\tilde{e}_{1} > 1/2$ and $(\tilde{e},\Lambda)$ is admissible.
For $\zeta, \tau \in {\mathbb R}^{+}$, assume existence of
$f(z;\lambda)$ (for every $\lambda \in {\mathbb S}^{1}$), analytic in the sector
$I^{\lambda}=(0,\tau) \lambda e^{i(-\zeta/2,\zeta/2)}$ and bounded at the origin,
such that for every $\lambda_{1}$, $\lambda_{2}$ with $I^{\lambda_{1}} \cap I^{\lambda_{2}} \neq \emptyset$
we have: If $\lambda_{1}, \lambda_{2} \in I_{l}(\lambda_{l},0)$ for some
$l$, $0 \leq l \leq \tilde{q}$, then
\[ f(z; \lambda_{1}) - f(z; \lambda_{2})
\in {\mathcal A}^{\leq - \tilde{e}_{l+1}}(I^{\lambda_{1}} \cap I^{\lambda_{2}}). \]
Then there exists a (unique) $(\tilde{e}_{1}, \hdots, \tilde{e}_{\tilde{q}})$-summable power series
$\hat{\phi}$ in $\Lambda$ with sum $f(z;\lambda_{\tilde{q}})$.
\end{pro}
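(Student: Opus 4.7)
The plan is to prove the proposition via the Ramis--Sibuya cohomological characterization of multi-summability from Definition \ref{def:RamSib}. The family $\{f(z;\lambda)\}_{\lambda \in {\mathbb S}^{1}}$ will be viewed as a $0$-cochain on the circle of directions with respect to the sheaf ${\mathcal A}$, and the sections $\phi_0, \phi_1, \ldots, \phi_{\tilde{q}}$ of the definition will be obtained by reducing modulo the successively finer sheaves ${\mathcal A}^{\leq -\tilde{e}_{j+1}}$ of flat functions.

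First, by compactness of ${\mathbb S}^{1}$ I would extract a finite subcover by sectors $I^{\mu_1}, \ldots, I^{\mu_N}$ with non-empty consecutive overlaps, refining $\zeta$ if needed so that for each consecutive pair $(\mu_i, \mu_{i+1})$ the pair either both lies in some $I_j(\lambda_j, 0)$ for a maximal $j = l(i) \in \{1,\ldots,\tilde{q}\}$, or lies in $I_0(\lambda_0,0) = {\mathbb S}^{1}$. For the construction of $\phi_0$, the hypothesis guarantees that every overlap difference $f(\cdot;\mu_i) - f(\cdot;\mu_{i+1})$ belongs at worst to ${\mathcal A}^{\leq -\tilde{e}_1}$, so the reductions of the $f(\cdot;\mu_i)$ in ${\mathcal A}/{\mathcal A}^{\leq -\tilde{e}_1}$ agree on all overlaps and glue to a global section of this quotient sheaf over ${\mathbb S}^{1}$. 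The Ramis--Sibuya isomorphism
\[ \Gamma({\mathbb S}^{1}; {\mathcal A}/{\mathcal A}^{\leq -\tilde{e}_1}) \to {\mathbb C}[[x]]_{1/\tilde{e}_1} \]
then produces the candidate formal power series $\hat{\phi}$, automatically of Gevrey order $1/\tilde{e}_1$, satisfying clause (i) and clause (ii.a) of Definition \ref{def:RamSib}.

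For $1 \leq j \leq \tilde{q}$ I would construct $\phi_j$ by restricting attention to the sub-family of those $\mu_i$ whose sector $I^{\mu_i}$ is contained in $I_j(\lambda_j, 0)$. By the hypothesis, on every overlap of two such sectors the difference now lies in ${\mathcal A}^{\leq -\tilde{e}_{j+1}}$, so the reductions glue to a section $\phi_j \in \Gamma(I_j(\lambda_j, 0); {\mathcal A}/{\mathcal A}^{\leq -\tilde{e}_{j+1}})$. The compatibility $\phi_{j|I_{j+1}(\lambda_{j+1},0)} = \phi_{j+1}$ modulo ${\mathcal A}^{\leq -\tilde{e}_{j+1}}$ is immediate, since on the smaller arc both $\phi_j$ and $\phi_{j+1}$ are represented by the very same local functions $f(\cdot;\mu_i)$. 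At the top level $j = \tilde{q}$ one has ${\mathcal A}^{\leq -\tilde{e}_{\tilde{q}+1}} = {\mathcal A}^{\leq -\infty} = 0$, so $\phi_{\tilde{q}}$ is a genuine analytic function on $I_{\tilde{q}}(\lambda_{\tilde{q}},0)$ which by construction restricts to $f(\cdot;\lambda_{\tilde{q}})$ on the sub-sector $I^{\lambda_{\tilde{q}}}$, identifying the sum as claimed.

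The main obstacle will be twofold: the combinatorial bookkeeping, ensuring that a single finite subcover of ${\mathbb S}^{1}$ simultaneously respects the partitions induced by the nested arcs $I_1(\lambda_1,0) \supset \cdots \supset I_{\tilde{q}}(\lambda_{\tilde{q}},0)$ for every level, and the uniqueness of $\hat{\phi}$. Uniqueness reduces to showing that if all sectorial sums of a Gevrey-$1/\tilde{e}_1$ series vanish, then the series itself is zero; this is a consequence of the classical Watson lemma applied level by level, where admissibility of $(\tilde{e}, \Lambda)$ guarantees that $\phi_j$ lives on an arc of opening strictly larger than $\pi/\tilde{e}_j$, the critical aperture at level $\tilde{e}_j$. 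The assumption $\tilde{e}_1 > 1/2$ is precisely what ensures the Borel--Laplace summation of order $\tilde{e}_1$ is available and that no spurious slower level enters the picture.
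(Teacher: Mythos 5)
Since the paper quotes this proposition from Balser's lecture notes without supplying a proof, there is nothing to compare against line by line; your Ramis--Sibuya cohomological strategy is the standard one and your sketch is essentially right. One step as written does not quite work, though, and it is worth being precise about it.

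When you build $\phi_j$ for $1 \leq j \leq \tilde{q}$ you restrict to those $\mu_i$ whose \emph{sector} $I^{\mu_i}$ is contained in $I_j(\lambda_j,0)$. Since each $I^{\mu_i}$ has the fixed aperture $\zeta$, such $\mu_i$ must lie at arc-distance at least $\zeta/2$ from the endpoints of $I_j(\lambda_j,0)$, so the union of the corresponding sectors covers at most the open arc $\mathring{I}_j(\lambda_j,0)$ and never the closed arc. That falls short of what Definition \ref{def:RamSib} demands, namely $\phi_j \in \Gamma(I_j(\lambda_j,0);{\mathcal A}/{\mathcal A}^{\leq -\tilde{e}_{j+1}})$, i.e.\ a section over some \emph{open} arc containing the closed arc $I_j(\lambda_j,0)$; a strictly smaller arc is not acceptable because $\pi/\tilde{e}_j$ is exactly the critical aperture at level $j$. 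The repair is simple and restores the argument: impose the membership condition on the center direction, not on the sector, and take the subfamily of all $\mu_i$ with $\mu_i \in I_j(\lambda_j,0)$. The hypothesis of the proposition is a condition on the \emph{positions} $\lambda_1,\lambda_2$, not on their sectors, so it still yields $f(\cdot;\mu_i)-f(\cdot;\mu_{i'}) \in {\mathcal A}^{\leq -\tilde{e}_{j+1}}$ on every overlap within this subfamily; and now $\bigcup_{\mu_i \in I_j(\lambda_j,0)} I^{\mu_i}$ is an open arc of aperture roughly $\pi/\tilde{e}_j + \zeta$, which strictly contains $I_j(\lambda_j,0)$. With this modification the remaining steps --- the Ramis--Sibuya isomorphism producing $\hat{\phi}$ with the correct Gevrey order, the level-by-level compatibility, the identification $\phi_{\tilde{q}} = f(\cdot;\lambda_{\tilde{q}})$ (using ${\mathcal A}^{\leq -\infty} = 0$), and uniqueness --- are all in order.
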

Let $e \in {\mathbb R}^{+}$.
A power series $\hat{\phi} \in {\mathbb C}[[x]]$ is $e$-summable if it is $e$-summable in
any direction outside of a finite set.
A power series $\hat{\phi} \in {\mathbb C}[[x]]$ is $(\tilde{e}_{1}, \hdots, \tilde{e}_{\tilde{q}})$-summable if
it has at most finitely many singular directions of each level $\tilde{e}_{l}$, $1 \leq l \leq \tilde{q}$
(see \cite{Balser:LN}).
We will use the following result:
\begin{lem}
\label{lem:regmd}
Let $\hat{\phi} \in {\mathbb C}[[x]]$ and
$\tilde{e}=(\tilde{e}_{1}, \hdots, \tilde{e}_{\tilde{q}}) \in ({\mathbb R}^{+})^{\tilde{q}}$.
Fix $\lambda_{j} \in {\mathbb S}^{1}$.
Suppose that there exists a sequence
$({\lambda}_{1,n}, \hdots, {\lambda}_{j-1,n},
{\lambda}_{j+1,n}, \hdots, {\lambda}_{\tilde{q},n}) \in
({\mathbb S}^{1})^{\tilde{q}-1}$ such that
\begin{itemize}
\item Given ${\Lambda}_{n} = ({\lambda}_{1,n}, \hdots, {\lambda}_{j-1,n},
\lambda_{j}, {\lambda}_{j+1,n}, \hdots, {\lambda}_{\tilde{q},n})$
the pair  $(\tilde{e}, {\Lambda}_{n})$ is admissible  for any $n \in {\mathbb N}$
\item $\lim_{n \to \infty}  {\lambda}_{k,n} = \lambda_{k}$ for any $k < j$
\item $\hat{\phi}$ is $\tilde{e}$-summable in
${\Lambda}_{n}$ for any $n \in {\mathbb N}$
\end{itemize}
for any admissible pair $(\tilde{e}, \Lambda)$ with
$\Lambda=(\lambda_{1}, \hdots, \lambda_{\tilde{q}})$.
Then $\lambda_{j}$ is
a regular direction of level $\tilde{e}_{j}$.
\end{lem}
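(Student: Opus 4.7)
To show $\lambda_j$ is a regular direction of level $\tilde e_j$, it suffices (by proposition \ref{pro:sumbal}) to exhibit two sections of $\mathcal A/\mathcal A^{\leq -\tilde e_{j+1}}$, defined on arcs that cover $\lambda_j$ from the left and from the right and agree modulo $\mathcal A^{\leq -\tilde e_j}$ on the overlap. First I would fix two admissible multi-directions $\Lambda^{\pm}=(\lambda^{\pm}_1,\ldots,\lambda^{\pm}_{\tilde q})$ sharing the common $j$-th component $\lambda_j$, but with $\lambda^{\pm}_{j-1}$ chosen (inside the range dictated by the admissibility constraint $I_j(\lambda_j,0)\subset I_{j-1}(\lambda^{\pm}_{j-1},0)$) so that $I_{j-1}(\lambda^-_{j-1},0)$ extends $I_j(\lambda_j,0)$ to the left of $\lambda_j$ and $I_{j-1}(\lambda^+_{j-1},0)$ extends it to the right; the remaining components can be fixed arbitrarily subject to admissibility.

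Invoking the hypothesis for each $\Lambda^{\pm}$ produces sequences $\Lambda^{\pm}_n$ with $\lambda^{\pm}_{k,n}\to\lambda^{\pm}_k$ for $k<j$ in which $\hat\phi$ is $\tilde e$-summable. This yields compatible families $\phi^{\pm}_{n,k}\in\Gamma(I_k(\lambda^{\pm}_{k,n},0);\mathcal A/\mathcal A^{\leq -\tilde e_{k+1}})$, and crucially, since the $j$-th coordinate stays fixed at $\lambda_j$ along the sequence, all the sections $\phi^{\pm}_{n,j}$ live on the \emph{same} arc $I_j(\lambda_j,0)$. I would then pass to the limit $n\to\infty$: selecting canonical representatives (for instance by analytic continuation of the deepest sum $\phi^{\pm}_{n,\tilde q}$, which is an honest holomorphic function) and applying a Vitali/Montel compactness argument in Gevrey-$1/\tilde e_{k+1}$ classes, where the uniform bounds come from the fact that every representative has the \emph{same} asymptotic expansion $\hat\phi$, one obtains limit sections $\phi^{\pm}_k$ on $I_k(\lambda^{\pm}_k,0)$ for each $k\leq j$.

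Finally I would compare $\phi^+_j$ and $\phi^-_j$ on $I_j(\lambda_j,0)$. Both are $\tilde e_j$-lifts of the $\tilde e_{j-1}$-sums $\phi^{\pm}_{j-1}$ of $\hat\phi$, and since these lower-level sums refine the common $\phi_0\in\Gamma(\mathbb S^1;\mathcal A/\mathcal A^{\leq -\tilde e_1})$ associated to $\hat\phi$, a routine chase through the sheaf compatibility forces $\phi^+_j-\phi^-_j\in\mathcal A^{\leq -\tilde e_j}(I_j(\lambda_j,0))$. Consequently $\phi^{\pm}_j$ patch into a section of $\mathcal A/\mathcal A^{\leq -\tilde e_{j+1}}$ over the enlarged arc $I_{j-1}(\lambda^-_{j-1},0)\cup I_{j-1}(\lambda^+_{j-1},0)$, which strictly contains $I_j(\lambda_j,0)$ on both sides of $\lambda_j$; proposition \ref{pro:sumbal} then delivers the desired regularity. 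The hard step is the limit passage: sections of $\mathcal A/\mathcal A^{\leq -\tilde e_{k+1}}$ are equivalence classes rather than honest functions, so one must fix canonical representatives and control their derivatives uniformly in $n$ (via Cauchy's integral formula applied on sectors shrinking slightly as $n$ varies) in order to apply compactness; this is feasible precisely because the coefficients of the asymptotic expansion are those of the fixed series $\hat\phi$ and are therefore independent of $n$.
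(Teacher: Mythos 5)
Your plan to flank $\lambda_j$ with two admissible multi-directions and patch is a reasonable instinct, but the patching step as written does not go through, and the difficulty is more than a technicality. Since both $\Lambda^+$ and $\Lambda^-$ (and all the $\Lambda^{\pm}_n$) have $j$-th component equal to the fixed $\lambda_j$, the level-$j$ sections $\phi^{\pm}_{n,j}$, and hence your purported limits $\phi^{\pm}_j$, are all defined on the \emph{same} arc $I_j(\lambda_j,0)=\lambda_j e^{i[-\pi/(2\tilde e_j),\,\pi/(2\tilde e_j)]}$. You cannot glue two sections living on one and the same closed arc into a section over the strictly larger arc $I_{j-1}(\lambda^-_{j-1},0)\cup I_{j-1}(\lambda^+_{j-1},0)$; there is simply no new territory being covered. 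The sections that genuinely live on those flanking arcs are the level-$(j-1)$ objects $\phi^{\pm}_{j-1}$, but those sit in the coarser quotient $\mathcal A/\mathcal A^{\leq -\tilde e_j}$, not in $\mathcal A/\mathcal A^{\leq -\tilde e_{j+1}}$, so patching them gives no control at level $\tilde e_j$. In short, you have conflated the domain of the level-$j$ sum with the domains of the level-$(j-1)$ sums, and this is precisely where the regularity of $\lambda_j$ at level $\tilde e_j$ would have to be produced.

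Two further points. First, the invocation of proposition \ref{pro:sumbal} is misplaced: that proposition manufactures a $(\tilde e_1,\ldots,\tilde e_{\tilde q})$-summable series from a \emph{full} family of holomorphic sums $\{f(\cdot;\lambda)\}_{\lambda\in\mathbb S^1}$ covering the circle, and its conclusion is summability in a given multi-direction, not non-singularity of a single direction at a single level. With only two arcs near $\lambda_j$ you cannot feed it the required data, nor would its output be the statement you need. Second, the compactness/limit passage is not actually doing work at level $j$: because the $j$-th coordinate is held fixed, the section $\phi_{n,j}$ already sits on the fixed arc $I_j(\lambda_j,0)$ for every $n$, and the uniqueness of the multisum tower makes it independent of $n$. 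Taking $n\to\infty$ at the lower levels only moves the arcs $I_k(\lambda_{k,n},0)$, $k<j$, onto $I_k(\lambda_k,0)$. So whatever work the hypothesis does, it is not done by a Montel argument on the level-$j$ sections; you should instead look at how the admissibility constraint $I_j(\lambda_j,0)\subset I_{j-1}(\lambda^\pm_{j-1},0)$, together with freedom in choosing $\lambda^\pm_{j-1}$ on either side, lets the level-$(j-1)$ quasi-sum be produced on arcs that overhang both endpoints of $I_j(\lambda_j,0)$ with the Gevrey control needed for the level-$\tilde e_j$ Borel transform of that quasi-sum to continue analytically along $\arg x = \arg \lambda_j$. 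That is the content that ``$\lambda_j$ is regular of level $\tilde e_j$'' is asking for, and it is not recovered by the present argument.
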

Next lemma is of technical type. It will be used to identify the asymptotic development
of the Lavaurs vector fields associated to an element of $\diff{tp1}{2}$.
\begin{lem}
\label{lem:techgevrey}
Fix $\nu \geq 2$, $\lambda_{0} \in {\mathbb S}^{1}$, $a,n \in {\mathbb N}$,
$c_{1},c_{2} \in {\mathbb R}^{+}$ and $b \in {\mathbb R}^{+}$ with $b>\pi/a$.
Denote $\lambda_{k}=\lambda_{0} e^{2 \pi i k/a}$, $\lambda_{k}'=\lambda_{k} e^{-i \pi/a}$
and $I_{k}=\lambda_{k} e^{i(-b,b)}$ for $k \in {\mathbb Z}/ a {\mathbb Z}$.
Consider $c \in {\mathbb R}^{+}$ such that the function
$t \mapsto e^{-c_{2}t^{-\nu}} t^{-(n+1)}$ is increasing in $(0,c)$.
Let ${\{h_{k}\}}_{k \in {\mathbb Z}/ a {\mathbb Z}}$ be a family of holomorphic functions satisfying
\begin{itemize}
\item $h_{k}$ is holomorphic in $(0,c) I_{k}$ for any $k \in {\mathbb Z}/ a {\mathbb Z}$.
\item $|h_{k}-h_{k-1}| \leq c_{1} e^{-c_{2}/|x|^{\nu}}$ in
$(0,c) \lambda_{k}' e^{i(-(b-\pi/a), b -\pi/a)}$ for any $k \in {\mathbb Z}/ a {\mathbb Z}$.
\end{itemize}
Suppose that there exists $\tau \in {\mathbb R}^{+}$ such that $|h_{k}| \leq \tau$ in
$(0,c) I_{k}$ for any $k \in {\mathbb Z}/ a {\mathbb Z}$.
The function $h_{k}$ has a $1/\nu$ Gevrey asymptotic development
$\sum_{l=0}^{\infty} \hat{h}_{l} x^{l}$ independent of $k$. Then we obtain
\begin{equation}
\label{equ:cch}
|\hat{h}_{n}| \leq \frac{2^{n} \tau}{c^{n}} + e^{- c_{2}/(2 c^{\nu})}
\end{equation}
if $c \in {\mathbb R}^{+}$ is small enough.
\end{lem}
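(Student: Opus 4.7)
The plan is to use a Ramis-Sibuya style cocycle argument to represent $\hat{h}_n$ as a sectorial contour integral plus cocycle corrections, and then estimate each piece at a convenient radius.

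First, since each $h_k$ is bounded by $\tau$ on $(0,c)I_k$ and consecutive differences $h_k-h_{k-1}$ are exponentially flat of order $\nu$ on the overlap sectors, the standard Ramis-Sibuya theorem shows each $h_k$ admits a $1/\nu$-Gevrey asymptotic expansion at the origin, and that this expansion $\sum_{l\geq 0}\hat h_l x^l$ is common to all $k$. This takes care of the qualitative part of the statement.

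Next I would derive an integral representation for $\hat h_n$. For $r\in(0,c)$, let $\gamma_k^{(r)}$ denote the arc of $\partial B(0,r)$ lying in $I_k$, oriented counterclockwise from $r\lambda_k'$ to $r\lambda_{k+1}'$; these $a$ arcs together cover $\partial B(0,r)$. Set $J(r)=\sum_k\int_{\gamma_k^{(r)}}h_k(x)x^{-n-1}dx$. Since $h_k$ is holomorphic in $(0,c)I_k$, I can deform $\gamma_k^{(r)}$ inside $I_k$ to the keyhole path going radially inward along $\lambda_k'$ from radius $r$ to $\epsilon$, along an arc at radius $\epsilon$ to $\epsilon\lambda_{k+1}'$, then radially outward along $\lambda_{k+1}'$. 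Summing over $k$, each radial segment along direction $\lambda_k'$ is traversed twice with opposite orientations, once with $h_k$ (coming from $\gamma_k^{(r)}$) and once with $h_{k-1}$ (from $\gamma_{k-1}^{(r)}$), producing the cocycle $h_k-h_{k-1}$. This yields the identity
\[
J(r)=\sum_k\int_{\gamma_k^{(\epsilon)}}\frac{h_k(x)}{x^{n+1}}dx+\sum_k\lambda_k'\int_\epsilon^r\frac{(h_k-h_{k-1})(t\lambda_k')}{(t\lambda_k')^{n+1}}dt.
\]
As $\epsilon\to 0$, substituting the partial Taylor sum in the first term and observing that the $\epsilon$-arcs glue into a full circle shows that only the $l=n$ term contributes, giving $2\pi i\hat h_n$ in the limit (the $O(x^{n+1})$ remainder gives an $O(\epsilon)$ contribution). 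The cocycle integrals converge near the origin by exponential flatness, so
\[
2\pi i\hat h_n=J(r)-\sum_k\lambda_k'\int_0^r\frac{(h_k-h_{k-1})(t\lambda_k')}{(t\lambda_k')^{n+1}}dt.
\]

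Finally I would estimate both pieces. The bound $|h_k|\leq\tau$ and the total arc length $2\pi r$ give $|J(r)|\leq 2\pi\tau/r^n$. For the cocycle integrals, the assumption that $t\mapsto e^{-c_2 t^{-\nu}}t^{-(n+1)}$ is increasing on $(0,c)$ implies
\[
\int_0^r\frac{c_1 e^{-c_2/t^\nu}}{t^{n+1}}dt\leq r\cdot\frac{c_1 e^{-c_2/r^\nu}}{r^{n+1}}=\frac{c_1 e^{-c_2/r^\nu}}{r^n},
\]
so the cocycle sum is at most $ac_1 e^{-c_2/r^\nu}/r^n$. Dividing by $2\pi$ and taking $r=c/2$ yields
\[
|\hat h_n|\leq\frac{2^n\tau}{c^n}+\frac{ac_1 \cdot 2^n}{2\pi c^n}e^{-2^\nu c_2/c^\nu}.
\]
For $c$ small enough the prefactor $ac_1 2^n/(2\pi c^n)$ is dominated by $e^{c_2(2^\nu-1/2)/c^\nu}$, and the second term becomes at most $e^{-c_2/(2c^\nu)}$, giving the desired bound. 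The main obstacle is the bookkeeping in the contour deformation: one must check that as the $a$ keyhole paths are summed, each radial piece is traversed in opposite senses with the correct pair $(h_k,h_{k-1})$, and that the $\epsilon\to 0$ limit of the inner arc sum is $2\pi i\hat h_n$ even though each individual arc produces divergent contributions from the low-order Taylor terms (which cancel only after summation around the full circle).
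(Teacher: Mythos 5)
Your proof is correct and, at its core, computes the same integrals as the paper, just packaged differently. The paper's proof goes through the Cauchy--Heine transform explicitly: it defines $h_k^{\sharp}(x)=\frac{1}{2\pi i}\int_0^{c\lambda_k'}(h_k-h_{k-1})(w)(w-x)^{-1}dw$, builds $\tilde h_k$ from these, observes that $h-\tilde h$ patches to a single-valued holomorphic function on $B(0,c/2)$ bounded by $\tau$ plus an exponentially small correction, and then applies Cauchy's integral formula on $\partial B(0,c/2)$ to that function while reading off the $n$-th coefficient of $\tilde h_k$ from the Cauchy--Heine integral $\frac{1}{2\pi i}\sum_k\int_0^{c\lambda_k'}(h_k-h_{k-1})(w)w^{-(n+1)}dw$. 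Your keyhole deformation of $J(r)=\sum_k\int_{\gamma_k^{(r)}}h_k(x)x^{-n-1}dx$ unpacks exactly this: the outer arcs give the Cauchy-formula piece $2\pi\tau/r^n$ and the radial segments reproduce precisely the Cauchy--Heine coefficient integral. The only thing you gain is avoiding the paper's auxiliary bookkeeping (the angle $\theta$, the constant $c_0$, the statement about $e^{-c_2 t^{-\nu}}t^{-1}$ being increasing), since you estimate directly at $r=c/2$. One cosmetic nit: in your identity for $J(r)$ the radial (cocycle) sum should enter with a minus sign, not a plus, because the inward segment (carrying $h_k$) runs from $r\lambda_k'$ to $\epsilon\lambda_k'$ while the outward segment (carrying $h_{k-1}$) runs the other way; this has no effect since you take absolute values immediately. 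Your treatment of the $\epsilon\to 0$ limit of the inner arcs is the right concern and is handled correctly: the $O(x^{n+1})$ remainders contribute $O(\epsilon)$ and the polynomial parts assemble into $\oint_{|x|=\epsilon}\bigl(\sum_{l\leq n}\hat h_l x^l\bigr)x^{-n-1}dx=2\pi i\,\hat h_n$.
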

\begin{proof}
The functions $h_{k}$ share a $1/\nu$ Gevrey asymptotic development since they define a section $h$ of
$({\mathcal A}/{\mathcal A}^{\leq -\nu})({\mathbb S}^{1})$ and
$\Gamma({\mathbb S}^{1};{\mathcal A}/{\mathcal A}^{\leq -\nu}) \to {\mathbb C}[[x]]_{\frac{1}{\nu}}$
is an isomorphism.

Denote $\theta=(b-\pi/a)/2$ and $c_{0}=\min (\sin (\theta),1/2)$.
By replacing $b$ with $b-\theta$ we can suppose that $h_{k}-h_{k-1}$ is defined in
$(0,c) \lambda_{k}' e^{i(-(b-\pi/a+\theta), b -\pi/a+\theta)}$ for any $k \in {\mathbb Z}/ a {\mathbb Z}$.

Let us construct, for $j \in {\mathbb Z}/ a {\mathbb Z}$, holomorphic functions
$\tilde{h}_{k}$ defined in $(0,c/2) I_{k}$, defining the same element
$h \in \Gamma({\mathbb S}^{1};{\mathcal A}/{\mathcal A}^{\leq -\nu})$ and
whose Gevrey development is easier to estimate. We use the Cauchy-Heine transform
(see \cite{Balser:LN}, chapter 4) to define the function
\begin{equation}
\label{equ:ch}
h_{k}^{\sharp}(x)=\frac{1}{2 \pi i} \int_{0}^{c \lambda_{k}'} (h_{k}-h_{k-1})(w) (w-x)^{-1} dw .
\end{equation}
It is defined in $(0,c/2) ({\mathbb S}^{1} \setminus \lambda_{k}' e^{i[-\theta,\theta]})$.
Moreover we obtain $|w-x|/|w| \geq \sin (\theta)$ for all
$w \in (0,c) \lambda_{k}'$ and $x \in (0,c/2) ({\mathbb S}^{1} \setminus \lambda_{k}' e^{i[-\theta,\theta]})$.
Given $\lambda \in \lambda_{k}' e^{i(-(b-\pi/a+\theta), b -\pi/a+\theta)}$ we can extend
$h_{k}^{\sharp}$ to $(0,c/2) ({\mathbb S}^{1} \setminus \lambda e^{i[-\theta,\theta]})$
by replacing the path of integration $[0,c] \lambda_{k}'$ in equation (\ref{equ:ch})
with the union $\gamma_{\lambda}$ of $[0,c] \lambda$ and
an arc in $\partial B(0,c)$ joining $c \lambda$ and $c \lambda_{k}'$.
We obtain $|w-x|/|w| \geq c_{0}$ for all
$w \in \gamma_{\lambda}$ and $x \in (0,c/2) ({\mathbb S}^{1} \setminus \lambda e^{i[-\theta,\theta]})$.
The function $h_{k}^{\sharp}$ is holomorphic in
$(0,c/2) (-\lambda_{k}') e^{i(-(b- \pi/a +\pi),b- \pi/a +\pi)}$. It is
multi-valuated and satisfies $h_{k}^{\sharp}(x)- h_{k}^{\sharp}(e^{2 \pi i}x)=(h_{k}-h_{k-1})(x)$.
We define
\[ \tilde{h}_{k}(x) = \sum_{l=1}^{k} h_{l}^{\sharp}(x) + \sum_{l=k+1}^{a} h_{l}^{\sharp}(e^{2 \pi i} x) \]
in $(0,c/2)I_{k}$ for $k \in {\mathbb Z}/ a {\mathbb Z}$.
By construction we obtain $\tilde{h}_{k}-\tilde{h}_{k-1} = h_{k}-h_{k-1}$ and
\[ |\tilde{h}_{k}(x)| \leq \frac{1}{2 \pi} \frac{a}{c_{0}} c (b - \pi/a + \theta +1)
\sup_{t \in (0,c)} (c_{1} e^{-c_{2} t^{-\nu}} t^{-1}) \]
for all $k \in {\mathbb Z}/ a {\mathbb Z}$ and $x \in (0,c/2) I_{k}$. By simple calculus
we can see that if $e^{-c_{2} t^{-\nu}} t^{-(n+1)}$ is increasing in $(0,c)$ then so is
$e^{-c_{2} t^{-\nu}} t^{-1}$. As a consequence we obtain
\[ |\tilde{h}_{k}(x)| \leq \frac{1}{2 \pi} \frac{a c_{1}}{c_{0}}  (b - \pi/a + \theta +1)
e^{-c_{2} c^{-\nu}} \]
for all $k \in {\mathbb Z}/ a {\mathbb Z}$ and $x \in (0,c/2) I_{k}$.

Let $\sum_{l=0}^{\infty} {h}_{l}^{\flat} x^{l}$ be the $1/\nu$ Gevrey development associated to
the element of $\Gamma({\mathbb S}^{1};{\mathcal A}/{\mathcal A}^{\leq -\nu})$ defined by
the functions $\tilde{h}_{k}$ for $k \in {\mathbb Z}/ a {\mathbb Z}$. We have
\[ h_{n}^{\flat} =
\frac{1}{2 \pi i} \sum_{k=1}^{a} \int_{0}^{c \lambda_{k}'} (h_{k}-h_{k-1})(w) w^{-(n+1)} dw \]
by the properties of the Cauchy-Heine transform. Since $e^{-c_{2}t^{-\nu}} t^{-(n+1)}$ is increasing
in $(0,c)$ we deduce
$|h_{n}^{\flat}| \leq (2 \pi)^{-1} a c c_{1} e^{-c_{2} c^{-\nu}} c^{-(n+1)}$.
Let $h-\tilde{h}$ be the function  defined in $B(0,c/2)$ such that
$(h-\tilde{h})_{|(0,c/2)I_{k}} =h_{k} - \tilde{h}_{k}$ for $k \in {\mathbb Z}/ a {\mathbb Z}$.
We have
\[ |(h-\tilde{h})(x)| \leq \tau + \frac{a c_{1} (b - \pi/a + \theta +1)}{2 c_{0} \pi}
e^{-c_{2} c^{-\nu}} \ \forall x \in B(0,c/2) . \]
Cauchy's integral formula implies
\[ |\hat{h}_{n}| \leq \frac{2^{n}}{c^{n}} \left({
\tau + \frac{a c_{1} (b - \pi/a + \theta +1)}{2 c_{0} \pi}
e^{-c_{2} c^{-\nu}} }\right) +  \frac{a  c_{1}}{2 c^{n} \pi}  e^{-c_{2} c^{-\nu}}. \]
A straightforward argument provides the estimate (\ref{equ:cch}).
\end{proof}
\subsection{Multi-summability of Fatou coordinates}
\label{subsec:mulsumfc}
Let $\varphi \in \diff{tp1}{2}$ with $2$-convergent normal form $\Upsilon={\rm exp}(X)$.
Consider $\Lambda=(\lambda_{1}, \hdots, \lambda_{\tilde{q}}) \in {\mathcal M}$ and
the dynamical splitting $\digamma_{\Lambda}$ in remark \ref{rem:unifspl}.

Let $j \in {\mathcal D}_{s}(\varphi)$ and $\lambda \in {\mathbb S}^{1}$.
Our aim is to define
\[ \ddot{\xi}_{j,\Lambda,\lambda}^{\varphi}(x,z) =
\ddot{\psi}_{j+1,\Lambda,\lambda}^{\varphi} \circ
(x, \ddot{\psi}_{j,\Lambda,\lambda}^{\varphi})^{-1}(x,z) . \]
We can interpret the set $(x,\ddot{\psi}_{j, \lambda}^{\varphi})(H_{\Lambda,j}^{\lambda})$
as a chart
coordinate system in which $\varphi$ is of the form $(x,z+1)$. The map
$\ddot{\xi}_{j,\lambda}^{\varphi}$
is then a transition function commuting with $(x,z+1)$. The main problem in order to define
$\ddot{\xi}_{j,\lambda}^{\varphi}$ is that
$H_{\Lambda,j}^{\lambda} \cap H_{\Lambda,j+1}^{\lambda} = \emptyset$.
Anyway in next lemma we extend the Fatou coordinates by iteration in order to obtain
a common domain of
definition for both $\ddot{\psi}_{j, \lambda}^{\varphi}$ and $\ddot{\psi}_{j+1,\lambda}^{\varphi}$.
\begin{rem}
The family $\{\ddot{\xi}_{j,\Lambda,\lambda}^{\varphi}\}_{j \in {\mathcal D}(\varphi)}$
for $\Lambda \in {\mathcal M}$ and $\lambda \in {\mathbb S}^{1}$ is an extension of
the Ecalle-Voronin invariants of $\varphi_{|x=0}$ in $I_{\Lambda}^{\lambda}$.
\end{rem}
\begin{lem}
\label{lem:chacha}
Let $\varphi \in \diff{tp1}{2}$ with $2$-convergent normal form $\Upsilon={\rm exp}(X)$.
Consider $\Lambda=(\lambda_{1}, \hdots, \lambda_{\tilde{q}}) \in {\mathcal M}$, $\lambda \in {\mathbb S}^{1}$,
$j \in {\mathcal D}_{s}(\varphi)$ and $s \in \{-1,1\}$. There exists $M \in {\mathbb R}^{+}$ such that the function
$\ddot{\xi}_{j,\Lambda,\lambda}^{\varphi}(x,z)$ commutes with $z \mapsto z+1$ and is defined in
$[0,\delta) I_{\Lambda}^{\lambda} \times s({\mathbb R} + i (-\infty,-M))$. Moreover
there exists
\[ \lim_{(x,Im(z)) \to (x_{0}, -s \infty)} \ddot{\xi}_{j,\Lambda,\lambda}^{\varphi}(x,z) - z \]
for any $x_{0} \in [0,\delta) I_{\Lambda}^{\lambda}$. The function
$e^{2 \pi i z} \circ \ddot{\xi}_{j,\Lambda,\lambda}^{\varphi}(x,(\ln z)/(2 \pi i))$
is holomorphic in
$(0,\delta) \lambda e^{i(-\upsilon_{\Lambda},\upsilon_{\Lambda})} \times \{ z \in \pn{1} : |z|^{s} > e^{2 \pi M} \}$.
The constant $M$ does not depend on $\Lambda$, $\lambda$ or $j$.
\end{lem}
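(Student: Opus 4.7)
The central idea is to exploit the extensions of the Fatou coordinates provided by proposition \ref{pro:bddconext}. Fix $\theta=\pi/2$ and choose a common $\rho \geq 2\rho_{0}$ (depending only on $X$, $\varphi$ and $\Lambda$) so that both $\ddot{\psi}_{j,\Lambda,\lambda}^{\varphi}$ and $\ddot{\psi}_{j+1,\Lambda,\lambda}^{\varphi}$ extend to continuous functions on $H_{j,\theta}^{\epsilon,\rho,\lambda}$ and $H_{j+1,\theta}^{\epsilon,\rho,\lambda}$ respectively, holomorphic in the interior, with $\ddot{\psi}_{j,\Lambda,\lambda}^{\varphi} - \psi_{L_{j}}^{X}$ and $\ddot{\psi}_{j+1,\Lambda,\lambda}^{\varphi} - \psi_{L_{j+1}}^{X}$ bounded. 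By the construction of these extensions (section \ref{subsec:extfatcor}), the trajectories $\Gamma_{x,j,\theta,-}^{\epsilon,\rho}$ and $\Gamma_{x,j+1,\theta,+}^{\epsilon,\rho}$ both emanate from the common tangent section $T_{X}^{\epsilon,-}(x) = T_{X}^{\epsilon,+}_{j+1}(x)$ (the tangent section of $\Re(X)$ between petals $L_{j}$ and $L_{j+1}$). Consequently the two domains overlap in an open neighborhood $V$ of $\cup_{x \in (0,\delta) I_{\Lambda}^{\lambda}} T_{X}^{\epsilon,-}(x)$ up to the associated $\varphi$-invariant fixed point.

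First I would establish injectivity of $(x,\ddot{\psi}_{j,\Lambda,\lambda}^{\varphi})$ on $H_{j,\theta}^{\epsilon,\rho,\lambda}$, using the analog of proposition \ref{pro:Fatpar} combined with the iterative extension (\ref{equ:exttheta}) and the monotone behavior of $Im(\psi_{L_{j}}^{X})$ along trajectories of $\Re(iX)$ in the exterior ${\mathcal E}_{0}$. This allows us to define $\ddot{\xi}_{j,\Lambda,\lambda}^{\varphi}$ on the image $(x,\ddot{\psi}_{j,\Lambda,\lambda}^{\varphi})(V)$. Second, the identity $(\ddot{\psi}_{j+1,\Lambda,\lambda}^{\varphi} - \ddot{\psi}_{j,\Lambda,\lambda}^{\varphi}) \circ \varphi \equiv \ddot{\psi}_{j+1,\Lambda,\lambda}^{\varphi} - \ddot{\psi}_{j,\Lambda,\lambda}^{\varphi}$ on $V$ yields directly the commutation $\ddot{\xi}_{j,\Lambda,\lambda}^{\varphi}(x,z+1) = \ddot{\xi}_{j,\Lambda,\lambda}^{\varphi}(x,z) + 1$.

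Next I would show that the image $(x,\ddot{\psi}_{j,\Lambda,\lambda}^{\varphi})(V)$ contains $[0,\delta) I_{\Lambda}^{\lambda} \times s({\mathbb R} + i(-\infty,-M))$ for some uniform $M$. On $V \cap {\mathcal E}_{0}$, equation (\ref{equ:pexpsi}) gives $1/(C_{6} |y|^{\nu({\mathcal E}_{0})}) \leq |\psi_{L_{j}}^{X}| \leq C_{6}/|y|^{\nu({\mathcal E}_{0})}$ with $C_{6}$ depending only on $X$. The sign convention $j \in {\mathcal D}_{s}(\varphi)$ determines whether $Im(\psi_{L_{j}}^{X})$ tends to $+\infty$ or $-\infty$ along $V$ as $y$ approaches the shared fixed point; this gives the $s$ dependence. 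Combining with the uniform bound of $\ddot{\psi}_{j,\Lambda,\lambda}^{\varphi} - \psi_{L_{j}}^{X}$ from proposition \ref{pro:bddconext} (with a constant depending only on $X$, $\varphi$) yields the claimed surjectivity with $M$ independent of $\Lambda$, $\lambda$ and $j$. The existence of $\lim_{(x,Im(z)) \to (x_{0},-s\infty)}(\ddot{\xi}_{j,\Lambda,\lambda}^{\varphi}(x,z) - z)$ follows because, by continuity of $\ddot{\psi}_{j,\Lambda,\lambda}^{\varphi} - \psi_{L_{j}}^{X}$ and $\ddot{\psi}_{j+1,\Lambda,\lambda}^{\varphi} - \psi_{L_{j+1}}^{X}$ up to the boundary (prop. \ref{pro:bddconext}), this difference reduces in the limit to the analytically controlled constant $\lim(\psi_{L_{j+1}}^{X} - \psi_{L_{j}}^{X})$ at the common fixed point, which is finite by the residue computation (definitions \ref{def:res1}, \ref{def:res2}).

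Finally, the bounded, $z \mapsto z+1$-periodic function $\ddot{\xi}_{j,\Lambda,\lambda}^{\varphi}(x,z) - z$ descends via $e^{2 \pi i z}$ to a function on $(0,\delta) \lambda e^{i(-\upsilon_{\Lambda},\upsilon_{\Lambda})} \times \{|z|^{s} > e^{2 \pi M}\}$ which is holomorphic outside the removable singularity and thus extends holomorphically by the existence of the limit above. The main obstacle is the careful verification in Step 1 that the overlap $V$ actually contains a $\varphi$-saturated neighborhood reaching sufficiently far in the direction $s \cdot i \infty$ in the Fatou coordinate, uniformly in $\Lambda$, $\lambda$, $j$; this relies on the uniformity of the constants appearing in proposition \ref{pro:bddconext} (which itself depends only on $X$, $\varphi$, $\Lambda$) combined with the fact that both petals $L_{j}$ and $L_{j+1}$ share the same fixed point and the same polynomial normal form around it, so that the constants $C_{6}$ and the boundedness constants are intrinsic to $\varphi$.
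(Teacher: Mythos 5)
The proposal takes a genuinely different route from the paper. The paper's proof extends $\ddot{\psi}_{j+1,\Lambda,\lambda}^{\varphi}$ by backward iteration of $\varphi$ from the band $B_{X,j,\lambda}(x)=\cup_{t\in[0,2]}{\rm exp}(tX)(\Gamma_{x,j,\lambda})$ around the $j$-separatrix, through the middle region $H_{j,j+1}^{\lambda}$, into the band around the $j+1$-separatrix, controlling the iteration via lemma \ref{lem:techsum} and the $\Delta_{\varphi}=O((1+|\psi_{L_j}^X|)^{-2})$ estimate from proposition \ref{pro:bddconf}. You instead try to use the wedge-shaped extensions $H_{j,\theta}^{\epsilon,\rho,\lambda}$ of subsection \ref{subsec:extfatcor} and then read off the answer from proposition \ref{pro:bddconext}.

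There is a genuine gap in your choice $\theta=\pi/2$. By definition, $W_{\pi/2,M}=\{Re(z)>0\}$ and, by lemma \ref{lem:hjter}, $\psi_{L_j}^X(H_{j,\pi/2}^{\epsilon,\rho}(x))$ sits in a (translated) closed right half-plane. For $\theta=\pi/2$ the arcs $\Gamma_{x,j,\pi/2,\pm}^{\epsilon,\rho}$ follow $\Re(\pm iX)$, so $\Gamma_{x,j,\pi/2}^{\epsilon,\rho}$ is nothing but the full $\Re(iX)$-separatrix through $T_{iX}^{\epsilon,j}(x)$, and $H_{j,\pi/2}^{\epsilon,\rho}(x)$ is precisely the petal it bounds, with no extension into the adjacent petals. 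Consequently $H_{j,\pi/2}^{\epsilon,\rho}(x)$ and $H_{j+1,\pi/2}^{\epsilon,\rho}(x)$ meet only along the common boundary trajectory and have empty intersection as open sets; the "open neighborhood $V$" you invoke does not exist. To obtain a nontrivial overlap the machinery of subsection \ref{subsec:extfatcor} requires $\theta<\pi/2$, so that $W_{\theta,M}$ protrudes across the separatrix into the adjacent petals as $|Im(z)|\to\infty$. Even then, the overlap in the $\psi_{L_j}^X$-coordinate is a wedge rather than a band, and you must verify separately that this wedge eventually contains a full fundamental strip $\{Re(z)\in[0,1],\ Im(z)<-M\}$ before the composition $\ddot{\psi}_{j+1,\Lambda,\lambda}^{\varphi}\circ(x,\ddot{\psi}_{j,\Lambda,\lambda}^{\varphi})^{-1}$ can be defined on $[0,\delta)I_\Lambda^\lambda\times s({\mathbb R}+i(-\infty,-M))$. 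Neither of these points is addressed, and without them the plan does not go through.
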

\begin{proof}
Consider $\Gamma_{x,k,\lambda}=\Gamma(\aleph_{\Lambda,\lambda}X, T_{iX}^{\epsilon,k}(x), T_{0})$
for $k \in {\mathcal D}(\varphi)$ and
$x \in [0,\delta) I_{\Lambda}^{\lambda}$.
Suppose $s=1$ without lack of generality. We denote
\[ B_{X,j,\lambda}(x)= \cup_{t \in [0,2]} {\rm exp}(tX)(\Gamma_{x,j,\lambda}) \ {\rm and} \
B_{X,j+1,\lambda}(x)= \cup_{t \in [-2,0]} {\rm exp}(tX)(\Gamma_{x,j+1,\lambda}) \]
for $x \in [0,\delta) I_{\Lambda}^{\lambda}$.
Let $H_{j,j+1}^{\lambda}$ be the element of the set
$Reg^{*}(\epsilon, \aleph_{\Lambda, \lambda} X,I_{\Lambda}^{\lambda})$ such that
$T_{iX}^{\epsilon,j}(0) \in \overline{H_{j,j+1}^{\lambda}(0)} \ni T_{iX}^{\epsilon,j+1}(0)$.
We define
\[ E_{X,j,\lambda}(x)=B_{X,j,\lambda}(x) \cup B_{X,j+1,\lambda}(x) \cup H_{j,j+1}^{\lambda}(x)
\ {\rm for} \  x \in [0,\delta) I_{\Lambda}^{\lambda}. \]
Since $E_{X,j,\lambda}(x)$ is simply connected for $x \in [0,\delta) I_{\Lambda}^{\lambda}$
we can extend $\psi_{L_{j}}^{X}$ to $E_{X,j,\lambda}$.

There exists $M \in {\mathbb R}^{+}$ such that any trajectory
$\Gamma(X, P, E_{X,j,\lambda})$ for
$P \in B_{X,j,\lambda}(x)$, $x \in [0,\delta) I_{\Lambda}^{\lambda}$
and $Im(\psi_{L_{j}}^{X}(P))<-M$ intersects $B_{X,j+1,\lambda}(x)$.
More precisely, there exists $t_{0}(P) \in {\mathbb R}^{+}$ such that
${\rm exp}(tX)(P) \in E_{X,j,\lambda}$ for any $t \in [-t_{0}(P),0]$ and
${\rm exp}(-t_{0}(P) X)(P) \in B_{X,j+1,\lambda}(x)$. The constant $M$ does not depend on
$\Lambda$, $\lambda$ or $j$. We have
\[ |\Delta_{\varphi}| \leq \frac{K_{18}}{(1+|\psi_{L_{j}}^{X}|)^{2}} \ {\rm in} \ E_{X,j,\lambda} \]
for some $K_{18} \in {\mathbb R}^{+}$ by proposition \ref{pro:bddconf}. We have
$\psi_{L_{j}}^{X} \circ \varphi^{-1} = \psi_{L_{j}}^{X} - 1 - \Delta_{\varphi} \circ \varphi^{-1}$.
Lemma \ref{lem:techsum} implies
\[ |\psi_{L_{j}}^{X} \circ \varphi^{-l}  - (\psi_{L_{j}}^{X} - l)|(P) \leq
 \frac{K_{19}}{1+|\psi_{L_{j}}^{X}(P)|} \]
for all orbit $\{ P, \varphi^{-1}(P), \hdots, \varphi^{-l}(P)\}$ contained in $E_{X,j,\lambda}(x)$ and
$x \in [0,\delta) I_{\Lambda}^{\lambda}$.
We can use the previous inequality to show that there exists $t_{1}(P) \in {\mathbb N}$ such that
\[ \varphi^{-l}(P) \in E_{X,j,\lambda} \ \forall 1 \leq l \leq t_{1}(P) \ {\rm and} \
\varphi^{-t_{1}(P)}(P) \in B_{X,j+1,\lambda} \]
for any  $P \in \cup_{x \in [0,\delta) I_{\Lambda}^{\lambda}} B_{X,j,\lambda}(x)$ with
$Im(\psi_{L_{j}}^{X}(P)) < - M$. We consider a greater $M \in {\mathbb R}^{+}$ if necessary.
Thus we can extend $\ddot{\psi}_{j+1, \lambda}^{\varphi}$ to
$B_{X,j,\lambda}(x) \cap \{ Im(\psi_{L_{j}}^{X}) < -M\}$ for any
$x \in [0,\delta) I_{\Lambda}^{\lambda}$. Since we have
\[ |(\ddot{\psi}_{j+1,\lambda}^{\varphi}- \psi_{L_{j}}^{X})(P) -
(\ddot{\psi}_{j+1,\lambda}^{\varphi}- \psi_{L_{j}}^{X})(\varphi^{-t_{1}(P)}(P))| \leq
\frac{K_{19}}{1+ |\psi_{L_{j}}^{X}(P)|} \]
for any  $P \in \cup_{x \in [0,\delta) I_{\Lambda}^{\lambda}} B_{X,j,\lambda}(x)$ with
$Im(\psi_{L_{j}}^{X}(P)) < - M$  then
$\ddot{\psi}_{j+1, \lambda}^{\varphi}- \psi_{L_{j}}^{X}$ is continuous in
$\cup_{x \in [0,\delta) I_{\Lambda}^{\lambda}} \overline{B_{X,j,\lambda}(x)} \cap
 \{ Im(\psi_{L_{j}}^{X}) < -M\}$ and so is the mapping
$\ddot{\psi}_{j+1, \lambda}^{\varphi}- \ddot{\psi}_{j, \lambda}^{\varphi}$.
The mapping $\ddot{\xi}_{j,\lambda}^{\varphi}(x,z)$
commutes with $(x,z+1)$. Therefore it is defined in
$[0,\delta) I_{\Lambda}^{\lambda} \times ({\mathbb R} + i (-\infty,-M))$
up to consider a greater $M \in {\mathbb R}^{+}$. Moreover we obtain
\[ \lim_{(x,Im(z)) \to (x_{0}, -\infty)}
\ddot{\xi}_{j,\lambda}^{\varphi}(x,z) -z =
(\ddot{\psi}_{j+1, \lambda}^{\varphi} - \ddot{\psi}_{j, \lambda}^{\varphi})
(\alpha^{\aleph_{\Lambda,\lambda}X}(H_{\Lambda,j}^{\lambda}(x))). \]
Thus
$e^{2 \pi i z} \circ \ddot{\xi}_{j,\lambda}^{\varphi}(x,(\ln z)/(2 \pi i))$ is holomorphic in
$[0,\delta) I_{\Lambda}^{\lambda} \times \{|z| > e^{2 \pi M} \}$.
\end{proof}
Next we study the dependence of $\ddot{\xi}_{j,\Lambda,\lambda}^{\varphi}$
on $\Lambda \in {\mathcal M}$ and $\lambda \in {\mathbb S}^{1}$. We provide
the estimates implying the multi-summability of the extensions of the Ecalle-Voronin
invariants.
\begin{teo}
\label{teo:difccfl}
Let $\varphi \in \diff{tp1}{2}$ with $2$-convergent normal form $\Upsilon={\rm exp}(X)$.
Consider $\Lambda, \Lambda' \in {\mathcal M}$,
$\lambda, \lambda' \in {\mathbb S}^{1}$, $j \in {\mathcal D}_{s}(\varphi)$ and $s \in \{-1,1\}$. Then
\begin{equation}
\label{equ:change}
|\ddot{\xi}_{j,\Lambda,\lambda}^{\varphi}(x,z) -  \ddot{\xi}_{j,\Lambda',\lambda'}^{\varphi}(x,z)| \leq K_{20}
e^{-K/|x|^{\tilde{e}_{d_{\Lambda,\Lambda'}^{\lambda,\lambda'}+1}}}
\end{equation}
in $[0,\delta) I_{\Lambda,\Lambda'}^{\lambda,\lambda'} \times s({\mathbb R} + i (-\infty,-M))$
for some $K_{20} \in {\mathbb R}^{+}$.
\end{teo}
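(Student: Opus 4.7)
\smallskip
\noindent\textbf{Proof plan.} The idea is to reduce the estimate for $\ddot{\xi}$ to the already established flatness of the Fatou coordinates $\ddot{\psi}$ (Propositions \ref{pro:difFatflat} and \ref{pro:difFatflatext}). Writing $y_{1}=(\ddot{\psi}_{j,\Lambda,\lambda}^{\varphi})^{-1}(x,z)$ and $y_{2}=(\ddot{\psi}_{j,\Lambda',\lambda'}^{\varphi})^{-1}(x,z)$, I will decompose
\[
\ddot{\xi}_{j,\Lambda,\lambda}^{\varphi}(x,z)-\ddot{\xi}_{j,\Lambda',\lambda'}^{\varphi}(x,z)= A(x,z)+B(x,z),
\]
where $A(x,z)=[\ddot{\psi}_{j+1,\Lambda,\lambda}^{\varphi}-\ddot{\psi}_{j+1,\Lambda',\lambda'}^{\varphi}](x,y_{1})$ and $B(x,z)=\ddot{\psi}_{j+1,\Lambda',\lambda'}^{\varphi}(x,y_{1})-\ddot{\psi}_{j+1,\Lambda',\lambda'}^{\varphi}(x,y_{2})$.

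\smallskip
\noindent\textbf{Step 1 (location of $y_{1},y_{2}$).} Lemma \ref{lem:chacha} shows that for $Im(z)$ sufficiently $s$-negative, both $y_{1}$ and $y_{2}$ lie arbitrarily close to the common singular point $P_{0}=\alpha^{\aleph X}(H_{\Lambda,j}^{\lambda})=\alpha^{\aleph X}(H_{\Lambda,j+1}^{\lambda})$ which sits on $\partial H_{\Lambda,j}^{\lambda}\cap\partial H_{\Lambda,j+1}^{\lambda}$ (and similarly for $\Lambda',\lambda'$). Enlarging $M$ if necessary I ensure $(x,y_{1})$ and $(x,y_{2})$ belong to $H_{j,\theta}^{\epsilon,\rho,\lambda,\lambda'}$ for some $\theta\in(0,\pi/2]$, $\rho\geq 2\rho_{0}$, so that both propositions \ref{pro:bddconext} and \ref{pro:difFatflatext} apply to the Fatou coordinates there.

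\smallskip
\noindent\textbf{Step 2 (estimate of $A$).} Proposition \ref{pro:difFatflatext} applied to index $j+1$ yields directly
\[
|A(x,z)|\leq e^{-K/|x|^{\tilde{e}_{d_{\Lambda,\Lambda'}^{\lambda,\lambda'}+1}}}
\]
on $H_{j+1,\theta}^{\epsilon,\rho,\lambda,\lambda'}$, which covers $(x,y_{1})$ by Step~1.

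\smallskip
\noindent\textbf{Step 3 (estimate of $B$).} The identity $\ddot{\psi}_{j,\Lambda,\lambda}^{\varphi}(x,y_{1})=\ddot{\psi}_{j,\Lambda',\lambda'}^{\varphi}(x,y_{2})$ rewrites as
\[
\ddot{\psi}_{j,\Lambda',\lambda'}^{\varphi}(x,y_{1})-\ddot{\psi}_{j,\Lambda',\lambda'}^{\varphi}(x,y_{2}) = [\ddot{\psi}_{j,\Lambda',\lambda'}^{\varphi}-\ddot{\psi}_{j,\Lambda,\lambda}^{\varphi}](x,y_{1}),
\]
whose right-hand side is $O(e^{-K/|x|^{\tilde{e}_{d+1}}})$ by Prop.~\ref{pro:difFatflatext} again. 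By a mean-value argument along a segment from $y_{2}$ to $y_{1}$,
\[
|y_{1}-y_{2}|\leq \frac{|\ddot{\psi}_{j,\Lambda',\lambda'}^{\varphi}(x,y_{1})-\ddot{\psi}_{j,\Lambda',\lambda'}^{\varphi}(x,y_{2})|}{\inf|\partial_{y}\ddot{\psi}_{j,\Lambda',\lambda'}^{\varphi}|}.
\]
By Prop.~\ref{pro:bddconext} the difference $\ddot{\psi}_{j,\Lambda',\lambda'}^{\varphi}-\psi_{L_{j}}^{X}$ is bounded and holomorphic on $H_{j,\theta}^{\epsilon,\rho,\lambda}$; a Cauchy estimate then shows $\partial_{y}\ddot{\psi}_{j,\Lambda',\lambda'}^{\varphi}\sim\partial_{y}\psi_{L_{j}}^{X}\sim 1/y^{\nu({\mathcal E}_{0})+1}$ near $P_{0}$, and the same for $j+1$. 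Combining the two,
\[
|B(x,z)|\leq \sup|\partial_{y}\ddot{\psi}_{j+1,\Lambda',\lambda'}^{\varphi}|\cdot|y_{1}-y_{2}| \leq C\,\frac{|y|^{\nu+1}}{|y|^{\nu+1}}\, e^{-K/|x|^{\tilde{e}_{d+1}}}=O(e^{-K/|x|^{\tilde{e}_{d+1}}}),
\]
since the two derivative factors (evaluated at nearby points $y_{1}$ and the segment from $y_{2}$ to $y_{1}$) are of the same order and cancel. Adding Steps 2 and 3 gives the stated bound, with the final constants $K_{20}$ and $K$ absorbed by taking a slightly smaller $K$.

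\smallskip
\noindent\textbf{Main obstacle.} The delicate point is Step 3: the $y$-derivatives of the Fatou coordinates blow up like $|y|^{-(\nu+1)}$ as one approaches $P_{0}$, so one needs to verify that the factor gained from the Jacobian in $|y_{1}-y_{2}|$ matches exactly the factor lost when integrating $\partial_{y}\ddot{\psi}_{j+1,\Lambda',\lambda'}^{\varphi}$ across the segment. This requires a careful Cauchy estimate on $\ddot{\psi}_{j+1,\Lambda',\lambda'}^{\varphi}-\psi_{L_{j+1}}^{X}$ in a small polydisk around each intermediate point and the comparison $\psi_{L_{j}}^{X}\sim \psi_{L_{j+1}}^{X}$ up to an additive constant near the shared singular point $P_{0}$, so that the quotient of derivatives is uniformly bounded on the relevant segment.
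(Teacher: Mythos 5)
Your proof is correct and follows the same overall strategy as the paper's: fix $y_{1},y_{2}$ as the two $\varphi$-iterated preimages of the same $z$, split the difference of the two Ecalle--Voronin extensions into a flatness term (handled by Proposition~\ref{pro:difFatflatext}/\ref{pro:difFatflat}) plus a term measuring the displacement $|y_{1}-y_{2}|$, and control the latter by comparing Jacobians of Fatou coordinates. The one substantive difference is in how the Jacobian comparison is executed. The paper never touches the $y$-derivatives directly: it shows $\partial_{z}(\ddot{\psi}_{j,\lambda}^{\varphi}\circ(x,\psi_{L_{j}}^{X})^{-1})\approx 1$ and $\partial_{z}(\ddot{\psi}_{j+1,\lambda}^{\varphi}\circ(x,\psi_{L_{j}}^{X})^{-1})\approx 1$ via Cauchy estimates on the bounded holomorphic difference $\ddot{\psi}-\psi_{L_{j}}^{X}$, so both Jacobian factors are $O(1)$ and nothing blows up. You instead estimate $\partial_{y}\ddot{\psi}_{j}\sim\partial_{y}\ddot{\psi}_{j+1}\sim|y-\gamma(x)|^{-(\nu+1)}$ and argue the singular factors cancel; this works, and you correctly identify the key fact (that $\psi_{L_{j}}^{X}$ and $\psi_{L_{j+1}}^{X}$ differ by a function of $x$ alone on the overlap) that makes the cancellation exact to leading order, but the $\psi_{L_{j}}^{X}$-coordinate route is cleaner because it avoids the need to track the blow-up and its cancellation at all. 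Finally, the paper first proves the estimate only on $Im(z)=-sM$ (along a fundamental segment $\gamma_{x}$) and then extends to the full strip by the maximum modulus principle applied to the bounded $1$-periodic function in the $e^{2\pi i z}$-coordinate; your argument works directly in the whole strip and hence does not need this extra step.
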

\begin{proof}
Consider the notations in the proof of lemma \ref{lem:chacha}.
Denote $d=d_{\Lambda,\Lambda'}^{\lambda,\lambda'}$. We have
\[  |\ddot{\psi}_{j,\Lambda,\lambda}^{\varphi} - \ddot{\psi}_{j,\Lambda',\lambda'}^{\varphi}| \leq
e^{-K/|x|^{\tilde{e}_{d+1}}} \ {\rm and} \
|\ddot{\psi}_{j+1,\Lambda,\lambda}^{\varphi} - \ddot{\psi}_{j+1,\Lambda',\lambda'}^{\varphi}| \leq
e^{-K/|x|^{\tilde{e}_{d+1}}}  \]
in $H_{\Lambda,\Lambda',j}^{\lambda,\lambda'}$ and $H_{\Lambda,\Lambda',j+1}^{\lambda,\lambda'}$ respectively
by prop. \ref{pro:difFatflat}.

Given $x \in [0,\delta) I_{\Lambda,\Lambda'}^{\lambda,\lambda'}$ consider a connected path
$\gamma_{x}$ contained in $B_{X,j,\lambda}(x)$
such that $\ddot{\psi}_{j,\lambda}^{\varphi}(\gamma_{x})$
is of the form $[a(x), a(x)+1] - i s M$.
Given $x_{0} \in [0,\delta) I_{\Lambda,\Lambda'}^{\lambda,\lambda'}$ and
$z_{0} \in \ddot{\psi}_{j,\lambda}^{\varphi}(\gamma_{x_{0}})$
we consider the point $(x_{0},y_{0}) \in \gamma_{x_{0}}$ such that
$\ddot{\psi}_{j,\lambda}^{\varphi}(x_{0},y_{0})=z_{0}$.
We consider the point $(x_{0},y_{1})$ such that $\ddot{\psi}_{j,\lambda'}^{\varphi}(x_{0},y_{1})=z_{0}$.
Since
\[ |\ddot{\psi}_{j,\lambda}^{\varphi}(x_{0},y_{1}) - \ddot{\psi}_{j,\lambda'}^{\varphi}(x_{0},y_{1})| \leq
e^{-K/|x_{0}|^{\tilde{e}_{d+1}}} \]
by proposition \ref{pro:difFatflat} we obtain
$|\ddot{\psi}_{j,\lambda}^{\varphi}(x_{0},y_{1}) - \ddot{\psi}_{j,\lambda}^{\varphi}(x_{0},y_{0})| \leq
e^{-K/|x_{0}|^{\tilde{e}_{d+1}}}$.
There exists $K_{1}' \in {\mathbb R}^{+}$ such that
$|\ddot{\psi}_{j,\lambda}^{\varphi} \circ (x, \psi_{L_{j}}^{X})^{-1}(x,z) - z| \leq K_{1}'$ in
a neighborhood of
$\cup_{x \in [0,\delta) I_{\Lambda,\Lambda'}^{\lambda,\lambda'}} \{x\} \times \psi_{L_{j}}^{X}(B_{X,j,\lambda}(x))$
since $\ddot{\psi}_{j,\lambda}^{\varphi} - \psi_{L_{j}}^{X}$ is bounded.
By using Cauchy's integral formula we obtain $K_{2}'>0$ such that
\[ \left|{ \frac{\partial (\ddot{\psi}_{j,\lambda}^{\varphi} \circ (x, \psi_{L_{j}}^{X})^{-1})}{\partial z} - 1 }\right|
\leq K_{2}'  \]
in a neighborhood of
$\cup_{x \in [0,\delta) I_{\Lambda,\Lambda'}^{\lambda,\lambda'}} \{x\} \times \psi_{L_{j}}^{X}(B_{X,j,\lambda}(x))$.
We can suppose $K_{2}' \leq 1/2$ by considering a greater $M \in {\mathbb R}^{+}$.
Thus we obtain
$|\partial (\psi_{L_{j}}^{X} \circ (x, \ddot{\psi}_{j,\lambda}^{\varphi})^{-1}) / \partial z| \leq 2$
in a neighborhood of
$\cup_{x \in [0,\delta) I_{\Lambda,\Lambda'}^{\lambda,\lambda'}}
\{x\} \times \ddot{\psi}_{j,\lambda}^{\varphi}(B_{X,j,\lambda}(x))$.
We deduce
\[ |\psi_{L_{j}}^{X} (x_{0},y_{1}) - \psi_{L_{j}}^{X} (x_{0},y_{0})| \leq
2 e^{-K/|x_{0}|^{\tilde{e}_{d+1}}} . \]
We have
\[ |\ddot{\psi}_{j+1, \lambda}^{\varphi} (x_{0},y_{0}) -
\ddot{\psi}_{j+1, \lambda'}^{\varphi}  (x_{0},y_{1})| \leq
|\ddot{\psi}_{j+1, \lambda}^{\varphi} (x_{0},y_{0}) -
\ddot{\psi}_{j+1, \lambda}^{\varphi}  (x_{0},y_{1})| + e^{-K/|x_{0}|^{\tilde{e}_{d+1}}}. \]
There exists $K_{3}' \in {\mathbb R}^{+}$ such that
$| \partial (\ddot{\psi}_{j+1,\lambda}^{\varphi} \circ (x, \psi_{L_{j}}^{X})^{-1}) / \partial z - 1| \leq K_{3}'$
in a neighborhood of
$\cup_{x \in [0,\delta) I_{\Lambda,\Lambda'}^{\lambda,\lambda'}} \{x\} \times
[\psi_{L_{j}}^{X}(B_{X,j,\lambda}(x)) \cap s({\mathbb R} + i (-\infty,-M))]$.
The proof is analogous to the one for $\ddot{\psi}_{j,\lambda}^{\varphi}$. This property implies
\[ |\ddot{\psi}_{j+1, \lambda}^{\varphi} (x_{0},y_{0}) -
\ddot{\psi}_{j+1, \lambda'}^{\varphi}  (x_{0},y_{1})| \leq
2(1+K_{3}') e^{-K/|x_{0}|^{\tilde{e}_{d+1}}} +
e^{-K/|x_{0}|^{\tilde{e}_{d+1}}}. \]
Denote $K_{20}= 2(1+K_{3}') +1$. The function
$(\ddot{\xi}_{j,\Lambda,\lambda}^{\varphi} -  \ddot{\xi}_{j,\Lambda',\lambda'}^{\varphi})(x_{0},(\ln z)/(2 \pi i))$
is defined in $ \{z \in \pn{1}: |z|^{s} > e^{2 \pi M} \}$ and is bounded by above by
$K_{20} e^{-K/|x_{0}|^{\tilde{e}_{d+1}}}$ in $\partial B(0, e^{2 \pi s M})$.
The modulus maximum principle implies equation (\ref{equ:change}).
\end{proof}
\begin{defi}
Let $j \in {\mathcal D}_{s}(\varphi)$.
Since $\ddot{\xi}_{j,\Lambda,\lambda}^{\varphi}$ commutes with $(x,z+1)$, it is of the form
\[ \ddot{\xi}_{j,\Lambda,\lambda}^{\varphi}= z + \ddot{a}_{j,\Lambda,\lambda,0}^{\varphi}(x) +
\sum_{l=1}^{\infty} \ddot{a}_{j,\Lambda,\lambda,l}^{\varphi}(x) e^{-2 \pi i s l z} \]
where $\ddot{a}_{j,\Lambda,\lambda,l}^{\varphi}$ is continuous in
$[0,\delta) I_{\Lambda}^{\lambda}$ and holomorphic in $(0,\delta) \dot{I}_{\Lambda}^{\lambda}$
for any $l \geq 0$.
\end{defi}
The properties of the families
$\{\ddot{a}_{j,\Lambda,\lambda,l}^{\varphi}\}_{(\Lambda,\lambda) \in
{\mathcal M} \times {\mathbb S}^{1}}$
and
$\{\ddot{\xi}_{j,\Lambda,\lambda}^{\varphi}\}_{(\Lambda,\lambda) \in
{\mathcal M} \times {\mathbb S}^{1}}$
are analogous.
\begin{teo}
\label{teo:coechachas}
Let $\varphi \in \diff{tp1}{2}$ with $2$-convergent normal form $\Upsilon={\rm exp}(X)$.
Consider $\Lambda=(\lambda_{1}, \hdots, \lambda_{\tilde{q}}) \in {\mathcal M}$,
$j \in {\mathcal D}_{s}(\varphi)$ and $s \in \{-1,1\}$. Then
the function $\ddot{a}_{j,\Lambda,\lambda,l}^{\varphi}$ is
$(\tilde{e}_{1}, \hdots, \tilde{e}_{\tilde{q}})$-summable in the multi-direction $\Lambda$
for any $l \in {\mathbb N} \cup \{0\}$.
Moreover its asymptotic development $\ddot{a}_{j,l}^{\varphi}$ does not depend on $\Lambda$.
In particular  $\ddot{a}_{j,l}^{\varphi}$ is a $(\tilde{e}_{1}, \hdots, \tilde{e}_{\tilde{q}})$-summable
power series for any $l \in {\mathbb N} \cup \{0\}$.
\end{teo}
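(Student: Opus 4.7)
The strategy is to reduce the multi-summability of the Fourier coefficients $\ddot{a}_{j,\Lambda,\lambda,l}^{\varphi}$ to the exponentially small estimates on the extensions of the Ecalle--Voronin invariants provided by Theorem \ref{teo:difccfl}, and then invoke Balser's criterion (Proposition \ref{pro:sumbal}).

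First, I would reinterpret the Fourier coefficients as Taylor coefficients of a holomorphic function of a single variable. By Lemma \ref{lem:chacha}, for each $\lambda \in {\mathbb S}^{1}$ the function
\[
F_{j,\Lambda,\lambda}(x,w) = \ddot{\xi}_{j,\Lambda,\lambda}^{\varphi}\!\left(x, -\tfrac{s}{2\pi i}\ln w\right) + \tfrac{s}{2\pi i} \ln w
\]
is well defined (since $\ddot{\xi}^{\varphi}$ commutes with $z \mapsto z+1$) and holomorphic in $(x,w) \in (0,\delta)\dot{I}_{\Lambda}^{\lambda} \times \{ |w| < r_{0} \}$ with $r_{0} = e^{-2\pi M}$, continuous up to $x=0$. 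Its Taylor series at $w=0$ is precisely $\sum_{l \geq 0} \ddot{a}_{j,\Lambda,\lambda,l}^{\varphi}(x) \, w^{l}$, so Cauchy's formula on a circle $|w|=r$ with $r = r_{0}/2$ yields
\[
\ddot{a}_{j,\Lambda,\lambda,l}^{\varphi}(x) = \frac{1}{2\pi i} \oint_{|w|=r} \frac{F_{j,\Lambda,\lambda}(x,w)}{w^{l+1}} \, dw.
\]
In particular each $\ddot{a}_{j,\Lambda,\lambda,l}^{\varphi}$ is bounded at the origin, and its continuity on $[0,\delta) I_{\Lambda}^{\lambda}$, holomorphy off $x=0$ are inherited from those of $F_{j,\Lambda,\lambda}$.

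Next I would transfer the flatness. For any $\lambda, \lambda' \in {\mathbb S}^{1}$ with $I_{\Lambda}^{\lambda} \cap I_{\Lambda}^{\lambda'} \neq \emptyset$, Theorem \ref{teo:difccfl} (applied with $\Lambda' = \Lambda$) gives
\[
|F_{j,\Lambda,\lambda}(x,w) - F_{j,\Lambda,\lambda'}(x,w)| \leq K_{20}\, e^{-K/|x|^{\tilde{e}_{d_{\Lambda,\Lambda}^{\lambda,\lambda'}+1}}}
\]
uniformly in $|w|=r$, and Cauchy's estimate yields
\[
|\ddot{a}_{j,\Lambda,\lambda,l}^{\varphi}(x) - \ddot{a}_{j,\Lambda,\lambda',l}^{\varphi}(x)| \leq r^{-l} K_{20}\, e^{-K/|x|^{\tilde{e}_{d_{\Lambda,\Lambda}^{\lambda,\lambda'}+1}}}
\]
for $x \in I_{\Lambda}^{\lambda} \cap I_{\Lambda}^{\lambda'}$. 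If both $\lambda, \lambda' \in I_{k}(\lambda_{k},0)$, then by definition $d_{\Lambda}^{\lambda}, d_{\Lambda}^{\lambda'} \geq k$, hence $d_{\Lambda,\Lambda}^{\lambda,\lambda'} \geq k$, and therefore the difference lies in ${\mathcal A}^{\leq -\tilde{e}_{k+1}}(I_{\Lambda}^{\lambda} \cap I_{\Lambda}^{\lambda'})$. This is exactly the cocycle condition required by Proposition \ref{pro:sumbal}, so (covering ${\mathbb S}^{1}$ by finitely many sectors $I_{\Lambda}^{\lambda}$) I conclude that $\ddot{a}_{j,\Lambda,\lambda,l}^{\varphi}$ is $(\tilde{e}_{1}, \hdots, \tilde{e}_{\tilde{q}})$-summable in the multi-direction $\Lambda$, with a well-defined formal Gevrey series $\ddot{a}_{j,\Lambda,l}^{\varphi} \in {\mathbb C}[[x]]_{1/\tilde{e}_{1}}$.

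Finally, independence of $\Lambda$ comes essentially for free from the same estimate. Given $\Lambda, \Lambda' \in {\mathcal M}$ with $\Lambda \neq \Lambda'$, one has $d_{\Lambda,\Lambda'}^{\lambda,\lambda'} = 0$ by definition, so the Cauchy argument above yields
\[
|\ddot{a}_{j,\Lambda,\lambda,l}^{\varphi}(x) - \ddot{a}_{j,\Lambda',\lambda',l}^{\varphi}(x)| \leq r^{-l} K_{20}\, e^{-K/|x|^{\tilde{e}_{1}}}
\]
on $I_{\Lambda}^{\lambda} \cap I_{\Lambda'}^{\lambda'}$. Hence the two sectorial functions define the same section of ${\mathcal A}/{\mathcal A}^{\leq -\tilde{e}_{1}}$ on ${\mathbb S}^{1}$, which under the isomorphism $\Gamma({\mathbb S}^{1}; {\mathcal A}/{\mathcal A}^{\leq -\tilde{e}_{1}}) \cong {\mathbb C}[[x]]_{1/\tilde{e}_{1}}$ means $\ddot{a}_{j,\Lambda,l}^{\varphi} = \ddot{a}_{j,\Lambda',l}^{\varphi}$. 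The common series is the desired $\ddot{a}_{j,l}^{\varphi}$, and its $(\tilde{e}_{1}, \hdots, \tilde{e}_{\tilde{q}})$-summability follows from the summability in each multi-direction $\Lambda \in {\mathcal M}$. The main subtlety I expect is purely bookkeeping: verifying carefully that the exponent of flatness provided by Theorem \ref{teo:difccfl} always matches the level required by Balser's criterion at the relevant overlap, and checking that the finite covering of ${\mathbb S}^{1}$ by sectors $I_{\Lambda}^{\lambda}$ admits, for every level $k$, at least one pair of consecutive $\lambda_{1}, \lambda_{2}$ both lying in $I_{k}(\lambda_{k},0)$ so that the full multi-summable structure is recovered rather than a weaker one.
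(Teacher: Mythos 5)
Your proof follows essentially the same route as the paper's: extract the Fourier coefficients by an integral formula (the Cauchy integral in $w=e^{-2\pi i s z}$ is the same computation as the paper's Fourier-coefficient integral over the contour $\ddot{\psi}_{j,\lambda}^{\varphi}(\gamma_x)$ up to change of variable), transfer the exponentially small bound of Theorem~\ref{teo:difccfl} to $\ddot{a}_{j,\Lambda,\lambda,l}^{\varphi}-\ddot{a}_{j,\Lambda',\lambda',l}^{\varphi}$, apply Proposition~\ref{pro:sumbal} for fixed $\Lambda$, and use the level-$\tilde{e}_1$ flatness between different $\Lambda$ to identify the formal series. Steps 1--4 are correct and match the paper.

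The last sentence is where you have a genuine gap. You write that the $(\tilde{e}_1,\hdots,\tilde{e}_{\tilde{q}})$-summability of the power series $\ddot{a}_{j,l}^{\varphi}$ ``follows from the summability in each multi-direction $\Lambda\in{\mathcal M}$,'' but this is not an automatic implication. The global notion (see the paragraph after Definition~\ref{def:RamSib}) requires that the singular directions of each level $\tilde{e}_k$ form a \emph{finite} set, and ${\mathcal M}$ excludes exactly the admissible multi-directions with some component $\lambda_k\in\tilde{\Xi}_X^k$; one must still argue that no singular direction of level $\tilde{e}_k$ escapes the finite set $\tilde{\Xi}_X^k$. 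The paper does this by constructing, for any $\lambda_k\notin\tilde{\Xi}_X^k$, a sequence of admissible multi-directions in ${\mathcal M}$ with $k$-th component fixed at $\lambda_k$ and the others perturbed by $e^{\pm i\zeta}$, and then invoking Lemma~\ref{lem:regmd} to conclude that $\lambda_k$ is regular of level $\tilde{e}_k$. Your ``bookkeeping'' remark is aimed at the nesting of sectors in Balser's criterion, which you handle fine, but it does not cover this singular-direction identification, which is the one step your proposal leaves unproved.
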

\begin{proof}
Fix $l \in {\mathbb N} \cup \{0\}$. Let $\Lambda' \in {\mathcal M}$ and $\lambda, \lambda' \in {\mathbb S}^{1}$.
Consider the curve $\gamma_{x}$ defined in the proof of theorem \ref{teo:difccfl} for
$x \in [0,\delta) I_{\Lambda,\Lambda'}^{\lambda,\lambda'}$. We have
\[ |\ddot{a}_{j,\Lambda,\lambda,l}^{\varphi} - \ddot{a}_{j,\Lambda',\lambda',l}^{\varphi}|(x) \leq
\left|{
\int_{\ddot{\psi}_{j,\lambda}^{\varphi}(\gamma_{x})}
e^{2 \pi i s l z}(\ddot{\xi}_{j,\Lambda,\lambda}^{\varphi}(x,z) - \ddot{\xi}_{j,\Lambda',\lambda'}^{\varphi}(x,z)) dz
}\right| \]
and then
\[ |\ddot{a}_{j,\Lambda,\lambda,l}^{\varphi} - \ddot{a}_{j,\Lambda',\lambda',l}^{\varphi}|(x) \leq
e^{2 \pi l M}   K_{20} e^{-K/|x|^{\tilde{e}_{d_{\Lambda,\Lambda'}^{\lambda,\lambda'}+1}}}. \]
We obtain
\begin{equation}
\label{equ:asydev}
\ddot{a}_{j,\Lambda,\lambda,l}^{\varphi} - \ddot{a}_{j,\Lambda',\lambda',l}^{\varphi} \in
{\mathcal A}^{\leq - \tilde{e}_{1}}(\dot{I}_{\Lambda,\Lambda'}^{\lambda,\lambda'}).
\end{equation}
Suppose $\Lambda'=\Lambda$.
If $\lambda, \lambda' \in I_{k}(\lambda_{k},0)$ for some $k \in \{0, \hdots, \tilde{q} \}$ we obtain
$d_{\Lambda}^{\lambda} \geq k \leq d_{\Lambda}^{\lambda'}$ and $d_{\Lambda,\Lambda}^{\lambda,\lambda'} \geq k$.
We obtain
\[ \ddot{a}_{j,\Lambda,\lambda,l}^{\varphi} - \ddot{a}_{j,\Lambda,\lambda',l}^{\varphi} \in
{\mathcal A}^{\leq - \tilde{e}_{k+1}}(\dot{I}_{\Lambda,\Lambda}^{\lambda,\lambda'}).  \]
Proposition \ref{pro:sumbal} implies that $\ddot{a}_{j,\Lambda,\lambda,l}^{\varphi}$ is
$(\tilde{e}_{1}, \hdots, \tilde{e}_{\tilde{q}})$-summable in $\Lambda$. Denote
$\ddot{a}_{j,\Lambda,l}^{\varphi}$ its asymptotic development. Property (\ref{equ:asydev})
for $\lambda, \lambda' \in {\mathbb S}^{1}$ implies
$\ddot{a}_{j,\Lambda,l}^{\varphi}=\ddot{a}_{j,\Lambda',l}^{\varphi}$
for all $\Lambda,\Lambda' \in {\mathcal M}$ (see condition (ii) a) in def. \ref{def:RamSib}).

Consider $(\lambda_{1}, \hdots, \lambda_{\tilde{q}}) \in ({\mathbb S}^{1})^{\tilde{q}}$ where
$\lambda_{k} \not \in \tilde{\Xi}_{X}^{k}$ (see def. \ref{def:msing}).
There exist $s_{1}, s_{2} \in \{-1,1\}$ such that
\[ (\lambda_{1} e^{i s_{1} \zeta}, \hdots, \lambda_{k-1} e^{i s_{1} \zeta}, \lambda_{k},
\lambda_{k+1} e^{i s_{2} \zeta}, \hdots, \lambda_{\tilde{q}} e^{i s_{2} \zeta}) \in {\mathcal M} \]
for any $\zeta$ in a neighborhood of $0$ in ${\mathbb R}^{+}$.
The singular directions of order $\tilde{e}_{k}$ of $\ddot{a}_{j,l}^{\varphi}$ are contained
in the finite set $\tilde{\Xi}_{X}^{k}$ for any $1 \leq k \leq \tilde{q}$ by lemma \ref{lem:regmd}.
Therefore $\ddot{a}_{j,l}^{\varphi} \in {\mathbb C}[[x]]$ is
$(\tilde{e}_{1}, \hdots, \tilde{e}_{\tilde{q}})$-summable.
\end{proof}
\begin{defi}
We define $\zeta_{\varphi}(x) = - \pi i / \nu({\mathcal E}_{0})^{-1}
\sum_{P \in (Sing X)(x)} Res(X,P)$ (see def. \ref{def:noinfcon}).
The function $\zeta_{\varphi}(x)$ is holomorphic in the neighborhood of $x=0$.
It does not depend on the choice of $X$.
\end{defi}
Let us review the normalizing conditions for Fatou coordinates
introduced in subsection 8.1 of \cite{JR:mod}.
By making analytic extension along the arc going from $T_{iX}^{\epsilon,j}(0)$ to
$T_{iX}^{\epsilon,j+1}(0)$ in counter clock wise sense we can define
$\psi_{L_{j+1}}^{X}-\psi_{L_{j}}^{X}$. It depends only on $x$ and it is holomorphic in a neighborhood of
$x=0$. By considering Fatou coordinates of the form $\dot{\psi}_{j}^{X} = \psi_{L_{j}}^{X} + c_{j}(x)$
for convenient holomorphic functions $c_{1}, \hdots, c_{2 \nu({\mathcal E}_{0})}$ defined in $B(0,\delta)$
we can obtain $(\dot{\psi}_{j+1}^{X} - \dot{\psi}_{j}^{X})(x,y)= \zeta_{\varphi}(x)$ for any
$j \in {\mathcal D}(\varphi)$.
Let $y=\gamma_{1}(x)$, $\hdots$, $y=\gamma_{p}(x)$ the irreducible components of $Fix (\varphi)$.
There exist continuous functions
\[ b_{j}:[0,\delta) I_{\Lambda}^{\lambda} \to {\mathbb C} \ {\rm for} \
1 \leq j \leq 2 \nu({\mathcal E}_{0}) \ {\rm and} \
c_{\Lambda,\lambda,k}^{\varphi}:[0,\delta) I_{\Lambda}^{\lambda} \to {\mathbb C} \ {\rm for} \
1 \leq k \leq p,  \]
holomorphic in $(0,\delta) \dot{I}_{\Lambda}^{\lambda}$, such that
\begin{equation}
\label{equ:priv}
(\ddot{\psi}_{j,\Lambda,\lambda}^{\varphi} + b_{j}(x)
- \dot{\psi}_{j}^{X})(x,\gamma_{k}(x)) = c_{\Lambda,\lambda,k}^{\varphi}(x)
\end{equation}
for all $x \in [0,\delta) I_{\Lambda}^{\lambda}$, $j \in {\mathcal D}(\varphi)$
and $1 \leq k \leq p$
such that $(x,\gamma_{k}(x)) \in \overline{H_{\Lambda,j}^{\lambda}}$ (see \cite{JR:mod}).
The relevant property is that a function
$c_{\Lambda,\lambda,k}^{\varphi}$
does not depend on the choice of $j \in {\mathcal D}(\varphi)$.
We define
\[ \dot{\psi}_{j,\Lambda,\lambda}^{\varphi}(x,y) = \ddot{\psi}_{j,\Lambda,\lambda}^{\varphi}(x,y)
+ b_{j}(x) -
c_{\Lambda,\lambda,1}^{\varphi}(x) \ \ {\rm for} \ j \in {\mathcal D}(\varphi). \]
We obtain $(\dot{\psi}_{j,\Lambda,\lambda}^{\varphi} - \dot{\psi}_{j}^{X})(x,\gamma_{1}(x)) = 0$
for all $x \in [0,\delta) I_{\Lambda}^{\lambda}$ and $j \in {\mathcal D}(\varphi)$
such that $(x,\gamma_{1}(x)) \in \overline{H_{\Lambda,j}^{\lambda}}$.
\begin{defi}
(section 8.1, prop. 8.1   \cite{JR:mod})
\label{def:prifat}
The family
${\{ \dot{\psi}_{j,\Lambda,\lambda}^{\varphi} \}}_{(j,\Lambda,\lambda) \in
{\mathcal D}(\varphi) \times {\mathcal M} \times {\mathbb S}^{1}}$
is called a homogeneous
privileged (with respect to $y=\gamma_{1}(x)$) system of Fatou coordinates of $\varphi$. We define the extension of the Ecalle-Voronin invariants
\[  \dot{\xi}_{j,\Lambda,\lambda}^{\varphi}(x,z) =
\dot{\psi}_{j+1,\Lambda,\lambda}^{\varphi} \circ (x, \dot{\psi}_{j,\Lambda,\lambda}^{\varphi})^{-1}(x,z) \]
for $(j,\Lambda,\lambda) \in {\mathcal D}(\varphi) \times {\mathcal M} \times {\mathbb S}^{1}$.
 We have
\[ \dot{\xi}_{j,\Lambda,\lambda}^{\varphi}(x,z)= z +  \zeta_{\varphi}(x) + \sum_{l=1}^{\infty}
\dot{a}_{j,\Lambda,\lambda,l}^{\varphi}(x)  e^{-2 \pi i s l z} \]
for $j \in {\mathcal D}_{s} (\varphi)$ and $s \in \{-1,1\}$.
Since the system ${\{\dot{\psi}_{j}^{X}\}}_{j \in {\mathcal D}(\varphi)}$ is unique up to
a holomorphic additive function then so is
${\{ \dot{\psi}_{j,\Lambda,\lambda}^{\varphi} \}}_{(j,\Lambda,\lambda) \in
{\mathcal D}(\varphi) \times {\mathcal M} \times {\mathbb S}^{1}}$.
More precisely, any other homogeneous privileged system of Fatou coordinates of $\varphi$
is of the form
${\{ \dot{\psi}_{j,\Lambda,\lambda}^{\varphi} + c \}}_{(j,\Lambda,\lambda) \in
{\mathcal D}(\varphi) \times {\mathcal M} \times {\mathbb S}^{1}}$
where $c(x)$ is a holomorphic function defined in $B(0,\delta)$.
\end{defi}
Let us remark that given $\Lambda \in {\mathcal M}$ and $\lambda \in {\mathbb S}^{1}$
we have
\[  \sum_{k=1}^{2 \nu({\mathcal E}_{0})} \ddot{a}_{k,\Lambda,\lambda,0}^{\varphi} \equiv
2 \nu({\mathcal E}_{0})  \zeta_{\varphi}. \]
The analogous result holds true for
${\{ \dot{\psi}_{j,\Lambda,\lambda}^{\varphi} \}}_{j \in {\mathcal D}(\varphi) }$ and
the property is preserved if we replace $\breve{\psi}_{j_{0},\Lambda,\lambda}^{\varphi}$
with $\breve{\psi}_{j_{0},\Lambda,\lambda}^{\varphi} + c_{j_{0}}(x)$ in a
a system of Fatou coordinates
${\{ \breve{\psi}_{j,\Lambda,\lambda}^{\varphi} \}}_{j \in {\mathcal D}(\varphi) }$.
Since $\psi_{j,\Lambda,\lambda}^{\varphi} - \dot{\psi}_{j,\Lambda,\lambda}^{\varphi}$
is a function of $x$ for any $j \in {\mathcal D}(\varphi)$
the result follows. This discussion motivates the next choice of normalizing condition
for multi-summable Fatou coordinates.
\begin{defi}
\label{def:msfatcoor}
We define $\psi_{1,\Lambda,\lambda}^{\varphi} \equiv \ddot{\psi}_{1,\Lambda,\lambda}^{\varphi}$.
We define
\[ \psi_{j,\Lambda,\lambda}^{\varphi}(x,y)=\ddot{\psi}_{j,\Lambda,\lambda}^{\varphi}(x,y)
- \sum_{k=1}^{j-1} \ddot{a}_{k,\Lambda,\lambda,0}^{\varphi}(x)+ (j-1) \zeta_{\varphi}(x) \]
for $j=2,\hdots,2 \nu({\mathcal E}_{0})$. We define the extension of the Ecalle-Voronin
invariants
\[  \xi_{j,\Lambda,\lambda}^{\varphi}(x,z) =
\psi_{j+1,\Lambda,\lambda}^{\varphi} \circ (x, \psi_{j,\Lambda,\lambda}^{\varphi})^{-1}(x,z) \]
for $j \in {\mathcal D}(\varphi)$. The family
${\{ \psi_{j,\Lambda,\lambda}^{\varphi} \}}_{(j,\Lambda,\lambda)
\in {\mathcal D}(\varphi) \times {\mathcal M} \times {\mathbb S}^{1}}$
is called a homogeneous
$(\tilde{e}_{1}, \hdots, \tilde{e}_{\tilde{q}})$-summable system of Fatou coordinates of $\varphi$.
It is defined up to a $(\tilde{e}_{1}, \hdots, \tilde{e}_{\tilde{q}})$-summable function,
i.e. any other homogeneous
$(\tilde{e}_{1}, \hdots, \tilde{e}_{\tilde{q}})$-summable system of Fatou coordinates of $\varphi$
is of the form
${\{ \psi_{j,\Lambda,\lambda}^{\varphi} + c_{\Lambda,\lambda} \}}_{(j,\Lambda,\lambda) \in
{\mathcal D}(\varphi) \times {\mathcal M} \times {\mathbb S}^{1}}$
where $c_{\Lambda,\lambda}(x)$ is a $(\tilde{e}_{1}, \hdots, \tilde{e}_{\tilde{q}})$-summable function.
\end{defi}
\begin{rem}
Different choices of $2$-convergent normal forms can provide different
homogeneous $(\tilde{e}_{1}, \hdots, \tilde{e}_{\tilde{q}})$-summable systems
of Fatou coordinates of $\varphi$.
\end{rem}
\begin{rem}
\label{rem:chacha}
The definition implies
\[ \xi_{j,\Lambda,\lambda}^{\varphi}= z +  \zeta_{\varphi}(x) +
\sum_{l=1}^{\infty} a_{j,\Lambda,\lambda,l}^{\varphi}(x) e^{-2 \pi i s l z} \]
for $j=1,\hdots,2 \nu({\mathcal E}_{0})$. The function
$\psi_{j,\Lambda,\lambda}^{\varphi} - \dot{\psi}_{j,\Lambda,\lambda}^{\varphi}$
depends only on $x$ for any $j \in \{1,2\}$. It is continuous in
$[0,\delta) I_{\Lambda}^{\lambda}$ and holomorphic in $(0,\delta) \dot{I}_{\Lambda}^{\lambda}$.
Moreover it does not depend on $j \in {\mathcal D}(\varphi_{1})$ since both systems of
Fatou coordinates are homogeneous.
\end{rem}
\begin{teo}
\label{teo:difFatflatext2}
Let $\varphi \in \diff{tp1}{2}$ with $2$-convergent normal form ${\rm exp}(X)$.
Let $\Lambda, \Lambda' \in {\mathcal M}$,
$\lambda, \lambda' \in {\mathbb S}^{1}$ and $j \in {\mathcal D}(\varphi)$.
Then there exists $K \in {\mathbb R}^{+}$ such that
\[ |\psi_{j,\Lambda,\lambda}^{\varphi} - \psi_{j,\Lambda',\lambda'}^{\varphi}|(x,y)
\leq  e^{-K/|x|^{\tilde{e}_{d_{\Lambda,\Lambda'}^{\lambda,\lambda'}+1}}}  \]
for any $(x,y) \in H_{\Lambda,\Lambda',j}^{\lambda,\lambda'}$.
\end{teo}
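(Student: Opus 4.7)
The plan is to reduce Theorem \ref{teo:difFatflatext2} to two ingredients already at hand: Proposition \ref{pro:difFatflat} (flatness of the differences of the unnormalized Fatou coordinates $\ddot{\psi}_{j,\Lambda,\lambda}^{\varphi}$) and the flatness estimates for the Fourier coefficients $\ddot{a}_{k,\Lambda,\lambda,0}^{\varphi}$ extracted from the proof of Theorem \ref{teo:coechachas}.

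The case $j=1$ is immediate from Definition \ref{def:msfatcoor}, since $\psi_{1,\Lambda,\lambda}^{\varphi} \equiv \ddot{\psi}_{1,\Lambda,\lambda}^{\varphi}$ and Proposition \ref{pro:difFatflat} applies verbatim. For $j \geq 2$, I would expand using Definition \ref{def:msfatcoor}; the $\zeta_{\varphi}(x)$ contributions cancel because $\zeta_{\varphi}$ depends only on $\varphi$, yielding on $H_{\Lambda,\Lambda',j}^{\lambda,\lambda'}$ the identity
\[
\psi_{j,\Lambda,\lambda}^{\varphi} - \psi_{j,\Lambda',\lambda'}^{\varphi} =
(\ddot{\psi}_{j,\Lambda,\lambda}^{\varphi} - \ddot{\psi}_{j,\Lambda',\lambda'}^{\varphi}) - \sum_{k=1}^{j-1} \bigl(\ddot{a}_{k,\Lambda,\lambda,0}^{\varphi} - \ddot{a}_{k,\Lambda',\lambda',0}^{\varphi}\bigr)(x).
\]
Write $d = d_{\Lambda,\Lambda'}^{\lambda,\lambda'}$. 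Proposition \ref{pro:difFatflat} bounds the first summand by $e^{-K_0/|x|^{\tilde{e}_{d+1}}}$ on $H_{\Lambda,\Lambda',j}^{\lambda,\lambda'}$. For each $k \in \{1,\ldots,j-1\}$, the estimate
\[
|\ddot{a}_{k,\Lambda,\lambda,l}^{\varphi} - \ddot{a}_{k,\Lambda',\lambda',l}^{\varphi}|(x) \leq e^{2\pi l M} K_{20}\, e^{-K_k/|x|^{\tilde{e}_{d+1}}}
\]
proved inside Theorem \ref{teo:coechachas} specializes at $l = 0$ to give $|\ddot{a}_{k,\Lambda,\lambda,0}^{\varphi} - \ddot{a}_{k,\Lambda',\lambda',0}^{\varphi}|(x) \leq K_{20}\, e^{-K_k/|x|^{\tilde{e}_{d+1}}}$ on $[0,\delta) I_{\Lambda,\Lambda'}^{\lambda,\lambda'}$, which contains the projection of $H_{\Lambda,\Lambda',j}^{\lambda,\lambda'}$ to the $x$-axis.

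Since there are only finitely many (at most $j$) exponentially small terms, each dominated by some $e^{-K_i/|x|^{\tilde{e}_{d+1}}}$ with $K_i > 0$, choosing $K \in (0, \min_i K_i)$ small enough absorbs the multiplicative constants and finite sum into a single exponential of the form $e^{-K/|x|^{\tilde{e}_{d+1}}}$, finishing the proof. No substantial obstacle is expected: the argument is essentially algebraic, consisting in verifying that the normalizing correction $\sum_{k<j} \ddot{a}_{k,\Lambda,\lambda,0}^{\varphi}$ already inherits the same exponentially small flatness rate as the unnormalized Fatou coordinates through the Ecalle--Voronin invariant estimate of Theorem \ref{teo:difccfl}; the only point to check carefully is that the curve $\gamma_x$ used to extract the zero Fourier mode can be taken inside a set (namely $B_{X,k,\lambda}(x) \subset E_{X,k,\lambda}(x)$) which does not depend on the multi-transversal flow, so that the integration formula applies uniformly in $\Lambda,\Lambda'$ for $x \in I_{\Lambda,\Lambda'}^{\lambda,\lambda'}$.
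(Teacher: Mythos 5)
Your proposal is correct and matches the paper's own (very briefly sketched) argument: the paper states in the remark following the theorem that it is deduced from Proposition \ref{pro:difFatflat} together with the $(\tilde{e}_{1},\hdots,\tilde{e}_{\tilde{q}})$-summability of $-\sum_{k=1}^{j-1}\ddot{a}_{k,0}^{\varphi}+(j-1)\zeta_{\varphi}$, which is exactly your decomposition into the $\ddot{\psi}$-difference plus the normalizing Fourier-coefficient corrections, with the $\zeta_{\varphi}$ terms canceling. Your observation about $\gamma_x \subset B_{X,k,\lambda}(x)$ being usable uniformly in $\Lambda,\Lambda'$ is a sensible precaution, and absorbing the finitely many exponential bounds into a single smaller $K$ is the standard final step.
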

\begin{teo}
\label{teo:difFatflatext3}
Let $\varphi \in \diff{tp1}{2}$ with $2$-convergent normal form ${\rm exp}(X)$.
Let $\Lambda,\Lambda' \in {\mathcal M}$,
$\lambda, \lambda' \in {\mathbb S}^{1}$, $j \in {\mathcal D}(\varphi)$
and $\theta \in (0,\pi/2]$.
Then there exist $K \in {\mathbb R}^{+}$ and $\rho \geq 2 \rho_{0}$ such that
\[ |{\psi}_{j,\Lambda,\lambda}^{\varphi} - {\psi}_{j,\Lambda',\lambda'}^{\varphi}|(x,y)
\leq  e^{-K/|x|^{\tilde{e}_{d_{\Lambda,\Lambda'}^{\lambda,\lambda'}+1}}}  \]
for any $(x,y) \in H_{j,\theta}^{\epsilon,\rho,\lambda,\lambda'}$.
Moreover $\rho$ depends only on $X$, $\varphi$ and $\Lambda$.
\end{teo}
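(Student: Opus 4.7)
The plan is to deduce Theorem \ref{teo:difFatflatext3} directly from Proposition \ref{pro:difFatflatext} together with the exponential coefficient estimates established in the course of proving Theorems \ref{teo:difccfl} and \ref{teo:coechachas}. The point is that $\psi_{j,\Lambda,\lambda}^{\varphi}$ differs from $\ddot{\psi}_{j,\Lambda,\lambda}^{\varphi}$ only by a function of $x$ alone, which is a sum of $\ddot{a}_{k,\Lambda,\lambda,0}^{\varphi}(x)$'s plus a universal holomorphic term $\zeta_{\varphi}(x)$ not depending on $(\Lambda,\lambda)$. Hence the theorem reduces to extending known exponential bounds to the enlarged set $H_{j,\theta}^{\epsilon,\rho,\lambda,\lambda'}$, which is $H_{\Lambda,\Lambda',j}^{\lambda,\lambda'}$ enlarged by cap-like domains $H_{j,\theta}^{\epsilon,\rho}(x)$ outside the first exterior set.

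First I would choose $\rho \geq 2\rho_{0}$ to be the constant provided by Proposition \ref{pro:difFatflatext} applied to $X$, $\varphi$ and $\Lambda$ (this $\rho$ depends only on $X$, $\varphi$ and $\Lambda$, as required). By that proposition there exist $K_{1} \in {\mathbb R}^{+}$ such that
\[
|\ddot{\psi}_{j,\Lambda,\lambda}^{\varphi} - \ddot{\psi}_{j,\Lambda',\lambda'}^{\varphi}|(x,y)
\leq e^{-K_{1}/|x|^{\tilde{e}_{d_{\Lambda,\Lambda'}^{\lambda,\lambda'}+1}}}
\quad \forall (x,y) \in H_{j,\theta}^{\epsilon,\rho,\lambda,\lambda'}.
\]
Next, from Definition \ref{def:msfatcoor} I would compute
\[
\psi_{j,\Lambda,\lambda}^{\varphi} - \psi_{j,\Lambda',\lambda'}^{\varphi}
= (\ddot{\psi}_{j,\Lambda,\lambda}^{\varphi} - \ddot{\psi}_{j,\Lambda',\lambda'}^{\varphi})
- \sum_{k=1}^{j-1} \bigl( \ddot{a}_{k,\Lambda,\lambda,0}^{\varphi}(x) - \ddot{a}_{k,\Lambda',\lambda',0}^{\varphi}(x) \bigr),
\]
since the $(j-1)\zeta_{\varphi}(x)$ terms cancel. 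The coefficient differences appearing here are precisely those estimated in the proof of Theorem \ref{teo:coechachas}: by integrating $e^{2\pi i s l z}(\ddot{\xi}_{k,\Lambda,\lambda}^{\varphi} - \ddot{\xi}_{k,\Lambda',\lambda',l}^{\varphi})$ over a fundamental curve $\gamma_{x} \subset B_{X,k,\lambda}(x) \subset H_{\Lambda,\Lambda',k}^{\lambda,\lambda'}$ and invoking Theorem \ref{teo:difccfl}, one obtains for each $k$ a constant $K_{2} > 0$ and $K_{20} \in {\mathbb R}^{+}$ with
\[
|\ddot{a}_{k,\Lambda,\lambda,0}^{\varphi}(x) - \ddot{a}_{k,\Lambda',\lambda',0}^{\varphi}(x)| \leq K_{20}\, e^{-K_{2}/|x|^{\tilde{e}_{d_{\Lambda,\Lambda'}^{\lambda,\lambda'}+1}}}
\]
for $x \in [0,\delta) I_{\Lambda,\Lambda'}^{\lambda,\lambda'}$. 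Combining these two estimates and absorbing the factor $1 + (j-1)K_{20}$ into the exponential by replacing $\min(K_{1},K_{2})$ with a slightly smaller constant $K$ yields the desired bound on all of $H_{j,\theta}^{\epsilon,\rho,\lambda,\lambda'}$.

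The main obstacle is essentially bookkeeping: verifying that the coefficient differences $\ddot{a}_{k,\Lambda,\lambda,0}^{\varphi}(x) - \ddot{a}_{k,\Lambda',\lambda',0}^{\varphi}(x)$, which are defined purely as functions of $x$ on $[0,\delta) I_{\Lambda,\Lambda'}^{\lambda,\lambda'}$, can be added to the $\ddot{\psi}$ difference without spoiling the domain of validity, and that the curve $\gamma_{x}$ used to extract the coefficients remains inside the original region $H_{\Lambda,\Lambda',j}^{\lambda,\lambda'}$ (where Prop. \ref{pro:difFatflat} applies directly) even though we want the final estimate on the enlarged domain $H_{j,\theta}^{\epsilon,\rho,\lambda,\lambda'}$. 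Both points are immediate: the correction is an $x$-only function holomorphic on the appropriate sector, and $B_{X,k,\lambda}(x)$ lies in the small neighborhood of the petal-defining trajectories where Proposition \ref{pro:difFatflat} is valid. Thus no genuinely new analytic work beyond \ref{pro:difFatflatext} and the proof of \ref{teo:coechachas} is needed, and the statement regarding dependence of $\rho$ is inherited directly from \ref{pro:difFatflatext}.
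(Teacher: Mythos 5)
Your proof is correct and follows essentially the same approach as the paper: the paper supplies no separate proof for this theorem but instead states in a remark that Theorems \ref{teo:difFatflatext2} and \ref{teo:difFatflatext3} follow from Propositions \ref{pro:difFatflat}, \ref{pro:difFatflatext} and the $(\tilde{e}_{1},\hdots,\tilde{e}_{\tilde{q}})$-summability of the $x$-only correction $-\sum_{k=1}^{j-1}\ddot{a}_{k,0}^{\varphi}(x)+(j-1)\zeta_{\varphi}(x)$, which is exactly the decomposition you use (with the exponential bound on $\ddot{a}_{k,\Lambda,\lambda,0}^{\varphi}-\ddot{a}_{k,\Lambda',\lambda',0}^{\varphi}$ taken from the proof of Theorem \ref{teo:coechachas}).
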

\begin{rem}
As in theorem \ref{teo:coechachas} the combinatorics of sectors in the $x$ variable
of the family
$\{ \psi_{j,\Lambda,\lambda}^{\varphi}\}_{(\Lambda,\lambda)
\in {\mathcal M} \times {\mathbb S}^{1}}$
corresponds to a multi-summable function. It is justified to say that
$\{ \psi_{j,\Lambda,\lambda}^{\varphi}\}_{(\Lambda,\lambda)
\in {\mathcal M} \times {\mathbb S}^{1}}$ is
$(\tilde{e}_{1}, \hdots, \tilde{e}_{\tilde{q}})$-summable in the $x$-variable.
It would be more rigorous to say that the family
$\{ \psi_{j,\Lambda,\lambda}^{\varphi} - \psi_{L_{j}}^{X} \}_{(\Lambda,\lambda)
\in {\mathcal M} \times {\mathbb S}^{1}}$ is multi-summable since its
elements are bounded.
\end{rem}
\begin{rem}
The previous theorems are deduced from
prop. \ref{pro:difFatflat}, \ref{pro:difFatflatext} and the
$(\tilde{e}_{1}, \hdots, \tilde{e}_{\tilde{q}})$-summability of the power series
$- \sum_{k=1}^{j-1} \ddot{a}_{k,0}^{\varphi}(x)+ (j-1) \zeta_{\varphi}(x)$ for any
$j \leq 2 \nu({\mathcal E}_{0})$.
Thus the lemma
\ref{lem:chacha} and the theorems \ref{teo:difccfl} and \ref{teo:coechachas}
hold true for
${\{ \xi_{j,\Lambda,\lambda}^{\varphi} \}}_{(j,\Lambda,\lambda) \in
{\mathcal D}(\varphi) \times {\mathcal M} \times {\mathbb S}^{1}}$
and
${\{ a_{j,\Lambda,\lambda,k}^{\varphi} \}}_{(j,\Lambda,\lambda,k) \in
{\mathcal D}(\varphi) \times {\mathcal M} \times {\mathbb S}^{1} \times {\mathbb N}}$.
Then Ecalle-Voronin invariants are
$(\tilde{e}_{1}, \hdots, \tilde{e}_{\tilde{q}})$-summable in the $x$-variable.
\end{rem}
\begin{rem}
Let $\varphi \in \diff{tp1}{2}$ with convergent normal form ${\rm exp}(X)$.
Suppose that $Fix (\varphi)$ is a curve $y=\gamma(x)$. The sets of unstable directions
${\mathcal U}_{X}^{j}$ (see subsection \ref{subsec:polvecfield}) are empty for any $1 \leq j \leq q$.
Therefore the Fatou coordinates ${\{ \psi_{j}^{\varphi}\}}_{j \in {\mathcal D}(\varphi)}$ and
Ecalle-Voronin invariants
${\{ \xi_{j}^{\varphi}\}}_{j \in {\mathcal D}(\varphi)}$ are analytic. Indeed we have
\[ \xi_{j}^{\varphi}= z +  \zeta_{\varphi}(x) +
\sum_{l=1}^{\infty} a_{j,l}^{\varphi}(x) e^{-2 \pi i s l z} \]
where $a_{j,l}^{\varphi} \in {\mathcal O}(B(0,\delta))$ for all $j \in {\mathcal D}_{s}(\varphi)$,
$s \in \{-1,1\}$ and $l \in {\mathbb N}$.
\end{rem}
\subsection{Multi-summability of Lavaurs vector fields}
\label{subsec:mulsumlvf}
Let $\varphi \in \diff{tp1}{2}$ with $2$-convergent normal form $\Upsilon={\rm exp}(X)$.
Consider $\Lambda \in {\mathcal M}$, $\lambda \in {\mathbb S}^{1}$, $j \in {\mathcal D}(\varphi)$.
\begin{defi}
\label{def:lavaurs}
We define the Lavaurs vector field (see cor. \ref{cor:Lavvf})
\[ X_{j,\Lambda,\lambda}^{\varphi} =
\frac{1}{\partial \psi_{j,\Lambda,\lambda}^{\varphi} / \partial y} \frac{\partial}{\partial y} . \]
It is defined in $H_{\Lambda,j}^{\lambda}$ and in $H_{j,\theta}^{\epsilon,\rho,\lambda}$
(see Step 1 of subsection \ref{subsec:extfatcor}) for any $\theta \in (0,\pi/2]$.
We denote $g_{j,\Lambda,\lambda}^{\varphi}= 1/(\partial \psi_{j,\Lambda,\lambda}^{\varphi} / \partial y)$.
\end{defi}
\begin{teo}
\label{teo:infgenms}
Let $\varphi \in \diff{tp1}{2}$ with $2$-convergent normal form $\Upsilon={\rm exp}(X)$.
Consider $j \in {\mathcal D}_{s}(\varphi)$ and $s \in \{-1,1\}$. There exists a development
\[ \hat{X}_{j}^{\varphi} = \left({ \sum_{k=0}^{\infty} g_{j,k}^{\varphi}(y) x^{k} }\right)
\frac{\partial}{\partial y} \]
such that
\begin{itemize}
\item $g_{j,k}^{\varphi}$ is holomorphic in $\cup_{\theta \in (0,\pi/2]} H_{j,\theta}^{\epsilon,\rho,\lambda}(0)$
for any $k \in {\mathbb N} \cup \{0\}$.
\item The series $\sum_{k=0}^{\infty} g_{j,k}^{\varphi}(y_{0}) x^{k}$ is the asymptotic development of
the  $(\tilde{e}_{1}, \hdots, \tilde{e}_{\tilde{q}})$-summable function $g_{j,\Lambda,\lambda}^{\varphi}(x,y_{0})$
for all $(0,y_{0}) \in \cup_{\theta \in (0,\pi/2]} H_{j,\theta}^{\epsilon,\rho,\lambda}(0)$ and
$\Lambda \in {\mathcal M}$.
\end{itemize}
\end{teo}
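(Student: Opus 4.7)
The plan is to combine the sharp Fatou coordinates produced by Proposition \ref{pro:fatcorork} with the exponential flatness of Theorem \ref{teo:difFatflatext3}, invoking Balser's criterion (Proposition \ref{pro:sumbal}) at the end. The uniqueness of the Lavaurs vector field (Corollary \ref{cor:Lavvf}) — as the only holomorphic $g\,\partial/\partial y$ whose time-$1$ flow equals $\varphi$ — guarantees that on the overlap of domains one has $g_{j,\Lambda,\lambda}^\varphi=(\partial\psi_{j,\lambda,k}^\varphi/\partial y)^{-1}$ for every $k$-convergent normal form $Y_k=u_k f\,\partial/\partial y$, so that the comparison identity
\[
g_{j,\Lambda,\lambda}^\varphi - u_k f \;=\; -\,\frac{(u_k f)^{2}\,\partial L_k/\partial y}{1 + u_k f\,\partial L_k/\partial y},\qquad L_k:=\psi_{j,\lambda,k}^\varphi-\psi_{L_j,k},
\]
is available on $\cup_{x'\in I_\lambda}(H_{j,\theta}^{\epsilon,\rho}(x')\cap H_{j,\theta/2}^{\epsilon_k,\rho}(x'))$.

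The first step is to extract an asymptotic development in $x$ of $g_{j,\Lambda,\lambda}^\varphi(x,y_0)$ at fixed $y_0\in\cup_\theta H_{j,\theta}^{\epsilon,\rho,\lambda}(0)$. In the magnifying glass coordinate $\xi=1/x^{\nu({\mathcal E}_0)}$ the $\overline{\partial}$-construction of subsection \ref{subsec:estsoldbe} produces $L_k=O(\xi^{5-k(1+1/\nu({\mathcal E}_0))})$ in $\tilde U_\lambda$, i.e.\ a genuine $O(|x|^{N_k})$ with $N_k\to\infty$; a Cauchy estimate in a polydisk of radius comparable to $|x|$ transfers this flatness to $\partial L_k/\partial y$ on the magnifying glass zone, and the iteration formula (\ref{equ:equext0}) combined with Lemma \ref{lem:techsum} propagates it (with only polynomial loss in $|x|^{-1}$) to the whole extended petal $H_{j,\theta}^{\epsilon,\rho,\lambda}$. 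Consequently $(g_{j,\Lambda,\lambda}^\varphi-u_k f)(x,y_0)=O(|x|^{M_k})$ with $M_k\to\infty$. Since $\hat u-u_k\in(f^k)$, the Taylor coefficients in $x$ of $u_k f(\cdot,y_0)$ stabilize as $k\to\infty$ to those of the formal series $\log\varphi$ evaluated at $y_0$, which provides a unique formal development $\sum_k g_{j,k}^\varphi(y_0)\,x^k$ of $g_{j,\Lambda,\lambda}^\varphi(\cdot,y_0)$, independent of $(\Lambda,\lambda)$. Joint holomorphy of $g_{j,\Lambda,\lambda}^\varphi$ on $H_{j,\theta}^{\epsilon,\rho,\lambda}$ plus continuity up to $x=0$ (Proposition \ref{pro:bddconext}) then show that each $g_{j,k}^\varphi$ is a holomorphic function of $y_0$ on $\cup_{\theta\in(0,\pi/2]} H_{j,\theta}^{\epsilon,\rho,\lambda}(0)$.

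The final step is to upgrade the asymptotic expansion to multi-summability. Theorem \ref{teo:difFatflatext3} gives the bound
\[
|\psi_{j,\Lambda,\lambda}^\varphi - \psi_{j,\Lambda',\lambda'}^\varphi|(x,y)\;\leq\; e^{-K/|x|^{\tilde{e}_{d_{\Lambda,\Lambda'}^{\lambda,\lambda'}+1}}}
\]
on $H_{j,\theta}^{\epsilon,\rho,\lambda,\lambda'}$; applying Cauchy's integral formula in $y$ on a polydisk of fixed radius around $(x,y_0)$ (contained in the overlap for $|x|$ small) converts this to $|g_{j,\Lambda,\lambda}^\varphi-g_{j,\Lambda',\lambda'}^\varphi|(x,y_0)\leq C\,e^{-K'/|x|^{\tilde{e}_{d_{\Lambda,\Lambda'}^{\lambda,\lambda'}+1}}}$. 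Together with the admissibility of $(\tilde{e}_1,\dots,\tilde{e}_{\tilde q};\Lambda)$ for every $\Lambda\in{\mathcal M}$ and the fact that the sectors $I_\Lambda^\lambda$ cover ${\mathbb S}^1$, Proposition \ref{pro:sumbal} yields the $(\tilde{e}_1,\dots,\tilde{e}_{\tilde q})$-summability of $\sum g_{j,k}^\varphi(y_0)\,x^k$ in the multi-direction $\Lambda$ with sum $g_{j,\Lambda,\lambda}^\varphi(\cdot,y_0)$, as claimed.

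The hard part will be the transport argument in the first step: at a fixed $y_0$ in the interior of the petal, $|\psi_{L_j}^X(x,y_0)|$ stays bounded as $x\to 0$, so the estimate (\ref{equ:bedb}) is \emph{not} flat in $x$ by itself. One must genuinely exploit the sharp $O(\xi^{5-k(1+1/\nu({\mathcal E}_0))})$ bound that is available only in the magnifying glass set $\tilde U_\lambda$, and control the loss of exponent when iterating $\varphi$ from points of $\tilde U_\lambda$ to points near $y_0$ — choosing $\theta$, $\rho$, and the number of iterates uniformly in $k$ so that $N_k\to\infty$ survives the iteration. This is where the non-spiraling properties of Section \ref{sec:tramul} and the asymptotic continuity of subsection \ref{subsec:npmtraflow} enter essentially.
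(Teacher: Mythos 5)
Your second step---Theorem~\ref{teo:difFatflatext3} plus Cauchy in $y$ to transfer the flatness to $g_{j,\Lambda,\lambda}^\varphi-g_{j,\Lambda',\lambda'}^\varphi$, then Balser's criterion---is precisely the paper's proof and already yields the full statement. Indeed Prop.~\ref{pro:sumbal} \emph{produces} the formal series, and holomorphy of the coefficient functions $g_{j,k}^\varphi$ in $y_0$ is automatic because $g_{j,\Lambda,\lambda}^\varphi$ is jointly holomorphic with locally uniform asymptotic bounds (which follow from Prop.~\ref{pro:bddconext} and the two-sided comparison $1/K_1'\le g_{j,\Lambda,\lambda}^\varphi/g\le K_1'$ that the paper uses to control the denominator in $g-g'=(\partial_y\psi'-\partial_y\psi)/(\partial_y\psi\cdot\partial_y\psi')$). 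Your first step is therefore not required by Theorem~\ref{teo:infgenms}: identifying the coefficients with those of $\log\varphi$ via the $k$-convergent normal forms $Y_k$, Prop.~\ref{pro:fatcorork}, and the $\bar\partial$ machinery---including the transport from the magnifying glass to the whole extended petal that you rightly flag as the hard point---is exactly the content of the \emph{next} result, Prop.~\ref{pro:coegida}. You have proved more than asked, importing that later difficulty into a theorem whose own proof sidesteps it entirely.

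One small correction to your comparison identity: $g_{j,\Lambda,\lambda}^\varphi\neq(\partial\psi_{j,\lambda,k}^\varphi/\partial y)^{-1}$ on the overlap. The two Fatou coordinates differ by a function of $x$ alone only up to an exponentially small remainder (see the proof of Prop.~\ref{pro:coegida}): their common domain covers only an annulus in the orbit space, not all of ${\mathbb C}^*$, so bounded $\varphi$-invariant functions need not be constant in $y$. The resulting correction to your identity is flat in $x$, so the extraction of the development at $y_0$ still goes through, but the identity as written is only approximate.
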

Let us remark that the set $\cup_{\theta \in (0,\pi/2]} H_{j,\theta}^{\epsilon,\rho,\lambda}(0)$ does not
depend on $\rho$ or $\lambda$. It only depends on $j$ and $\epsilon$.
\begin{proof}
Let us prove that $g_{j,\Lambda,\lambda}^{\varphi}$ is
$(\tilde{e}_{1}, \hdots, \tilde{e}_{\tilde{q}})$-summable in the $x$ variable.

Fix $\theta \in (0,\pi/2]$. Denote $X=g(x,y) \partial / \partial y$.
Let $\Lambda,\Lambda' \in {\mathcal M}$ and $\lambda, \lambda' \in {\mathbb S}^{1}$.
We have
\[ |{\psi}_{j,\Lambda,\lambda}^{\varphi} - {\psi}_{j,\Lambda',\lambda'}^{\varphi}|(x,y)
\leq  e^{-K/|x|^{\tilde{e}_{d_{\Lambda,\Lambda'}^{\lambda,\lambda'}+1}}}  \]
for any $(x,y) \in H_{j,\theta}^{\epsilon,\rho,\lambda,\lambda'}$ by theorem
\ref{teo:difFatflatext3}. The definition of $X_{j,\Lambda,\lambda}^{\varphi}$ implies
\[ g_{j,\Lambda,\lambda}^{\varphi} - g_{j,\Lambda',\lambda'}^{\varphi} =
\frac{1}{\partial {\psi}_{j,\Lambda,\lambda}^{\varphi}/\partial y} -
\frac{1}{\partial {\psi}_{j,\Lambda',\lambda'}^{\varphi} / \partial y}=
\frac{\partial {\psi}_{j,\Lambda',\lambda'}^{\varphi} / \partial y -\partial {\psi}_{j,\Lambda,\lambda}^{\varphi}/\partial y}
{(\partial {\psi}_{j,\Lambda,\lambda}^{\varphi}/\partial y)
(\partial {\psi}_{j,\Lambda',\lambda'}^{\varphi} / \partial y)} .\]
Let us consider the function
$h_{j,\Lambda,\lambda}=\partial ({\psi}_{j,\Lambda,\lambda}^{\varphi} - \psi_{L_{j}}^{X})/\partial y$. It satisfies
\[ h_{j,\Lambda,\lambda} =
\frac{\partial ({\psi}_{j,\Lambda,\lambda}^{\varphi} - \psi_{L_{j}}^{X})}{\partial \psi_{L_{j}}^{X}}
\frac{\partial \psi_{L_{j}}^{X}}{\partial y} =
\frac{\partial ({\psi}_{j,\Lambda,\lambda}^{\varphi} - \psi_{L_{j}}^{X})}{\partial \psi_{L_{j}}^{X}}
\frac{1}{g} . \]
By using proposition \ref{pro:bddconext} and Cauchy's integral formula we obtain that
$g h_{j,\Lambda,\lambda}$  is a continuous function
defined in in $\overline{H_{j,\theta}^{\epsilon,\rho,\lambda}}$ whose value
at $(0,0)$ is equal to $0$. Moreover the restriction
$(gh_{j,\Lambda,\lambda})(0,y)$ to $x=0$ does not depend on
$\Lambda$ or $\lambda$. Thus there exists $K_{1}' \geq 1$ such that
\[ \frac{1}{K_{1}'} \leq \frac{g_{j,\Lambda,\lambda}^{\varphi}}{g} \leq K_{1}' \ {\rm and} \
\frac{1}{K_{1}'} \leq \frac{g_{j,\Lambda',\lambda'}^{\varphi}}{g} \leq K_{1}' \]
in $H_{j,\theta}^{\epsilon,\rho,\lambda}$ and $H_{j,\theta}^{\epsilon,\rho,\lambda'}$ respectively.
Since
\[ \frac{\partial {\psi}_{j,\Lambda',\lambda'}^{\varphi}}{\partial y} -
\frac{\partial {\psi}_{j,\Lambda,\lambda}^{\varphi}}{\partial y} =
\frac{\partial ({\psi}_{j,\Lambda',\lambda'}^{\varphi} -
{\psi}_{j,\Lambda,\lambda}^{\varphi})}{\partial \psi_{L_{j}}^{X}}
\frac{\partial \psi_{L_{j}}^{X}}{\partial y} =
\frac{\partial ({\psi}_{j,\Lambda',\lambda'}^{\varphi} -
{\psi}_{j,\Lambda,\lambda}^{\varphi})}{\partial \psi_{L_{j}}^{X}}
\frac{1}{g}  \]
we deduce analogously that
\[ \left|{
\frac{\partial {\psi}_{j,\Lambda',\lambda'}^{\varphi}}{\partial y} -
\frac{\partial {\psi}_{j,\Lambda,\lambda}^{\varphi}}{\partial y}
}\right| \leq K_{2}' e^{-K/|x|^{\tilde{e}_{d_{\Lambda,\Lambda'}^{\lambda,\lambda'}+1}}} \frac{1}{|g|} \]
in $H_{j,\theta}^{\epsilon,\rho,\lambda,\lambda'}$ for some $K_{2}' \in {\mathbb R}^{+}$. Therefore
we obtain
\[ |g_{j,\Lambda,\lambda}^{\varphi} - g_{j,\Lambda',\lambda'}^{\varphi}| \leq |g| K_{2}' (K_{1}')^{2}
e^{-K/|x|^{\tilde{e}_{d_{\Lambda,\Lambda'}^{\lambda,\lambda'}+1}}} \]
in $H_{j,\theta}^{\epsilon,\rho,\lambda,\lambda'}$. Notice that
$H_{j,\theta}^{\epsilon,\rho,\lambda}(0)=H_{j,\theta}^{\epsilon,\rho,\lambda,\lambda'}(0)$.
The result is obtained by proceeding as in the proof of theorem \ref{teo:coechachas}.
\end{proof}
\subsection{Analyzing the infinitesimal generator}
\label{subsec:anainfgen}
Let $\varphi \in \diff{tp1}{2}$ with $2$-convergent normal form $\Upsilon={\rm exp}(X)$.
\begin{defi}
The infinitesimal generator $\log \varphi$ of $\varphi$ is of the form
\[ \log \varphi = \left({ \sum_{k=0}^{\infty} \hat{g}_{k}^{\varphi}(y) x^{k}}\right) \frac{\partial}{\partial y} \]
where $\hat{g}_{k}^{\varphi} \in {\mathbb C}[[x]]$ for any $k \in {\mathbb N} \cup \{0\}$.
\end{defi}
Analogously as in th. \ref{teo:infgenms} we prove that the family
$\{g_{j,k}^{\varphi}\}_{j \in {\mathcal D}(\varphi)}$ represents a
$\nu({\mathcal E}_{0})$-summable function for any $k \in {\mathbb N} \cup \{0\}$
(th. \ref{teo:infgenad}). We claim that this function coincides with
$\hat{g}_{k}^{\varphi}$. Roughly speaking the Lavaurs vector fields
$(1/\partial \psi_{j,\lambda,k}^{\varphi}) \partial /\partial y$ are very tangent to
$\hat{u} f \partial / \partial y$ by prop.  \ref{pro:fatcorork}.
The proof is completed by using that the functions of the form
$\psi_{j,\Lambda,\lambda}^{\varphi}(x,y) - \psi_{j,\lambda,k}^{\varphi}(x,y)$
are (up to an additive function of $x$) exponentially small in the $x$ variable
(prop. \ref{pro:coegida}).

Denote $f = y \circ \varphi -y$,
$\log \varphi = \hat{u}  f \partial/\partial y$ and
$X = u  f \partial/\partial y$
for some units $\hat{u} \in {\mathbb C}[[x,y]]$, $u \in {\mathbb C}\{x,y\}$.
We obtain $\hat{u}-u \in (f^{2})$.
Fix $k \geq \max(5,4 \nu({\mathcal E}_{0}))$. Let ${\rm exp}(Y_{k})$ be a
$k$-convergent normal form and $B(0,\delta) \times B(0,\epsilon_{k})$ as defined
in subsection \ref{subsec:asyfatcor}. The vector field $Y_{k}$ is of the form
$u_{k} f \partial / \partial y$ where $u_{k} \in {\mathbb C}\{x,y\}$ is a unit
such that $\hat{u}-u \in (f^{k})$.
\begin{pro}
\label{pro:coegida}
Let $\varphi \in \diff{tp1}{2}$ with $2$-convergent normal form $\Upsilon={\rm exp}(X)$.
Then $\hat{g}_{b}^{\varphi}$ is an asymptotic
development in $H_{j,\theta}^{\epsilon,\rho}(0)$
at $x=0$ of $g_{j,b}^{\varphi}$ for all $j \in {\mathcal D}(\varphi)$, $\theta \in (0,\pi/2]$
and $b \in {\mathbb N} \cup \{0\}$.
\end{pro}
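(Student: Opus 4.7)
Fix $j \in \mathcal{D}(\varphi)$, $\theta \in (0,\pi/2]$, $\Lambda \in \mathcal{M}$, $\lambda \in \mathbb{S}^{1}$ and $b \in \mathbb{N} \cup \{0\}$. The strategy is to interpose between the Lavaurs coefficient $g_{j,\Lambda,\lambda}^{\varphi}$ of def.~\ref{def:lavaurs} and the formal infinitesimal generator $\log\varphi = \hat{u} f \,\partial/\partial y$ the analytic coefficient $v_k f$ of $X_{j,\lambda,k}^{\varphi}$ produced in subsection~\ref{subsec:asyfatcor}, for $k$ arbitrarily large. Two comparisons are needed: a cohomological one showing that the $x$-Taylor coefficients of $v_k f$ recover the asymptotic coefficients $g_{j,b}^{\varphi}$ of theorem~\ref{teo:infgenms}, and an algebraic one showing that these coefficients agree with $\hat{g}_{b}^{\varphi}$ to arbitrarily high order in $y$ as $k \to \infty$.

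For the first (cohomological) comparison, both $\psi_{j,\Lambda,\lambda}^{\varphi}$ and $\psi_{j,\lambda,k}^{\varphi}$ are Fatou coordinates of $\varphi$ defined on an overlap containing $\bigcup_{x\in I_\lambda}(H_{j,\theta}^{\epsilon,\rho}(x) \cap H_{j,\theta/2}^{\epsilon_k,\rho}(x))$; both are uniformly bounded perturbations of $\psi_{L_j}^{X}$ (by prop.~\ref{pro:bddconext}, prop.~\ref{pro:fatcorork}, and the boundedness of $\psi_{L_j,k} - \psi_{L_j}^{X}$), and their difference is $\varphi$-invariant. Transporting via $(x,e^{2\pi i \psi_{j,\lambda,k}^{\varphi}})$, the difference descends to a bounded holomorphic function on a $\varphi$-invariant annulus in the orbit space; as in the proof of prop.~\ref{pro:difFatflat}, Laurent expansion yields
\[
\psi_{j,\Lambda,\lambda}^{\varphi} - \psi_{j,\lambda,k}^{\varphi} = c_k(x) + R_k(x,y),
\]
with $c_k$ depending only on $x$ and $R_k$ exponentially small in $x$ on every proper sub-sector. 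A Cauchy estimate on $\partial R_k/\partial y$ combined with the identity $g_{j,\Lambda,\lambda}^{\varphi} - v_k f = -(\partial R_k/\partial y)\, g_{j,\Lambda,\lambda}^{\varphi}\, v_k f$ then shows that $g_{j,\Lambda,\lambda}^{\varphi} - v_k f$ is exponentially flat in $x$ on every $H_{j,\theta'}^{\epsilon,\rho,\lambda}$ with $\theta' < \theta$. Since $v_k f$ is analytic in $x$ and $g_{j,\Lambda,\lambda}^{\varphi}$ has asymptotic development $\sum_b g_{j,b}^{\varphi}(y)\, x^b$ by theorem~\ref{teo:infgenms}, we conclude $(v_k f)^{(b)}(y) = g_{j,b}^{\varphi}(y)$ on $H_{j,\theta}^{\epsilon,\rho}(0)$.

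For the second (algebraic) comparison, the pointwise bound $|v_k - u_k| \leq C |f|^{k-c}$ of prop.~\ref{pro:fatcorork}, with $c = 5\nu(\mathcal{E}_0)/(\nu(\mathcal{E}_0)+1)$, combined with Riemann's removable-singularity theorem, yields $v_k - u_k \in (f^{\lceil k-c \rceil})$ in $\mathbb{C}\{x,y\}$. Together with $u_k - \hat{u} \in (f^k)$ this gives $v_k f - \hat{u} f \in (f^{k-c+1})$ in $\mathbb{C}[[x,y]]$. Writing $v_k f - \hat{u} f = f^{k-c+1}\, G$ with $G \in \mathbb{C}[[x,y]]$ and taking $x^b$-coefficients, using the elementary bound $(f^m)^{(i)}(y) = O(y^{(p+1)(m-i)})$ valid for $i \leq m$ (a consequence of $f(0,y) = O(y^{p+1})$, where $p+1$ is the order of tangency of $\varphi_{|x=0}$ to the identity), gives
\[
(v_k f)^{(b)}(y) - \hat{g}_{b}^{\varphi}(y) = O\bigl(y^{(p+1)(k-c+1-b)}\bigr)
\]
as formal power series in $y$. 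Combining with the first comparison this reads $g_{j,b}^{\varphi}(y) - \hat{g}_{b}^{\varphi}(y) = O\bigl(y^{(p+1)(k-c+1-b)}\bigr)$ asymptotically at $y=0$ on $H_{j,\theta}^{\epsilon,\rho}(0)$; since $k$ may be chosen arbitrarily large, $\hat{g}_{b}^{\varphi}$ is the asymptotic development of $g_{j,b}^{\varphi}$ on this petal.

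The delicate point is the Laurent argument of the second paragraph: one must verify that the common domain of $\psi_{j,\Lambda,\lambda}^{\varphi}$ and $\psi_{j,\lambda,k}^{\varphi}$, after iteration by $\varphi^{\pm 1}$ in the manner of lemma~\ref{lem:fundext}, descends via $(x, e^{2\pi i \psi_{j,\lambda,k}^{\varphi}})$ to a genuine $\varphi$-invariant annulus of the type produced by prop.~\ref{pro:flasp}, with radii allowing the extraction of the exponentially small remainder as in the estimates of prop.~\ref{pro:difFatflatext}. Once this is in place, the rest of the proof is routine.
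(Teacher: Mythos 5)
Your plan has the right overall shape (interpose the sectorial Fatou coordinate $\psi_{j,\lambda,k}^{\varphi}$ and its Lavaurs vector field between the multi-summable Lavaurs family and $\log\varphi$), and your first (cohomological) comparison is in substance what the paper does: one shows $\psi_{j,\Lambda,\lambda}^{\varphi} - \psi_{j,\lambda,k}^{\varphi}$ equals a function of $x$ plus an exponentially small remainder, hence $g_{j,\Lambda,\lambda}^{\varphi}$ and $g_{j,\lambda,k}^{\varphi}=v_k f$ share the same asymptotic development in $x$. However, the second (algebraic) comparison is where the proof breaks.

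The claim that $v_k - u_k \in (f^{\lceil k - c\rceil})$ in $\mathbb{C}\{x,y\}$ — and a fortiori that $v_k f - \hat u f = f^{k-c+1}G$ in $\mathbb{C}[[x,y]]$ and can be attacked by ``taking $x^b$-coefficients'' — is not available. The function $v_k$ is only holomorphic on the sector $I_{\lambda}\times(\dots)$ in the $x$-variable; it is \emph{not} a germ at $(0,0)$ and has no Taylor (or formal) expansion in $x$. Riemann's removable singularity theorem applies fiberwise (at fixed $x_0\neq 0$) but does not produce a germ of $\mathbb{C}\{x,y\}$. Consequently your claim ``since $v_k f$ is analytic in $x$\dots we conclude $(v_k f)^{(b)}(y) = g_{j,b}^{\varphi}(y)$'' is unfounded: $v_k f$ only has an \emph{asymptotic} development in $x$, and the passage from the pointwise bound $|v_k - u_k| = O(|f|^{k-c})$ of prop.~\ref{pro:fatcorork} to a bound on those asymptotic coefficients is precisely the hard step you have skipped. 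In general a sup-norm bound of size $\tau$ on a sector does not by itself control the $b$-th asymptotic coefficient; some additional quasi-analytic input (here the exponentially small glueing data) is indispensable.

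The paper fills this gap with lemma \ref{lem:techgevrey}, a Cauchy--Heine type estimate, applied to the family $\{(g_{j,\lambda,k}^{\varphi}-u_k f)(x,y_0)\}_{\lambda}$: it converts the sup-bound $\tau = O(|f(\,\cdot\,,y_0)|^{(\nu k+1)/(\nu+1)})$ together with the exponentially small cocycle $g_{j,\lambda,k}^{\varphi}-g_{j,\lambda',k}^{\varphi}$ into an explicit estimate $|\hat h_b| \lesssim 2^b\tau/c^b + e^{-c_2/(2c^{\nu})}$ on the common Gevrey coefficient. The paper then chooses the $x$-radius $c = |y_0|^{\nu+2}$ so that this becomes $O(y_0^{\nu k+1-b(\nu+2)})$, which is the desired decay as $k\to\infty$. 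The crucial comparison object is the \emph{analytic} $u_k f$ (which genuinely lies in $\mathbb{C}\{x,y\}$ and whose $x^b$-Taylor coefficient agrees with $\hat g_b^{\varphi}$ up to high order in $y$ because $\log\varphi - u_k f\partial/\partial y \in (f^{k+1})$), not the sectorial $v_k f$. To repair your argument you would need to replace the ideal-membership claim by the Cauchy--Heine coefficient estimate of lemma \ref{lem:techgevrey}, applied on the shrinking $x$-disks $B(0,|y_0|^{\nu+2})$, with $u_k f$ playing the role of the analytic reference.
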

\begin{proof}
Denote $\nu=\nu({\mathcal E}_{0})$.
Fix $k \geq \max(5,4 \nu)$. Consider $\lambda \in {\mathbb S}^{1}$. We have
\[ \frac{1}{\partial \psi_{j,\lambda,k}^{\varphi}/ \partial y} -
\frac{1}{\partial \psi_{L_{j},k} / \partial y} =
\frac{(\partial (\psi_{L_{j},k}-\psi_{j,\lambda,k}^{\varphi})/\partial \psi_{L_{j},k})
(\partial \psi_{L_{j},k} / \partial y)}
{(\partial \psi_{L_{j},k} / \partial y)(\partial \psi_{j,\lambda,k}^{\varphi}/ \partial y)},  \]
see subsection \ref{subsec:asyfatcor} for the definition of $\psi_{L_{j},k} $.
We denote $g_{j,\lambda,k}^{\varphi}= 1/(\partial \psi_{j,\lambda,k}^{\varphi} / \partial y)$.
We obtain
\[ \left|{
g_{j,\lambda,k}^{\varphi}  - u_{k} f }\right| =
O \left({ \frac{f}{(1+|\psi_{L_{j}}^{X}(x,y)|)^{k-1}} }\right)=
O(f^{\frac{\nu k +1}{\nu+1}})  \]
in $\cup_{x \in I_{\lambda}} (H_{j,\theta}^{\epsilon,\rho}(x) \cap H_{j,\theta/2}^{\epsilon_{k},\rho}(x))$
by proposition \ref{pro:fatcorork}.
Consider $\lambda' \in {\mathbb S}^{1}$ such that $I_{\lambda} \cap I_{\lambda'} \neq \emptyset$.
The function
$\psi_{j,\lambda,k}^{\varphi} - \psi_{j,\lambda',k}^{\varphi}$
is defined in $\cup_{x \in I_{\lambda'} \cap I_{\lambda}} B_{\varphi,j}^{1}(x)$
(see Step 3 of subsection \ref{subsec:extfatcor}).
We argue as in the proof of prop. \ref{pro:difFatflat} to obtain $K_{1}' \in {\mathbb R}^{+}$ such that
\[  |\psi_{j,\lambda,k}^{\varphi} - \psi_{j,\lambda',k}^{\varphi}| \leq
 e^{-K_{1}'/|x|^{\nu}}  \]
in $\cup_{x \in I_{\lambda'} \cap I_{\lambda}}
(H_{j,\theta}^{\epsilon,\rho}(x) \cap H_{j,\theta/2}^{\epsilon_{k},\rho}(x))$.
This implies
\begin{equation}
\label{equ:gevflat}
g_{j,\lambda,k}^{\varphi} - g_{j,\lambda',k}^{\varphi} =
O \left({
f \frac{\partial (\psi_{j,\lambda',k}^{\varphi}-
\psi_{j,\lambda,k}^{\varphi})}{\partial \psi_{L_{j}}^{X}}
}\right) = O \left({  e^{-K_{1}'/|x|^{\nu}} f }\right)
\end{equation}
in $\cup_{x \in I_{\lambda'} \cap I_{\lambda}}
(H_{j,\theta}^{\epsilon,\rho}(x) \cap H_{j,\theta/2}^{\epsilon_{k},\rho}(x))$.

We have $H_{j,\theta}^{\epsilon,\rho} \cap H_{j,\theta/2}^{\epsilon_{k},\rho} \subset
B(0,\delta) \times e^{i[-\zeta,\zeta]} (0,\epsilon)$ for some $\zeta \in {\mathbb R}^{+}$
(prop. \ref{pro:estext}). Lemma \ref{lem:itf} and remark \ref{rem:psiext} imply
$|\psi_{L_{j}}^{X}| \leq C_{1}'/|y|^{\nu}$ in
${\mathcal E}_{0} \cap (B(0,\delta) \times e^{i[-\zeta,\zeta]} (0,\epsilon))$ for some
$C_{1}' \in {\mathbb R}^{+}$. Cauchy's integral formula provides
\[ \left|{ \frac{\partial \psi_{L_{j}}^{X}}{\partial x} }\right|(x_{0},y) =
\frac{1}{2 \pi} \left|{
\int_{x \in \partial B(x_{0},|y|/(2 \rho_{0}))} \frac{\psi_{L_{j}}^{X}(x,y)}{(x-x_{0})^{2}} dx
}\right| \leq \frac{C_{1}' 2 \rho_{0}}{|y|^{\nu+1}}\]
for any $(x_{0},y) \in \tilde{\mathcal E}_{0} \cap (B(0,\delta) \times e^{i[-\zeta,\zeta]} (0,\epsilon))$
(see section \ref{subsec:dynsplit}). Consider points
$(0,y)  \in H_{j,\theta}^{\epsilon,\rho} \cap H_{j,\theta/2}^{\epsilon_{k},\rho}$
and $x \in \overline{B}(0,|y|^{\nu+2})$. We have
\[ |\psi_{L_{j}}^{X}(x,y) - \psi_{L_{j}}^{X}(0,y)| \leq C_{1}' 2 \rho_{0}
\frac{|x|}{|y|^{\nu+1}} \leq  C_{1}' 2 \rho_{0} |y|. \]
Therefore $(x,y)$ is in the neighborhood of
$H_{j,\theta}^{\epsilon,\rho} \cap H_{j,\theta/2}^{\epsilon_{k},\rho}$
if $y$ is in a neighborhood of $0$. As a consequence the property (\ref{equ:gevflat})
holds true for points $(x,y) \in T_{0}$ such that
$(0,y)$ is in a neighborhood of $(0,0)$ in $H_{j,\theta}^{\epsilon,\rho} \cap H_{j,\theta/2}^{\epsilon_{k},\rho}$
and $x \in \overline{B}(0,|y|^{\nu+2})$. The function $f$
is a $O(y^{\nu+1}) + O(x)$ and then a $O(y^{\nu+1})$
in $\{ (x,y) \in {\mathbb C}^{2} : |x| \leq |y|^{\nu+2} \}$.

Fix $(0,y_{0})  \in H_{j,\theta}^{\epsilon,\rho} \cap H_{j,\theta/2}^{\epsilon_{k},\rho}$ and the
ball $B(0,|y_{0}|^{\nu+2})$.
The equation (\ref{equ:gevflat}) implies that the family of functions
${\{ g_{j,\lambda,k}^{\varphi}(x,y_{0}) \}}_{\lambda \in {\mathbb S}^{1}}$
has a common $1/\nu$ Gevrey asymptotic development
$\sum_{b=0}^{\infty} g_{j,b,k}^{\varphi}(y_{0}) x^{b}$.
We apply lemma \ref{lem:techgevrey} to
the functions $(g_{j,\lambda,k}^{\varphi}-u_{k} f)(x,y_{0})$ to obtain that
$g_{j,b,k}^{\varphi}(y_{0})$ satisfies
\[ \left|{ g_{j,b,k}^{\varphi}(y_{0})-\frac{1}{b!} \frac{\partial^{b} (u_{k}f)}{\partial x^{b}}(0,y_{0}) }\right|
=  O \left({ \frac{(y_{0}^{\nu+1})^{\frac{\nu k +1}{\nu+1}}}
{y_{0}^{b(\nu+2)}} }\right) + e^{\frac{-K_{1}'}{2 |y_{0}|^{\nu(\nu+2)}}} =
O \left({ y_{0}^{\nu k +1 - b(\nu+2)} }\right). \]
Consider $\Lambda \in {\mathcal M}$, $\lambda \in {\mathbb S}^{1}$. Then
$\psi_{j,\Lambda,\lambda}^{\varphi} - \psi_{j,\lambda,k}^{\varphi}$
is defined in $\cup_{x \in I_{\lambda} \cap (0,\delta) I_{\Lambda}^{\lambda}} B_{\varphi,j}^{1}(x)$.
The functions $\psi_{j,\lambda,k}^{\varphi} - \psi_{L_{j}}^{X}$ and
$\psi_{j,\Lambda,\lambda}^{\varphi} - \psi_{L_{j}}^{X}$ are bounded in
$\cup_{x \in I_{\lambda} \cap (0,\delta) I_{\Lambda}^{\lambda}} B_{\varphi,j}^{1}(x)$
by lemma \ref{lem:fatcorork} and proposition \ref{pro:bddcon} respectively.
Therefore $\psi_{j,\Lambda,\lambda}^{\varphi}  - \psi_{j,\lambda,k}^{\varphi} $ is
bounded. By using
$(Im \ \psi_{L_{j}}^{X})(B_{\varphi,j}^{1}(x)) = [-a_{1}/|x|^{\nu},a_{1}/|x|^{\nu}]$
and proceeding as in prop. \ref{pro:difFatflat} we obtain that there exist
 $K_{2}' \in {\mathbb R}^{+}$ and a holomorphic function
 $a_{0}(x) \in {\mathcal O}( I_{\lambda} \cap (0,\delta) I_{\Lambda})$
such that
\[ |\psi_{j,\Lambda,\lambda}^{\varphi}(x,y) - \psi_{j,\lambda,k}^{\varphi}(x,y) - a_{0}(x)|
\leq e^{-K_{2}'/|x|^{\nu({\mathcal E}_{0})}} \ {\rm in} \
\cup_{x \in I_{\lambda} \cap (0,\delta) I_{\Lambda}^{\lambda}} B_{\varphi,j}^{1}(x). \]
The estimate holds true in
$\cup_{x \in I_{\lambda'} \cap (0,\delta) I_{\Lambda}^{\lambda}}
(H_{j,\theta}^{\epsilon,\rho}(x) \cap H_{j,\theta/2}^{\epsilon_{k},\rho}(x))$
too since the orbits by $\varphi$ of points in this set intersect
$\cup_{x \in I_{\lambda} \cap (0,\delta) I_{\Lambda}^{\lambda}} B_{\varphi,j}^{1}(x)$
(lemma \ref{lem:fundext}) and both sides are invariant by $\varphi$. We obtain
\[  g_{j,\lambda,k}^{\varphi} - g_{j,\Lambda,\lambda}^{\varphi} =
O \left({  e^{-K_{2}'/|x|^{\nu({\mathcal E}_{0})}} f }\right)  \]
in $\cup_{x \in I_{\lambda'} \cap (0,\delta) I_{\Lambda}^{\lambda}}
(H_{j,\theta}^{\epsilon,\rho}(x) \cap H_{j,\theta/2}^{\epsilon_{k},\rho}(x))$.
This implies $g_{j,b}^{\varphi}= g_{j,b,k}^{\varphi}$
in $(H_{j,\theta}^{\epsilon,\rho} \cap H_{j,\theta/2}^{\epsilon_{k},\rho})(0)$.
Since $(\sum_{k=0}^{\infty} \hat{g}_{k}^{\varphi}(y) x^{k})- u_{k} f \in (f^{k+1})$ we
deduce that $\hat{g}_{b}^{\varphi}$ is an asymptotic development of
$g_{j,b}^{\varphi}$ in $(H_{j,\theta}^{\epsilon,\rho} \cap H_{j,\theta/2}^{\epsilon_{k},\rho})(0)$
and then in $H_{j,\theta}^{\epsilon,\rho}(0)$ since
$H_{j,\theta}^{\epsilon,\rho}$ and
$H_{j,\theta}^{\epsilon,\rho} \cap H_{j,\theta/2}^{\epsilon_{k},\rho}$ coincide in a neighborhood of $(0,0)$.
\end{proof}
\begin{teo}
\label{teo:infgenad}
Let $\varphi \in \diff{tp1}{2}$ with $2$-convergent normal form $\Upsilon={\rm exp}(X)$.
Then $\hat{g}_{b}^{\varphi}$ is a $\nu({\mathcal E}_{0})$-summable function whose sum in
$\cup_{\theta \in (0,\pi/2]} H_{j,\theta}^{\epsilon,\rho}(0)$ is equal to $g_{j,b}^{\varphi}$
for all $j \in {\mathcal D}(\varphi)$ and $b \in {\mathbb N} \cup \{0\}$.
\end{teo}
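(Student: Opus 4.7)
The strategy is to apply proposition \ref{pro:sumbal} with $\tilde{q}=1$ and $\tilde{e}_{1}=\nu({\mathcal E}_{0})$ in the $y$-variable to the family $\{g_{j,b}^{\varphi}\}_{j \in {\mathcal D}(\varphi)}$ on the extended petals $Q_{j} := \cup_{\theta \in (0,\pi/2]} H_{j,\theta}^{\epsilon,\rho}(0)$, for each fixed $b \in {\mathbb N} \cup \{0\}$. By construction the $2\nu({\mathcal E}_{0})$ petals $Q_{j}$ cover a punctured neighborhood of $y=0$, each contains a sector of opening exceeding $\pi/\nu({\mathcal E}_{0})$, and consecutive petals $Q_{j}, Q_{j+1}$ overlap in a small angular sector. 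Proposition \ref{pro:coegida} already supplies $\hat{g}_{b}^{\varphi}$ as the common asymptotic development on each $Q_j$, so it suffices to establish the exponential flatness
\begin{equation*}
|g_{j+1,b}^{\varphi}(y) - g_{j,b}^{\varphi}(y)| \leq C_{b}\,e^{-K/|y|^{\nu({\mathcal E}_{0})}}
\end{equation*}
on $Q_{j} \cap Q_{j+1}$ for neighboring $j$.

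I would first prove the two-dimensional analogue for the Lavaurs vector field coefficients. After extending $\psi_{j,\Lambda,\lambda}^{\varphi}$ and $\psi_{j+1,\Lambda,\lambda}^{\varphi}$ to a common domain containing points $(x,y_{0})$ with $x$ small and $y_{0} \in Q_{j} \cap Q_{j+1}$ by iteration under $\varphi$ (as in the proof of lemma \ref{lem:chacha}, but now at the overlap direction of the petals), the horn-map identity $\psi_{j+1,\Lambda,\lambda}^{\varphi} = \xi_{j,\Lambda,\lambda}^{\varphi}(x,\psi_{j,\Lambda,\lambda}^{\varphi})$ from def. \ref{def:msfatcoor} combined with differentiation in $y$ yields
\begin{equation*}
g_{j+1,\Lambda,\lambda}^{\varphi} - g_{j,\Lambda,\lambda}^{\varphi} = g_{j,\Lambda,\lambda}^{\varphi}\cdot\left(\frac{1}{\partial_{z}\xi_{j,\Lambda,\lambda}^{\varphi}(x,\psi_{j,\Lambda,\lambda}^{\varphi})} - 1\right).
\end{equation*}
From the Fourier expansion of remark \ref{rem:chacha}, $\partial_{z}\xi_{j,\Lambda,\lambda}^{\varphi}(x,z) - 1 = O(e^{-2\pi |\mathrm{Im}(z)|})$. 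The crucial geometric input is $|\mathrm{Im}(\psi_{L_{j}}^{X}(0,y))| \geq c/|y|^{\nu({\mathcal E}_{0})}$ on $Q_{j} \cap Q_{j+1}$, deduced from the explicit asymptotic $\psi_{L_{j}}^{X} \sim -1/(\nu({\mathcal E}_{0})v(0,0)y^{\nu({\mathcal E}_{0})})$ of remark \ref{rem:psiext} together with the observation that the overlap sector is bounded away from the ray where $\psi_{L_{j}}^{X}$ is real; combined with the boundedness of $\psi_{j,\Lambda,\lambda}^{\varphi} - \psi_{L_{j}}^{X}$ supplied by proposition \ref{pro:bddconext} this produces the two-dimensional flatness
\begin{equation*}
|g_{j+1,\Lambda,\lambda}^{\varphi}(x,y) - g_{j,\Lambda,\lambda}^{\varphi}(x,y)| \leq C\,e^{-K/|y|^{\nu({\mathcal E}_{0})}}.
\end{equation*}

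To extract the coefficient-level flatness, fix $y_{0} \in Q_{j} \cap Q_{j+1}$ and apply lemma \ref{lem:techgevrey} to the family $\{(g_{j+1,\Lambda,\lambda}^{\varphi}-g_{j,\Lambda,\lambda}^{\varphi})(\cdot,y_{0})\}$ of holomorphic functions of $x$ on an admissible cover of ${\mathbb S}^{1}$ indexed by $(\Lambda,\lambda) \in {\mathcal M} \times {\mathbb S}^{1}$: each element is bounded by $\tau = Ce^{-K/|y_{0}|^{\nu({\mathcal E}_{0})}}$ from the two-dimensional estimate, while differences on overlapping $x$-sectors are bounded by $c_{1}e^{-c_{2}/|x|^{\nu({\mathcal E}_{0})}}$ via theorem \ref{teo:difFatflatext2} (since $\tilde{e}_{d+1} \geq \tilde{e}_{1} = \nu({\mathcal E}_{0})$). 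The common $1/\nu({\mathcal E}_{0})$-Gevrey asymptotic development of this family is $\sum_{b}(g_{j+1,b}^{\varphi}-g_{j,b}^{\varphi})(y_{0})\,x^{b}$. Choosing $c = c_{3}|y_{0}|$ with $c_{3}$ small enough to ensure the monotonicity hypothesis of the lemma, for each fixed $b$ both terms of the resulting bound $2^{b}\tau/c^{b}+e^{-c_{2}/(2c^{\nu({\mathcal E}_{0})})}$ are $O(e^{-K'/|y_{0}|^{\nu({\mathcal E}_{0})}})$ for some $K'>0$ (the polynomial factor $|y_{0}|^{-b}$ is absorbed by the exponential). Feeding this into proposition \ref{pro:sumbal} yields the $\nu({\mathcal E}_{0})$-summability of $\hat{g}_{b}^{\varphi}$ and identifies $g_{j,b}^{\varphi}$ as its sum on $Q_{j}$.

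The hard part will be the geometric estimate $|\mathrm{Im}(\psi_{L_{j}}^{X}(0,y))| \geq c/|y|^{\nu({\mathcal E}_{0})}$ on $Q_{j} \cap Q_{j+1}$: while proposition \ref{pro:ext1g} bounds $|\psi_{L_{j}}^{X}|$ from below, isolating its imaginary part requires tracking the argument of $\psi_{L_{j}}^{X} \sim -1/(\nu({\mathcal E}_{0})v(0,0)y^{\nu({\mathcal E}_{0})})$ across the overlap sector, together with a case analysis according to whether $j \in {\mathcal D}_{1}(\varphi)$ or $j \in {\mathcal D}_{-1}(\varphi)$ and careful handling of the boundary trajectories of $e^{\pm i\theta}X$ that delimit the extended petals $H_{j,\theta}^{\epsilon,\rho}(0)$.
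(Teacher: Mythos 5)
Your proposal follows essentially the same line as the paper's proof: establish the two-dimensional exponential flatness of $g_{j+1}-g_j$ in the overlap via the Fourier expansion of the horn map $\xi_j$ and the lower bound on $|\mathrm{Im}\,\psi_{L_j}^X|$ (which the paper obtains more directly as $|\mathrm{Im}\,\psi_{L_j}^X|\geq c_\theta|\psi_{L_j}^X|\geq c_\theta C_6^{-1}|y|^{-\nu}$ in the half-space $\{s\,\mathrm{Im}\,\psi_{L_j}^X<-M\}$, sidestepping the case analysis you anticipate as the hard part), then extract coefficient-level flatness via lemma \ref{lem:techgevrey}, and conclude by proposition \ref{pro:sumbal}. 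The minor variations --- differentiating the horn-map identity to get $g_{j+1}-g_j=g_j(1/\partial_z\xi_j-1)$ rather than working with $\psi_{j+1}-\psi_j-\zeta_\varphi$ and Cauchy estimates, and taking $c=c_3|y_0|$ in lemma \ref{lem:techgevrey} rather than the paper's implicit $|y_0|^{\nu+2}$ --- are both harmless.
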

\begin{proof}
Denote $\nu=\nu({\mathcal E}_{0})$.
Fix $\theta \in (0,\pi/2]$. Fix $\Lambda \in {\mathcal M}$. Consider $\lambda \in {\mathbb S}^{1}$.
The function $(g_{j+1,\Lambda,\lambda}^{\varphi} - g_{j,\Lambda,\lambda}^{\varphi})(x,y_{0})$
represents a $(\tilde{e}_{1}, \hdots, \tilde{e}_{\tilde{q}})$-summable function
for any $(0,y_{0}) \in H_{j,\theta}^{\epsilon,\rho}(0) \cap H_{j+1,\theta}^{\epsilon,\rho}(0)$
by th. \ref{teo:infgenms}. In fact we have
\[ (g_{j+1,\Lambda,\lambda}^{\varphi} - g_{j,\Lambda,\lambda}^{\varphi})(x,y) -
(g_{j+1,\Lambda',\lambda'}^{\varphi} - g_{j,\Lambda',\lambda'}^{\varphi})(x,y) =
O( f e^{-K/|x|^{\tilde{e}_{d_{\Lambda,\Lambda'}^{\lambda,\lambda'}+1}}} ) \]
in $H_{j,\theta}^{\epsilon,\rho,\lambda,\lambda'} \cap H_{j+1,\theta}^{\epsilon,\rho,\lambda,\lambda'}$
for all $\Lambda' \in {\mathcal M}$ and
$\lambda,\lambda' \in {\mathbb S}^{1}$ (see proof of theorem \ref{teo:infgenms}).

We have
\[ \xi_{j,\Lambda,\lambda}^{\varphi} - z= \zeta_{\varphi}(x) +
\sum_{l=1}^{\infty} a_{j,\Lambda,\lambda,l}^{\varphi}(x) e^{-2 \pi i s l z} \]
where $\sum_{l=1}^{\infty} a_{j,\Lambda,\lambda,l}^{\varphi}(x) w^{l}$ is a bounded function in
$\{ (x,w) \in (0,\delta) I_{\Lambda}^{\lambda} \times  B(0,e^{-2 \pi M}) \}$. We deduce
that $\xi_{j,\Lambda,\lambda}^{\varphi} - z - \zeta_{\varphi}(x)$ is a $O(e^{-2 \pi i s z})$. We obtain
\[ \psi_{j+1,\Lambda,\lambda}^{\varphi} - \psi_{j,\Lambda,\lambda}^{\varphi} - \zeta_{\varphi}(x) =
O(e^{-2 \pi i s \psi_{j,\Lambda,\lambda}^{\varphi}})  = O(e^{-2 \pi  |Im(\psi_{L_{j}}^{X})|}) \]
in $H_{j,\theta}^{\epsilon,\rho} \cap H_{j+1,\theta}^{\epsilon,\rho} \cap \{ s Im(\psi_{L_{j}}^{X}) < -M\}$.
Since $\theta>0$ we get that there exists $c_{\theta} \in {\mathbb R}^{+}$
such that $|Im(\psi_{L_{j}}^{X})| \geq c_{\theta} |\psi_{L_{j}}^{X}|$ in
$H_{j,\theta}^{\epsilon,\rho} \cap H_{j+1,\theta}^{\epsilon,\rho} \cap \{ s Im(\psi_{L_{j}}^{X}) < -M\}$
and then
$O(e^{-2 \pi  |Im(\psi_{L_{j}}^{X})|})=O(e^{-2 \pi c_{\theta} |\psi_{L_{j}}^{X}|})$. We obtain
\[ g_{j+1,\Lambda,\lambda}^{\varphi} - g_{j,\Lambda,\lambda}^{\varphi} =
\frac{(\partial (\psi_{j,\Lambda,\lambda}^{\varphi}-\psi_{j+1,\Lambda,\lambda}^{\varphi})/\partial \psi_{L_{j}}^{X})
(\partial \psi_{L_{j}}^{X} / \partial y)}
{(\partial \psi_{j,\Lambda,\lambda}^{\varphi} / \partial y)(\partial \psi_{j,\Lambda,\lambda}^{\varphi}/ \partial y)}
 = O(f e^{-2 \pi  c_{\theta} |\psi_{L_{j}}^{X}|})\]
in $H_{j,\theta}^{\epsilon,\rho} \cap H_{j+1,\theta}^{\epsilon,\rho} \cap \{ s Im(\psi_{L_{j}}^{X}) < -M\}$.
Moreover $\psi_{L_{j}}^{X}$ satisfies
$|\psi_{L_{j}}^{X}| \geq C_{6}^{-1}/|y|^{\nu}$ in $H_{j,\theta}^{\epsilon,\rho}$
by equation (\ref{equ:pexpsi}). The previous discussion implies
\[ g_{j+1,\Lambda,\lambda}^{\varphi} - g_{j,\Lambda,\lambda}^{\varphi} =
O \left({ f e^{- \frac{2 \pi c_{\theta}}{C_{6}} \frac{1}{|y|^{\nu}}} }\right) \]
in $H_{j,\theta}^{\epsilon,\rho} \cap H_{j+1,\theta}^{\epsilon,\rho} \cap \{ s Im(\psi_{L_{j}}^{X}) < -M\}$.

We proceed as in proposition \ref{pro:coegida} (using lemma \ref{lem:techgevrey}) to obtain
\begin{equation}
\label{equ:coccond}
 (g_{j+1,b}^{\varphi} - g_{j,b}^{\varphi})(y) =
O \left({ \frac{f e^{- \frac{2 \pi c_{\theta}}{C_{6}} \frac{1}{|y|^{\nu}}}}{y^{b(\nu+2)}} }\right)
+ O \left({ e^{-K/(2 |y|^{\nu (\nu+2)})} }\right) =
O \left({  e^{- \frac{\pi c_{\theta}}{C_{6}} \frac{1}{|y|^{\nu}}}   }\right)
\end{equation}
in $H_{j,\theta}^{\epsilon,\rho}(0) \cap H_{j+1,\theta}^{\epsilon,\rho}(0) \cap \{ s Im(\psi_{L_{j}}^{X}) < -M\}$.

The curve $\{ y \in H_{j,\theta}^{\epsilon,\rho}(0): \psi_{L_{j}}^{X}(0,y) \in {\mathbb R} \}$
adheres a direction $\lambda_{j} \in {\mathbb S}^{1}$ at $y=0$.
We have $\lambda_{k'} =\lambda_{k} e^{\pi i \nu^{-1} (k'-k)}$ for all
$k,k' \in {\mathbb Z}/(2 \nu {\mathbb Z})$. Since $\psi_{L_{j}}^{X} \sim 1/y^{\nu}$ in
$H_{j,\theta}^{\epsilon,\rho}(0)$ we deduce that $H_{j,\theta}^{\epsilon,\rho}(0)$
contains a sector
$\{ y \in (0,\epsilon_{\theta}) \lambda_{j}
e^{i \left[{ -   \frac{\pi - 2 \theta}{\nu} , \frac{\pi - 2 \theta}{\nu} }\right]} \}$. Indeed
it contains sectors of the form
$(0,\epsilon') \lambda_{j}
e^{i [ - (\pi - \theta) \nu^{-1} + \eta, (\pi - \theta) \nu^{-1} - \eta ]}$ for any
$\eta \in {\mathbb R}^{+}$. Analogously
$H_{j,\theta}^{\epsilon,\rho}(0) \cap H_{j+1,\theta}^{\epsilon,\rho}(0) \cap \{ s Im(\psi_{L_{j}}^{X}) < -M\}$
contains a sector $\{ y \in (0,\epsilon_{\theta}') \lambda_{j} e^{\frac{\pi i}{2 \nu}}
e^{i \left[{ -   \frac{\pi - 3 \theta}{2 \nu} , \frac{\pi - 3 \theta}{2 \nu} }\right]} \}$
in which property (\ref{equ:coccond}) holds true.
The function $g_{j,b}$ is bounded at the origin in $H_{j,\theta}^{\epsilon,\rho}(0)$
for $j \in {\mathbb Z}/(2 \nu {\mathbb Z})$ since it has
an asymptotic development (prop. \ref{pro:coegida}).
Therefore prop. \ref{pro:sumbal} implies that there exists a unique $\nu$-summable function
$\hat{\phi}_{b}$ such that its sum in $H_{j,\theta}^{\epsilon,\rho}(0)$ is $g_{j,b}$
for all $j \in {\mathbb Z}/(2 \nu {\mathbb Z})$ and $\theta \in (0,\pi/2]$.
Moreover $\{ \lambda_{1} e^{\frac{\pi i}{\nu}}, \hdots, \lambda_{2 \nu} e^{\frac{\pi i}{\nu}} \}$
is the set of singular directions of $\hat{\phi}_{b}$.
Since asymptotic developments are unique we obtain $\hat{g}_{b}^{\varphi} = \hat{\phi}_{b}$.
In particular $\hat{g}_{b}^{\varphi}$ is a $\nu$-summable function for any
$b \in {\mathbb N} \cup \{0\}$.
\end{proof}
\begin{rem}
Let $\varphi \in \diff{tp1}{2}$ and $\Upsilon={\rm exp}(g \partial / \partial y)$
a $2$-convergent normal form
of $\varphi$. We can describe the asymptotic behavior of the Lavaurs vector fields in
the neighborhood of $Fix (\varphi) \cup \{x=0\}$.
\begin{itemize}
\item The vector field
$X_{j,\Lambda,\lambda}^{\varphi}=g_{j,\Lambda,\lambda}^{\varphi} \partial / \partial y$
coincides with $\log \varphi$
until the first non-zero term in the neighborhood of $Fix (\varphi)$. More
precisely, $g_{j,\Lambda,\lambda}^{\varphi} /g -1$ is a continuous function in
$H_{j,\theta}^{\epsilon,\rho,\lambda}$
vanishing at $\overline{H_{j,\theta}^{\epsilon,\rho,\lambda}} \cap Fix (\varphi)$.
This result is corollary 7.3 in \cite{JR:mod} whose proof is based in prop.
7.3 \cite{JR:mod}. Since prop. \ref{pro:bddconext} is the analogue of
prop. 7.3 \cite{JR:mod} for multi-transversal flows the same result holds true.
\item Fixed $j \in {\mathcal D}(\varphi)$ the family
$\{X_{j,\Lambda,\lambda}^{\varphi}\}_{(\Lambda,\lambda)
\in {\mathcal M} \times {\mathbb S}^{1}}$ is multi-summable in the variable $x$.
\end{itemize}
\end{rem}
\section{Applications}
\label{sec:app}
In this section we adapt the analytic conjugation theorem in \cite{JR:mod} to the
multi-summable setting. We also express the analytic class of $\varphi \in \diff{p1}{2}$
as a function of the analytic classes of the one dimensional germs in the family
${\{\varphi_{|x=x_{n}}\}}_{n \in {\mathbb N}}$ where $x_{n}$ is a sequence of elements of
$B(0,\delta) \setminus \{0\}$ such that $\lim_{n \to \infty} x_{n}=0$.
\subsection{Actions of conjugations on Fatou coordinates}
Given $\varphi, \eta \in \diff{p1}{2}$, we are interested on studying
whether the analytic conjugacy
of $\varphi_{|x=x_{n}}$ and $\eta_{|x=x_{n}}$ by an analytic mapping
$\kappa_{n}$ defined in an open set in $\{x_{n}\} \times {\mathbb C}$ for a sequence
$x_{n} \to 0$ implies
that $\varphi$ and $\eta$ are analytically conjugated. No continuous dependence
with respect to the parameter $x$ of the mapping $(x_{n},y) \mapsto \kappa_{n}(y)$
is required. In this subsection we prove that even if the mappings $\kappa_{n}$ are
unrelated some of their properties behave uniformly.
\begin{defi} (see \cite{JR:mod})
We say that $\eta$ is a {\it s-mapping} if
$\eta$ is a biholomorphism from $B(0,s)$ onto
$\eta(B(0,s))$. If $\eta(B(0,s))$ is contained in $B(0,S)$ then
we say that $\eta$ is a (s,S)-mapping.
\end{defi}
Next we adapt the results in section 10.1 of \cite{JR:mod} to the context of slow decaying functions
(see def. \ref{def:slow}).
\begin{defi}
Let $X=x^{a} g(x,y) \partial /\partial y \in {\mathcal X} \cn{2}$ with
$g \in {\mathbb C} \{x,y\}$ and $g(0,y) \not \equiv 0$. The radical ideal $\sqrt{(g)}$ is
generated by an element $h \in {\mathbb C}\{x,y\}$. We define $N(X)$ as the order of
$h(0,y)$ at $y=0$. In fact $N(X)$ is the cardinal of the set
$Sing X \cap \{x=x_{0}\}$ for $x_{0} \neq 0$.
\end{defi}
%
%
\begin{defi}
Let $X \in {\mathcal X} \cn{2}$.
We denote $\kappa_{|(Sing X)(x_{0})}  \cong Id$ if $N(X)>1$ and $\kappa_{|(Sing X)(x_{0})} \equiv Id$.
Suppose $N(X)=1$. The set $Sing X$ has a unique component $y=\gamma(x)$ different than $x=0$.
We denote $\kappa_{|(Sing X)(x_{0})}  \cong Id$ if
$\kappa(\gamma(x_{0}))=\gamma(x_{0})$ and $\kappa'(\gamma(x_{0}))=1$.
\end{defi}
The previous definition implies that $\kappa - y$ vanishes in $(Sing X)(x_{0})$ and
the sum of the vanishing multiplicities is greater or equal than $2$.

Next we see that we can control the image of $s$-mappings.
\begin{lem}
\label{lem:quitR}
Let $X \in {\mathcal X} \cn{2}$ with $N(X) \geq 1$.
Let $s:B(0,\delta) \setminus \{0\} \to {\mathbb R}^{+}$ be a slow decaying function
and $\tau \in (0,1/4]$. There exists a neighborhood $V$
of $0$ in the $x$-line such that for any $x_{0} \in V \setminus \{0\}$ each $s(x_{0})$-mapping
satisfying $\kappa_{|(Sing X)(x_{0})} \cong Id$ is a $(s(x_{0})\tau, 2 s(x_{0})\tau)$-mapping.
\end{lem}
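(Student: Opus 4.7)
The plan is to apply classical distortion estimates for univalent maps, exploiting that the fixed points imposed by $\kappa_{|(Sing X)(x_{0})} \cong Id$ lie very close to the origin compared with the radius $s(x_{0})$.

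First, the irreducible components of $\{h=0\}$ distinct from $\{x=0\}$ are analytic curves through $(0,0)$, hence $(Sing X)(x_{0}) \setminus \{x=0\}$ is parametrized by Puiseux series $y_{1}(x_{0}),\ldots,y_{p}(x_{0})$ satisfying $|y_{i}(x_{0})| \leq C|x_{0}|^{1/k}$ for some $k \in {\mathbb N}$ and $C \in {\mathbb R}^{+}$. The slow decaying hypothesis on $s$ forces $|x_{0}|^{1/k}/s(x_{0}) \to 0$, hence $|y_{i}(x_{0})|/s(x_{0}) \to 0$ as $x_{0} \to 0$. For any prescribed $\varepsilon>0$ there is therefore a neighborhood $V_{\varepsilon}$ of $0$ with $(Sing X)(x_{0}) \subset B(0,\varepsilon s(x_{0}))$ for all $x_{0} \in V_{\varepsilon} \setminus \{0\}$.

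Next I would treat the two cases separately. If $N(X)=1$, let $a=\gamma(x_{0})$ be the unique fiber singularity; by hypothesis $\kappa(a)=a$ and $\kappa'(a)=1$. The translate $f(w)=\kappa(w+a)-a$ is univalent on $B(0,R)$ with $R=s(x_{0})-|a|$, $f(0)=0$, $f'(0)=1$, so the classical Koebe distortion bound gives
\[ |f(w)| \leq \frac{|w|}{(1-|w|/R)^{2}}. \]
If $N(X) \geq 2$, pick two fixed points $a,b \in (Sing X)(x_{0})$; the same translate $f$ has unknown derivative $c=f'(0)$ and Koebe reads
\[ |f(w)-cw| \leq |c| \cdot |w| \cdot \frac{(|w|/R)(2-|w|/R)}{(1-|w|/R)^{2}}. \]
Evaluating at $w=b-a$ and using $f(b-a)=b-a$ (since $\kappa(b)=b$) yields an inequality that forces $|c-1| \to 0$ as $|b-a|/R \to 0$, that is as $x_{0} \to 0$. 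In either case, for $y \in B(0,s(x_{0})\tau)$ with $\tau \in (0,1/4]$ and $\varepsilon:=\max_{i} |y_{i}(x_{0})|/s(x_{0}) = o(1)$, the estimates yield $|\kappa(y)| \leq |a|+|f(y-a)| \leq 2s(x_{0})\tau$ once $\varepsilon$ is below an explicit threshold depending only on $\tau$; choosing $V \subset V_{\varepsilon}$ for that threshold then completes the proof.

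The main obstacle is the case $N(X) \geq 2$: one has no tangential normalization of $\kappa$ at any fixed point, so the derivative information must be extracted indirectly from a second fixed point via a two-data application of the Koebe inequality. Once $c$ is shown to be close to $1$ the remaining bound is a routine Schwarz-type estimate, and the dependence of $V$ on $X$, $s$ and $\tau$ is built solely through the convergence rate of $\max_{i}|y_{i}(x_{0})|/s(x_{0})$ to zero.
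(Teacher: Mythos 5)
Your proposal is correct and takes essentially the same route as the paper: observe that the slow-decaying hypothesis forces $\max_j|\gamma_j(x_0)|/s(x_0)\to 0$, translate $\kappa$ to a univalent function centered at one fixed point, use the remaining fixed-point data to control the derivative at that center via the Koebe growth theorem, and conclude with the Koebe upper distortion bound and the numerical constraint $\tau\le 1/4$. The only cosmetic difference is that the paper does not prove $\kappa'(\gamma_1(x_0))\to 1$; it only needs the one-sided growth estimate $|\kappa'(\gamma_1(x_0))|\le(1+|\upsilon(x_0)|)^2$ with $\upsilon(x_0)\to 0$, which is slightly lighter than the two-sided statement you aim for, but both suffice.
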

\begin{proof}
Let $\gamma_{1}(x_{0})$ and $\gamma_{2}(x_{0})$ be two different points of
$(Sing X)(x_{0})$ if $N(X)>1$. Otherwise we consider $\gamma_{1}(x_{0}) = \gamma_{2}(x_{0}) \in (Sing X)(x_{0})$.
We define
\[ \kappa_{1}(y) =
\frac{\kappa((s(x_{0})-|\gamma_{1}(x_{0})|)y + \gamma_{1}(x_{0})) -\gamma_{1}(x_{0})}
{(s(x_{0})-|\gamma_{1}(x_{0})|) (\partial \kappa/\partial y)(\gamma_{1}(x_{0}))} .\]
Then $\kappa_{1}$ is a Schlicht function.
Denote $\upsilon(x_{0}) =
(\gamma_{2}(x_{0}) - \gamma_{1}(x_{0}))/(s(x_{0})-|\gamma_{1}(x_{0})|)$. We have
$\kappa_{1} (\upsilon(x_{0})) = \upsilon(x_{0})/
(\partial \kappa/\partial y)(\gamma_{1}(x_{0}))$.
Koebe's distortion theorem (see \cite{Conway}, page 65) implies
$| (\partial \kappa / \partial y)(\gamma_{1}(x_{0})) | \leq
(1 + |\upsilon(x_{0})|)^{2}$. We have
\[ \sup_{y \in B(0,s(x_{0})\tau)} |\kappa(y)| \leq
(s(x_{0}) - |\gamma_{1}(x_{0})|)(1 + |\upsilon(x_{0})|)^{2}
\sup_{y \in B(0,A(x_{0},\tau))}
|\kappa_{1}(y)| + |\gamma_{1}(x_{0})| \]
where $A(x_{0},\tau)=(s(x_{0}) \tau +|\gamma_{1}(x_{0})|)/(s(x_{0})-|\gamma_{1}(x_{0})|)$.
By Koebe's distortion theorem we obtain
$\sup_{y \in B(0,A(x_{0},\tau))} |\kappa_{1}(y)| \leq
A(x_{0},\tau)/(1-A(x_{0},\tau))^{2}$ and
\[ \sup_{y \in B(0,s(x_{0})\tau)} |\kappa(y)| \leq
(s(x_{0}) - |\gamma_{1}(x_{0})|)(1 + |\upsilon(x_{0})|)^{2}
\frac{A(x_{0},\tau)}{(1-A(x_{0},\tau))^{2}}  + |\gamma_{1}(x_{0})|. \]
Since
\[ \lim_{x_{0} \to 0} A(x_{0},\tau) = \lim_{x_{0} \to 0}
\tau \frac{1 +\frac{|\gamma_{1}(x_{0})|}{s(x_{0}) \tau}}{1-\frac{|\gamma_{1}(x_{0})|}{s(x_{0})}} = \tau . \]
we get $\sup_{y \in B(0,s(x_{0})\tau)} |\kappa(y)| \leq 2 s(x_{0}) \tau$ for any $x_{0}$ in
a neighborhood of $0$.
\end{proof}
\begin{lem}
\label{lem:adjcon}
Let $\varphi_{1}, \varphi_{2} \in \diff{tp1}{2}$ with convergent
normal forms ${\rm exp}(X_{1})$ and ${\rm exp}(X_{2})$ respectively.
Suppose $Fix (\varphi_{1})=Fix (\varphi_{2})$ is a curve $y=\gamma(x)$.
Consider an analytic diffeomorphism $\sigma(x,y)=(x,\sigma_{2}(x,y)) \in \diff{p}{2}$
conjugating $X_{1}$ and $X_{2}$.
Let $s:B(0,\delta) \setminus \{0\} \to {\mathbb R}^{+}$ be a slow decaying function
and $\tau \in (0,1/4]$. There exists a neighborhood $V$
of $0$ in the $x$-line such that for any $x_{0} \in V \setminus \{0\}$ each $s(x_{0})$-mapping
$\kappa$ conjugating $(\varphi_{1})_{|x=x_{0}}$ and $(\varphi_{2})_{|x=x_{0}}$
is a $(s(x_{0})\tau, s(x_{0})\tau')$-mapping where
$\tau'=2 \sup_{x \in B(0,\delta)} | \partial(y \circ \sigma)/\partial y|(x,\gamma(x))$.
\end{lem}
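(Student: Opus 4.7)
The plan is to imitate the proof of Lemma \ref{lem:quitR} via Koebe's distortion theorem, the main new ingredient being a derivative identity between $\partial\kappa/\partial y$ and $\partial(y\circ\sigma)/\partial y$ at the fixed curve.

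First, the fixed point: because $X_1\in\Xt$ has $Sing(X_1)=\{y=\gamma(x)\}$ with multiplicity $s_1\ge 2$, the function $f=y\circ\varphi_1-y$ lies in $(y-\gamma(x))^{s_1}$, so its partial derivative in $y$ vanishes identically along $y=\gamma(x)$. Hence $\varphi_{1|x=x_0}$ is tangent to the identity at $\gamma(x_0)$ for every $x_0$; the same holds for $\varphi_2$. For $x_0$ close to $0$, $\gamma(x_0)$ is the unique fixed point of $\varphi_{i|x=x_0}$ in $B(0,s(x_0))$, so any conjugacy $\kappa$ must satisfy $\kappa(\gamma(x_0))=\gamma(x_0)$. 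Likewise $\sigma_{x_0}(y):=\sigma_2(x_0,y)$ fixes $\gamma(x_0)$, since $\sigma$ preserves the common singular curve of $X_1$ and $X_2$.

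The key step is the derivative identity $|\partial\kappa/\partial y|(\gamma(x_0))=|\partial(y\circ\sigma)/\partial y|(x_0,\gamma(x_0))$. Writing
\[
\varphi_{i|x=x_0}(y)=y+a_i(x_0)(y-\gamma(x_0))^{s_1}+O((y-\gamma(x_0))^{s_1+1})
\]
with $a_i(x_0)\ne 0$, the relation $\kappa\circ\varphi_1=\varphi_2\circ\kappa$ compared at order $(y-\gamma(x_0))^{s_1}$ yields $\bigl((\partial\kappa/\partial y)(\gamma(x_0))\bigr)^{s_1-1}=a_1(x_0)/a_2(x_0)$. The analogous computation for $\sigma_{x_0}\circ\exp(X_{1|x=x_0})=\exp(X_{2|x=x_0})\circ\sigma_{x_0}$ gives the same relation with $\tilde a_i(x_0)$, the leading coefficient of $\exp(X_i)_{|x=x_0}-Id$ at $\gamma(x_0)$. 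The convergent normal form condition $y\circ\varphi_i-y\circ\exp(X_i)\in(y\circ\varphi_i-y)^2=O((y-\gamma(x))^{2s_1})$ forces $a_i(x_0)=\tilde a_i(x_0)$, producing the identity.

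With this derivative bound in hand, I would reproduce the Koebe argument of Lemma \ref{lem:quitR} verbatim: the rescaled function
\[
\kappa_1(z)=\frac{\kappa\bigl((s(x_0)-|\gamma(x_0)|)z+\gamma(x_0)\bigr)-\gamma(x_0)}{(s(x_0)-|\gamma(x_0)|)(\partial\kappa/\partial y)(\gamma(x_0))}
\]
is Schlicht on the unit disk, so by Koebe's growth theorem, for $|y|\le s(x_0)\tau$,
\[
|\kappa(y)|\le|\gamma(x_0)|+|\partial\kappa/\partial y|(\gamma(x_0))\cdot(s(x_0)-|\gamma(x_0)|)\cdot\frac{A(x_0,\tau)}{(1-A(x_0,\tau))^2},
\]
with $A(x_0,\tau)=(s(x_0)\tau+|\gamma(x_0)|)/(s(x_0)-|\gamma(x_0)|)$. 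The slow-decay hypothesis gives $|\gamma(x_0)|/s(x_0)\to 0$ (since $|\gamma(x_0)|=O(|x_0|)$ while $s(x_0)\gg|x_0|^\eta$ for every $\eta>0$), so $A(x_0,\tau)\to\tau\le 1/4$ and $A/(1-A)^2\to\tau/(1-\tau)^2\le 4/9$. Combined with the derivative identity and the definition $\tau'=2\sup_{x\in B(0,\delta)}|\partial(y\circ\sigma)/\partial y|(x,\gamma(x))$, one obtains $|\kappa(y)|\le(4/9)(\tau'/2)s(x_0)(1+o(1))<s(x_0)\tau'$ for $x_0$ in a pointed neighborhood $V$ of $0$.

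The hard part is the derivative identity: extracting from the convergent normal form relation the precise equality of the order-$s_1$ Taylor coefficients of $\varphi_{i|x=x_0}-Id$ and $\exp(X_i)_{|x=x_0}-Id$ at $\gamma(x_0)$, uniformly in $x_0$ near $0$, and making sure the first-order-in-$f$ perturbation between $\varphi_i$ and $\exp(X_i)$ does not leak into the leading coefficient at the fixed curve.
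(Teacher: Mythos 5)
Your proof is correct, and the key derivative identity $|\kappa'(\gamma(x_0))| = |\partial(y\circ\sigma)/\partial y|(x_0,\gamma(x_0))$ is established by a genuinely different route than the paper's. The paper works with the germ $\upsilon = \sigma^{-1}_{|x=x_0}\circ\kappa$, which conjugates the one-dimensional tangent-to-identity germs $\zeta_1=(\varphi_1)_{|x=x_0}$ and $\zeta_2=(\sigma^{-1}\circ\varphi_2\circ\sigma)_{|x=x_0}$, both with common convergent normal form ${\rm exp}(X_1)_{|x=x_0}$; it then invokes the structure theorem for such conjugations, $\upsilon=\sigma_\lambda\circ{\rm exp}(t\log\zeta_2)\circ\hat\sigma(\zeta_1,\zeta_2)$, where $\hat\sigma$ is the normalized formal conjugation (derivative $1$ at $\gamma(x_0)$), the flow term has derivative $1$, and $\sigma_\lambda$ is periodic with derivative a root of unity, yielding $|\upsilon'(\gamma(x_0))|=1$ at once. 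You instead compare the order-$s_1$ Taylor coefficients on both sides of $\kappa\circ\varphi_{1|x=x_0}=\varphi_{2|x=x_0}\circ\kappa$ to get $(\kappa')^{s_1-1}=a_1/a_2$, do the same for $\sigma_{x_0}$, and then observe that the $1$-convergent normal form condition $y\circ\varphi_i-y\circ{\rm exp}(X_i)\in(y\circ\varphi_i-y)^2$ (ideal membership in ${\mathbb C}\{x,y\}$) forces vanishing of order $2s_1$ along $y=\gamma(x)$, hence $a_i(x_0)=\tilde a_i(x_0)$. That fully resolves the worry you flag at the end about the ``first-order-in-$f$ perturbation leaking'' into the order-$s_1$ coefficient: the ideal membership kills everything below order $2s_1$, with margin $s_1\geq 2$ to spare, uniformly in $x_0$ because $v(0,0)\ne 0$ keeps $a_i(x_0)$ bounded away from zero. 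Your route is more elementary and self-contained (no appeal to the formal centralizer of $\zeta_2$), at the modest cost of an explicit Taylor computation; the paper's route is shorter given the centralizer fact, which it relies on elsewhere anyway. The Koebe part is the same in both, and your limit $A(x_0,\tau)/(1-A(x_0,\tau))^2\to\tau/(1-\tau)^2\leq 4/9$ combined with $|\kappa'|\leq\tau'/2$ correctly closes the estimate.
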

\begin{proof}
Denote $\zeta_{1}$, $\zeta_{2}$ and $\upsilon$ the germs of
diffeomorphisms induced by $(\varphi_{1})_{|x=x_{0}}$,
$(\sigma^{-1} \circ \varphi_{2} \circ \sigma)_{|x=x_{0}}$  and
$\sigma_{|x=x_{0}}^{-1} \circ \kappa$ respectively
in the neighborhood of $\gamma(x_{0})$.
The mapping $\upsilon$ conjugates
diffeomorphisms $\zeta_{1}$, $\zeta_{2}$ with common convergent normal form
${\rm exp}(X_{1})_{|x=x_{0}}$.
Denote $\nu=\nu({\mathcal E}_{0})$.
The diffeomorphism $\upsilon$ is of the form
\[ \upsilon =
\sigma_{\lambda} \circ {\rm exp}(t \log \zeta_{2}) \circ \hat{\sigma}(\zeta_{1},\zeta_{2}) \]
where $\hat{\sigma}(\zeta_{1},\zeta_{2})$ is the unique element of
$\widehat{\rm Diff} ({\mathbb C},\gamma(x_{0}))$ conjugating $\zeta_{1}$, $\zeta_{2}$
of the form $y+O((y-\gamma(x_{0}))^{\nu+2})$, $t \in {\mathbb C}$ and $\sigma_{\lambda}$
is a periodic element of $\widehat{\rm Diff} ({\mathbb C},\gamma(x_{0}))$
commuting with $\zeta_{2}$ such that
$\upsilon'(\gamma(x_{0}))=\sigma_{\lambda}'(\gamma(x_{0}))=\lambda \in e^{2 \pi i {\mathbb Q}}$.
We obtain $|\upsilon'(\gamma(x_{0}))| = 1$ and then
$|\kappa'(\gamma(x_{0}))| =|\partial(y \circ \sigma)/\partial y|(x_{0},\gamma(x_{0}))$.

We define
\[ \kappa_{1}(y) =
\frac{\kappa((s(x_{0})-|\gamma(x_{0})|)y + \gamma(x_{0})) -\gamma(x_{0})}
{(s(x_{0})-|\gamma(x_{0})|) (\partial \kappa/\partial y)(\gamma(x_{0}))} .\]
Then $\kappa_{1}$ is a Schlicht function. By the Koebe's distortion
theorem (see \cite{Conway}, page 65) we get
\[ \sup_{y \in B(0,s(x_{0})\tau)} |\kappa(y)| \leq
(s  - |\gamma|)(x_{0}) \left|{ \frac{\partial(y \circ \sigma)}{\partial y} }\right|(x_{0},\gamma(x_{0}))
\sup_{y \in B(0,A(x_{0},\tau))}
|\kappa_{1}(y)| + |\gamma(x_{0})| \]
where $A(x_{0},\tau)=(s(x_{0}) \tau +|\gamma(x_{0})|)/(s(x_{0})-|\gamma(x_{0})|)$.
By arguing as in lemma \ref{lem:quitR} we obtain that
$\kappa$ is a $(s(x_{0}) \tau, s(x_{0}) \tau')$-mapping.
\end{proof}
Now we provide uniform quantitative estimates for $s$-mapping conjugacies.
\begin{lem}
\label{lem:modbdd}
Let $\varphi_{1}, \varphi_{2} \in \diff{tp1}{2}$ with common convergent
normal form ${\rm exp}(X)$ where $X=u(x,y) \prod_{j=1}^{N} (y-\gamma_{j}(x))^{n_{j}} \partial/\partial y$
and $u \in {\mathbb C} \{x,y\}$ is a unit.
Denote $\nu=\nu({\mathcal E}_{0})$.
Let $s:B(0,\delta) \setminus \{0\} \to {\mathbb R}^{+}$ be a slow decaying function.
There exists an open set $0 \in V \subset {\mathbb C}$
such that for any  $x_{0} \in V \setminus \{ 0 \}$ and any $s(x_{0})$-mapping $\kappa$ conjugating
$(\varphi_{1})_{|x=x_{0}}$,
$(\varphi_{2})_{|x=x_{0}}$ with $\kappa_{|(Sing X)(x_{0})} \cong Id$
then $\kappa$ is of the form
$y + J_{\kappa}(y) \prod_{j=1}^{N} (y-\gamma_{j}(x_{0}))^{n_{j}}$ where
$\sup_{B(0,s(x_{0})/4)} |J_{\kappa}| < (8^{\nu} 6)/s^{\nu}(x_{0})$.
\end{lem}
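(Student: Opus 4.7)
The plan is to exploit the common convergent normal form to show $\kappa(y)-y$ is divisible by $\prod_{j=1}^N(y-\gamma_j(x_0))^{n_j}$, and then bound the resulting holomorphic quotient via the maximum modulus principle together with lemma \ref{lem:quitR}.

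First I would establish, for each $x_0$ in a suitable neighborhood $V$ of $0$, that $\kappa-y$ vanishes to order at least $n_j$ at every $\gamma_j(x_0)$. By hypothesis $\kappa$ fixes each $\gamma_j(x_0)$, and in the case $N=1$ also satisfies $\kappa'(\gamma_1(x_0))=1$. Since $\varphi_1,\varphi_2$ share the convergent normal form ${\rm exp}(X)$, at each $\gamma_j(x_0)$ they are tangent-to-identity germs of multiplicity $n_j$ with identical leading coefficient $u(x_0,\gamma_j(x_0))$: indeed $y\circ\varphi_i-y=fu+O(f^2)$ with $f$ vanishing to order $n_j$ at $\gamma_j(x_0)$, so $\varphi_1-\varphi_2=O((y-\gamma_j(x_0))^{2n_j})$. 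Following the factorization used in the proof of lemma \ref{lem:adjcon}, any such $\kappa$ admits locally a decomposition $\kappa=\sigma_\lambda\circ{\rm exp}(t\log\varphi_2)\circ\hat\sigma(\varphi_1,\varphi_2)$ with $\hat\sigma=y+O((y-\gamma_j(x_0))^{n_j+1})$ and ${\rm exp}(t\log\varphi_2)=y+tu(x_0,\gamma_j(x_0))(y-\gamma_j(x_0))^{n_j}+O((y-\gamma_j(x_0))^{n_j+1})$. To conclude the vanishing one needs $\lambda=1$: this is immediate for $N=1$, while for $N>1$ I would argue that the fact that $\kappa$ is a single Schlicht-type biholomorphism on $B(0,s(x_0))$ fixing all the $\gamma_j(x_0)$ simultaneously, combined with a rigidity argument, forbids any derivative $\kappa'(\gamma_j(x_0))$ from being a nontrivial root of unity.

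Once the vanishing is secured, $J_\kappa(y):=(\kappa(y)-y)/\prod_{j=1}^N(y-\gamma_j(x_0))^{n_j}$ is holomorphic on $B(0,s(x_0))$. Since $s$ is slow decaying while each $\gamma_j(x_0)\to 0$ with polynomial rate, one can shrink $V$ so that $|\gamma_j(x_0)|<s(x_0)/8$ for all $x_0\in V\setminus\{0\}$ and $j$. Applying lemma \ref{lem:quitR} with $\tau=1/4$, the map $\kappa$ is a $(s(x_0)/4,s(x_0)/2)$-mapping, so on the circle $|y|=s(x_0)/4$ one has $|\kappa(y)-y|\leq 3s(x_0)/4$ while $|y-\gamma_j(x_0)|\geq s(x_0)/8$, yielding $\left|\prod_{j=1}^N(y-\gamma_j(x_0))^{n_j}\right|\geq (s(x_0)/8)^{\nu+1}$ via the identity $\sum_j n_j=\nu+1$. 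The maximum modulus principle then delivers
\[ \sup_{B(0,s(x_0)/4)}|J_\kappa|\leq\frac{3s(x_0)/4}{(s(x_0)/8)^{\nu+1}}=\frac{6\cdot 8^{\nu}}{s(x_0)^{\nu}}. \]
The main obstacle will be the divisibility step, particularly the $N>1$ rigidity argument excluding nontrivial roots of unity as derivatives of $\kappa$ at the fixed points.
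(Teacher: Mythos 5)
Your outline is close to the paper's, but the order of your two steps is reversed in a way that leaves a real gap in the $N>1$ case, which you yourself flag as "the main obstacle" but do not resolve. You want to first establish divisibility of $\kappa-y$ by $\prod_j(y-\gamma_j(x_0))^{n_j}$ and then bound the quotient $J_\kappa$ by maximum modulus. But for $N>1$ the hypothesis $\kappa_{|(Sing X)(x_0)}\cong Id$ only gives simple vanishing $\kappa(\gamma_j(x_0))=\gamma_j(x_0)$ (not $\kappa'(\gamma_j(x_0))=1$), so the decomposition $\kappa=\sigma_\lambda\circ\exp(t\log\zeta_2)\circ\hat\sigma$ leaves the root of unity $\lambda\in\langle e^{2\pi i/(n_j-1)}\rangle$ undetermined, and the appeal to a "rigidity argument" for a Schlicht map fixing several points is not an argument.

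The paper closes this gap by running the maximum-modulus estimate \emph{first}, on the weaker quotient $A(y)=(\kappa(y)-y)/\prod_{j}(y-\gamma_j(x_0))$, which is available from simple vanishing alone. With Lemma~\ref{lem:quitR} and slow decay of $s$ one gets $\sup_{B(0,s(x_0)/4)}|A|\le 6\cdot 8^{N-1}/s^{N-1}(x_0)$, and evaluating at $\gamma_j(x_0)$,
\[
\left|\frac{\partial\kappa}{\partial y}(\gamma_j(x_0))-1\right|\le \frac{8^{N-1}\,6}{s^{N-1}(x_0)}\prod_{k\ne j}|\gamma_j(x_0)-\gamma_k(x_0)|.
\]
Since the differences $\gamma_j-\gamma_k$ vanish to positive order in $x$ while $1/s^{N-1}$ grows slower than any negative power of $|x|$, the right side tends to $0$ as $x_0\to 0$. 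Thus the root of unity $\lambda=\kappa'(\gamma_j(x_0))$ tends to $1$ and, being confined to a finite set, equals $1$ on a pointed neighborhood $V$. Only then does $\sigma_\lambda\equiv Id$, giving $(y-\gamma_j(x_0))^{n_j}\mid(\kappa-y)$, after which your maximum-modulus bound on $J_\kappa$ goes through verbatim and yields $6\cdot 8^\nu/s^\nu(x_0)$. So the missing idea is precisely the preliminary estimate on $A$, whose sole purpose is to drive $\kappa'(\gamma_j(x_0))\to 1$; without it there is no lever to rule out nontrivial $\lambda$.

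Two minor remarks. First, even for $N=1$ your phrase "immediate" hides that the decomposition is still needed to promote the hypothesis $\kappa-y=O((y-\gamma)^2)$ to $O((y-\gamma)^{n_1})$ when $n_1>2$; what is immediate is only $\lambda=1$. Second, for indices with $n_j=1$ the local decomposition does not apply (the germ is not tangent to the identity there), but nothing is needed since simple vanishing already gives the required power; the paper handles this by noting "we can suppose $n_j>1$."
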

\begin{proof}
We have $\kappa = y + (y-\gamma_{1}(x_{0})) \hdots (y - \gamma_{N}(x_{0})) A(y)$
for some $A \in {\mathcal O}(B(0,s(x_{0})))$ by hypothesis.
By lemma \ref{lem:quitR} and the modulus maximum principle we obtain
\[ \sup_{B(0,s(x_{0})/4)} |A| =   \sup_{y \in B(0,s(x_{0})/4)}
\frac{|\kappa(y) -y|}{|(y-\gamma_{1}(x_{0})) \hdots (y - \gamma_{N}(x_{0}))|}
\leq \frac{3 s(x_{0})/4}{((s(x_{0})/4) /2)^{N}}\]
for any $x_{0}$ in a pointed neighborhood of $0$. We have that
\[ \left|{ \frac{\partial \kappa}{\partial y }(\gamma_{j}(x_{0})) -1 }\right|
\leq \frac{8^{N-1} 6}{s^{N-1}(x_{0})}
\prod_{k \in \{1,\hdots, N\} \setminus \{ j \}}
|\gamma_{j}(x_{0}) - \gamma_{k}(x_{0})| . \]
Fix $j \in \{1,\hdots,N\}$. We claim that
$(y-\gamma_{j}(x_{0}))^{n_{j}}$ divides $\kappa -y$.
We can suppose $n_{j}>1$. Denote $\zeta_{1}$,
$\zeta_{2}$ and $\upsilon$ the germs of
diffeomorphisms induced by $(\varphi_{1})_{|x=x_{0}}$,
$(\varphi_{2})_{|x=x_{0}}$  and $\kappa$ respectively
in the neighborhood of $\gamma_{j}(x_{0})$. The diffeomorphism $\upsilon$ is of the form
\[ \upsilon = \sigma_{\lambda} \circ {\rm exp}(t \log \zeta_{2}) \circ \hat{\sigma}(\zeta_{1},\zeta_{2}) \]
where $\hat{\sigma}(\zeta_{1},\zeta_{2})$ is the unique element of
$\widehat{\rm Diff} ({\mathbb C},\gamma_{j}(x_{0}))$ conjugating $\zeta_{1}$, $\zeta_{2}$
of the form $y+O((y-\gamma_{j}(x_{0}))^{n_{j}+1})$, $t \in {\mathbb C}$ and
$\sigma_{\lambda}$ is a periodic
element of $\widehat{\rm Diff} ({\mathbb C},\gamma_{j}(x_{0}))$ commuting
with $\zeta_{2}$ such that
$\upsilon'(\gamma_{j}(x_{0}))=\sigma_{\lambda}'(\gamma_{j}(x_{0}))=
\lambda \in <e^{2 \pi i/(n_{j}-1)}>$.
We obtain $\lambda = 1$ for $N(X)=1$ by hypothesis. We obtain $\lambda=1$ for $N(X)>1$ and
$x_{0}$ in a neighborhood of $0$ since $N \geq 2$
implies $\lim_{x_{0} \to 0} (\partial \kappa/\partial y)(\gamma_{j}(x_{0})) =1$.
Thus $\sigma_{\lambda} \equiv Id$ and then
$\kappa -y$ belongs to $(y-\gamma_{j}(x_{0}))^{n_{j}}$. The function
$J_{\kappa}$ belongs to ${\mathcal O}(B(0,s(x_{0})))$.
Analogously as for $A$ we obtain
$\sup_{B(0,s(x_{0})/4)} |J_{\kappa}| \leq 8^{\nu} 6/s^{\nu}(x_{0})$ for any $x_{0} \neq 0$.
\end{proof}
We want to interpret the estimates in lemma  \ref{lem:modbdd}
in terms of the Fatou coordinates of the common normal form.
We define $\kappa_{t}(y) = y + t (\kappa(y)-y)$
for $y \in B(0,s(x_{0}))$ and $t \in {\mathbb C}$.
Let $\psi^{X}$ be a holomorphic integral of the time form of $X$.
We can define the function $\psi^{X} \circ \kappa(x,y)  - \psi^{X}(x,y)$
analogously as $\Delta_{\varphi}$.
The continuous path that we use to extend $\psi^{X}$ is parametrized by
$t \to \kappa_{t}(x,y)$ for $t \in [0,1]$. The function
$\psi^{X} \circ \kappa - \psi^{X}$ is well-defined
and holomorphic in $B(0,s(x_{0})) \setminus Sing X$.
\begin{lem}
\label{lem:bddfldi}
Let $\varphi_{1}, \varphi_{2} \in \diff{tp1}{2}$ with common convergent
normal form ${\rm exp}(X)$.
Let $s$ be a slow decaying function. Then there exists $\tau \in {\mathbb R}^{+}$
and $C' \in {\mathbb R}^{+}$ such that we have
$\sup_{B(0,s(x_{0}) \tau)} |\psi^{X} \circ \kappa - \psi^{X}| \leq C'/s^{\nu}(x_{0})$
for any $s(x_{0})$-mapping $\kappa$ conjugating $\varphi_{1}(x_{0},y)$ and
$\varphi_{2}(x_{0},y)$ with $\kappa_{|(Sing X)(x_{0})} \cong Id$ and any
$x_{0}$ in a pointed neighborhood of $0$. In particular we obtain that
$\psi^{X} \circ \kappa - \psi^{X}$ belongs to ${\mathcal O}(B(0,s(x_{0}) \tau))$.
\end{lem}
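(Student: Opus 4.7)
The plan is to express $\psi^{X} \circ \kappa - \psi^{X}$ as an integral along the homotopy $t \mapsto \kappa_{t}$ and bound the integrand uniformly by using the factorization of $\kappa - y$ provided by lemma \ref{lem:modbdd}. Since $X = u f \partial/\partial y$ with $f = \prod_{j=1}^{N}(y-\gamma_{j}(x_{0}))^{n_{j}}$ and $X(\psi^{X}) \equiv 1$, the chain rule yields
\[
\frac{d}{dt}\psi^{X}(x_{0},\kappa_{t}(y)) = \frac{\kappa(y)-y}{u(x_{0},\kappa_{t}(y))\,f(\kappa_{t}(y))},
\]
and since lemma \ref{lem:modbdd} gives $\kappa(y)-y = J_{\kappa}(y)\,f(y)$ with $|J_{\kappa}| \leq 8^{\nu} 6/s^{\nu}(x_{0})$ on $B(0,s(x_{0})/4)$, the computation reduces to estimating the ratio $f(y)/f(\kappa_{t}(y))$.

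First I would fix $\tau \in (0,1/8)$ to be chosen later and invoke lemma \ref{lem:quitR} (applied to each $\kappa_{t}$, noting that $\kappa_{t}$ also fixes $(Sing X)(x_{0})$ in the sense required) to ensure $\kappa_{t}(B(0,s(x_{0})\tau)) \subset B(0,2s(x_{0})\tau)$ for every $t \in [0,1]$ and every $x_{0}$ in some pointed neighborhood of $0$. Next I would factor
\[
\kappa_{t}(y)-\gamma_{j}(x_{0}) = (y-\gamma_{j}(x_{0}))\Bigl[\,1 + t\,J_{\kappa}(y)\,(y-\gamma_{j}(x_{0}))^{n_{j}-1}\prod_{k \neq j}(y-\gamma_{k}(x_{0}))^{n_{k}}\Bigr],
\]
which is the crucial algebraic step (it uses the full strength of $\kappa-y \in (f)$ from lemma \ref{lem:modbdd}, in particular that each $(y-\gamma_{j})^{n_{j}}$ divides $\kappa - y$). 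Since $\gamma_{k}(x_{0}) \to 0$ we have $|y - \gamma_{k}(x_{0})| \leq 2s(x_{0})\tau$ on our disk, and using $(n_{j}-1) + \sum_{k \neq j} n_{k} = \nu$ the bracketed factor is $1 + O(\tau^{\nu})$ uniformly in $x_{0}$, so for $\tau$ small enough (independent of $x_{0}$ and of the particular $\kappa$) each bracket has modulus at least $1/2$.

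This yields $|f(y)/f(\kappa_{t}(y))| \leq 2^{\nu+1}$ on $B(0,s(x_{0})\tau) \setminus (Sing X)(x_{0})$ for all $t \in [0,1]$. Combining with the bound on $J_{\kappa}$ and the fact that $u$ does not vanish in a neighborhood of the origin (so $|u(x_{0},\kappa_{t}(y))| \geq c_{u} > 0$), we obtain
\[
\Bigl|\tfrac{d}{dt}\psi^{X}(x_{0},\kappa_{t}(y))\Bigr| \leq \frac{8^{\nu}\cdot 6 \cdot 2^{\nu+1}}{c_{u}\,s^{\nu}(x_{0})},
\]
and integrating from $t=0$ to $t=1$ gives the desired estimate with $C' = 8^{\nu}\cdot 6 \cdot 2^{\nu+1}/c_{u}$. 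The bound is manifestly independent of $\kappa$ and holds on all of $B(0,s(x_{0})\tau) \setminus (Sing X)(x_{0})$; since the removable singularities at $(Sing X)(x_{0})$ are bounded, $\psi^{X}\circ\kappa - \psi^{X}$ extends holomorphically to $B(0,s(x_{0})\tau)$.

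The main obstacle is the factorization step and the quantitative non-vanishing of $f(\kappa_{t})$: one must check that the bracketed factor really is bounded away from zero on the whole disk $B(0,s(x_{0})\tau)$ rather than just away from the singular set. This is where the slow decaying hypothesis on $s$ is implicitly used (through lemma \ref{lem:modbdd}, which controls $J_{\kappa}$ by $s^{-\nu}$), and the cancellation $(n_{j}-1)+\sum_{k \neq j} n_{k} = \nu$ is essential so that the product $J_{\kappa}\cdot(y-\gamma_{j})^{n_{j}-1}\prod_{k\neq j}(y-\gamma_{k})^{n_{k}}$ becomes a pure power of $\tau$ and can be made arbitrarily small by choice of $\tau$, uniformly in $x_{0}$.
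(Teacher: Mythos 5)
Your argument reproduces the paper's proof in substance: the same homotopy $\kappa_{t}=y+t(\kappa(y)-y)$, the same factorization $\kappa-y=J_{\kappa}f$ from lemma \ref{lem:modbdd}, the same factor-by-factor control of $f(y)/f(\kappa_{t}(y))$ via $1+t(\kappa(y)-y)/(y-\gamma_{j})$ (your explicit form $1+tJ_{\kappa}(y)(y-\gamma_{j})^{n_{j}-1}\prod_{k\neq j}(y-\gamma_{k})^{n_{k}}$ is the same quantity), and the same integration in $t$. The only imprecision is the appeal to lemma \ref{lem:quitR} for each $\kappa_{t}$: since $\kappa_{t}=(1-t)\mathrm{Id}+t\kappa$ need not be injective, that lemma does not apply to $\kappa_{t}$ directly, but the needed range bound $\kappa_{t}(B(0,s(x_{0})\tau))\subset B(0,2s(x_{0})\tau)$ follows from one application of lemma \ref{lem:quitR} to $\kappa$ itself together with convexity of the ball, which is what the paper uses.
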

\begin{proof}
Denote
$X(y)=u(x,y) (y-\gamma_{1}(x))^{n_{1}} \hdots (y - \gamma_{N}(x))^{n_{N}}$
where $u \in {\mathbb C} \{x,y\}$ is a unit.
Denote $\nu=\nu({\mathcal E}_{0})$. Consider $\tau \in (0,1/4)$.
The function $\kappa_{t}(y)=y + t(\kappa(y)-y)$ satisfies
$\kappa_{t}(B(0,s(x_{0}) \tau)) \subset B(0,3 s(x_{0}) \tau)$
for any $t \in [0,1]$ by lemma \ref{lem:quitR}.
By lemma \ref{lem:modbdd} we have
\[ \left|{ \frac{\partial \kappa_{t}}{\partial t}(y) }\right| = |\kappa(y)-y|
\leq \frac{(8^{\nu} 6)/s^{\nu}(x_{0})}{|u \circ \kappa_{t}(y)|} |X(y) \circ \kappa_{t}(y)|
\left|{ \frac{\prod_{j=1}^{N} (y-\gamma_{j}(x_{0}))^{n_{j}}}
{\prod_{j=1}^{N} (y-\gamma_{j}(x_{0}))^{n_{j}} \circ \kappa_{t}(y)} }\right| \]
for all $y \in B(0,s(x_{0}) \tau)$ and $t \in [0,1]$. We have
\[ \frac{(y-\gamma_{j}(x_{0})) \circ \kappa_{t}(y)}{y -\gamma_{j}(x_{0})} =
1 + t \frac{\kappa(y)-y}{y-\gamma_{j}(x_{0})}. \]
Lemma \ref{lem:modbdd} implies
\[ \left|{ \frac{\kappa(y)-y}{y-\gamma_{j}(x_{0})} }\right| \leq  \frac{8^{\nu} 6}{s^{\nu}(x_{0})}
(2 s(x_{0}) \tau)^{\nu} = 16^{\nu} 6 \tau^{\nu} \ \ \forall y \in B(0,s(x_{0})\tau). \]
%
By considering a smaller $\tau \in {\mathbb R}^{+}$ we can suppose
$16^{\nu} 6 \tau^{\nu} < 1/2$. We obtain
\[ \left|{ \frac{\partial \kappa_{t}}{\partial t}(y) }\right| \leq
\frac{1}{s^{\nu}(x_{0})} \frac{(8^{\nu} 6)2}{|u(0,0)|} 2^{\nu+1} |X(y) \circ \kappa_{t}(y)| \]
for all $y \in B(0,s(x_{0}) \tau)$ and $t \in [0,1]$.
Denote $C'=(2^{4 \nu +3}  3)/|u(0,0)|$. We deduce that
\[ |\psi^{X} \circ \kappa_{t} - \psi^{X}|(y)  =
\left|{ \int_{0}^{t} \frac{\partial (\psi^{X} \circ \kappa_{\upsilon}(y))}{\partial \upsilon} d \upsilon }\right| =
\left|{ \int_{0}^{t} \frac{\partial \psi^{X}}{\partial y} \circ \kappa_{\upsilon}(y)
\frac{\partial \kappa_{\upsilon}(y)}{\partial \upsilon} d \upsilon }\right| \leq \frac{C' t}{s^{\nu}(x_{0})} \]
for all $y \in B(0,s(x_{0}) \tau) \setminus (Sing X)(x_{0})$ and $t \in [0,1]$. By Riemann's theorem
$\psi^{X} \circ \kappa -\psi^{X}$ belongs to ${\mathcal O}(B(0,s(x_{0}) \tau))$
and $|\psi^{X} \circ \kappa - \psi^{X}| \leq C'/s^{\nu}(x_{0})$.
\end{proof}
Let $\varphi_{1}, \varphi_{2} \in \diff{tp1}{2}$ with common convergent
normal form. Given a formal conjugation $\hat{\eta} \in \diffh{p}{2}$
we express the condition $\hat{\eta} \in \diff{}{2}$ in terms of the
extensions of Ecalle-Voronin invariants
$\{\xi_{j,\Lambda,\lambda}^{\varphi_{1}}\}_{(j,\Lambda,\lambda) \in
{\mathcal D}(\varphi_{1}) \times {\mathcal M} \times {\mathbb S}^{1}}$
and
$\{\xi_{j,\Lambda,\lambda}^{\varphi_{2}}\}_{(j,\Lambda,\lambda) \in
{\mathcal D}(\varphi_{2}) \times {\mathcal M} \times {\mathbb S}^{1}}$.
\begin{pro}
\label{pro:conimeq}
  Let $\varphi_{1}, \varphi_{2} \in \diff{tp1}{2}$ with common convergent
normal form ${\rm exp}(X)$. Let $s$ be a slow decaying function.
Fix $\Lambda=(\lambda_{1}, \hdots, \lambda_{\tilde{q}}) \in {\mathcal M}$
and $\lambda \in {\mathbb S}^{1}$.
Consider $x_{0} \in (0,\delta) I_{\Lambda}^{\lambda}$ and
a $s(x_{0})$-mapping $\kappa$ conjugating $(\varphi_{1})_{|x=x_{0}}$ and
$(\varphi_{2})_{|x=x_{0}}$  with $\kappa_{|(Sing X)(x_{0})} \cong Id$. Then there exists
$c(x_{0}) \in {\mathbb C}$ such that
\[ \xi_{j,\Lambda,\lambda}^{\varphi_{2}} (x_{0},z) =
(z + c(x_{0})) \circ \xi_{j,\Lambda,\lambda}^{\varphi_{1}} (x_{0},z)
\circ (z-c(x_{0}))
\ \ \forall j \in {\mathcal D}(\varphi_{1}) \]
and $|c(x_{0})| \leq C'/s^{\nu({\mathcal E}_{0})}(x_{0})$.
The constant $C'$ depends on $\Lambda$ and $\lambda$ but it does not depend on $x_{0}$.
\end{pro}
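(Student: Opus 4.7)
The plan is to show that the $s(x_{0})$-mapping $\kappa$ intertwines $\psi_{j,\Lambda,\lambda}^{\varphi_{1}}(x_{0},\cdot)$ and $\psi_{j,\Lambda,\lambda}^{\varphi_{2}}(x_{0},\cdot)$ up to a single additive constant $c(x_{0})$, independent of $j \in \mathcal{D}(\varphi_{1})$, satisfying the stated bound.

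First I would verify geometric compatibility. By lemma \ref{lem:quitR} with $\tau=1/4$, we have $\kappa(B(0,s(x_{0})/4)) \subset B(0,s(x_{0})/2)$, and lemma \ref{lem:modbdd} shows that $\kappa - y$ vanishes to high order on $(Sing X)(x_{0})$ with $O(1/s^{\nu(\mathcal{E}_{0})}(x_{0}))$ control on the coefficient. Combining with proposition \ref{pro:widcon}, which guarantees that the subregion $H_{s\tau,j}(x_{0})$ contains fundamental domains of $\varphi_{1}$ of width of order $1/s^{\nu(\mathcal{E}_{0})}(x_{0})$, I can choose $\tau \in (0,1/4)$ small enough that $\kappa(H_{s\tau,j}(x_{0})) \subset H_{\Lambda,j}^{\lambda}(x_{0})$ for all $j$.

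Next, I would define $\tilde{\psi}_{j}(y) := \psi_{j,\Lambda,\lambda}^{\varphi_{2}}(x_{0},\kappa(y))$ on $H_{s\tau,j}(x_{0})$. Since $\kappa \circ (\varphi_{1})_{|x=x_{0}} = (\varphi_{2})_{|x=x_{0}} \circ \kappa$, one has $\tilde{\psi}_{j} \circ \varphi_{1} = \tilde{\psi}_{j} + 1$, so $\tilde{\psi}_{j}$ is a Fatou coordinate of $\varphi_{1}$; iterating via $\varphi_{1}$ extends it holomorphically to $H_{\Lambda,j}^{\lambda}(x_{0})$. To control the discrepancy with $\psi_{j,\Lambda,\lambda}^{\varphi_{1}}$, I split
\begin{equation*}
\tilde{\psi}_{j} - \psi_{j,\Lambda,\lambda}^{\varphi_{1}} = \bigl(\psi_{j,\Lambda,\lambda}^{\varphi_{2}} - \psi^{X}\bigr) \circ \kappa + \bigl(\psi^{X} \circ \kappa - \psi^{X}\bigr) + \bigl(\psi^{X} - \psi_{j,\Lambda,\lambda}^{\varphi_{1}}\bigr).
\end{equation*}
The first and third summands are bounded by proposition \ref{pro:bddcon}, while lemma \ref{lem:bddfldi} bounds the middle one by $C'/s^{\nu(\mathcal{E}_{0})}(x_{0})$. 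Thus $\tilde{\psi}_{j} - \psi_{j,\Lambda,\lambda}^{\varphi_{1}}$ is a bounded, $\varphi_{1}$-invariant holomorphic function on the interior of $H_{\Lambda,j}^{\lambda}(x_{0})$. Since the orbit space of $\varphi_{1}$ there is biholomorphic to $\mathbb{C}^{*}$ (cf.\ corollary \ref{cor:Lavvf}), the difference descends to a bounded holomorphic function on $\mathbb{C}^{*}$ and is therefore a constant $c_{j}(x_{0})$. The bounded additive terms are absorbed into $C'$ using that any slow decaying function is bounded, so $1/s^{\nu(\mathcal{E}_{0})}(x_{0})$ is bounded below; this yields $|c_{j}(x_{0})| \leq C'/s^{\nu(\mathcal{E}_{0})}(x_{0})$.

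Finally, substituting $\psi_{j,\Lambda,\lambda}^{\varphi_{2}} \circ \kappa = \psi_{j,\Lambda,\lambda}^{\varphi_{1}} + c_{j}(x_{0})$ into the definition of $\xi_{j,\Lambda,\lambda}^{\varphi_{2}}$ gives
\begin{equation*}
\xi_{j,\Lambda,\lambda}^{\varphi_{2}}(x_{0},z) = \xi_{j,\Lambda,\lambda}^{\varphi_{1}}(x_{0}, z - c_{j}(x_{0})) + c_{j+1}(x_{0}).
\end{equation*}
Comparing with the Fourier expansion $\xi_{j,\Lambda,\lambda}^{\varphi}(x_{0},z) = z + \zeta_{\varphi}(x_{0}) + \sum_{l \geq 1} a_{j,\Lambda,\lambda,l}^{\varphi}(x_{0}) e^{-2\pi i s l z}$ of remark \ref{rem:chacha} and matching the $z$-independent constant term, I get $c_{j+1}(x_{0}) - c_{j}(x_{0}) = \zeta_{\varphi_{2}}(x_{0}) - \zeta_{\varphi_{1}}(x_{0})$. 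The common normal form $\exp(X)$ forces $\zeta_{\varphi_{1}} \equiv \zeta_{\varphi_{2}}$ (both depend only on residues of $X$), so $c_{j+1}(x_{0}) = c_{j}(x_{0}) =: c(x_{0})$, giving the conjugation formula. The main obstacle will be the geometric step: since $\kappa$ is defined only on a single fibre with no regularity in $x$, verifying $\kappa(H_{s\tau,j}(x_{0})) \subset H_{\Lambda,j}^{\lambda}(x_{0})$ requires balancing quantitatively the closeness of $\kappa$ to the identity from lemma \ref{lem:modbdd} against the width estimates of proposition \ref{pro:widcon}, at the correct rate in $s(x_{0})$.
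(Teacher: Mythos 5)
Your proposal is essentially correct and follows the same skeleton as the paper's proof for the geometric reduction and the bound: both reduce to the case $s < \epsilon$, both invoke lemma \ref{lem:quitR}/\ref{lem:modbdd} for control of $\kappa$, lemma \ref{lem:bddfldi} for the $C'/s^{\nu}(x_0)$ estimate of $\psi^X \circ \kappa - \psi^X$, and proposition \ref{pro:widcon} to force $\kappa(H_{s\tau,j}(x_0)) \subset H_{\Lambda,j}^{\lambda}(x_0)$. (Your gloss of prop.\ \ref{pro:widcon} as "contains fundamental domains of a given width" is imprecise — it controls the width of $H \setminus H_{s\tau}$ — but the inference you draw from it is the right one, and it is exactly the one the paper draws.)

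Where you genuinely diverge from the paper is the derivation of the $j$-independence of the constant $c(x_0)$. The paper obtains this by citing proposition 10.1 of \cite{JR:mod}: working in the homogeneous \emph{privileged} Fatou system $\{\dot{\psi}_{j,\Lambda,\lambda}^{\varphi_l}\}$, normalized at the fixed-point curve $y=\gamma(x)$, one gets immediately that $\dot{\psi}_{j}^{\varphi_2}\circ\kappa - \dot{\psi}_{j}^{\varphi_1}$ equals the $j$-independent quantity $(\psi^X\circ\kappa - \psi^X)(x_0,\gamma(x_0))$; the constant $c(x_0)$ then comes with a closed expression. You instead first show (via the bounded-$\varphi_1$-invariant-holomorphic-function argument on the orbit space $\cong \mathbb{C}^*$) that $\psi_{j,\Lambda,\lambda}^{\varphi_2}\circ\kappa - \psi_{j,\Lambda,\lambda}^{\varphi_1}$ is some constant $c_j(x_0)$ for each $j$, and then identify the constant Fourier coefficient of $\xi_j$ as $\zeta_\varphi(x_0)$ (remark \ref{rem:chacha}) and use $\zeta_{\varphi_1}\equiv\zeta_{\varphi_2}$ (which holds because $\zeta_\varphi$ depends only on the common normal form $X$) to derive $c_{j+1}(x_0)=c_j(x_0)$. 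This gives the same conclusion by a self-contained argument inside the $\psi$-system instead of passing through the privileged $\dot{\psi}$-system and an external citation; what it does not give you, but the paper's route does, is the explicit formula $\tilde{c}(x_0) = (\psi^X\circ\kappa - \psi^X)(x_0,\gamma(x_0))$, which is useful later in theorem \ref{teo:mulcon}. Both routes are correct; yours trades the explicit expression for a cleaner conceptual argument for the $j$-independence.
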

\begin{proof}
Denote $\nu=\nu({\mathcal E}_{0})$.
Since $s$ is bounded we can suppose $s < \epsilon$ by replacing $s$ with $s \tau_{0}$ for some
$\tau_{0} \in (0,1)$.
By lemma \ref{lem:bddfldi} there exist $C_{1}' \in {\mathbb R}^{+}$ and $\tau_{1} \in (0,1)$
such that $\sup_{B(0,s(x_{0}) \tau_{1})} |\psi^{X} \circ \kappa - \psi^{X}| \leq C_{1}'/s^{\nu}(x_{0})$.

Let $j \in {\mathcal D}(\varphi_{1})$ and $\tau \in (0,1)$. We define $H_{s \tau, j}$ the element of
$Reg(s \tau, \aleph_{\Lambda,\lambda} X, I_{\Lambda}^{\lambda})$ contained in $H_{\Lambda,j}^{\lambda}$
(see def. \ref{def:hjlam}).
We obtain that $\kappa(H_{s \tau, j}(x_{0})) \subset H_{\Lambda,j}^{\lambda}(x_{0})$
for $\tau \in {\mathbb R}^{+}$  small enough and any $j \in {\mathcal D}(\varphi_{1})$
by proposition \ref{pro:widcon}.

Consider an irreducible component $y=\gamma(x)$ of $Sing X$.
We are in the situation of prop. 10.1 in \cite{JR:mod}. By considering
homogeneous privileged (with respect to $y=\gamma(x)$) systems of Fatou coordinates
\[ {\{ \dot{\psi}_{j',\Lambda',\lambda'}^{\varphi_{1}} \}}_{(j',\Lambda',\lambda') \in
{\mathcal D}(\varphi_{1}) \times {\mathcal M} \times {\mathbb S}^{1}} \ \ {\rm and} \ \
{\{ \dot{\psi}_{j',\Lambda',\lambda'}^{\varphi_{2}} \}}_{(j',\Lambda',\lambda') \in
{\mathcal D}(\varphi_{2}) \times {\mathcal M} \times {\mathbb S}^{1}} \]
of $\varphi_{1}$ and $\varphi_{2}$ respectively we obtain
\begin{equation}
\label{equ:conjmm}
\dot{\psi}_{j,\Lambda,\lambda}^{\varphi_{2}} \circ \kappa -
\dot{\psi}_{j,\Lambda,\lambda}^{\varphi_{1}} = \tilde{c}(x_{0}) \ {\rm in} \ H_{s \tau, j}(x_{0})
\ \ \forall j \in {\mathcal D}(\varphi_{1})
\end{equation}
where $\tilde{c} (x_{0}) \equiv (\psi^{X} \circ \kappa - \psi^{X})(x_{0},\gamma(x_{0}))$
(proposition 10.1 of \cite{JR:mod}). We obtain the inequality
$| \tilde{c}(x_{0})| \leq  C_{1}'/s^{\nu}(x_{0})$.
Let $c_{\Lambda,\lambda}^{\varphi_{l}}: [0,\delta) I_{\Lambda}^{\lambda} \to {\mathbb C}$
be the function defined by $c_{\Lambda,\lambda}^{\varphi_{l}} =
{\psi}_{j,\Lambda,\lambda}^{\varphi_{l}} - \dot{\psi}_{j,\Lambda,\lambda}^{\varphi_{l}}$
for $l \in \{1,2\}$ (see remark \ref{rem:chacha}).
%
%
Equation (\ref{equ:conjmm}) leads us to
\[ {\psi}_{j,\Lambda,\lambda}^{\varphi_{2}} \circ \kappa -
{\psi}_{j,\Lambda,\lambda}^{\varphi_{1}} = c(x_{0}) \ {\rm in} \ H_{s \tau, j}(x_{0})
\ \ \forall j \in {\mathcal D}(\varphi_{1}) \]
by defining
$c(x_{0})=\tilde{c} (x_{0}) +
c_{\Lambda,\lambda}^{\varphi_{2}}(x_{0}) - c_{\Lambda,\lambda}^{\varphi_{1}}(x_{0})$.
We obtain (see def. \ref{def:msfatcoor})
\[ \xi_{j,\Lambda,\lambda}^{\varphi_{2}} (x_{0},z) =
(z + c(x_{0})) \circ \xi_{j,\Lambda,\lambda}^{\varphi_{1}} (x_{0},z)
\circ (z-c(x_{0})) \ \ \forall j \in {\mathcal D}(\varphi_{1}) . \]
Thus there exists $C' \in {\mathbb R}^{+}$ independent of
$x_{0}$ such that
$|c(x_{0})| \leq C'/s^{\nu}(x_{0})$.
%
\end{proof}
\begin{pro}
\label{pro:iinvic} \cite{JR:mod}
  Let $\varphi_{1}, \varphi_{2} \in \diff{tp1}{2}$ with common convergent
normal form ${\rm exp}(X)$.
Fix $\Lambda=(\lambda_{1}, \hdots, \lambda_{\tilde{q}}) \in {\mathcal M}$
and $\lambda \in {\mathbb S}^{1}$. Fix a constant $M>0$. Suppose that
\[ \xi_{j,\Lambda,\lambda}^{\varphi_{2}} (x_{0},z) =
(z + c(x_{0})) \circ \xi_{j,\Lambda,\lambda}^{\varphi_{1}} (x_{0},z)
\circ (z-c(x_{0}))
\ \ \forall j \in {\mathcal D}(\varphi_{1}) \]
for some  $x_{0} \in [0,\delta) I_{\Lambda}^{\lambda}$ and $c(x_{0}) \in B(0,M)$.
Then there exists a s-mapping $\kappa$ such that
$\kappa \circ (\varphi_{1})_{|x=x_{0}} =(\varphi_{2})_{|x=x_{0}} \circ \kappa$
and $\kappa_{|(Sing X)(x_{0})} \cong Id$.
The constant $s \in {\mathbb R}^{+}$ does not depend on $x_{0}$.
\end{pro}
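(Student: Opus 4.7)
The strategy is to invert the construction behind Proposition \ref{pro:conimeq}: starting from the hypothesis on the sectorial Ecalle-Voronin invariants, glue together local conjugations defined on each region $H_{\Lambda,j}^{\lambda}(x_{0})$ via the Fatou coordinates, and then check that the resulting map extends holomorphically across the fixed points and is defined on a disk of radius independent of $x_{0}$.

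The plan is as follows. First, for each $j \in {\mathcal D}(\varphi_{1})$, define a local conjugation on $H_{\Lambda,j}^{\lambda}(x_{0})$ by
\[ \kappa_{j}(y) = \bigl(\psi_{j,\Lambda,\lambda}^{\varphi_{2}}(x_{0},\cdot)\bigr)^{-1}\!\bigl(\psi_{j,\Lambda,\lambda}^{\varphi_{1}}(x_{0},y) - c(x_{0})\bigr) . \]
By construction $\kappa_{j}$ conjugates $(\varphi_{1})_{|x=x_{0}}$ to $(\varphi_{2})_{|x=x_{0}}$ on its domain, because the Fatou-coordinate equation $\psi \circ \varphi = \psi + 1$ is preserved under the additive shift by $c(x_{0})$. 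Injectivity of $(x_{0},\psi_{j,\Lambda,\lambda}^{\varphi_{l}})$ (Prop.~\ref{pro:Fatpar}) ensures that $\kappa_{j}$ is holomorphic and univalent on the interior of $H_{\Lambda,j}^{\lambda}(x_{0})$.

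Second, the identity $\xi_{j,\Lambda,\lambda}^{\varphi_{2}} = (z+c(x_{0})) \circ \xi_{j,\Lambda,\lambda}^{\varphi_{1}} \circ (z-c(x_{0}))$ translates, upon unfolding the definition $\xi_{j} = \psi_{j+1} \circ \psi_{j}^{-1}$, into the coincidence $\kappa_{j} \equiv \kappa_{j+1}$ on the overlap of $H_{\Lambda,j}^{\lambda}(x_{0})$ and $H_{\Lambda,j+1}^{\lambda}(x_{0})$. This overlap is a neighborhood of the relevant singular point minus the singular point itself. Doing this for all consecutive $j$ yields a globally defined holomorphic $\kappa$ on $\bigcup_{j} H_{\Lambda,j}^{\lambda}(x_{0})$, which exhausts a pointed neighborhood of $\bigl(Sing X\bigr)(x_{0})$ minus the fixed points themselves.

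Third, extend $\kappa$ across each point $(x_{0},\gamma_{k}(x_{0})) \in (Sing X)(x_{0})$. Near such a point the restrictions $(\varphi_{l})_{|x=x_{0}}$ are one-dimensional resonant diffeomorphisms sharing the same convergent normal form ${\rm exp}(X)_{|x=x_{0}}$, and the gluing $\kappa$ constructed above on the punctured neighborhood realizes the prescribed Ecalle-Voronin conjugacy; by the classical Ecalle-Voronin theorem for one-dimensional tangent-to-the-identity (respectively resonant) germs, $\kappa$ extends holomorphically to the fixed point. The normalization condition $\kappa_{|(Sing X)(x_{0})} \cong Id$ follows from the definition of the homogeneous privileged system of Fatou coordinates (relative to $y=\gamma_{1}(x)$) combined with the normalization of $\xi_{j,\Lambda,\lambda}^{\varphi_{l}}$ via the subtraction of $\sum_{k<j} \ddot{a}_{k,\Lambda,\lambda,0}^{\varphi} - (j-1)\zeta_{\varphi}$ in Definition \ref{def:msfatcoor}, which forces the derivative of $\kappa$ at each $\gamma_{k}(x_{0})$ to be $1$ when $N(X) > 1$.

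The main obstacle, and the reason the conclusion gives a uniform $s$ independent of $x_{0}$, is the domain-size control. The natural domain of $\kappa$ is a union of regions whose size in the $y$-direction could in principle shrink as $x_{0} \to 0$. To obtain a uniform lower bound $s > 0$, I would combine two ingredients: on the one hand, Proposition \ref{pro:widreg} and Proposition \ref{pro:widcon} give lower estimates on the $\Re(X)$-width of the regions $H_{\Lambda,j}^{\lambda}(x_{0})$ independent of $x_{0}$; on the other hand, the assumption $c(x_{0}) \in B(0,M)$ with $M$ fixed bounds the translation needed in the definition of $\kappa_{j}$, so that the image $\kappa_{j}(H_{\Lambda,j}^{\lambda}(x_{0}))$ is contained in a uniformly large subregion of $H_{\Lambda,j}^{\lambda}(x_{0})$. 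Together these show that $\kappa$ is defined on a disk $B(0,s)$ with $s$ depending only on $X$, $\varphi_{1}$, $\varphi_{2}$, $\Lambda$, $\lambda$ and $M$, but not on the particular parameter $x_{0}$. The statement that this proof is contained in \cite{JR:mod} suggests that the quantitative estimates required for this uniformity step are already encoded in the asymptotic behavior of the Fatou coordinates relative to $\psi_{L_{j}}^{X}$ described by Proposition \ref{pro:bddcon}.
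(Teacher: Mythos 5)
Your plan --- build sectorial conjugations from shifted Fatou coordinates, glue them using the hypothesis on the $\xi_j$'s, remove the singularity across the fixed points, and control the radius uniformly --- is the right strategy for this realization statement, and it is essentially what happens in \cite{JR:mod}. Two details, however, are wrong as written and would break the argument.

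The sign in the definition of $\kappa_j$ must be $+c(x_0)$, not $-c(x_0)$. With $\psi_{j,\Lambda,\lambda}^{\varphi_2}(x_0,\kappa_j(y)) = \psi_{j,\Lambda,\lambda}^{\varphi_1}(x_0,y) + c(x_0)$, the hypothesis $\xi_j^{\varphi_2}(w)=\xi_j^{\varphi_1}(w-c)+c$ gives
\[
\psi_{j+1}^{\varphi_2}(\kappa_j(y)) = \xi_j^{\varphi_2}\bigl(\psi_j^{\varphi_1}(y) + c\bigr)
= \xi_j^{\varphi_1}\bigl(\psi_j^{\varphi_1}(y)\bigr) + c = \psi_{j+1}^{\varphi_1}(y) + c,
\]
so $\kappa_j\equiv\kappa_{j+1}$. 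With $-c$ the same computation yields $\xi_j^{\varphi_1}(\psi_j^{\varphi_1}(y)-2c)+c$, which is not $\psi_{j+1}^{\varphi_1}(y)-c$, and the pieces fail to match. The $+$ convention is also the one that comes out of the proof of Proposition \ref{pro:conimeq}, where $\psi_j^{\varphi_2}\circ\kappa - \psi_j^{\varphi_1} = c(x_0)$.

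Second, the regions $H_{\Lambda,j}^{\lambda}(x_0)$ and $H_{\Lambda,j+1}^{\lambda}(x_0)$ are disjoint open sets, separated by trajectories of $\Re(\aleph_{\Lambda,\lambda}^* X)$; their set-theoretic intersection is empty, so it makes no sense to speak of "the coincidence $\kappa_j\equiv\kappa_{j+1}$ on the overlap of the two regions." The consistency is instead verified on the domain where $\xi_j$ is defined, namely the strip near the common singular point on which $\psi_{j+1,\Lambda,\lambda}^{\varphi_l}$ has been continued by iteration into $B_{X,j,\lambda}(x_0)$ (proof of Lemma \ref{lem:chacha}). On that strip both $\kappa_j$ and the analytic continuation of $\kappa_{j+1}$ are defined and the computation above shows they agree.

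The remaining ingredients you list are the correct ones: $|c(x_0)|<M$ together with the boundedness of $\psi_j^{\varphi_l}-\psi_{L_j}^{X}$ (Proposition \ref{pro:bddcon}) bounds the shift in the $\psi_{L_j}^{X}$ chart; Propositions \ref{pro:widreg} and \ref{pro:widcon}, applied with a constant function $s\equiv s_0$, produce a uniform ball $B(0,s_0)$; and the extension of $\kappa$ across the fixed points is a removable singularity for a bounded holomorphic map --- one should not invoke the Ecalle--Voronin classification theorem as an outside input, since this proposition \emph{is} its realization step.
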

\subsection{Theorem of analytic conjugation}
In this subsection we prove an analogue (theorem \ref{teo:mulcon}) of theorem
10.1 in \cite{JR:mod} for the multi-summable setting. Roughly speaking
two elements in $\diff{tp1}{2}$ with common normal form are analytically
conjugated if and only if their homogeneous multi-summable systems of
extensions of Ecalle-Voronin invariants
coincide up to the action of a multi-summable family of
transformations of the form $(x,z+c(x))$.
\begin{defi}
\label{def:norcon}
Let $\varphi_{1}, \varphi_{2} \in \diff{p1}{2}$.
We denote $\varphi_{1} \sim \varphi_{2}$ if there exists
$\sigma \in \diff{}{2}$ conjugating $\varphi_{1}$ and $\varphi_{2}$
such that $\sigma_{|Fix (\varphi_{1})} \equiv Id$.
\end{defi}
\begin{teo}
\cite{JR:mod}
  Let $\varphi_{1}, \varphi_{2} \in \diff{tp1}{2}$ with common convergent
normal form ${\rm exp}(X)$. Suppose $N(X)=1$.
Then  $\varphi_{1} \sim \varphi_{2}$ if and only if there exist
$c \in {\mathbb C}\{x\}$ and $k \in {\mathbb Z}/(\nu({\mathcal E}_{0}) {\mathbb Z})$
such that
\[ \xi_{j}^{\varphi_{2}} (x,z) \equiv
\xi_{j+2k}^{\varphi_{1}} (x,z-c(x))   + c(x)  \]
for any $j \in {\mathcal D}(\varphi_{1})$.
\end{teo}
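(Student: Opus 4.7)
The plan is to prove the two directions separately, using the Fatou coordinate machinery developed in the paper.

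For necessity, suppose $\sigma \in \diff{}{2}$ conjugates $\varphi_{1}$ and $\varphi_{2}$ with $\sigma_{|Fix(\varphi_{1})} \equiv Id$. First I would observe that $\sigma$ is a parametrized diffeomorphism, so $\sigma_{|x=0}$ is a local diffeomorphism of $(\mathbb{C},0)$ fixing the origin and conjugating the tangent-to-identity germs $(\varphi_{1})_{|x=0}$ and $(\varphi_{2})_{|x=0}$. Such a conjugation permutes the $2\nu({\mathcal E}_{0})$ petals of $(\varphi_{1})_{|x=0}$ by a cyclic shift; since it must respect the attracting/repelling character of each petal (which alternates), the shift must be even, giving the index $2k \in \mathbb{Z}/(\nu({\mathcal E}_{0})\mathbb{Z})$. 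Now since ${\rm exp}(X)$ is a common convergent normal form and $N(X)=1$, the Fatou coordinates $\psi_{j}^{\varphi_{1}}$ and $\psi_{j+2k}^{\varphi_{2}}$ are genuinely holomorphic (the unstable sets ${\mathcal U}_{X}^{j}$ are empty, so there is no multisummability here). The composition $\psi_{j+2k}^{\varphi_{2}} \circ \sigma$ is another Fatou coordinate of $\varphi_{1}$ in $H_{j}$, so it differs from $\psi_{j}^{\varphi_{1}}$ by a quantity that is constant along orbits of $\varphi_{1}$; as a holomorphic $\varphi_{1}$-invariant it depends only on $x$, yielding a function $c(x) \in \mathcal{O}(B(0,\delta))$ with $\psi_{j+2k}^{\varphi_{2}} \circ \sigma = \psi_{j}^{\varphi_{1}} + c(x)$. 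The homogeneity of the privileged systems (def.~\ref{def:prifat}, here coinciding with the summable systems of def.~\ref{def:msfatcoor}) ensures that the same $c(x)$ and the same $k$ work for every $j$. Translating in the $z$-variable then gives the stated relation on the $\xi_{j}^{\varphi_{l}}$.

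For sufficiency, assume the relation holds. Since ${\mathcal U}_{X}^{j} = \emptyset$ the Fatou coordinates $\psi_{j}^{\varphi_{1}}, \psi_{j+2k}^{\varphi_{2}}$ are analytic, so I would define in each region $H_{j}$ the mapping
\[ \sigma_{j}(x,y) = (x,\psi_{j+2k}^{\varphi_{2}})^{-1}\bigl(x,\psi_{j}^{\varphi_{1}}(x,y) + c(x)\bigr). \]
By construction $\sigma_{j}$ is holomorphic in the interior of $H_{j}$ and conjugates $\varphi_{1}$ and $\varphi_{2}$ there. The relation $\xi_{j}^{\varphi_{2}}(x,z) = \xi_{j+2k}^{\varphi_{1}}(x,z-c(x)) + c(x)$ is exactly what is needed so that $\sigma_{j}$ and $\sigma_{j+1}$ agree on the asymptotic sector where their domains overlap (in the coordinate system $(x,e^{2\pi i \psi_{j}^{\varphi_{1}}})$ the transition from $H_{j}$ to $H_{j+1}$ is encoded precisely by the Ecalle--Voronin invariants). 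The overlaps cover a punctured neighborhood of $Fix(\varphi_{1}) \cup \{x=0\}$, so the $\sigma_{j}$ assemble into a holomorphic mapping $\sigma$ defined on some $(B(0,\delta) \setminus \{0\}) \times B(0,\epsilon)$ minus $Fix(\varphi_{1})$.

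It remains to extend $\sigma$ across $Fix(\varphi_{1})$ and across $\{x=0\}$. For the extension across $Fix(\varphi_{1})$ I would use that $\psi_{j}^{\varphi_{l}} - \psi_{L_{j}}^{X}$ is continuous up to $Fix(\varphi_{l})$ (proposition \ref{pro:bddcon}) and that the first non-zero term of the Lavaurs vector fields coincides with that of $X$ along $Fix$, which forces $\sigma - Id$ to vanish to order $(y-\gamma(x))^{n_{1}}$ on $Fix(\varphi_{1})$; Riemann's removable singularity theorem then extends $\sigma$ across the fixed curve. For the extension across $x=0$, I would invoke proposition \ref{pro:iinvic} to obtain, for each $x_{0}$ close to $0$, an $s$-mapping $\kappa_{x_{0}}$ conjugating $(\varphi_{1})_{|x=x_{0}}$ and $(\varphi_{2})_{|x=x_{0}}$ with $\kappa_{x_{0}|(Sing X)(x_{0})} \cong Id$, with $s>0$ uniform in $x_{0}$ (using that $|c(x_{0})|$ is bounded near $0$). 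Since $\sigma_{|x=x_{0}}$ and $\kappa_{x_{0}}$ both realize the same orbit equivalence pinned down by their value on $Fix$, the uniqueness part of proposition \ref{pro:iinvic} gives $\sigma_{|x=x_{0}} \equiv \kappa_{x_{0}}$ in a common domain; then lemma \ref{lem:modbdd} yields uniform bounds on $\sigma$ in a full neighborhood of $0$ in $y$, independent of $x_{0}$. Riemann's theorem provides the holomorphic extension across $x=0$.

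The main obstacle I expect is the matching step: checking that the local conjugations $\sigma_{j}$ patch into a single holomorphic mapping on the union of regions. This is where the combinatorial content of the assumption (the \emph{same} pair $(c(x),2k)$ working simultaneously for all $j$) is essential, and where the careful choice of the homogeneous system of Fatou coordinates from definition \ref{def:msfatcoor}, together with the relation $\sum_{k} \ddot a^{\varphi}_{k,\Lambda,\lambda,0} \equiv 2\nu({\mathcal E}_{0}) \zeta_{\varphi}$, makes the bookkeeping work out.
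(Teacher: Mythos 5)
The statement is only cited from \cite{JR:mod} in the present paper, with no proof given, so there is no in-text argument to compare against; the remark following theorem \ref{teo:mulcon} indicates the intended sufficiency route is proposition \ref{pro:iinvic} together with theorem \ref{teo:modi} (after a preliminary conjugation of $\varphi_{2}$ by an automorphism of ${\rm exp}(X)$ to absorb the shift $2k$), which is considerably cleaner than your direct patching construction. Your necessity direction is essentially sound, but you should say explicitly that what forces $\psi_{j+2k}^{\varphi_{2}}\circ\sigma - \psi_{j}^{\varphi_{1}}$ to depend on $x$ alone is its \emph{boundedness} (proposition \ref{pro:bddcon}): an arbitrary $\varphi_{1}$-invariant on $H_{j}(x_{0})$ is an arbitrary function on the orbit space $\cong {\mathbb C}^{*}$ and need not be constant.

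The sufficiency direction contains a genuine gap. You invoke a ``uniqueness part of proposition \ref{pro:iinvic}'' to identify $\sigma_{|x=x_{0}}$ with the $\kappa_{x_{0}}$ produced there, but proposition \ref{pro:iinvic} as stated is purely an existence result, and the uniqueness you need is in fact false: two injective conjugations of $(\varphi_{1})_{|x=x_{0}}$ and $(\varphi_{2})_{|x=x_{0}}$ with $\kappa_{|(Sing X)(x_{0})}\cong Id$ can still differ by a flow element ${\rm exp}(t\log((\varphi_{1})_{|x=x_{0}}))$ of the centralizer, so they need not coincide on a common disk. Hence your last step does not close. A repair would either derive bounds on $\sigma$ near $x=0$ directly from the boundedness of $\psi_{j}^{\varphi_{l}}-\psi_{L_{j}}^{X}$ (corollary \ref{cor:Lavvf}, proposition \ref{pro:bddcon}) and then invoke Riemann's extension, or else dispense with constructing $\sigma$ by patching and instead apply proposition \ref{pro:iinvic} for every $x_{0}$ followed by the Main Theorem of \cite{JR:mod} (or theorem \ref{teo:modi}), after first reducing to $k=0$ by pre-conjugating $\varphi_{2}$ with an automorphism of ${\rm exp}(X)$ exactly as is done inside the proof of theorem \ref{teo:modi}. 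Also note that with your definition of $\sigma_{j}$ the overlap identity requires the shift $j\mapsto j-2k$ rather than $j\mapsto j+2k$; this is harmless after renaming $k$, but it shows the gluing algebra was not actually checked.
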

\begin{teo}
\label{teo:mulcon}
  Let $\varphi_{1}, \varphi_{2} \in \diff{tp1}{2}$ with common convergent
normal form ${\rm exp}(X)$. Suppose $N(X)>1$. Assume $\varphi_{1} \sim \varphi_{2}$. Then there exist
a $(\tilde{e}_{1}, \hdots, \tilde{e}_{\tilde{q}})$-summable function
${\{c_{\Lambda,\lambda}(x)\}}_{(\Lambda,\lambda) \in {\mathcal M} \times {\mathbb S}^{1}}$,
where $c_{\Lambda,\lambda}$ is defined in $(0,\delta) \dot{I}_{\Lambda}^{\lambda}$,
such that
\[ \xi_{j,\Lambda,\lambda}^{\varphi_{2}} (x,z) \equiv
\xi_{j,\Lambda,\lambda}^{\varphi_{1}} (x,z-c_{\Lambda,\lambda}(x))   + c_{\Lambda,\lambda}(x)  \]
for any $j \in {\mathcal D}(\varphi_{1})$.
\end{teo}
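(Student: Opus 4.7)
The plan is to apply proposition \ref{pro:conimeq} pointwise in $x$, using the restrictions $\kappa_{x_0} := \sigma_{|x=x_0}$ of the global conjugation $\sigma$ as $s$-mappings of uniform size. Since $\sigma \in \diff{}{2}$ and $Fix (\varphi_1) = Fix (\varphi_2)$, there exists $s > 0$ such that $\kappa_{x_0}$ is an $s$-mapping for every $x_0 \in B(0,\delta)$; the hypothesis $N(X) > 1$ combined with $\sigma_{|Fix (\varphi_1)} \equiv Id$ then ensures $(\kappa_{x_0})_{|(Sing\, X)(x_0)} \cong Id$ (the derivative condition is automatic at multiplicity-$>1$ fixed points by the argument in the proof of lemma \ref{lem:modbdd}). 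Taking $s$ as the constant slow decaying function, proposition \ref{pro:conimeq} produces, for each $(\Lambda, \lambda) \in {\mathcal M} \times {\mathbb S}^{1}$ and each $x_0 \in (0,\delta) I_{\Lambda}^{\lambda}$, a uniformly bounded scalar $c_{\Lambda,\lambda}(x_0)$ witnessing the required identity
\[ \xi_{j,\Lambda,\lambda}^{\varphi_2}(x_0, z) = (z + c_{\Lambda,\lambda}(x_0)) \circ \xi_{j,\Lambda,\lambda}^{\varphi_1}(x_0, z) \circ (z - c_{\Lambda,\lambda}(x_0)) \]
for every $j \in {\mathcal D}(\varphi_1)$.

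Next, I would upgrade this pointwise relation to a global identity between Fatou coordinates. Tracing through the proof of \ref{pro:conimeq}, after passing from the privileged system to the homogeneous multi-summable system of definition \ref{def:msfatcoor}, one obtains
\[ \psi_{j,\Lambda,\lambda}^{\varphi_2} \circ \sigma - \psi_{j,\Lambda,\lambda}^{\varphi_1} \equiv c_{\Lambda,\lambda}(x) \]
on a subregion of $H_{\Lambda,j}^{\lambda}$ produced by the shrinking argument of proposition \ref{pro:widcon} (which ensures $\sigma$ maps the shrunken region into $H_{\Lambda,j}^{\lambda}$ for $\varphi_2$), simultaneously for every $j \in {\mathcal D}(\varphi_1)$. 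This confirms that $c_{\Lambda,\lambda}$ depends only on $x$, is $j$-independent, is continuous on $[0,\delta) I_{\Lambda}^{\lambda}$ and holomorphic on $(0,\delta) \dot{I}_{\Lambda}^{\lambda}$. For multi-summability I apply the criterion \ref{pro:sumbal}: fixing $j$ and subtracting the displayed identity for two pairs $(\Lambda,\lambda)$ and $(\Lambda',\lambda')$ on the common region $H_{\Lambda,\Lambda',j}^{\lambda,\lambda'}$ yields
\[ c_{\Lambda,\lambda}(x) - c_{\Lambda',\lambda'}(x) = (\psi_{j,\Lambda,\lambda}^{\varphi_2} - \psi_{j,\Lambda',\lambda'}^{\varphi_2}) \circ \sigma - (\psi_{j,\Lambda,\lambda}^{\varphi_1} - \psi_{j,\Lambda',\lambda'}^{\varphi_1}) \]
at any convenient evaluation point. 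Theorem \ref{teo:difFatflatext2} bounds each of the two bracketed differences by $O(e^{-K/|x|^{\tilde{e}_{d+1}}})$, where $d = d_{\Lambda,\Lambda'}^{\lambda,\lambda'}$, hence so is their difference; combined with the uniform boundedness of $c_{\Lambda,\lambda}$, this gives the $(\tilde{e}_{1}, \hdots, \tilde{e}_{\tilde{q}})$-summability asserted in the theorem.

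The main obstacle will be verifying domain compatibility: one must show that $\sigma$ maps a non-trivial subset of $H_{\Lambda,\Lambda',j}^{\lambda,\lambda'}$ into the intersection of the domains of $\psi_{j,\Lambda,\lambda}^{\varphi_2}$ and $\psi_{j,\Lambda',\lambda'}^{\varphi_2}$, so that all four Fatou coordinates appearing in the cocycle identity are simultaneously defined at a common point where the flatness estimates of \ref{teo:difFatflatext2} apply. This reduces, via proposition \ref{pro:widcon} and the iteration along $\varphi_2$-orbits from subsection \ref{subsec:extfatcor}, to a uniform polynomial-width degradation under $\sigma$, which in turn follows from the uniform $s$-mapping character of $\kappa_{x_0}$ established in the first paragraph.
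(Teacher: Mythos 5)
Your proposal is correct and follows essentially the same route as the paper's proof: apply proposition \ref{pro:conimeq} to the restrictions of $\sigma$ (using \ref{pro:widcon} for domain compatibility) to obtain $c_{\Lambda,\lambda}$ as a $j$-independent function of $x$, then evaluate the cocycle identity at a fixed basepoint $(0,y_0)$ in $L_{j'}^s$ and invoke theorem \ref{teo:difFatflatext2} to obtain the exponential estimates needed for multi-summability. The only minor inaccuracy is your parenthetical remark about the derivative condition at multiplicity-$>1$ points: under $N(X)>1$, definition of $\cong Id$ makes no derivative requirement at all, so $\sigma_{|Fix(\varphi_1)}\equiv Id$ alone suffices without appealing to lemma \ref{lem:modbdd}.
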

The reciprocal of the theorem is also true. In fact it is a direct consequence of
proposition \ref{pro:iinvic}  and the theorem \ref{teo:modi}.
\begin{proof}
Let $\sigma(x,y)=(x,\sigma_{2}(x,y) \in \diff{p}{2}$ be the mapping conjugating $\varphi_{1}$ and
$\varphi_{2}$ such that $\sigma_{|Fix (\varphi_{1})} \equiv Id$.
The mapping $\sigma$ is a diffeomorphism defined in a neighborhood $B(0,\delta) \times B(0,s)$
of the origin for some $s \in {\mathbb R}^{+}$.
By considering a smaller $s \in {\mathbb R}^{+}$ if necessary we obtain that
the function $\psi^{X} \circ \sigma - \psi^{X}$ is bounded in $B(0,\delta) \times B(0,s)$
(lemma \ref{lem:bddfldi}).
Fix $j' \in {\mathcal D}(\varphi_{1})$, we define $L_{j'}^{s}$ the unique element of
${\mathcal P}_{X}^{s}$ contained in $L_{j'}$.
The set $\sigma (L_{j}^{s})$ is contained in $L_{j}$ for some $s \in {\mathbb R}^{+}$ and any
$j \in {\mathcal D}(\varphi_{1})$ by proposition \ref{pro:widcon}. Consider a point $(0,y_{0}) \in L_{j'}^{s}$.

Proposition \ref{pro:conimeq} implies that
\[ c_{\Lambda,\lambda} = \psi_{j,\Lambda,\lambda}^{\varphi_{2}} \circ \sigma -  \psi_{j,\Lambda,\lambda}^{\varphi_{1}} \]
defines a function $c_{\Lambda,\lambda}: [0,\delta) I_{\Lambda}^{\lambda} \to {\mathbb C}$ of $x$.
The definition of $c_{\Lambda,\lambda}$ does not depend on $j \in {\mathcal D}(\varphi_{1})$.
Moreover $c_{\Lambda,\lambda}$ is continuous in  $[0,\delta) I_{\Lambda}^{\lambda}$ and holomorphic in
$(0,\delta) \dot{I}_{\Lambda}^{\lambda}$. We obtain
\[ \xi_{j,\Lambda,\lambda}^{\varphi_{2}} (x,z) \equiv
\xi_{j,\Lambda,\lambda}^{\varphi_{1}} (x,z-c_{\Lambda,\lambda}(x))   + c_{\Lambda,\lambda}(x)  \ \
\forall j \in {\mathcal D}(\varphi_{1}). \]
It suffices to prove that for all $\Lambda,\Lambda' \in {\mathcal M}$ and $\lambda,\lambda' \in {\mathbb S}^{1}$
we have
\[ |c_{\Lambda,\lambda} - c_{\Lambda',\lambda'}|(x) =
O \left({ e^{-K/|x|^{\tilde{e}_{d_{\Lambda,\Lambda'}^{\lambda,\lambda'}+1}}} }\right)  \]
for some $K \in {\mathbb R}^{+}$. We have
\[ (c_{\Lambda,\lambda} - c_{\Lambda',\lambda'})(x)=
(\psi_{j',\Lambda,\lambda}^{\varphi_{2}}-\psi_{j',\Lambda',\lambda'}^{\varphi_{2}})(\sigma(x,y_{0}))
-  (\psi_{j',\Lambda,\lambda}^{\varphi_{1}} - \psi_{j',\Lambda',\lambda'}^{\varphi_{1}})(x,y_{0})  \]
in a neighborhood of $x=0$ in
$(0,\delta) (\dot{I}_{\Lambda}^{\lambda} \cap \dot{I}_{\Lambda'}^{\lambda'})$.
The points $(x,y_{0})$ and $\sigma(x,y_{0})$ belong to
$H_{\Lambda,\Lambda',j'}^{\lambda,\lambda'}$ for any
$x \in (0,\delta) (\dot{I}_{\Lambda}^{\lambda} \cap \dot{I}_{\Lambda'}^{\lambda'})$
in a neighborhood of $0$.
Theorem \ref{teo:difFatflatext2} implies that
$c_{\Lambda,\lambda} - c_{\Lambda',\lambda'}$ is exponentially small of the proper order.
\end{proof}
\subsection{Isolated zeros theorem for analytic conjugacy}
\label{subsec:isozeros}
This subsection is devoted to the proof of theorem \ref{teo:modi}.
Let us notice that theorem \ref{teo:modi} stands by itself; it
makes no reference to Fatou coordinates or other multi-summable objects
in the paper.
\begin{defi}
Let $\varphi \in \diff{p1}{2}$. Consider a convergent normal form ${\rm exp}(X)$ of
$\varphi$. Denote $X=g(x,y) \partial / \partial y$.
We define $\nu(\varphi) = \nu(g(0,y)) -1$ where $\nu(g(0,y))$ is the order of the series
$g(0,y)$ at $y=0$. The definition does not depend on the choice of $X$.
\end{defi}
\begin{teo}
\label{teo:modi}
Let $\varphi, \eta \in \diff{p1}{2}$
with $Fix (\varphi)= Fix (\eta)$. Then $\varphi \sim \eta$
if and only if there exists a $\nu(\varphi)$ slow decaying function
$s$ and a sequence $x_{n} \to 0$ contained in $B(0,\delta) \setminus \{0\}$ such that
for any $n \in {\mathbb N}$ the
restrictions $\varphi_{|x=x_{n}}$ and $\eta_{|x=x_{n}}$ are conjugated by an injective
holomorphic mapping $\kappa_{n}$ defined in $B(0,s(x_{n}))$ and fixing the points in
$Fix (\varphi) \cap \{ x=x_{n}\}$.
\end{teo}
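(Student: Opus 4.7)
The forward direction is immediate: if $\varphi \sim \eta$ via some $\sigma \in \diff{}{2}$ with $\sigma_{|Fix(\varphi)} \equiv Id$, then $\kappa_n := \sigma_{|x=x_n}$ conjugates the restrictions and is defined on a fixed ball $B(0,s_0)$, and any positive constant is a $\nu(\varphi)$ slow decaying function. For the converse I would first carry out two standard reductions: the ramification $(x,y) \mapsto (x^k,y)$ in the parameter variable brings $\diff{p1}{2}$ into $\diff{tp1}{2}$, and since $Fix(\varphi)=Fix(\eta)$ together with the sectorial conjugacies $\kappa_n$ force matching residues (Definition \ref{def:resd}) at every $x_n$ and hence at $x=0$ by continuity, a formal conjugation preserving $Fix(\varphi)$ exists and allows replacement of $\eta$ by a formally conjugate germ sharing a convergent normal form $\Upsilon = \exp(X)$ with $\varphi$.

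Next, for each $n$ and each admissible pair $(\Lambda,\lambda) \in \mathcal{M} \times \mathbb{S}^1$ with $x_n \in (0,\delta) I_{\Lambda}^{\lambda}$, I would apply Proposition \ref{pro:conimeq} to the $s(x_n)$-mapping $\kappa_n$: this produces $c_n \in \mathbb{C}$ with
\[
\xi_{j,\Lambda,\lambda}^{\eta}(x_n,z) \equiv (z + c_n) \circ \xi_{j,\Lambda,\lambda}^{\varphi}(x_n,z) \circ (z - c_n) \quad \forall j \in \mathcal{D}(\varphi),
\]
and satisfying $|c_n| \leq C'/s(x_n)^{\nu(\varphi)}$. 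The $\nu(\varphi)$-slow-decay hypothesis on $s$ translates this bound into the sub-logarithmic estimate $|c_n| = o(|\ln|x_n||)$; this is precisely the growth rate compatible with the Gevrey order $1/\nu(\mathcal{E}_0) = 1/\nu(\varphi)$ of the multi-summable Fatou coordinates.

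I would then construct a candidate shift function by fixing homogeneous $(\tilde{e}_1,\hdots,\tilde{e}_{\tilde{q}})$-summable systems of Fatou coordinates for $\varphi$ and $\eta$ and a base point $(0,y_0)$ in a petal, and setting $c_{\Lambda,\lambda}(x) := \psi_{1,\Lambda,\lambda}^{\eta}(x,y_0) - \psi_{1,\Lambda,\lambda}^{\varphi}(x,y_0)$. By Theorems \ref{teo:difFatflatext2} and \ref{teo:difFatflatext3}, the family $\{c_{\Lambda,\lambda}\}$ is $(\tilde{e}_1,\hdots,\tilde{e}_{\tilde{q}})$-summable in $x$. A computation with $\psi^\eta \circ \kappa_n - \psi^\varphi$ at $x=x_n$ on a region to which $\kappa_n$ extends the Fatou coordinate comparison of Proposition \ref{pro:conimeq}, combined with the asymptotically continuous behaviour in adapted coordinates of the regions (Proposition \ref{pro:hit}), shows that $c_{\Lambda,\lambda}(x_n) \equiv c_n \pmod{\mathbb{Z}}$ up to an exponentially small error in $|x_n|$.

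The main obstacle is the quasi-analytic step that follows. I need to upgrade the pointwise equalities at $x_n \to 0$ to the identity of multi-summable objects
\[
\xi_{j,\Lambda,\lambda}^{\eta}(x,z) \equiv (z + c_{\Lambda,\lambda}(x)) \circ \xi_{j,\Lambda,\lambda}^{\varphi}(x,z) \circ (z - c_{\Lambda,\lambda}(x)),
\]
so that the reciprocal of Theorem \ref{teo:mulcon} (combined with Proposition \ref{pro:iinvic} to integrate sectorial conjugations into a global one) yields $\varphi \sim \eta$. The strategy is to expand the discrepancy as a multi-summable series $\sum_{l \geq 1} D_{j,\Lambda,\lambda,l}(x) e^{-2\pi i s l z}$, whose coefficients $D_{j,\Lambda,\lambda,l}$ are $(\tilde{e}_1,\hdots,\tilde{e}_{\tilde{q}})$-summable; by construction they vanish at every $x_n$ up to an exponentially small error in $|x_n|^{\tilde{e}_{d+1}}$. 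A Gevrey estimate in the spirit of Lemma \ref{lem:techgevrey}, together with the sub-logarithmic bound on $|c_n|$ (which controls how $e^{-2\pi i s l c_n}$ can perturb the comparison without destroying the Gevrey bookkeeping), then forces each $D_{j,\Lambda,\lambda,l}$ to have zero asymptotic expansion, hence to vanish identically. The precision of the exponent $\nu(\varphi)$ in the slow-decay hypothesis is exactly what makes this Gevrey matching work, linking directly to the counterexamples with domain of definition $B(0, C_0/\sqrt[\nu(\mathcal{E}_0)]{|\ln x_0|})$ mentioned in the introduction.
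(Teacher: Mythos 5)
Your reductions (ramification to $\diff{tp1}{2}$, matching residues via continuity and the meromorphy of $x \mapsto Res(\varphi,(x,\gamma_j(x)))$, common normal form, and the application of Proposition \ref{pro:conimeq} to produce $c_n$ with $|c_n| \leq C'/s^{\nu(\varphi)}(x_n)$) agree with the paper. Note, however, that the paper also has to normalize the derivatives $\kappa_n'(\gamma_1(x_n))$ when $N(X)=1$, passing to a subsequence and absorbing a root-of-unity rotation into the normal form; your plan omits this step.

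The genuine gap is in the quasi-analytic step. You set
$c_{\Lambda,\lambda}(x) := \psi_{1,\Lambda,\lambda}^{\eta}(x,y_0) - \psi_{1,\Lambda,\lambda}^{\varphi}(x,y_0)$
and claim that $c_{\Lambda,\lambda}(x_n) \equiv c_n \pmod{\mathbb{Z}}$ up to an exponentially small error. This does not hold. The functional equation from Proposition \ref{pro:conimeq} gives $\psi^{\eta}_{j,\Lambda,\lambda}(x_n,\kappa_n(y)) = \psi^{\varphi}_{j,\Lambda,\lambda}(x_n,y) + c_n$, so
\[
c_{\Lambda,\lambda}(x_n) - c_n = \psi^{\eta}_{1,\Lambda,\lambda}(x_n,y_0) - \psi^{\eta}_{1,\Lambda,\lambda}(x_n,\kappa_n(y_0)).
\]
Lemma \ref{lem:modbdd} bounds $\kappa_n(y_0)-y_0$ by $O(1/s^{\nu(\varphi)}(x_n))$ times $\prod(y_0-\gamma_j(x_n))^{n_j}$, and $\partial\psi^{\eta}/\partial y$ is comparable to the reciprocal of that product, so the error is only $O(1/s^{\nu(\varphi)}(x_n))$ — a slowly decaying quantity, not exponentially small. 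Consequently the discrepancy coefficients $D_{j,\Lambda,\lambda,l}(x) = a^{\eta}_{j,\Lambda,\lambda,l}(x) - a^{\varphi}_{j,\Lambda,\lambda,l}(x)\,e^{2\pi i \upsilon l c_{\Lambda,\lambda}(x)}$ do not vanish at the $x_n$ to the precision you need, and the proposed Gevrey argument via Lemma \ref{lem:techgevrey} cannot force their asymptotic expansions to be zero.

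The paper avoids constructing a candidate shift function a priori. Instead it works multiplicatively: for each $(j,l) \in E^{\lambda}(\varphi)$, the equation at $x_n$ gives $a^{\eta}_{j,\Lambda,\lambda,l}(x_n)/a^{\varphi}_{j,\Lambda,\lambda,l}(x_n) = e^{2\pi i \upsilon l c_n}$, hence for every $\iota > 0$ the $\nu(\varphi)$-slow-decay hypothesis yields $|x_n|^{\iota} \leq |a^{\eta}/a^{\varphi}|(x_n) \leq |x_n|^{-\iota}$ for $n \gg 1$. Since the ratio of two nonzero multi-summable coefficients is of the form $x^{e} h$ with $h(0) \neq 0$, this estimate forces $e = 0$. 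The ratio is therefore a bounded nonvanishing multi-summable function, a continuous logarithm $c^{j,l}_{\lambda}$ exists on the whole sector, and a Bezout argument on the indices $l$ combines these into a single $c_{\lambda}$ satisfying the Ecalle--Voronin conjugacy identity for all $x$ in the sector. This is the mechanism your plan is missing: it is the ratio $a^{\eta}/a^{\varphi}$ that is controlled by slow decay, not a discrepancy series built from an auxiliary Fatou-coordinate difference. After that, Proposition \ref{pro:iinvic} and the Main Theorem of \cite{JR:mod} conclude as you indicate; be careful not to invoke the reciprocal of Theorem \ref{teo:mulcon} here, since the paper derives that reciprocal from Theorem \ref{teo:modi} itself.
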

\begin{proof}
We start proving some normalizing conditions.
We can suppose that $\varphi, \eta \in \diff{tp1}{2}$
up to replace $\varphi$ and $\eta$ with $(x^{1/k},y) \circ \varphi \circ (x^{k},y)$
and $(x^{1/k},y) \circ \eta \circ (x^{k},y)$ respectively for some $k \in {\mathbb N}$.
It suffices to prove the theorem in this context since given $k \in {\mathbb N}$ we have
\[ \varphi \sim \eta \Leftrightarrow
(x^{1/k},y) \circ \varphi \circ (x^{k},y) \sim (x^{1/k},y) \circ \eta \circ (x^{k},y) \]
by lemma 10.9 in \cite{JR:mod}. Let us stress that the $\nu(\varphi)$ slow decaying
character is invariant by ramification $x \to x^{k}$.

Let ${\rm exp}(X)$ and ${\rm exp}(Y)$ be convergent normal forms of $\varphi$ and $\eta$
respectively. Since $Fix (\varphi)= Fix (\eta)$ we obtain
\[ X(y) = u(x,y) \prod_{j=1}^{p} (y-\gamma_{j}(x))^{n_{j}} \ {\rm and} \
Y(y) = v(x,y) \prod_{j=1}^{p} (y-\gamma_{j}(x))^{n_{j}'} \]
where $u, v \in {\mathbb C}\{x,y\}$ are units. The mapping $\kappa_{n}$ conjugates
the germs of $\varphi_{|x=x_{n}}$ and $\eta_{|x=x_{n}}$ in the neighborhood of $y=\gamma_{j}(x_{n})$.
Since the analytic conjugation $\kappa_{n}$ preserves the order of tangency with the identity
we get $n_{j}=n_{j}'$ for any $1 \leq j \leq p$.
Denote $w_{n,j}=(x_{n},\gamma_{j}(x_{n}))$. An analytic conjugation preserves residues too
(prop. 5.8 \cite{UPD}), thus we obtain
\[ Res(X,w_{n,j})=  Res(\varphi,w_{n,j})  =   Res(\eta, w_{n,j})= Res(Y, w_{n,j}) \]
for all $1 \leq j \leq p$ and $n \in {\mathbb N}$ (see def. \ref{def:res2} and
\ref{def:resd}).
Given $1 \leq j \leq p$ the functions
\[ x \to Res(\varphi, (x,\gamma_{j}(x))) \ {\rm and} \ x \to Res(\eta, (x,\gamma_{j}(x))) \]
are meromorphic (prop. 5.2 \cite{UPD}). We obtain
$Res(\varphi, (x,\gamma_{j}(x))) \equiv Res(\eta, (x,\gamma_{j}(x)))$ for any $1 \leq j \leq p$.
The equality $(X(y))=(Y(y))$ of ideals of ${\mathbb C}\{x,y\}$ and
\[ Res(X, (x,\gamma_{j}(x))) \equiv Res(Y, (x,\gamma_{j}(x))) \ \forall 1 \leq j \leq p \]
imply that the diffeomorphisms ${\rm exp}(X)$ and ${\rm exp}(Y)$
are analytically conjugated by a mapping
$\sigma(x,y)=(x,\sigma_{2}(x,y))$ such that $\sigma_{|Sing X} \equiv Id$
(prop. 5.10 \cite{UPD}).
Up to replace $s$ with $s \tau$
for some $\tau \in {\mathbb R}^{+}$ we obtain that $\sigma^{-1} \circ \kappa_{n}$
is a $s(x_{n})$-mapping for $n \in {\mathbb N}$
(lemmas \ref{lem:quitR} and \ref{lem:adjcon}).
Up to replace $\eta$ with $\sigma^{-1} \circ \eta \circ \sigma$
and $\kappa_{n}$ with $\sigma^{-1} \circ \kappa_{n}$ for $n \in {\mathbb N}$
we can suppose that $\varphi$ and $\eta$ have common normal form ${\rm exp}(X)$.

Suppose $N(X)=1$. We obtain that
$\kappa_{n}'(\gamma_{1}(x_{n})) \in <e^{2 \pi i/\nu}>$ for any $n \in {\mathbb N}$
by arguing as in lemma \ref{lem:modbdd}.
Up to taking a subsequence of ${\{x_{n}\}}_{n \in {\mathbb N}}$ we can suppose that
there exists $\mu \in <e^{2 \pi i/\nu}>$ such that
$\kappa_{n}'(\gamma_{1}(x_{n}))=\mu$ for any $n \in {\mathbb N}$. There exists
$\sigma_{0} \in \diff{p}{2}$ conjugating the vector fields $X$ and
\[ X_{0}=\frac{(y-\gamma(x))^{\nu+1}}{1+(y-\gamma(x))^{\nu} Res(X,(x,\gamma(x)))} \frac{\partial}{\partial y}. \]
Since $(x,\mu (y-\gamma(x))+ \gamma(x))$ preserves $X_{0}$
then $\sigma=\sigma_{0}^{-1} \circ (x, \mu (y-\gamma(x))+ \gamma(x)) \circ \sigma_{0}$ conjugates $X$ with
itself. Moreover $\sigma$ satisfies
$(\partial (y \circ \sigma)/\partial y)(x,\gamma(x)) \equiv \mu$.
As in the previous paragraph up to replace $\eta$ with $\sigma^{-1} \circ \eta \circ \sigma$
and $\kappa_{n}$ with $\sigma^{-1} \circ \kappa_{n}$ for $n \in {\mathbb N}$
we can suppose $\kappa_{n}'(\gamma(x_{n}))=1$ for any $n \in {\mathbb N}$.
In this way we obtain that $\kappa_{n} \cong Id$ for any $n \in {\mathbb N}$
independently on whether $N(X)=1$ or $N(X)>1$.

The goal is proving that there exists $s_{0} \in {\mathbb R}^{+}$ such that
$\varphi_{x=x_{0}}$ and $\eta_{|x=x_{0}}$ are conjugated by a $s_{0}$-mapping
$\phi_{x_{0}}$ with $\phi_{x_{0}} \cong Id$ for any $x_{0}$ in a pointed neighborhood of $0$.
Then the Main Theorem of \cite{JR:mod} implies $\varphi \sim \eta$.

By taking a subsequence of ${\{x_{n}\}}_{n \in {\mathbb N}}$ we can suppose that
$x_{n}$ adheres to a direction $\lambda \in {\mathbb S}^{1}$.
Fix $\Lambda = (\lambda_{1}, \hdots, \lambda_{\tilde{q}}) \in {\mathcal M}$ with
$\lambda_{q}$ close to $\lambda$.
Consider the set
\[ E_{\upsilon}^{\lambda}(\varphi)=\{ (j,l) \in {\mathcal D}_{\upsilon}(\varphi) \times {\mathbb N} :
a_{j,\Lambda,\lambda,l}^{\varphi} \not \equiv 0\} \ {\rm  for} \  \upsilon \in \{-1,1\} \]
and
$E^{\lambda}(\varphi)=E_{-1}^{\lambda}(\varphi) \cup E_{1}^{\lambda}(\varphi)$.
Proposition \ref{pro:conimeq} implies that there exists a sequence
${\{c_{n}\}}_{n \in {\mathbb N}}$ of complex numbers such that
\[ \xi_{j,\Lambda,\lambda}^{\eta} (x_{n},z) =
(z + c_{n}) \circ \xi_{j,\Lambda,\lambda}^{\varphi} (x_{n},z) \circ (z-c_{n})
\ \ \forall j \in {\mathcal D}(\varphi). \]
Moreover we have $|c_{n}| \leq C'/s^{\nu}(x_{n})$ for some $C' \in {\mathbb R}^{+}$
and any $n \in {\mathbb N}$. We have
\begin{equation}
\label{equ:conevi}
a_{j,\Lambda,\lambda,l}^{\eta}(x_{n})= a_{j,\Lambda,\lambda,l}^{\varphi}(x_{n}) e^{2 \pi i \upsilon l c_{n}}
\end{equation}
for all $n \in {\mathbb N}$, $\upsilon \in \{-1,1\}$ and $j \in {\mathcal D}_{\upsilon}(\varphi)$.
The multi-summable character implies for both $a_{j,\Lambda,\lambda,l}^{\varphi}$
and $a_{j,\Lambda,\lambda,l}^{\eta}$ that either they are identically $0$ or never vanishing in
a neighborhood of $0$ in $(0,\delta)I_{\Lambda}^{\lambda}$.
We obtain $E^{\lambda}(\varphi)=E^{\lambda}(\eta)$.

Consider $(j,l) \in E_{\upsilon}^{\lambda}(\varphi)$. We obtain
\[   e^{-\frac{2 \pi l C'}{s^{\nu}(x_{n})}} \leq
\left|{ \frac{a_{j,\Lambda,\lambda,l}^{\eta}}{a_{j,\Lambda,\lambda,l}^{\varphi}} }\right| (x_{n})
\leq e^{\frac{2 \pi l C'}{s^{\nu}(x_{n})}} \ \forall n \in {\mathbb N} . \]
Given $\iota \in {\mathbb R}^{+}$ we have
$2 \pi l C'/s^{\nu}(x_{n}) < \iota |\ln|x_{n}||$ for any $n \in {\mathbb N}$ big enough since
$s$ is a $\nu$ slow decaying function. This implies
\begin{equation}
\label{equ:stdev}
|x_{n}|^{\iota} \leq
\left|{ \frac{a_{j,\Lambda,\lambda,l}^{\eta}}{a_{j,\Lambda,\lambda,l}^{\varphi}} }\right| (x_{n})
\leq \frac{1}{|x_{n}|^{\iota}} \ \forall n >>1 .
\end{equation}
The multi-summability of $a_{j,\Lambda,\lambda,l}^{\varphi}$ and $a_{j,\Lambda,\lambda,l}^{\eta}$
implies that $a_{j,\Lambda,\lambda,l}^{\eta}/a_{j,\Lambda,\lambda,l}^{\varphi}$ is a function of
the form $x^{e} h_{j,\Lambda,\lambda,\lambda}$ where $e$ belongs to ${\mathbb Z}$ and
$h_{j,l,\Lambda,\lambda}$ is a  $(\tilde{e}_{1}, \hdots, \tilde{e}_{\tilde{q}})$-summable
function such that $h_{j,l,\Lambda,\lambda}(0) \neq 0$.
Equation (\ref{equ:stdev}) implies $e=0$.
We consider a continuous function $c_{\lambda}^{j,l}$
defined in a neighborhood of $0$ in $[0,\delta) \dot{I}_{\Lambda}^{\lambda}$ such that
$e^{2 \pi i \upsilon l c_{\lambda}^{j,l}} \equiv a_{j,\Lambda,\lambda,l}^{\eta}/a_{j,\Lambda,\lambda,l}^{\varphi}$.

Consider $(j,l) \in E_{\upsilon}^{\lambda}(\varphi)$ and $(j',l') \in E_{\upsilon'}^{\lambda}(\varphi)$.
The equation (\ref{equ:conevi}) implies
\[ {\left({ \frac{a_{j,\Lambda,\lambda,l}^{\eta}}{a_{j,\Lambda,\lambda,l}^{\varphi}} }\right)}^{\upsilon l'} (x_{n}) =
 {\left({ \frac{a_{j',\Lambda,\lambda,l'}^{\eta}}{a_{j',\Lambda,\lambda,l'}^{\varphi}} }\right)}^{\upsilon' l} (x_{n})
 \ \ \forall n \in {\mathbb N}. \]
Therefore we obtain
\begin{equation}
\label{equ:profms}
{\left({ \frac{a_{j,\Lambda,\lambda,l}^{\eta}}{a_{j,\Lambda,\lambda,l}^{\varphi}} }\right)}^{\upsilon l'} (x) =
 {\left({ \frac{a_{j',\Lambda,\lambda,l'}^{\eta}}{a_{j',\Lambda,\lambda,l'}^{\varphi}} }\right)}^{\upsilon' l} (x)
 \ \ \forall x \in (0,\delta) \dot{I}_{\Lambda}^{\lambda}
\end{equation}
by the multi-summable character of the functions involved.

Suppose $E^{\lambda}(\varphi) \neq \emptyset$. Let us consider
$(j_{1},l_{1})$, $\hdots$, $(j_{b},l_{b})$ such that $(j,l) \in E^{\lambda}(\varphi)$
implies that $l$ belongs to the ideal $(l_{1},\hdots,l_{b})$ of ${\mathbb Z}$.
Let $x_{n_{0}}$ be a point such that
$a_{j_{k},\Lambda,\lambda,l_{k}}^{\varphi}(x_{n_{0}}) \neq 0$ for any $1 \leq k \leq b$.
We can choose the function $c_{\lambda}^{j_{k},l_{k}}$ such that
$c_{\lambda}^{j_{k},l_{k}}(x_{n_{0}})=c_{n_{0}}$ for any $1 \leq k \leq b$.
Equation (\ref{equ:profms}) and
$c_{\lambda}^{j_{k},l_{k}}(x_{n_{0}}) = c_{\lambda}^{j_{d},l_{d}}(x_{n_{0}})$
imply
\[ e^{2 \pi i l_{k} l_{d} c_{\lambda}^{j_{k},l_{k}}} \equiv e^{2 \pi i l_{k} l_{d} c_{\lambda}^{j_{d},l_{d}}}
\implies c_{\lambda}^{j_{k},l_{k}} - c_{\lambda}^{j_{d},l_{d}} \in {\mathbb Z}/(l_{k}l_{d}) \implies
c_{\lambda}^{j_{k},l_{k}} \equiv c_{\lambda}^{j_{d},l_{d}} \]
for all $1 \leq k,d \leq b$. We denote $c_{\lambda}=c_{\lambda}^{j_{k},l_{k}}$ for $1 \leq k \leq b$.
We define $c_{\lambda} \equiv 0$ for the case $E^{\lambda}(\varphi) = \emptyset$.

We have $(j_{k},l_{k}) \in E_{\upsilon_{k}}^{\lambda}(\varphi)$ for $1 \leq k \leq b$.
Consider $(j,l) \in E_{\upsilon}^{\lambda}(\varphi)$. There exist
$m_{1}, \hdots, m_{b} \in {\mathbb Z}$ such that
$l=m_{1} l_{1} + \hdots + m_{b} l_{b}$. We have
\[ e^{2 \pi i \upsilon_{k} l_{k} c_{n}} =
\frac{a_{j_{k},\Lambda,\lambda,l_{k}}^{\eta}}{a_{j_{k},\Lambda,\lambda,l_{k}}^{\varphi}} (x_{n})
\ \forall k \in \{1,\hdots,b\} \ \forall n >>1. \]
We obtain
\[ \frac{a_{j,\Lambda,\lambda,l}^{\eta}}{a_{j,\Lambda,\lambda,l}^{\varphi}} (x_{n}) =
 e^{2 \pi i \upsilon l c_{n}} = \prod_{k=1}^{b}
{\left({  \frac{a_{j_{k},\Lambda,\lambda,l_{k}}^{\eta}}{a_{j_{k},\Lambda,\lambda,l_{k}}^{\varphi}}
}\right)}^{\upsilon_{k} \upsilon m_{k}} (x_{n}) \ \forall n>>1. \]
The multi-summability character of the functions involved in the previous equality implies
\[ \frac{a_{j,\Lambda,\lambda,l}^{\eta}}{a_{j,\Lambda,\lambda,l}^{\varphi}} \equiv \prod_{k=1}^{b}
{\left({  \frac{a_{j_{k},\Lambda,\lambda,l_{k}}^{\eta}}{a_{j_{k},\Lambda,\lambda,l_{k}}^{\varphi}}
}\right)}^{\upsilon_{k} \upsilon m_{k}}  . \]
We also have
\[  e^{2 \pi i \upsilon_{k} l_{k} c_{\lambda}} \equiv
\frac{a_{j_{k},\Lambda,\lambda,l_{k}}^{\eta}}{a_{j_{k},\Lambda,\lambda,l_{k}}^{\varphi}}
\ \forall k \in \{1,\hdots,b\}   \]
by construction. Therefore we obtain
\[ e^{2 \pi i \upsilon  l  c_{\lambda}} \equiv \prod_{k=1}^{b}
{\left({  \frac{a_{j_{k},\Lambda,\lambda,l_{k}}^{\eta}}{a_{j_{k},\Lambda,\lambda,l_{k}}^{\varphi}}
}\right)}^{\upsilon_{k} \upsilon m_{k}} \equiv
\frac{a_{j,\Lambda,\lambda,l}^{\eta}}{a_{j,\Lambda,\lambda,l}^{\varphi}} \]
for any $(j,l) \in E^{\lambda}(\varphi)$. We deduce
\[ \xi_{j,\Lambda,\lambda}^{\eta} (x,z) \equiv
\xi_{j,\Lambda,\lambda}^{\varphi} (x,z-c_{\lambda}(x))  + c_{\lambda}(x)
\ \ \forall j \in {\mathcal D}(\varphi). \]
By prop. \ref{pro:iinvic} there exists $s_{0} \in {\mathbb R}^{+}$ such that
for any $x_{0} \in [0,\delta) \dot{I}_{\Lambda}^{\lambda}$ there exists
a $s_{0}$-mapping $\kappa_{x_{0}}$ with $\kappa_{x_{0}} \cong Id$
conjugating $\varphi_{|x=x_{0}}$ and $\eta_{|x=x_{0}}$. Indeed it is possible
to choose ${\{\kappa_{x}\}}_{x \in [0,\delta) \dot{I}_{\Lambda}^{\lambda}}$ such that there exists
a continuous map $\kappa$ defined in $[0,\delta) \dot{I}_{\Lambda}^{\lambda} \times B(0,s_{0})$ and holomorphic
in $(0,\delta) \dot{I}_{\Lambda}^{\lambda} \times B(0,s_{0})$ satisfying
$\kappa \circ \varphi = \eta \circ \kappa$ and $\kappa_{|x=x_{0}}=\kappa_{x_{0}}$ for any
$x_{0} \in [0,\delta) \dot{I}_{\Lambda}^{\lambda}$.

By considering $\lambda' \in {\mathbb S}^{1}$ such that
$\dot{I}_{\Lambda}^{\lambda} \cap I_{\Lambda}^{\lambda'} \neq \emptyset$ we can repeat the
previous argument to enlarge the family ${\{\kappa_{x}\}}_{x \in [0,\delta) \dot{I}_{\Lambda}^{\lambda}}$
to obtain a family
${\{\kappa_{x}\}}_{x \in [0,\delta) (\dot{I}_{\Lambda}^{\lambda} \cup \dot{I}_{\Lambda}^{\lambda'}) }$
satisfying analogous properties for some smaller $s_{0} \in {\mathbb R}^{+}$.
By varying $\lambda' \in {\mathbb S}^{1}$ we obtain
a family ${\{\kappa_{x}\}}_{x \in B(0,\delta)}$ of $s_{0}$-mappings
such that $\kappa_{x_{0}}$ conjugates $\varphi_{|x=x_{0}}$ and $\eta_{|x=x_{0}}$
and satisfies $\kappa_{x_{0}} \cong Id$ for any $x_{0} \in B(0,\delta)$. The Main Theorem
in \cite{JR:mod} implies $\varphi \sim \eta$.
\end{proof}
\begin{rem}
\label{rem:res}
Theorem \ref{teo:modi} can be generalized for codimension finite resonant
diffeomorphisms. If the linear part of $\varphi_{|x=0}$ is $p$ periodic
we replace in the theorem the sets $Fix (\varphi)$ and $Fix(\eta)$ with
$Fix (\varphi^{p})$ and $Fix(\eta^{p})$ respectively.  Indeed we have
$(\partial (y \circ \varphi)/\partial y)(0,0) =  (\partial (y \circ \eta)/\partial y)(0,0) $
by continuity and $\varphi^{p} \sim \eta^{p}$ by theorem \ref{teo:modi}.
Proposition 5.4 of \cite{JR:mod} implies $\varphi \sim \eta$.
\end{rem}
\begin{cor}
\label{cor:modi}
Let $\varphi, \eta \in \diff{p1}{2}$
with $Fix (\varphi)= Fix (\eta)$. Suppose there exist $s \in {\mathbb R}^{+}$ and
a sequence $x_{n} \to 0$ contained in $B(0,\delta) \setminus \{0\}$ such that
for any $n \in {\mathbb N}$ the
restrictions $\varphi_{|x=x_{n}}$ and $\eta_{|x=x_{n}}$ are conjugated by an injective
holomorphic mapping $\kappa_{n}$ defined in $B(0,s)$ and fixing the points in
$Fix (\varphi) \cap \{ x=x_{n}\}$. Then we obtain $\varphi \sim \eta$.
\end{cor}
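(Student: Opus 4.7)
The plan is to deduce the corollary directly from Theorem \ref{teo:modi} by exhibiting the constant hypothesis as a special instance of the slow-decay hypothesis. Indeed, the remark following Definition \ref{def:slow} explicitly notes that the constant function $s(x) \equiv s$ is a $\nu$ slow decaying function for every $\nu \in {\mathbb N}$, since $\lim_{x \to 0} s \sqrt[\nu]{|\ln|x||} = \infty$. In particular, the constant $s$ given in the statement is $\nu(\varphi)$ slow decaying.

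With this reformulation, the hypotheses of Theorem \ref{teo:modi} are satisfied verbatim: we have $\varphi,\eta \in \diff{p1}{2}$ with $Fix(\varphi) = Fix(\eta)$, a $\nu(\varphi)$ slow decaying function (namely the constant $s$), and a sequence $x_n \to 0$ in $B(0,\delta) \setminus \{0\}$ such that for each $n$ the germs $\varphi_{|x=x_n}$ and $\eta_{|x=x_n}$ are conjugated by an injective holomorphic mapping $\kappa_n \colon B(0,s(x_n)) = B(0,s) \to {\mathbb C}$ fixing $Fix(\varphi) \cap \{x = x_n\}$. The conclusion $\varphi \sim \eta$ is then immediate from the ``if'' direction of Theorem \ref{teo:modi}.

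There is no real obstacle here since this is a purely formal specialization; the entire content of the corollary lies in the much stronger Theorem \ref{teo:modi}, whose proof uses the multi-summability of the extensions of the Ecalle-Voronin invariants together with Proposition \ref{pro:conimeq} to control the sequence of translations $c_n$, the slow decay hypothesis being exactly what is needed to force these translations into a multi-summable family whose exponentiated differences of invariants match. The corollary merely observes that a constant bound on the domain of definition is a fortiori a slow-decaying bound.
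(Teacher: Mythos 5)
Your proposal is correct and coincides with the paper's intent: Corollary \ref{cor:modi} is stated as an immediate specialization of Theorem \ref{teo:modi}, obtained by noting (as the remark after Definition \ref{def:slow} records) that a constant function is $\nu$ slow decaying for every $\nu$. The paper gives no separate argument, so your one-line deduction is exactly the expected proof.
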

\begin{rem}
The theorem \ref{teo:modi} allows us to reduce the problem of analytic classification
of elements of $\diff{p1}{2}$ to well-behaved directions in the parameter space.
For instance consider a diffeomorphism of the form
\[ \varphi(x,y) = (x, y + u(x,y) (y-x)(y+x)) \]
for some unit $u \in {\mathbb C}\{x,y\}$ such that $u(0,0)=1$.
In any sector of the form
\[ S=(0,\delta) e^{i(\pi/2+\upsilon, 3\pi/2 -\upsilon)} \ {\rm or} \
S=(0,\delta) e^{i(- \pi/2 +\upsilon, \pi/2-\upsilon)} \]
for $\upsilon \in {\mathbb R}^{+}$ the diffeomorphism $\varphi_{|x=x_{0}}$
has an attracting and a repelling fixed point for any $x_{0} \in S$.
Indeed an analytic system of invariants in $S$ can be constructed by comparing the
linearizing mappings in both fixed points. This is Glutsyuk's point of view \cite{Gluglu}.
Roughly speaking, conjugation in
$(B(0,\delta) \setminus i {\mathbb R}^{*}) \times B(0,\epsilon)$ implies conjugation
in a neighborhood of the origin.
\end{rem}
\begin{rem}
Consider $X \in  {\mathcal X} \cn{2}$ such that ${\rm exp}(X) \in \diff{tp1}{2}$.
Prop. 11.1 in \cite{JR:mod} shows that there exist
$\varphi, \eta \in \diff{p1}{2}$ with normal form ${\rm exp}(X)$ such that
\begin{itemize}
\item $\varphi \not \sim \eta$
\item $\varphi$ and $\eta$ are
conjugated by an analytic injective multi-valued, in the $x$ variable, mapping $\sigma$.
\item $\sigma$ is
defined in a domain $|y|< C_{0}/\sqrt[\nu(X)]{|\ln x|}$ for some
$C_{0} \in {\mathbb R}^{+}$.
\item $\sigma$ satisfies
$\sigma_{|Fix (\varphi)} \equiv Id$ and
$\sigma(e^{2 \pi i}x,y) = \eta \circ \sigma(x,y)$.
\end{itemize}
The Main Theorem is optimal and it does not hold true for non-slow decaying functions.
Equivalently a domain of the form $|y|< C_{0}/\sqrt[\nu(X)]{|\ln x|}$
is maximal as a domain of definition of a mapping $\sigma$ satisfying the previous properties.
If $\sigma$ is defined in a substantially bigger domain $|y| < s(x)$, i.e
$\lim_{x \to 0} s(x)/(1/\sqrt[\nu(X)]{|\ln |x||})=\infty$, then we obtain $\varphi \sim \eta$.
\end{rem}
\bibliography{rendu}
\end{document}